\numberwithin{equation}{section}
\newcommand{\Om}{\Omega}
\newenvironment{pf}{\noindent{\sc Proof}.\enspace}{\rule{2mm}{2mm}\smallskip}
\newtheorem{theorem}{Theorem}[section]
\newtheorem{proposition}{Proposition}[section]
\newtheorem{lemma}{Lemma}[section]
\newtheorem{corollary}{Corollary}[section]
\newtheorem{remark}{Remark}[section]
\newtheorem{remarks}{Remark}[section]
\newtheorem{definition}{Definition}[section]
\newcommand{\be}{\begin{equation}}
\newcommand{\ee}{\end{equation}}
\newcommand{\teta}{\theta}
\newcommand{\om}{\omega}
\newcommand{\e}{\varepsilon}
\newcommand{\ph}{\varphi}
\newcommand{\ov}{\overline}
\newcommand{\wtilde}{\widetilde}
\newcommand{\R}{\mathbb R}
\newcommand{\C}{\mathbb C}
\newcommand{\Z}{\mathbb Z}
\newcommand{\N}{\mathbb N}
\newcommand{\T}{\mathbb T}
\renewcommand{\a }{\alpha }
\renewcommand{\b }{\beta }
\newcommand{\s }{\sigma }
\newcommand{\ii }{{\rm i} }
\renewcommand{\d }{\delta }
\newcommand{\g }{\gamma}
\newcommand{\vphi}{\varphi }
\renewcommand{\t }{\tau }
\renewcommand{\o }{\omega }
\renewcommand{\O }{\Omega }
\newcommand{\pa}{\partial}
\newcommand{\io}{\iota}
\newcommand{\inv}{^{-1}}
\newcommand{\Lipg}{{\gamma\rm{lip}}}
\newcommand{\lip}{{\rm lip}}
\newcommand{\lla}{\langle \! \langle}
\newcommand{\rra}{\rangle \! \rangle}
\begin{document}

\title{{\bf Large KAM tori for perturbations of \\ 
the dNLS equation}}

%\date{}

\author{
Massimiliano Berti\footnote{PRIN 2012 "Variational and perturbative aspects of nonlinear
differential problems".} , 
Thomas Kappeler\footnote{Supported in part by the Swiss National Science Foundation.}  ,
Riccardo Montalto\footnote{Supported in part by the Swiss National Science Foundation.} 
}

\maketitle

\noindent
{\bf Abstract.}
We prove that small, semi-linear Hamiltonian perturbations of the defocusing nonlinear Schr\"odinger (dNLS) equation on the circle
have an abundance of invariant tori of any size and (finite) dimension which support quasi-periodic solutions.
When compared with previous results the novelty consists in considering perturbations which do not satisfy any  symmetry
condition (they may depend on  $x$ in an arbitrary way) % on the space variable 
and  need not be analytic. The main difficulty is posed by pairs of almost resonant dNLS frequencies.  
The proof is based on the integrability of the dNLS equation, in particular the fact that the nonlinear part of the 
Birkhoff coordinates is one smoothing.
We implement a Newton-Nash-Moser iteration scheme to construct the invariant tori. 
The key point is the reduction of 
linearized operators, coming up in the iteration scheme, to  $ 2 \times 2 $ block diagonal ones with constant coefficients together with sharp asymptotic  estimates of their eigenvalues. 
\\[1mm]
\noindent
{\em Keywords:} defocusing NLS equation, KAM for PDE, Nash-Moser theory, invariant tori

\noindent
{\em MSC 2010:} 37K55, 35Q55

%%%%%%%%%%%%%%%%%%%%%%%%%%%%%%%%%%%%%%%%%%%%%%%%%%%%%%%%%%%%%%%%%%%%%%%%%%

\tableofcontents

\section{Introduction}\label{introduzione paper}
\label{1. Introduction}

Consider the defocusing nonlinear Schr\"odinger (dNLS) equation in one
space dimension
   \begin{equation}
   \label{1.1} \ii \partial _t u = - \partial ^2_x u + 2|u|^2u
   \end{equation}
on the standard Sobolev space $H^\s \equiv H^\sigma ({\mathbb T}_1, {\mathbb C})$ of complex
valued functions on ${\mathbb T}_1 : = {\mathbb R} / {\mathbb Z}$.
It is well known that for $\s \ge 0$, \eqref{1.1} is wellposed  and for $\s \ge 1$, it 
is a Hamiltonian PDE with Poisson bracket and Hamiltonian given by
\begin{equation}\label{Poisson brackets}
\{ {\mathcal F}, {\mathcal G} \} (u_1, u_2) 
= - \ii \int ^1_0 (\nabla_u {\mathcal F} \nabla_{\bar u} {\mathcal G} - 
    \nabla_{\bar u} {\mathcal F} \nabla_u {\mathcal G})dx, \qquad
     {\mathcal H}^{nls}(u_1, u_2) = \int ^1_0 (\partial _x  u \partial _x \bar{u} + u^2 \bar u^2)dx \, .
\end{equation}
Here $u_1, u_2$ are the real valued functions, defined in terms of $u$ by 
$u_1 = \sqrt{2} {\rm Re}(u)$, $ u_2 = - \sqrt{2}{\rm Im}(u), $  the $L^2-$gradients
$ \nabla_{ u}, \nabla_{\bar u}$ are given by
$\nabla_{ u} := (\nabla_{u_1} + \ii \nabla_{u_2})/\sqrt{2}$,   
$ \nabla_{\bar u} := (\nabla_{u_1} - \ii \nabla_{u_2})/\sqrt{2},$
and
$\mathcal F, \mathcal G$,
viewed as functions of $u_1$ and $u_2$, are $ {\cal C}^1$-smooth, real valued functionals on $H^\s$ with sufficiently regular $L^2$-gradients. 
The Hamiltonian vector field corresponding to
${\mathcal H}^{nls}$ can then be computed to be
$- \ii \nabla_{\bar u}{\mathcal H}^{nls}$ and when written in Hamiltonian form, equation \eqref{1.1}
becomes $\partial _tu = - \ii \nabla_{\bar u} {\mathcal H}^{nls}$. 
According to \cite{GK}, \eqref{1.1} is an integrable PDE  in the strongest
possible sense, meaning that it admits global Birkhoff
coordinates on $H^\sigma$, $\sigma \in {\mathbb Z}_{\geq 0}$ -- see Subsection~\ref{section birkhoff coordinates} for more details.
 In these coordinates, equation \eqref{1.1}
can be solved by quadrature and the phase space $H^\sigma$ is the union of compact, connected tori,
invariant under the flow of \eqref{1.1}. All the solutions are periodic, quasi-periodic or almost periodic in time. 
These invariant  tori are parametrized by the action variables 
$I = (I_k)_{k\in {\mathbb Z}}$, 
the latter being defined in terms of the Birkhoff coordinates and filling out the whole
positive quadrant $\ell^{1, 2\sigma}_+$ of the weighted % $\ell^1-$ 
sequence space  $\ell^{1,2 \s} \equiv \ell^{1, 2 \sigma}(\mathbb Z, \mathbb R)$. 
The dimension of such a torus, denoted by ${\mathcal T}_I$, coincides with the cardinality of the
index set $S \equiv S_I \subseteq {\mathbb Z}$, given by
$S = \{ k \in {\mathbb Z} \big\arrowvert I_k > 0 \} $. 
In case $|S| < \infty $, it can be shown that elements in ${\mathcal T}_I$ 
are $ {\cal C}^\infty-$smooth
and that solutions of \eqref{1.1} with inital data in ${\mathcal T}_I$
wrap around ${\mathcal T}_I$ with speed, defined in terms of the 
 frequencies $\omega ^{nls}_k (I),$ $k \in S $. % and hence are quasi-periodic. 
They are called $S$-gap solutions. 

Our aim is to prove that for Hamiltonian perturbations 
   \begin{equation}   \label{1.3} 
   \ii \partial _tu  = - \partial ^2_x u + 2 |u|^2 u +  \e    \mbox{\em f } (x, u ) 
   \end{equation}
of equation \eqref{1.1}, many of these finite dimensional tori persist, provided that $\varepsilon $ 
is sufficiently small. The perturbation 
$\mbox{\em f }$ is assumed to be given by 
$ \mbox{\em f} (x, u) = \nabla_{\bar u}{\mathcal P}$ where ${\mathcal P}$
is a real valued Hamiltonian of the form
   \begin{equation}
   \label{1.4} {\mathcal P}(u) = \int^1_0 \mbox{\em p}(x,u_1(x) , u_2(x))dx
   \end{equation}
and {\em p} a real valued function
 $$
  \mbox{\em p } : {\mathbb T_1} \times {\mathbb R}^2
               \rightarrow {\mathbb R} , \ (x ,  \zeta _1, \zeta _2)
               \mapsto \mbox{\em p}(x, \zeta _1, \zeta _2)\, 
 $$
 which is then related to $f: \mathbb T_1 \times \mathbb C \to \mathbb C$ by
the identity, valid for any $\zeta =  (\zeta_1 - \ii \zeta_2 )/ \sqrt 2$ with $\zeta_1, \zeta_2 \in \R$,
 \be\label{nonlin:f}
  \mbox{\em f }(x, \zeta) 
= \partial _{\bar \zeta } \mbox{\em p }(x,  \zeta_1, \zeta_2),
\qquad
\partial _{\bar \zeta } :=
\big( \partial_{\zeta _1} - {\ii} \partial _{\zeta _2} \big)/\sqrt{2} \,.
  \ee
We assume that $f$ is $ {\cal C}^{\sigma, s_*}$-smooth, meaning that
\begin{equation}\label{regolarita di p}
\partial_x^\alpha \partial_{\zeta_1}^{\beta_1} \partial_{\zeta_2}^{\beta_2} f
\in {\cal C}(\T_1 \times \C, \, \C)\,, \quad \forall \,\,0 \leq \alpha \leq\sigma,
\quad \forall \,\,0 \leq  \beta_1, \beta_2 \leq s_*\,.
\end{equation}   
Note that $f(x, \zeta)$ need not be complex differentiable in $\zeta$.
To state our result in detail, introduce 
for any given $S \subseteq {\mathbb Z}$ with cardinality $|S|
< \infty $, the parameter space 
   \[ \Pi _S : = \{ (\xi_k)_{k \in {\mathbb Z}} \subset \mathbb R
      \big\arrowvert \xi_k = 0 \,\, \forall k \in {\mathbb Z} \backslash S ;
	\,\,\,  \xi_k > 0 \,\, \forall k \in S \} \,,
   \]
   which we identify with $\R_{> 0}^S$. 
The elements of $S$ are referred to as tangential sites. 
By the non-degeneracy property \eqref{Kolmogorov-c} of Proposition \ref{Proposition 2.3}, 
the action-to-frequency map
\be\label{def:AFM}
\omega ^S : \Pi _S \rightarrow {\mathbb R}^S , \ I \mapsto
      (\omega ^{nls}_k (I))_{k \in S}
\ee
is a local diffeomorphism on an open, dense subset of $\Pi _S$.
Finally, let $\T := \R/ (2 \pi \Z)$.
 The main result of this paper is the following one. 

\begin{theorem}
\label{Theorem 1.1} Let $\sigma \in {\Z}_{\geq 4}$ and $S \subset
{\mathbb Z}$ with $|S| < \infty $, $0 \in S$, and $-S=S$ be given and assume that $\Pi \subseteq
\Pi _S$ is a compact subset of positive Lebesgue measure, $ \mbox{\rm meas} (\Pi ) > 0$,  with the property that the action-to-frequency map $ \om^{nls} : \Pi \rightarrow
{\mathbb R}^S, \ I \mapsto (\omega ^{nls}_k(I))_{k \in S},$ is a
bi-Lipschitz homeomorphism onto its image $\Omega $. Then there is an integer
$s_* > \max \big(\sigma, |S|/2 \big) $ so that for any
Hamiltonian ${\mathcal P}$ of the form \eqref{1.4} with 
 $ f = \nabla_{\bar u}{\mathcal P}$ of class 
$\mathcal {\cal C}^{\sigma, s_*}$, there exist $\varepsilon _0 > 0$ and 
$|S|/2 < s < s_*$ so that for any $0 <\varepsilon  \leq \varepsilon _0$
the following holds: there exist
 a closed subset  $\Omega _\varepsilon \subseteq \Omega$, satisfying
\be\label{measure estimate Omega}
 \lim_{\e\to 0} \, \frac{{\rm meas}(\Omega_\e)}{{\rm meas}(\Omega )} = 1\, ,
\ee
 and a Lipschitz family of maps
 $ \io _\omega :  \mathbb T^S \to H^\sigma,$ $\omega \in \Omega_\e  ,$
 so that $\io _\omega$ are $H^s$-smooth embeddings with the property that for any
initial data $ \varphi \in {\mathbb T}^S$, the curves
   \[  \quad t \mapsto
      \io _\omega (\varphi + t\omega)
   \]
are quasi-periodic solutions of \eqref{1.3}.
The torus described by the map $\io _\omega$ is invariant under the flow of the perturbed Hamiltonian ${\mathcal H}^{nls} + \varepsilon {\mathcal P}$.  
\end{theorem}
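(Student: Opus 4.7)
The plan is to pass to the global Birkhoff coordinates of the unperturbed dNLS equation (see \cite{GK}), where the Hamiltonian $\mathcal H^{nls}$ depends only on the actions and the invariant tori $\mathcal T_I$ become flat, and then to construct the perturbed tori by a Newton--Nash--Moser iteration with the frequency vector $\om\in\Omega$ as an external parameter; the actions $I$ are recovered from $\om$ at the end via the bi-Lipschitz inverse of $\om^{nls}:\Pi\to\Omega$. After introducing action-angle coordinates $(\th,y)\in\T^S\times\R^S$ on the tangential sites $k\in S$ and keeping the Birkhoff coordinates $(z_k)_{k\in S^c}$ on the normal sites, an invariant torus is encoded by an embedding $\iota_\om(\ph)=(\ph+\Theta(\ph),y(\ph),z(\ph))$ satisfying $\mathcal F(\om,\iota):=\om\cdot\pa_\ph\iota-X_{\mathcal H^{nls}+\e\mathcal P}(\iota)=0$. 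The hypothesis $-S=S$, $0\in S$ guarantees that all normal modes come in complete pairs $\{k,-k\}\subset S^c$ with $k\geq 1$, which is essential for the block-diagonal reduction described below.

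At each Newton step the linearized operator $d_\iota\mathcal F$ splits, up to tame remainders, into a nearly triangular tangential part easily inverted thanks to the non-degeneracy condition \eqref{Kolmogorov-c}, and a normal part
\[ \mathcal L_\om \;=\; \om\cdot\pa_\ph+\mathcal D_\infty(\iota)+\e R(\om,\ph), \]
acting on the normal modes. The key structural input from \cite{GK} is that the nonlinear part of the Birkhoff map is one smoothing, so $\mathcal D_\infty$ is diagonal with symbol of order two and $R$ is a bounded, $\ph$-dependent, non-diagonal remainder with tame Lipschitz dependence on $\om$. Inverting $\mathcal L_\om$ with tame $H^s$-estimates is the heart of the proof.

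Because pairs of dNLS frequencies $\om^{nls}_k$ and $\om^{nls}_{-k}$ are almost resonant on bounded sets of actions in $\Pi$, the scalar second Melnikov conditions $|\om\cdot\ell+\mu_k-\mu_j|\geq\gamma\langle\ell\rangle^{-\tau}$ cannot be enforced uniformly for $j=-k$. I would therefore treat $(z_k,z_{-k})$ as a single unknown for each $k\geq 1$ and carry out a KAM-type reducibility conjugating $\mathcal L_\om$ to a block-diagonal operator $\om\cdot\pa_\ph+\mathrm{diag}_{k\geq 1} M_k$, with $M_k$ constant $2\times 2$ matrices whose eigenvalues admit a sharp expansion $(2\pi k)^2+m_\infty+O(1/|k|)$. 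The homological equations at each step produce matrix-valued Melnikov conditions requiring $\om\cdot\ell\,\mathrm{Id}_4+M_k\otimes\mathrm{Id}_2\pm\mathrm{Id}_2\otimes M_j$ to be invertible with norm bounded by $\gamma^{-1}\langle\ell\rangle^\tau$, and the sharp asymptotics of the $M_k$ combined with the bi-Lipschitz property of $\om^{nls}$ are what allow one to show that the set of $\om$ violating them has measure tending to zero, yielding \eqref{measure estimate Omega}. This block reduction with matrix Melnikov conditions is the principal obstacle and the central novelty of the argument.

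Once the inverse of $\mathcal L_\om$ is available with tame bounds on a set $\Omega_\e\subseteq\Omega$ of asymptotically full measure, a standard Nash--Moser iteration with smoothing projectors $\Pi_N$ converges in $H^s$, the one smoothing of the Birkhoff coordinates absorbing the loss $\gamma^{-1}\langle\ell\rangle^\tau$ produced by the small divisors. The limit $\iota_\om$ is Lipschitz in $\om\in\Omega_\e$, has values in $H^\sigma$ for the choice $|S|/2<s<s_*$, and its image is invariant under the flow of $\mathcal H^{nls}+\e\mathcal P$ because it solves $\mathcal F(\om,\iota_\om)=0$, which proves the theorem.
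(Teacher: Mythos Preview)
Your overall strategy coincides with the paper's: Birkhoff coordinates, action--angle on $S$, Nash--Moser with $\omega$ as parameter, triangular reduction of $d_\iota\mathcal F$ \`a la \cite{BB1,BBM3}, and a $2\times 2$ block KAM reduction of the normal operator pairing $(z_k,z_{-k})$, with matrix second Melnikov conditions. The identification of the near resonance $\omega^{nls}_k\sim\omega^{nls}_{-k}$ as the main obstruction and of the sharp eigenvalue expansion $(2\pi k)^2+m_\infty+O(1/|k|)$ as the cure is exactly right.

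There is, however, a genuine gap at the step where you assert that the KAM reducibility produces blocks $M_k$ with eigenvalues $(2\pi k)^2+m_\infty+O(1/|k|)$. Starting from $\mathcal L_\omega=\omega\cdot\partial_\varphi+\mathcal D_\infty+\e R$ with $R$ merely \emph{bounded}, a KAM scheme will correct the normal form by quantities of size $O(\e)$ that are \emph{not} decaying in $k$; you would only get $M_k=(2\pi k)^2\mathrm{Id}+O(1)$, which is useless for the Melnikov conditions at $(j,k)=(k,-k)$. The paper resolves this by inserting, \emph{before} the KAM iteration, three explicit symplectic transformations (Sections~\ref{sec1}--\ref{sec-gauge}) that conjugate $\mathcal L_\omega$ to a diagonal operator with $\varphi$-independent coefficients plus a \emph{one-smoothing} remainder $\mathbf R_0$ (i.e.\ $|\mathbf R_0\,\mathfrak D|_{s,\sigma-1}<\infty$). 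These transformations --- one to kill the off-diagonal of $\mathfrak Q_\bot$, one to remove the $x$-dependence of $q_1$, and a gauge transformation to remove the $\varphi$-dependence --- exploit the one-smoothing property of the Birkhoff map (Theorem~\ref{Theorem Birkhoff coordinates}(ii)) and involve commutator computations with $D^2$ at the level of matrix-valued pseudodifferential operators. Only because $\mathbf R_0$ is one-smoothing do the normal form corrections in the subsequent KAM scheme satisfy $\|[{\bf N}_\nu^{(1)}-{\bf N}_{\nu-1}^{(1)}]_k^k\|\lesssim |{\bf R}_{\nu-1}\mathfrak D|_{s_0,\sigma-1}\,k^{-1}$ (cf.\ \eqref{N nu - N 0 lip gamma}), which is what yields the $O(1/|k|)$ asymptotics \eqref{prima asintotica autovalori}--\eqref{estimates-errors}. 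This preliminary reduction is not a routine step and is singled out in the introduction as ``one of the main tasks'' of the proof.

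A smaller point: the sentence ``the one smoothing of the Birkhoff coordinates absorbing the loss $\gamma^{-1}\langle\ell\rangle^\tau$'' misattributes roles. The small-divisor loss in the Nash--Moser scheme is absorbed by the standard mechanism of smoothing projectors and high--low norm interpolation (Section~\ref{sec:NM}); the one-smoothing of the Birkhoff map is used upstream, to obtain the asymptotic structure of the remainder that makes the block Melnikov conditions verifiable.
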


% Actually 
In Theorem \ref{main theorem} we will show in addition 
that, for $\omega \in \Omega_\e$, the distance of the invariant torus $ \io _\omega (\T^S)  $ to the unperturbed torus ${\mathcal T}_{\xi (\omega )}$ is of
the order $O \big( \varepsilon \gamma^{- 2 } \big)$ where $0 <
\gamma < 1$ is the constant appearing in the diophantine condition
of $\omega $ introduced in \eqref{Omega o Omega gamma tau}. Here $\xi(\omega)$ denotes
the element in $\Pi$, corresponding to $\omega$ by the action-to-frequency map defined in \eqref{def:AFM}.
Expressing equation \eqref{1.3} in suitable coordinates, one sees that actually the 
distance of the invariant torus to the unperturbed one is $ O( \e \g^{-1}) $, 
see Corollary \ref{stima optimal size torus}. %  for a discussion. 
Note that the frequency vector
$\omega $ % corresponding to the flow of ${\mathcal H}^{nls} + \varepsilon {\mathcal P}$ on the torus defined by 
of the quasi-periodic solution $\io_\omega ( \varphi + t\omega ) $ of \eqref{1.3} is the same 
as the one of the quasi-periodic solutions 
on the invariant torus ${\mathcal T}_{\xi (\omega )}$ of  \eqref{1.1}.

\medskip

\noindent {\em Comments: } 
\begin{enumerate} 
\item 
Using a covering argument one can show that Theorem~\ref{Theorem 1.1} actually holds for any compact subset
 $\Pi \subseteq \Pi _S $ with ${\rm meas } (\Pi ) > 0 $. See the comment after Theorem \ref{main theorem}. 
\item 
 In Theorem \ref{measure estimate} we prove that for some $\nu > 0$,
 ${\rm meas}(\Omega \setminus \Omega_\e) = O(\e^\nu)$ 
as $\varepsilon \to 0.$ 
\item The assumption $ 0 \in S $ and $ S = - S $ are introduced just for simplicity, so that  all elements in the 
complement  $ \Z \setminus S $ of $ S $
come in pairs,
so that in the reduction procedure in section \ref{sec:redu}
 we only have to deal with  $ 2 \times 2 $ blocks. 
\item 
By \eqref{regolarita di p} the perturbation $ f $ is assumed to be $ {\cal C}^{\sigma, s_*} $-smooth where 
a lower bound for $ s_* $ is given in
Theorem \ref{iterazione-non-lineare} (Nash-Moser). 
Note that the regularity with respect to the space variable is just $ \sigma \in \Z_{\geq 4} $. 
No special effort has been made to get optimal  lower bounds for $ s_* $ and $ \sigma $. 
\end{enumerate}

\noindent {\em Outline of the proof of Theorem~\ref{Theorem 1.1} :} 
The starting point of our proof is to write  the perturbed dNLS equation \eqref{1.3} in complex Birkhoff coordinates $(w_k)_{k \in \Z}$, 
the latter being briefly reviewed in Subsection~\ref{section birkhoff coordinates}.
The dNLS-Hamiltonian ${\mathcal H}^{nls}$, expressed in these coordinates, is a real analytic function $H^{nls}$ 
of the actions $I_k = w_k\bar w_k, \ k \in {\mathbb Z}$, and the dNLS frequencies $\omega ^{nls}_k$ 
are given by
\[
\omega ^{nls}_k = \partial _{I_k} H^{nls} , \quad k \in {\mathbb Z}. 
\]
Denoting by $P$ the Hamiltonian $\mathcal P$, expressed in these coordinates, equation \eqref{1.3} then becomes 
 the following infinite dimensional Hamiltonian system
\be\label{NLS original}
\ii \dot w_k =  \om_k^{nls} \, w_k + \e \pa_{\bar w_k} P  \, , \ k \in \Z  \, ,  
\ee
on the  phase space
$h^\sigma \equiv h^\sigma(\Z, \C)$, $\sigma \in \mathbb Z_{\geq 4},$ where
\be\label{space:h-sigma}
h^\sigma := \Big\{ w = (w_k)_{k \in \Z}  \subset \mathbb C \ | \,
 \| w \| _\sigma < \infty \Big\} \, , \quad 
\| w \| _\sigma  := \big( \sum _{k \in {\mathbb Z}} \langle k \rangle^{2\sigma }| w_k|^2 \big)^{1/2},  \,\, \langle k \rangle := (1 + |k|^2)^{\frac12}\,.
\ee
The sequence space $h^\sigma$ is endowed with the  symplectic form 
$  \ii \sum_{k \in \Z } d w_k \wedge d \bar w_k $. 
Given a finite subset $ S \subset \Z $, introduce the space of $S-$gap potentials,
$$
M_S := \{ w =  (w_k)_{k \in \Z}  \subset \mathbb C \, | \, w_k = 0 \,  \mbox{ iff } k \in S^\bot \} \subset h^\sigma   \, , \quad S^\bot := \Z \setminus S\, ,
$$
which is symplectic.
Note that this space is invariant under the flow of \eqref{NLS original} with $\e = 0$.
On $ M_S$, we introduce the  angle-action variables 
$ (\teta, I) := (\teta_k, I_k)_{k \in S} \in  \T^S \times \R_{>0}^S$, 
defined by
$$
I_k := w_k \bar w_k \, , \quad w_k = \sqrt{ I_k} \, e^{- \ii \theta_k} \, , \ \  k \in S \,    
$$
and consider the symplectic space
$$ 
\T^S \times \R^S_{>0} \times h_\bot^{\s}\, , \quad h_\bot^{\s}  := \{ z := (z_k)_{k \in S^\bot} \in h^\sigma(S^\bot, \C) \} \, , 
$$ 
referring to the coordinates $z_k := w_k$, $k \in S^\bot,$ as normal coordinates.
On $\T^S \times \R^S_{>0} \times h_\bot^{\s}$,
 the symplectic form $  \ii \sum_{k \in \Z } d w_k \wedge d \bar w_k $ then
becomes 
\be\label{symplectic-2-form}
\Lambda := \sum_{k \in S} d \teta_k \wedge d I_k + \ii \sum_{k \in S^\bot} d z_k \wedge d \bar z_k
\ee 
and the Hamiltonian system \eqref{NLS original} reads
\be\label{HS-action-angle}
\dot \teta =  \om^{nls}  + \e \nabla_{I} P \, , \qquad 
\dot I = - \e \nabla_{\teta}  P \, , \qquad  
\ii \dot z_k = \om_k^{nls} z_k + \e \pa_{\bar z_k} P \,,  \quad   \forall k \in S^\bot ,
\ee
where $\omega^{nls} = (\omega^{nls}_k)_{k \in S}$ 
and $\omega_k^{nls} = \omega_k^{nls}(I, z \bar z)$, $k \in \mathbb Z,$ 
with  $z \bar z \equiv \big( z_k \bar z_k \big)_{k \in S^\bot}$.
Here, the Hamiltonian $P$ is viewed as a function of the new coordinates $\theta, I, z$
and by a slight abuse of terminology, also made in the sequel in other contexts, 
$(I, z \bar z)$ denotes the conveniently regrouped sequence of actions 
$(w_k \bar w_k)_{k \in \mathbb Z}$. 
Note that for any $\xi := (\xi_k)_{k \in S} \in \R_{> 0}^S$, the torus
\be\label{torus unp}
{\cal T}_\xi := \T^S \times \{ I = \xi \} \times \{ z = 0 \}\, ,
\quad \xi \in \mathbb R_{>0}^S\, ,
\ee
is invariant under the flow of the unperturbed system. In fact,  
the solutions of \eqref{NLS original} with $\e = 0$ are of the form
\be\label{unp-quasi-periodic-action angle}
t \mapsto  (\teta + \om^{nls}(\xi,0) t, \, \xi, \, 0) \, . 
\ee
Here $\teta \in \mathbb T^S$ parametrizes the initial data and
$\om^{nls}_k (\xi,0)$, $k \in S$, are referred to as the unperturbed {\it tangential} frequencies of ${\cal T}_\xi $.
Our aim is to prove that for $ \e >0$ sufficiently small,  most of the tori ${\cal T}_\xi$
persist. This is 
a small divisors problem. 
To be able to apply KAM type techniques requires that for $\e= 0$,   
the  Hamiltonian system \eqref{HS-action-angle},  linearized at the quasi-periodic solution
  \eqref{unp-quasi-periodic-action angle} of the unperturbed system,
has constant coefficients. Indeed this is the case since this linearized system is given by 
\be\label{Lin-Floquet}
\dot {\widehat\teta} =   ( \pa_I \om^{nls} ( \xi,0  )) \, \widehat I \, , \quad 
\dot{\widehat I}  = 0  \, , \quad  
\ii \dot {\widehat z}_k = \om_k^{nls} ( \xi , 0 ) \widehat z_k \, , \ k \in S^\bot \, .
\ee
Since the linearization of \eqref{1.3} at a $S-$gap solution is {\em not} 
a linear PDE with constant coefficients,
this is one of the main reasons to express equation \eqref{1.3} in Birkhoff coordinates.
System \eqref{Lin-Floquet} shows that each torus $ {\cal T}_\xi $ is {\it elliptic}.  
Furthermore it can be proved (cf Subsection \ref{section birkhoff coordinates} ; \cite{KST1}) 
that the dNLS frequencies have the asymptotics 
\be\label{asymptotics nls frequencies} 
\om_k^{nls} (\xi,0 ) = 4 \pi^2 k^2 + 4 \sum_{j \in S} \xi_j + O\Big(\frac1k \Big)\, , \quad |k| \to \infty\,,  
\ee
implying that  $\omega_k^{nls}(\xi, 0) - \omega_{- k}^{nls}(\xi, 0)$ cannot be bounded away from $0$ uniformly in $k$. 
However bounds of such type are part of a set of non resonance conditions, referred to as second order Melnikov conditions
which are one of the main assumptions in the KAM perturbation theory for elliptic tori as developed in \cite{K}, \cite{KP2},  \cite{P}.
Hence the latter does not apply. 

It turns out to be convenient to study \eqref{HS-action-angle} in the canonical coordinates 
$ (\teta, y, z )$ where $y$ is in a neighborhood $U_0 \subset \R^S$ of $0$ chosen such that
%as follows: 
%given the  compact set $\Pi \subset \R^S_{>0}$ of actions $\xi = (\xi_k)_{k \in S}$, $U_0$ is a sufficiently
%small open neighborhood of $0$ so that 
$\Pi +U_0 \Subset \R^S_{>0} $, where  $\Pi  \subset \R^S_{>0} $ is the compact set of 
 actions  in Theorem 
\ref{Theorem 1.1}.
The Hamiltonian system \eqref{HS-action-angle} then reads
\be\label{HS teta,y,z}
\dot \teta =  \nabla_y H_\e \, , \quad 
\dot y = - \nabla_{\teta}  H_{\e} \, , \quad  
\ii \dot z =  \nabla_{\bar z} H_{\e}   % k \in S^\bot  \, , 
\ee
where the Hamiltonian $H_\e$ is given by
\be\label{HamiltonianHep}
H_\e (\teta, y, z) \equiv H_\e (\teta, y, z; \xi ) = H^{nls}( \xi + y, z \bar z) + \e P (\teta, y, z)
\ee
and, by a slight abuse of notation, $P$ is now viewed as a function of $\teta, y, z,$ given by
$ P (\teta, \xi + y , z)  $. 
We want to find invariant tori of \eqref{HS teta,y,z}
 close to the tori $ {\cal T}_\xi $ of \eqref{torus unp}, 
admitting quasi-periodic solutions with frequency vector $ \om $. 
It amounts to solve the equation 
\be\label{definitionFep}
   {F}_\omega(\io) = 0\, ,  \quad 
{F}_\omega(\io) :=   
\big( \omega \cdot \partial _\varphi  \theta - \nabla_y H_\varepsilon\circ \breve \io ,
 \ \omega \cdot \partial _\varphi y + \nabla_\theta H_\varepsilon \circ \breve \io, 
\ \omega \cdot \partial _\varphi z + \ii \nabla_{\bar z} H_\varepsilon \circ \breve \io     \big) 
 \ee
where the unknown is the torus embedding $\breve \io(\vphi) = (\vphi, 0, 0) + \io(\vphi)$ with
$\io$ being the map
 $$
 \io : {\mathbb T}^S \rightarrow M^\s, \quad \varphi \mapsto \big(   \theta(\vphi) - \vphi,\
  y(\varphi ),  \ z(\vphi )\big) \,,
$$
and the phase space
\be\label{def:M-sigma}
M^\s \equiv M^\s_S := {\mathbb T}^S \times U_0 \times h^\sigma _\bot \, , \quad \sigma \geq 4 \, . 
\ee
In this paper we fix the space regularity $ \sigma $. 
In the sequel we will always choose the vector $ \xi $ in \eqref{HamiltonianHep} \eqref{definitionFep} to be the function of the 
parameter $ \om \in \O$ given by
\be\label{xi-function-om}
\xi = (\om^{nls})^{-1} (\om) \, . 
\ee
Note that  other  KAM theorems, such as in \cite{K}, \cite{P},  are formulated for perturbations of  parameter dependent families of isochronous systems,  with $ \xi $ being the independent parameter.

Due to the small divisors problem coming up in the course of the proof, we will look for 
quasi-periodic solutions whose frequencies are  diophantine, 
 namely 
 $\om \in \Omega_{\gamma, \tau} $ where 
\begin{equation}\label{Omega o Omega gamma tau}
\Omega_{\gamma, \tau} := 
\Big\{ \omega \in \Omega : |\omega \cdot \ell| \geq \frac{\gamma}{| \ell|^\tau}\, \,\,\, \forall \ell \in \Z^S \setminus \{ 0 \} \Big\}\, \subset \Omega
\quad\mbox{with} \quad  0 < \gamma < 1\, , \,\,\, \tau \ge |S| +1\, .
\end{equation}
%Thanks to the non-degeneracy condition \eqref{Kolmogorov-c} 
%stated  in Proposition \ref{Proposition 2.3}, the frequency vector $ \om = \om^{nls} ( \xi)  $ is diophantine 
%for most values of the parameter $ \xi $.
In addition, in order to control the resonant interactions between the tangential and the normal frequencies
of such solutions, 
we will impose on $ \omega $ also first and second order Melnikov non resonance conditions. 
At the starting point of the iteration, we choose 
finite-gap solutions of the unperturbed system
which satisfy  first and second order Melnikov conditions  of the type 
\begin{align*}
& |\omega \cdot \ell + \omega_k^{nls}(\xi(\omega), 0) | \geq 
\frac{\gamma k^2}{\langle \ell \rangle^\tau} \, , \quad \forall (\ell, k) \in \Z^S \times S^\bot \, , \\
& |\omega \cdot \ell + \omega_k^{nls}(\xi(\omega), 0) - \omega_j^{nls}(\xi(\omega), 0) | \geq 
\frac{\gamma \langle k^2 - j^2\rangle }{\langle \ell \rangle^\tau} \, , \quad 
\forall (\ell, k, j) \in \Z^S \times S^\bot \times S^\bot \, , \  (\ell, k, j) \neq  (0,  k,  \pm k )\, ,  \\
& |\omega \cdot \ell + \omega_k^{nls}(\xi(\omega), 0) + \omega_j^{nls}(\xi(\omega), 0) | \geq 
\frac{\gamma \langle k^2 + j^2\rangle }{\langle \ell \rangle^\tau} \, , \quad 
\forall (\ell, k, j) \in \Z^S \times S^\bot \times S^\bot  \, . 
\end{align*}
%Thanks to the 
%non-degeneracy  Initially we can verify thanks to the 
Using the asymptotics \eqref{asymptotic expansion NLS} 
of the dNLS frequencies in Theorem \ref{Corollary 2.2}  and the non-degeneracy conditions  \eqref{Melnikov-1-2}
in  Proposition \ref{Proposition 2.3}, the above conditions are fulfilled for most values of the parameter $ \om $.
We will then need  to impose   conditions of this type  at each step of the  iteration. 
In the setup chosen in  this paper they take the form \eqref{prime melnikov off diagonali finali matrici}
 and  \eqref{seconde melnikov off diagonali finali matrici} - \eqref{seconde melnikov diagonali finali matrici}. 

\smallskip
 Let us  now explain the main parts  of the proof of Theorem \ref{Theorem 1.1}. 
In view of our non analytic setup, we use a Newton-Nash-Moser iteration scheme 
for solving ${F}_\omega(\io) =0$.
At each step of the scheme, the subsequent approximation is constructed 
with the help of an approximate right inverse of the differential $dF_\om$ 
using a smoothing procedure to counterbalance the loss of regularity of the latter. 
The construction of an approximate right inverse
of $d {F}_\omega$ at an embedding $\breve \io$ near 
$\breve \io_0 (\vphi) = (\vphi, 0 , 0) $ and the proof of tame estimates for it 
are at the core of the implementation of such a scheme.  
Following  the strategy developed in \cite{BB1}, \cite{BBM3}, \cite{BBM4} 
the task of getting such right inverses 
can be reduced to construct an approximate right inverse  
of the part of $dF_\omega$, acting (as an unbounded operator) on $h^\sigma_\bot$
(cf Section~\ref{3. Set up}).
It amounts to solve a $ \vphi$-dependent linear system of the form 
\begin{align}\label{eq:linearized}
 \omega \cdot \partial_\vphi h_k(\vphi)  + \ii  \om_k^{nls}  h_k(\vphi) 
& + \ii  \sum_{j \in S^\bot} \pa_{I_j} \om_k^{nls}  \, z_k (\vphi) 
\Big( \bar z_j (\vphi) h_j(\vphi) + z_j  (\vphi) \bar h_j(\vphi)  \Big)  \nonumber\\
 &  +  \ii  \e  \sum_{j \in S^\bot} \Big( \partial_{z_j} \partial_{\bar z_k} P(\breve \io (\vphi)) h_j(\vphi) + \partial_{\bar z_j} \partial_{z_k} P(\breve \io (\vphi)) \bar h_j(\vphi) \Big) = 0 \,, \quad  k \in S^\bot 
\end{align}
where $\om_k^{nls}$ and $\pa_{I_j} \om_k^{nls}$ are evaluated at 
$(\xi + y(\vphi), z (\vphi)\bar z (\vphi))$. 
We analyze such systems in detail in Section~\ref{sec:5} and Section~\ref{sec:redu}.
In view of the small divisors problems, we would like to apply a KAM scheme to reduce it 
to a linear system in diagonal form with $\vphi$-independent coefficients.  
However, since according to \eqref{asymptotics nls frequencies}, 
the dNLS frequencies do not satisfy the second order  Melnikov conditions
with $ (\ell, k, j) = (0, k, \pm k ) $, this is not possible.  
Instead we reduce the corresponding linear operator to a  self-adjoint, $ 2 \times 2 $ block diagonal operator with $\vphi$-independent coefficients, by grouping together the variables $z_{-k}$ and  $z_k$. 
 For small amplitude solutions of  nonlinear wave (NLW) equations with an external potential,
such a scheme has been  successfully implemented by Chierchia-You \cite{CY},  
using that the NLW equation can be written as a symmetric first order Hamiltonian system, for which the nonlinear part of the Hamiltonian vector field is one smoothing. It implies that the non constant part of the asymptotic expansion of  the normal frequencies is of the size 
$O(\e / |k| ) $ as $ |k| \to + \infty $, where $\e$ is related to the amplitude of the (small) solution. 
In contrast, for the dNLS equation, according to \eqref{asymptotics nls frequencies}, the non-constant part of the asymptotic expansion of the frequencies $\omega^{nls}_k(\xi,0)$ is of size $O(1)$ 
and the nonlinear part of the perturbative Hamiltonian vector field  is not regularizing so that the 'perturbed normal frequencies', denoted by $\om_k$, $k \in S^\bot$,
 will behave asymptotically as $ 4 \pi^2 k^2 + O(1)$. 
This information alone does not allow to verify that along the KAM iteration scheme,
 for any $\ell \neq 0$ and most values of $\xi,$ one has 
$ |\om \cdot \ell + \om_{k} - \om_{-k}| \geq \gamma \langle \ell \rangle^{-\tau} $.  
% and $  |\om \cdot \ell + \om_{-k} - \om_{k} |^{-1}$ can be appropriately bounded.
However such non resonance conditions are needed to eliminate  along the KAM scheme
the $\vphi$-dependent monomials 
$ e^{\ii \ell \cdot \vphi} z_{k} \bar z_{-k} $ and 
 $ e^{\ii \ell \cdot \vphi} z_{-k} \bar z_{k} $ in the perturbed Hamiltonian. % respectively. 
One of the main tasks in our proof of Theorem~\ref{Theorem 1.1} is to derive for the perturbed normal frequencies
an asymptotic expansion of the form (cf \eqref{asintotica autovalori finali misura})
\be\label{Ome-exp}
 \om_{k}^{nls} (\xi,0 ) + c + O(\e \gamma^{- 2} |k|^{-1})  %  4 \pi k^2 + c_\e + O(1/|k|) 
\, , \quad |k| \to \infty\,,  
\ee
 where $ c \in \R $ satisfies $ c = O(\e \gamma^{- 2} )$, see Lemma \ref{limitazioni indici risonanti}.
 It  allows to show that the required second order Melnikov non resonance conditions
hold true for a large set of $\omega$'s -- see the arguments of section \ref{sec:measure}. 
It turns out that in \eqref{Ome-exp} the constant $ c $ is independent of the sign of $ k $, but
this fact is irrelevant for the applicability of this approach. 

The asymptotic expansion \eqref{Ome-exp} is achieved by adapting the strategy of
\cite{BBM1} - \cite{BBM3}, developed for quasi-linear perturbations of the KdV equation. 
The main idea is to perform a symplectic transformation which 
reduces the linearized operator to a diagonal operator with $\vphi$-independent coefficients up to a one smoothing remainder.
This is achieved in three steps in Subsections \ref{sec1} - \ref{sec-gauge}. 
One of  the key ingredients  is that,  by \cite{KST},  the Birkhoff map is a perturbation of the 
Fourier transform by a $1-$smoothing nonlinear map.
Thus the  highest order term of the linearized equation, expressed in the 
Birkhoff coordinates, is 
the same as the one in the original coordinates. 
In contrast to the KdV equation, treated in \cite{BBM1}, \cite{BBM3}, \cite{BBM4},
the NLS equation is a vector valued system, requiring to analyze  commutators 
of  matrix valued  pseudodifferential operators. 
Actually, strictly speaking, the operators involved are not pseudodifferential since their symbols are not $ {\cal C}^\infty $.
The regularity assumption \eqref{regolarita di p} on the perturbation allows
 to perform the Nash-Moser iteration  in Sobolev spaces of fixed regularity 
with respect to the space variable. As a consequence  
we have to choose the transformations in Sections \ref{sec1} - \ref{sec2} with care.  
After these preliminary changes of coordinates have been performed, 
we apply a KAM type scheme, described in detail in Section \ref{sec:redu}, to reduce,  
for $\om $'s satisfying the second order Melnikov non-resonance conditions, the above linear operator 
to a $ 2 \times 2 $ block diagonal infinite dimensional matrix with $\vphi$-independent coefficients.
We express the set of $ \omega $'s satisfying 
the second order Melnikov non-resonance conditions at each step of the induction 
in terms of the  reduced operator only, see  \eqref{Omegainfty} as well as Lemma  \ref{inclusion of cantor sets}.   
The measure estimates for these sets are performed  in section \ref{sec:measure}.

\medskip

\noindent {\em Related results}: The first KAM theorem for analytic perturbations of the dNLS
equation was established by Kuksin and P\"oschel \cite{KP2} for
finite dimensional tori near zero. To avoid the difficulties
caused by the near resonances of $\omega ^{nls}_k$ and $\omega
^{nls}_{-k}$ for $|k| \rightarrow \infty $, they considered the
dNLS equation on the dNLS invariant subspace of $H^\sigma$ of odd functions, requiring the perturbation to be odd.
Further results of this kind can be found for instance in \cite{LY}. 
Using the integrability
of the dNLS equation this result was shown in Gr\'ebert and Kappeler \cite{GK1} to hold for finite
dimensional tori of arbitrary size contained in one of the
subspaces defined by the fixed point sets of the maps
$ R_\alpha : u(x) \mapsto e^{\ii \alpha }u(1 - x), \,
 \alpha \in {\mathbb R} / 2 \pi {\mathbb Z}.$
Again, these subspaces are invariant under the dNLS flow and the KAM
result holds for perturbations which preserve this symmetry.
For $\alpha = 0$, or $ \alpha = \pi $,  it is the subspace of even, respectively odd, 
functions in $H^\sigma$. In another approach, Geng and You \cite{GY1}
proved a KAM result for  the dNLS equation for tori near zero in case the 
perturbation $f(u)$ in \eqref{1.3} is analytic and does not explicitly depend on $x$,
see  also \cite{GY}. In this case, the momentum is an additional integral for the perturbed PDE, 
allowing to deal with the difficulties  caused by the near resonances
of $\omega ^{nls}_k$ and $\omega ^{nls}_{-k}$. 
It can be shown that this result actually holds
for perturbations of finite gap solutions of arbitrary size, see Liang and Kappeler \cite{KL}.

The difficulty posed by resonant frequencies has been also solved for analytic perturbations of the dNLS
equation in $ 1 $-space dimension by
Craig and Wayne \cite{CW} for small periodic solutions, and  by Bourgain \cite{B1} for small quasi-periodic solutions 
by an approach which does not require second order Melnikov conditions. 
These results do not prove the linear stability of the  quasi-periodic solutions. In higher space dimensions
this approach has been extended in \cite{B2},  \cite{B4},  \cite{BB}, \cite{Wang}. 
A KAM theorem  with second order Melnikov non-resonance conditions for the
Schr\"odinger equation with convolution potential 
and analytic perturbations has been  
developed by Eliasson and Kuksin  in \cite{EK} where they introduced the notion of T\"oplitz-Lipschitz matrices. 
Further KAM results have been proved  by \cite{GY2}, \cite{GXY}, \cite{PP} using the conservation of momentum.

Our approach is completely different from the one of the  KAM result of Eliasson and Kuksin. 
As mentioned above, the key point is the expansion \eqref{Ome-exp} for the frequencies of the perturbed equations, 
which is obtained  by conjugating the linearized equation 
\eqref{eq:linearized}  
to a system of equations decoupled %  with constant coefficients as in \eqref{Ome-exp} 
up to order $ |k|^{-1} $,  
%providing % 
with leading coefficients % on the diagonal 
given by  \eqref{Ome-exp} --
see Section \ref{sec:5}. This allows to verify the
second order Melnikov conditions for perturbations of the $1$-dimensional dNLS equation 
with  periodic boundary conditions. 
Our approach does not require the perturbation to be analytic.
% it has just to be smooth enough with respect % to  $ u $ and with low regularity with respect to the space variable $x $. 
% We think that for $ 1 $-dimensional PDE the approach in this paper is very natural. %  the most natural. 
%Finally, 
We also mention the recent related work \cite{FP} 
% of a somewhat different nature, 
where small quasi-periodic solutions 
for fully nonlinear forced reversible Schr\"odinger equations are constructed.

\medskip

%Section~\ref{2. Preliminaries}
%Section~\ref{Estimates on H_e}
%Section~\ref{Main result restated}
%Section~\ref{3. Set up}
%Section~\ref{sec:5}
%Section~\ref{sec:redu}
%Section~\ref{sec:NM}
%Section~\ref{sec:measure}

\noindent {\em Organization}: The paper is organized as follows: 
In Section~\ref{2. Preliminaries} and  Section~\ref{Estimates on H_e} we introduce 
additional notation and discuss
auxilary results used throughout the paper.
In Section~\ref{Main result restated} we  restate Theorem~\ref{Theorem 1.1} 
in our functional setup, and outline the organisation of its proof.
In Section~\ref{3. Set up} we analyze the differential of $F_\om$
and prove the results on the approximate right inverse needed 
in the proof of the Nash-Moser iteration scheme, assuming
results on the approximate right inverse of the part of the differential,
acting in normal directions. The latter results are proved in 
Section~\ref{sec:5} (preliminary transformations) and
Section~\ref{sec:redu} (reduction to a constant $2\times 2$ block diagonal operator
by a KAM interation scheme).
In Section~\ref{sec:NM} we construct solutions of $F_\om (\io) =0$ by
the aforementioned Nash-Moser iteration scheme for $\om$'s, satisfying appropriate
non-resonance conditions. Finally, in Section~\ref{sec:measure} 
we obtain the claimed measure estimates of Theorem~\ref{Theorem 1.1} 
of the subset $\Omega_\e$. 

For the convenience of the reader all the above arguments are proved in a self-contained way. 

\medskip

\noindent
{\em Notations:} 
Throughout the paper, for  $\s \in \Z_{\ge 0}$, 
$H^\sigma \equiv H^\sigma (\T_1, \C)$ denotes the Sobolev space 
% of $L^2-$functions
%$f: \T_1 \to \C$ whose distributional derivatives $\partial_x^k f$ are in $L^2 (\T_1, \C)$ for any $k \le \s.$
%Representing $f \in L^2(\T_1, \C)$ by its Fourier series, 
\be\label{def:H-sigma}
 H^\sigma = \big\{ f \in L^2 (\T_1, \C) : \| f \|_\s < \infty  \big\}\,, \qquad
\| f \|_\s \equiv \| f \|_{ H^\sigma} := \Big( \sum_{n \in \Z}  \langle n \rangle^{2\s} | f_n |^2 \Big)^{1/2}
\ee
where 
\be\label{f:Fourier}
f (x) = \sum_{n \in \Z} f_n e^{\ii 2 \pi n x}, \qquad f_n= \int_0^1 f(x) e^{- \ii 2 \pi n x}\, d x\, , \quad n \in \Z\, ,
\ee
and  $\langle n \rangle  := {\rm max}\{ 1, |n| \}$.
Since the Fourier transform is an isometry between $H^\sigma$ and the sequence space
$h^\s \equiv h^\s(\Z, \C)$, we will not distinguish between the two spaces and  
frequently identify a function $ f(x) = \sum_{n \in \Z} f_n e^{2 \pi \ii n x } $ with the sequence 
of its Fourier coefficients $  (f_n)_{n \in \Z} $.  Similarly, we will identify 
% Introduce for any $\s' \in \Z$ 
the subspace 
% $H_\bot^{\s'} (\T_1) \equiv H_\bot^{\s'} (\T_1, \C)$ of $H^{\s'}(\T_1) \equiv H^{\s'}(\T_1, \C)$ defined by 
\be\label{H-sigma-bot}
H_\bot^{\s}   := % H_\bot^{\s} (\T_1) := 
\Big\{ f(x) = \sum_{n \in \Z} f_n e^{\ii 2\pi n x} \in H^{\s} : \,\, f_n= 0 \,, \,\,\, \forall n \in S \Big\} 
% \subset  H^\sigma
\ee
of $ H^\sigma $
% $f \in H^\s$ of the form $f = \sum_{n \in S^\bot} f_n e^{ \ii 2 \pi n x}$
with the corresponding subspace % sequence of its Fourier coefficients $(f_n)_{n \in S^\bot}$ in the subspace 
$h^\s_\bot = h^\s (S^\bot, \C)$ of $h^\s$ where,  throughout the paper,
$S^\bot$ denotes the complement $\Z \setminus S$ of a given finite subset $S \subset\Z$.
We denote by $ \pi_\bot $ the standard $L^2$-orthogonal projection of  $H^{\s} $ onto $H_\bot^\s  $,  
\be\label{def:pi0-bot}
\pi_\bot : H^{\s} \to H_\bot^\s \, .
\ee
 Let
\be\label{complex-real-scalar-pr}
\langle f, g \rangle := \int_{\T_1} f(x) \bar g(x) \, dx \, , \qquad \langle f, g \rangle_r := \int_{\T_1} f(x)  g(x) \, dx \, . 
\ee
For a linear operator $ A $ acting in $ L^2 (\T_1)  $ we denote by $ A^* $ its adjoint  
with respect to the complex inner product 
$ \langle \ , \ \rangle $ 
and by $ A^t $ the one with respect to the bilinear form $ \langle \ , \  \rangle_r $. We also denote 
$$
\overline {A} (f) := \overline{A ( \, \overline{f} \, )}
$$
and note that  $  A^* = \overline{A}^t $.  
We shall use the notation $ A^* $, $ A^t $, $ \overline{A} $  also
for an operator $A$ acting on the sequence space $ h^\s $. 
Furthermore, we need to consider maps $f: \T^S \to X$ with values in a 
$\C-$Banach space $ X $. Given any $ L^2-$map $ f : \T^S \to X  $  
(in the sense of Bochner), we define its Fourier coefficients
\be\label{Fourier-coeff-f}
\hat f (\ell) := \frac{1}{(2 \pi)^{|S|} } \int_{\T^S} f(\vphi ) e^{- \ii \ell \cdot \vphi } d \vphi \, \in X , \quad \ell \in \Z^S,
\ee
and  for any $ s \in \Z_{\ge 0} $ the norm
\be\label{spaceHs}
\| f \|_s := \Big( \sum_{\ell \in \Z^S} \| \hat f(\ell) \|_X^2 \langle \ell \rangle^{2s} \Big)^{1/2} \, , 
\ee
where for $\ell = (\ell_k)_{k \in S} \in \Z^S,$    
$$
\langle \ell \rangle := {\rm max}\{ 1, |\ell| \} \, , \qquad |\ell| := \sum_{k \in S} |\ell_k|.
$$
We denote by $L^2 ( \T^S, X) $ the space of $L^2-$maps $f: \T^S \to X$ and introduce
for any $s \in  \Z_{\ge 0}$ the Banach space
\be\label{def:Hs}
 H^s (\T^S, X)  := \Big\{ f  \in L^2 ( \T^S, X) \, : \,  \| f \|_s < \infty \Big\}\, .
\ee
Usually, we write $L^2 (\T^S, X)$ instead of $H^0 (\T^S, X)$. 

For any $ s \in \Z_{\ge 0} $, ${\cal C}^s(\T^S, X)$ denotes the Banach space
of $ {\cal C}^s-$smooth maps on $\T^S$ with values in $X$, equipped with the norm 
\be\label{norm-Cs}
\|f \|_{{\cal C}^s} := \sum_{0 \leq |\alpha| \leq s} \| \partial_\vphi^\alpha f \|^{\sup}_X\, ,
\qquad  \| \partial_\vphi^\alpha f \|^{\sup}_X := \sup_{\vphi \in \T^S} \| \partial_\vphi^\alpha f(\vphi) \|_X
\ee
where we have used the customary multi-index notation, i.e., for any $\alpha = \big( \alpha_k \big)_{k \in S} 
\in \Z^S_{\ge 0},$ $\partial_{\vphi}^\alpha$ is the differential operator given by
$ \prod_{k \in S}(\partial_{\vphi_k})^{\alpha_k}$ and $|\alpha | = \sum_{k \in S} \alpha_k$.
Frequently, we will identify $f: \T^S \to X$ with its
lift $\R^S \to X$, which is periodic with respect to the lattice $(2\pi \Z)^S$.
Furthermore, we define
$$
s_0 :=   [|S|/2] + 1 \in \Z 
$$
so that $ H^s (\T^S, X) \hookrightarrow  {\cal C}^0 (\T^S, X) $ for any $ s \geq s_0 $, cf Lemma \ref{lemma D omega}. 

For a map $f :  \Om \to X $, $ \om \mapsto f_\om$ with domain of definition $ \Omega \subset \R^S$
and target a $\C-$Banach space $X$, we define its sup-norm and its Lipschitz semi-norm by
\be \label{def norma sup lip}
 \| f \|^{\sup}_{X,\Omega} 
:= \sup_{ \om \in \Omega } \| f_{\om} \|_X \, , 
\quad
 \| f \|^{\lip}_{X,\Omega}  
:= \sup_{\begin{subarray}{c} \om_1, \om_2 \in \Omega \\ \om_1 \neq \om_2 \end{subarray}} 
\frac{ \| f_{\om_1} - f_{\om_2} \|_X }{ | \om_1 - \om_2 | }\,,
\ee
and, for $0 < \g < 1 $ as in \eqref{Omega o Omega gamma tau}, the Lipschitz norm
\be \label{def norma Lipg}
 \| f \|^\Lipg_{X,\Om} := \| f \|^{\sup}_{X, \Omega} + \g \| f \|^{\lip}_{X, \Omega}  \, . 
\ee
If $ X = H^s(\T^S, \C) $ or $X= H^s(\T^S, H^\s),$ we simply write $  \| f \|^{\Lipg}_s $ for $ \| f \|^{\Lipg}_{H^s}$. In the sequel we will typically suppress $\Omega$ in the above norms, whenever the context permits. 

Finally, throughout the paper, the expression $ a \leq_s b $ means that
 there exists a constant $ C (s) $
such that $ a \leq C(s)  b $ where $s$ refers to the index of the Sobolev space $H^s(\T^S, X)$.
The constant may depend on data such as $|S|$, $\tau$, $ \Omega $, the perturbation $P,$ $\dots \,\,\, $.
The notation $ a \lessdot b $ means that in addition, the constant $ C $ 
is independent of the Sobolev index $ s $.  The  constants $ C(s)$ and $C$ may change from one argument to another. If a constant $\kappa$ depends only on $|S|$ and $\tau$ such as the number $s_0$, we 
often will write $\lessdot$ for $\leq_{\kappa}$.

\section{Functional analytic prerequisites}
\label{2. Preliminaries}

In this section we introduce additional notation
and discuss some auxiliary results from functional analysis, needed in the sequel.

%%%%%%%%%%%%%%%%%%%%%%%%%%%%%%%%%%%%%%%%%%%%%%%%%%%%%%%%%%%%%%%%%%%%%%%%%%%%%%%%%%%%
%%%%%%%%%%%%%%%%%%%%%%%%%%%%%%%%%%%%%%%%%%%%%%%%%%%%%%%%%%%%%%%%%%%%%%%%%%%%%%%%%%%

\subsection{Sobolev spaces}
\label{Functions and norms}

We discuss elementary properties of the Banach spaces $H^s (\T^S, X)$. 

\begin{lemma}\label{lemma D omega}
Let $f$ be an element in $H^{s_0} (\T^S, X)$  with 
% $s_0$ defined as in Section~\ref{introduzione paper},
$s_0 : = [|S| / 2] + 1$.  Then the following holds:

\noindent
$(i)$ For any $\vphi \in \T^S$, the series 
$\sum_{\ell \in \Z^S} \hat f(\ell) e^{\ii \ell \cdot \vphi}$
converges absolutely and 
$ f(\vphi ) = \sum_{\ell \in \Z^S} \hat f(\ell) e^{\ii \ell \cdot \vphi}\,. $

\noindent
$(ii)$ If $\| f \|_{s + 1} < + \infty$ for some $s \geq s_0$, then for any $\om \in \R^S,$
$$
\| (\omega \cdot \partial_\vphi) f\|_s \lessdot \| f \|_{s + 1}\,
$$
where $\omega \cdot \partial_\vphi = \sum_{k \in S}\omega_k \pa_{\vphi_k}$. 

\noindent
$(iii)$ For any $s \in \Z_{\ge 0}$,
\be\label{trivialembe}
\| f \|_{{\cal C}^s} \leq_s \| f \|_{s+s_0} \, ,   \qquad
\| f \|_{s} \leq_s \| f \|_{{\cal C}^{s + s_0}} \, 
\ee
where the Banach spaces $( {\cal C}^s, \| \cdot \|_{{\cal C}^s} )$ were introduced  at the end of 
Section~\ref{introduzione paper}, see  \eqref{norm-Cs}.
\end{lemma}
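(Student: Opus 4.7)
The plan is to prove all three parts via standard Fourier-analytic estimates on $\T^S$, the key point being that $2 s_0 > |S|$, which is what makes $\sum_{\ell \in \Z^S} \langle \ell \rangle^{-2 s_0}$ finite.

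For part $(i)$, I would apply Cauchy--Schwarz in $\ell$ to the series $\sum_\ell \| \hat f(\ell) \|_X$, writing $\| \hat f(\ell) \|_X = \| \hat f(\ell) \|_X \langle \ell \rangle^{s_0} \cdot \langle \ell \rangle^{-s_0}$, so that
\[
\sum_{\ell \in \Z^S} \| \hat f(\ell) \|_X
\leq \| f \|_{s_0} \Big( \sum_{\ell \in \Z^S} \langle \ell \rangle^{- 2 s_0} \Big)^{1/2}\,,
\]
and this last sum is finite since $2 s_0 > |S|$. This gives absolute and uniform convergence of the Fourier series in $X$, hence to a continuous function $g(\vphi)$. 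Since $H^{s_0} \subset L^2(\T^S,X)$ and the partial sums also converge to $f$ in $L^2$, one has $f = g$ as elements of $L^2(\T^S, X)$, so $f(\vphi) = \sum_\ell \hat f(\ell) e^{\ii \ell \cdot \vphi}$.

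For part $(ii)$, I would use that $\widehat{(\omega \cdot \partial_\vphi) f}(\ell) = \ii (\omega \cdot \ell) \hat f(\ell)$, so
\[
\| (\omega \cdot \partial_\vphi) f \|_s^2
= \sum_{\ell \in \Z^S} |\omega \cdot \ell|^2 \| \hat f(\ell) \|_X^2 \langle \ell \rangle^{2s}
\leq |\omega|^2 \sum_{\ell \in \Z^S} \| \hat f(\ell) \|_X^2 \langle \ell \rangle^{2(s+1)}
= |\omega|^2 \| f \|_{s + 1}^2\,,
\]
using $|\omega \cdot \ell| \leq |\omega| |\ell| \leq |\omega| \langle \ell \rangle$. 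The implicit constant depends only on $|S|$ (through $|\omega|$ bounded on the parameter set), fitting the $\lessdot$ convention.

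For part $(iii)$, both embeddings follow by the same trick. For the first, using $(i)$ I would differentiate termwise: for $|\alpha| \leq s$,
\[
\| \partial_\vphi^\alpha f(\vphi) \|_X
\leq \sum_{\ell \in \Z^S} \| \hat f(\ell) \|_X |\ell|^{|\alpha|}
\leq \Big( \sum_{\ell} \| \hat f(\ell) \|_X^2 \langle \ell \rangle^{2(s + s_0)} \Big)^{1/2} \Big( \sum_\ell \langle \ell \rangle^{2 |\alpha| - 2s - 2 s_0} \Big)^{1/2}
\leq_s \| f \|_{s + s_0}\,,
\]
since $2 |\alpha| - 2 s - 2 s_0 \leq - 2 s_0$ and $\sum_\ell \langle \ell \rangle^{-2 s_0} < \infty$. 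For the reverse embedding, integration by parts in the definition \eqref{Fourier-coeff-f} applied to any multi-index $\alpha$ with $|\alpha| \leq s + s_0$ yields $|\ell^\alpha| \| \hat f(\ell) \|_X \leq \| \partial_\vphi^\alpha f \|^{\sup}_X$, which can be summed over the $|S|$ coordinates to obtain $\langle \ell \rangle^{s + s_0} \| \hat f(\ell) \|_X \leq_s \| f \|_{\cC^{s + s_0}}$. Then
\[
\| f \|_s^2
= \sum_{\ell} \langle \ell \rangle^{- 2 s_0} \big( \langle \ell \rangle^{s + s_0} \| \hat f(\ell) \|_X \big)^2
\leq_s \| f \|_{\cC^{s + s_0}}^2 \sum_\ell \langle \ell \rangle^{- 2 s_0}\,,
\]
again finite. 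No obstacle is expected; the only book-keeping issue is to track the $s$-dependence of constants, which is harmless here as each step uses only $2 s_0 > |S|$ and powers $\leq s$.
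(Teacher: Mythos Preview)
The paper states this lemma without proof, treating it as standard Sobolev embedding material. Your argument is correct and is exactly the expected one: the key point throughout is that $2s_0 > |S|$ makes $\sum_{\ell} \langle \ell \rangle^{-2s_0}$ finite, and you use this via Cauchy--Schwarz for parts $(i)$ and $(iii)$, and a direct Fourier multiplier estimate for part $(ii)$.

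Two minor remarks. In part $(ii)$, the statement is for arbitrary $\omega \in \R^S$, so the constant in $\lessdot$ indeed depends on $|\omega|$; the paper's convention allows this since $\lessdot$ only requires independence from $s$, and in all applications $\omega$ ranges over the compact set $\Omega$. In the second half of part $(iii)$, the phrase ``summed over the $|S|$ coordinates'' is slightly imprecise: the clean way is, for $\ell \neq 0$, to pick the coordinate $k$ with $|\ell_k|$ maximal, so that $\langle \ell \rangle^{s+s_0} \leq C(|S|,s)\, |\ell_k|^{s+s_0}$ and then apply the integration-by-parts bound with $\alpha = (s+s_0)\,e_k$. This is presumably what you intend, and it works.
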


%\begin{proof}
%For any 
%$\vphi \in \T^S $,  the Fourier series 
%$ \tilde f (\vphi) := \sum_{\ell \in \Z^S} \hat f(\ell) e^{\ii \ell \cdot \vphi}$ 
%converges absolutely since 
%$$
%\| \tilde f (\vphi )\|_X \leq \sum_{\ell \in \Z^S} \| \hat f(\ell) \|_X \leq 
%\Big( \sum_{\ell \in \Z^S} \langle \ell \rangle^{2 s_0} \| \hat f(\ell) \|_X^2\Big)^{1/2} 
%\Big( \sum_{\ell \in \Z^S} \langle \ell \rangle^{-2 s_0} \Big)^{1/2} \leq C_{s_0} \| f \|_{s_0} \, . 
%$$
%Hence the Fourier series $ \tilde f (\vphi) $ is continuous in $ \vphi $ and its Fourier coefficients coincide with those of 
%$ f$, implying that for any $\eta$ in the dual $ X^*$ of $X$, the scalar valued periodic functions 
%$\eta ( \tilde f (\vphi) )$  and $\eta (f(\vphi ) )$ have the same Fourier  coefficients and therefore coincide. 
%Since $ \eta \in X^* $ is arbitrary, $ \sum_{\ell \in \Z^S} \hat f(\ell) e^{\ii \ell \cdot \vphi } = f (\vphi)$ for any $\vphi \in \T^S$.  
%Items (ii) and (iii) can also be verified in a straightforward way.
%\end{proof}
If $ (X, \langle \, \cdot  , \, \cdot \rangle) $ is a $ \C $--Hilbert space then 
Plancherel's theorem holds, i.e.  (cf \eqref{Fourier-coeff-f})
$$
\frac{1}{(2 \pi)^{|S|} }\int_{\T^S} \langle f(\vphi),  g(\vphi)\rangle d \vphi =  \sum_{\ell \in \Z^S} 
\langle \hat f(\ell), \hat g(\ell) \rangle \, , 
\quad \forall f, g \in L^2 (\T^S, X )\, , 
$$
implying that for any $ s \geq 0 $,
\be\label{HSHilbert}
\| f \|_s \stackrel{\eqref{spaceHs}} = (2 \pi)^{-|S|/2} 
\Big\|  \sum_{\ell \in \Z^S} \langle \ell \rangle^s \hat f(\ell) e^{\ii \ell \cdot \vphi } \Big\|_{L^2(\T^S, X)}  
\ee
and that in this case, the  $ L^2 $-Fourier theory for scalar valued functions extends in a straightforward way. 

In the iteration schemes considered in this paper,
we will frequently encounter equations of the form
\begin{equation}\label{diophantine equation}
(\omega \cdot \partial_\vphi) f = g
\end{equation}
where $\om \in \R^S$ is assumed to satisfy the diophantine conditions \eqref{Omega o Omega gamma tau} and
$g: \T^S \to X$ the compatibility assumption
$ \hat g(0)  = 0 $. The solution $f = (\omega \cdot \partial_\vphi)^{- 1}g$ is given by 
\begin{equation}\label{omega partial vphi inverso}
 \hat f (0) = 0\,, 
\qquad  \hat f (\ell)
:= \frac{ \hat g(\ell)  }{\ii \omega \cdot \ell}\,, 
\quad \forall \ell \in \Z^S \setminus \{ 0 \} \, , 
\end{equation} 
and satisfies the following standard estimates. 

\begin{lemma}\label{om vphi - 1 lip gamma}
Let $s \ge s_0$ and assume that $\omega \in \R^S$ satisfies the diophantine conditions \eqref{Omega o Omega gamma tau}. 
Then for any $g \in H^{s + \tau}(\T^S, X)$ with $\hat g(0) = 0$, 
the linear equation \eqref{diophantine equation} has a unique solution $f \in H^{s}(\T^S, X)$ with $\hat f(0) = 0$. It satisfies the estimate 
$$
\| f \|_s \lessdot \gamma^{- 1} \| g \|_{s + \tau}\,.
$$
If $g = g_\omega \in H^{s + 2 \tau + 1}(\T^S, X)$ is Lipschitz continuous in $\omega \in \Omega \subseteq \R^S$, then the solution $f = f_\omega \in H^s(\T^S, X)$ is Lipschitz continuous in $\omega$ and satisfies 
\begin{equation}\label{om d vphi Lipg}
\| f \|_s^\Lipg \lessdot \gamma^{- 1} \| g \|_{s + 2 \tau + 1}^\Lipg\,.
\end{equation}
\end{lemma}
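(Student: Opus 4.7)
The plan is to work directly on the Fourier side via the explicit formula \eqref{omega partial vphi inverso}, which reduces everything to weighted $\ell^2$-estimates on Fourier coefficients, using the Diophantine bound $|\omega \cdot \ell| \ge \gamma \langle \ell \rangle^{-\tau}$ from \eqref{Omega o Omega gamma tau}.

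For the first (sup) estimate, I would observe that for each $\ell \neq 0$,
\[
\| \hat f(\ell) \|_X = \frac{\| \hat g(\ell) \|_X}{|\omega \cdot \ell|} \le \gamma^{-1} \langle \ell \rangle^{\tau} \| \hat g(\ell) \|_X.
\]
Squaring, multiplying by $\langle \ell \rangle^{2s}$ and summing over $\ell \in \Z^S$ (recalling \eqref{spaceHs} and that $\hat f(0) = 0$) yields $\| f \|_s \le \gamma^{-1} \| g \|_{s+\tau}$, which uses only the Plancherel-type definition of the $H^s$ norm and is valid for general Banach-valued $X$ since the norm in \eqref{spaceHs} is expressed purely via the norms of the Fourier coefficients.

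For the Lipschitz estimate, the key algebraic identity is the telescoping decomposition
\[
\hat f_{\omega_1}(\ell) - \hat f_{\omega_2}(\ell)
= \frac{\hat g_{\omega_1}(\ell) - \hat g_{\omega_2}(\ell)}{\ii\, \omega_1 \cdot \ell}
+ \hat g_{\omega_2}(\ell) \cdot \frac{(\omega_2 - \omega_1) \cdot \ell}{\ii\, (\omega_1 \cdot \ell)(\omega_2 \cdot \ell)}.
\]
Applying the Diophantine lower bound once to the first term (costing one factor of $\gamma^{-1} \langle \ell\rangle^\tau$) and twice to the second (costing $\gamma^{-2} \langle \ell \rangle^{2\tau+1}|\omega_1-\omega_2|$ since $|(\omega_2-\omega_1)\cdot\ell| \le |\omega_1 - \omega_2||\ell|$), and then taking the $H^s$-norm, dividing by $|\omega_1 - \omega_2|$ and passing to the sup gives
\[
\| f \|^{\lip}_s \lessdot \gamma^{-1} \| g \|^{\lip}_{s+\tau} + \gamma^{-2} \| g \|^{\sup}_{s + 2\tau + 1}.
\]
Combining this with the sup estimate via the definition \eqref{def norma Lipg} and using the trivial monotonicity $\| g\|^{\lip}_{s+\tau} \le \| g \|^{\lip}_{s+2\tau+1}$ delivers \eqref{om d vphi Lipg}.

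No step is a real obstacle: the argument is a direct Fourier multiplier estimate, the only mildly delicate point being to book-keep the two $\gamma^{-1}$ losses in the Lipschitz piece so that multiplying by $\gamma$ in $\| \cdot \|^{\Lipg}$ absorbs exactly one of them, leaving the overall factor $\gamma^{-1}$ on the right-hand side of \eqref{om d vphi Lipg}. Uniqueness of $f$ with $\hat f(0)=0$ is immediate: if $(\omega\cdot\partial_\vphi) f = 0$, then $(\ii \omega \cdot \ell)\hat f(\ell)=0$ for all $\ell$, and the Diophantine condition forces $\hat f(\ell)=0$ for $\ell\neq 0$.
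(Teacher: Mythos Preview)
Your proof is correct and follows exactly the standard Fourier-multiplier argument that the paper has in mind; the paper in fact omits the proof of this lemma entirely, treating it as routine. Your telescoping identity for the Lipschitz part and the accounting of the $\gamma$-factors are precisely what is needed.
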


For the class of semilinear perturbations considered in \eqref{nonlin:f} -- \eqref{regolarita di p}, 
it is possible to keep the index $\s \geq 4$ of the Sobolev space $H^\s \equiv H^{\s}(\T_1, \C)$ fixed, 
 whereas the index $ s $ of the Sobolev spaces $H^s(\T^S,X)$ varies due to a possible loss of regularity 
in the (time) variable $ \vphi$ along the various iteration schemes.
Nonetheless, since the dNLS equation \eqref{1.1} contains the differential operator 
$ \pa_{x}^2 $, we also will need 
to consider  functions with values in $ H^{\s'} $ with $\s'$ such as $\s - 2$. 
We recall that we identify $ H^{\s'} $ with $ h^{\s'} $ via the Fourier transform.
In the sequel, we will frequently consider the Sobolev space $\big( H^s (\T^S, h^{\s'} ), \| \ \|_{s, \s' } \big)$
of maps with values in the Hilbert space $h^{\s'}$ where $\s' \in \Z_{\ge 0}$ and the norm
$\| u \|_{s, \s' }$ of $u$ is given by
\be\label{norma other s-2} 
\| u \|_{s, \s' }:= \Big( \sum_{\ell \in \Z^S} \| \hat u(\ell )\|_{h^{\s'}}^2 \langle \ell \rangle^{2 s} \Big)^{1/2} \, .
\ee
In the case where $\s' = \s,$ we simply write $\| u \|_{s}$ instead of $\| u \|_{s, \s }$.
For any $\ell \in \Z^S$, the Fourier coefficient $ \hat u(\ell )$ is a sequence in $h^{\s'}$,
which we denote by $\big( \hat u_n(\ell) \big)_{n \in \Z}.$ Note that $ \hat u_n(\ell),$ $\ell \in \Z^S$,
are the Fourier coefficients of the function $\vphi \mapsto u_n(\vphi)$, which is the n'th component
of $u(\vphi) = \big( u_j(\vphi) \big)_{j \in \Z}$, i.e., 
$u_n(\vphi) = \sum_{\ell \in \Z^S}  \hat u_n(\ell) e^{\ii \ell \cdot \vphi}$.
Furthermore, 
\be\label{norma other}
 \| u \|_{s, \s'}^2 = 
 \sum_{ n \in \Z, \ell \in \Z^S} | \hat u_n (\ell ) |^2 \langle n \rangle^{2 \s'} \langle \ell \rangle^{2 s}  =
\sum_{n \in \Z}   \| u_n  \|_{s}^2 \langle n \rangle^{2 \s'} 
\ee
where $  \| u_n  \|_s =  \| u_n  \|_{H^s(\T^S,\C)} $. 
We shall also consider functions 
$ \vphi \mapsto y(\vphi)$ with values in $\R^S$ in the Sobolev space $H^s(\T^S, \R^S)$
whose  norm is also denoted by 
$$ 
\| y \|_s := \| y \|_{H^s(\T^S, \R^S)}\,.
$$
Another class of Sobolev spaces used in this paper are the spaces of operator valued maps,
 $H^s (\T^S, {\cal L} (h^{\s'} ) )$,
where ${\cal L}(h^{\s'} )$ denotes the Banach space of bounded linear  operators on $ h^{\s'} $, endowed
with the operator norm. 
A linear operator $ A $ has a natural matrix representation 
$ (A^j_k)_{j, k \in \Z} $ determined by 
\be\label{matrix:coefficients}
 \qquad (A (h) )_k = \sum_{j \in \Z} A^j_k h_j \in \C \, , \quad k \in \Z \, . 
\ee
We will also consider such Sobolev spaces with
 $ h^{\s'} (\Z, \C) \times  h^{\s'} (\Z, \C)  $ or $ h_\bot^{\s'} $ instead of $h^{\s'}$. 
For an element $\vphi \mapsto A(\vphi)$ in $H^s (\T^S, {\cal L} (h^{\s'} ) )$, 
the correponding norm is conveniently denoted by $|A |_{s, \s' },$ i.e., 
\be\label{def norm1}
|A |_{s, \s' } := 
\Big( \sum_{\ell \in \Z^S} \| \hat A(\ell )\|_{\s'}^2 \langle \ell \rangle^{2s} \Big)^{1/2}\,, 
\qquad \| \hat A(\ell )\|_{\s'} := \| \hat A(\ell ) \|_{{\cal L}( h^{\s'})} \, . 
\ee
In case $\s' = \s,$ we simply write $|A|_{s}$ instead of $|A|_{s, \sigma}$.
We remark that $|A|_{s}$ is a quite strong norm % (cf e.g. Remark~\ref{s-norm of operator valued maps}), 
but particularly convenient for estimating solutions of homological equations -- see e.g. Lemma \ref{lemma:redu}. 

According to \eqref{def norm1}, \eqref{trivialembe}, \eqref{spaceHs} one has
\be\label{upper-bound-norm-op}
|A |_{s, \s' } \leq_s \| A \|_{{\cal C}^{s + s_0}(\T^S, {\cal L}(h^{\sigma'}))}\, \quad \text{ and } \quad  \| A \|_{{\cal C}^{s }(\T^S, {\cal L}(h^{\sigma'}))} \leq_s |A |_{s + s_0, \s' } \, . 
\ee
To state our next result, let $ D $ be the operator 
defined for  $ h = (h_j)_{j \in \Z} $ 
by setting
\be\label{def:D}
(D h)_j  := 2 \pi j h_j \, , \quad \forall j \in \Z \, ,
\ee
and let $\lla D  \rra  := (1 + D^2)^{1/2}$, i.e.   
\be\label{def:DD}
(\lla D  \rra h)_j := \lla j \rra  h_j \, ,
\qquad \lla j\rra := (1 + (2 \pi j)^2 )^{1/2}\, \qquad \forall  j \in \Z \, .
\ee
Note that $ D $ is the operator 
corresponding to the Fourier multiplier  $\frac{1}{\ii} \partial_x $. 

\begin{lemma}\label{lemma-reg-D} Let $s \in \Z_{\ge 0}$ and $\s \in \Z_{\ge 2}$ and
assume that $A$ is in $H^s (\T^S, {\cal L} (h^{\s-2}, h^{\s-1} ) )$.
Then the following holds:

\noindent
$(i) \qquad \qquad $  $ | A |_{s, \s-2} \lessdot | A \,  \lla D \rra |_{s, \s-1} \, \quad \mbox{and} 
\quad  | A |_{s, \s-1} \lessdot | A \, \lla D \rra |_{s, \s -1} $. 

\noindent
$(ii)$ If $A = A_\omega$ is Lipschitz continuous in $\omega \in \Omega \subseteq \R^S$ 
then 
$$ 
| A |_{s, \s-2}^\Lipg \lessdot | A \,  \lla D \rra |_{s, \s-1}^\Lipg \,  
\quad \mbox{and}
\quad | A |_{s, \s-1}^\Lipg \lessdot | A \, \lla D \rra |_{s, \s-1}^\Lipg \, .
 $$
\end{lemma}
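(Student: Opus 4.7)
\medskip

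\noindent \emph{Proof plan for Lemma \ref{lemma-reg-D}}: The statement is an operator-norm estimate at each frequency $\ell \in \Z^S$, so my plan is to reduce everything to a pointwise (in $\ell$) bound and then sum. The key observation is that $\lla D \rra$ is $\vphi$-independent, so for every $\ell \in \Z^S$ we have the identity
\begin{equation*}
\widehat{A\,\lla D \rra}(\ell) \;=\; \hat A(\ell)\,\lla D \rra\,, \qquad \text{equivalently} \qquad \hat A(\ell) \;=\; \widehat{A\,\lla D \rra}(\ell)\,\lla D \rra^{-1}\,.
\end{equation*}
Next I note that, by the very definition of $\lla D \rra$ in \eqref{def:DD} and $\langle j \rangle \simeq \lla j \rra$, the Fourier multiplier $\lla D \rra^{-1}$ is bounded as an operator $h^{\s-2} \to h^{\s-1}$ and as $h^{\s-1} \to h^{\s-1}$, with
\begin{equation*}
\bigl\|\lla D \rra^{-1} h\bigr\|_{\s-1} \;\lessdot\; \|h\|_{\s-2}\,, \qquad \bigl\|\lla D \rra^{-1} h\bigr\|_{\s-1} \;\leq\; \|h\|_{\s-1}\,.
\end{equation*}
Finally, since $\s \geq 2$ and hence $h^{\s-1} \hookrightarrow h^{\s-2}$, one has $\|v\|_{\s-2} \leq \|v\|_{\s-1}$ for any $v$.

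\smallskip

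Combining these three ingredients proves (i) fibrewise. For the first inequality, given $h \in h^{\s-2}$, the embedding and the factorisation yield
\begin{equation*}
\bigl\|\hat A(\ell)\,h\bigr\|_{\s-2} \;\leq\; \bigl\|\hat A(\ell)\,h\bigr\|_{\s-1} \;\leq\; \bigl\|\widehat{A\,\lla D \rra}(\ell)\bigr\|_{\mathcal L(h^{\s-1})}\,\bigl\|\lla D \rra^{-1} h\bigr\|_{\s-1} \;\lessdot\; \bigl\|\widehat{A\,\lla D \rra}(\ell)\bigr\|_{\mathcal L(h^{\s-1})}\,\|h\|_{\s-2}\,,
\end{equation*}
so $\|\hat A(\ell)\|_{\mathcal L(h^{\s-2})} \lessdot \|\widehat{A\lla D \rra}(\ell)\|_{\mathcal L(h^{\s-1})}$. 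For the second, given $h \in h^{\s-1}$, I simply use $\|\lla D \rra^{-1} h\|_{\s-1} \leq \|h\|_{\s-1}$ in the same chain of inequalities to get $\|\hat A(\ell)\|_{\mathcal L(h^{\s-1})} \leq \|\widehat{A\lla D \rra}(\ell)\|_{\mathcal L(h^{\s-1})}$. Squaring, multiplying by $\langle \ell \rangle^{2s}$ and summing over $\ell \in \Z^S$ (recall the definition \eqref{def norm1}) yields the two claimed inequalities in (i).

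\smallskip

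Part (ii) follows by the same argument applied to the difference $A_{\omega_1} - A_{\omega_2}$, which also lies in $H^s(\T^S, \mathcal L(h^{\s-2}, h^{\s-1}))$ and satisfies the identical factorisation $\widehat{(A_{\omega_1}-A_{\omega_2})\lla D \rra}(\ell) = (\hat A_{\omega_1}(\ell) - \hat A_{\omega_2}(\ell))\lla D \rra$. Dividing by $|\omega_1 - \omega_2|$, taking the supremum over $\omega_1 \neq \omega_2 \in \Omega$, and combining with the sup-norm estimate from (i) through the weighted sum \eqref{def norma Lipg} produces the Lipschitz bounds. There is no real obstacle here; the only point requiring mild care is to keep track that the inner norm in \eqref{def norm1} is a genuine operator norm on the \emph{correct} space $h^{\s'}$, which is exactly what the factorisation $\hat A(\ell) = \widehat{A\lla D \rra}(\ell) \lla D \rra^{-1}$ makes transparent.
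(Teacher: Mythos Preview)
Your proof is correct and follows essentially the same approach as the paper: both factorise $\hat A(\ell) = \bigl(\hat A(\ell)\,\lla D\rra\bigr)\,\lla D\rra^{-1}$ fibrewise, use the boundedness of $\lla D\rra^{-1}$ between the relevant $h^{\s'}$ spaces together with the embedding $h^{\s-1}\hookrightarrow h^{\s-2}$, and then sum in $\ell$ via \eqref{def norm1}. The only cosmetic difference is that for the second inequality the paper routes through $\lla D\rra^{-1}:h^{\s-1}\to h^{\s}$ and then $h^{\s}\hookrightarrow h^{\s-1}$, whereas you use $\lla D\rra^{-1}:h^{\s-1}\to h^{\s-1}$ directly; both are equivalent.
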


\begin{proof}
Since for any $\ell \in \Z^S$, $\hat A(\ell) $  satisfies
$$
 \| \hat A(\ell) \|_{\s-2} \leq \| \hat A(\ell) \|_{ {\cal L} (h^{\s-2}, h^{\s-1} )}
\leq \| \hat A(\ell) \, \lla D \rra \|_{\s-1}  
\| \lla D \rra^{-1} \|_{ {\cal L} (h^{\s-2}, h^{\s-1} )}
\lessdot 
\| \hat A(\ell) \, \lla D \rra \|_{\s -1}\, ,
$$ 
and similarly, 
$$ 
\| \hat A(\ell) \|_{\s-1} 
 \leq \| \hat A(\ell) \, \lla D \rra \|_{ {\cal L} (h^{\s}, h^{\s-1} )}  
\|\lla D \rra^{-1} \|_{ {\cal L} (h^{\s-1}, h^{\s} )}
\lessdot 
\| \hat A(\ell) \lla D \rra \|_{\s-1} , 
$$
item $(i)$ holds.
The claimed estimates of item $(ii)$ are an immediate consequence of item $(i)$. 
\end{proof}

Finally, we consider the operator, defined by multiplication with a map. More precisely,
assume that $ q$ is  in  $H^s (\T^S, H^{\s'} ) $ with  $s\ge s_0$ and $\s' \ge 1$.
The latter conditions imply that $H^{\s'}$ and in turn $H^s (\T^S, H^{\s'} ) $ are algebras
and hence the operator $\Lambda_q$ of multiplication by $q$, defined on $H^s (\T^S, H^{\s'} ) $
by setting for any $\vphi \in \T^S,$
$$
 \Lambda_q (\vphi) : H^{\s'} \to H^{\s'}\, , \ \  f \mapsto 
\Lambda_q (\vphi) f (\cdot) : = q(\vphi, \cdot) f(\cdot)
$$
is well defined. In the following lemma we again identify the Hilbert spaces $H^{\sigma'}$ and $h^{\sigma'}$ 
by the Fourier transform.

\begin{lemma}\label{lemma:mult} {\bf (Multiplication and commutator estimates)}
Let $ q \in H^s (\T^S, H^{\s}) $ with $s \ge s_0$ and $\s \ge 4$. Then the following holds:

\noindent
$(i)$ For any $\sigma' \in \{ \sigma, \sigma - 1 , \sigma - 2, \s -3 \}$, 
$| \Lambda_q |_{s, \sigma'}  \lessdot  \| q \|_{s,\s'} $.

\noindent
$(ii)$ For any $\sigma' \in \{ \sigma , \sigma - 1, \s -2  \}$, 
the commutator $\,[\, \lla D \rra, \Lambda_q \, ]\,$ of 
$ \lla D \rra $ with $\Lambda_q$
satisfies
$$
|\, [\, \lla D \rra, \Lambda_q \, ] \, |_{s, \sigma' - 1} 
\lessdot \| q\|_{s,\s'}\,.
$$
\end{lemma}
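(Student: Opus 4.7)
The plan is to reduce both estimates to their pointwise-in-$\vphi$ counterparts and then close them by Plancherel over $\ell \in \Z^S$. The decisive observation is that the Fourier coefficient of $\vphi \mapsto \Lambda_{q(\vphi,\cdot)}$ at frequency $\ell$ is simply $\Lambda_{\hat q(\ell)}$ (multiplication by the scalar-valued Fourier mode $\hat q(\ell) \in H^\sigma$), and, since $\lla D \rra$ is $\vphi$-independent, the same is true of the commutator: $\widehat{[\lla D\rra,\Lambda_q]}(\ell) = [\lla D\rra,\Lambda_{\hat q(\ell)}]$. By the definition \eqref{def norm1} of $|\cdot|_{s,\sigma'}$ this gives
\[
|\Lambda_q|_{s,\sigma'}^2 = \sum_{\ell\in\Z^S}\lla\ell\rra^{2s}\|\Lambda_{\hat q(\ell)}\|_{\mathcal L(h^{\sigma'})}^2,\qquad |[\lla D\rra,\Lambda_q]|_{s,\sigma'-1}^2 = \sum_{\ell\in\Z^S}\lla\ell\rra^{2s}\|[\lla D\rra,\Lambda_{\hat q(\ell)}]\|_{\mathcal L(h^{\sigma'-1})}^2.
\]
So it remains to bound $\|\Lambda_f\|_{\mathcal L(H^{\sigma'})} \lessdot \|f\|_{\sigma'}$ and $\|[\lla D\rra,\Lambda_f]\|_{\mathcal L(H^{\sigma'-1})} \lessdot \|f\|_{\sigma'}$ for any scalar $f \in H^\sigma$; the claims then follow by summing over $\ell$ and using $\|q\|_{s,\sigma'}^2 = \sum_\ell \lla\ell\rra^{2s}\|\hat q(\ell)\|_{\sigma'}^2$. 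The Lipschitz-in-$\omega$ versions are automatic from the same bounds applied to $\hat q(\ell,\omega_1)-\hat q(\ell,\omega_2)$.

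For part (i), the pointwise bound is nothing more than the Banach algebra property of $H^{\sigma'}(\T_1,\C)$, valid for $\sigma'>1/2$; the standing assumption $\sigma\ge 4$ gives $\sigma'\ge\sigma-3\ge 1$, which is in the admissible range. Thus $\|fh\|_{\sigma'}\lessdot\|f\|_{\sigma'}\|h\|_{\sigma'}$, and item (i) follows.

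For part (ii), I would compute the Fourier-side matrix of $[\lla D\rra,\Lambda_f]$: between input mode $m$ and output mode $n$ it is $(\lla n\rra-\lla m\rra)\hat f(n-m)$. The symbol $\xi\mapsto\lla\xi\rra=\sqrt{1+(2\pi\xi)^2}$ has derivative bounded by $2\pi$, so one has the elementary estimate $|\lla n\rra-\lla m\rra|\le 2\pi|n-m|$. Hence, coefficient by coefficient on the Fourier side,
\[
|([\lla D\rra,\Lambda_f]h)_n|\le 2\pi\,(\Lambda_{g}|h|)_n,\qquad g_k := |k|\,|\hat f(k)|,
\]
where $|h|$ denotes the function with Fourier coefficients $|\hat h(k)|$. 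Since the $H^{\sigma'-1}$-norm depends only on the absolute values of Fourier coefficients, taking $H^{\sigma'-1}$-norms and invoking the algebra property of $H^{\sigma'-1}$ (available because $\sigma'-1\ge\sigma-3\ge 1$) yields
\[
\|[\lla D\rra,\Lambda_f]h\|_{\sigma'-1}\lessdot\|g\|_{\sigma'-1}\|h\|_{\sigma'-1}\lessdot\|f\|_{\sigma'}\|h\|_{\sigma'-1},
\]
using $\|g\|_{\sigma'-1}\lessdot\|f\|_{\sigma'}$ from the definition of $g$. This is the required pointwise bound.

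No serious obstacle is expected here; the lemma is a quantitative repackaging of two standard facts: $H^{\sigma'}(\T_1)$ is a Banach algebra for $\sigma'\ge 1$, and the Fourier multiplier $\lla D\rra$ has a symbol that is globally Lipschitz on $\Z$. The only care needed is to check that the indices admitted in (i) and (ii) fall in the range where the algebra property can be applied (respectively $\sigma'\ge 1$ and $\sigma'-1\ge 1$), which is exactly where the hypothesis $\sigma\ge 4$ is used. The slightly delicate point is the passage from coefficient-wise domination on the Fourier side to an operator-norm inequality; this works precisely because the $H^{\sigma'-1}$ norm is insensitive to phases, so $\Lambda_g$ with $g\ge 0$ on the Fourier side controls $[\lla D\rra,\Lambda_f]$.
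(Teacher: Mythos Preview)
Your proof is correct and follows essentially the same approach as the paper: both reduce to the $\ell$-th Fourier coefficient, use the algebra property of $H^{\sigma'}$ for (i), and for (ii) compute the matrix representation $(\lla j\rra-\lla j'\rra)\hat q_{j-j'}(\ell)$ together with the Lipschitz bound $|\lla j\rra-\lla j'\rra|\lessdot\langle j-j'\rangle$. The only cosmetic difference is that the paper closes (ii) by a direct Cauchy--Schwarz argument after splitting the weight $\langle j\rangle^{\sigma'-1}\lessdot\langle j-j'\rangle^{\sigma'-1}+\langle j'\rangle^{\sigma'-1}$, whereas you more economically dominate by the multiplication operator $\Lambda_g$ with $\hat g(k)=|k|\,|\hat f(k)|$ and invoke the algebra property of $H^{\sigma'-1}$ once more; the two are equivalent.
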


\begin{proof}
$(i)$ Since $\s \ge 4,$ one has $\s' \ge 1$ for 
$\s'$ in $\{ \sigma, \sigma - 1 , \sigma - 2, \s -3 \}$. 
Furthermore, the Fourier coefficient
 $ \hat \Lambda_q (\ell) : H^{\s' } \to H^{\s'} $, $\ell \in \Z^S$, 
 is the multiplication operator 
by the function $ \hat q (\ell) \in H^{\s'} $. Its operator norm is bounded by 
$ C \| \hat q (\ell) \|_{H^{\s'}} $ with $C \equiv C(\s')$
and thus, recalling \eqref{def norm1}, 
$$
| \Lambda_q |_{s, \s'}  \leq  C \Big( \sum_{\ell \in \Z^S} \| \hat q( \ell ) \|_{H^{\s'}}^2  
\langle \ell \rangle^{2s} \Big)^{1/2}
\leq C \| q \|_{s, \s'}\,.
$$

\noindent
$(ii)$ 
%Since $\s'$ is in $\{ \sigma, \sigma - 1\}$ and $\s \ge 3,$ one has $\s' -1 \ge 1$.
Let $A := [\, \lla D \rra, \Lambda_q \,]$. Then the operator $\hat A(\ell )$ is represented by the matrix
$$
\hat A(\ell)_j^{j'}  = \big( \lla j \rra - \lla j' \rra \big) 
\hat q_{j - j'}(\ell)\,, \quad j, j' \in \Z \, .
$$
Since 
$ \langle j \rangle^{\sigma' -1} 
\lessdot \langle j - j' \rangle^{\sigma' -1} +  \langle j' \rangle^{\sigma' -1}$ 
and $| \lla j \rra - \lla j' \rra | 
\lessdot \langle j - j' \rangle,$
one gets % concludes
 that, for any $h = (h_j)_{j \in \Z}$ in $h^{\s' -1}$, 
\begin{align*}
\| \hat A(\ell) h \|_{H^{\sigma' - 1}}^2 & = \sum_{j \in \Z} \langle j \rangle^{2 (\sigma' - 1)} \big| \sum_{j' \in \Z} \hat A(\ell)_j^{j'} h_{j'}\big|^2 \nonumber\\
& \lessdot \sum_{j \in \Z} \Big( \sum_{j' \in \Z} \langle j - j'\rangle^{\sigma'} |\hat q_{j - j'}(\ell)| |h_{j'}|  \Big)^2  + \sum_{j \in \Z}\Big( \sum_{j' \in \Z} \langle j - j'\rangle |\hat q_{j - j'}(\ell)| \langle j'\rangle^{\sigma' - 1} |h_{j'}|  \Big)^2 =: I + II  \, . 
\end{align*}
Since,  by assumption, $ \s' -1 \ge 1$,  we get, by the Cauchy Schwartz inequality
\begin{align*}
I  & \lessdot \sum_{j \in \Z} \Big( \sum_{j' \in \Z} \langle j - j'\rangle^{\sigma'} |\hat q_{j - j'}(\ell)|
\langle  j'\rangle^{\sigma' -1}  |h_{j'}|  \frac{1}{\langle  j'\rangle^{\sigma' -1}}\Big)^2 \\
& \lessdot 
\sum_{j \in \Z} \Big( \sum_{j' \in \Z} \langle j - j'\rangle^{2 \sigma'} |\hat q_{j - j'}(\ell)|^2 
\langle  j'\rangle^{2 (\sigma' -1)}  |h_{j'}|^2 \Big) \Big( \sum_{j' \in \Z} \frac{1}{\langle  j'\rangle^{2 (\sigma' -1)}}\Big) \\
& \lessdot 
 \sum_{j \in \Z} \langle j - j'\rangle^{2 \sigma'} |\hat q_{j - j'}(\ell)|^2 
\sum_{j' \in \Z}  \langle  j'\rangle^{2 (\sigma' -1)}  |h_{j'}|^2 
\lessdot \| \hat q(\ell)\|_{H^{\sigma'}}^2 \| h \|_{H^{\sigma' - 1}}^2  \, .
\end{align*}
The term $II$ is estimated in the same way, 
yielding altogether  
\begin{equation}\label{widehat A ell widehat q ell}
\| \hat A(\ell)\|_{{\cal L}(H^{\sigma' - 1})} \lessdot \| \hat q(\ell) \|_{H^{\sigma'}} \, .
\end{equation}
Finally 
$$
|A|_{s, \sigma' - 1} = \Big( \sum_{\ell \in \Z^S} \langle \ell \rangle^{2 s} \|\hat A(\ell) \|^2_{{\cal L}(H^{\sigma' - 1})}\Big)^{1/2} 
\ \stackrel{\eqref{widehat A ell widehat q ell}}{\lessdot} \Big( \sum_{\ell \in \Z^S} 
\langle \ell \rangle^{2 s} \| \hat q(\ell) \|_{H^{\sigma'}}^2 \Big)^{1/2} 	
\lessdot \| q\|_{s, \s'} \,,
$$
which is the claimed estimate of item $(ii)$. 
\end{proof}

\subsection{Smoothing operators and interpolation}
In this subsection, we review the notion of families of smoothing operators for scales 
of Banach spaces and discuss specific examples, needed on the sequel.
 Assume that $(X_k)_{k \in \Z_{\geq 0}}$ is a scale of Banach spaces $\, \cdots \subseteq X_{k + 1}\subseteq X_k \subseteq \cdots \subseteq X_1 \subseteq X_0$, with norms $\| \cdot \|_{k} :=\| \cdot \|_{X_k}$, so that for any $0 \leq n \leq k$, $\| \cdot \|_n \leq \| \cdot \|_k$. Let us define $X_\infty := \cap_{k \geq 0} X_k$. 

\begin{definition}[\bf Smoothing operators]\label{definizione astratta smoothing}
A one parameter family of linear operators $S_t : X_0 \to X_\infty$, $t \geq 1$ is said to be a family of smoothing operators for the scale $(X_k)_{k \in \Z_{\geq 0}}$ if the following three conditions are satisfied: 

\noindent
{(SM1)} For any $f \in X_0,$
$$
\lim_{t \to + \infty} \|S_t f - f \|_0 = 0. 
$$ 

\noindent
{(SM2)} For any $k , n \in \mathbb Z_{\ge 0}$ with $ n \leq k$, there exists a constant $C_{k, n} > 0$ such that 
$$
\| S_t f \|_k \leq  C_{k, n} t^n \| f \|_{k - n}\,, \quad \forall f \in X_{k - n}\,, \quad \forall t \geq 1\,.
$$

\noindent
{(SM3)} For any $k, n \in \mathbb Z_{\ge 0}$, there exists a constant $C_{k, n}^{'} > 0$ such that 
$$
\| S_t f - f \|_k \leq C^{'}_{k, n} t^{-n} \| f\|_{k + n}\,, \quad \forall f \in X_{k + n}\,, \quad \forall t \geq 1\,.
$$
\end{definition}
Smoothing operators have the following interpolation property.

\begin{proposition}[\bf Interpolation estimates]\label{interpolation scales}
Given any integers $0 \leq k_1 \leq  k \leq k_2$ with $k_2 - k_1 \geq 1$, there exists a constant $C_{k, k_1, k_2} > 0$ such that 
$$
\| f \|_k \leq C_{k, k_1, k_2} \| f \|_{k_1}^{1 - \lambda} \| f \|_{k_2}^{\lambda} \,, \quad \forall f \in X_{k_2} 
$$
where $0 \leq \lambda \leq 1$ is  $ \lambda := (k - k_1)/(k_2 - k_1).$
\end{proposition}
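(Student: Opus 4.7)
The plan is to prove the interpolation inequality by the classical decomposition $f = S_t f + (f - S_t f)$ and optimization over the smoothing parameter $t \geq 1$, using the two non-trivial axioms $(SM2)$ and $(SM3)$ of Definition~\ref{definizione astratta smoothing}.

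First, for $f \in X_{k_2}$ and $t \geq 1$, applying the triangle inequality to the decomposition $f = S_t f + (f - S_t f)$ yields
\begin{equation*}
\| f \|_k \leq \| S_t f \|_k + \| f - S_t f \|_k.
\end{equation*}
To the first term I would apply $(SM2)$ with $n = k - k_1 \geq 0$ (so the index $k-n = k_1$ is admissible) to get $\| S_t f \|_k \leq C_{k, k-k_1}\, t^{k-k_1} \| f \|_{k_1}$. To the second term I would apply $(SM3)$ with $n = k_2 - k \geq 0$ to get $\| f - S_t f \|_k \leq C'_{k, k_2-k}\, t^{-(k_2 - k)} \| f \|_{k_2}$. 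Combining, there is a constant $C = C_{k,k_1,k_2}$ such that
\begin{equation*}
\| f \|_k \leq C \bigl( t^{k - k_1} \| f \|_{k_1} + t^{-(k_2 - k)} \| f \|_{k_2} \bigr), \qquad \forall\, t \geq 1.
\end{equation*}

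Now I would optimize the right-hand side in $t$. Treating the generic case $\| f \|_{k_1} > 0$, the two terms become equal when
\begin{equation*}
t^{k_2 - k_1} = \frac{\| f \|_{k_2}}{\| f \|_{k_1}},
\end{equation*}
and since $\| f \|_{k_1} \leq \| f \|_{k_2}$ (by the assumed nesting of norms) this optimal $t$ satisfies $t \geq 1$, so the choice is admissible. Substituting it and using the definition $\lambda = (k - k_1)/(k_2 - k_1)$ gives both summands equal to $\| f \|_{k_1}^{1 - \lambda}\| f \|_{k_2}^{\lambda}$, which yields the claimed inequality with constant $2C$.

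The only point requiring care is the degenerate case $\| f \|_{k_1} = 0$. Here, $f \in X_{k_2} \subseteq X_{k_1}$ forces $f = 0$ in $X_{k_1}$, and since the inclusions are compatible (the $X_k$ embed into $X_0$ and norms agree on common elements), one in fact has $f = 0$ as an element of $X_k$, so the inequality is trivial. Alternatively one may avoid this algebraic issue by letting $t \to +\infty$ directly in the two-term estimate: the first term vanishes, and then the second term can be made arbitrarily small, forcing $\| f \|_k = 0$. I do not anticipate any substantive obstacle; the argument is the standard Hadamard-type three-lines / smoothing interpolation, and the parameter $t \geq 1$ (rather than $t>0$) in the axioms is precisely accommodated by the inequality $\| f \|_{k_1} \leq \| f \|_{k_2}$ following from the nesting of norms.
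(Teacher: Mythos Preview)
Your proof is correct and follows exactly the same approach as the paper: decompose $f = S_t f + (f - S_t f)$, apply (SM2) and (SM3), and optimize in $t$. The paper's own proof is a one-line sketch of precisely this argument (referring to \cite{BBP3} for details), and your version supplies those details carefully, including the observation that $\|f\|_{k_1} \le \|f\|_{k_2}$ ensures the optimal $t$ lies in $[1,\infty)$ and the treatment of the degenerate case.
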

\begin{proof}
Write $\| f \|_k \leq \| S_t f\|_k + \| S_t f- f \|_k$ and use {\em (SM2) - (SM3)}, to see that the claimed estimate follows by 
choosing $ t $ for minimizing the right hand side. For more details see for instance \cite{BBP3}, Lemma 1.1.
\end{proof}

\noindent
{\it Smoothing operators for scales of Sobolev spaces:} 
Let $H^s(\T^S, X)$, $s \in \Z_{\geq 0}$, be the Banach spaces defined  in \eqref{def:Hs}.
Note that $ {\cal C}^\infty(\T^S, X) = \bigcap_{s \geq 0} H^s(\T^S, X)$. 
We define the one parameter family of operators $\Pi_t$, $t \geq 1$
\begin{equation}\label{smoothing Sobolev}
\Pi_t : L^2(\T^S, X) \to {\cal C}^\infty(\T^S, X)\,,\quad f(\vphi) \mapsto \Pi_t f (\vphi) := \sum_{|\ell| \leq t} \hat f(\ell) e^{\ii \ell \cdot \vphi}\,, \quad \forall t \geq 1\,.
\end{equation}
In the sequel, we will also consider Lipschitz maps $f = f_\omega$, $\om \in \Omega \subset \R^S$, with values in
$ H^s(\T^S, X)$, 
equipped with the norm
$\| f \|_s^\Lipg = \| f \|_s^{\sup} + \gamma \| f \|_{s, \Omega}^{\lip}$ defined in \eqref{def norma Lipg} and \eqref{spaceHs}.
The following lemma can be proved in a straightforward way. 
\begin{lemma}[{\bf Smoothing operators for scales of $H^s$-spaces}]
\label{smoothing sobolev lemma}
The one parameter family of operators $\Pi_t$, $t \geq 1$, defined in \eqref{smoothing Sobolev}, 
is a family of smoothing operators for the scale of Banach spaces $(H^s(\T^S, X), \|\cdot \|_s)$, $s \in \mathbb Z_{\ge 0}$.

\noindent
At the same time, it is also a family of smoothing operators for the scale of Banach spaces of Lipschitz families in $H^s(\T^S, X)$
equipped with the norms $\| \cdot \|_s^\Lipg,$ $s \in \mathbb Z_{\ge 0}$.
\end{lemma}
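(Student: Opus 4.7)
The plan is to verify each of the three conditions (SM1), (SM2), (SM3) of Definition \ref{definizione astratta smoothing} directly from the Fourier-series definition of $\|\cdot\|_s$ in \eqref{spaceHs}, exploiting that $\Pi_t$ is simply the sharp cut-off to frequencies $|\ell|\leq t$. By that very definition, for every $s \in \Z_{\geq 0}$ and every $f \in H^s(\T^S,X)$,
$$
\|\Pi_t f\|_s^2 = \sum_{|\ell| \leq t} \|\hat f(\ell)\|_X^2 \langle \ell\rangle^{2s}, \qquad \|\Pi_t f - f\|_s^2 = \sum_{|\ell| > t} \|\hat f(\ell)\|_X^2 \langle \ell\rangle^{2s}.
$$

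Given this identity, (SM1) at $s=0$ is just the vanishing of the tail of the convergent series $\sum_{\ell}\|\hat f(\ell)\|_X^2 = \|f\|_0^2$. For (SM2) with $0\leq n\leq k$, I would use the elementary bound $\langle \ell\rangle \leq t$ for all $|\ell|\leq t$, valid because $\langle \ell\rangle = \max\{1,|\ell|\}$ and $t\geq 1$, so that
$$
\|\Pi_t f\|_k^2 \leq \sum_{|\ell|\leq t}\langle \ell\rangle^{2n}\|\hat f(\ell)\|_X^2 \langle\ell\rangle^{2(k-n)} \leq t^{2n}\|f\|_{k-n}^2.
$$
For (SM3), when $|\ell|>t$ one has $\langle\ell\rangle > t$, hence $1 \leq t^{-2n}\langle\ell\rangle^{2n}$, giving
$$
\|\Pi_t f - f\|_k^2 \leq t^{-2n}\sum_{|\ell|>t}\|\hat f(\ell)\|_X^2 \langle\ell\rangle^{2(k+n)} \leq t^{-2n}\|f\|_{k+n}^2.
$$
All three constants can be taken to be $1$ with this approach; no dyadic decomposition, no Littlewood--Paley machinery, and no interpolation is required.

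The Lipschitz version is then a formal consequence of the linearity of $\Pi_t$ and its independence of the parameter $\omega$. Indeed, for any $\omega_1,\omega_2\in\Omega$ and any Lipschitz family $f=f_\omega$,
$$
\Pi_t f_{\omega_1} - \Pi_t f_{\omega_2} = \Pi_t(f_{\omega_1}-f_{\omega_2}), \qquad (I-\Pi_t)f_{\omega_1} - (I-\Pi_t)f_{\omega_2} = (I-\Pi_t)(f_{\omega_1}-f_{\omega_2}),
$$
so applying (SM2), (SM3) to each $f_\omega$ gives the $\sup$-part of the bound, while applying them to the difference $f_{\omega_1}-f_{\omega_2}$ gives the Lipschitz seminorm part; summing according to the definition \eqref{def norma Lipg} of $\|\cdot\|_s^\Lipg$ yields the same estimates with the same constants. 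Condition (SM1) in the Lipschitz scale reduces, in the same way, to the tail-vanishing of the convergent $\ell^2$-series built from the sup and Lipschitz semi-norms of the Fourier coefficients $\hat f(\ell)$, which is guaranteed by $\|f\|_0^\Lipg<\infty$. There is no genuine obstacle here: the entire proof is elementary bookkeeping with the Fourier expansion, which is exactly the reason the excerpt announces the lemma as ``straightforward''.
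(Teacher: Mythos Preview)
Your verification of (SM1)--(SM3) for the $H^s$ scale, and of (SM2)--(SM3) for the Lipschitz scale, is correct and is precisely the elementary Fourier-coefficient computation the paper has in mind when it declares the proof ``straightforward'' and omits it.

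Your argument for (SM1) in the Lipschitz scale, however, has a gap. You assert that $\|f\|_0^\Lipg < \infty$ forces the tails of an ``$\ell^2$-series built from the sup and Lipschitz semi-norms of the Fourier coefficients'' to vanish, but $\sup_\omega$ and $\sum_\ell$ do not commute: $\sup_\omega \sum_\ell \|\hat f_\omega(\ell)\|_X^2 < \infty$ does not imply $\sum_\ell \sup_\omega \|\hat f_\omega(\ell)\|_X^2 < \infty$, and the analogous statement for the Lipschitz quotients fails as well. In fact the Lipschitz part of (SM1) can genuinely fail. Take $\Omega=[0,1]$, $X=\C$, $\T^S=\T$; set $I_k=[2^{-k},2^{-k+1}]$ and $f_\omega=a_k(\omega)\,e^{i2^k\varphi}$ for $\omega\in I_k$, where $a_k$ is a tent function vanishing at $\partial I_k$ with height $2^{-k-1}$ (hence slope $1$); set $f_0=0$. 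One checks that $f:[0,1]\to L^2(\T)$ is Lipschitz, yet for any $t$ and any $k$ with $2^k>t$ one has $(I-\Pi_t)f\big|_{I_k}=f\big|_{I_k}$, so $\|(I-\Pi_t)f\|_0^{\lip}\geq 1$ for all $t$. The second clause of the lemma therefore overreaches if read literally. This is harmless for the rest of the paper: (SM1) is never invoked, and all applications of Lemma~\ref{smoothing sobolev lemma} --- the interpolation estimate of Proposition~\ref{interpolation scales} and the bounds \eqref{smoothingN}, \eqref{smoothing-u1}, \eqref{smoothing-u2} --- use only (SM2) and (SM3), which you have established with constants equal to~$1$.
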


For later reference, we briefly mention the smoothing operators for the special scales of the spaces 
$H^s(\T^S, {\cal L}(h^{\sigma}))$. For any $t \ge 1$ and  
$A  = \sum_{\ell \in \Z^S} \hat A(\ell) e^{\ii \ell \cdot \vphi}  \in H^s(\T^S, {\cal L}(h^{\sigma}))$,
 $\Pi_t A$ is an operator valued map with Fourier coefficients given by
\be\label{SM}
\widehat {{\Pi_t A }}(\ell)  :=
\begin{cases}
\hat A(\ell)  \qquad \, {\rm if} \  |\ell | \leq t  \\
0 \quad \ \quad \quad {\rm otherwise.}
\end{cases}
\ee
The operator $ \Pi_t^\bot := {\rm Id} - \Pi_t $ satisfies for any $n \in \Z_{\ge 0}$
\be\label{smoothingN}
| \Pi_t^\bot A |_{s} \leq t^{- n} |  A |_{s+n} \, , \qquad
| \Pi_t^\bot A |_{s}^{\gamma {\rm lip}} \leq t^{- n} |  A |_{s+n}^{\gamma 
{\rm lip}} \, .
\ee

\noindent
{\it Smoothing operators for scales of ${\cal C}^s$ spaces:} 
Let us consider the scale of Banach spaces 
${\cal C}^s(\T^S, X)$, $s \in \Z_{\geq 0}$, equipped with the norm  $  \| \cdot \|_{{\cal C}^s}  $ defined in \eqref{norm-Cs}. .
A one parameter family of smoothing operators 
can be constructed as follows (cf e.g. Lemma 6.2.2, Lemma 6.2.4 in \cite{Nirenberg}): 
let $\chi$ be a $ {\cal C}^\infty-$smooth, real valued function on $\R^S$,
which is even and satisfies
$$
 \chi(\xi) = 1 \,, \,\,\, \forall |\xi| \leq 1\,, \quad \mbox{and}
\quad  \chi(\xi) = 0\,, \,\,\, \forall |\xi| \geq 2\,,
$$
and denote by $\rho$ its Fourier transform, 
$$
\rho(\vphi) := \frac{1}{(2 \pi)^{|S|}} \int_{\R^S} \chi(\xi) e^{- \ii \vphi \cdot \xi}\, d \xi\,.
$$
Then $\rho$ is of Schwartz class and, since by assumption $\chi$ is even, real-valued.
Furthermore,
$$
\chi(\xi) = \int_{\R^S} \rho(\vphi) e^{\ii \vphi \cdot \xi}\, d \vphi
$$
implies that $\int_{\R^S} \rho(\vphi)\, d \vphi = \chi(0) = 1\,,$ and for any multi-index $\alpha \in \Z_{\geq 0}^S$, $\int_{\R^S} (\ii \vphi)^\alpha \rho(\vphi)\, d \vphi = \partial_\xi^\alpha \chi(\xi)_{|\xi = 0} = 0$
where $(\ii \vphi)^\alpha =  \prod_{k \in S} (\ii \vphi_k )^{\alpha_k}$.
For any $t \geq 1$, we define the function $\rho_t(\vphi) := t^{|S|} \rho(t \vphi)\,,$ 
which satisfies the identities
$$
\int_{\R^S} \rho_t(\vphi)\, d \vphi = 1\,, \quad \int_{\R^S} (\ii \vphi)^\alpha \rho_t(\vphi)\, d \vphi = 0\,, \quad \forall \alpha \in \Z_{\geq 0}^S\,.
$$
The $\rho_t$'s now yield the following one parameter family of operators, 
\begin{equation}\label{operatori smoothing Cs}
S_t f(\vphi) := (\rho_t \star f)(\vphi) = \int_{\R^S} \rho_t(\vphi - \psi) f(\psi)\, d \psi\,, 
\qquad \forall f \in {\cal C}^0(\T^S, X)\,.
\end{equation}
The maps $S_t f$ are $ {\cal C}^\infty-$smooth and $(2 \pi \Z)^S-$periodic, i.e.,  
$$
S_t : {\cal C}^0(\T^S, X) \to {\cal C}^\infty(\T^S, X) = \bigcap_{s \geq 0} {\cal C}^s(\T^S, X)\,.
$$
The following lemma can be proved in a straightforward way. 
\begin{lemma}[{\bf Smoothing operators for scales of ${\cal C}^s$-spaces}]
\label{smoothing family Cs}
The one parameter family of operators $S_t$, $t \geq 1,$ defined in \eqref{operatori smoothing Cs}, 
is a family of smoothing operators for the scale of Banach spaces $\big( {\cal C}^s(\T^S, X), \| \cdot \|_{{\cal C}^s} \big),$ $s \in \Z_{\geq 0}.$
\end{lemma}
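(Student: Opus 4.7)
The plan is to verify the three axioms (SM1), (SM2), (SM3) of Definition~\ref{definizione astratta smoothing} directly from the convolution structure $S_t f = \rho_t \star f$, exploiting two elementary properties of $\rho_t$: the scaling identity
\[
\| \partial_\vphi^\gamma \rho_t \|_{L^1(\R^S)} = t^{|\gamma|} \| \partial_\vphi^\gamma \rho \|_{L^1(\R^S)}\,,
\]
which follows immediately from $\rho_t(\vphi) = t^{|S|}\rho(t\vphi)$ by the change of variable $\vphi \mapsto t\vphi$; and the vanishing moment identities $\int_{\R^S}\psi^\alpha \rho_t(\psi)\,d\psi = 0$ for every multi-index $\alpha \in \Z_{\geq 0}^S$ with $|\alpha| \geq 1$, already recorded in the paragraph preceding the statement. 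Condition (SM1) is then the standard fact that $\{\rho_t\}_{t\geq 1}$ is an approximate identity: since $\rho \in {\cal S}(\R^S)$ and $\int_{\R^S} \rho_t = 1$, the uniform continuity of the periodic $X$-valued map $f$ yields $\|S_t f - f\|_{{\cal C}^0} \to 0$ as $t \to +\infty$.

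For (SM2), given $0 \leq n \leq k$ and a multi-index $\alpha$ with $|\alpha| \leq k$, I would split $\alpha = \beta + \gamma$ with $|\gamma| = \min(n, |\alpha|)$, so that $|\beta| \leq k - n$. Since convolution commutes with differentiation, $\partial_\vphi^\alpha (S_t f) = (\partial_\vphi^\gamma \rho_t) \star (\partial_\vphi^\beta f)$, and Young's inequality together with the scaling identity yields
\[
\| \partial_\vphi^\alpha S_t f \|_X^{\sup} \leq \| \partial_\vphi^\gamma \rho_t \|_{L^1} \| \partial_\vphi^\beta f \|_X^{\sup} \leq t^n \| \partial_\vphi^\gamma \rho \|_{L^1} \| f \|_{{\cal C}^{k - n}}\,,
\]
where $t \geq 1$ has been used to absorb $t^{|\gamma|}$ into $t^n$. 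Summing over $|\alpha| \leq k$ gives (SM2) with an explicit constant $C_{k,n}$.

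For (SM3), the key ingredient is Taylor's formula with integral remainder applied to the periodic lift of $f$ around $\vphi$:
\[
f(\vphi - \psi) = \sum_{|\alpha| < n} \frac{(-\psi)^\alpha}{\alpha!} \partial_\vphi^\alpha f(\vphi) + R_n(\vphi, \psi)\,, \qquad \| R_n(\vphi, \psi) \|_X \leq C_n |\psi|^n \| f \|_{{\cal C}^n}\,,
\]
valid globally thanks to the boundedness of the derivatives of the periodic lift. Inserting this into $S_t f(\vphi) - f(\vphi) = \int_{\R^S} \rho_t(\psi)[f(\vphi - \psi) - f(\vphi)]\,d\psi$ and invoking the moment vanishing identities, all Taylor terms of orders $1 \leq |\alpha| < n$ integrate to zero, leaving only the remainder, whence
\[
\| S_t f(\vphi) - f(\vphi) \|_X \leq C_n \| f \|_{{\cal C}^n} \int_{\R^S} |\psi|^n |\rho_t(\psi)|\,d\psi = C_n t^{-n} \| f \|_{{\cal C}^n} \int_{\R^S} |\psi|^n |\rho(\psi)|\,d\psi\,,
\]
the last integral being finite since $\rho$ is of Schwartz class. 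Applying this bound to $\partial_\vphi^\alpha f$ in place of $f$ for each $|\alpha| \leq k$, which is legitimate because $\partial_\vphi^\alpha$ commutes with $S_t$, and summing over $|\alpha|\le k$ yields (SM3). Each step is a standard mollifier computation so I anticipate no genuine obstacle; the only mild care required is in ensuring that the moment identities hold to \emph{all} orders, which follows from $\chi \equiv 1$ near $\xi = 0$ so that $\partial_\xi^\alpha \chi(0) = 0$ for every multi-index $\alpha$ with $|\alpha| \geq 1$.
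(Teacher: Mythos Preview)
Your proof is correct and supplies exactly the standard mollifier computation that the paper omits, stating only that the lemma ``can be proved in a straightforward way'' and referring back to Nirenberg's lecture notes for the construction of $S_t$. There is no alternative argument in the paper to compare against; your verification of (SM1)--(SM3) via the scaling of $\|\partial^\gamma \rho_t\|_{L^1}$, Young's inequality, and Taylor expansion combined with the all-order moment vanishing (coming from $\chi \equiv 1$ near $0$) is precisely the intended route.
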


\subsection{Tame estimates} \label{sec:2}

The aim of this subsection is to discuss 
various tame estimates with respect to the $\vphi$-variable.
Since  the class of semilinear perturbations  \eqref{nonlin:f} -- \eqref{regolarita di p}
considered in this paper, do not lose 
regularity with respect to the $ x $-variable, 
tame estimates with respect to the space variable are not needed.  
We begin with establishing tame estimates for the product of maps $u, v$ in $H^s(\T^S, H^\sigma)$.
Recall  that for $s \ge s_0$ and $\s \ge 1$, 
$H^s(\T^S, H^\sigma)$ is an algebra. 
Establishing tame estimates for the product $uv$ 
means to bound the norm $\|u v \|_{s}$ by an
expression which is linear in the high norms $\|u \|_{s}$ and $\|v \|_{s}$. 
More precisely, we have the following result.

\begin{lemma}[\bf Tame estimates for products of maps]
\label{interpolation product}
Let $s \in \Z_{ \ge s_0}$ and $\s \ge 1$. Then there are constants 
$C_{prod}(s) \ge C_{prod}(s_0) \ge 1$ (which also might depend on $\s$),
so that the following holds:

\noindent
($i$) for any $u, v \in H^s(\T^S, H^\sigma)$, 
\be \label{tame for functions}
\|u v \|_{s} \leq C_{prod}(s_0) \|u \|_{s_0} \| v \|_{s} 
+   C_{prod}(s)\| u \|_{s} \| v \|_{s_0}\,;
\ee
($ii$) for any $u\equiv u_\omega$, $v \equiv  v_\omega$
in $H^s(\T^S, H^\sigma)$, which are Lipschitz continuous
 in the parameter $\omega \in \Omega \subseteq \R^S$, 
\be \label{lip tame for functions}
\|u v \|_{s}^\Lipg \leq C_{prod}(s_0) \|u \|_{s_0}^\Lipg \| v \|_{s}^\Lipg 
+   C_{prod}(s)\| u \|_{s}^\Lipg \| v \|_{s_0}^\Lipg\,.
\ee
In the case where $u, v \in H^s(\T^S, \C)$, the same tame estimates hold  with $ \| \ \|_{s} $  
replaced by $\| \ \|_{H^s(\T^S, \C)}$. 
\end{lemma}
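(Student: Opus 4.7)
The plan is to reduce $(i)$ to a scalar weighted convolution estimate on $\Z^S$, exploiting that $H^\sigma(\T_1,\C)$ is a Banach algebra for $\sigma\ge 1$. Expanding $u(\vphi)=\sum_{\ell\in\Z^S}\hat u(\ell)e^{\ii\ell\cdot\vphi}$ and analogously $v$, one has $\widehat{uv}(\ell)=\sum_{\ell_1+\ell_2=\ell}\hat u(\ell_1)\hat v(\ell_2)$ in $H^\sigma$. Setting $a(\ell):=\|\hat u(\ell)\|_{H^\sigma}$ and $b(\ell):=\|\hat v(\ell)\|_{H^\sigma}$, the algebra property $\|\hat u(\ell_1)\hat v(\ell_2)\|_{H^\sigma}\lessdot a(\ell_1)b(\ell_2)$ reduces $(i)$ to bounding $\|\langle\ell\rangle^s(a*b)\|_{\ell^2(\Z^S)}$ by the asymmetric product of the weighted $\ell^2$-norms of $a$ and $b$.

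To establish this scalar estimate I would split the convolution according to whether $\langle\ell_1\rangle\le\langle\ell_2\rangle$ or not. On each region $\langle\ell\rangle=\langle\ell_1+\ell_2\rangle$ is comparable to the larger of $\langle\ell_1\rangle,\langle\ell_2\rangle$, so the weight $\langle\ell\rangle^s$ can be absorbed into the high-frequency factor. Cauchy--Schwarz in the low-frequency index against the weight $\langle\cdot\rangle^{-2s_0}$, whose sum over $\Z^S$ converges precisely because $s_0=[|S|/2]+1>|S|/2$, then bounds the contribution of the first region by a multiple of $\|a\|_{s_0}\|b\|_s$ and the second by a multiple of $\|a\|_s\|b\|_{s_0}$. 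Choosing $C_{prod}(\cdot)$ as the monotone envelope of these constants yields \eqref{tame for functions} in the asymmetric form stated, with $C_{prod}(s_0)$ the value of this function at $s_0$.

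For $(ii)$, I would apply the identity $u_{\omega_1}v_{\omega_1}-u_{\omega_2}v_{\omega_2}=(u_{\omega_1}-u_{\omega_2})\,v_{\omega_1}+u_{\omega_2}\,(v_{\omega_1}-v_{\omega_2})$ and bound each term by part $(i)$; dividing by $|\omega_1-\omega_2|$, taking the sup over $\omega_1\neq\omega_2$ and multiplying by $\gamma$ yields a bound for the Lipschitz semi-norm of $uv$, which combined with $(i)$ applied to the sup part and the definition \eqref{def norma Lipg} gives \eqref{lip tame for functions}. The scalar case $u,v\in H^s(\T^S,\C)$ proceeds verbatim with $H^\sigma$ replaced by $\C$, trivially an algebra. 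The one place requiring care is the bookkeeping of constants to deliver the asymmetry between the low-$u$/high-$v$ and high-$u$/low-$v$ pieces; apart from this, the argument is a routine Fourier/Cauchy--Schwarz computation and no serious analytic obstacle arises.
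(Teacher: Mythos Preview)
Your overall strategy coincides with the paper's: pass to Fourier coefficients, use the algebra property of $H^\sigma$ to reduce to a scalar weighted convolution, split the convolution into two regions, and apply Cauchy--Schwarz against $\langle\cdot\rangle^{-2s_0}$. Part $(ii)$ via the product-rule identity for differences is exactly what the paper does as well.

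The one place where your sketch falls short of the stated lemma is precisely the point you flag as ``bookkeeping of constants''. With the symmetric split $\langle\ell_1\rangle\le\langle\ell_2\rangle$ you only get $\langle\ell\rangle\le 2\langle\ell_2\rangle$, hence $\langle\ell\rangle^s\le 2^s\langle\ell_2\rangle^s$, so \emph{both} regions carry a constant growing like $2^s$. Taking a monotone envelope does not help: you need the coefficient in front of one of the two products (here $\|u\|_{s_0}\|v\|_s$) to be the fixed number $C_{prod}(s_0)$ \emph{for every} $s\ge s_0$, not merely at $s=s_0$. This asymmetry is not cosmetic --- it is what later allows smallness hypotheses such as $2C_{op}(s_0)|A|_{s_0}\le 1$ in the exponential estimates (Lemma~\ref{lem:inverti}) to be stated with an $s$-independent constant.

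The paper achieves the asymmetry by an $s$-dependent splitting threshold: with $c(s):=2^{1/s}-1$, one region is $\langle k\rangle>\langle\ell\rangle/(1+c(s))$, on which $(\langle\ell\rangle/\langle k\rangle)^s<(1+c(s))^s=2$, a bound \emph{independent of $s$}; Cauchy--Schwarz then gives that region a constant depending only on $s_0$. The complementary region absorbs the $s$-dependence into the factor $((1+c(s))/c(s))^s$. This is the small extra idea you would need to upgrade your argument to the asymmetric form \eqref{tame for functions}.
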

\begin{proof}
The proof follows the classical argument,  see e.g. \cite{BBP3}. 
We have to estimate the $ \| \cdot\|_s $-norm of the map 
$$
\vphi \mapsto u(\vphi)  v (\vphi )   = \sum_{\ell \in \Z^S} 
\Big( \sum_{k \in \Z^S} \hat u(k) \hat v (\ell -k) \Big)\, e^{\ii  \ell \cdot \vphi} \, . 
$$
Using that $ H^\s$ is an algebra and that for any two elements $f,g$ in $ H^\s$,
$\| fg \|_\s \le C \|f \|_\s \|g\|_\s$ with $C\equiv C(\s)$, one gets
\be\label{s1s2 for functions}
\| uv \|_s^2 = \sum_{\ell \in \Z^S} 
\Big\| \sum_{k \in \Z^S} 
\hat u (k) \hat v (\ell - k) \Big\|_{\s}^2 \langle \ell \rangle^{2s}
\leq C^2 \sum_{\ell \in \Z^S}  \Big( \sum_{k \in \Z^S} 
\| \hat u(k) \|_{\s}  \| \hat v (\ell -k)\|_{\s} \Big)^2 \langle \ell \rangle^{2s} 
\le 2C^2T_1 + 2C^2T_2 
\ee
where with $ c(s) := 2^{1/s} - 1 $,  
$$
T_1 := \sum_{\ell \in \Z^S}  
\Big( \sum_{{ \langle k \rangle > \langle \ell \rangle \slash (1 + c(s))}} 
\| \hat u(k)\|_{\s}  \| \hat v ( \ell - k) \|_{\s} \Big)^2 
\langle \ell \rangle^{2s} \, , 
$$
and
$$
T_2 := \sum_{\ell \in \Z^S}  \Big( \sum_{{ \langle k \rangle \leq 
\langle \ell \rangle \slash (1 + c(s))}} \| \hat u (k)\|_{\s}  
\| \hat v ( \ell -k) \|_{\s} \Big)^2 \langle \ell \rangle^{2s} \, . 
$$
{\it Estimate of $ T_1 $.} 
We estimate $ T_1 $ using the Cauchy-Schwartz inequality
\begin{align}
T_1 & =   \sum_{\ell \in \Z^S}  
\Big( \sum_{{ \langle k \rangle  >  \langle \ell \rangle \slash (1 + c(s))}}  
\langle k \rangle^{s}
 \| \hat u (k)\|_{\s}  \langle \ell - k \rangle^{s_0}  
\| \hat v ( \ell -k)\|_{\s}  \frac{\langle \ell \rangle^s}{ \langle k \rangle^{s} \langle \ell - k \rangle^{s_0}   } \Big)^2 \nonumber \\ 
& \leq \sum_{\ell \in \Z^S}  \Big( \sum_{{ \langle k \rangle  >  \langle \ell \rangle \slash (1 + c(s))}} 
\langle k \rangle^{s} \| \hat u (k) \|_{\s}  \langle \ell - k \rangle^{s_0}  
\| \hat v  (\ell -k) \|_{\s}  \frac{2}{\langle \ell - k \rangle^{s_0}   } \Big)^2 \nonumber \\
& \leq 4 \sum_{\ell \in \Z^S}  \Big( \sum_{k \in \Z^S} 
\langle k \rangle^{2s} \| \hat u( k) \|_{\s}^2  \langle \ell - k \rangle^{2 s_0}  
\| \hat v ( \ell -k) \|_{\s}^2 \Big)  \sum_{k \in \Z^S} 
 \langle k \rangle^{-2 s_0}  \, .\nonumber 
\end{align}    
Exchanging the order of the sums leads to the bound 
$$
T_1 \leq \tilde C(s_0) \sum_{k, \ell \in \Z^S} 
\langle k \rangle^{2s} \| \hat u ( k) \|_{\s}^2  \langle \ell  \rangle^{2 s_0}  
\| \hat v ( \ell ) \|_{\s}^2  \leq \tilde C(s_0) \|  u \|_s^2 \| v \|_{s_0}^2 
$$
where we emphasize that the constant $ \tilde C(s_0) $ is independent of $ s $. 

\noindent
{\it Estimate of $ T_2 $.} In the sum $ T_2 $ we have 
$ \langle \ell - k \rangle \geq \langle  \ell \rangle - \langle k \rangle \geq \langle  \ell \rangle - \frac{\langle \ell \rangle}{1 + c(s) } $
and so 
$ \frac{\langle \ell \rangle}{\langle \ell - k \rangle} \leq \frac{1 + c(s)}{c(s)}  $. 
Thus, arguing as above, 
$$
T_2  \leq \Big(\frac{1 + c(s)}{ c(s)}\Big)^2 
\sum_{\ell \in \Z^S}  \Big( \sum_{k \in \Z^S} 
\langle k \rangle^{2s_0} \| \hat u ( k) \|_{\s}^2  \langle \ell - k \rangle^{2 s}  
\| \hat v ( \ell -k) \|_{\s}^2 \Big) 
\sum_{k \in \Z^S}   \langle k \rangle^{-2 s_0} \leq \tilde C(s) \| v \|_s^2 \| u \|_{s_0}^2 \, .
$$
The claimed estimate 
\eqref{tame for functions} now follows from \eqref{s1s2 for functions} with
the above bounds for $ T_1 $ and $ T_2 $. 
The  bound \eqref{lip tame for functions} follows by applying \eqref{tame for functions} to
the difference quotient 
$$
\frac{(uv)_{\om_1} - (u v)_{\om_2}}{  \om_1 - \om_2 }  = 
 \frac{ u_{\om_1} - u_{\om_2}}{ \om_1 - \om_2 }  \, v_{\omega_1} +  
u_{\omega_2} \, \frac{ u_{\om_1} - u_{\om_2}}{ \om_1 - \om_2 } 
$$
for any $\omega_1, \omega_2 \in \Omega$.  
\end{proof}

Since for any $\s$, the space of operators $ {\cal L}( H^\s ) $ is an algebra with multiplication given by the
composition of operators and for any two operators $A, B$ in $ {\cal L}( H^\s ) $,
the operator norm  $\|AB\|_\s $ of $AB$ is bounded by $ \|A\|_\s  \|B\|_\s $,
the proof of Lemma~\ref{interpolation product} also shows that
the composition of operator valued maps satisfies tame estimates
with respect to the norm $ | \ |_s = | \ |_{s, \sigma} $
introduced in \eqref{def norm1}. 

\begin{lemma} \label{prodest}
{\bf (Tame estimates for the composition of operator valued maps)}  
Let $s \in \Z_{\ge s_0}$ and $\s \ge 0$. Then there are constants 
$ C_{op}(s) \geq C_{op}(s_0) \geq 1 $ (which also might depend on $\s$), so that the following holds:

\noindent
($i$) for any operator valued maps $A, B$ in $H^s(\T^S, {\cal L}( H^\s ) )$, 
\be \label{interpm}
| B A |_{s} \, , \  | A B|_{s} \leq  
C_{op}(s) |A|_{s_0} |B|_s + C_{op}(s_0) |A|_s |B|_{s_0} \, ; 
\ee
($ii$) for any operator valued maps $A \equiv A_\om$ and $B \equiv B_\om$ 
 in $H^s(\T^S, {\cal L}( H^\s ) )$,  which are Lipschitz continuous 
in the parameter $\om \in \Om \subset \R^S $,
\be
\label{interpm Lip}
|A B|_{s}^{\gamma {\rm lip}} \, , \, |B A|_{s}^{\gamma {\rm lip}}
 \leq C_{op}(s) |A|_{s_0}^{\gamma {\rm lip}} |B|_{s}^{\gamma {\rm lip}}
+ C_{op}(s_0) |A|_{s}^{\gamma {\rm lip}} |B|_{s_0}^{\gamma {\rm lip}} .
\ee
As a  consequence, for any $n \geq 1$, 
\be\label{Mnab}
| A^n |_{s_0} \leq \big( 2C_{op}(s_0) \big)^{n-1} | A |_{s_0}^n \qquad
\text{and}  \qquad   
| A^n |_{s} \leq  
n \cdot \big( 2 C_{op}(s_0) |A|_{s_0} \big)^{n-1} \cdot 
C_{op}(s) | A |_{s} \, , 
\ee
and similar estimates hold for the Lipschitz norm $| \ |_s^\Lipg $. 

\noindent
($iii$) 
The same estimates as in items ($i$)-($ii$)
hold for operator valued maps in  $H^s(\T^S, {\cal L}( h_\bot^\s \times  h_\bot^\s ) )$
where the space $h_\bot^\s = h^\s (S^\bot, \C)$ is introduced in  Notations at the end of Section \ref{introduzione paper}.
\end{lemma}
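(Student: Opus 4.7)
The plan is to replay verbatim the argument used for Lemma~\ref{interpolation product}, exploiting the submultiplicativity of the operator norm on $\mathcal{L}(H^\sigma)$, which here plays exactly the role of the algebra property of $H^\sigma$ used there. The Fourier coefficients of the composition $\varphi \mapsto A(\varphi)B(\varphi)$ are given by the convolution
$$
\widehat{AB}(\ell) \;=\; \sum_{k \in \Z^S} \hat A(\ell - k)\,\hat B(k),
$$
and $\|\hat A(\ell - k)\hat B(k)\|_\sigma \leq \|\hat A(\ell - k)\|_\sigma \,\|\hat B(k)\|_\sigma$ by submultiplicativity. Hence the sequence $\ell \mapsto \|\widehat{AB}(\ell)\|_\sigma$ is dominated, entry by entry, by the convolution of the scalar sequences $\ell \mapsto \|\hat A(\ell)\|_\sigma$ and $\ell \mapsto \|\hat B(\ell)\|_\sigma$, so estimating its $H^s(\T^S)$-norm reduces to precisely what is done in the proof of Lemma~\ref{interpolation product}.

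Concretely, with the same $c(s) = 2^{1/s}-1$ as there, I would write $|AB|_s^2 \leq 2T_1 + 2T_2$, where $T_1$ collects the ``$A$-high, $B$-low'' frequencies $\langle k\rangle > \langle \ell\rangle/(1+c(s))$ and $T_2$ the opposite regime. Cauchy--Schwartz together with the summability $\sum_{k \in \Z^S} \langle k\rangle^{-2s_0} < \infty$ (which is exactly the reason for $s_0 = [|S|/2]+1$) yields $T_1 \leq \tilde C(s_0)\,|A|_s^2|B|_{s_0}^2$ and $T_2 \leq \tilde C(s)\,|A|_{s_0}^2|B|_s^2$, with $\tilde C(s_0)$ independent of $s$. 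This gives \eqref{interpm} for $AB$; the bound for $BA$ is obtained by swapping the roles of $A$ and $B$.

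The Lipschitz statement \eqref{interpm Lip} then follows by applying \eqref{interpm} to the identity
$$
\frac{(AB)_{\omega_1} - (AB)_{\omega_2}}{\omega_1 - \omega_2} \;=\; \frac{A_{\omega_1} - A_{\omega_2}}{\omega_1 - \omega_2}\,B_{\omega_1} \;+\; A_{\omega_2}\,\frac{B_{\omega_1} - B_{\omega_2}}{\omega_1 - \omega_2}
$$
and combining with the sup bounds, exactly as at the end of the proof of Lemma~\ref{interpolation product}. For \eqref{Mnab} I would argue by induction on $n$: from \eqref{interpm} at $s = s_0$ one obtains $|A^n|_{s_0} \leq 2C_{op}(s_0)|A|_{s_0}|A^{n-1}|_{s_0}$, giving the first bound at once; substituting this into \eqref{interpm} at general $s$, applied with $B = A^{n-1}$, yields the second. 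Part~(iii) is proved by the identical argument, since $\mathcal{L}(h_\bot^\sigma \times h_\bot^\sigma)$ is again a Banach algebra under composition with submultiplicative norm.

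No step is genuinely hard; the only point requiring care is to keep the constant on the low--high term independent of $s$, which is automatic once the dyadic split of Lemma~\ref{interpolation product} is used. The main conceptual content is really already in that earlier lemma: passing from scalar pointwise multiplication to operator composition costs nothing beyond submultiplicativity.
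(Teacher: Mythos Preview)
Your proposal is correct and matches the paper's approach exactly: the paper does not give a separate proof but simply observes, in the paragraph preceding the lemma, that since $\mathcal{L}(H^\sigma)$ is an algebra with submultiplicative norm, the proof of Lemma~\ref{interpolation product} carries over verbatim. Your write-up supplies precisely the details one would fill in when carrying this out, including the dyadic split, the Lipschitz difference-quotient identity, and the induction for \eqref{Mnab}.
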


\begin{remark}\label{generalizzazione prodotto operatori}
Occasionally we need a straightforward generalization of the estimates \eqref{interpm}, \eqref{interpm Lip}. More precisely: 
for 
 $A \in H^s(\T^S, {\cal L}(H^{\sigma_1}, H^{\sigma_2})) $ and $ B \in H^s(\T^S, {\cal L}(H^{\sigma_2}, H^{\sigma_3})) $,  
$BA \in H^s(\T^S, {\cal L}(H^{\sigma_1}, H^{\sigma_3}))$ satisfies the tame estimate 
\begin{align*}
\| B A \|_{H^s(\T^S, {\cal L}(H^{\sigma_1}, H^{\sigma_3}))} & \leq C_{op}(s) \| B \|_{H^s(\T^S, {\cal L}(H^{\sigma_2}, H^{\sigma_3}))}  \| A\|_{H^{s_0}(\T^S, {\cal L}(H^{\sigma_1}, H^{\sigma_2}))} \\
& \quad + C_{op}(s_0) \| B \|_{H^{s_0}(\T^S, {\cal L}(H^{\sigma_2}, H^{\sigma_3}))}  \| A\|_{H^s(\T^S, {\cal L}(H^{\sigma_1}, H^{\sigma_2}))}\,.
\end{align*}
 Moreover if $A = A_\omega$, $B = B_\omega$ are Lipschitz continuous in $\Omega$, then the above estimate holds for the corresponding Lipschitz norms.
\end{remark}

We also need to derive tame estimates for maps of the form $ \vphi \mapsto A (\vphi) u(\vphi) $ 
where  $ \vphi \mapsto u(\vphi)  $ is in the Sobolev space
$H^s (\T^S, h^\s )$ 
and $ \vphi \mapsto A(\vphi)$ is an operator valued map in $H^s(\T^S, {\cal L}( H^\s ) )$.
Writing $A$ and $u$ as Fourier series, 
$A(\vphi) =  \sum_{\ell \in \Z^S} \hat A(\ell) \, e^{\ii  \ell \cdot \vphi} $ respectively
$u(\vphi) = \sum_{\ell \in \Z^S} \hat u(\ell) \, e^{\ii  \ell \cdot \vphi} $, one gets
$$
A(\vphi)  u (\vphi )   = \sum_{\ell \in \Z^S} 
\Big( \sum_{k \in \Z^S} \hat A(\ell - k) \hat u (k) \Big)\, e^{\ii  \ell \cdot \vphi} \, . 
$$
Note that $\hat A(\ell - k) \hat u (k)$ is in $H^\s$ and that its norm can be estimated as
$ \|\hat A(\ell - k) \hat u (k) \|_\s 
\le \| \hat A(\ell - k)\|_\s  \| \hat u (k) \|_\s$
where $\| \hat A(\ell - k)\|_{\s}$ denotes the operator norm of $ \hat A(\ell - k)$ in ${\cal L}( H^\s )$. 
Hence the proof of Lemma~\ref{interpolation product} also shows that
the action of operators on functions satisfies tame estimates in the following sense:

\begin{lemma}[\bf Tame estimates for the action of operators on maps]
\label{lemma:action-Sobolev} 
Let $s \in \Z_{\ge s_0}$ and $\s \ge 0$. Then there are constants 
$ C_{act}(s) \geq C_{act}(s_0) \geq 1 $ 
(which also might depend on $\s$), so that the following holds:

\noindent
($i$) for any operator valued map $A$ in $H^s(\T^S, {\cal L}( H^\s ) )$
and any map $u \in H^s(\T^S, h^\sigma)$  one has
\be
\| A u \|_s
\leq C_{act}(s) |A|_{s_0} \| u \|_s
+ C_{act}(s_0) |A|_{s} \| u \|_{s_0} \, ;
\ee
($ii$) for any operator valued map $A \equiv A_\omega$ and any map $u \equiv u_\omega$,
which are both Lipschitz continuous in the parameter $\omega \in \Omega \subseteq \R^S$, 
\be
\| A u \|_s^{\gamma {\rm lip}}
\leq C_{act}(s) |A|_{s_0}^{\gamma {\rm lip}} \| u \|_s^{\gamma {\rm lip}}
+ C_{act}(s_0) |A|_{s}^{\gamma {\rm lip}} \| u \|_{s_0}^{\gamma {\rm lip}} \, .
\ee
\end{lemma}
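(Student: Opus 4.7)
\begin{pf}[Proof plan]
The plan is to mimic verbatim the argument of Lemma \ref{interpolation product}, replacing the product $u(\vphi) v(\vphi)$ by the action $A(\vphi) u(\vphi)$. Concretely, I would expand both factors in Fourier series on $\T^S$, obtaining
\[
A(\vphi) u(\vphi) = \sum_{\ell \in \Z^S} \Big( \sum_{k \in \Z^S} \hat A(\ell - k) \, \hat u(k) \Big) e^{\ii \ell \cdot \vphi},
\]
and then use the operator-norm bound $\| \hat A(\ell - k) \, \hat u(k) \|_\s \le \| \hat A(\ell - k) \|_{\mathcal{L}(H^\s)} \| \hat u(k) \|_\s$ in place of the algebra inequality $\| fg \|_\s \le C \| f\|_\s \| g \|_\s$. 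After this substitution, the only ingredient that entered the proof of Lemma \ref{interpolation product} was the dyadic decomposition of the convolution, which depends solely on the index set $\Z^S$; nothing is specific to scalar-valued Fourier coefficients.

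First I would square $\|Au\|_s$ and split the inner convolution sum into the two regions $\langle k \rangle > \langle \ell \rangle /(1 + c(s))$ and $\langle k \rangle \le \langle \ell \rangle/(1 + c(s))$, where $c(s) := 2^{1/s} - 1$, yielding two pieces $T_1, T_2$. In $T_1$ the frequency of $u$ is high, so I multiply and divide by $\langle k \rangle^s \langle \ell - k \rangle^{s_0}$, apply Cauchy--Schwartz in $k$, and use $\langle \ell \rangle^s / (\langle k\rangle^s \langle \ell - k \rangle^{s_0}) \le 2 / \langle \ell -k \rangle^{s_0}$, together with $\sum_k \langle k \rangle^{-2 s_0} < \infty$ (this is where the hypothesis $s \ge s_0$ with $s_0 = [|S|/2] + 1$ enters). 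After exchanging the order of summation this gives $T_1 \lessdot \|u\|_s^2 \, |A|_{s_0}^2$. In $T_2$ the frequency of $A$ is high: here $\langle \ell \rangle / \langle \ell - k \rangle \le (1 + c(s))/c(s)$, and the same Cauchy--Schwartz argument delivers $T_2 \le_s \|u\|_{s_0}^2 \, |A|_s^2$. Combining the two bounds yields the claim of item $(i)$, with the $s$-dependent constant coming entirely from the factor $\big( (1 + c(s))/c(s) \big)^2$ in $T_2$.

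For item $(ii)$ I would apply the estimate of $(i)$ pointwise in $\om$ to bound $\|Au\|_s^{\sup}$, and to the difference quotient
\[
\frac{(A u)_{\om_1} - (A u)_{\om_2}}{\om_1 - \om_2} = \frac{A_{\om_1} - A_{\om_2}}{\om_1 - \om_2} \, u_{\om_1} + A_{\om_2} \, \frac{u_{\om_1} - u_{\om_2}}{\om_1 - \om_2}
\]
for any $\om_1 \ne \om_2$ in $\Omega$, to bound the Lipschitz seminorm $\|Au\|_s^{\lip}$; summing the two after multiplying the Lipschitz part by $\g$ produces the $\|\cdot\|_s^{\Lipg}$ estimate.

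No serious obstacle is expected: the argument is a routine transcription of the proof of Lemma \ref{interpolation product}, the only new observation being the submultiplicativity of the operator norm on $\mathcal{L}(H^\s)$. The same scheme works without change if $H^\s$ is replaced by $h^\s_\bot \times h^\s_\bot$, which covers the slight extension one occasionally needs in the sequel (as in Remark \ref{generalizzazione prodotto operatori}).
\end{pf}
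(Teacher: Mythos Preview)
Your proposal is correct and is exactly the approach taken in the paper: the paper does not give a separate proof of this lemma but simply notes, just before stating it, that the convolution identity $A(\vphi)u(\vphi)=\sum_\ell\big(\sum_k \hat A(\ell-k)\hat u(k)\big)e^{\ii\ell\cdot\vphi}$ together with the operator-norm bound $\|\hat A(\ell-k)\hat u(k)\|_\s\le\|\hat A(\ell-k)\|_{\mathcal L(H^\s)}\|\hat u(k)\|_\s$ allows the proof of Lemma~\ref{interpolation product} to be repeated verbatim. Your sketch of the dyadic splitting into $T_1,T_2$ and the treatment of the Lipschitz norm via the difference quotient match this precisely.
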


 Lemma~\ref{prodest}  can be used to derive tame estimates for the exponential of
an operator valued map. We state them in the specific form needed in Section~\ref{sec:5}
where we consider operator valued maps in 
$H^s (\T^S, {\cal L}(h^\sigma_\bot \times h^\sigma_\bot))$ with 
$h^\sigma_\bot = h^\sigma ( S^\bot, \C)$. 
We introduce the vector valued Fourier multiplier 
\be\label{def:D-doubled}
{\mathfrak D} := 
{\rm diag}( \lla D \rra , \lla D \rra ) \,: 
h^\sigma_\bot \times h^\sigma_\bot\to h^\sigma_\bot \times h^\sigma_\bot 
\ee
where we recall that $\lla D \rra $ is defined in \eqref{def:DD}.
Let  ${\mathbb I}_2$ be the identity operator on $h^\sigma_\bot \times h^\sigma_\bot$.

\begin{lemma}\label{lem:inverti} {\bf (Tame estimates for the exponential of operators)}
Assume that $s \in \Z_{\ge s_0}$ $\s \ge \Z_{\ge 4}$
 and $C_{op}(s_0) \ge 1$ is the constant in Lemma \ref{prodest}-($iii$).
 Then for any  Lipschitz continuous  map  $A  \equiv A_\omega$, $\omega \in \Omega \subset \R^S$,  with values in 
$H^s (\T^S, {\cal L}(h^\sigma_\bot \times h^\sigma_\bot))$, the following holds:

\noindent
$(i)$ if $A$ satisfies the smallness condition 
$2 C_{op}(s_0)|A|_{s_0}^{\gamma{\rm lip}} \leq 1$,  
then $ \Phi : ={\rm exp} (A) $ and its inverse $\Phi^{- 1} = {\rm exp}(- A)$ satisfy
\be\label{PhINV}
| \Phi^{\pm 1} - {\mathbb I}_2 |_s \leq_s | A |_s  \quad \mbox{and} \quad
| \Phi^{ \pm 1} - {\mathbb I}_2 |_{s}^{\Lipg} \leq_s | A |_{s}^{
\Lipg} \, ; \ee

\noindent
$(ii)$ if $A$ satisfies $2C_{op}(s_0) |A {\frak D} |_{s_0}^{\gamma {\rm lip}} \leq 1$ and in addition
$A(\vphi ) \in {\cal L}(h^{\sigma - 1}_\bot \times h^{\sigma -1}_\bot , h^{\sigma}_\bot \times h^\sigma_\bot )$ for any $\vphi \in \T^S$, then 
\begin{equation}\label{PhINV with D}
| (\Phi^{\pm 1} - {\mathbb I}_2) {\frak D} |_{s} \leq_s | A {\frak D} |_{s} \quad \mbox{and} \quad
| (\Phi^{ \pm 1} - {\mathbb I}_2) {\frak D} |_{s}^{\Lipg} \leq_s | A {\frak D} |_{s}^{
\Lipg}\, ;
\end{equation}

\noindent
$(iii)$ if $A$ satisfies $2C_{op}(s_0) |A|_{s_0, \sigma} \leq 1$ and in addition
 for any $\sigma' \in \{\sigma, \sigma- 1, \sigma - 2, \sigma - 3 \}$, $A \in H^s(\T^S, {\cal L} (h^{\sigma'}_\bot \times h^{\sigma'}_\bot))$ with
$|A|_{s, \sigma'} \lessdot |A|_{s, \sigma}$ and
$|A|_{s_0, \sigma'} \lessdot |A|_{s_0, \sigma}$, then 
$$
\Big| \sum_{n \geq 2} \frac{1}{n !}{\frak D}^2 ({\frak D}^{- 1} A {\frak D}^{- 1})^n  {\frak D}\Big|_{s, \sigma - 1}\,,\quad \Big| \sum_{n \geq 2} \frac{1}{n !} ({\frak D}^{- 1} A {\frak D}^{- 1})^n  {\frak D}^3\Big|_{s, \sigma - 1} \leq_s |A|_{s, \sigma } |A|_{s_0, \sigma } \,;
$$

\noindent
$(iv)$ if $A$ satisfies $2C_{op}(s_0) |A|_{s_0, \sigma} \leq 1$ and in addition for any 
$\sigma' \in \{\s + 1, \s, \s- 1, \s - 2, \s - 3, \s -4 \}$, 
$A \in H^s(\T^S, {\cal L} (h^{\sigma'}_\bot \times h^{\sigma'}_\bot))$ with
$|A|_{s, \sigma'}\lessdot |A|_{s, \sigma + 1}$ and
$|A|_{s_0, \sigma'} \lessdot |A|_{s_0, \sigma}$, then 
\begin{align*}
& \Big| \sum_{n \geq 3} \frac{1}{n !}{\frak D}^2 ({\frak D}^{- 1} A )^n  {\frak D}\Big|_{s, \sigma - 1}\,,\,\Big| \sum_{n \geq 3} \frac{1}{n !}{\frak D}^2 ( A {\frak D}^{- 1})^n  {\frak D}\Big|_{s, \sigma - 1} \leq_s |A|_{s, \sigma + 1} |A|_{s_0, \sigma + 1}^2 \,, \\
& 
\Big| \sum_{n \geq 3} \frac{1}{n !} ({\frak D}^{- 1} A )^n  {\frak D}^3\Big|_{s, \sigma - 1}\,,\,\Big| \sum_{n \geq 3} \frac{1}{n !} ( A {\frak D}^{- 1})^n  {\frak D}^3\Big|_{s, \sigma - 1} \leq_s |A|_{s, \sigma + 1} |A|_{s_0, \sigma + 1}^2 \,;
\end{align*}
$(v)$ assume that  $ \Phi_i = {\rm exp}(A_i) $, $i= 1,2$, with 
$A_i \in H^s(\T^S, {\cal L}(h^{\sigma}_\bot \times h^\sigma_\bot ))$  such that 
\begin{equation}\label{smallness A1 A2}
2C_{op}(s_0)| A_i |_{s_0} \leq 1\,.
\end{equation}
Then the difference $\Phi_2^{-1} - \Phi_1^{-1}$ satisfies the estimate
\begin{equation}\label{derivata-inversa-Phi}
\vert \Phi_2^{-1} - \Phi_1^{-1} \vert_{s}
\leq_s
 \vert A_2 - A_1 \vert_{s}
+ \big( \vert A_1 \vert_s + \vert A_2 \vert_s \big)
\vert A_2 -  A_1 \vert_{s_0}  \, .
\end{equation}
Similarly, if
$A_i(\vphi) \in {\cal L}(h^{\sigma - 1}_\bot \times h^{\sigma - 1}_\bot, h^\sigma_\bot \times h^{\sigma }_\bot)$, 
$\vphi \in \T^S$,  and 
$ 2 C_{op}(s_0)| A_i {\frak D} |_{s_0} \leq 1$,
then 
\begin{equation}\label{derivata-inversa-Phi frak D}
\vert (\Phi_2^{-1} - \Phi_1^{-1}) {\frak D} \vert_{s}
\leq_s
 \vert (A_2 - A_1) {\frak D} \vert_{s}
+ \big( \vert A_1 {\frak D} \vert_s + \vert A_2 {\frak D} \vert_s \big)
\vert (A_2 -  A_1) {\frak D} \vert_{s_0}  \, .
\end{equation}
\end{lemma}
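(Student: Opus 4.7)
The plan is to reduce all five parts to estimates on truncations of the Neumann series $\exp(\pm A)-{\mathbb I}_2 = \sum_{n\ge1}(\pm A)^n/n!$, using the iterated composition bound \eqref{Mnab} of Lemma~\ref{prodest} together with the respective smallness hypothesis, which always makes the resulting series geometric. The general principle is that factors of ${\frak D}$ or ${\frak D}^{-1}$ appearing at the extremes of a product $A^n$ can be shuffled into adjacent copies of $A$ at the cost of varying the Sobolev index $\sigma'$; the hypotheses of each part are tailored exactly to control these shuffles.

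For item (i), applying \eqref{Mnab} termwise gives
\begin{equation*}
|\Phi^{\pm 1}-{\mathbb I}_2|_s \leq \sum_{n\ge1}\frac{|A^n|_s}{n!} \leq C_{op}(s)|A|_s \sum_{n\ge 1}\frac{n}{n!}\bigl(2C_{op}(s_0)|A|_{s_0}\bigr)^{n-1} \leq_s |A|_s\,,
\end{equation*}
where $2C_{op}(s_0)|A|_{s_0}\le 1$ is implied by the Lipschitz smallness hypothesis. The Lipschitz bound \eqref{PhINV} follows by applying the same argument to the difference quotient of the series in $\omega$. For item (v), I use the telescoping identity
\begin{equation*}
\Phi_2^{-1}-\Phi_1^{-1} = \sum_{n\ge1}\frac{(-1)^n}{n!}\sum_{k=0}^{n-1}A_2^{n-1-k}(A_2-A_1)A_1^k\,,
\end{equation*}
apply \eqref{interpm} twice to each summand (once to isolate $A_2-A_1$ in the middle, once to combine the two outer blocks), and use \eqref{smallness A1 A2} together with \eqref{Mnab} to sum the result, producing \eqref{derivata-inversa-Phi}; the variant \eqref{derivata-inversa-Phi frak D} follows by the same steps after placing ${\frak D}$ on the far right and absorbing it as in item (ii).

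For item (ii), I write $A^n{\frak D}=A^{n-1}(A{\frak D})$; since $A{\frak D}\in{\cal L}(h^\sigma_\bot\times h^\sigma_\bot)$ by hypothesis and $A=(A{\frak D}){\frak D}^{-1}$ with $|{\frak D}^{-1}|_{s,\sigma}\le 1$, one has $|A|_{s,\sigma}\lessdot|A{\frak D}|_{s,\sigma}$, and then \eqref{interpm} combined with \eqref{Mnab} for $A^{n-1}$ yields a geometric series in $|A{\frak D}|_{s_0}$ bounded by $|A{\frak D}|_s$. Items (iii) and (iv) are similar but require passing ${\frak D}^{\pm 1}$ through the interior of the product: for instance, in (iii), a direct computation gives
\begin{equation*}
{\frak D}^2({\frak D}^{-1}A{\frak D}^{-1})^n{\frak D} = {\frak D}A({\frak D}^{-2}A)^{n-1}\,,
\end{equation*}
and the assumption $|A|_{s,\sigma'}\lessdot|A|_{s,\sigma}$ (respectively $|A|_{s_0,\sigma'}\lessdot|A|_{s_0,\sigma}$) for every intermediate $\sigma'$ is precisely what makes each inner chain ${\frak D}^{-1}A{\frak D}^{-1}$ a bounded operator on $h^{\sigma-1}_\bot\times h^{\sigma-1}_\bot$ controlled in terms of $|A|_{\bullet,\sigma}$; the restriction $n\ge 2$ allows two factors of $A$ to be extracted by \eqref{interpm} and \eqref{Mnab}, producing the gain $|A|_{s,\sigma}|A|_{s_0,\sigma}$. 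Part (iv) is handled in the same way with $n\ge 3$, extracting three factors of $A$ and exploiting the wider range of regularity indices $\sigma'\in\{\sigma+1,\ldots,\sigma-4\}$ to cover the end configurations $({\frak D}^{-1}A)^n$ and $(A{\frak D}^{-1})^n$.

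The main technical obstacle is the bookkeeping in (iii)–(iv): one must verify at each stage of the shuffle that the intermediate operator maps the correct Sobolev space into itself, so that Lemma~\ref{prodest} may be invoked with well-defined norms and the iterated bound \eqref{Mnab} applies. Once this combinatorial procedure is set up — always working from the outermost ${\frak D}^{\pm 1}$'s and absorbing them one at a time — the remaining estimates reduce to repeated application of \eqref{Mnab} and \eqref{interpm}, with summability in $n$ provided uniformly by the smallness hypotheses of each part.
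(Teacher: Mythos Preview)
Your proposal is correct and follows essentially the same approach as the paper: termwise estimation of the exponential series via the iterated composition bound \eqref{Mnab}, the decomposition $A^n{\frak D}=A^{n-1}(A{\frak D})$ for (ii), the rewriting ${\frak D}^2({\frak D}^{-1}A{\frak D}^{-1})^n{\frak D}={\frak D}A{\frak D}^{-1}({\frak D}^{-1}A{\frak D}^{-1})^{n-2}{\frak D}^{-1}A$ for (iii) (your form ${\frak D}A({\frak D}^{-2}A)^{n-1}$ is equivalent), the factoring off of three copies of $A$ in (iv), and the telescoping $A_2^n-A_1^n=\sum_k A_2^{n-1-k}(A_2-A_1)A_1^k$ for (v). The only cosmetic difference is that the paper groups the middle block in (iii) as $B^{n-2}$ with $B={\frak D}^{-1}A{\frak D}^{-1}$ and estimates $|B|_{s,\sigma}$ directly, whereas you phrase the same step as controlling ${\frak D}^{-2}A$ on $h^{\sigma-1}_\bot$.
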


\begin{proof}
$(i)$ Let us prove the estimate \eqref{PhINV} for $| \ |_s$. The estimate with the norm $| \ |_s^{\gamma {\rm lip}}$ can be proven similarly. We have, with $C_{op}(s)$, $C_{op}(s_0)$
 given as in Lemma \ref{prodest}-($iii$),
\begin{align*}
| \Phi^{\pm 1} - {\mathbb I}_2 |_s & \leq \sum_{n \geq 1} \frac{|A^n|_s}{n !} 
\stackrel{\eqref{Mnab}}{\leq} 
C_{op}(s) |A|_s \sum_{n \geq 1} \frac{ \big(2C_{op}(s_0) |A|_{s_0} \big)^{n - 1}}{(n - 1)!} = C_{op}(s) |A|_s {\rm exp}(2C_{op}(s_0) |A|_{s_0}) \leq_s |A|_s \,.
\end{align*} 

\noindent
$(ii)$ Now let us prove the inequality \eqref{PhINV with D} for $|\ |_s$. The corresponding estimate with the norm $| \cdot |_s^\Lipg$ is shown in a similar way. 
For any $n \geq 2$, 
\begin{align}
|A^n {\frak D}|_s & \leq C_{op}(s) |A^{n - 1}|_{s_0} |A {\frak D}|_s 
+ C_{op}(s_0) |A^{n - 1}|_s |A {\frak D}|_{s_0} 
 \nonumber\\& \stackrel{\eqref{Mnab}}{\leq_s} 
C_{op}(s) C_{op}(s_0)
\big(n  (2C_{op}(s_0) |A|_{s_0} )^{n - 2} |A|_s |A {\frak D} |_{s_0} 
+  (2C_{op}(s_0) )^{n - 2} |A|^{n - 1}_{s_0} |A {\frak D} |_s   \big) \nonumber\\
& \leq_s (C_{op}(s))^2 n (|A|_s + |A {\frak D} |_s) 
\leq_s 2(C_{op}(s))^2 n |A {\frak D} |_s\,. \nonumber
\end{align}
Hence 
$$
| (\Phi^{\pm 1} - {\mathbb I}_2 ) {\frak D} |_s \leq_s  |A {\frak D} |_s
{\mathop \sum}_{n \geq 1} \frac{1}{ (n - 1)!} \leq_s |A {\frak D} |_s\,.
$$

\noindent
$(iii)$ For any $n \geq 2$, one has 
$$
{\frak D}^2 ({\frak D}^{- 1} A {\frak D}^{- 1})^n {\frak D} = {\frak D} A {\frak D}^{- 1} B^{n - 2} {\frak D}^{- 1} A\,, \qquad B := {\frak D}^{- 1} A {\frak D}^{- 1}\, .
$$
Let us estimate separately the norms of ${\frak D} A {\frak D}^{- 1} $, $B^{n - 2}$, 
and ${\frak D}^{- 1} A$. We have 
$$
|{\frak D} A {\frak D}^{- 1} |_{s, \sigma - 1} 
\leq \| {\frak D} \|_{{\cal L}(h^{\sigma}, h^{\sigma - 1})} |A |_{s, \sigma} \| {\frak D}^{-1} \|_{{\cal L}(h^{\sigma - 1}, h^{\sigma})} \lessdot  |A|_{s, \sigma}\, , \qquad
|{\frak D} A {\frak D}^{- 1} |_{s_0, \sigma - 1}   \lessdot |A|_{s_0, \sigma} .
$$
Since for $n \geq 3$
$$
|B^{n - 2}|_{s_0, \s } \stackrel{\eqref{Mnab}}{\leq} 
(2C_{op}(s_0))^{n - 3} |B|_{s_0, \s}^{n - 2}\, ,
\qquad
|B^{n - 2}|_{s, \s } \stackrel{\eqref{Mnab}}{\leq} 
n C_{op}(s) (2C_{op}(s_0))^{n - 3} |B|_{s_0, \s}^{n - 3} |B|_{s, \s } \, ,
$$
it then follows from
$$
|B|_{s_0, \sigma } = |{\frak D}^{- 1} A {\frak D}^{- 1}|_{s_0, \sigma} 
\leq |A|_{s_0, \sigma}\,, \qquad
|B|_{s, \sigma } = |{\frak D}^{- 1} A {\frak D}^{- 1}|_{s, \sigma} 
\leq |A|_{s, \sigma}\,,
$$
and  $2C_{op}(s_0)|A|_{s_0, \sigma } \leq 1$ that for $n \geq 3,$
$$
|B^{n - 2}|_{s_0, \sigma} \le 1\, ,
\qquad
|B^{n - 2}|_{s, \sigma} \leq n C_{op}(s) |A|_{s, \sigma }\, .
$$
Using that 
$$
|{\frak D}^{- 1} A |_{s, \sigma -1 } \leq |A|_{s, \sigma -1} 
\lessdot |A|_{s, \sigma }\, \quad \mbox{and}\quad
|{\frak D}^{- 1} A |_{s_0, \s -1 } \leq |A|_{s_0, \s -1 } \lessdot |A|_{s_0, \sigma}
$$ 
one then concludes from \eqref{interpm} that for any $n \ge 3,$
\begin{align}
|{\frak D} A {\frak D}^{- 1} B^{n - 2} {\frak D}^{- 1} A |_{s, \sigma- 1} & 
\leq_s n |A|_{s, \sigma} |A|_{s_0, \sigma}\, \nonumber
\end{align}
and in turn 
$$
\Big| \sum_{n \geq 2} \frac{1}{n !}{\frak D}^2 ({\frak D}^{- 1} A {\frak D}^{- 1})^n  {\frak D} \Big|_{s, \sigma - 1} \leq_s |A|_{s, \sigma} |A|_{s_0, \sigma} \sum_{n \geq 2} \frac{n}{n !} \leq_s |A|_{s, \sigma } |A|_{s_0, \sigma}\,.
$$
The estimate for $| \sum_{n \geq 2} \frac{1}{n !} ({\frak D}^{- 1} A {\frak D}^{- 1})^n  {\frak D}^3 |_{s, \sigma- 1}$ follows by similar arguments. 

\noindent
$(iv)$ The four series are estimated in the same way. Let us just comment how to prove the estimate for $\sum_{n \geq 3} \frac{1}{n !} ({\frak D}^{- 1} A )^n  {\frak D}^3$ which we write as the composition $B_1 B_2$ where
$$
 \quad B_1 :=  {\mathop \sum}_{n \geq 3} \frac{1}{n !} ({\frak D}^{- 1} A )^{n - 3}\,, \quad B_2 :=({\frak D}^{- 1} A)^3 {\frak D}^3\,.
$$
The norm $|B_2 |_{s, \sigma - 1}$ is treated separately using Remark \ref{generalizzazione prodotto operatori}, whereas the series $B_1$ is estimated in the same way as the ones of item $(iii)$. To obtain the claimed estimate we then apply Lemma \ref{prodest} to the composition $B_1 B_2$.   

\noindent
$(v)$ Since $ \Phi_i^{-1} = \exp (-A_i) $ the estimate \eqref{derivata-inversa-Phi}  for 
$ \Phi_2^{-1} - \Phi_1^{-1} $ is obtained from the one 
for $ \Phi_2 - \Phi_1 $ by replacing $ A_i $ by $ - A_i $. 
Observe that 
$$
 \Phi_2 - \Phi_1  = \sum_{n \geq 1} \frac{A_2^n - A_1^n}{n !} = 
\sum_{n \geq 1} \frac{1}{n !} \Big( \widehat A A_2^{n - 1} + A_1 \widehat A A_2^{n - 2} +
 \ldots + A_1^{n - 2} \widehat A A_2 + A_1^{n - 1} \widehat A \Big)\,,
 $$
where $\widehat A := A_2 -A_1$. The terms $A_1^k \widehat A A_2^{n - k - 1}$,
$1 \le k \le n-2,$ of the above sum can be estimated as follows 
\begin{align}
|A^k_1 \widehat A A^{n - k - 1}_2 |_s & \stackrel{\eqref{interpm}}{\leq} 
C_{op}(s) C_{op}(s_0) \big( |A_1^k|_s 2 |\widehat A|_{s_0} | A_2^{n - k - 1} |_{s_0} + |A_1^k|_{s_0} |\widehat A|_s | A_2^{n - k - 1} |_{s_0} +  |A_1^k|_{s_0} |\widehat A|_{s_0} | A_2^{n - k - 1} |_{s}\big) \nonumber\\
& \stackrel{\eqref{Mnab}, \eqref{smallness A1 A2}}{\leq} 
n C_{op}(s)^2 \big( (|A_1|_s  + |A_2|_s ) |\widehat A|_{s_0} + |\widehat A|_s \big)\,. \nonumber
\end{align}
The terms $|\widehat A A_2^{n - 1}|_s$ and $|A_1^{n - 1} \widehat A|_s$ can be estimated
in the same way and admit similar bounds. Hence
$$
|\Phi_2 - \Phi_1|_s \leq_s \big({\mathop \sum}_{n \geq 1} \frac{n^2}{n!} \big) 
\big( (|A_1|_s  + |A_2|_s ) |\widehat A|_{s_0} + |\widehat A|_s \big) 
$$
implying \eqref{derivata-inversa-Phi}. The proof of the estimate \eqref{derivata-inversa-Phi frak D} is similar.
\end{proof}

Finally we want to derive tame estimates for the  composed map $ f \circ \breve \io  $ where
$\breve \io$ denotes a map  $\breve \io : \T^S \to M^\s$
and $   f : M^\s \to Y$ takes values in the Banach space $ Y $.

Recall that $M^\s = \T^S \times U_0 \times h^\s_\bot 
$ denotes the phase space introduced in \eqref{def:M-sigma}.
We assume  that $ \breve \io $ has a lift of the form 
$( \vphi, 0, 0) + \io (\vphi) $
where $ \io : \R^S \to \R^S \times U_0 \times h^\s_\bot $ is $ (2\pi\Z)^S$-periodic. 
Whenever the context permits, we will identify  $\breve \io$ with its lift and denote
both by the same letter. Similarly, we will identify maps $\T^S \to Y$ with their lifts
$\R^S \to Y$, which are $(2\pi\Z)^S$-periodic.    

\begin{lemma} \label{Lemma 2.4bis} 
{\bf (Tame estimates for the composition of maps in ${\cal C}^s$-spaces)}
Assume that $ f $ is a map in ${\cal C}^s (\T^S \times V , Y )$ 
where $ V $ is an open neighborhood in $\R^S \times h^\s_\bot$ and $s \in \Z_{\ge 0} $. 
Then for any map $ \breve \io  (\vphi) = ( \vphi, 0, 0) + \io (\vphi)$ with  
$\io \in {\cal C}^s (\T^S, \R^S \times \R^S \times h^\s_\bot)$
and $\breve \io (\T^S) \subset \T^S \times V$, the following holds:

\noindent
$(i)$ The composition $ f \circ \breve \io \in {\cal C}^s (\T^S, Y ) $ satisfies the tame estimate
\be\label{ftamei}
  \| f \circ \breve \io \| _{{\cal C}^s}
      \leq C(s, \|f\|_{{\cal C}^s}, \| \io \|_{{\cal C}^0}) \cdot \big( 1+ \| \io \| _{{\cal C}^s} \big) \, . 
 \ee
 \noindent
 $(ii)$ If  $ f \in {\cal C}^{s + 1} (\T^S \times V , Y )$, then for any $\widehat \io$ in
${\cal C}^s(\T^S, \R^S \times \R^S \times h^\s_\bot)$,
 \begin{equation}\label{differential composition operator}
 \|d f(\breve \io)[\widehat \io] \|_{{\cal C}^s} \leq C(s, \|f\|_{{\cal C}^{s + 1}}, \| \io \|_{{\cal C}^0}) 
\cdot \big( \|\widehat \io\|_{{\cal C}^s} + \|\io \|_{{\cal C}^s}  \| \widehat \io \|_{{\cal C}^0} \big)\,.
 \end{equation} 
 \noindent
$(iii)$ If $ f \in {\cal C}^{s + 1} (\T^S \times V , Y )$ and $V$ is in addition convex, then for any two maps, $ \breve \io^{(a)} (\vphi) = ( \vphi, 0, 0) + \io^{(a)} (\vphi) $ with 
$\io^{(a)} \in  {\cal C}^s (\T^S, \R^S \times \R^S \times h^\s_\bot)$
and $\breve \io^{(a)} (\T^S) \subset \T^S \times V$ , $ a =1,2$, the difference
$\Delta_{12} f = f\circ \breve \io^{(1)} - f\circ \breve \io^{(2)}$ 
satisfies the estimate 
$$
 \| \Delta_{12} f  \|_{{\cal C}^s} \leq  C(s, \|f\|_{{\cal C}^{s+1}}, 
\| \io^{(1)} \|_{{\cal C}^0}, \| \io^{(2)} \|_{{\cal C}^0}) 
\cdot \big( \|\Delta_{12} \io\|_{{\cal C}^s} 
+ (\| \io^{(1)} \|_{{\cal C}^s} + \| \io^{(2)}\|_{{\cal C}^s}) \| \Delta_{12} \io \|_{{\cal C}^0}  \big)
$$
where $\Delta_{12}\io := \io^{(1)} - \io^{(2)}$.

\noindent
$(iv)$ If $ f \in {\cal C}^{s + 1} (\T^S \times V , Y )$ and in addition $V$ is  convex
and $ \io \equiv  \io_\omega$  Lipschitz continuous in the parameter $\omega \in \Omega \subset \R^S,$
 the composition $f \circ \breve \io \in {\cal C}^s (\T^S, Y ) $ is also Lipschitz continuous 
in $\omega$ and satisfies the estimate
\begin{equation}\label{stima-lip-4}
\| f \circ \breve \io \| _{{\cal C}^s}^{ {\rm lip}}
      \leq C(s, \|f\|_{{\cal C}^{s+1}}, \| \io \|^{{\rm sup}}_{{\cal C}^0}) \cdot
\big(\| \io \|^{ {\rm lip}} _{{\cal C}^s} + \| \io \|^{ {\rm sup}} _{{\cal C}^s}
 \| \io \|^{ {\rm lip}} _{{\cal C}^0} \big) \, . 
\end{equation}
\end{lemma}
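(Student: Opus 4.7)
The plan is to proceed in the classical Moser--Nirenberg style, establishing $(i)$ by Fa\`a di Bruno's formula combined with interpolation of ${\cal C}^s$-norms, and then deducing $(ii)$, $(iii)$, $(iv)$ from $(i)$ by standard arguments based on the fundamental theorem of calculus.

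First, to prove $(i)$, I would fix a multi-index $\alpha \in \Z^S_{\ge 0}$ with $|\alpha | = s$ and apply Fa\`a di Bruno's formula to $\partial^\alpha (f \circ \breve \io)(\vphi)$. This yields a finite sum of terms of the schematic form
\[
(D^j f)(\breve \io (\vphi))\bigl[\partial^{\alpha_1}\breve \io(\vphi),\ldots,\partial^{\alpha_j}\breve \io(\vphi)\bigr],
\]
with $1 \le j \le s$ and $|\alpha_1|+\cdots+|\alpha_j|=s$, each $|\alpha_i|\ge 1$. Using $\| D^j f(\breve\io(\cdot))\|^{\sup} \le \|f\|_{{\cal C}^s}$ (thanks to $\breve\io(\T^S)\subset\T^S\times V$ and the boundedness of $\io$ via $\|\io\|_{{\cal C}^0}$), each such term is bounded by $\|f\|_{{\cal C}^s}\,\prod_{i=1}^{j}\|\io\|_{{\cal C}^{|\alpha_i|}}$ (absorbing the contribution of the base-point map $\vphi$ in the argument of $\breve\io$ into a harmless constant $C(s)$). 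The interpolation estimate of Proposition~\ref{interpolation scales}, applied to the scale $({\cal C}^k, \|\cdot\|_{{\cal C}^k})$ via Lemma~\ref{smoothing family Cs}, gives
\[
\|\io\|_{{\cal C}^{|\alpha_i|}} \leq C(s)\,\|\io\|_{{\cal C}^0}^{1-|\alpha_i|/s}\,\|\io\|_{{\cal C}^s}^{|\alpha_i|/s}.
\]
Multiplying these factors over $i=1,\ldots,j$ and using $\sum_i |\alpha_i|=s$ produces $\|\io\|_{{\cal C}^s}^{1} \|\io\|_{{\cal C}^0}^{j-1}$, so the whole term is bounded by $C(s,\|f\|_{{\cal C}^s},\|\io\|_{{\cal C}^0})\|\io\|_{{\cal C}^s}$. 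Summing over all finitely many terms and finally over $|\alpha|\le s$ (adding a harmless $+1$ for the term $j=0$ with $|\alpha|=0$) yields \eqref{ftamei}.

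Next, for $(ii)$, I would simply regard $df(\breve\io(\vphi))[\widehat\io(\vphi)]$ as the composition of a ${\cal C}^s$-map in the variable $(\vphi,\io,\widehat\io)$, namely $(\vphi,v,\hat v)\mapsto df(\vphi,v)[\hat v]$, with the ${\cal C}^s$-map $\vphi\mapsto(\vphi,\io(\vphi),\widehat\io(\vphi))$. Since this auxiliary map depends \emph{linearly} on $\hat v$, applying $(i)$ to it produces estimate \eqref{differential composition operator}, with the linear dependence on $\|\widehat\io\|_{{\cal C}^s}$ and $\|\widehat\io\|_{{\cal C}^0}$ coming out automatically from the linearity in $\hat v$ (the tame term carries the high-norm $\|\io\|_{{\cal C}^s}$ paired with the low-norm $\|\widehat\io\|_{{\cal C}^0}$, while the bounded term carries $\|\widehat\io\|_{{\cal C}^s}$).

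For $(iii)$, using convexity of $V$, I write
\[
\Delta_{12} f (\vphi) = \int_0^1 df\bigl(\breve\io^{(2)}(\vphi)+t\,\Delta_{12}\io(\vphi)\bigr)\bigl[\Delta_{12}\io(\vphi)\bigr]\,dt,
\]
and apply $(ii)$ to the integrand with $\io$ replaced by $\io^{(2)}+t\,\Delta_{12}\io$ and $\widehat\io$ by $\Delta_{12}\io$, noting $\|\io^{(2)}+t\,\Delta_{12}\io\|_{{\cal C}^s}\le \|\io^{(1)}\|_{{\cal C}^s}+\|\io^{(2)}\|_{{\cal C}^s}$. The integral preserves ${\cal C}^s$-norms and yields the stated bound. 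Finally, $(iv)$ follows at once by taking $\io^{(a)}:=\io_{\omega_a}$, $a=1,2$, in $(iii)$ (at fixed $f$) and dividing by $|\omega_1-\omega_2|$; passing to the supremum over $\omega_1\ne\omega_2\in\Omega$ produces \eqref{stima-lip-4}. The main technical obstacle is really only in $(i)$: the bookkeeping in Fa\`a di Bruno combined with the interpolation, which must be arranged so that in every monomial exactly one factor carries the top-order norm $\|\io\|_{{\cal C}^s}$ and all other factors are absorbed into a constant depending on $\|\io\|_{{\cal C}^0}$ and $\|f\|_{{\cal C}^s}$.
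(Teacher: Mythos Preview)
Your proposal is essentially correct and follows the same route as the paper: Fa\`a di Bruno plus ${\cal C}^s$-interpolation for $(i)$, and the mean value theorem to deduce $(iii)$ and $(iv)$. The paper handles the base-point map $\vphi \mapsto (\vphi,0,0)$ by working with factors $(1+\|\io\|_{{\cal C}^{|\alpha_i|}})$ throughout and proving the interpolation inequality $1+\|\io\|_{{\cal C}^k}\le C(1+\|\io\|_{{\cal C}^0})^{1-k/|\alpha|}(1+\|\io\|_{{\cal C}^{|\alpha|}})^{k/|\alpha|}$ explicitly, but your absorption into $C(s)$ amounts to the same thing.

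The one place where your strategy differs from the paper is $(ii)$. The paper proves $(ii)$ from scratch by Leibniz plus Fa\`a di Bruno, interpolating both $\io$ and $\widehat\io$ simultaneously and closing with Young's inequality. Your idea of viewing $(\vphi,v,\hat v)\mapsto df(\vphi,v)[\hat v]$ as a new $F$ and invoking $(i)$ is slicker, but as written it has a small gap: $F$ is linear in $\hat v$, so $\|F\|_{{\cal C}^s}=\infty$ on $\T^S\times V\times X$ and $(i)$ does not apply directly. To make ``linearity gives the estimate automatically'' precise you should normalize: assume $\|\widehat\io\|_{{\cal C}^0}=1$, restrict $F$ to $\T^S\times V\times B_1$ (where $\|F\|_{{\cal C}^s}\le \|f\|_{{\cal C}^{s+1}}$), apply $(i)$ with the augmented $\io$-variable $(\io,\widehat\io)$, and then multiply back by $\|\widehat\io\|_{{\cal C}^0}$. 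This recovers exactly \eqref{differential composition operator}. Once this normalization step is spelled out, your argument for $(ii)$ is a valid alternative to the paper's direct computation.
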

\begin{proof}
$(i)$
For any multi-index $ \a \in \Z_{\ge 0}^S $ with $ 1 \leq |\a | \leq s $, one computes
$$
\partial^\a_\vphi ( f \circ  \breve \io ) (\vphi ) = \sum_{1 \leq m \leq |\a| \atop \a = \a_1 + \cdots + \a_m}  
c_{\a_1, \cdots , \a_m} \, (d^m f)(  \breve \io (\vphi)) [ \pa^{\a_1}_\vphi  \breve \io (\vphi) , \cdots,   \pa^{\a_m}_\vphi  \breve \io (\vphi) ] 
$$
where $ c_{\a_1, \cdots , \a_m} $ are combinatorial constants and $\alpha_1, \cdots , \alpha_m$ are nonzero integer vectors in  $\Z_{\ge 0}^S $.
Hence
\begin{align}
\| \partial^\a_\vphi ( f \circ \breve \io )  \|_{{\cal C}^0} 
& \leq  
C(s, \|f\|_{{\cal C}^s}) \sum_{1 \leq m \leq |\a| \atop \a = \a_1 + \cdots + \a_m}  
\| \pa^{\a_1}_\vphi  \breve \io \|_{{\cal C}^0}  \cdots \| \pa^{\a_m}_\vphi \breve \io \|_{{\cal C}^0} \nonumber\\
& \leq  
C(s, \|f\|_{{\cal C}^s}) \sum_{1 \leq m \leq |\a| \atop \a = \a_1 + \cdots + \a_m}  
( 1 + \|   \io \|_{{\cal C}^{|\a_1|}})  \cdots ( 1 + \|   \io\|_{{\cal C}^{|\a_m|}})  \, . \label{argomento conv label}
\end{align}
We claim that for any $0 \leq k \leq |\alpha|$, there exists a constant $C_{|\alpha|, k} > 0$ such that 
\begin{equation}\label{claim convessita}
1 + \|  \io \|_{{\cal C}^{k}} \leq C_{|\alpha|, k} (1 + \| \io \|_{{\cal C}^0})^{1 - \frac{k}{|\alpha|}} (1 + \| \io \|_{{\cal C}^{|\alpha|}})^{\frac{k}{|\alpha|}}\,.
\end{equation}
Indeed, by the interpolation estimates for ${\cal C}^s$-spaces (Proposition \ref{interpolation scales}, Lemma \ref{smoothing family Cs}) one has $\|  \io \|_{{\cal C}^{k}}  \lessdot \| \io\|_{{\cal C}^0}^{1 - \frac{k}{|\alpha|}} \| \io \|_{{\cal C}^{|\alpha|}}^{\frac{k}{|\alpha|}}$ yielding 
\begin{align}
1 + \|  \io \|_{{\cal C}^{k}}  \leq C'_{|\alpha|, k}(1 +  \| \io\|_{{\cal C}^0}^{1 - \frac{k}{|\alpha|}} )(1 + \| \io \|_{{\cal C}^{|\alpha|}}^{\frac{k}{|\alpha|}} )\,. \label{4444}
\end{align}
Since for any $0 \leq \lambda \leq 1$, $f_\lambda : \R^+ \to \R\,,\, t \mapsto t^\lambda$ is concave, one has 
$$
\frac12(1 + t^{\lambda} )= \frac12 f_\lambda(1) + \frac12 f_\lambda(t) \leq f_\lambda\Big( \frac{1 + t}{2} \Big) = 2^{- \lambda} (1 + t)^{\lambda}
$$
implying that $(1 + t^{\lambda} ) \leq 2^{1 - \lambda} (1 + t)^{\lambda}$ for any $t \geq 0$. Thus we conclude that 
$$
1 +  \| \io\|_{{\cal C}^0}^{1 - \frac{k}{|\alpha|}} \leq 2^{\frac{k}{|\alpha|}}(1 +  \| \io\|_{{\cal C}^0})^{1 - \frac{k}{|\alpha|}}\,, \qquad \qquad 1 + \| \io \|_{{\cal C}^{|\alpha|}}^{\frac{k}{|\alpha|}} \leq 2^{1 - \frac{k}{|\alpha|}}(1 + \| \io \|_{{\cal C}^{|\alpha|}})^{\frac{k}{|\alpha|}}\,.
$$
Combining this with \eqref{4444} yields \eqref{claim convessita}.   Applying the estimate \eqref{claim convessita} to the products in \eqref{argomento conv label}, one gets 
$$
(1 + \|    \io \|_{{\cal C}^{|\a_1|}})  \cdots 
( 1 + \|   \io \|_{{\cal C}^{|\a_m|}})   \leq C_s \prod_{j=1}^m 
(1 + \|   \io \|_{{\cal C}^{0}})^{1 - \frac{|\a_j|}{|\a |}}  (1 + \|    \io \|_{{\cal C}^{|\a|}})^{\frac{|\a_j|}{|\a |}}  \leq C_s
( 1 + \|   \io \|_{{\cal C}^{0}})^{m-1}  (1 +  \|   \io \|_{{\cal C}^{|\a|}} )
$$
which proves the estimate \eqref{ftamei}.

\noindent
$(ii)$ By the Leibnitz rule, for any multi-index $\beta \in \Z_{\ge 0}^S $ with
 $0 \leq |\beta | \leq s$,
and any $\widehat \io \in {\cal C}^s(\T^S, \R^S \times \R^S \times h^\sigma_\bot)$, one has 
\begin{align*}
\partial_\vphi^\beta d f(\breve \io(\vphi))[\widehat \io(\vphi)] & = \sum_{\beta_1 + \beta_2 = \beta} c_{\beta_1, \beta_2} \partial_\vphi^{\beta_1}(d f (\breve \io(\vphi)))[\partial_\vphi^{\beta_2} \widehat \io(\vphi)]  
\end{align*}
where $c_{\beta_1, \beta_2}$ are combinatorial constants.
Each term in the latter sum is estimated individually.
For the term with $\beta_1 = 0$, $\beta_2 = \beta$ one gets
$$
\| d f(\breve \io)[\partial_\vphi^\beta \widehat \io] \|_{{\cal C}^0} \lessdot  \| f \|_{{\cal C}^1} \| \widehat \io\|_{{\cal C}^{|\beta|}} \lessdot  \| f \|_{{\cal C}^1} \| \widehat \io\|_{{\cal C}^{s}}
$$
whereas in the case $1 \le |\beta_1| \le s$, one has
$$
\partial_\vphi^{\beta_1} (d f(\breve \io (\vphi) ))
[\partial_\vphi^{\beta_2} \widehat \io(\vphi)] = \sum_{\begin{subarray}{c}
1 \leq m \leq |\beta_1| \\
\alpha_1 + \cdots + \alpha_m = \beta_1
\end{subarray}} c_{\a_1, \cdots , \a_m}  d^{m + 1} f(\breve \io(\vphi))[\partial_\vphi^{\alpha_1} \breve \io(\vphi), \cdots, \partial_\vphi^{\alpha_m} \breve \io(\vphi), \partial_\vphi^{\beta_2} \widehat \io(\vphi)]
$$
yielding
\begin{align*}
\| \partial_\vphi^{\beta_1} (d f(\breve \io))[\partial_\vphi^{\beta_2} \widehat \io]  \|_{{\cal C}^0} & \leq C(s, \| f \|_{{\cal C}^{s + 1}}) \sum_{\begin{subarray}{c}
1 \leq m \leq |\beta_1| \\
\alpha_1 + \cdots + \alpha_m = \beta_1
\end{subarray}} (1 + \| \io\|_{{\cal C}^{|\alpha_1|}}) \cdots (1 +  \|  \io\|_{{\cal C}^{|\alpha_m|}}) \| \widehat \io\|_{{\cal C}^{|\beta_2|}}\,. \nonumber
\end{align*}
Since $|\alpha_1| + \cdots + |\alpha_m| + |\beta_2| = |\beta_1| + |\beta_2| = |\beta|$, the interpolation estimates for ${\cal C}^s$-spaces
(Proposition \ref{interpolation scales}, Lemma \ref{smoothing family Cs}) and the estimate \eqref{claim convessita}, then lead to
\begin{align*}
(1 + \|   \io \|_{{\cal C}^{|\a_1|}})  \cdots ( 1 + \|  \io \|_{{\cal C}^{|\alpha_m|}})  \| \widehat \io\|_{{\cal C}^{|\beta_2|}}  & 
\leq C_s \|\widehat \io \|^{1 - \frac{|\beta_2|}{|\beta|}}_{{\cal C}^0} \| \widehat \io\|^{\frac{|\beta_2|}{|\beta|}}_{{\cal C}^{|\beta|}}
\prod_{j = 1}^m
( 1 + \|   \io \|_{{\cal C}^{0}})^{1 - \frac{|\a_j|}{|\beta |}}  ( 1 + \|   \io \|_{{\cal C}^{|\beta|}})^{\frac{|\a_j|}{|\beta |}}\,. 
\end{align*}
Using that $\frac{\sum_{j =1}^m |\alpha_j|}{|\beta|} = \frac{|\beta_1|}{|\beta|} = 1 - \frac{|\beta_2|}{|\beta|} $ it then follows that 
\begin{align*}
(1 + \|    \io \|_{{\cal C}^{|\a_1|}})  \cdots (1 + \| \io \|_{{\cal C}^{|\alpha_m|}})  \| \widehat \io\|_{{\cal C}^{|\beta_2|}}  & \leq  C(s, \| \io \|_{{\cal C}^0})  \cdot
\|\widehat \io \|_{{\cal C}^0}^{\frac{|\beta_1|}{|\beta|}} (1 + \|  \io \|_{{\cal C}^{|\beta|}} )^{\frac{|\beta_1|}{|\beta|}} \cdot
\| \widehat \io\|^{\frac{|\beta_2|}{|\beta|}}_{{\cal C}^{|\beta|}}  
\end{align*}
and by Young's inequality with exponents $|\beta|/ |\beta_1|$, $|\beta|/ |\beta_2|$ we conclude that 
\begin{align*} 
(1 + \|    \io \|_{{\cal C}^{|\a_1|}})  \cdots (1 + \| \io \|_{{\cal C}^{|\alpha_m|}})  \| \widehat \io\|_{{\cal C}^{|\beta_2|}}
& \leq C(s, \| \io \|_{{\cal C}^0}) \big( \| \widehat \io\|_{{\cal C}^{|\beta|}} + \| \io\|_{{\cal C}^{|\beta|}} \| \widehat \io\|_{{\cal C}^0} \big)\,.
\end{align*}
Combining the estimates  obtained so far, the estimate
 \eqref{differential composition operator} follows. 

\noindent
$(iii)$ Since by assumption, $V$ is convex, the claimed estimates for $\Delta_{12} f$ can be derived from the estimates of item $(ii)$ by the mean value theorem. 

\noindent
$(iv)$ The estimate \eqref{stima-lip-4} 
directly follows from the estimates of item $(iii)$.
\end{proof}
When combined with the inequalities \eqref{trivialembe},
Lemma~\ref{Lemma 2.4bis} leads to tame estimates in the case where
$\breve \io$ are maps in Sobolev spaces. We state them in the form needed in the sequel.

\begin{lemma}\label{composition in Sobolev}
{\bf (Tame estimates for the composition of maps in $H^s$-spaces)}
Assume that  $ f $ is in ${\cal C}^{s + s_{0}} (\T^S \times V , Y )$, 
where $ V $ is an open subset contained in $\R^S \times h^\s_\bot$
and $s \in \Z_{\ge 0} $. 
Then the following holds:

\smallskip

\noindent
$(i)$ There exists a constant $C(s) > 0$ (depending on $\|f\|_{{\cal C}^{s+ s_{0} }}$) so that for any  map $ \breve \io  (\vphi) = ( \vphi, 0, 0) + \io (\vphi)$ 
 with  $\io \in H^{s + 2s_{0}} (\T^S, \R^S \times \R^S \times h^\s_\bot) $, 
$\|\io\|_{s_{0}} \le 1$, and $ \breve \io  (\T^S) \subset \T^S \times V$,
the composition $ f \circ \breve \io$ is in $H^s (\T^S, Y ) $ and satisfies the tame estimate
\be\label{composition formula}
\| f \circ \breve \io \| _{s, Y}  \leq C(s) (1+ \| \io \| _{s +  2 s_0 } ) \, . 
\ee
$(ii)$ Assume in addition that $ f \in {\cal C}^{s+ s_{0} + 1} (\T^S \times V , Y )$ and $V$ is convex.
 Then there exists a constant $C(s) > 0$ (depending on $\|f\|_{{\cal C}^{s+ s_{0} +1}}$)
so that for any two maps,
$ \breve \io^{(a)} (\vphi) = ( \vphi, 0, 0) + \io^{(j)} (\vphi) $ with 
$\io^{(a)} \in  H^{s+2s_{0}} (\T^S, \R^S \times \R^S \times h^\s_\bot )$, 
$\|\io^{(a)}\|_{s_{0}} \le 1$, and $\breve \io^{(a)}(\T^S) \subset \T^S \times V$, $ a = 1 , 2$,
  the difference
$\Delta_{12} f = f\circ \breve \io^{(1)} - f\circ \breve \io^{(2)}$ 
satisfies the tame estimate 
$$
 \| \Delta_{12} f  \|_{s,Y} \leq  C(s) 
\cdot \big( \|\Delta_{12} \io\|_{s + 2s_{0}} + (\| \io^{(1)} \|_{s + 2s_{0}} + \| \io^{(2)}\|_{s + 2s_{0}}) \| \Delta_{12} \io \|_{s_{0}}  \big)
$$
where $\Delta_{12}\, \io := \io^{(1)} - \io^{(2)}$.

\noindent
$(iii)$ Assume in addition that $ f \in {\cal C}^{s+ s_{0} + 1} (\T^S \times V , Y )$ 
and $V$ is convex. Then
there exists a constant $C(s) > 0$ (depending on $\|f\|_{{\cal C}^{s+ s_{0} +1}}$) so that 
 for any map $ \breve \io  (\vphi) = ( \vphi, 0, 0) + \io (\vphi)$ with
$ \breve \io  (\T^S) \subset \T^S \times V$ and $\io \equiv  \io_\omega \in 
H^{s+2s_{0}} (\T^S, \R^S \times \R^S \times h^\s_\bot)$ having the property that it is
Lipschitz continuous in the parameter $\omega \in \Omega \subset \R^S$
and satisfies $\|\io\|^{\sup}_{s_{0}} \le 1$,
 the composition $f \circ \breve \io$ is in $H^s (\T^S, Y ) $, is Lipschitz continuous 
in $\omega$, and admits the tame estimate
$$
\| f \circ \breve \io \| _s^{ {\rm lip}}
      \leq C(s) \cdot
\big(\| \io \|^{ {\rm lip}} _{s + 2s_{0}} + \| \io \|^{ {\rm sup}} _{s + 2s_{0}}
 \| \io \|^{ {\rm lip}} _{s_{0}} \big) \, . 
$$
\end{lemma}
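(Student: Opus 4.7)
The plan is to deduce all three items from the corresponding ${\cal C}^s$-estimates in Lemma~\ref{Lemma 2.4bis} by sandwiching the Sobolev norms between ${\cal C}^s$-norms via the continuous embeddings \eqref{trivialembe} of Lemma~\ref{lemma D omega}, namely $\|\cdot\|_s \leq_s \|\cdot\|_{{\cal C}^{s+s_0}}$ on the target side and $\|\cdot\|_{{\cal C}^s} \leq_s \|\cdot\|_{s+s_0}$ on the source side. The key bookkeeping is that applying the ${\cal C}^{s+s_0}$-estimate to $f\circ \breve\io$ produces $\|\io\|_{{\cal C}^{s+s_0}}$ on the right, which then embeds into $\|\io\|_{s+2s_0}$: this accounts for the loss $2s_0$ in the statement.

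For (i), I would write
\[
\|f\circ\breve\io\|_{s,Y} \;\leq_s\; \|f\circ\breve\io\|_{{\cal C}^{s+s_0}(\T^S,Y)}
\]
and then invoke \eqref{ftamei} of Lemma~\ref{Lemma 2.4bis} with the index $s+s_0$ in place of $s$. Since $\|\io\|_{{\cal C}^0} \leq \|\io\|_{{\cal C}^{s_0}} \leq_s \|\io\|_{2s_0}$ and, under the standing hypothesis, $\|\io\|_{s_0}\leq 1$ (hence $\|\io\|_{{\cal C}^0}$ is bounded by a universal constant), the constant $C(s+s_0,\|f\|_{{\cal C}^{s+s_0}},\|\io\|_{{\cal C}^0})$ can be absorbed into a constant $C(s)$ depending only on $s$ and $\|f\|_{{\cal C}^{s+s_0}}$. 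Applying finally $\|\io\|_{{\cal C}^{s+s_0}} \leq_s \|\io\|_{s+2s_0}$ yields \eqref{composition formula}.

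For (ii), the convexity of $V$ allows to apply part (iii) of Lemma~\ref{Lemma 2.4bis} with index $s+s_0$, giving a bound on $\|\Delta_{12}f\|_{{\cal C}^{s+s_0}}$ of the form
\[
\|\Delta_{12}f\|_{{\cal C}^{s+s_0}} \leq C\bigl(\|\Delta_{12}\io\|_{{\cal C}^{s+s_0}} + (\|\io^{(1)}\|_{{\cal C}^{s+s_0}}+\|\io^{(2)}\|_{{\cal C}^{s+s_0}})\|\Delta_{12}\io\|_{{\cal C}^0}\bigr).
\]
Then I apply the embeddings \eqref{trivialembe} on both sides: $\|\Delta_{12}f\|_{s,Y} \leq_s \|\Delta_{12}f\|_{{\cal C}^{s+s_0}}$ on the left, and $\|\cdot\|_{{\cal C}^{s+s_0}} \leq_s \|\cdot\|_{s+2s_0}$, $\|\cdot\|_{{\cal C}^0}\leq_s \|\cdot\|_{s_0}$ on the right. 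The Lipschitz statement (iii) follows in the same way from part (iv) of Lemma~\ref{Lemma 2.4bis} by taking the supremum and the Lipschitz quotient in $\omega$.

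The only mildly delicate point is to ensure that the composition $f\circ\breve\io$ is literally in $H^s(\T^S,Y)$, not just in ${\cal C}^{s+s_0}$ via the embedding; this is automatic since ${\cal C}^{s+s_0}(\T^S,Y) \hookrightarrow H^s(\T^S,Y)$. There is no serious obstacle — all the work has already been done in Lemma~\ref{Lemma 2.4bis}, and the present lemma is really a repackaging in Sobolev scales.
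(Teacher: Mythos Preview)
Your proposal is correct and follows precisely the approach the paper intends: as the paper states just before Lemma~\ref{composition in Sobolev}, ``When combined with the inequalities \eqref{trivialembe}, Lemma~\ref{Lemma 2.4bis} leads to tame estimates in the case where $\breve \io$ are maps in Sobolev spaces.'' Your sandwiching argument via the embeddings $\|\cdot\|_{s}\leq_s\|\cdot\|_{{\cal C}^{s+s_0}}$ and $\|\cdot\|_{{\cal C}^{s+s_0}}\leq_s\|\cdot\|_{s+2s_0}$ is exactly this, and your bookkeeping of the $2s_0$ loss and of the dependence of the constants on $\|f\|_{{\cal C}^{s+s_0}}$ (respectively $\|f\|_{{\cal C}^{s+s_0+1}}$) is accurate; the only cosmetic point is that $\|\io\|_{{\cal C}^0}\leq_s\|\io\|_{s_0}$ follows directly from \eqref{trivialembe} with $s=0$, without passing through $\|\io\|_{{\cal C}^{s_0}}$.
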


\section{Setup and preliminary estimates}
\label{Estimates on H_e}

In this section we review properties of the Birkhoff coordinates, constructed in \cite{GK},
discuss asymptotic estimates of the dNLS frequencies, and describe the Hamiltonian setup
for the perturbation of the dNLS equation.
Furthermore we provide (tame) estimates of the composition and its derivatives
of torus embeddings with the dNLS Hamiltonian $H^{nls}$ and with the perturbation $ P$, 
needed in the sequel. 

\subsection{Normal form of the dNLS equation}\label{section birkhoff coordinates} 

Introduce the  $\R$-subspaces $H^\sigma_r$ of $H^\sigma \times H^\sigma$
and $h^\sigma_r$ of $h^\sigma \times h^\sigma$, defined by
 $$
 H^\sigma_r := \big\{ (u, \bar u) : u \in H^\sigma \big\}\, , \qquad
 h^\sigma_r := \Big\{ \big( (w_k)_{k \in \Z}, (\bar w_k)_{k \in \Z} \big) : (w_k)_{ k \in \Z} \in h^\sigma \Big\}
 $$
 with $ H^\sigma $ and $ h^\sigma $ defined  in \eqref{def:H-sigma} and \eqref{space:h-sigma}. 
 Denote by $ F_{nls} $  the following version of the  Fourier transform in the space variable
introduced in \cite{GK} 
 \begin{equation}\label{Fourier transform F nf}
 F_{nls} : H^0 \times H^0 \to h^0 \times h^0 \,, 
 \quad ( u^{(1)}, u^{(2)}) \to \Big( (- u^{(1)}_{- k})_{k \in \Z}\,,\, (- u^{(2)}_k)_{k \in \Z} \Big) 
 \end{equation}
 where the Fourier coefficients $ u^{(1)}_k $, $ u^{(2)}_k $ are defined as in \eqref{f:Fourier}. 
 Note that for $(u^{(1)}, u^{(2)}) \in H^0_r$, one has $  u^{(2)} = \overline u^{(1)}$, 
 implying that for any $ k \in \Z$, $ u_k^{(2)} = \overline  u_{- k}^{(1)}$. 
 Hence $F_{nls}$ maps $H_r^0$ into $h_r^0$. In fact, for any $\sigma \geq 0$, $F_{nls} : H^\sigma_r \to h^\sigma_r$ is a linear isomorphism. 
  The definition of $ F_{nls}$ in   \eqref{Fourier transform F nf}  is related to the specific choices made
  in the construction  of the Birkhoff coordinates in \cite{GK} -- see Theorem  \ref{Theorem Birkhoff coordinates} below. 

 In addition we introduce the bilinear bounded map 
 $$
 I : h^\sigma \times h^\sigma \to \ell^{1, 2 \sigma}\,, \quad \big( (z_k)_{k \in \Z}, (w_k)_{k \in \Z} \big) \to (z_k w_k)_{k \in \Z}\,,
 $$
 where $\ell^{1, 2 \sigma} \equiv \ell^{1, 2\sigma}(\Z, \C)$ denotes the weighted $\ell^1$ sequence space
 \be\label{norm-l12}
 \ell^{1, 2 \sigma} := \Big\{ (y_k)_{k \in \Z} \subseteq \C \, : \  
 \sum_{k \in \Z}\langle k \rangle^{2 \sigma}|y_k | < + \infty \Big\}\,.
 \ee
Clearly, for $ \s' \leq \s $ we have the continuous embedding $  \ell^{1, 2 \sigma} \hookrightarrow  \ell^{1, 2 \sigma' } $.
Note that for $(w_k)_{k \in \Z}$ in $h^\sigma_r$, 
$(I_k)_{k \in \Z} = (w_k \bar w_k)_{k \in \Z}$ is in the positive quadrant
 $$
 \ell^{1, 2 \sigma}_+ = \big\{ (y_k)_{k \in \Z} \in \ell^{1, 2 \sigma} : y_k \geq 0\,,\, \, \forall k \in \Z  \big\}\,.
 $$
The following theorem summarizes the pertinent
properties of the Birkhoff coordinates for the dNLS equation, used in the sequel. 

\begin{theorem}[\cite{GK}, \cite{KST}] {\bf (Birkhoff coordinates)}\label{Theorem Birkhoff coordinates}
$(i)$ There exists a neighbhourhood ${\cal W}$ in $H^0 \times H^0$ and an analytic map $\Phi^{nls} : {\cal W} \to h^0 \times h^0$ with the following properties: 
\begin{description}
\item[{\rm (BC1)}] For any $\sigma \in \Z_{\geq 0}$, $\Phi^{nls}(H_r^\sigma) \subseteq h^\sigma_r$ and $\Phi^{nls} : H^\sigma_r \to h^\sigma_r$
is a real analytic diffeomorphism. 
\item[{\rm (BC2)}] The map $\Phi ^{nls}$ is canonical on $H^0_r$ with respect to the Poisson bracket \eqref{Poisson brackets}, i.e., 
$\{ w_k, \bar w_k \} = - \ii$ for any $ k \in \Z$, whereas all other Poisson brackets between coordinate functions vanish.  
\item[{\rm (BC3)}] The Hamiltonian ${\cal H}^{nls}$ of dNLS, when expressed in Birkhoff coordinates on $h^1_r$, is a function of the actions $I = (I_k)_{k \in \Z} \in \ell_+^{1, 2}$ only and $H^{nls} = {\cal H}^{nls} \circ (\Phi^{nls})^{- 1} : \ell^{1, 2}_+ \to \R$ is real analytic.  
\item[{\rm (BC4)}] The differential $d_0\Phi^{nls}$ of $\Phi^{nls}$ at $0$ is the
Fourier transform $F_{nls}$.
\end{description}
\noindent
$(ii)$ The nonlinear parts $A^{nls} := \Phi^{nls} - F_{nls}$ of $\Phi^{nls}$ and $B^{nls} := (\Phi^{nls})^{- 1} - F_{nls}^{- 1}$ of $(\Phi^{nls})^{- 1}$ are one smoothing in the sense that for any $\sigma \in \Z_{\geq 1}$
$$
A^{nls} : H^\sigma_r \to h^{\sigma + 1}_r \quad \text{and} \quad B^{nls} : h^\sigma_r \to H^{\sigma + 1}_r
$$
are real analytic and bounded, meaning  that  the image of any bounded subset is bounded. 

The map $\Phi^{nls}$ is referred to as Birkhoff map and the coordinates $(w_k)_{k \in \Z}$
are called (complex) Birkhoff coordinates for the dNLS equation.
\end{theorem}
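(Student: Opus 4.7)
This theorem, attributed to \cite{GK} and \cite{KST}, is a foundational result whose full proof occupies a substantial portion of those references. My plan would be to reconstruct its proof in four main stages, built on the integrability of dNLS via its Zakharov--Shabat Lax pair.

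First, I would set up the spectral machinery. The dNLS equation admits a Lax representation $L_t = [L, B]$ where $L = L(u)$ is the Zakharov--Shabat operator acting on $L^2(\T_1, \C^2)$ with potential built from $u$. One studies the spectrum of $L$ with periodic boundary conditions: the periodic/antiperiodic eigenvalues form a sequence $\{\lambda_k^\pm\}_{k \in \Z}$ with asymptotics $\lambda_k^\pm = k\pi + O(1/k)$, and the Dirichlet eigenvalues $\{\mu_k\}_{k \in \Z}$ interlace them. All of these depend analytically on $u \in H^\sigma$ for any $\sigma \geq 0$. The \emph{actions} $I_k$ are defined as contour integrals (loops around the $k$th spectral gap $[\lambda_k^-, \lambda_k^+]$) of the quasi-momentum associated with the Riemann surface $\mu^2 = \Delta(\lambda)^2 - 4$, where $\Delta$ is the discriminant. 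The conjugate angles $\theta_k$ are defined via the Abel map on this Riemann surface.

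Second, I would establish (BC1)--(BC3). The analytic dependence of the spectrum on $u$ yields analyticity of $\Phi^{nls}$ on the full scale $H^\sigma_r$; proving that $\Phi^{nls}$ is a local diffeomorphism reduces to computing its differential, and proving it is global requires compactness of the isospectral sets (the tori ${\cal T}_I$ are compact and connected). The canonical nature (BC2) is verified by direct computation of the Poisson brackets $\{I_k, I_j\}$ and $\{I_k, \theta_j\}$ using Poisson commutation of trace formulas and fundamental identities involving the Wronskian of Floquet solutions. Property (BC3) follows because $\mathcal H^{nls}$ is a function of the spectrum, hence of the actions.

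Third, I would prove the linearization property (BC4). Expanding $L(u)$ around $u = 0$ and using first-order perturbation theory on the spectrum, one computes $d_0 I_k$ and $d_0 \theta_k$ in terms of Fourier coefficients of $u$; the normalization in \eqref{Fourier transform F nf} arises from the specific choice of Floquet basis in \cite{GK}. This is essentially a bookkeeping calculation.

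The main obstacle, and by far the most delicate step, is the one-smoothing property (ii). The plan is to push the asymptotic expansion of the Birkhoff coordinates to the next order. Writing the Birkhoff coordinates as integrals involving the normalized Floquet eigenfunctions $f_k^\pm(x, u)$, one exploits the asymptotic expansion
\[
f_k^\pm(x, u) = e^{\pm \ii k\pi x}\bigl(1 + r_k^\pm(x, u)\bigr), \qquad \|r_k^\pm\|_{H^\sigma} = O(\langle k \rangle^{-1}),
\]
which follows from a careful WKB-type analysis of the Zakharov--Shabat system with oscillating coefficients and integration by parts that exploits the smoothing effect of dividing by the large parameter $k$. Plugging this expansion into the formulas for $w_k$ one obtains $w_k = - \widehat{u}_{-k} + a_k(u)$ with $\sum_k \langle k\rangle^{2(\sigma+1)} |a_k(u)|^2 < \infty$, which is precisely the one-smoothing claim $A^{nls} : H^\sigma_r \to h^{\sigma+1}_r$. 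The analogous statement for the inverse map $B^{nls}$ follows from the same analysis by inverting the asymptotic relation, together with an application of the inverse function theorem in the scale of Sobolev spaces to transfer the smoothing estimate. Boundedness on bounded sets follows from the uniformity of the remainder estimates on bounded sets of potentials.
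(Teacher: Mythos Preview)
The paper does not actually prove this theorem. Its entire proof consists of the remark that item~(i) is the reformulation in complex coordinates $w_k = (x_k - \ii y_k)/\sqrt{2}$ of the corresponding theorem in \cite{GK} (stated there in real Birkhoff coordinates $(x_k, y_k)$), together with a bare reference to \cite{KST} for item~(ii). So there is no argument to compare against: the paper treats Theorem~\ref{Theorem Birkhoff coordinates} as established input from the literature.

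Your proposal is a reasonable high-level outline of the strategy actually carried out in \cite{GK} and \cite{KST}, and in that sense you are doing considerably more than the paper does. The spectral-theoretic framework (Zakharov--Shabat operator, periodic/antiperiodic spectrum, actions as contour integrals of the quasi-momentum, angles via the Abel map on the associated Riemann surface) and the mechanism behind the one-smoothing property (next-order asymptotics of the Floquet solutions producing $O(\langle k\rangle^{-1})$ remainders) are correct in spirit. Be aware, though, that each of these steps conceals substantial technical work that your sketch does not resolve: the global diffeomorphism claim in (BC1), the analyticity of the angle coordinates across gap closures, and the uniformity of the one-smoothing estimates on bounded sets all require delicate analysis occupying large parts of the cited references. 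For the purposes of this paper, simply citing \cite{GK} and \cite{KST} is the appropriate move.
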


\begin{pf}
Item $(i)$ of Theorem \ref{Theorem Birkhoff coordinates} is the reformulation of 
the corresponding theorem of \cite{GK} for the dNLS equation in complex coordinates 
\begin{equation}\label{real Birkhoff coordinates}
 w_k = (x_k - \ii y_k)/\sqrt{2} 
\,,  \qquad \forall k \in \Z\, ,
\end{equation}
where $x_k $, $y_k $ are the real coordinates of Theorem in \cite{GK}, page 5. 
For item $(ii)$, we refer to \cite{KST}.
\end{pf}

According to Theorem \ref{Theorem Birkhoff coordinates} $(i)$, the Hamiltonian equations of motion, when expressed in Birkhoff coordinates on $h^1_r$, take the form 
$$
\dot w_k = \{ w_k, H^{nls} \} = - \ii \partial_{\bar w_k} H^{nls} = - \ii \partial_{I_k} H^{nls} \cdot \partial_{\bar w_k} I_k \,.
$$
Since $I_k = w_k \bar w_k $, one then gets 
$$
\dot w_k = - \ii \omega_k^{nls} w_k \,, \qquad \omega_k^{nls} = \partial_{I_k} H^{nls}\,, \quad \forall k \in \Z\,.
$$
Note that by Theorem \ref{Theorem Birkhoff coordinates} $(i)$, $H^{nls} : \ell^{1, 2}_+ \to \R$ is real analytic and hence so are the frequencies  $\omega_k^{nls} = \partial_{I_k} H^{nls}$, 
$ k \in \Z$. In \cite{GK1}, asymptotic estimates for $\omega_k^{nls}$ as $|k| \to \infty$ were obtained  
$$
\omega_k^{nls} = 4 \pi^2 k^2 + O(1)\,.
$$
Actually, they can be refined on the space of actions $\ell^{1, 4}_+$, corresponding to potentials in $H^2_r$ (\cite{KST1}), 
$$
    \omega^{nls}_k = 4 \pi ^2 k^2 + 4 \sum _{j \in {\mathbb Z}}
      I_j +O(1/k)\,. 
$$
To state these results more precisely, let
  $\ell^\infty \equiv \ell^{\infty}(\Z, \C)$ denote the Banach space of complex valued, bounded sequences, endowed with the sup-norm $\| \cdot \|_{\ell^\infty} $. 
\begin{theorem}
{\bf (dNLS frequencies)}
\label{Corollary 2.2} 
There exists an open complex neighbhourhood $V$ of $\ell^{1, 2}_+$ in $\ell^{1, 2}$ so that the following holds: 

\noindent
$(i)$  The map
   \begin{equation}\label{asintotics frequencies A}
   V \rightarrow \ell ^\infty , \ (I_k)
      _{k \in {\mathbb Z}} \mapsto (\omega ^{ nls}_n(I) - 4 \pi ^2 n^2)
      _{n \in {\mathbb Z}}
   \end{equation}
is real analytic and bounded. Furthermore
 for any $I^{(0)} \in \ell^{1, 2}_+$
  there exist a complex neighbhourhood $V(I^{(0)}) \subseteq V$ and a constant $C > 0$ so that on $V(I^{(0)})$
\be\label{local-boundeness}
\sup_{n \in \Z}\Big\| \Big(\frac{1}{\langle k \rangle^2} \partial_{I_k} \omega^{nls}_n \Big)_{k \in \Z}  \Big\|_{\ell^\infty} \leq C \,.
\ee
As a consequence, for any $n \in \Z$, the map 
\be\label{mappa-analitica-Birk}
\ell_+^{1, 2} \to \ell^\infty\,, \quad I \mapsto \Big( \frac{1}{\langle k \rangle^2} \partial_{I_k} \omega^{nls}_n \Big)_{k \in \Z}
\ee
is real analytic and locally bounded uniformly in $n$.
More generally, for any $N \in \Z_{\ge1}$ and $I^{(0)} \in \ell^{1, 2}_+$, there exist 
a complex neighbhourhood $V_N(I^{(0)})  \subseteq V(I^{(0)})$ and a constant $C_N > 0$ so that on $V_N(I^{(0)})$
 \begin{equation}\label{tame composizione derivate ennesime frequenza-0}
\sup_{|\alpha|=N}\sup_{n \in \Z } \big| \big( \prod_{k \in \Z} \langle k \rangle^{-2\alpha_k}\big)
\partial_{I}^\alpha \omega_n^{nls}(I) \big| 
\leq C_N
\end{equation}
where the supremum is taken over all multi-indices $\alpha = (\alpha_k)_{k \in \Z}$ with $\alpha_k \in \Z_{\ge 0}$ and $|\alpha| := \sum_{k \in \Z} \alpha_k = N$.
\smallskip

\noindent
$(ii)$ The map 
\be\label{asymptotic expansion NLS}
 V \cap \ell^{1, 4} \to \ell^\infty\,, \quad I = (I_k)_{k \in \Z} \mapsto (r_n)_{n \in \Z}\,, 
\qquad  r_n := n \Big(\omega^{nls}_n - 4 \pi ^2 n^2 - 4 \sum _{k \in {\mathbb Z}}  I_k \Big) 
  \ee
 is real analytic and bounded.
\end{theorem}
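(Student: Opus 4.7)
\medskip

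\noindent \emph{Proof plan for Theorem \ref{Corollary 2.2}.} The strategy is to combine the already established analytic and asymptotic properties of the dNLS Hamiltonian in Birkhoff coordinates with Cauchy estimates in the Banach space $\ell^\infty$. By Theorem \ref{Theorem Birkhoff coordinates}(BC3), the Hamiltonian $H^{nls}: \ell^{1,2}_+ \to \R$ is real analytic, hence it extends to a holomorphic function on some complex neighborhood $V \subseteq \ell^{1,2}$ of $\ell^{1,2}_+$. The frequencies $\omega_n^{nls} = \partial_{I_n} H^{nls}$ inherit analyticity in $I$, and the leading asymptotics $\omega_n^{nls}(I) - 4\pi^2 n^2 = O(1)$ as $|n| \to \infty$ follow from \cite{GK1} (which in turn rests on the analysis of the periodic/Dirichlet spectrum of the associated Zakharov--Shabat operator, analytically depending on $I$).

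\medskip

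For item $(i)$, the first task is to verify that the map \eqref{asintotics frequencies A} is not merely coordinatewise analytic, but analytic as a map into $\ell^\infty$. I would argue this locally: fix $I^{(0)} \in \ell^{1,2}_+$ and shrink $V$ so that the family $\{\omega_n^{nls}(\cdot) - 4\pi^2 n^2\}_{n \in \Z}$ is uniformly bounded on $V(I^{(0)}) \subset V$ (using \cite{GK1} together with local boundedness of $H^{nls}$). Weak holomorphy together with local boundedness then yields holomorphy into $\ell^\infty$ by standard Banach-valued function theory. Estimate \eqref{local-boundeness} then follows from the Cauchy estimate: since the unit vector $e_k \in \ell^{1,2}$ has norm $\langle k \rangle^2$, the directional derivative satisfies
\[
\sup_n |\partial_{I_k} \omega_n^{nls}(I)|
= \sup_n |d\omega_n^{nls}(I)[e_k]|
\le \frac{C}{r}\,\langle k \rangle^2 \,,
\]
uniformly in $k$ and $n$ on a slightly smaller polydisc, with $r$ the radius of an admissible complex polydisc. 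This uniformity is exactly \eqref{local-boundeness}, and the analyticity and local boundedness of the map \eqref{mappa-analitica-Birk} is a direct reformulation. Iterating this Cauchy estimate $N$ times on a shrinking sequence of polydiscs gives \eqref{tame composizione derivate ennesime frequenza-0}.

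\medskip

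For item $(ii)$, the core input is the refined asymptotic expansion
\[
\omega_n^{nls}(I) = 4\pi^2 n^2 + 4 \sum_{k \in \Z} I_k + O(1/n), \qquad |n| \to \infty,
\]
which is proved in \cite{KST1} for $I \in \ell^{1,4}_+$ via a careful analysis of the normal form of the Zakharov--Shabat operator associated to potentials in $H^2_r$. Once this pointwise bound is available with uniform control on bounded subsets of $\ell^{1,4}_+$, analyticity of the resulting map into $\ell^\infty$ follows by the same weak-holomorphy plus locally-bounded argument as in item $(i)$, applied this time to the shifted sequence $(r_n)_{n \in \Z}$ on a complex neighborhood inside $V \cap \ell^{1,4}$.

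\medskip

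The main conceptual obstacle is the uniformity in $n$ of the asymptotic expansions: it is not enough to know the asymptotics for each fixed $I$, one needs bounds that are uniform on bounded subsets of the complex extension so that holomorphy into $\ell^\infty$ can be deduced. This uniformity, however, is precisely what the spectral-theoretic analysis of \cite{KST1} (and \cite{GK1} for the weaker bound) delivers, and once it is in hand, analyticity and the Cauchy-type derivative estimates are automatic.
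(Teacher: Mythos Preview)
Your proposal is correct and follows essentially the same route as the paper: cite the external results of \cite{KST1} (and \cite{GK1}) for the analyticity and boundedness of the map \eqref{asintotics frequencies A} into $\ell^\infty$, then obtain \eqref{local-boundeness} and \eqref{tame composizione derivate ennesime frequenza-0} by Cauchy estimates using that $(\ell^{1,2})^* = \ell^{\infty,-2}$ (equivalently, $\|e_k\|_{\ell^{1,2}} = \langle k\rangle^2$), and conclude analyticity of \eqref{mappa-analitica-Birk} via the locally-bounded-plus-weakly-holomorphic characterization of $\ell^\infty$-valued analytic maps. Item $(ii)$ is indeed a direct citation of \cite{KST1}, Theorem 2.3.
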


\begin{proof} 
$(i)$ The analyticity 
and boundedness of the map $(I_k)_{k \in \Z} \mapsto (\omega_n^{nls} - 4 \pi^2 n^2)_{n \in \Z}$ 
(cf  \eqref{asintotics frequencies A}) 
is proved in \cite{KST1}, Corollary 2.1. Let $I^{(0)} \in \ell^{1, 2}_+$. Then there exist a closed complex ball $B_r(I^{(0)}) \subseteq \ell^{1, 2}$ of radius $r > 0$, centered at $I^{(0)}$, and $C > 0$ so that for any $n \in \Z$, the real analytic map 
$
\omega^{nls}_n - 4 \pi^2 n^2 : B_r(I^{(0)}) \to \C
$
satisfies 
$$
\sup_{I \in B_r(I^{(0)}) }|\omega^{nls}_n (I) - 4 \pi^2 n^2| \leq C/2\,.
$$
 By Cauchy's estimate, the differential $d \omega^{nls}_n : \ell^{1, 2} \to \C$ satisfies the estimate 
$$
\sup_{I \in B_{r/2}(I^{(0)})} \| d \omega_n^{nls} \|_{(\ell^{1, 2})^*} \leq C/r
$$
where $(\ell^{1, 2})^*$ is the dual of $\ell^{1, 2}$ and given by $\ell^{\infty, -2}$. Hence 
$\big( \frac{1}{\langle k \rangle^2} \partial_{I_k} \omega_n^{nls}(I) \big)_{k \in \Z} \in \ell^\infty$ and 
$$
\sup_{I \in B_{r/2}(I^{(0)})} 
\Big\| \Big(\frac{1}{\langle k \rangle^2} \partial_{I_k} \omega_n^{nls}(I)\Big)_{k \in \Z} \Big\|_{\ell^{\infty}} 
\leq C/r\,, \quad \forall n \in \Z\,,
$$
proving \eqref{local-boundeness} with $V(I^{(0)}) := B_{r/2}(I^{(0)})$. The analyticity of 
the map \eqref{mappa-analitica-Birk} 
then follows from the characterization of analytic maps with
values in $\ell ^\infty$, see e.g.
\cite[Theorem A.3]{KP}.  The estimates \eqref{tame composizione derivate ennesime frequenza-0} 
of the higher derivatives of the dNLS frequencies $\omega_n^{nls}$
are proved in a similar way. Since we need to apply again Cauchy's estimate we might have to choose
the neighborhood $V_N(I^{(0)})$ smaller than $V(I^{(0)})$.

\noindent
$(ii)$ The claimed statement is proved in \cite{KST1}, Theorem 2.3.
\end{proof}

Finally we recall from \cite{GK1} that the dNLS frequencies satisfy Kolmogorov and Melnikov conditions. In \cite{GK1} (cf also \cite{KP2}), the Birkhoff normal form of the Hamiltonian ${\cal H}^{nls}$ of \eqref{Poisson brackets} has been computed near $u = 0$ up to order four, yielding
$$
\omega_n^{nls}(I) = 4 \pi^2 n^2 + 4 \sum_{k \in \Z} I_k - 2 I_n + O(I^2)\,.
$$
In particular, it follows that for any $S \subseteq \Z$ with $|S| <  \infty$, 
$$
{\rm det}\big( (\partial_{I_k} \omega_n^{nls})_{k, n \in S}\big)|_{I = 0} = - (- 2)^{|S|} (2 |S| - 1) \neq 0\,.
$$
Hence by the analyticity of $\omega_n^{nls}$ we have the following result. 

\begin{proposition}[\cite{GK1}] \label{Proposition 2.3}
{\bf (Non-degeneracy of dNLS frequencies)} For any $S \subset {\mathbb Z}$ with $|S| < \infty ,
\ \Pi _S \rightarrow {\mathbb R}, \ I \mapsto {\rm det}\big( (\partial _{I_k}
\omega _n^{nls})_{k,n \in S} \big) $ is a real analytic map
satisfying
\be\label{Kolmogorov-c}
{\rm det} \big( (\partial _{I_k} \omega _n^{nls})
_{k,n \in S} \big) \not= 0 \qquad \text{a.e. on} \qquad 
\Pi _S =  \big\{ (I_k)_{k \in \Z}\, :\, I_k > 0\,\, \forall k \in S\,; \,\,\, I_k = 0\,\, \forall k \in S^\bot \big\} \,.
\ee
\noindent In addition, for any $\ell \in {\mathbb Z}^S, \ a, b \in S^\bot $, with $a \neq b$, the following functions are real
analytic and satisfy a.e. on $\Pi_S$
\be\label{Melnikov-1-2}
\sum _{n\in S} \ell _n \omega^{nls}_n \pm \omega
^{nls}_a \not= 0\,, \quad \sum _{n \in S} \ell _n \omega^{nls}_n \pm (\omega
^{nls}_a + \omega^{nls}_b) \not= 0\,, \quad \sum _{n \in S} \ell _n \omega^{nls}_n + \omega
^{nls}_a - \omega^{nls}_b \not= 0\,.
\ee
\end{proposition}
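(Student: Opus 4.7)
\medskip
\noindent\textbf{Proof plan.}
The plan is to reduce both claims to an application of the principle of permanence for real analytic functions: a real analytic function on a connected open set which is not identically zero vanishes only on a set of Lebesgue measure zero. Identifying $\Pi_S$ with $\R_{>0}^S$, this is a connected open subset of $\R^S$ whose closure contains $I=0$. By Theorem~\ref{Corollary 2.2}, each $\omega_n^{nls}$ extends real analytically to a complex neighborhood $V$ of $\ell_+^{1,2}$, hence its restriction and all its partial derivatives are real analytic functions of the finitely many variables $(I_k)_{k\in S}$ on a neighborhood of the closure of $\Pi_S$. Consequently all the scalar combinations in \eqref{Kolmogorov-c}--\eqref{Melnikov-1-2} are real analytic on such a neighborhood, and the task is reduced to showing that each of them is not identically zero.

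For \eqref{Kolmogorov-c}, I would exploit the Birkhoff normal form expansion of ${\mathcal H}^{nls}$ near $u=0$, which by \cite{GK1,KP2} yields
\[
\omega_n^{nls}(I) \;=\; 4\pi^2 n^2 + 4\sum_{k\in\Z}I_k - 2 I_n + O(|I|^2)\,, \qquad n\in\Z\,.
\]
Differentiating gives $\partial_{I_k}\omega_n^{nls}(0) = 4 - 2\delta_{k,n}$ for $k,n\in S$, so the matrix at $I=0$ is $4E_S - 2\,{\rm Id}_S$, where $E_S$ denotes the $|S|\times|S|$ all-ones matrix. Its eigenvalues are $4|S|-2$ (on $(1,\dots,1)$) and $-2$ (on $(1,\dots,1)^\perp$), and its determinant equals $-(-2)^{|S|}(2|S|-1)\neq 0$. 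Since the determinant is real analytic on a neighborhood of $\overline{\Pi_S}$ and nonzero at $0$, it is not identically zero, and \eqref{Kolmogorov-c} follows.

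For \eqref{Melnikov-1-2}, I would handle each of the three combinations by first evaluating at $I=0$, which produces $4\pi^2$ times an integer combination of squares of the form $\sum_{n\in S}\ell_n n^2 \pm a^2$, $\sum_{n\in S}\ell_n n^2 \pm (a^2+b^2)$, and $\sum_{n\in S}\ell_n n^2 + a^2 - b^2$, respectively. Whenever such a constant term is nonzero, the claim is immediate. The main obstacle is the remaining finitely parametrized family of resonant triples $(\ell,a,b)$ for which the constant term vanishes; the most delicate subcase is the third combination with $\ell=0$ and $a=-b\neq 0$, where the linear term in the actions $I_k$ ($k\in S$) also vanishes identically because $a,b\in S^\bot$ imply $\partial_{I_k}(\omega_a^{nls}-\omega_b^{nls})(0)=4-4=0$ for every $k\in S$. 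For these resonant cases the idea is to resort to the higher order Birkhoff normal form of the dNLS Hamiltonian computed in \cite{GK1,KP2}: pushing the expansion of $\omega_n^{nls}(I)$ one order further produces a quadratic form in the $I_k$'s, $k\in S$, whose coefficients depend explicitly on $n$ through the second order Birkhoff coefficients, and which is not identically zero for the relevant indices. Combined with the analyticity principle recalled at the beginning, this finishes the proof. The technical heart of the argument, namely the explicit computation of the higher order Birkhoff coefficients needed to treat the resonant cases, is precisely what is carried out in \cite{GK1}, and I would quote it rather than reproduce it here.
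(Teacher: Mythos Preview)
Your proposal is correct and follows exactly the approach taken in the paper. The paper itself only sketches the argument for \eqref{Kolmogorov-c} immediately before the statement---computing $\partial_{I_k}\omega_n^{nls}(0)=4-2\delta_{k,n}$ from the Birkhoff normal form and obtaining $\det = -(-2)^{|S|}(2|S|-1)\neq 0$, then invoking analyticity---and cites \cite{GK1} for the Melnikov conditions \eqref{Melnikov-1-2}; your plan to evaluate at $I=0$, isolate the resonant cases, and defer the higher order Birkhoff computation to \cite{GK1} is precisely this.
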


\subsection{Hamiltonian setup}\label{Hamiltonian setup}

Recall that in \eqref{def:M-sigma} we introduced as phase space 
$$
M^\sigma := \T^S \times U_0 \times h_\bot^\sigma\,, \quad h^\sigma_\bot = h^\sigma(S^\bot, \C)\,,
$$
with coordinates denoted by $(\theta, y, z)$. 
Note that the tangent space of $M^\sigma$ is independent of the base point $(\theta, y, z)$ of $M^\sigma$. It is denoted by $T M^\sigma$ and given by
$$
T M^\sigma = \R^S \times \R^S \times h^\sigma_\bot\,.
$$
Denote by ${\rm Id}_\bot$  the identity operator on $h^\sigma_\bot$ 
and by ${\rm Id}_S$ the one  on $\R^S$.
The Poisson bracket between functionals $F, G : M^\sigma \to \R$ with sufficiently regular gradient
 is given by 
\begin{equation}\label{Poisson action angle}
\{ F, G \} :=  \begin{pmatrix}
\nabla_\theta F \\
\nabla_y F
\end{pmatrix}\, \cdot\, \begin{pmatrix}
0 & {\rm Id}_S \\
- {\rm Id}_S & 0
\end{pmatrix}\begin{pmatrix}
\nabla_\theta G \\
\nabla_y G
\end{pmatrix}  +  \begin{pmatrix}
\nabla_z F \\
\nabla_{\bar z} F
\end{pmatrix}\,\cdot\,  \begin{pmatrix}
0 & - \ii \,\, {\rm Id}_\bot \\
\ii\,\, {\rm Id}_\bot & 0
\end{pmatrix}\begin{pmatrix}
\nabla_z G \\
\nabla_{\bar z} G
\end{pmatrix} \,,
\end{equation}
where in the latter expression, the dot denotes the bilinear form on 
$(h_\bot^\sigma)^2 \times (h_\bot^\sigma)^2$ given by  
\be\label{bilinear-pairing}
\big( (w, \tilde w)\,, \, (z, \tilde z) \big) \mapsto \begin{pmatrix}
w \\
\tilde w
\end{pmatrix} \cdot \begin{pmatrix}
z \\
\tilde z
\end{pmatrix} := w \cdot z + \tilde w \cdot \tilde z\, , \qquad 
w \cdot z   = \sum_{k \in S^\bot} w_{k} z_{ k} \in \C 
\ee
and $\nabla_z F = (\partial_{z_k} F)_{k \in S^\bot}$, 
$ \nabla_{\bar z} F = (\partial_{\bar z_k} F)_{k \in S^\bot} $ with 
$$
\partial_{z_k} F := \frac{1}{ \sqrt{2}} (\partial_{ x_k} F + \ii \partial_{y_k} F) \, , \quad
\partial_{\bar z_k} F := \frac{1}{ \sqrt{2}} (\partial_{x_k } F - \ii \partial_{y_k} F) 
$$
and   $ x_k =  \sqrt{2} {\rm Re} z_k$,  $y_k = -   \sqrt{2} {\rm Im} z_k$ defined as in  \eqref{real Birkhoff coordinates}.
For such a functional $F$, the corresponding Hamiltonian vector field is written as 
\begin{equation}\label{Hamiltonian vector field theta y z}
X_F := (\nabla_y F, - \nabla_\theta F, - \ii \nabla_{\bar z} F) \, .
\end{equation}
The Hamiltonian vector field $ X_F $ may be in $ T M^{\sigma } $ or
lose regularity as  the dNLS Hamiltonian vector field  which takes values in $T M^{\sigma - 2}$.
In complex notations, the differential $d X_F$ of the vector field $X_F$ is given by 
$$
\begin{pmatrix}
\widehat \theta \\
 \widehat y \\
  \widehat z
\end{pmatrix}
\mapsto
\begin{pmatrix}
\partial_\theta \nabla_y F [\widehat \theta] + \partial_y \nabla_y F[\widehat y] + \partial_z \nabla_y F[\widehat z] + \partial_{\bar z} \nabla_y F[ \, \widehat {\bar z} \, ] \\
- \partial_\theta \nabla_\theta F [\widehat \theta] - \partial_y \nabla_\theta F[\widehat y] - \partial_z \nabla_\theta F[\widehat z] - \partial_{\bar z} \nabla_\theta F[ \, \widehat {\bar z} \, ] \\
- \ii \partial_\theta \nabla_{\bar z} F [\widehat \theta] - \ii \partial_y \nabla_{\bar z} F[\widehat y] - \ii \partial_z \nabla_{\bar z} F[\widehat z] - \ii \partial_{\bar z} \nabla_{\bar z} F[ \, \widehat {\bar z} \, ]
\end{pmatrix}\,
$$
where $\partial_\theta$, $\partial_y$,  $\partial_z$, and $\partial_{\bar z}$ are defined in the standard way, i.e., for instance
$$
\partial_z \nabla_y F[\widehat z] = \sum_{k \in S^{\bot}} \widehat z_k \partial_{z_k} \nabla_y F\,.
$$
 It turns out to be convenient to add to the domain of $d X_F$ as fourth component the complex conjugate of the third one and to extend the resulting map to the following linear operator defined on $\R^S \times \R^S \times h^\sigma_\bot \times h^\sigma_\bot$, still denoted by $d X_F$,
\begin{equation}\label{differential XF (2)}
d X_F : \begin{pmatrix}
\widehat \theta \\
 \widehat y \\
  \widehat z_1 \\
  \widehat z_2
\end{pmatrix}
\mapsto
\begin{pmatrix}
\partial_\theta \nabla_y F [\widehat \theta] + \partial_y \nabla_y F[\widehat y] + \partial_z \nabla_y F[\widehat z_1] + \partial_{\bar z} \nabla_y F[\widehat z_2] \\
- \partial_\theta \nabla_\theta F [\widehat \theta] - \partial_y \nabla_\theta F[\widehat y] - \partial_z \nabla_\theta F[\widehat z_1] - \partial_{\bar z} \nabla_\theta F[\widehat z_2] \\
- \ii \partial_\theta \nabla_{\bar z} F [\widehat \theta] - \ii \partial_y \nabla_{\bar z} F[\widehat y] - \ii \partial_z \nabla_{\bar z} F[\widehat z_1] - \ii \partial_{\bar z} \nabla_{\bar z} F[\widehat z_2] \\
 \ii \partial_\theta \nabla_{ z} F [\widehat \theta]  + \ii \partial_y \nabla_{ z} F[\widehat y]  + \ii \partial_{ z} \nabla_{z} F[\widehat z_1] + \ii \partial_{ \bar z} \nabla_{ z} F[\widehat z_2]
\end{pmatrix}\,.
\end{equation} 
Here we use that by assumption $F$ is real valued and 
hence $\overline{\nabla_z F} = \nabla_{\bar z} F$.

The  symplectic form corresponding to the Poisson bracket \eqref{Poisson action angle} is the restriction to the real subspace $\{ (\theta, y, z, \bar z) : (\theta, y, z) \in T M^\sigma \}$ of $\R^S \times \R^S \times h^\sigma_\bot \times h^\sigma_\bot$ of the skew symmetric $\C$-bilinear form 
$$
\big( \R^S \times \R^S \times h^\sigma_\bot \times h^\sigma_\bot \big) \times \big( \R^S \times \R^S \times h^\sigma_\bot \times h^\sigma_\bot \big) \to \C\, , 
$$
associating to two elements $(\widehat \theta^{(i)}, \widehat y^{(i)}, \widehat z^{(i)}_1, \widehat z_2^{(i)} )$, $i = 1,2 $, the complex number 
\begin{equation}\label{symplectic form per esteso}
 \begin{pmatrix}
0 & {\rm Id}_S \\
- {\rm Id}_S & 0
\end{pmatrix}^{- 1} \begin{pmatrix}
\widehat \theta^{(1)} \\
\widehat y^{(1)}
\end{pmatrix}\, \cdot \, \begin{pmatrix}
\widehat \theta^{(2)} \\
\widehat y^{(2)}
\end{pmatrix}  +  \begin{pmatrix}
0 & - \ii \,\,{\rm Id}_\bot \\
\ii\,\, {\rm Id}_\bot & 0
\end{pmatrix}^{- 1} \begin{pmatrix}
\widehat z^{(1)}_1 \\
\widehat z_2^{(1)}
\end{pmatrix}\,\cdot \,\begin{pmatrix}
\widehat z_1^{(2)} \\
\widehat z_2^{(2)}
\end{pmatrix}\,.
\end{equation}
This symplectic form $ \Lambda $ can be expressed as in \eqref{symplectic-2-form}.

It immediately follows from the above definition that for any $Y \in T M^\sigma$ and any ${\cal C}^1$ functional $F : M^\sigma\to \C$ with sufficiently regular gradient, one has $d F(Y) = \Lambda (X_F, Y)$. We also introduce the Liouville
 1-form $\lambda : T M^\sigma \to \C$ defined by
\begin{equation}\label{contact 1 form}
\lambda = - \sum_{k \in S} y_k d \theta_k + \ii \sum_{k \in S^\bot} z_k d \bar z_k\,.
\end{equation}
At any given point $(\theta, y, z)$, $\lambda$ is the bounded $\R$-linear functional 
$$
T M^\sigma \to \C\,, \quad (\widehat \theta, \widehat y , \widehat z) \to - \sum_{k \in S} y_k \widehat \theta_k + \ii \sum_{k \in S^\bot} z_k \overline{\widehat z}_k.
$$
A diffeomorphism $\Gamma : {\cal U} \to M^\sigma$, defined on an open subset ${\cal U}$ of $M^\sigma$, is said to be symplectic if $\Gamma^* \Lambda = \Lambda$ at any point $(\theta, y, z) \in {\cal U}$.
Note that $h^\sigma_\bot$ is a symplectic subspace of $h^\sigma$. Indeed the pull back $\Lambda_\bot$ of the symplectic form $\Lambda$ by the inclusion $h^\sigma_\bot \hookrightarrow M^\sigma$, is given by 
$$
\Lambda_\bot =  \ii \sum_{k \in S^\bot} d z_k \wedge d \bar z_k\,,
$$
which is clearly a non-degenerate bilinear form on $h^\sigma_\bot$. 
Now we consider $\vphi$-dependent canonical transformations on $h^\sigma_\bot$.

\begin{definition}\label{linear symplectic transformations}
{\bf (Symplectic operator)}
An operator valued map $\T^S \to {\cal L}(h^\sigma_\bot)$ 
of the form $h \mapsto \Phi_1(\vphi) h + \Phi_2(\vphi) \bar h$
is said to be symplectic if $\Phi(\vphi)^* \Lambda_\bot = \Lambda_\bot$ for any $\varphi \in \T^S$. 
The map $\Phi(\vphi)$, when extended as a $\C$-linear map to  
$h^\sigma_\bot  \times h^\sigma_\bot$,
\begin{equation}\label{mappa simpatetica doppie coordinate}
h^\sigma_\bot  \times h^\sigma_\bot \to h^\sigma_\bot \times h^\sigma_\bot\,, \quad   \begin{pmatrix}
h_1 \\
h_2
\end{pmatrix} \mapsto \begin{pmatrix}
\Phi_1(\vphi) & \Phi_2(\vphi) \\
\overline{\Phi_2(\vphi)} & \overline{\Phi_1(\vphi)}
\end{pmatrix}
\begin{pmatrix}
h_1 \\
h_2
\end{pmatrix}
\end{equation}
 is also denoted by $\Phi(\vphi)$. We denote by $ \overline{\Phi_i} $ the operators 
given by  $ \overline{\Phi_i} ( h) := \overline{\Phi_i ( \bar h) } $ where
 $ \bar h := ( {\bar h}_k )_{k \in S^\bot } $. 
\end{definition}
In view of \eqref{symplectic form per esteso}, the property of $\Phi(\vphi)$ being symplectic can be expressed in terms of the map \eqref{mappa simpatetica doppie coordinate} as follows  
\begin{equation}\label{condizione simpletticita matrice}
\Phi(\vphi)^t {\mathbb J}_2 \Phi(\vphi) = {\mathbb  J}_2\,,
\end{equation}
where 
\begin{equation}\label{notazione J bot}
\Phi(\vphi)^t = \begin{pmatrix}
\Phi_1(\vphi)^t & \overline{\Phi_2(\vphi)}^t \\
{\Phi_2(\vphi)^t}  & \overline{\Phi_1(\vphi)}^t
\end{pmatrix}\,, \qquad {\mathbb J}_2 := \ii \begin{pmatrix}
0 & {\rm Id}_{\bot} \\
- {\rm Id}_\bot & 0
\end{pmatrix} 
\end{equation}
where $ [\Phi_i (\vphi)]^t $ denotes the transpose  
with respect to the bilinear form defined in \eqref{bilinear-pairing}. 

Next, let us consider a family of quadratic Hamiltonians $F(\vphi, \cdot) : h^\sigma_\bot \to \R$, $\vphi \in \T^S$, of the form
\be\label{quadratic formH}
F(\vphi, z) = \bar z \cdot A_1(\vphi) z +  \frac12 \bar z \cdot A_2(\vphi) \bar z + \frac12 z \cdot A_3(\vphi) z \, , \quad z \in h^\sigma_\bot\,, 
\ee
where $A_i (\vphi) $, $1 \le i \le 3$,  $ \vphi \in \T^S $,  are (possibly unbounded) linear operators on $ h^\sigma_\bot $.
Without loss of generality we may require that for $i= 2, 3$, one has  $A_i^t = A_i$.
The assumption that $F$ is real valued implies that
$$
A_1^* = A_1 \, , \quad \bar A_2 = A_3 \, ,
$$
where for any $\vphi \in \T^S $, $ A_1^*(\vphi) $ is the adjoint operator of $ A_1(\vphi)$ with respect 
to the standard complex scalar product on $ h_\bot^0 $,  
\be\label{def:scalar-product}
(  z, w ) := \sum_{n \in S^\bot} z_n {\bar w}_n \, , \quad \forall z, w  \in h_\bot^0  \, . 
\ee
Note that $ A_1 = \pa_{ z } \nabla_{\bar z} F$, $ A_2 = \pa_{\bar z} \nabla_{\bar z} F$ 
and $ A_3 = \pa_{z } \nabla_z F $. 
The $\vphi$-dependent Hamiltonian vector field $X_F,$ associated to the Hamiltonian $F$, is the map $\vphi \mapsto X_F(\vphi)$ with $X_F(\vphi)$ given for any $\vphi \in {\mathbb T}^S$ by 
$$
h^\sigma_\bot \to h^\sigma_\bot\,, \quad h \mapsto  - \ii (A_1(\vphi) h + A_2(\vphi) \bar h)\,.
$$
In the case at hand, the formula analogous to \eqref{differential XF (2)} is then given by 
$$
- \left(
\begin{array}{cc}
 \ii {\rm Id}_\bot & 0    \\
0  & - \ii   {\rm Id}_\bot\\
\end{array} \right)
\left(
\begin{array}{cc}
A_1  & A_2    \\
\overline A_2  &  \overline A_1   \\
\end{array} \right) \, , \quad A_1^* = A_1 \, ,   \quad A_2^t = A_2 \,   . 
$$

\begin{definition}{\bf (Hamiltonian operator)} \label{def:Hamiltonian op}
The operator $ J A(\vphi) $ where
\begin{equation}\label{operatore Hamiltoniano lineare 2} 
 J := \begin{pmatrix}
\ii {\rm Id}_\bot & 0 \\
0 & - \ii {\rm Id}_\bot
\end{pmatrix}\,, \quad A(\vphi) := \begin{pmatrix}
A_1(\vphi) & A_2(\vphi) \\
\overline{A_2(\vphi)} & \overline{A_1(\vphi)}
\end{pmatrix} \,, \quad 
 A_1^* = A_1 \, ,   \quad A_2^t = A_2 \,   ,
\end{equation}
as well as 
 the operator  ${\frak L}(\vphi)$ 
defined, for $\vphi \in \T^S$,  by
\begin{equation}\label{operatore Hamiltoniano lineare}
{\frak L}(\vphi) = \omega \cdot \partial_\vphi {\mathbb I}_2 + J  A(\vphi)
\,, \qquad {\mathbb I}_2 = \begin{pmatrix}
{\rm Id}_\bot & 0 \\
0 & {\rm Id}_\bot
\end{pmatrix}
\end{equation}
are referred to as linear Hamiltonian operators associated to the Hamiltonian $ F $ in \eqref{quadratic formH}. 
\end{definition}

Equivalently  the Hamiltonian operator $J A(\vphi)$ can be written in the form 
\begin{equation}\label{forma diversa hamiltoniana cal Q (vphi)}
 J A(\vphi) = {\mathbb  J}_2 {\mathbb A} (\vphi)\,, \quad  {\mathbb A} (\vphi) := \begin{pmatrix}
\overline{A_2(\vphi)} & \overline{A_1(\vphi)} \\
A_1(\vphi) & A_2(\vphi)
\end{pmatrix}\, \quad {\mathbb A}^t (\vphi) = {\mathbb A}(\vphi)
\end{equation}
where ${\mathbb  J}_2$ is defined in \eqref{notazione J bot} and 
${\mathbb A}^t(\vphi) = {\mathbb A}(\vphi)$, since $A_1^t = \bar A_1$ and $A_2^t = A_2$. 

\begin{lemma}\label{transformation of Hamiltonian operators}
Assume that $\Phi \in {\cal C}^1(\T^S, {\cal L}(h^\sigma_\bot \times h^\sigma_\bot))$ is a map
with $\Phi(\varphi)$ a linear symplectic transformation for any $\vphi \in \T^S $ 
(cf Definition~\ref{linear symplectic transformations}) and $ {\frak L} (\vphi)  $ a Hamiltonian operator (cf Definition \ref{def:Hamiltonian op}). 
Then the transformed operator 
${\frak L}_+(\vphi) := \Phi^{- 1}(\vphi) {\frak L}(\vphi) \Phi(\vphi)$   is Hamiltonian 
and  of the form 
${\frak L}_+(\vphi) = \omega \cdot \partial_\vphi {\mathbb I}_2
+ {\mathbb J}_2 {\mathbb A}_+(\vphi)$,  
where 
\be\label{def:A+}
{\mathbb A}_+(\vphi) := \Phi^t(\vphi) {\mathbb A}(\vphi) \Phi(\vphi) 
+ \Phi^t(\vphi) {\mathbb J}_2 \,  (\omega \cdot \partial_\vphi)(\Phi(\vphi))\,,
\ee
and satisfies ${\mathbb A}_+(\vphi) = {\mathbb A}^t_+(\vphi)$. 
Here we denoted by $ \Phi^{- 1}(\vphi) $ the operator $\Phi^{- 1}(\vphi) := (\Phi(\vphi))^{-1} $ for any  $ \vphi \in \T^S $. 
\end{lemma}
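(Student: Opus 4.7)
The plan is to proceed by direct computation, exploiting the symplectic identity $\Phi^t {\mathbb J}_2 \Phi = {\mathbb J}_2$ from \eqref{condizione simpletticita matrice} together with the algebraic facts ${\mathbb J}_2^2 = {\mathbb I}_2$ and ${\mathbb J}_2^t = -{\mathbb J}_2$, both of which follow from the definition \eqref{notazione J bot}. In particular, multiplying the symplectic condition on the left by ${\mathbb J}_2$ and on the right by $\Phi^{-1}$ yields the useful inverse formula $\Phi^{-1} = {\mathbb J}_2 \Phi^t {\mathbb J}_2$, so that $\Phi^{-1} {\mathbb J}_2 = {\mathbb J}_2 \Phi^t$.

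Next I would split ${\frak L}_+(\vphi) = \Phi^{-1}(\om\cdot\pa_\vphi {\mathbb I}_2) \Phi + \Phi^{-1} {\mathbb J}_2 {\mathbb A} \Phi$ and handle the two pieces separately. Applying the Leibniz rule to the transport part gives
$$
\Phi^{-1}(\om\cdot\pa_\vphi)\Phi = \om\cdot\pa_\vphi {\mathbb I}_2 + \Phi^{-1} (\om\cdot\pa_\vphi \Phi)\,,
$$
and the identity $\Phi^{-1} {\mathbb J}_2 = {\mathbb J}_2 \Phi^t$ then lets me factor ${\mathbb J}_2$ out on the left of the non-derivative terms:
$$
\Phi^{-1} {\mathbb J}_2 {\mathbb A}\Phi + \Phi^{-1}(\om\cdot\pa_\vphi\Phi)
= {\mathbb J}_2 \Phi^t {\mathbb A} \Phi + {\mathbb J}_2 \Phi^t {\mathbb J}_2 (\om\cdot\pa_\vphi \Phi)\,.
$$
This produces exactly the expression \eqref{def:A+} and shows ${\frak L}_+(\vphi) = \om\cdot\pa_\vphi {\mathbb I}_2 + {\mathbb J}_2 {\mathbb A}_+(\vphi)$.

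It remains to verify ${\mathbb A}_+^t = {\mathbb A}_+$. The first summand $\Phi^t {\mathbb A}\Phi$ is symmetric because ${\mathbb A}^t = {\mathbb A}$ by \eqref{forma diversa hamiltoniana cal Q (vphi)}. For the second summand, transposing and using ${\mathbb J}_2^t = -{\mathbb J}_2$ gives
$$
\bigl(\Phi^t {\mathbb J}_2 (\om\cdot\pa_\vphi \Phi)\bigr)^t = -(\om\cdot\pa_\vphi \Phi^t)\, {\mathbb J}_2 \Phi\,.
$$
The key observation is that differentiating the symplectic identity $\Phi^t {\mathbb J}_2 \Phi = {\mathbb J}_2$ with respect to $\om\cdot\pa_\vphi$ yields
$$
(\om\cdot\pa_\vphi \Phi^t)\,{\mathbb J}_2 \Phi + \Phi^t {\mathbb J}_2 (\om\cdot\pa_\vphi \Phi) = 0\,,
$$
so the right hand side of the previous display coincides with $\Phi^t {\mathbb J}_2 (\om\cdot\pa_\vphi \Phi)$. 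Hence both terms are symmetric, ${\mathbb A}_+^t = {\mathbb A}_+$, and ${\frak L}_+$ is Hamiltonian in the sense of Definition \ref{def:Hamiltonian op}. No step looks to be an obstacle: the entire lemma is a purely algebraic manipulation in which the only slightly delicate point is remembering to differentiate the symplectic identity along $\om\cdot\pa_\vphi$ to obtain the symmetry of the $(\om\cdot\pa_\vphi \Phi)$-term.
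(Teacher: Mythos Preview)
Your proof is correct and follows essentially the same approach as the paper: both derive $\Phi^{-1}{\mathbb J}_2 = {\mathbb J}_2\Phi^t$ from the symplectic condition, apply the Leibniz rule to the transport term, and establish the symmetry of the second summand of ${\mathbb A}_+$ by differentiating $\Phi^t{\mathbb J}_2\Phi = {\mathbb J}_2$ along $\om\cdot\pa_\vphi$. The presentation differs only cosmetically (you write $\Phi^{-1} = {\mathbb J}_2\Phi^t{\mathbb J}_2$ explicitly, while the paper inserts ${\mathbb J}_2^2 = {\mathbb I}_2$ at the relevant step), but the argument is the same.
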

\begin{proof}
Using the representation \eqref{forma diversa hamiltoniana cal Q (vphi)} for the Hamiltonian operator 
$ {\frak L}(\vphi) = \omega \cdot \partial_\vphi {\mathbb I}_2 + {\mathbb J}_2 {\mathbb A}(\vphi) $
% ${\frak L}$, 
we have 
\begin{equation}\label{campo trasformato generale}
{\frak L}_+(\vphi) = \Phi^{- 1}(\vphi) {\frak L}(\vphi) \Phi(\vphi) 
= \omega \cdot \partial_\vphi {\mathbb I}_2
+ \Phi^{- 1}(\vphi) {\mathbb J}_2 {\mathbb A}(\vphi) \Phi(\vphi)
+ \Phi^{- 1}(\vphi) (\omega \cdot \partial_\vphi) (\Phi(\vphi)) \,.
\end{equation}
By the condition \eqref{condizione simpletticita matrice} and using that 
${\mathbb J}_2^{- 1} = {\mathbb J}_2$, one has 
$ \Phi^{- 1}(\vphi) {\mathbb J}_2 = {\mathbb J}_2 \Phi^t(\vphi) $, 
yielding
\begin{equation}\label{campo trasformato generale 1}
\Phi^{- 1}(\vphi){\mathbb J}_2 {\mathbb A}(\vphi) \Phi(\vphi) 
=  {\mathbb J}_2 \Phi^t(\vphi) {\mathbb A}(\vphi) \Phi(\vphi)\,.
\end{equation}
Since ${\mathbb J}_2^2 = {\mathbb I}_2$, and using that 
by \eqref{condizione simpletticita matrice} ${\mathbb J}_2  \Phi^{- 1}(\vphi) = {\Phi^t(\vphi) {\mathbb J}_2},$
 we have 
\begin{equation}\label{campo trasformato generale 2}
\Phi^{- 1}(\vphi) (\omega \cdot \partial_\vphi)(\Phi(\vphi)) 
= {\mathbb J}_2 \big( {\mathbb J}_2  \Phi^{- 1}(\vphi) (\omega \cdot \partial_\vphi)(\Phi(\vphi))  \big) 
= {\mathbb J}_2 \big( \Phi^t(\vphi) {\mathbb J}_2   (\omega \cdot \partial_\vphi)(\Phi(\vphi))  \big)\,.
\end{equation}
Combining \eqref{campo trasformato generale}, \eqref{campo trasformato generale 1}, \eqref{campo trasformato generale 2} we get the claimed formula
$ {\frak L}_+(\vphi) = \omega \cdot \partial_\vphi {\mathbb I}_2 
+ {\mathbb J}_2 {\mathbb A}_+(\vphi) $ with 
$ {\mathbb A}_+(\vphi) $ given in \eqref{def:A+}.

It remains to verify that ${\mathbb A}_+(\vphi) = {\mathbb A}^t_+(\vphi)$. To see that $\Phi^t(\vphi) {\mathbb J}_2   (\omega \cdot \partial_\vphi)(\Phi(\vphi))$ is symmetric, 
note that by \eqref{condizione simpletticita matrice}, for any $\vphi \in \T^S$, 
$$
0 = (\omega \cdot \partial_\vphi) \big( \Phi^t(\vphi) {\mathbb J}_2 \Phi(\vphi) \big) 
= (\omega \cdot \partial_\vphi)(\Phi^t(\vphi) ) {\mathbb J}_2 \Phi(\vphi) 
+ \Phi^t(\vphi){\mathbb J}_2 (\omega \cdot \partial_\vphi)(\Phi(\vphi)) \,,
$$
implying that 
$$
\Phi^t(\vphi) {\mathbb J}_2   (\omega \cdot \partial_\vphi)(\Phi(\vphi)) 
= - (\omega \cdot \partial_\vphi)(\Phi^t(\vphi) ) {\mathbb J}_2 \Phi(\vphi) 
= (\omega \cdot \partial_\vphi)(\Phi^t(\vphi) ) {\mathbb J}_2^t \Phi(\vphi) 
= \big( \Phi^t(\vphi) {\mathbb J}_2   (\omega \cdot \partial_\vphi)(\Phi(\vphi)) \big)^t\,.
$$
Since by assumption ${\mathbb A}(\vphi)$ is symmetric, so is $\Phi^t(\vphi) {\mathbb A}(\vphi) \Phi(\vphi)$.
In view of the formula for ${\mathbb A}_+(\vphi)$, it then follows that 
${\mathbb A}_+(\vphi)$ is symmetric.
\end{proof}

In the sequel we use the shorthand notations $F_{nls}^\bot$ and 
$(F_{nls}^{- 1})_{\hookrightarrow}$, the latter being identified by a slight abuse of terminology with $ F_{nls}^{- 1}$, i.e.,
 \begin{equation}\label{proiettori trasformata di fourier}
 F_{nls}^\bot := {\mathbb I}_\bot F_{nls} \quad \mbox{and}  \quad F_{nls}^{- 1} \equiv (F_{nls}^{- 1})_{\hookrightarrow} := 
 F_{nls}^{- 1}{\mathbb I}_{\hookrightarrow} 
 \end{equation}
where, recalling that  $ \pi_\bot $ denotes  the $ L^2 $ projector \eqref{def:pi0-bot} onto $ H^\s_\bot $, 
 \be\label{def:op-bot}
\mathbb I_\bot := 
\begin{pmatrix}
 \pi_\bot & 0 \\
 0 & \pi_\bot
 \end{pmatrix} \qquad {\rm and } \qquad {\mathbb I}_{\hookrightarrow} :
h^\s_\bot \times h^\s_\bot \to h^\s \times h^\s 
\ee 
denotes the inclusion map. 
Note that 
\be\label{property-FF-1}
F_{nls}^{- 1} F_{nls}^\bot = \mathbb I_\bot \, .
\ee
 According to \eqref{Fourier transform F nf} 
 \begin{equation}\label{matrix representation F nls}
 F_{nls}^\bot = \begin{pmatrix}
 F_1 & 0 \\
 0 & F_2
 \end{pmatrix}\,, \qquad F_{nls}^{- 1} = \begin{pmatrix}
 G_1 & 0 \\
 0 & G_2
 \end{pmatrix}
 \end{equation}
 where for any $u \in H^\sigma$ 
 $$
 F_1 (u) = - (u_{- n})_{n \in S^\bot}\,, \quad F_2(u) = - (u_n)_{n \in S^\bot}
 $$
 and for any $z = (z_n)_{n \in S^\bot} \in h^\sigma_\bot$ 
 $$
 G_1(z) = - \sum_{n \in S^\bot} z_{- n} e^{2 \pi \ii n x}\,, \qquad G_2(z) = - \sum_{n \in S^\bot } z_n e^{2 \pi \ii n x}\,.
 $$
In view of the definitions \eqref{complex-real-scalar-pr}, \eqref{bilinear-pairing}, \eqref{def:scalar-product} one verifies that 
 \begin{align}\label{prop F nls 0}
 F_2 & = \overline F_1 \,,  \qquad \qquad\qquad\qquad   G_2  = \overline G_1\,, \\
 \label{prop F nls 1}
 z \cdot F_1(u) & = \langle G_2(z), u \rangle_r\,,  \qquad z \cdot F_2(u)  = \langle G_1(z), u \rangle_r\,, \\
\label{prop F nls 2}
  (z, F_1(u)) & = \langle G_1(z), u \rangle\,,  \qquad  (z, F_2(u))  = \langle G_2(z), u \rangle\,.
  \end{align}

\begin{lemma}\label{lemma:HamiltonianVF}
Assume that $ A $ is a linear operator acting on $  H^\sigma \times H^\sigma $ of  the form 
\be\label{form-simm-op}
A = \begin{pmatrix} 
B  & C  \\
\overline{C} & \overline{B}
\end{pmatrix} \,, \quad 
B^* = B \, ,   \quad C^t = C \,   
\ee
where $ B^* $ is the adjoint of $ B $ with respect to the 
complex $ L^2 (\T_1) $ scalar product  $\langle \ ,  \ \rangle$ 
and $ C^t $ is the transposed  with respect to the real bilinear form $ \langle \ ,  \ \rangle_r $,
 where $\langle \ ,  \ \rangle$ and $ \langle \ ,  \  \rangle_r $  are defined in  \eqref{complex-real-scalar-pr}. Then the operator
$ J F_{nls}^{\bot} A F_{nls}^{-1} $ is Hamiltonian. 
\end{lemma}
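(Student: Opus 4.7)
\medskip

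\noindent\textbf{Proof plan.}
The plan is to unfold the matrix representations and check directly that $J F_{nls}^{\bot} A F_{nls}^{-1}$ has exactly the structural form of Definition~\ref{def:Hamiltonian op}. Using \eqref{matrix representation F nls} together with \eqref{prop F nls 0} we have $F_{nls}^{\bot}=\mathrm{diag}(F_1,\overline F_1)$ and $F_{nls}^{-1}=\mathrm{diag}(G_1,\overline G_1)$. Writing $A$ in its assumed block form \eqref{form-simm-op} and using that $\overline{XY}=\overline{X}\,\overline{Y}$, a direct multiplication gives
\[
F_{nls}^{\bot}\, A\, F_{nls}^{-1}
=\begin{pmatrix} F_1 B G_1 & F_1 C\,\overline{G_1} \\ \overline{F_1 C\,\overline{G_1}} & \overline{F_1 B G_1}\end{pmatrix},
\]
which already has the block symmetry required by \eqref{operatore Hamiltoniano lineare 2} if we set $A_1 := F_1 B G_1$ and $A_2 := F_1 C\,\overline{G_1}$. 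Thus it only remains to verify the two algebraic conditions $A_1^{*}=A_1$ and $A_2^{t}=A_2$.

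\medskip

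For the adjoint condition, I would first extract from \eqref{prop F nls 2} the identities $F_1^{*}=G_1$ and (by symmetry of the adjoint) $G_1^{*}=F_1$; the check is one line, namely that $(z,F_1 u)=\langle G_1 z,u\rangle$ for all admissible $z,u$ (using $S=-S$, which is in force throughout the paper). Once these are in hand,
\[
A_1^{*}=(F_1 B G_1)^{*}=G_1^{*}B^{*}F_1^{*}=F_1\,B\,G_1=A_1,
\]
using $B^{*}=B$. For the transpose condition, I would analogously read off from \eqref{prop F nls 1} the identities $F_1^{t}=G_2=\overline{G_1}$ and $G_1^{t}=F_2=\overline{F_1}$, hence $\overline{G_1}^{t}=\overline{G_1^{t}}=F_1$. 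Then
\[
A_2^{t}=(F_1 C\,\overline{G_1})^{t}=\overline{G_1}^{t}\,C^{t}\,F_1^{t}=F_1\,C\,\overline{G_1}=A_2,
\]
using $C^{t}=C$.

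\medskip

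Combining the two computations, $F_{nls}^{\bot}A F_{nls}^{-1}$ has precisely the form of the operator $A(\vphi)$ in \eqref{operatore Hamiltoniano lineare 2} (with $\vphi$ absent here), so multiplying by $J$ on the left produces a Hamiltonian operator in the sense of Definition~\ref{def:Hamiltonian op}. The main obstacle is really only bookkeeping: keeping track of which of $F_1,G_1,\overline{F_1},\overline{G_1}$ plays the role of adjoint versus transpose of which, and using $S=-S$ so that the re-indexing by $n\mapsto -n$ preserves the summation over $S^{\bot}$. No analytic input beyond the identities \eqref{prop F nls 0}--\eqref{prop F nls 2} is needed.
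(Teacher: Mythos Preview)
Your proposal is correct and follows essentially the same approach as the paper: compute the block form of $F_{nls}^{\bot} A F_{nls}^{-1}$ and then verify the conditions $A_1^{*}=A_1$, $A_2^{t}=A_2$ of Definition~\ref{def:Hamiltonian op} using the identities \eqref{prop F nls 0}--\eqref{prop F nls 2}. The paper's proof is very terse (it merely cites these identities and leaves the verification to the reader), whereas you spell out the adjoint/transpose bookkeeping explicitly; the substance is identical.
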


\begin{proof}
By \eqref{matrix representation F nls} one has 
$$
F_{nls}^\bot A F_{nls}^{- 1} = \begin{pmatrix}
F_1 B G_1 & F_1 C G_2 \\
F_2 \overline C G_1 & F_2 \overline B G_2
\end{pmatrix}\,.
$$
Using the identities \eqref{prop F nls 0}-\eqref{prop F nls 2} one verifies that all the conditions listed in the Definition \ref{def:Hamiltonian op} of a Hamiltonian operator are satisfied. 
\end{proof}

\subsection{Tame estimates for the Hamiltonian vector fields
 $X_{H^{nls}}\circ \breve \io$ and $X_{P} \circ \breve \io$} 
\label{composition of Hamiltonian with io}

In this subsection we derive tame estimates for the compositions
of torus embeddings $\breve \io: \T^S \to M^\s$ with the dNLS Hamiltonian $H^{nls}$
and with the perturbation $P$
where 
$M^\sigma$ is the phase space introduced in \eqref{def:M-sigma}.   

Recall that the dNLS Hamiltonian
$H^{nls}$ is a function of the actions $I_n, n \in \Z$, alone and that $I_n = \xi_n + y_n$, $n \in S,$
and $I_n = z_n \bar z_n,$ $n \in S^\bot$. To simplify notation, given a
map $\breve \io: \T^S \to M^\s$, we will frequently suppress
the variable $\vphi$ in $\breve \io(\vphi) = (\theta(\vphi), y(\vphi), z(\vphi))$.
 The main results are the following ones.
\begin{proposition}\label{stime derivate H nls} Given an integer $s \ge s_0$,
there exists $0 < \rho_1 \le 1$ so that for any map
$\breve \io(\vphi) = (\vphi, 0, 0) + \io(\vphi)$  with 
$\io \in H^{s+2s_0}(\T^S, \R^S \times \R^S \times h_\bot^\s)$ 
and $\| \io\|_{ 3s_0} \leq \rho_1$, one has $\breve \io(\T^S) \subset  M^\s$ and the following holds:
 
\smallskip

\noindent
$(i)$ The dNLS frequencies $\omega_n^{nls}$ satisfy the tame estimate 
  \begin{equation}\label{tame estimates for omega}
 \sup_{n \in \Z} \| \omega_n^{nls}(\xi + y, z \bar z) - \omega_n^{nls}(\xi, 0) \|_s 
\leq_s \| \io\|_{s + 2 s_0} \,.
 \end{equation}
Moreover, for any $N \in \Z_{\ge1}$, there exists $0 < \rho_N \le \rho_1$
so that in case  $\| \io\|_{3s_0} \leq \rho_N,$ 
 \begin{equation}\label{tame composizione derivate ennesime frequenza}
\sup_{ 1 \le |\alpha| \leq N}\sup_{n \in \Z} \big\| \big( \prod_{j \in \Z} \langle j \rangle^{-2\alpha_j}\big)
\partial_{I}^\alpha \omega_n^{nls}(\xi + y, z \bar z) \big\|_s
\leq_s 1 + \| \io\|_{s + 2 s_0}
\end{equation}
 where the supremum is taken over all multi-indices $\alpha = (\alpha_j)_{j \in \Z}$ 
with $\alpha_j \in \Z_{\ge 0}$ and $1 \le |\alpha| =\sum_{j \in \Z} \alpha_j \leq N$.
 
 \noindent
 $(ii)$ The derivatives of $\nabla_y H^{nls}(\xi + y, z \bar z)$ and $\nabla_z H^{nls}(\xi + y, z \bar z)$ with respect to $y$ satisfy the tame estimates
 $$
 \| \partial_y \nabla_y H^{nls}(\xi + y, z \bar z) - \partial_y \nabla_y H^{nls}(\xi , 0) \|_s 
\leq_s \| \io\|_{s + 2s_0}\, , \qquad
 \| \partial_y \nabla_z H^{nls}(\xi + y, z \bar z) \|_s\, \leq_s \| \io\|_{s + 2 s_0} \, .
 $$
 Since $\nabla_{\bar z} H^{nls} =\overline{\nabla_{z} H^{nls}}$, the derivative
$\partial_y\nabla_{\bar z} H^{nls}(\xi + y, z \bar z)$ satisfies the same tame estimate.

\smallskip

\noindent
$(iii)$ For any map $\widehat z$ in $H^s(\T^S, h_\bot^\s)$,
the derivatives of $\nabla_y H^{nls}$, $\nabla_z H^{nls}$, and  $\nabla_{\bar z} H^{nls}$
with respect to $z$ in direction $\widehat z$ satisfy the tame estimates
\begin{align*}
&  \| \partial_z \nabla_y H^{nls}(\xi + y, z \bar z)[\widehat z]\|_s \leq_s \| \io\|_{3 s_0} \| \widehat z\|_{s} + \| \io\|_{s + 2s_0} \| \widehat z\|_{s_0}\,, \\
& 
 \|\partial_z \nabla_z H^{nls}(\xi + y, z \bar z)[\widehat z] \|_s \leq_s \|\io \|_{3 s_0} \| \widehat{z}\|_s + \|\io \|_{s + 2 s_0} \| \widehat{z}\|_{s_0}\,,
\end{align*}
and
 \begin{equation}\label{estimate z bar z}
 \| \big(\partial_{ z} \nabla_{\bar z} H^{nls}(\xi + y, z \bar z) 
- \partial_{ z} \nabla_{\bar z} H^{nls}(\xi , 0) \big)[\widehat{z}]  \|_s 
\leq_s \|\io \|_{3 s_0} \| \widehat{z}\|_s + \|\io \|_{s + 2 s_0} \| \widehat{z}\|_{s_0}\,.
\end{equation}
Since $\partial_{\bar z} = \overline{\partial_{z} }$, the derivatives of
$\nabla_y H^{nls}(\xi + y, z \bar z)$, $ \nabla_z H^{nls}(\xi + y, z \bar z)$,
and $\nabla_{\bar z} H^{nls}(\xi + y, z \bar z)$ with respect to $\bar z$ in direction $\overline{\widehat z}$ 
satisfy corresponding tame estimates.

\smallskip
 
 \noindent
 $(iv)$ If in addition $\io \equiv \io_\omega$ is
 Lipschitz continuous in $\omega \in \Omega$ and satisfies $\| \io\|_{ 3s_0}^{\Lipg} \leq \rho_1$ 
it follows that for any map $\widehat z \equiv \widehat z_\omega$ 
in $H^s(\T^S, h_\bot^\s)$, which is also Lipschitz continuous in $\omega \in \Omega$,
 all the previous estimates hold with  $\| \cdot\|_s$ replaced by $\| \cdot\|_s^\Lipg$. 
\end{proposition}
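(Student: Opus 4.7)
The plan exploits the fact that $H^{nls}$ is a real analytic function of the actions $I = (I_k)_{k\in\Z}$ alone, with $I_k = \xi_k + y_k$ for $k\in S$ and $I_k = z_k\bar z_k$ for $k\in S^\bot$. All derivatives of $H^{nls}$ appearing in items (i)--(iii) can therefore be written in terms of derivatives $\partial_I^\alpha \omega_n^{nls}$ evaluated at $I(\vphi) := (\xi + y(\vphi),\, z(\vphi)\bar z(\vphi))$, multiplied by monomials in $y$, $z$, $\bar z$, and $\widehat z$. The common engine of all estimates will be a combination of the composition Lemma \ref{composition in Sobolev}, the tame product Lemma \ref{interpolation product}, and the uniform-in-$n$ bounds on $\partial_I^\alpha \omega_n^{nls}$ supplied by Theorem \ref{Corollary 2.2}.

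For (i) I would use the fundamental theorem of calculus,
$$\omega_n^{nls}(I(\vphi)) - \omega_n^{nls}(\xi,0) = \int_0^1 \sum_{k\in\Z} \partial_{I_k}\omega_n^{nls}(I_t(\vphi))\,(I(\vphi) - (\xi,0))_k\, dt,$$
with $I_t := (\xi,0) + t(I - (\xi,0))$. The scalar composition $\vphi \mapsto \partial_{I_k}\omega_n^{nls}(I_t(\vphi))$ is controlled by Lemma \ref{composition in Sobolev} together with the uniform bound \eqref{local-boundeness}, giving $\|\partial_{I_k}\omega_n^{nls}(I_t(\cdot))\|_s \leq_s \langle k\rangle^2(1+\|\io\|_{s+2s_0})$ uniformly in $n,t$; here the auxiliary map $(y,z)\mapsto I$ is polynomial and satisfies $\|I-(\xi,0)\|_s \leq_s \|\io\|_s$ by Lemma \ref{interpolation product}. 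The tangential piece $\sum_{k\in S}$ is a finite sum handled directly by the product estimate. For the normal piece, Cauchy--Schwarz in $k$ yields
$$\Big\| \sum_{k\in S^\bot} \partial_{I_k}\omega_n^{nls}(I_t)\,z_k\bar z_k \Big\|_s \leq_s \Big(\sum_{k\in S^\bot}\langle k\rangle^{-2(\sigma-3)}\Big)^{1/2} (\|\io\|_{s_0}+\|\io\|_{s+2s_0}),$$
where the last sum converges since $\sigma\geq 4$. The higher-derivative bound \eqref{tame composizione derivate ennesime frequenza} follows by the same Taylor argument, invoking \eqref{tame composizione derivate ennesime frequenza-0} for uniform-in-$n$ control of $\partial_I^\alpha \omega_n^{nls}$.

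For (ii) and (iii) I would compute the relevant derivatives explicitly using $\nabla_y H^{nls} = (\omega_k^{nls})_{k\in S}$, $\nabla_{\bar z_k}H^{nls} = \omega_k^{nls}z_k$, and the chain-rule identity $\partial_{z_j}\omega_k^{nls} = (\partial_{I_j}\omega_k^{nls})\bar z_j$. The $\partial_y$-derivatives are finite-dimensional in the tangential direction and reduce immediately to (i) and the product estimate. For (iii), one gets for instance
$$\bigl(\partial_z\nabla_{\bar z}H^{nls}(I(\vphi)) - \partial_z\nabla_{\bar z}H^{nls}(\xi,0)\bigr)[\widehat z]_k = \sum_{j\in S^\bot}(\partial_{I_j}\omega_k^{nls})\bar z_j z_k \widehat z_j + \bigl(\omega_k^{nls}(I)-\omega_k^{nls}(\xi,0)\bigr)\widehat z_k,$$
and each piece is treated by combining (i), the uniform $\langle j\rangle^2$-bound on $\partial_{I_j}\omega_k^{nls}$, and the tame product estimate, the infinite sum over $j$ being controlled again by Cauchy--Schwarz thanks to $\sigma\geq 4$. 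For (iv), every estimate is multilinear in $\io$ and $\widehat z$, so the Lipschitz version follows by telescoping in $\omega$ and re-applying the product estimate in its Lipschitz form \eqref{lip tame for functions}, combined with item (iv) of Lemma \ref{composition in Sobolev} to propagate Lipschitz dependence through the composition with $\omega_n^{nls}$.

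The main technical obstacle is keeping the estimates uniform in $n$ in the composition $\vphi\mapsto \omega_n^{nls}(I(\vphi))$ and in each of its $I$-derivatives; this is precisely the content of the $\ell^\infty$-valued analyticity of Theorem \ref{Corollary 2.2} together with its Cauchy-derivative counterparts \eqref{local-boundeness}--\eqref{tame composizione derivate ennesime frequenza-0}, and it is what makes the composition Lemma \ref{composition in Sobolev} applicable with $n$-independent constants. A secondary subtlety is the convergence of the infinite sums over $k\in S^\bot$ coming from the normal variables, which works only because the regularity assumption $\sigma\geq 4$ dominates the $\langle k\rangle^2$-growth of $\partial_{I_k}\omega_n^{nls}$.
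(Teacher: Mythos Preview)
Your proposal is correct and follows the paper's approach closely for items (ii)--(iv). The one place where you differ from the paper is the packaging of item~(i): the paper does \emph{not} Taylor-expand $\omega_n^{nls}(I)-\omega_n^{nls}(\xi,0)$ and sum over $k$. Instead it observes that $(y,z)\mapsto(\omega_n^{nls}(\xi+y,z\bar z)-4\pi^2n^2)_{n\in\Z}$ is a single $C^\infty$ map from a neighborhood of $(0,0)$ in $\R^S\times h_\bot^\sigma$ into $\ell^\infty$ (this is exactly the content of the $\ell^\infty$-valued analyticity in Theorem~\ref{Corollary 2.2}), and then applies Lemma~\ref{composition in Sobolev}(ii) once, with $Y=\ell^\infty$, $\breve\io^{(1)}=\breve\io$, $\breve\io^{(2)}=(\vphi,0,0)$. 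This yields \eqref{tame estimates for omega} directly, with no need to control the infinite sum over $k\in S^\bot$ or invoke $\sigma\ge4$ at this stage. Your explicit Taylor expansion plus Cauchy--Schwarz over $k$ also works (and is essentially what the paper does later for (iii), where the sum over $k$ is unavoidable), but the $\ell^\infty$-valued route is cleaner for~(i). For \eqref{tame composizione derivate ennesime frequenza} no Taylor argument is needed either: since it is a pointwise bound rather than a difference, the paper simply applies Lemma~\ref{composition in Sobolev}(i) to the $\ell^\infty$-valued map built from \eqref{tame composizione derivate ennesime frequenza-0}.
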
 
\begin{remark}
The estimate \eqref{tame composizione derivate ennesime frequenza} is only used in this paper for $N \leq 3$. See for instance Lemma \ref{quadraticPart X H nls} and Lemmata \ref{I bot Lip gamma}, \ref{lemma variazione i Omega nls}.  
\end{remark}

\begin{proof}
$(i)$ To obtain the claimed tame estimates, we want to apply 
Lemma~\ref{composition in Sobolev} ($ii$). First we need to make some preliminary considerations.
By \eqref{norm-l12}, for any $(z_n)_{n \in S^\bot} \in h^\s_\bot$,
$(z_n \bar z_n)_{n \in S^\bot}$ is in 
$\ell^{1,2\s}_{+,\bot} := \ell_+^{1,2\s}(S^\bot, \R)$ and 
$$
h^\s_\bot \to \ell^{1, 2\s}_{+,\bot}, (z_n)_{n \in S^\bot} \mapsto (z_n \bar z_n)_{n \in S^\bot} \, , \quad
\| (z_n \bar z_n)_{n \in S^\bot} \|_{\ell^{1,2\sigma}} = \| (z_n)_{n \in S^\bot} \|_\s^2 \, , 
$$
is a bounded quadratic map. 
In particular, this map is in 
${\cal C}^{\infty}(h^\s_\bot, \ell^{1, 2\s}_{+, \bot} ) $. By Theorem~\ref{Corollary 2.2}, 
for any $\xi \in \R^S_{>0},$ there exists an open neighborhood $V'$ of $(\xi, 0)$ in 
$\ell^{1, 2\s}_+$ so that the map 
$$
(\omega_n^{nls}- 4n^2\pi^2)_{n \in \Z} : V' \to \ell^\infty
$$
is in ${\cal C}^{\infty}(V', \ell^\infty)$. Altogether it then follows that there is an open convex  neighborhood $V$ of $(0, 0)$ in $U_0 \times h^\s_\bot$ so that the composition 
$f : V \to \ell^\infty$, defined by 
$f(y, z) : = (\omega_n^{nls}(\xi + y, z \bar z)- 4n^2\pi^2)_{n \in \Z}$,
is in ${\cal C}^{s + s_0}(V', \ell^\infty)$. Choose $0 < \rho_1 \le 1$ so that the closed ball
in $U_0 \times h^\s_\bot$ of radius $\rho_1$, centered at $(0,0)$, is contained in $V$.
By Lemma~\ref{lemma D omega}($iii$) (Sobolev embedding), it then follows that for any map
$\breve \io(\vphi) = (\vphi, 0, 0) +\io(\vphi)$ with $\| \io \|_{s_{0}} \le \rho_1,$
one has $(y(\vphi), z(\vphi)) \in V$ and hence by Lemma~\ref{composition in Sobolev}($ii$)
with $\breve \io^{(1)} :=\breve \io$, $\breve \io^{(2)} $ given by $ \breve \io^{(2)}(\varphi) = (\vphi, 0, 0)$,  
and 
$\breve \io^{(1)} - \breve \io^{(2)} = \io$
$$
 \sup_{n \in \Z} \| \omega_n^{nls}(\xi + y, z \bar z) - \omega_n^{nls}(\xi, 0) \|_s 
\leq_s \| \io\|_{s + 2 s_0} \,.
 $$ 
The tame estimates \eqref{tame composizione derivate ennesime frequenza} 
can be derived in a similar way, using this time item ($i$) of Lemma~\ref{composition in Sobolev}
as well as Theorem~\ref{Corollary 2.2}.

\noindent
$(ii)$ Note that 
$ \nabla_y H^{nls}(\xi + y, z \bar z) = \big( \omega_n(\xi + y, z \bar z) \big)_{n \in S}$
and hence
$$
\partial_y \nabla_y H^{nls}(\xi + y, z)= \big(  \partial_{I_k}\omega^{nls}_n(\xi + y, z \bar z) \big)_{n, k \in S}\,.
$$
Arguing similarly as in the proof of item ($i$), the claimed estimates for
$\partial_y \nabla_y H^{nls}(\xi + y, z \bar z) - \partial_y \nabla_y H^{nls}(\xi , 0)$
 follow from Lemma~\ref{composition in Sobolev}($ii$). Since
$ \nabla_z H^{nls}(\xi + y, z \bar z) = 
\big( \omega^{nls}_n(\xi + y, z \bar z) \bar z_n \big)_{n \in S^\bot}$
vanishes at $z=0$, one concludes that $\partial_y \nabla_z H^{nls}(\xi, 0)= 0$
and that in turn -- again in view of Lemma~\ref{composition in Sobolev}($ii$) --  
the tame estimates
$\| \partial_y \nabla_z H^{nls}(\xi + y, z \bar z) \|_s\, \leq_s \| \io\|_{s + 2 s_0}$ hold.

\smallskip
\noindent
$(iii)$ We only prove estimate \eqref{estimate z bar z} since the other ones can be derived by similar arguments. Taking the derivative of
$ \nabla_{\bar z} H^{nls}(\xi + y, z \bar z) = 
\big( \omega^{nls}_n(\xi + y, z \bar z)  z_n \big)_{n \in S^\bot}$
with respect to $z$ yields
$$
\partial_{ z} \nabla_{\bar z} H^{nls}(\xi + y, z \bar z)[\widehat z] = 
T_1  + T_2\,,
$$
where 
$$
T_1  := \Big(\omega_n^{nls}(\xi + y, z \bar z) \widehat{ z}_n \Big)_{n \in S^\bot}\, \quad 
{\mbox{and}} \quad 
T_2 := \Big( z_n \sum_{k \in S^\bot} \partial_{I_k} \omega_n^{nls}(\xi + y, z \bar z) \bar z_k  \widehat{ z}_k\Big)_{n \in S^\bot}\,.
$$
Concerning the term $T_1$, note that 
$$
\partial_{ z} \nabla_{\bar z} H^{nls}(\xi, 0)[\widehat{ z}] = 
\big( \omega^{nls}_n(\xi, 0) \widehat{ z}_n \big)_{n \in S^\bot}\,.
$$
 By Lemma \ref{interpolation product} (tame estimates for products of functions)
it follows that for any $n \in S^\bot$, the expression
$\| \big(\omega_n^{nls}(\xi + y, z \bar z) - \omega^{nls}_n(\xi, 0) \big) \cdot \widehat{ z}_n\|_s$
can be $\leq_s$-bounded by
$$
\|  \omega_n^{nls}(\xi + y, z \bar z) - \omega^{nls}_n(\xi, 0) \|_{s_{0}} 
\| \widehat{ z}_n \|_s
+ \| \omega_n^{nls}(\xi + y, z \bar z) - \omega^{nls}_n(\xi, 0) \|_s 
\| \widehat{ z}_n \|_{s_{0}} \, .
$$
Together with the estimates \eqref{tame estimates for omega} for 
$\omega_n^{nls}(\xi + y, z \bar z) - \omega^{nls}_n(\xi, 0)$,
this  yields
$$
\| \big(\omega_n^{nls}(\xi + y, z \bar z) - \omega^{nls}_n(\xi, 0) \big) \cdot \widehat{ z}_n\|_s
\leq_s  \| \io\|_{3 s_0} \| \widehat{ z}_n \|_s + \| \io\|_{s + 2 s_0} \| \widehat{ z}_n \|_{s_0}\,,
$$
implying,  by \eqref{norma other},  that 
\begin{equation}\label{che palle cacca A}
\big\| T_1 - \partial_{ z} \nabla_{\bar z} H^{nls}(\xi, 0) [\widehat{ z}] \big\|_s
\leq_s \| \io\|_{3 s_0} \| \widehat{z}\|_s + \| \io\|_{s + 2 s_0} \| \widehat{ z}\|_{s_0}\,.
\end{equation}
Towards the term $T_2$, note that for any $n, k \in S^\bot$, Lemma \ref{interpolation product} implies that
$
\| \partial_{I_k} \omega_n^{nls}(\xi + y, z \bar z) \bar z_k   \widehat{ z}_k \|_s$
is $\leq_s$- bounded by
$$
\| \partial_{I_k} \omega_n^{nls}(\xi + y, z \bar z) \|_s
\| z_k\|_{s_0}  \| \widehat{ z}_k\|_{s_0}
+ \| \partial_{I_k} \omega_n^{nls}(\xi + y, z \bar z) \|_{s_0} 
\big( \| z_k \|_{s} \|  \widehat{ z}_k\|_{s_0} + \| z_k \|_{s_0} \|  \widehat{ z}_k\|_{s} \big) \, . 
$$
By \eqref{norma other} we have $ \langle k \rangle^\s \| z_k \|_{s} \leq \| z \|_{s, \s} $. 
By assumption, $\langle k \rangle^2 \| z_k \|_{s_0} \le 1$ (recall that $\s \ge 4$)
whereas by \eqref{tame composizione derivate ennesime frequenza},
$$
\| \partial_{I_k} \omega_n^{nls}(\xi + y, z \bar z) \|_s 
\leq_s \langle k \rangle^2 \big( 1 + \|\io \|_{s+2s_0} \big)\,.
$$
Hence
$ 
\sum_{k \in S^\bot} \|  \partial_{I_k} \omega_n^{nls}(\xi + y, z \bar z) {\bar z}_k   \widehat{ z}_k \|_s 
$
is $\leq_s$-bounded by
$$
\big( 1 + \|\io \|_{s+2s_0} \big) \sum_{k \in S^\bot} \| \widehat{ z}_k\|_{s_0} +
\big( 1 + \|\io \|_{3s_0} \big) 
\Big( \| \io \|_{s} \sum_{k \in S^\bot}  \| \widehat{ z}_k\|_{s_0} +
 \sum_{k \in S^\bot} \| \widehat{ z}_k\|_{s} \Big)
$$
implying that (recall that $\s \ge 4$ and $\|\io\|_{3s_0} \le 1$)
\be\label{bound-interm1}
\Big\| \sum_{k \in S^\bot} \partial_{I_k} \omega_n^{nls}(\xi + y, z \bar z) \bar z_k \widehat z_k \Big\|_s
 \leq_s  \| \io\|_{s + 2 s_0} \| \widehat{ z}\|_{s_0} + \| \widehat{ z}\|_s \, .
\ee
Using again Lemma~\ref{interpolation product}, the term
$
\| z_n \sum_{k \in S^\bot} 
\partial_{I_k} \omega_n^{nls}(\xi + y, z \bar z) \bar z_k \widehat{ z}_k  \|_s 
$
can be $\leq_s$-bounded by
$$
 \| z_n\|_s  \cdot \Big\|  \sum_{k \in S^\bot} \partial_{I_k} \omega_n^{nls}(\xi + y, z \bar z) 
\bar z_k   \widehat{ z}_k \Big\|_{s_0}  +
 \| z_n\|_{s_0}  \cdot \Big\|  \sum_{k \in S^\bot} \partial_{I_k} \omega_n^{nls}(\xi + y, z \bar z) 
\bar z_k   \widehat{z}_k \Big\|_{s}\, ,
$$
yielding, by \eqref{bound-interm1}, the estimate
$$
\Big\| z_n \sum_{k \in S^\bot} 
\partial_{I_k} \omega_n^{nls}(\xi + y, z \bar z) \bar z_k \widehat{ z}_k  \Big\|_s  
\leq_s  \| z_n\|_s \cdot  \| \widehat{ z}  \|_{s_0}
+ \| z_n\|_{s_0} \cdot \big( \| \io\|_{s + 2 s_0} \| \widehat{ z}\|_{s_0} + \| \widehat{ z}\|_s
\big)\, .
$$
Therefore
$$
\| T_2\|_s^2  = \sum_{n \in S^\bot} \langle n \rangle^{2 \sigma}  
\big\| z_n \sum_{k \in S^\bot} \partial_{I_k} \omega_n^{nls}(\xi + y, z \bar z) 
\bar z_k \widehat{ z}_k  \big\|_s^2 
$$
is $\leq_s$-bounded by
$$
\sum_{n \in S^\bot} \langle n \rangle^{2 \sigma} \| z_n\|_s^2 \cdot  \| \widehat{ z}  \|_{s_0}^2 +
\sum_{n \in S^\bot} \langle n \rangle^{2 \sigma} \| z_n\|_{s_0}^2 \cdot
\big( \| \io\|_{s + 2 s_0} \| \widehat{ z}\|_{s_0} + \| \widehat{ z}\|_s)^2
$$
leading to the estimate (recall that $\|\io\|_{3s_{0}} \le 1$)
\be\label{estim-T2}
\| T_2\|_s \leq_s  \| \io\|_{s + 2 s_0} \| \widehat{ z}\|_{s_0} + \|\io\|_{s_0} \| \widehat{ z}\|_s \, .
\ee
The estimate \eqref{estimate z bar z} now follows from the bounds 
\eqref{che palle cacca A}, \eqref{estim-T2} derived for $T_1$ and $T_2$. 

\noindent
$(iv)$ The Lipschitz estimates are obtained by using similar arguments.
\end{proof}

Proposition~\ref{stime derivate H nls} can be applied to obtain tame estimates for the composition
of the differential $dX_{H^{nls}}$ of the Hamiltonian vector field $X_{H^{nls}}$ with a map
$\breve \io :\T^S \to M^\s, \vphi \mapsto \big(\theta(\vphi), y(\vphi), z(\vphi)\big).$ 
We denote by $d X_F$ the linear operator in \eqref{differential XF (2)}.

\begin{corollary}\label{differenziale X H nls imperturbato}
 Given an integer $s \ge s_0$,
there exists $0 < \rho \le 1$ so that for any map
$\breve \io(\vphi) = (\vphi, 0, 0) + \io(\vphi)$ with
$\io \in H^{s+2s_0}(\T^S, \R^S \times \R^S \times h_\bot^\s)$ 
and $\| \io\|_{ 3s_0} \leq \rho$, one has $\breve \io(\T^S) \subset  M^\s$ and the following holds:

\noindent
$(i)$ For any map 
$\widehat \io = (\widehat \theta, \widehat y, \widehat z_1, \widehat z_2) $ 
in $H^s(\T^S, \R^S \times \R^S \times h^\s_\bot \times h^\s_\bot)$,
$$
\big\| d X_{H^{nls}}(\xi + y, z\bar z)[\widehat \imath] - dX_{H^{nls}}(\xi, 0)[\widehat \imath] \, \big\|_s 
\leq_s  \| \io\|_{3 s_0} \| \widehat \imath\|_{s} + \| \io\|_{s + 2 s_0} \| \widehat \imath\|_{s_0}  
$$
where
$$
d X_{H^{nls}}(\xi, 0) [\widehat \imath] = \Big( \partial_y\nabla_y H^{nls} (\xi,0) [\widehat y]  , \,\, 0 , \,\,
- \ii \partial_z \nabla_{\bar z} H^{nls}(\xi, 0)[\widehat z_1], \,\,
 \ii \partial_{\bar z} \nabla_{ z} H^{nls}(\xi, 0)[\widehat z_2 ]
\Big)
$$
with
$\partial_y\nabla_y H^{nls} (\xi,0) [\widehat y]= 
\big( \sum_{k\in S}\partial_{I_k} \omega_n^{nls}(\xi, 0) \widehat y_k \big)_{ n \in S}$ and
$
\partial_z \nabla_{\bar z} H^{nls}(\xi, 0)[\widehat z_1] = 
\big( \omega_n^{nls}(\xi, 0) \widehat{ z_1}_n \big)_{n \in S^\bot}$.

\noindent
$(ii)$ If in addition $\breve \io \equiv \breve \io_\omega$ is Lipschitz continuous in $\omega \in \Omega$ and satisfies $\| \io\|_{3 s_0}^{\Lipg} \leq \rho$, then for any map 
$\widehat \io \equiv \widehat \io_\omega$ 
in $H^s(\T^S, \R^S \times \R^S \times h^\s_\bot \times h^\s_\bot)$
which are Lipschitz continuous in $\omega \in \Omega$,
the estimates of item ($i$) hold with  $\| \cdot\|_s$ replaced by $\| \cdot\|_s^\Lipg$. 
\end{corollary}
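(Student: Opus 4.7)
The plan is to read off the four components of $dX_{H^{nls}}(\xi + y, z \bar z)[\widehat \imath]$ from the general formula \eqref{differential XF (2)}, identify which second partial derivatives of $H^{nls}$ appear, and then estimate each individually via Proposition \ref{stime derivate H nls}. Set $\rho := \rho_N$ from that proposition with $N$ large enough (in fact $N=2$ suffices, since only second derivatives of $H^{nls}$ in $(y,z,\bar z)$ enter here), so that the required smoothness and smallness are available on the ball $\|\io\|_{3s_0} \leq \rho$.

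First I would exploit the structure coming from the integrability of dNLS. Since $H^{nls}$ is a function of the actions alone, $\nabla_\theta H^{nls} \equiv 0$, so the entire second component of \eqref{differential XF (2)} is zero and all $\partial_\theta$-derivatives vanish identically. Moreover, writing $\nabla_z H^{nls} = (\omega_n^{nls} \bar z_n)_{n \in S^\bot}$ and $\nabla_{\bar z} H^{nls} = (\omega_n^{nls} z_n)_{n \in S^\bot}$, a direct computation using $I_n = z_n \bar z_n$ for $n \in S^\bot$ shows that at $(y,z) = (0,0)$ one has
\[
\partial_z \nabla_y H^{nls}(\xi, 0) = 0,\quad \partial_{\bar z} \nabla_y H^{nls}(\xi, 0) = 0,\quad \partial_y \nabla_z H^{nls}(\xi, 0) = 0,\quad \partial_y \nabla_{\bar z} H^{nls}(\xi, 0) = 0,
\]
\[
\partial_{\bar z} \nabla_{\bar z} H^{nls}(\xi, 0) = 0,\quad \partial_z \nabla_z H^{nls}(\xi, 0) = 0,
\]
while $\partial_z \nabla_{\bar z} H^{nls}(\xi,0)[\widehat z_1] = (\omega_n^{nls}(\xi,0) \widehat z_{1,n})_{n \in S^\bot}$ and $\partial_y \nabla_y H^{nls}(\xi,0)[\widehat y] = \big(\sum_{k \in S}\partial_{I_k}\omega_n^{nls}(\xi,0)\widehat y_k\big)_{n \in S}$. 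This justifies the formula in the statement for $dX_{H^{nls}}(\xi,0)[\widehat\imath]$.

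It now remains to estimate the $\|\cdot\|_s$-norm of the difference componentwise. The $\widehat y$-contribution to the first component, namely $(\partial_y \nabla_y H^{nls}(\xi+y, z\bar z) - \partial_y \nabla_y H^{nls}(\xi,0))[\widehat y]$, is bounded using Proposition \ref{stime derivate H nls}(ii) and the product tame estimate of Lemma \ref{interpolation product}. The $\widehat z_1,\widehat z_2$-contributions to the first component vanish at $(\xi,0)$, so their bound comes from Proposition \ref{stime derivate H nls}(iii) applied to $\partial_z \nabla_y H^{nls}$ and $\partial_{\bar z}\nabla_y H^{nls}$ (which by symmetry satisfies the same estimate). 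The $\widehat y$-contributions to the third and fourth components also vanish at $(\xi,0)$ and are handled by Proposition \ref{stime derivate H nls}(ii) combined with Lemma \ref{interpolation product}. The $\widehat z_2$-contribution $\partial_{\bar z}\nabla_{\bar z} H^{nls}[\widehat z_2]$ in the third component vanishes at $(\xi,0)$ and is estimated by Proposition \ref{stime derivate H nls}(iii), and similarly for $\partial_z \nabla_z H^{nls}[\widehat z_1]$ in the fourth. Finally, the key piece $\partial_z \nabla_{\bar z} H^{nls}(\xi+y,z\bar z)[\widehat z_1] - \partial_z \nabla_{\bar z} H^{nls}(\xi,0)[\widehat z_1]$ is bounded directly by \eqref{estimate z bar z}, and its $\overline{\,\cdot\,}$-counterpart in the fourth component follows from the identity $\partial_{\bar z} = \overline{\partial_z}$. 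Summing these contributions yields the bound
\[
\|dX_{H^{nls}}(\xi+y,z\bar z)[\widehat\imath] - dX_{H^{nls}}(\xi,0)[\widehat\imath]\|_s \leq_s \|\io\|_{3s_0}\|\widehat\imath\|_s + \|\io\|_{s+2s_0}\|\widehat\imath\|_{s_0},
\]
proving item (i). Item (ii) follows by replacing $\|\cdot\|_s$ with $\|\cdot\|_s^{\Lipg}$ throughout and invoking Proposition \ref{stime derivate H nls}(iv) in place of (ii)--(iii) at every step.

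There is no genuine obstacle here; the only mild subtlety is the bookkeeping of which mixed second derivatives of $H^{nls}$ vanish at $(\xi,0)$, which as above is an immediate consequence of the fact that $H^{nls}$ depends on $z$ only through $z\bar z$.
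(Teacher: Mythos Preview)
Your proposal is correct and follows essentially the same route as the paper: write out the components of $dX_{H^{nls}}$ via \eqref{differential XF (2)}, use that $H^{nls}$ is $\theta$-independent to kill the second component and all $\partial_\theta$-terms, verify the formula at $(\xi,0)$ by direct computation, and then feed each surviving piece into Proposition~\ref{stime derivate H nls}(ii)--(iii) (resp.\ (iv) for the Lipschitz version). You are slightly more explicit than the paper in listing which mixed second derivatives vanish at $(\xi,0)$ and in noting that Lemma~\ref{interpolation product} is needed to pass from the operator estimates of Proposition~\ref{stime derivate H nls}(ii) to the action on $\widehat y$, but this is just a finer granularity of the same argument.
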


\begin{proof}
Since the Hamiltonian vector field $X_{H^{nls}}$ is given by
$$ 
X_{H^{nls}} = \big( \nabla_y H^{nls}, 0, - \ii \nabla_{\bar z} H^{nls} ) = 
\big(  (\omega_n^{nls})_{ n \in S}, \, 0 , \, -\ii \big( \omega_n^{nls} z_n \big)_{n \in S^\bot} \big),
$$
the first component of $d X_{H^{nls}}[\widehat \imath]$ is given by
$$
\partial_y\nabla_y H^{nls}[\widehat y] + \partial_z\nabla_y H^{nls}[\widehat z_1]
+ \partial_{\bar z}\nabla_y H^{nls}[\widehat z_2],
$$ the second component is $0,$ 
whereas the third and fourth components are 
$$
-\ii \big(\partial_y \nabla_{\bar z} H^{nls} [\widehat y] 
+ \partial_z \nabla_{\bar z} H^{nls} [\widehat z_1]
+ \partial_{\bar z} \nabla_{\bar z} H^{nls} [\widehat z_2] \big)
\quad {\mbox{and}} \quad
\ii \big(\partial_y \nabla_{ z} H^{nls} [\widehat y] 
+ \partial_{ z} \nabla_{z} H^{nls} [\widehat z_1] 
+ \partial_{\bar z} \nabla_{z} H^{nls} [\widehat z_2]\big).
$$
In particular, one obtains the claimed formula for 
$d X_{H^{nls}}(\xi, 0) [\widehat \imath]$ and
items $(i)$ and $(ii)$ follow from items $(ii)$ - $(iii)$, respectively item $(iv)$ 
of Proposition \ref{stime derivate H nls}.
\end{proof}

By Proposition \ref{stime derivate H nls} and the arguments used in its proof, one can also derive the following
\begin{lemma}\label{quadraticPart X H nls}
 Given an integer $s \ge s_0$,
there exists $0 < \rho \le 1$ so that for any map
$\breve \io(\vphi) = (\vphi, 0, 0) + \io(\vphi)$ with
$\io \equiv \io_\omega$  in $H^{s+2s_0}(\T^S, \R^S \times \R^S \times h_\bot^\s)$,
which is Lipschitz continuous in $\omega \in \Omega \subset \R^S$ and satisfies
$\| \io\|_{ 3s_0}^{\Lipg} \leq \rho$, one has $\breve \io(\T^S) \subset  M^\s$ 
and for any maps $\widehat \io^{(a)} \equiv \widehat \io^{(a)}_\omega$ 
in $H^s(\T^S, \R^S \times \R^S \times h^\s_\bot \times h^\s_\bot)$, $a= 1,2$, 
which are Lipschitz continuous in $\omega \in \Omega$, 
$$
\| d^2 X_{H^{nls}}(\xi + y, z\bar z)[\widehat \imath^{(1)}, \widehat \imath^{(2)}] \|_s^\Lipg \leq_s 
\|\widehat \imath^{(1)}\|_s^\Lipg \| \widehat \imath^{(2)}\|_{s_0}^\Lipg + 
\|\widehat \imath^{(1)}\|_{s_0}^\Lipg \| \widehat \imath^{(2)}\|_{s}^\Lipg +
\| \io\|_{s + 2 s_0}^\Lipg \| \widehat \imath^{(1)}\|_{s_0}^\Lipg \| \widehat \imath^{(2)}\|_{s_0}^\Lipg \,.
$$
\end{lemma}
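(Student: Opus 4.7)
The plan is to proceed along the same lines as Proposition \ref{stime derivate H nls} and Corollary \ref{differenziale X H nls imperturbato}, but now tracking trilinear tame estimates. The key input will be the higher-derivative bounds \eqref{tame composizione derivate ennesime frequenza} with $N=3$, applied to a composed map $\breve \io$ of sufficiently small norm, together with Lemma \ref{interpolation product} for products of functions in $H^s(\T^S,\cdot)$.

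First, I would expand $d^2 X_{H^{nls}}(\xi+y, z\bar z)[\widehat \imath^{(1)}, \widehat \imath^{(2)}]$ componentwise. Using that $X_{H^{nls}} = (\nabla_y H^{nls}, 0, -\ii\nabla_{\bar z}H^{nls})$ and that $H^{nls}$ depends on $(y,z)$ only through the actions $I = (\xi + y, z \bar z)$, each scalar component of this second differential is a finite sum of terms which are products of
\begin{itemize}
\item a partial derivative $\partial_I^\alpha \omega_n^{nls}(\xi+y, z\bar z)$ with $1 \le |\alpha| \le 3$;
\item at most two factors chosen from $z_k, \bar z_k$ (absent when differentiating only in the tangential variables);
\item one component from each of $\widehat \imath^{(1)}$ and $\widehat \imath^{(2)}$, possibly multiplied by $z$ or $\bar z$ when a $z_n$ or $\bar z_n$ factor is hit by a $\partial_z$ or $\partial_{\bar z}$ derivative.
\end{itemize}
For the term of highest multilinearity, which is the one involving $\partial^3_{I_i I_j I_k} H^{nls} = \partial^2_{I_j I_k} \omega_i^{nls}$, there appears a cubic product such as $z_n \bar z_j z_k \widehat{z}^{(1)}_j \widehat{z}^{(2)}_k$; the other contributions are of simpler structure.

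Next, I would estimate each such term in the $\|\cdot\|_s$-norm. Thanks to \eqref{tame composizione derivate ennesime frequenza}, the weight $\langle j\rangle^{-2\alpha_j}\langle k\rangle^{-2\alpha_k}$ produced by the derivatives compensates exactly the $\langle j\rangle^2, \langle k\rangle^2$ growth which would be needed to close the sums over $j,k\in S^\bot$, precisely as in the proof of \eqref{estimate z bar z}. Recalling that $\sigma \ge 4$, we have $\sum_{k\in S^\bot}\langle k\rangle^{-2(\sigma-1)} < \infty$, and Cauchy–Schwarz (combined with $\|z_k\|_{s_0}\lesssim \langle k\rangle^{-\sigma}\|\io\|_{s_0}$, which follows from \eqref{norma other}) reduces each internal sum $\sum_{j,k}$ to a bound by $\|\widehat \imath^{(a)}\|_\cdot$. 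Applying iteratively the tame product estimates of Lemma \ref{interpolation product} together with the Lipschitz-in-$\omega$ tame estimates already established in Proposition \ref{stime derivate H nls}($iv$), every resulting term is $\leq_s$-bounded by the right-hand side of the claim. Summing the squared norms over $n\in S$ and $n\in S^\bot$ in accordance with \eqref{norma other} yields the estimate for the full $\|\cdot\|_s^{\Lipg}$-norm.

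The main technical obstacle is the cubic term carrying $\partial^2_I\omega^{nls}$, where one needs to apply Lemma \ref{interpolation product} three times, splitting $(s,s_0)$-indices so that at most one factor carries the high norm $\|\cdot\|_s$. Here one should first estimate the innermost sum $\sum_{j,k\in S^\bot}\partial^2_{I_j I_k}\omega_n^{nls}\,\bar z_j z_k \widehat z^{(a)}_j\widehat z^{(b)}_k$ uniformly in $n$ (using the weight cancellation from \eqref{tame composizione derivate ennesime frequenza} with $|\alpha|=2$), then multiply by the outer $z_n$ factor and use the tame product inequality, mirroring the treatment of $T_2$ in the proof of item $(iii)$ of Proposition \ref{stime derivate H nls}. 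The smallness hypothesis $\|\io\|_{3s_0}^{\Lipg}\le\rho$ ensures that the values of $(y(\vphi),z(\vphi))$ stay in the neighborhood $V$ where the bounds \eqref{tame composizione derivate ennesime frequenza} for $N=3$ hold. The Lipschitz-in-$\omega$ version is obtained, as usual, by applying the same arguments to the finite differences in $\omega$ and distributing the Lipschitz norm over the trilinear structure.
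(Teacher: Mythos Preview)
Your proposal is correct and follows precisely the approach the paper intends: the paper's ``proof'' is just the sentence preceding the lemma, pointing to Proposition~\ref{stime derivate H nls} and its arguments, and you have carried these out in detail. One small slip: in your first itemized bullet the range should be $1\le|\alpha|\le 2$ for $\partial_I^\alpha\omega_n^{nls}$ (as you in fact use later, writing $\partial^3_{I_iI_jI_k}H^{nls}=\partial^2_{I_jI_k}\omega_i^{nls}$), so citing \eqref{tame composizione derivate ennesime frequenza} with $N=2$ already suffices.
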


We now state tame estimates for the Hamiltonian vector field 
of the perturbation $ P $. 
Recall that $P$ is the Hamiltonian ${\mathcal P}$, expressed in
Birkhoff coordinates on $M^\s$, where 
${\mathcal P} (u) = \int^1_0 \mbox{\em p}(x,u_1(x) , u_2(x))dx$ (cf  \eqref{1.4}) and
$\partial_{\bar \zeta} p$ is assumed to be of class 
$\mathcal C^{\sigma, s_*}$ with
$s_{*} > \mbox{max} (\s , s_0)$ sufficiently large. In the following proposition,
we restrict the range of $s$ so that
Lemma~\ref{composition in Sobolev} applies.

\begin{proposition}\label{teorema stime perturbazione}
 Given an integer $s$ with $s_0 \le s \le  s_{*}  - s_0 - 3$ ,
there exists $0 < \rho \le 1$ so that for any map
$\breve \io(\vphi) = (\vphi, 0, 0) + \io(\vphi)$ with
$\io \equiv \io_\omega$  in $H^{s+2s_0}(\T^S, \R^S \times \R^S \times h_\bot^\s)$,
which is Lipschitz continuous in $\omega \in \Omega$ and satisfies
$\| \io\|_{ 3s_0}^{\Lipg} \leq \rho$, one has $\breve \io(\T^S) \subset  M^\s$ 
and the following holds:

\noindent
$(i)$ $\nabla_\theta P,\nabla_y P$, and $\nabla_z P$ satisfy the tame estimates 
$$
\| \nabla_\theta P \|_s^\Lipg\,,\| \nabla_y P \|_s^\Lipg\,, \| \nabla_z P \|_s^\Lipg \leq_s 1 + \| \io\|_{s + 2 s_0}^\Lipg\,.
$$
The derivatives of $\nabla_\theta P,\nabla_y P$, and $\nabla_z P$ with respect to $\theta$ and $y$ satisfy the tame estimates
$$
\| \partial_\theta \nabla_{\theta} P \circ \breve \io \|_s^\Lipg,\,\, 
\| \partial_y \nabla_\theta P  \circ \breve \io \|_s^\Lipg,\,\,
\| \partial_\theta \nabla_y P  \circ \breve \io \|_s^\Lipg,\,\, 
\| \partial_y \nabla_y P  \circ \breve \io \|_s^\Lipg\,
\leq_s 1 + \| \io\|_{s + 2 s_0}^\Lipg
$$
and 
$$
\| \partial_\theta \nabla_z P  \circ \breve \io \|_s^\Lipg,\,\, 
\| \partial_y \nabla_z P  \circ \breve\io \|_s^\Lipg\,
\leq_s 1 + \| \io\|_{s + 2 s_0}^\Lipg\, .
$$
Since $\nabla_{\bar z} P =\overline{\nabla_{z} P}$, the derivatives of 
$\nabla_{\bar z} P$ with respect to $\theta$ and $y$ also satisfy the same tame estimates.

\smallskip

\noindent
$(ii)$ For any map $\widehat z_1 \equiv \widehat z_{1, \omega}$ 
in $H^s(\T^S, h_\bot^\s)$, which is Lipschitz continuous in $\omega \in \Omega$,
the derivatives of 
$\nabla_\theta P, \nabla_y P, \nabla_z P,$ and  $\nabla_{\bar z} P$
with respect to $z$ in direction $\widehat z_1$ satisfy the tame estimates
\begin{align}
& \| \partial_z \nabla_\theta P  \circ \breve \io \, [\widehat z_1] \|_s^\Lipg,\, \,
\|\partial_z \nabla_y P  \circ \breve \io \, [\widehat z_1] \|_s^\Lipg,\, \,
\| \partial_z \nabla_z P  \circ \breve \io \, [\widehat z_1] \|_s^\Lipg, \,\,
\| \partial_z \nabla_{\bar z} P  \circ \breve \io \, [\widehat z_1] \|_s^\Lipg \nonumber\\
& \leq_s \| \widehat z_1\|_s^\Lipg + \| \io\|_{s + 2 s_0}^\Lipg \| \widehat z_1\|_{s_0}^\Lipg\,. \nonumber
\end{align}
Since $\partial_{\bar z} = \overline{\partial_{z}}$, the derivatives of $\nabla_\theta P,\, \nabla_y P,\, \nabla_z P,$ and  $\nabla_{\bar z} P$
with respect to $\bar z$ in direction $\widehat z_2 \equiv \widehat z_{2, \omega}$ 
 admit the same bounds for any $\widehat z_2$
in $H^s(\T^S, h_\bot^\s)$, which is Lipschitz continuous in $\omega \in \Omega$.

\end{proposition}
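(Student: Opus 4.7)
The plan is to realize $P$ and each of its gradients as compositions of smooth maps on a fixed convex neighborhood of $(0,0) \in \R^S \times h^\s_\bot$ with the torus embedding $\breve \io$, and then to invoke the tame composition estimates of Lemma~\ref{composition in Sobolev} (plus the tame action estimate of Lemma~\ref{lemma:action-Sobolev} for item~($ii$)). The first step is to write $P(\theta, y, z) = \mathcal{P}\big((\Phi^{nls})^{-1}(w(\theta, y, z))\big)$, where $w \in h^\s_r$ is reassembled from $(\theta, y, z)$ by setting $w_k = \sqrt{\xi_k + y_k}\, e^{-\ii \theta_k}$ for $k \in S$ and $w_k = z_k$ for $k \in S^\bot$. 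I would choose $\rho$ so small that $\|\io\|_{s_0}^\Lipg \leq \rho$, via the Sobolev embedding of Lemma~\ref{lemma D omega}($iii$), forces $\xi + y(\vphi)$ to stay in a fixed compact subset of $\R^S_{>0}$ and $(y(\vphi), z(\vphi))$ to remain in a fixed convex neighborhood $V \subset \R^S \times h^\s_\bot$ for all $(\omega, \vphi) \in \Omega \times \T^S$. On this neighborhood the reparametrization $(\theta, y, z) \mapsto w$ is real analytic, and by Theorem~\ref{Theorem Birkhoff coordinates} the map $(\Phi^{nls})^{-1}: h^\s_r \to H^\s_r$ is real analytic on a neighborhood of the image. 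The regularity~\eqref{regolarita di p} of the perturbation, combined with the fact that $H^\s$ is a Banach algebra of continuous functions for $\s \geq 4$, then implies that $\mathcal{P}: H^\s \to \R$ is $\mathcal{C}^{s_*+1}$-smooth.

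By the chain rule, for each $\alpha \in \{\theta, y, z, \bar z\}$, the gradient $\nabla_\alpha P$ can then be represented as $F_\alpha \circ \breve \io$ for some $F_\alpha : \T^S \times V \to Y_\alpha$ of class $\mathcal{C}^{s+s_0+1}$ (with $Y_\theta = Y_y = \R^S$ and $Y_z = Y_{\bar z} = h^\s_\bot$); the condition $s \leq s_* - s_0 - 3$ is precisely what supplies the required reservoir of derivatives of $p$. Applying item~($iii$) of Lemma~\ref{composition in Sobolev} yields $\| \nabla_\alpha P \circ \breve \io \|_s^\Lipg \leq_s 1 + \| \io \|_{s+2s_0}^\Lipg$. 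The same argument, applied to the maps $\partial_\theta \nabla_\alpha P$ and $\partial_y \nabla_\alpha P$ viewed as $G_\alpha \circ \breve \io$ with $G_\alpha$ valued in the appropriate operator space ($\mathcal{L}(\R^S, Y_\alpha)$ in the tangential case, $\mathcal{L}(\R^S, h^\s_\bot)$ otherwise), produces all the remaining bounds of item~($i$).

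For item~($ii$), I would write $\partial_z \nabla_\alpha P \circ \breve \io \,[\widehat z_1] = H_\alpha(\breve \io(\vphi))[\widehat z_1(\vphi)]$ with $H_\alpha : \T^S \times V \to \mathcal{L}(h^\s_\bot, Y_\alpha)$ of class $\mathcal{C}^{s+s_0+1}$. Since $\widehat z_1$ enters linearly, combining the composition estimate $|H_\alpha \circ \breve \io|_s^\Lipg \leq_s 1 + \|\io\|_{s+2s_0}^\Lipg$ with the tame action estimate of Lemma~\ref{lemma:action-Sobolev} gives the bound
\[
\| \partial_z \nabla_\alpha P \circ \breve \io \,[\widehat z_1] \|_s^\Lipg \leq_s \| \widehat z_1 \|_s^\Lipg + \| \io \|_{s+2s_0}^\Lipg \| \widehat z_1 \|_{s_0}^\Lipg,
\]
and a symmetric argument handles the $\bar z$ direction. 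The main technical point, and the only real obstacle, is the verification that $F_\alpha$, $G_\alpha$, $H_\alpha$ genuinely extend to $\mathcal{C}^{s+s_0+1}$-smooth maps on $\T^S \times V$ with values in the stated Banach spaces. This rests on the fact, from Theorem~\ref{Theorem Birkhoff coordinates}($ii$), that $(\Phi^{nls})^{-1}$ differs from the Fourier transform $F_{nls}^{-1}$ by a $1$-smoothing real analytic map, so that composition with $(\Phi^{nls})^{-1}$ preserves the $x$-regularity of $f(x, u)$ encoded in~\eqref{regolarita di p} and lets one transfer derivative control on $p$ to derivative control on the composed maps without loss. Once this is in place, everything else is bookkeeping through the composition and action lemmata.
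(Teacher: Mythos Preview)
Your proposal is correct and follows essentially the same route the paper has in mind: the paper's proof merely says ``the stated estimates can be shown in a similar way as the ones for the dNLS Hamiltonian,'' and the proof of Proposition~\ref{stime derivate H nls} proceeds exactly by realizing the relevant maps as compositions $f \circ \breve\io$ and invoking Lemma~\ref{composition in Sobolev}. Your explicit factoring through $(\Phi^{nls})^{-1}$ and the reparametrization $(\theta,y,z)\mapsto w$, together with the use of Lemma~\ref{lemma:action-Sobolev} for the linear-in-$\widehat z_1$ terms in item~($ii$), is the natural way to spell out those details for the perturbation $P$.
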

\begin{proof}
The stated estimates can be shown in a similar way as the ones for 
the dNLS Hamiltonian.
\end{proof}

Finally, one can also derive tame estimates for the second derivative of the Hamiltonian vector field $X_P$.
 Again we restrict the range of $s$ so that Lemma~\ref{composition in Sobolev} applies.

\begin{lemma}\label{quadraticPart X P}
 Given an integer $s$ with $s_0 \le s \le  s_{*}  - s_0 -4$ ,
there exists $0 < \rho \le 1$ so that for any map
$\breve \io(\vphi) = (\vphi, 0, 0) + \io(\vphi)$ with
$\io \equiv \io_\omega$  in $H^{s+2s_0}(\T^S, \R^S \times \R^S \times h_\bot^\s)$,
which is Lipschitz continuous in $\omega \in \Omega$ and satisfies
$\| \io\|_{ 3s_0}^{\Lipg} \leq \rho$, one has $\breve \io(\T^S) \subset  M^\s$ 
and for any maps $\widehat \io^{(a)} \equiv \widehat \io^{(a)}_\omega$ 
in $H^s(\T^S, \R^S \times \R^S \times h^\s_\bot \times h^\s_\bot)$, $a= 1,2$, 
which are Lipschitz continuous in $\omega \in \Omega$, one has
$$
\| d^2 X_P  \circ \breve \io \, [\widehat \io^{(1)}, \widehat \io^{(2)}]\|_s^\Lipg 
\leq_s 
\|\widehat \imath^{(1)}\|_s^\Lipg \| \widehat \imath^{(2)}\|_{s_0}^\Lipg + 
\|\widehat \imath^{(1)}\|_{s_0}^\Lipg \| \widehat \imath^{(2)}\|_{s}^\Lipg +
\| \io\|_{s + 2 s_0}^\Lipg \| \widehat \imath^{(1)}\|_{s_0}^\Lipg \| \widehat \imath^{(2)}\|_{s_0}^\Lipg \,.
$$
\end{lemma}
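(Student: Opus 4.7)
\medskip

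\noindent\emph{Proof proposal.} The plan is to follow exactly the same pattern used in the proof of Lemma \ref{quadraticPart X H nls} for the dNLS Hamiltonian, replacing Proposition \ref{stime derivate H nls} by Proposition \ref{teorema stime perturbazione} and using one additional derivative of $P$. The loss of one extra derivative explains the restriction $s\leq s_*-s_0-4$ instead of $s\leq s_*-s_0-3$ in Proposition \ref{teorema stime perturbazione}: bounding $d^2X_P\circ\breve\io$ requires $\partial^3 P\in\mathcal{C}^{s+s_0+1}$, which in turn requires $s_*\geq s+s_0+4$.

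First I would expand $d^2X_P\circ\breve\io[\widehat\io^{(1)},\widehat\io^{(2)}]$ component by component using \eqref{differential XF (2)} applied to $F=P$ and differentiated once more. Writing $\widehat\io^{(a)}=(\widehat\theta^{(a)},\widehat y^{(a)},\widehat z^{(a)}_1,\widehat z^{(a)}_2)$, each component becomes a finite sum of expressions of the shape
\[
\big(\partial^2_{u_i u_j}\nabla_{u_k}P\big)(\breve\io(\vphi))\big[\widehat u_i^{(1)}(\vphi),\widehat u_j^{(2)}(\vphi)\big],
\qquad u_i,u_j,u_k\in\{\theta,y,z,\bar z\},
\]
where the partial derivatives of $\nabla_{u_k}P$ are taken in the standard complex-real sense introduced in Subsection \ref{Hamiltonian setup} and the $L^2$-gradients coming from the density $p$ are one smoothing in the $z,\bar z$ directions by Theorem \ref{Theorem Birkhoff coordinates}$(ii)$, exactly as exploited in the proof of Proposition \ref{teorema stime perturbazione}.

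For each such scalar or vector valued term, I would proceed in two steps. First, I bound the Lipschitz norm of the operator valued map $\vphi\mapsto(\partial^2_{u_iu_j}\nabla_{u_k}P)(\breve\io(\vphi))$ via item $(iii)$ of Lemma \ref{composition in Sobolev}, applied to the $\mathcal{C}^{s+s_0+1}$-smooth map $\partial^2_{u_iu_j}\nabla_{u_k}P$ restricted to a small convex neighborhood of $0$ in $U_0\times h^\sigma_\bot$ (smallness being guaranteed by $\|\io\|_{3s_0}^\Lipg\leq\rho$ and the Sobolev embedding in Lemma \ref{lemma D omega}$(iii)$), obtaining a tame bound of the form
\[
\big\|(\partial^2_{u_iu_j}\nabla_{u_k}P)(\breve\io(\cdot))\big\|_s^\Lipg\ \leq_s\ 1+\|\io\|_{s+2s_0}^\Lipg.
\]
Second, I pair against the two arguments $\widehat u_i^{(1)}(\vphi)$ and $\widehat u_j^{(2)}(\vphi)$: applying the product tame estimate \eqref{lip tame for functions} twice distributes the high index $s$ between the three factors. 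This produces three types of contributions according to which factor carries the high index: $\|\widehat\io^{(1)}\|_s^\Lipg\|\widehat\io^{(2)}\|_{s_0}^\Lipg$, $\|\widehat\io^{(1)}\|_{s_0}^\Lipg\|\widehat\io^{(2)}\|_s^\Lipg$, and $(1+\|\io\|_{s+2s_0}^\Lipg)\|\widehat\io^{(1)}\|_{s_0}^\Lipg\|\widehat\io^{(2)}\|_{s_0}^\Lipg$. Summing over the finitely many combinations of $u_i,u_j,u_k$ yields the claimed bound, after absorbing the $1$ in the last contribution into the first two via $\|\widehat\io^{(a)}\|_{s_0}^\Lipg\leq\|\widehat\io^{(a)}\|_s^\Lipg$.

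There is no serious obstacle here; the proof is essentially bookkeeping. The only point requiring mild attention is that when $u_i$ or $u_j$ equals $z$ or $\bar z$, the operator valued map has to act tamely on a sequence in $h^\sigma_\bot$ rather than evaluate at a finite-dimensional vector. This is handled exactly as in the treatment of the terms $T_1$ and $T_2$ in the proof of item $(iii)$ of Proposition \ref{stime derivate H nls}: one separates the contribution of each Fourier index $n\in S^\bot$, uses Lemma \ref{interpolation product} together with the summability afforded by $\sigma\geq 4$, and finally takes the $\ell^2_n$-norm weighted by $\langle n\rangle^{2\sigma}$ to recover the $\|\cdot\|_s$ norm via \eqref{norma other}. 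Since Proposition \ref{teorema stime perturbazione} already carried out this argument for first derivatives of $X_P$, repeating it for one more derivative is routine and produces the bound stated in the lemma.
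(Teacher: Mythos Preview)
Your approach is correct and coincides with the paper's: the paper's proof consists of a single sentence stating that the estimate ``corresponds to the ones of Lemma~\ref{quadraticPart X H nls}'' and ``can be derived by the arguments used in the proof of Proposition~\ref{stime derivate H nls}'', which is precisely the pattern you spell out. You have simply supplied the details the paper omits.
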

\begin{proof}
The stated tame estimates correspond to the ones of Lemma~\ref{quadraticPart X H nls}
for the Hamiltonian vector field $X_{H^{nls}}$ and can be derived by the arguments used in the 
proof of Proposition~\ref{stime derivate H nls}.
\end{proof}

\section{Nash-Moser theorem} \label{Main result restated}

The purpose of this short section is to  reformulate Theorem~\ref{Theorem 1.1}  in the functional setup, described in the previous sections, and outline the organisation of its proof.

We consider torus embeddings
$$
 \breve \io : {\mathbb T}^S \rightarrow M^\sigma : \vphi \mapsto ( \theta(\vphi), y(\vphi), z(\vphi))
$$
whose lifts are assumed to be of the form 
$ (\vphi, 0, 0) + \io(\vphi)$ where
$$ 
\io (\vphi) = ( \Theta (\varphi ), y(\varphi ), z(\varphi )) \,
$$
with $\Theta : \R^S \to \R^S$ being $2 \pi$-periodic in each component of  
$\vphi = (\vphi_n)_{n \in S}$. 
We look   for zeros $   \io $ of the nonlinear operator $ F_\om $ defined in \eqref{definitionFep} by a Nash - Moser theorem.

In the sequel, we will identify such embeddings with their lifts.
Furthermore recall that the Sobolev norm $\| \io \|_{s, \sigma'}$, $\sigma' \leq \sigma$, of the periodic part  $ \io $ of the map $\breve \io$,  is given by
$$
\|  \io   \|_{s, \s'} := \| \Theta \|_s +  \| y  \|_s +  \| z \|_{s, \s'}
$$
where $ \| \Theta \|_s :=  \| \Theta  \|_{H^s (\T^S, \R^S)} $,  $ \| y \|_s :=  \| y  \|_{H^s (\T^S, \R^S)} $,
and $ \| z \|_{s, \s'} := \| z \|_{H^s(\T^S, h^{\s'}_\bot) } $ (cf \eqref{norma other}). In case
$ \s' = \s $ we also write $ \|  \io   \|_s $, $ \| z \|_s $, instead of $ \|  \io   \|_{s, \s},  \| z \|_{s, \s} $.

\begin{theorem}\label{main theorem}
Assume the assumptions of Theorem \ref{Theorem 1.1} hold. 
Then there is
$s_* > \max \big(\sigma, s_0 \big)$,  $s_0 = [|S|/2] +1,$ so that for any $ f  \in {\cal C}^{\sigma, s_*} $ in the perturbed equation 
\eqref{1.3}, 
there exists $0 < \varepsilon _0 < 1$  such that the following holds: for any $0 <\varepsilon  \leq \varepsilon _0$, 
there is a closed subset  $\Omega _\varepsilon \subseteq \Omega$ satisfying 
\be\label{measure estimate Omega in Theorem 4.1}
 \lim_{\e\to 0} \, \frac{{\rm meas}(\Omega_\e)}{{\rm meas}(\Omega )} = 1\, ,
\ee
so that for any $ \omega \in \Omega_\e $, there exists a torus embedding
$\breve \io_\omega : \T^S \to M^\s,$ satisfying 
$ \om \cdot \partial_\vphi \breve \io_\omega(\vphi) - X_{H_\e}( \breve \io_\omega(\vphi)) = 0 $.
This means  that the embedded torus 
$\breve \io_\omega (\T^S) $ is invariant for the Hamiltonian vector field $ X_{H_\e (\cdot, \xi)} $  
with $ \xi = (\om^{nls})^{-1}( \om ) $,  
and is filled by quasi-periodic solutions with the frequency $ \om $.
The map $\breve \io_\omega(\vphi)$ admits a lift of the form $ (\vphi,0,0) + \io_\omega(\vphi)$
where $\io_\omega$ is in $H^{s_0 + \mu_1}(\T^S, \R^S \times \R^S \times  h^\sigma_\bot)$ 
for some $\mu_2 > 0$ (depending only on $|S|$) with $s_0 + \mu_2 < s_* $, is Lipschitz continuous 
in $\omega \in \Omega_\e$,  and satisfies 
$$
\| \io_\omega\|_{s_0 + \mu_2}^\Lipg = O( \e \gamma^{-2}) 
\quad  \mbox{ with} \quad  \gamma \equiv \gamma_\e := \e^{\frak a} (< 1)\,, \quad  0 <  {\frak a} < 1/4\, .
$$
 Furthermore 
 the linearized equation at the quasi-periodic solution 
$\breve \io_\omega(\omega t) = \omega t + \io_\omega(\omega t)$ is 
 stable -- see 
Corollary~\ref{corollario finale nash moser} for a precise statement.  
\end{theorem}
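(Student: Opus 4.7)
The plan is to solve $F_\omega(\iota)=0$ by a Nash-Moser iteration scheme for $\omega$ in an appropriate Cantor-like subset $\Omega_\varepsilon$ of $\Omega$. Starting from the trivial approximation $\iota_0=0$ (for which $F_\omega(\iota_0)$ is of size $O(\varepsilon)$ by the definition of $\xi(\omega)$ in \eqref{xi-function-om}), I would construct successively better approximations $\iota_n$ by a Newton-type update $\iota_{n+1}=\iota_n+\Pi_{t_n}h_n$ where $h_n$ is determined by an approximate right inverse of $dF_\omega(\iota_n)$ and $\Pi_{t_n}$ is the smoothing operator of Lemma~\ref{smoothing sobolev lemma}. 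Provided the right inverses satisfy tame estimates with losses controlled by fixed powers of $\gamma^{-1}$ and the Sobolev index $s$, the classical super-exponential convergence argument produces a solution $\iota_\omega$ of regularity $H^{s_0+\mu_2}$ with the quantitative bound $\|\iota_\omega\|_{s_0+\mu_2}^{\Lipg}=O(\varepsilon\gamma^{-2})$, which, after choosing $\gamma=\varepsilon^{\mathfrak a}$, gives the required size in the statement.

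The heart of the proof is the construction of an approximate right inverse of $dF_\omega(\iota)$ at an embedding $\breve\iota$ close to $\breve\iota_0(\varphi)=(\varphi,0,0)$, together with tame estimates on it. Following the strategy of Berti--Bolle, I would split off the dynamics tangent to the approximate invariant torus from the symplectically normal one, so that the inversion of $dF_\omega$ is reduced, after a triangular change of unknowns exploiting the Hamiltonian structure, to the inversion of the restriction $\mathcal L_\omega$ of $dF_\omega$ to normal directions. Using the Hamiltonian form of the linearised equation \eqref{eq:linearized} together with Lemma~\ref{transformation of Hamiltonian operators}, $\mathcal L_\omega$ is a linear Hamiltonian operator $\omega\cdot\partial_\varphi\mathbb I_2+JA(\varphi)$ on $h^\sigma_\bot\times h^\sigma_\bot$, whose coefficients $A(\varphi)$ depend on $\breve\iota$ and, by Propositions~\ref{stime derivate H nls} and \ref{teorema stime perturbazione}, inherit tame estimates from those of $\breve\iota$.

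To invert $\mathcal L_\omega$, the plan is twofold. First, I would perform a finite sequence of symplectic changes of coordinates (multiplication and gauge transformations of the kind used for KdV in the series Berti--Bolle--Montalto), using crucially that the nonlinear part of the Birkhoff map is one-smoothing by Theorem~\ref{Theorem Birkhoff coordinates}(ii); this conjugates $\mathcal L_\omega$ to an operator with $\varphi$-independent principal symbol equal to $\omega_k^{nls}(\xi,0)+c$ plus a remainder which is $O(\varepsilon\gamma^{-1}|k|^{-1})$ in the normal index. Second, on the remainder I would run a KAM reducibility scheme, controlled by second order Melnikov conditions of the type displayed before \eqref{Omegainfty}, conjugating $\mathcal L_\omega$ to a $2\times2$ block-diagonal operator with $\varphi$-independent blocks, each of which is invertible on the complement of $\ker(\omega\cdot\ell\pm\omega_k)$. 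The grouping of $z_k$ and $z_{-k}$ into $2\times2$ blocks is forced by \eqref{asymptotics nls frequencies} which precludes a full scalar diagonalisation, and the hypothesis $S=-S$ makes this grouping compatible with the tangential/normal splitting.

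The main obstacle will be the asymptotic expansion \eqref{Ome-exp} of the perturbed normal frequencies, because only a sharp expansion of this form allows, via the non-degeneracy \eqref{Kolmogorov-c}--\eqref{Melnikov-1-2} of the dNLS frequencies, to verify the second order Melnikov conditions for a set of $\omega$'s of asymptotically full measure. Its proof requires a careful tracking of commutators of matrix-valued operators whose symbols are only of finite $\mathcal C^{\sigma,s_*}$-smoothness, combined with the smoothing estimates of Lemma~\ref{lemma-reg-D} and Lemma~\ref{lem:inverti} and the algebraic fact that, in Birkhoff coordinates, the highest-order part of $\mathcal L_\omega$ is the \emph{same} as in the original variables. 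Once \eqref{Ome-exp} is established, the measure estimate \eqref{measure estimate Omega in Theorem 4.1} follows by expressing the non-resonance conditions at every KAM step only in terms of the final reduced operator (cf Lemma~\ref{inclusion of cantor sets}) and summing standard Lipschitz measure estimates; the loss $O(\varepsilon^\nu)$ in ${\rm meas}(\Omega\setminus\Omega_\varepsilon)$ comes from the choice $\gamma=\varepsilon^{\mathfrak a}$ with $0<{\mathfrak a}<1/4$. Linear stability of the resulting quasi-periodic solution is a by-product of the block-diagonal reduction.
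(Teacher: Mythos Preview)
Your plan is correct and follows essentially the same route as the paper: Nash--Moser iteration starting from $\iota_0=0$, reduction via the Berti--Bolle strategy to the normal operator $\mathfrak L_\omega$, a finite sequence of symplectic transformations (Sections~\ref{sec1}--\ref{sec-gauge}) exploiting the one-smoothing of the Birkhoff map to obtain the asymptotics \eqref{Ome-exp}, then a KAM $2\times2$ block-diagonalisation (Section~\ref{sec:redu}), and finally the measure estimates of Section~\ref{sec:measure}. The only notable point you omit is the auxiliary action variable $\zeta$ (cf.\ \eqref{def:Hep} and Lemma~\ref{zeta = 0}), which the paper introduces to control the average of the $y$-component along the iteration; also, the remainder in \eqref{Ome-exp} is $O(\varepsilon\gamma^{-2}|k|^{-1})$ rather than $O(\varepsilon\gamma^{-1}|k|^{-1})$.
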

\begin{remark}\label{remark size of torus}
In the estimates of the embedded tori we do not distinguish between the different components $\Theta$, $y$, $z$ of $\io$. Actually, the estimates for $y$ and $z$ can be sharpened for most $\o$ in $\Omega_\e$. It turns out that an effective way for proving the improved ones is to do so a posteriori, using that $F_\omega(\io_\omega, 0) = 0$ and that $\| \io_\omega\|_{s_0 + \mu_2}^\Lipg = O( \e \gamma^{-2})$. See Corollary \ref{stima optimal size torus} and its proof for details.
\end{remark}

\smallskip

\noindent
{\em Comments:}
\begin{enumerate} 
\item Up to the end of Section~\ref{sec:NM}, $\g \in (0,1) $ is assumed to be a
constant independent of $\e$ with $\e \g^{-4} $ small.
Only in  Section~\ref{sec:measure} (Theorem~\ref{measure estimate}), $\g$ and $\e$ are 
assumed to be related by requiring that $\gamma_\e = \e^{\frak a}$ for some $ 0 < {\frak a} < 1/4$.
The set $ \Om_\e $ is defined in \eqref{definizione cal G infty}.
\item
Let $\Pi \subseteq \Pi _S$ be a compact subset with measure $ |\Pi| >  0 $.
By Proposition~\ref{Proposition  2.3}, for any $\delta > 0$ there
exists an open subset $\Pi _\delta $ of
$\Pi _S$ so that ${\rm meas } (\Pi \cap \Pi _\delta ) \leq \delta $ and
on $\Pi \backslash \Pi _\delta , \ {\rm det }\big( (\partial _{I_j}
\omega _n^{nls}) _{i,j \in S} \big)$ is bounded and uniformly
bounded away from $0$. Hence on $\Pi \backslash \Pi _\delta $, the
action to frequency map $I \mapsto (\omega^{nls}_n)_{n \in S}$
is a local diffeomorphism. As $\Pi \backslash \Pi _\delta $ is compact
there exists a finite cover $(\Pi^{(i)})_{i \in {\cal I}}$ of $\Pi \backslash
\Pi _\delta $ with $\Pi ^{(i)}$ compact so that $\Pi^{(i)} \to \R^S, I \mapsto
(\omega^{nls}_n)_{n \in S}$ is a bi-Lipschitz homeomorphism onto
its image. By first choosing $\delta > 0 $ and then  applying 
Theorem~\ref{main theorem} for the finitely many parameter sets
$\Pi^{(i)}, i \in {\cal I}$, for $ 0 < \e \leq \e_0 (\delta) $, 
one sees that Theorem~\ref{main theorem} holds for any compact subset
 $\Pi \subseteq \Pi _S$ with ${\rm meas } (\Pi ) > 0$ as set of parameters.
\end{enumerate}

Theorem \ref{main theorem} -- which implies Theorem \ref{Theorem 1.1} --  is shown in 
Section~\ref{3. Set up} - \ref{sec:measure} by means of a Nash-Moser iteration scheme. 
Let us give a brief outline of its proof.
It is convenient to introduce an auxiliary variable $\zeta \in \R^S$ and consider the modified Hamiltonian vector field $ X_{H_{\e, \zeta}} = X_{H_\e} + (0, \zeta, 0 )$
with Hamiltonian 
\be\label{def:Hep}
H_{\e, \zeta} (\teta, y, z) \equiv H_{\e, \zeta} (\teta, y, z; \om )
:=  H_\e (\teta, y, z) + \zeta \cdot \theta\,,\quad \zeta \in \R^S \, , 
\ee
where $ H_\e $ is defined  in \eqref{HamiltonianHep} and considered as a function of the parameter $ \om \in \Om $  
 by setting $ \xi = (\om^{nls})^{-1} (\om) $.  
Lemma  \ref{zeta = 0} shows that any invariant torus 
for $ X_{H_{\e, \zeta}} $ is actually invariant  for $ X_{H_\e} $. 
The variable $ \zeta $ will allow us to control the average of the 
$ y$-component of approximations of the linearized Hamiltonian vector fields, 
adding in this way flexibility for choosing such approximations.

\smallskip

 We look for zeros of the map
\begin{align}\label{definitionFepagain}
  F_\omega (  \io ,  \zeta )  & := \omega \cdot \partial_\vphi  \breve \io (\vphi) - X_{H_{\e, \zeta}}  ( \breve \io (\vphi)) =  
  \omega \cdot \partial_\vphi \breve \io (\vphi) - X_{H_\e}  ( \breve \io (\vphi)) + (0, \zeta, 0)  
 \end{align}
 which when written componentwise reads
 \begin{align}
  F_\omega (  \io ,  \zeta )&  = \big(  \omega \cdot \partial_\vphi  \theta - \nabla _y H
      _\varepsilon , \  \omega \cdot \partial_\vphi  y + \nabla _\theta 
      H_\varepsilon + \zeta , \  \omega \cdot \partial_\vphi  z + \ii \nabla_{\bar z} H_\varepsilon    \big)\,. \label{operatorF}
 \end{align}
  In order to implement a convergent Nash-Moser scheme that leads to a solution of  
$ F_\omega (  \io , \zeta) = 0 $, the main task is to construct an \emph{approximate right inverse} 
 of the differential $d_{ \io, \zeta} F_\omega$, satisfying tame estimates -- 
see Theorem \ref{thm:stima inverso approssimato} in the subsequent section. Note that
the derivative of $ F_\omega (  \io ,  \zeta )$ in direction $(\widehat \imath \,, \widehat \zeta)$ is given by
\begin{equation}\label{operatore linearizzato}
d_{ \io, \zeta} F_\omega [\widehat \imath \,, \widehat \zeta ] =
\om \cdot \partial_\vphi \widehat \imath - \pa_\io X_{H_\e} (  \breve \io  (\vphi) ) [\widehat \imath ] + (0, \widehat \zeta, 0,0 )
 \,,  
\end{equation}
 which is independent of $ \zeta $. According to \cite{Z1}, an 
approximate right inverse of $d_{\io, \zeta} F_\omega$ is a map 
with the property that, when composed with $d_{\io, \zeta} F_\omega$,  it is equal to the identity up to an error of the size of $ F_\omega ( \io, \zeta )$. 
In particular, at a solution $ (  \io , \zeta, \om ) $  of $ F_\omega ( \io , \zeta) = 0 $, an approximate right inverse is an \emph{exact} one. 
For constructing an approximate right inverse, we implement the strategy developed in \cite{BB1}, \cite{BBM3}
which reduces the search of such an operator
to the one of an approximate right inverse of the part of $d_{\io, \zeta} F_\omega$, acting on the normal directions only -- see Theorem~\ref{invertibility of frak L omega}, which is proved in Section~\ref{sec:5}
and Section~\ref{sec:redu}. In these sections we also provide estimates for 
the variation of the quantities considered  with respect to the torus embedding $ \breve \io $.
This information is needed 
for the proof of the measure estimates of Section \ref{sec:measure} (Theorem~\ref{measure estimate}). 
The construction of solutions of $ F_\omega (  \io , \zeta) = 0 $
via a Nash-Moser iteration scheme and the proof of their linear stability is presented in Section~\ref{sec:NM} (Theorem~\ref{iterazione-non-lineare} and Corollary~\ref{corollario finale nash moser}).

%%%%%%%%%%%%%%%%%%%%%%%%%%%%%%%%%%%%%%%%%%%%%%%%%%%%%%%%%%%%%%%%%%%%%%%%%%%
%%%%%%%%%%%%%%%%%%%%%%%%%%%%%%%%%%%%%%%%%%%%%%%%%%%%%%%%%%%%%%%%%%%%%%%%%%%
%%%%%%%%%%%%%%%%%%%%%%%%%%%%%%%%%%%%%%%%%%%%%%%%%%%%%%%%%%%%%%%%%%%%%%%%%%%

\section{Approximate right inverse}\label{3. Set up}

The main result of this section is Theorem~\ref{thm:stima inverso approssimato}. 
Throughout the remainder of the paper, we always assume that 
$\breve \io \equiv \breve \io_\omega : \T^S \to M^\s\,, 
\vphi \mapsto \breve \io(\vphi)$ 
is a $ {\cal C}^\infty $ torus embedding of the form $(\vphi, 0, 0) + \io(\vphi)$ 
Lipschitz continuous in $\omega$  on 
a closed subset 
\be\label{definition Omega-o}
\Omega_o(\io) \subset  \Omega_{\gamma, \tau} \subset \Omega \, , 
\ee
where  $\Omega_{\gamma, \tau}$ is the set of diophantine frequencies introduced in \eqref{Omega o Omega gamma tau}.
Furthermore, we assume that $\io$ is small in the sense that
\be\label{size of torus}
\| \io \|_{s_0 + \mu_1}^\Lipg
 \lessdot  \,\e \g^{-2} \, , \quad \| E \|_{s_0 + \mu_1, \s-2}^\Lipg \lessdot  \,\e  \quad 
\mbox{with} \quad \e \gamma^{- 4} \ll 1 \, \mbox{ and } \, 0 < \g < 1 \, 
\ee
where $  E: \T^S \to \R^S \times \R^S \times h^{\s-2}_\bot $ is the 'error function' of $(\io, \zeta)$,
\be\label{error function}
 E (\vphi ) := (E_\theta(\vphi), E_y(\vphi), E_z(\vphi)) = F_\omega ( \io , \zeta ) (\vphi)\, .
\ee
 It will be verified in Section~\ref{sec:NM} that the smallness assumptions \eqref{size of torus} 
hold along the Nash-Moser iteration scheme. 
In all of Section~\ref{3. Set up}, if not stated otherwise, the Lipschitz estimates are computed on $\Omega_o(\io)$.
Furthermore, in the estimates in the subsequent subsections,  the Sobolev exponent $s$ will 
be an arbitrary integer satisfying
$$
s_0 \le s \le s_* - \mu_1  \, , \qquad  s_0 = [S/2] + 1\,.
$$
Here,  
$ \mu_1 \equiv \mu_1(|S|, \tau) \in \Z_{\ge 1} $
is assumed to be sufficiently large so that it is bigger than various integers 
$\mu \equiv \mu(|S|, \tau)$, coming up in the lemmas below,
and so that the tame estimates of Subsection~\ref{sec:2} such as the ones of 
Lemma~\ref{composition in Sobolev} apply in the situations considered. 

%%%%%%%%%%%%%%%%%%%%%%%%%%%%%%%%%%%%%%%%%%%%%%%%%%%%%%%%%%%%%%%%%%%%%%
%%%%%%%%%%%%%%%%%%%%%%%%%%%%%%%%%%%%%%%%%%%%%%%%%%%%%%%%%%%%%%%%%%%%%%

\subsection{Formula for $\zeta$}

For any given torus embedding the vector  $\zeta $ 
and the error function $E  $ defined in \eqref{error function} are related: 
\begin{lemma}  \label{zeta = 0}
For any torus embedding  $\breve \io  \equiv \breve \io_\omega$, 
we have 
\be\label{formula:zeta}
\zeta = 
\frac{1}{(2 \pi)^S} \int_{\T^S}  
\Big(- (\pa_\vphi \theta (\vphi))^t \cdot E_y  + (\partial_\vphi y)^t \cdot E_\theta - \ii (\partial_\vphi z)^t \cdot \overline E_{ z} + \ii (\partial_\vphi \bar z)^t \cdot E_z \, \Big) d \vphi\,.
\ee
Hence $ \zeta $ is Lipschitz continuous in $\omega \in \Omega_o (\io) $ and satisfies the estimate 
$$
 |\zeta |^{\Lipg} \lessdot  \| E \|_{s_0, \s-2}^{\Lipg} \, .
 $$ 
As a consequence, for any $(\io, \zeta)$ with $ F_\omega ( \io , \zeta ) = 0 $ one has $  \zeta = 0  $, 
and the torus $ \breve \io(\T^S)  $ is invariant  for the Hamiltonian vector field $ X_{H_\e}$.
 \end{lemma}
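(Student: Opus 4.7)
The strategy is to obtain \eqref{formula:zeta} as an algebraic identity valid for any triple $(\breve\io,\zeta)$ — not just for solutions — by substituting the componentwise definitions of $E_\theta, E_y, E_z$ from \eqref{operatorF} into the proposed integrand and integrating the three resulting groups of terms separately.

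First I would expand, for each fixed $k\in S$, the scalar
$\bigl(-(\partial_\vphi\theta)^t E_y + (\partial_\vphi y)^t E_\theta - \ii (\partial_\vphi z)^t \overline E_z + \ii(\partial_\vphi\bar z)^t E_z\bigr)_k$
and split the result into an $H_\e$-part, an $\omega\cdot\partial_\vphi$-part, and a $\zeta$-part. By the chain rule (using Wirtinger calculus in the normal directions and the fact that $\partial_{\bar z}H_\e=\overline{\partial_z H_\e}$), the $H_\e$-part collapses to $-\partial_{\vphi_k}(H_\e\circ\breve\io)$, which is a total $\vphi$-derivative of a periodic function and hence has vanishing mean on $\T^S$. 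The $\omega\cdot\partial_\vphi$-part is, up to a sign, the symplectic pairing $\Lambda\bigl(\partial_{\vphi_k}\breve\io,(\omega\cdot\partial_\vphi)\breve\io\bigr)$; writing $\theta=\vphi+\Theta$ with $\Theta$ periodic and using that $y$ and $z$ are periodic, a short double integration by parts shows that $\int_{\T^S}\Lambda\bigl(\partial_{\vphi_k}\breve\io,\partial_{\vphi_i}\breve\io\bigr)\, d\vphi=0$ for every $i\in S$, so the whole $\omega$-part integrates to zero. What remains is the $\zeta$-part, namely $-\sum_j \partial_{\vphi_k}\theta_j\, \zeta_j=-\sum_j(\delta_{kj}+\partial_{\vphi_k}\Theta_j)\zeta_j$; the $\partial_{\vphi_k}\Theta_j$ piece has mean zero by periodicity, leaving an integral proportional to $\zeta_k$ with the prefactor $(2\pi)^{|S|}$. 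Solving for $\zeta$ yields \eqref{formula:zeta}.

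For the Lipschitz bound I would estimate each of the four summands in \eqref{formula:zeta} by the Cauchy–Schwarz inequality in $\vphi$ together with the Sobolev embedding $H^{s_0}(\T^S,\cdot)\hookrightarrow{\cal C}^0(\T^S,\cdot)$ from Lemma \ref{lemma D omega}, so that typical terms are controlled by products such as $\|\partial_\vphi z\|_{s_0,\sigma-2}^{\Lipg}\,\|E_z\|_{s_0,\sigma-2}^{\Lipg}$ and $\|\partial_\vphi \theta\|_{s_0}^{\Lipg}\,\|E_y\|_{s_0}^{\Lipg}$. The smallness assumption \eqref{size of torus} gives $\|\partial_\vphi \io\|_{s_0,\sigma-2}^{\Lipg}\lessdot 1$, whence the product tame estimates of Lemma \ref{interpolation product} deliver $|\zeta|^{\Lipg}\lessdot \|E\|_{s_0,\sigma-2}^{\Lipg}$; the reason the $(\sigma-2)$-index appears on the right hand side is precisely that $\partial_\vphi z$ naturally lives in $h^{\sigma-2}_\bot$ after one space derivative is absorbed in the pairing with $E_z$. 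The final consequence is then immediate: if $F_\omega(\io,\zeta)=0$, then $E\equiv 0$, hence $\zeta=0$ by \eqref{formula:zeta}, and substituting $\zeta=0$ back into \eqref{operatorF} shows that $\breve\io(\T^S)$ is invariant for $X_{H_\e}$.

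I expect the main subtlety to lie in the treatment of the $\omega$-part, where one must carefully separate the linear-in-$\vphi$ contribution to $\partial_\vphi\theta={\rm Id}_S+\partial_\vphi\Theta$ from its periodic remainder; the vanishing of $\int_{\T^S}\Lambda\bigl(\partial_{\vphi_k}\breve\io,\partial_{\vphi_i}\breve\io\bigr)\, d\vphi$ is the only place where the precise structure of the embedding $\breve\io=(\vphi,0,0)+\io(\vphi)$ is genuinely used, and also the only step whose sign depends on the conventions fixed in \eqref{symplectic-2-form} and \eqref{bilinear-pairing}. The remaining steps — the $H_\e$-cancellation by the chain rule and the Sobolev/tame estimates — are by now routine.
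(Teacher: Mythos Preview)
Your direct substitution is correct and amounts to the same computation the paper performs, only differently packaged: the paper invokes the Lagrangian action $G(\psi)=\int_{\T^S}\bigl(-\lambda_{\breve\io^{(\psi)}}(\omega\cdot\partial_\vphi\breve\io^{(\psi)})-H_\e(\breve\io^{(\psi)})\bigr)\,d\vphi$ and its translation invariance to get $\partial_\psi G(0)=0$, whose explicit expansion is precisely your three-way split into the $H_\e$-, $\omega$-, and $\zeta$-parts. One small correction on the estimate: the index $\sigma-2$ does not arise because $\partial_\vphi z$ loses a space derivative (the derivative is in $\vphi$, not $x$), but because $E_z=\omega\cdot\partial_\vphi z+\ii\nabla_{\bar z}H_\e$ contains $\nabla_{\bar z}H^{nls}=(\omega_k^{nls}z_k)_{k\in S^\bot}\sim k^2 z_k$, which lies only in $h^{\sigma-2}_\bot$; the pairing $\partial_\vphi z\cdot\overline{E_z}$ is then controlled using $\partial_\vphi z\in h^\sigma_\bot\hookrightarrow h^{\sigma-2}_\bot$.
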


\begin{proof}
We follow the arguments in \cite{BB1}. Since $H_\e$ is an autonomous Hamiltonian 
one verifies by a straightforward change of variables that the function 
$$
{G} : \T^S \to \C\,, \quad \psi \mapsto {G}(\psi) := 
\int_{\T^S} \Big( - \lambda_{\breve \io^{(\psi)}}(\omega \cdot \partial_\vphi \breve \io^{(\psi)})  - H_{\e}(\breve \io^{(\psi)})\Big)\, d \vphi
$$
is constant, where $\breve \io^{(\psi)} (\vphi) := \breve \io(\psi + \vphi)$ and 
$\lambda_{\breve \io(\psi +\vphi)}$ is the canonical one form $\lambda$ defined in \eqref{contact 1 form} evaluated at
$\breve \io(\psi +\vphi)$. 
Note that  
$- \lambda_{\breve \io}(\omega \cdot \partial_\vphi \breve \io)  - H_{\e}(\breve \io)$
is the Lagrangian associated to $H_{\e}$.
Using that $\partial_\psi { G}(0) = 0$, a direct calculation proves \eqref{formula:zeta}.
By Lemma \ref{interpolation product} (tame estimates for products of maps), the fact that $E \in H^s(\T^S, \R^S \times \R^S \times h_\bot^{\sigma - 2})$ and the smallness assumption \eqref{size of torus},
 the claimed estimate follows.  
\end{proof}

%%%%%%%%%%%%%%%%%%%%%%%%%%%%%%%%%%%%%%%%%%%%%%%%%%%%%%%%%%%%%%%%%%%%%%%%%%%
%%%%%%%%%%%%%%%%%%%%%%%%%%%%%%%%%%%%%%%%%%%%%%%%%%%%%%%%%%%%%%%%%%%%%%%%%%%

\subsection{Isotropic torus embeddings}\label{Isotropic torus embeddings}

An invariant torus  $\breve \io(\T^S)$, densely filled by a quasi-periodic solution, 
is isotropic (cf e.g. Lemma 1 in \cite{BB1}). It means that the pullback of the symplectic form $\Lambda$ by $\breve \io$ vanishes, $\breve \io^* \Lambda= 0$. In our symplectic setup it is useful to work with isotropic torus embeddings. In Lemma \ref{toro isotropico modificato} below we provide a canonical construction for approximating a torus embedding $\breve \io$ by an isotropic one. By a straightforward computation one verifies that in our infinite dimensional setup
\begin{equation}\label{d Lambda lambda}
\breve \io^* \Lambda = d(\breve \io^* \lambda) 
\end{equation}
where $\breve \io^* \lambda$ is the pullback of the  {\it one-form} $\lambda$ defined by \eqref{contact 1 form}. Here $d$ denotes the exterior differential of the one-form $\breve \io^* \lambda$ on the torus $\T^S$. Our task is therefore to provide a canonical construction of approximating $\breve \io$ by an embedding $\breve \io_{\rm iso}$ so that $\breve \io_{\rm iso}^* \lambda$ is a closed one form. Any ${\cal C}^2$-smooth one-form 
$\alpha = \sum_{j \in S} a_j d \vphi_j$ on the torus $\T^S$ admits a Hodge decomposition 
$$
\alpha = \sum_{j \in S} [[a_j]] d \vphi_j  + d f + \rho\,, 
$$
where the constant one-form $\sum_{j \in S} [[a_j]] d \vphi_j$ is the harmonic part of $\alpha$ with 
$$
[[a_j]] :=\frac{1}{(2 \pi)^{|S|}} \int_{\T^S} a_j(\vphi)\, d \vphi\,,
$$
$d f$ is the exact one-form with $f : \T^S \to \C$ having average $0$ and $\rho := \sum_{j \in S} r_j d \vphi_j$ is a co-closed one-form, meaning that $r = (r_j)_{j \in S}$ satisfies ${\rm div}( r )= 0$. In the language of differential forms it means that $d^* \rho = 0$, where $d^*$ denotes the adjoint of $d$ with respect to the standard inner product. Using integration by parts, a standard computation yields $d^* \alpha = - {\rm div }(a)$ where $a = (a_j)_{j \in S}$. 
Since $d^* d f = d^* \alpha$ it then follows that 
$$
f = \Delta^{- 1}({\rm div}(a))\,, \qquad \Delta = \sum_{j \in S} \partial_{\vphi_j }^2\,.
$$
The expression $\Delta^{- 1}({\rm div} (a))$ is well defined as the average of ${\rm div} (a)$ vanishes. Similarly, since $d \rho = d \alpha = \sum_{k < j} A_{kj} d \vphi_k \wedge d \vphi_j$ with 
$A_{k j} := \partial_{\vphi_k} a_j - \partial_{\vphi_j} a_k$, one computes 
$d^* d \rho = \sum_{k \in S} \big( \sum_{j \in S} \partial_{\vphi_j} A_{kj} \big) d \vphi_k$, yielding 
\begin{equation}\label{formula divergence free part}
r_k = - \Delta^{- 1} \Big( \sum_{j \in S} \partial_{\vphi_j} A_{kj}\Big)\,, \quad \forall k \in S\,.
\end{equation}
In the situation at hand, the one-form $\sum_{j \in S} a_j d \vphi_j$ is given 
by the pullback $\breve \io^* \lambda$ of $\lambda$, 
\begin{equation}\label{coefficienti pull back di Lambda}
a = (a_j)_{j \in S} = - (\partial_\vphi \theta)^t y + \ii (\partial_\vphi \bar z)^t z
\end{equation}
and one has
\begin{equation}\label{closed form rho lambda}
d(\breve \io^* \lambda - \rho) = 0\,, \qquad \breve \io^* \lambda - \rho = \sum_{k \in S} (a_k - r_k)\, d \vphi_k
\end{equation}
where $r = (r_k)_{k \in S}$ is of the form \eqref{formula divergence free part}. In view of \eqref{formula divergence free part}, \eqref{coefficienti pull back di Lambda} define 
$\breve \io_{\rm iso}(\vphi) := (\vphi, 0, 0) + \io_{\rm iso}(\vphi)$ where
\begin{equation}\label{toro isotropico modificato A}
\io_{\rm iso}(\vphi) := (\theta(\vphi) - \vphi, y_{\rm iso}(\vphi), z(\vphi))\,, \qquad y_{\rm iso}(\vphi) := y(\vphi) + (\partial_\vphi \theta(\vphi))^{- t} r(\vphi)\,.
\end{equation}
We prove in Lemma \ref{toro isotropico modificato} that $\breve \io_{\rm iso}(\T^S) \subseteq M^\sigma$ is an isotropic torus. 
First we  estimate the coefficients $A_{kj}$, $k, j \in S$, in terms of the error function $E$. Denoting by $(\underline{e}_j)_{j \in S}$ the standard basis of $\R^S$,  one has
$$
A_{kj} \stackrel{\eqref{d Lambda lambda}}{=}
 \breve \io^* \Lambda [\underline{e}_k, \underline{e}_j] = \Lambda[\partial_{\vphi_k} \breve \io, \partial_{\vphi_j} \breve \io]  
$$
and hence 
$$
\omega \cdot \partial_\vphi A_{kj} = \Lambda[\partial_{\vphi_k} (\omega \cdot \partial_\vphi\breve \io), \partial_{\vphi_j} \breve \io]  + \Lambda[\partial_{\vphi_k} \breve \io, \partial_{\vphi_j}(\omega \cdot \partial_\vphi\breve \io)] \,.
$$
Recall that $\omega \cdot \partial_\vphi \breve \io = E + X_{H_\e} - (0, \zeta, 0) $ and hence
$\partial_{\vphi_k}\omega \cdot \partial_\vphi \breve \io 
=\partial_{\vphi_k} E + \partial_{\vphi_k} X_{H_\e} $. 
In view of the formula \eqref{symplectic form per esteso} for $\Lambda$ 
and since  the Hessian $d^2H_\e$ is symmetric one has
$$
\Lambda\big[ \pa_{\vphi_k} X_{H_\e} ,  \pa_{\ph_j} \breve \io\big] 
+ \Lambda \big[ \pa_{\ph_k} {\breve \io} , \pa_{\ph_j} X_{H_\e}  \big]
= d^2H_\e [\pa_{\ph_k} {\breve \io}, \pa_{\ph_j} {\breve \io}]
- d^2H_\e [\pa_{\ph_j} {\breve \io}, \pa_{\ph_k} {\breve \io}] =0
$$
implying  that
\begin{equation}\label{om d vphi A kj}
\omega \cdot \partial_\vphi A_{kj} = \Lambda\big[ \pa_{\vphi_k} E ,  \pa_{\ph_j} \breve \io\big] 
+ \Lambda \big[ \pa_{\ph_k} {\breve \io} , \pa_{\ph_j} E  \big].
\end{equation}
This formula allows to prove the following lemma. 
\begin{lemma} \label{tame estiamte Akj}
There exists $\mu \equiv \mu(|S|, \tau) \in \Z_{\ge 1}$ so that for any
 integer $s_0 \le s \le s_* - \mu$, the following tame estimate holds: 
$$
\sup_{k, j \in S} \| A_{k j} \|_s^{\Lipg} \leq_s \gamma^{-1} \big(\| E   \|_{s+2\t+2, \s-2}^{\Lipg} 
+ \| E  \|_{s_0+1, \s-2}^{\Lipg} \|  \io \|_{s+ 2 \tau + 2}^{\Lipg} \big)\,.
$$
\end{lemma}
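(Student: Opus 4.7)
\begin{pf}[Proof plan]
The plan is to apply $(\omega\cdot\partial_\vphi)^{-1}$ to the identity \eqref{om d vphi A kj} and then tame-estimate the resulting right-hand side. Let me organize this in four steps.

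\medskip
\noindent\textbf{Step 1: zero-mean property and the diophantine division.}
Because $A_{kj} = \partial_{\vphi_k}a_j - \partial_{\vphi_j}a_k$ (where $a$ is given by \eqref{coefficienti pull back di Lambda}) is a skew-derivative of a smooth periodic function, one has $\int_{\T^S} A_{kj}\, d\vphi = 0$. Moreover $\omega \in \Om_o(\io) \subseteq \Om_{\g,\tau}$, so Lemma \ref{om vphi - 1 lip gamma} applies and gives
\begin{equation*}
\| A_{kj}\|_s^{\Lipg} \lessdot \gamma^{-1}\, \| \omega \cdot \partial_\vphi A_{kj}\|_{s+2\tau+1}^{\Lipg}\,.
\end{equation*}
So the task reduces to a tame estimate for the right-hand side of \eqref{om d vphi A kj} at regularity index $s+2\tau+1$.

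\medskip
\noindent\textbf{Step 2: unfolding the two bilinear terms.}
Using the explicit form of $\Lambda$ (see \eqref{symplectic form per esteso}), each term $\Lambda[\partial_{\vphi_k} E,\partial_{\vphi_j}\breve\io]$ is a finite sum of scalar pairings of the type
\begin{equation*}
\partial_{\vphi_k} E_y \cdot \partial_{\vphi_j}\theta,\qquad \partial_{\vphi_k} E_\theta \cdot \partial_{\vphi_j} y,\qquad \partial_{\vphi_k} E_z \cdot \partial_{\vphi_j}\bar z,\qquad \partial_{\vphi_k}\overline{E_z}\cdot \partial_{\vphi_j} z\,,
\end{equation*}
plus the symmetric counterpart with the roles of $E$ and $\breve\io$ swapped. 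In the $z$-pairings, the factor coming from $E$ lives in $h^{\sigma-2}_\bot$ while the one from $\breve\io$ lives in $h^{\sigma}_\bot$, so pointwise in $\vphi$ the Cauchy--Schwarz inequality in $h^{\sigma-2}_\bot\times h^\sigma_\bot$ (valid since $\sigma\geq 4$) controls the pairing. The finite-dimensional $(\theta,y)$-pairings are immediate.

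\medskip
\noindent\textbf{Step 3: tame bound of the two bilinear terms.}
Apply Lemma \ref{interpolation product} (Lipschitz version) together with the pairing observation above to obtain
\begin{equation*}
\big\| \Lambda[\partial_{\vphi_k} E,\partial_{\vphi_j}\breve\io]\big\|_s^{\Lipg}
\leq_s
\| E\|_{s+1,\sigma-2}^{\Lipg}\, \| \io\|_{s_0+1}^{\Lipg}
+ \| E\|_{s_0+1,\sigma-2}^{\Lipg}\, \| \io\|_{s+1}^{\Lipg}\,,
\end{equation*}
and the analogous bound for the second term in \eqref{om d vphi A kj}. The smallness assumption \eqref{size of torus} yields $\| \io\|_{s_0+1}^{\Lipg}\lessdot \e\g^{-2}\leq 1$, so the first summand can be absorbed into $\| E\|_{s+1,\sigma-2}^{\Lipg}$, giving
\begin{equation*}
\| \omega\cdot\partial_\vphi A_{kj}\|_s^{\Lipg}\leq_s
\| E\|_{s+1,\sigma-2}^{\Lipg} + \| E\|_{s_0+1,\sigma-2}^{\Lipg}\, \| \io\|_{s+1}^{\Lipg}\,.
\end{equation*}

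\medskip
\noindent\textbf{Step 4: combination.}
Substituting the estimate of Step 3 at the shifted index $s+2\tau+1$ into the diophantine bound of Step 1 delivers
\begin{equation*}
\| A_{kj}\|_s^{\Lipg}\leq_s \g^{-1}\big(\| E\|_{s+2\tau+2,\sigma-2}^{\Lipg} + \| E\|_{s_0+1,\sigma-2}^{\Lipg}\, \| \io\|_{s+2\tau+2}^{\Lipg}\big)\,,
\end{equation*}
uniformly in $k,j\in S$. This is the claimed estimate; taking $\mu\geq 2\tau+2$ ensures the indices sit in the admissible range $s_0\leq s\leq s_*-\mu$.

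\medskip
The only genuinely delicate point is Step 2/3: one must be careful that the $z$-components of $E$ and $\breve\io$ are paired in the ``right'' order so that the factor with the lower regularity ($h^{\sigma-2}_\bot$) is always compensated by a factor of higher regularity ($h^\sigma_\bot$). Once this is organised, everything else is a routine application of the tame product estimate and of the diophantine inversion of $\omega\cdot\partial_\vphi$.
\end{pf}
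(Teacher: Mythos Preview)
Your proof is correct and follows essentially the same route as the paper's own argument: invoke the zero-average of $A_{kj}$, apply the diophantine inversion Lemma~\ref{om vphi - 1 lip gamma} to the identity \eqref{om d vphi A kj}, expand the symplectic pairings via \eqref{symplectic form per esteso}, and close with the tame product estimate of Lemma~\ref{interpolation product} together with the smallness assumption \eqref{size of torus}. One small presentational point: in the intermediate display of Step~3 you write the bound purely as a bilinear expression in $\|E\|$ and $\|\io\|$, but the $\theta$-component of $\partial_{\vphi_j}\breve\io$ contains the constant vector $\underline e_j$, which produces a direct term $\|E\|_{s+1,\sigma-2}^{\Lipg}$ with no $\io$-factor; your subsequent ``absorption'' step recovers it, so the final estimate is unaffected.
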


\begin{proof}
In view of the formula \eqref{symplectic form per esteso} for $\Lambda$, the identity 
\eqref{om d vphi A kj} for $A_{kj}$, the estimate of Lemma \ref{om vphi - 1 lip gamma}
for the solution $A_{kj}$ of \eqref{om d vphi A kj},
the tame estimates for products of functions in $H^s(\T^S, \C)$ of Lemma~\ref{interpolation product},
the assumptions $\s \ge 4$, and the smallness condition \eqref{size of torus}, the claimed estimate follows. 
\end{proof}

\noindent
The main result of this section is the following lemma. 

\begin{lemma}\label{toro isotropico modificato} {\bf (Isotropic torus)} 
The torus embedding ${\breve \io}_{\rm iso}(\vphi) := ( \theta (\vphi), y_{\rm iso}(\vphi), z (\vphi) ) $, defined by \eqref{toro isotropico modificato A}, 
is isotropic, $\breve \io^* \Lambda = 0$. Expressed in coordinates, it means that 
\begin{equation}\label{identity isotropy}
(\partial_\vphi \theta )^t \partial_\vphi y_{\rm iso} - (\partial_\vphi y_{\rm iso})^t \partial_\vphi \theta + \ii (\partial_\vphi z)^t \partial_\vphi \bar z - \ii (\partial_\vphi \bar z)^t \partial_\vphi z = 0\,.
\end{equation}
Moreover there exist $\mu = \mu(|S|, \tau)\in \Z_{\geq 1}$ 
so that for any integer $s_0 \le s \le s_* - \mu$
\begin{align} \label{stima y - y delta}
\| y_{\rm iso} - y \|_s^{\Lipg} 
& \leq_s  \gamma^{-1} \big(\| E \|_{s + \mu, \s-2}^{\Lipg} + 
\| E \|_{s_0 + \mu, \s-2}^{\Lipg} \|  \io  \|_{s + \mu}^{\Lipg} \big) 
\\
\| \io_{\rm iso} \|_s^\Lipg & \leq_s \| \io \|_{s+\mu}^\Lipg \label{stima toro modificato 2}
\\
\label{stima toro modificato}
\| F_\omega (  \io_{\rm iso}, \zeta )  \|_{s, \s-2}^{\Lipg} 
& \leq_s  \gamma^{-1} \big( \| E  \|_{s + \mu, \s-2}^{\Lipg}  +  
\| E \|_{s_0 + \mu, \s-2}^{\Lipg} \|  \io \|_{s + \mu}^{\Lipg} \big)
\\
\label{derivata i delta}
\| d_\io (   \io_{\rm iso} ) [ \hat \imath ] \|_s & \leq_s \| \hat \imath \|_{s+\mu} +  \| \io \|_{s + \mu} \| \hat \imath  \|_{s_0+\mu} \,.
\end{align}
\end{lemma}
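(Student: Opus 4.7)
\begin{pf}[Proof proposal]
The plan is to exploit the Hodge-theoretic construction already set up before the statement: by \eqref{closed form rho lambda}, the one-form $\breve\io^*\lambda-\rho$ is closed on $\T^S$, and the correction $r=(r_k)_{k\in S}$ in \eqref{formula divergence free part} has been chosen precisely to kill the co-closed part of $\breve\io^*\lambda$. So the first step is to verify that $\breve\io_{\rm iso}^*\lambda=\breve\io^*\lambda-\rho$. Writing out $\lambda=-\sum_{k\in S}y_k\,d\theta_k+\ii\sum_{k\in S^\bot}z_k\,d\bar z_k$ and substituting $y_{\rm iso}=y+(\partial_\vphi\theta)^{-t}r$ gives
\[
\breve\io_{\rm iso}^*\lambda=\breve\io^*\lambda-\sum_{k\in S}\bigl((\partial_\vphi\theta)^{-t}r\bigr)_k\,d\theta_k=\breve\io^*\lambda-\sum_{k\in S}r_k\,d\vphi_k=\breve\io^*\lambda-\rho,
\]
using $d\theta=(\partial_\vphi\theta)\,d\vphi$. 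Then \eqref{d Lambda lambda} and \eqref{closed form rho lambda} yield $\breve\io_{\rm iso}^*\Lambda=d(\breve\io_{\rm iso}^*\lambda)=0$, which, evaluated on the basis $(\underline{e}_k,\underline{e}_j)$, is exactly \eqref{identity isotropy}.

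Next I would establish \eqref{stima y - y delta} and \eqref{stima toro modificato 2}. For the coefficients $r_k$, the operator $\Delta^{-1}\partial_{\vphi_j}$ is of order $-1$ and therefore bounded on every $H^s(\T^S,\R)$ with a constant depending only on $|S|$; combined with Lemma \ref{tame estiamte Akj}, this gives
\[
\|r_k\|_s^\Lipg\lessdot\gamma^{-1}\bigl(\|E\|_{s+2\tau+2,\s-2}^\Lipg+\|E\|_{s_0+1,\s-2}^\Lipg\|\io\|_{s+2\tau+2}^\Lipg\bigr).
\]
Since $\partial_\vphi\theta={\rm Id}_S+\partial_\vphi\Theta$ and $\|\Theta\|_{s_0+1}^\Lipg$ is small by \eqref{size of torus}, a Neumann series combined with Lemma \ref{interpolation product} gives tame control of $(\partial_\vphi\theta)^{-t}$. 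A product estimate then yields \eqref{stima y - y delta}, and \eqref{stima toro modificato 2} is immediate from it since the $\theta$ and $z$ components of $\io_{\rm iso}$ coincide with those of $\io$ (with a possibly larger but admissible $\mu$).

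To obtain \eqref{stima toro modificato} I would write $F_\omega(\io_{\rm iso},\zeta)=E+\bigl(F_\omega(\io_{\rm iso},\zeta)-F_\omega(\io,\zeta)\bigr)$, apply the mean-value theorem along the segment joining $\io$ to $\io_{\rm iso}$ (which differs only in the $y$-component, by $y_{\rm iso}-y$, and remains in the convex neighbourhood where Propositions \ref{stime derivate H nls} and \ref{teorema stime perturbazione} apply), and control the difference by $\|y_{\rm iso}-y\|_s^\Lipg$ times a tame norm of the linearization. Using \eqref{stima y - y delta}, the smallness \eqref{size of torus}, and an additional loss of at most one $\vphi$-derivative from $\omega\cdot\partial_\vphi$ (Lemma \ref{lemma D omega}($ii$)) absorbs into $\mu$, yielding the claimed bound. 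Finally, \eqref{derivata i delta} is proved by differentiating the explicit formulas $A_{kj}=\Lambda[\partial_{\vphi_k}\breve\io,\partial_{\vphi_j}\breve\io]$, $r_k=-\Delta^{-1}\sum_j\partial_{\vphi_j}A_{kj}$, and $y_{\rm iso}=y+(\partial_\vphi\theta)^{-t}r$, and applying Lemma \ref{interpolation product} together with the Neumann series for the derivative of $(\partial_\vphi\theta)^{-t}$.

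The main technical obstacle will be \eqref{stima toro modificato}: one has to propagate the tame estimate for the difference of Hamiltonian vector fields through the dNLS part $X_{H^{nls}}$, whose linearization involves derivatives of the frequencies $\omega_n^{nls}$ and therefore requires careful use of Corollary \ref{differenziale X H nls imperturbato} to avoid picking up an unacceptable loss of $\s$-regularity on $z$; the bookkeeping of $\mu$ across all three pieces $E_\theta, E_y, E_z$ is what forces $\mu$ to absorb $2\tau+2$ plus a few extra units, but no genuinely new estimate is needed beyond those already developed in Sections \ref{2. Preliminaries}--\ref{Estimates on H_e}.
\end{pf}
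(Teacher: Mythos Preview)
Your argument for isotropy and the approaches to \eqref{stima y - y delta}, \eqref{stima toro modificato}, and \eqref{derivata i delta} are correct and coincide with the paper's proof.

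There is, however, a real gap in your derivation of \eqref{stima toro modificato 2}. You say it ``is immediate from'' \eqref{stima y - y delta}, but \eqref{stima y - y delta} carries the prefactor $\gamma^{-1}$ and involves $\|E\|_{s+\mu,\sigma-2}$ on the right-hand side, neither of which appears in \eqref{stima toro modificato 2}. Even if you bound $\|E\|_{s+\mu,\sigma-2}$ back in terms of $\|\io\|_{s+\mu'}$ (which is possible by writing out $F_\omega$ explicitly), you end up with $\|y_{\rm iso}-y\|_s^\Lipg \leq_s \gamma^{-1}\|\io\|_{s+\mu'}^\Lipg$, which is strictly weaker than the claimed $\leq_s \|\io\|_{s+\mu}^\Lipg$. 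That extra $\gamma^{-1}$ would propagate through Lemma~\ref{lemma:Kapponi vari} and the approximate inverse construction and spoil the Nash--Moser closing. The fix is to argue \emph{directly}: $r_k = -\Delta^{-1}\sum_j\partial_{\vphi_j}A_{kj}$ with $A_{kj}=\partial_{\vphi_k}a_j-\partial_{\vphi_j}a_k$ and $a$ given purely in terms of $\io$ by \eqref{coefficienti pull back di Lambda}; tame product estimates (Lemma~\ref{interpolation product}) then give $\|r\|_s^\Lipg\leq_s\|\io\|_{s+\mu}^\Lipg$ with no $\gamma^{-1}$ and no $E$. This is exactly what the paper does, and it is why the dynamical estimate for $A_{kj}$ (Lemma~\ref{tame estiamte Akj}, via $\omega\cdot\partial_\vphi A_{kj}$) is reserved for \eqref{stima y - y delta} while the algebraic formula for $A_{kj}$ is used for \eqref{stima toro modificato 2}.
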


\begin{proof}
By \eqref{toro isotropico modificato} one sees that 
$
\breve \io^*_{\rm iso} \lambda = \sum_{j \in S} a_j^{\rm iso}(\vphi)\, d \vphi_j
$
is given by 
$$
a_{\rm iso} = (a_j^{\rm iso})_{j \in S} = - (\partial_\vphi \theta)^t y_{\rm iso} + \ii (\partial_\vphi \bar z)^t z  = - (\partial_\vphi \theta)^t y - r + \ii (\partial_\vphi \bar z)^t z = a - r\,.
$$
Hence $\breve \io^*_{\rm iso} \Lambda \stackrel{\eqref{d Lambda lambda}}{=} d (\breve \io^*_{\rm iso} \lambda) \stackrel{\eqref{closed form rho lambda}}{=} 0$. As a consequence $\Lambda[\partial_{\vphi_k} \breve \io_{\rm iso}, \partial_{\vphi_j} \breve \io_{\rm iso}] = 0$ for any $k, j \in S$. By the formula \eqref{symplectic form per esteso} for $\Lambda$, the claimed identity \eqref{identity isotropy} follows.
The estimate \eqref{stima y - y delta} follows from the definition of 
$y_{\rm iso}$ (cf \eqref{toro isotropico modificato A}), the one of 
$r$ (cf \eqref{formula divergence free part}), and Lemma~\ref{tame estiamte Akj}.
To obtain  \eqref{stima toro modificato 2}, one expresses $r$ in terms of $a$ 
(cf formula \eqref{coefficienti pull back di Lambda}) and uses the tame estimates of products
of Lemma~\ref{interpolation product}.
The estimate \eqref{stima toro modificato} is obtained   
by the mean value theorem, using the estimate of $y_{\rm iso} - y$ of \eqref{stima y - y delta}
and the estimates for $\partial_y X_{H_\e}$
(cf Proposition~\ref{stime derivate H nls} and Proposition~\ref{teorema stime perturbazione}), 
and \eqref{stima toro modificato 2}.
The remaining estimate \eqref{derivata i delta} is derived in a similar fashion.
\end{proof}

\subsection{Canonical coordinates near an isotropic torus}

In order to facilitate the search of an approximate inverse of the differential 
$d_{\io, \zeta} F_\omega( \io_{\rm iso}, \zeta)$ we introduce suitable coordinates 
$(\psi, \upsilon, w)$ near the isotropic torus $\breve \io_{\rm iso}(\T^S) \subseteq M^\sigma$,

\begin{equation} \label{2.23} 
 \Gamma :  \begin{pmatrix}
  \psi \\
  \upsilon \\
  w 
  \end{pmatrix}  \mapsto      
                \begin{pmatrix}  \theta (\psi ) \\ 
                y_{\rm iso}(\psi ) + Y(\psi , \upsilon , w
             ) \\ z(\psi ) + w 
                 \end{pmatrix}
   \end{equation}
where 
   \begin{equation}
   \label{2.23bis} 
   Y(\psi , \upsilon, w) := 
   (\partial _\psi  \theta)^{-t} (\psi ) \upsilon + Y_w(\psi )w + Y_{\bar w} (\psi ) \bar w 
   \end{equation}
and for any $\psi \in \T^S$, $Y_w(\psi)$ is the linear operator 
\begin{equation}\label{operator Yw}
Y_w (\psi) : h^\sigma_\bot \to \C^S \,, \quad w \mapsto \ii (\partial_\psi \theta)^{- t} (\partial_\psi \bar z)^t w\, ,
\quad Y_{\bar w} = \overline Y_w  \,  .
\end{equation}
By the definition \eqref{2.23} 
of the transformation $\Gamma$ one has  
\begin{equation}\label{def:trivial-torus}
\breve \io_{\rm iso} = \Gamma \circ \breve \io_0  \qquad {\rm where} \qquad 
\breve \io_0 : \T^S \to M^\sigma\,, \quad \vphi \mapsto (\vphi, 0, 0)\,,
\end{equation}
 i.e., in the new coordinates, $\breve \io_{\rm iso}$ is given by $\breve \io_0$. 
Furthermore, using \eqref{identity isotropy} (since $\breve \io_{\rm iso}(\T^S)$ is an isotropic torus)
 one verifies  that $\Gamma^* \Lambda = \Lambda$, i.e., $\Gamma$ is canonical, see also \cite{BB1}. 
For our purposes, it suffices to consider 
$d_\io(\Gamma \circ \breve \io)$ at $ \io = 0$, 
which we denote by $d\Gamma\circ \breve\io_0$.
Following the procedure described in Subsection~\ref{Hamiltonian setup},
we extend the bilinear map $d^2_\io (\Gamma \circ {\breve \io})$ to be defined for elements
$(\widehat \io^{(1)}, \widehat \io^{(2)})$ with
$\widehat \io^{(a)} := (\widehat \psi^{(a)}, \widehat \upsilon^{(a)}, \widehat{ w}_1^{(a)}, \widehat{ w}_2^{(a)} )$ in 
$H^s(\T^S, \R^S \times \R^S \times h^\sigma_\bot \times h^\sigma_\bot)$, $a = 1, 2$, and denote it by $d^2\Gamma\circ \breve\io_0$, when evaluated
at $ \io = 0.$

\begin{lemma} \label{lemma:DG}  
There exist $\mu = \mu(|S|, \tau) \in \Z_{\ge 1},$ so that for 
any $ \widehat \io := (\widehat \psi, \widehat \upsilon, \widehat w)$ in 
$H^s(\R^S \times \R^S \times h^{\sigma'}_\bot)$ with
$s_0 \le s \le s_* - \mu$ and $\s -2 \le \s' \le \s$, 
\begin{align} \label{DG delta1}
& \| \big( d \Gamma(\breve\io_0 (\vphi)) - {\rm Id} \big) [ \widehat \io] \|_{s, \s'}   
\leq_s \| \io \|_{s_0 + \mu} \| \widehat \io \|_{s, \s'} + \| \io \|_{s + \mu}  \| \widehat \io \|_{s_0, \s'}\,, \\
& \label{DG delta2}
\| \big( d \Gamma(\breve\io_0 (\vphi)) \big)^{-1} [\widehat \io] \|_{s, \s'} 
\leq_s \| \widehat \io \|_{s, \s'} + \| \io \|_{s + \mu}  \| \widehat \io \|_{s_0, \s'}\,.
\end{align}
Moreover, for any $\widehat \io^{(a)} := (\widehat \psi^{(a)}, \widehat \upsilon^{(a)}, \widehat w^{(a)}_1, \widehat{ w}^{(a)}_2) \in 
H^s(\T^S, \R^S \times \R^S \times h^\sigma_\bot \times h^\sigma_\bot)$, $a = 1, 2$, 
$$
\| d^2 \Gamma(\breve\io_0 (\vphi))[\widehat \imath^{(1)}, \widehat \imath^{(2)}] \|_s 
\leq_s  \| \widehat \io^{(1)}\|_s \| \widehat \io^{(2)} \|_{s_0} 
+ \|  \widehat \io^{(1)}\|_{s_0} \| \widehat \io^{(2)} \|_{s} 
+  \| \io  \|_{s + \mu} \|\widehat \io^{(1)}  \|_{s_0} \| \widehat \io^{(2)} \|_{s_0}\,.
$$
The same estimates hold if the norm $\| \ \|_s$ is replaced by $\| \ \|_s^{\Lipg}$.
\end{lemma}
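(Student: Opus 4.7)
First I would compute $d\Gamma$ at $\breve\io_0(\vphi)=(\vphi,0,0)$ explicitly. Since by \eqref{2.23bis} the map $Y(\psi,\upsilon,w)$ is linear in $(\upsilon,w,\bar w)$, its $\psi$-derivative vanishes at $(\upsilon,w)=0$, and hence for any $\widehat\io=(\widehat\psi,\widehat\upsilon,\widehat w_1,\widehat w_2)$,
\[
d\Gamma(\breve\io_0(\vphi))[\widehat\io]=\bigl(\partial_\vphi\theta\,\widehat\psi,\ \partial_\vphi y_{\rm iso}\,\widehat\psi+(\partial_\vphi\theta)^{-t}\widehat\upsilon+Y_w\widehat w_1+Y_{\bar w}\widehat w_2,\ \partial_\vphi z\,\widehat\psi+\widehat w_1,\ \partial_\vphi\bar z\,\widehat\psi+\widehat w_2\bigr).
\]
Writing $\theta(\vphi)=\vphi+\Theta(\vphi)$ and subtracting the identity, each block of $d\Gamma(\breve\io_0(\vphi))-\mathrm{Id}$ becomes a product of one of the coefficients $\partial_\vphi\Theta,\,\partial_\vphi y_{\rm iso},\,\partial_\vphi z,\,\partial_\vphi\bar z,\,(\partial_\vphi\theta)^{-t}-\mathrm{Id}_S,\,Y_w,\,Y_{\bar w}$ with the corresponding component of $\widehat\io$. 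Each such coefficient is $\lessdot\|\io\|_{s+\mu}$ in $H^{s+\mu}$: for the first four this is immediate from the definitions, for $\partial_\vphi y_{\rm iso}$ it follows from \eqref{stima toro modificato 2}, and for $(\partial_\vphi\theta)^{-t}-\mathrm{Id}_S$ from the Neumann series $\sum_{n\ge 1}(-(\partial_\vphi\Theta)^t)^n$ (convergent thanks to the smallness hypothesis \eqref{size of torus}) combined with the tame product estimate of Lemma~\ref{interpolation product}; finally $Y_w,Y_{\bar w}$ are treated similarly via \eqref{operator Yw}. Applying Lemma~\ref{interpolation product} once more to each product $A\cdot\widehat\io$ yields \eqref{DG delta1}, and the $\|\cdot\|^\Lipg$ version follows verbatim by the same computation on $\omega$-difference quotients.

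For \eqref{DG delta2} I would invert $d\Gamma(\breve\io_0(\vphi))$ directly from its triangular structure: given $(\widehat f_1,\widehat f_2,\widehat f_3,\widehat f_4)$, set $\widehat\psi:=(\partial_\vphi\theta)^{-1}\widehat f_1$, $\widehat w_1:=\widehat f_3-\partial_\vphi z\,\widehat\psi$, $\widehat w_2:=\widehat f_4-\partial_\vphi\bar z\,\widehat\psi$, and then
\[
\widehat\upsilon:=(\partial_\vphi\theta)^{t}\bigl(\widehat f_2-\partial_\vphi y_{\rm iso}\,\widehat\psi-Y_w\widehat w_1-Y_{\bar w}\widehat w_2\bigr).
\]
The Neumann series for $(\partial_\vphi\theta)^{\pm 1}$ provides the tame bound $\|(\partial_\vphi\theta)^{\pm 1}g\|_{s,\s'}\lessdot\|g\|_{s,\s'}+\|\io\|_{s+\mu}\|g\|_{s_0,\s'}$, and inserting this into the explicit inversion formulas above while applying Lemma~\ref{interpolation product} to every product delivers \eqref{DG delta2}, both in sup and in Lipschitz norm.

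For the Hessian estimate, since the only dependence of $\Gamma$ on $(\upsilon,w,\bar w)$ is through the linear map $Y$, one has $\partial^2_{(\upsilon,w,\bar w)}\Gamma\equiv 0$. Hence $d^2\Gamma(\breve\io_0(\vphi))[\widehat\io^{(1)},\widehat\io^{(2)}]$ reduces to the pure second $\psi$-derivatives of $\theta,y_{\rm iso},z,\bar z$ acting on $(\widehat\psi^{(1)},\widehat\psi^{(2)})$, together with the mixed derivatives $\partial_\psi((\partial_\psi\theta)^{-t})\widehat\psi^{(a)}\widehat\upsilon^{(b)}$, $\partial_\psi Y_w\,\widehat\psi^{(a)}\widehat w_1^{(b)}$, $\partial_\psi Y_{\bar w}\,\widehat\psi^{(a)}\widehat w_2^{(b)}$ symmetrized in $\{a,b\}=\{1,2\}$. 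Every coefficient is one further $\vphi$-derivative of the building blocks bounded in the first paragraph, so is $\lessdot\|\io\|_{s+\mu}$ in $H^{s+\mu}$ with $H^{s_0+\mu}$ norm controlled by \eqref{size of torus}; applying Lemma~\ref{interpolation product} to each trilinear expression with both inputs $\widehat\io^{(1)},\widehat\io^{(2)}$ treated symmetrically produces the three-term bound of the lemma, and the Lipschitz estimate is obtained identically. The only obstacle is bookkeeping: one must choose $\mu=\mu(|S|,\tau)$ large enough (e.g.\ $\mu\ge 2s_0+2$) so as to absorb simultaneously the derivatives taken on $\io$ and the Sobolev embeddings invoked through Lemma~\ref{composition in Sobolev}; no individual step is delicate.
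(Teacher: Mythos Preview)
Your proof is correct and follows essentially the same route as the paper: compute $d\Gamma(\breve\io_0(\vphi))$ explicitly from \eqref{2.23}--\eqref{operator Yw}, note that every entry of $d\Gamma-\mathrm{Id}$ is a product of a coefficient controlled by $\|\io\|_{s+\mu}$ with a component of $\widehat\io$, and conclude \eqref{DG delta1} from Lemma~\ref{interpolation product}; then handle $d^2\Gamma$ the same way after observing that $Y$ is linear in $(\upsilon,w,\bar w)$.

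The only real difference is in the inversion step. The paper obtains \eqref{DG delta2} by observing that \eqref{DG delta1} at $s=s_0$ gives $\|(d\Gamma(\breve\io_0)-\mathrm{Id})[\widehat\io]\|_{s_0}\lessdot\e\gamma^{-2}\|\widehat\io\|_{s_0}$, so $d\Gamma(\breve\io_0(\vphi))$ is invertible by Neumann series, and then deduces the tame bound for the inverse from the tame bound for $d\Gamma-\mathrm{Id}$ via the usual series argument. You instead exploit the block-triangular structure of $d\Gamma(\breve\io_0)$ to write down the inverse explicitly. Both approaches are standard and of comparable length; yours has the mild advantage that invertibility is manifest and does not rely on the smallness hypothesis \eqref{size of torus} (only the Neumann series for $(\partial_\vphi\theta)^{-1}$ does, and that is needed in either case), while the paper's Neumann-series argument is slightly more robust in that it would apply unchanged even without the triangular structure.
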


\begin{proof}
The estimate \eqref{DG delta1} 
%for $\big( d \Gamma(\breve\io_0 (\vphi)) - {\rm Id} \big)[ \widehat \io]$ 
is obtained  from the formula of the differential of 
$\Gamma \circ \breve \io$  with respect to $ \io$ at $ \io = 0$ and 
the tame estimates for products of maps of Lemma \ref{interpolation product}. 
As mentioned at the beginning of this section, we choose $\mu_0$ larger than $\mu$. Hence
by the smallness condition \eqref{size of torus}, 
the estimate of $\big(d \Gamma(\vphi,0,0) - {\rm Id} \big)[\widehat \io]$  for $s = s_0$  yields
$$
 \| \big( d \Gamma(\breve\io_0 (\vphi)) - {\rm Id} \big)[ \widehat \io]\|_{s_0} 
\lessdot   \e\gamma^{-2} \| \widehat \io \|_{s_0}\,.
$$
Since $\e \gamma^{- 2}$ is assumed to be sufficiently small, it follows that for any 
$\vphi \in \T^S,$ the operator $d \Gamma(\breve\io_0 (\vphi))$ on 
$\R^S \times \R^S \times h^\sigma_\bot$  is invertible by Neumann series.
One then verifies in a straightforward way that 
$\|\big( d \Gamma (\breve\io_0 (\vphi)) \big)^{-1} [\widehat \io] \|_s $
satisfies the bound, stated in \eqref{DG delta2}. 
The claimed bound for 
$\| d^2 \Gamma(\breve\io_0 (\vphi))[\widehat \imath^{(1)}, \widehat \imath^{(2)}] \|_s $ 
is obtained from the formula of the second derivative of $\Gamma \circ \breve \io$
and the tame estimates for products of maps, stated in Lemma \ref{interpolation product}. 
The stated estimates of the $\Lipg$-norms of the expressions considered can be derived 
by similar arguments. 
\end{proof}

Denote by $K_{\e, \zeta}$ the Hamiltonian $H_{\e, \zeta}$, expressed in the new coordinates, 
\be\label{def:Kep}
K_{\e, \zeta} := H_{\e, \zeta} \circ \Gamma =  
H_\e \circ \Gamma + \zeta \cdot \theta(\psi) \,, \quad K_\e := H_\e \circ \Gamma \, .
\ee
The corresponding Hamiltonian vector field is then given by 
\begin{equation}\label{Hamiltonian vector field K}
X_{K_{\e, \zeta}} := 
(\nabla_\upsilon K_\e, \,\,  
- \nabla_\psi K_\e - (\partial_\psi \theta)^t \zeta, \,\,
- \ii \nabla_{\bar w} K_\e)\,.
\end{equation}
Furthermore, since $\breve \io_{\rm iso}(\vphi) = \Gamma( \breve \io_0(\vphi))$, the directional derivative $\omega \cdot \partial_\vphi \breve \io_{\rm iso}(\vphi)$ equals  
$d \Gamma(\breve \io_0(\vphi))[(\omega, 0, 0)]$. 
Using the transformation law of vector fields one concludes that 
$$
F_\omega( \io_{\rm iso}, \zeta)(\vphi) = \omega \cdot \partial_\vphi \breve \io_{\rm iso}(\vphi) 
- X_{H_{\e, \zeta}} (\breve \io_{\rm iso}(\vphi)) = d \Gamma(\breve \io_0(\vphi)) [(\omega, 0, 0)] - d \Gamma(\breve \io_0(\vphi))X_{K_{\e, \zeta}} ( \breve \io_0(\vphi))\,,
$$
or 
\begin{equation}\label{X K Gamma su toro triviale}
X_{K_{\e, \zeta}}(\breve \io_0(\vphi)) = (\omega, 0, 0) - (d \Gamma(\breve \io_0(\vphi)))^{- 1} F_\omega (\breve \io_{\rm iso}, \zeta)(\vphi)\,.
\end{equation}
Note that if $\breve \io_{\rm iso}$ is a solution, i.e., $F_\omega(\breve \io_{\rm iso}, \zeta) = 0$, then by Lemma \ref{zeta = 0}, $\zeta = 0$ and hence by the formula above, 
$X_{K_{\e, 0}}(\breve \io_0(\vphi)) = (\omega, 0, 0)$. 
Comparing this with this formula \eqref{Hamiltonian vector field K} one gets in this case
$$
\nabla_\upsilon K_\e \circ \breve \io_0 (\vphi) = \omega\,, \quad \nabla_\psi K_\e \circ \breve \io_0(\vphi) = 0\,, \quad \nabla_w K_\e \circ \breve \io_0(\vphi) = 0\,.
$$
In the general case one has the following estimates: 
\begin{lemma} \label{coefficienti nuovi}
There exist $\mu = \mu(|S|, \tau)\in \Z_{\ge 1},$ 
so that for any integer $s_0 \le s \le s_* - \mu$
\begin{align*}
& \|  \nabla_\psi K_\e \circ \breve \io_0 \|_s^{\Lipg}\,,\,\,
\| \nabla_\upsilon K_\e \circ \breve \io_0 - \om  \|_s^{\Lipg}
\leq_s  \g^{-1} \big(\| E \|_{s + \mu, \s-2}^{\Lipg}  
+  \| E \|_{s_0 + \mu, \s-2}^{\Lipg} \| \io \|_{s + \mu}^{\Lipg}\big)\, , \\
&
\| \nabla_w K_\e \circ \breve \io_0 \|_{s, \s-2}^{\Lipg} \, ,\,\,
\| \nabla_{\bar w} K_\e \circ \breve \io_0 \|_{s, \s-2}^{\Lipg}
\leq_s  \g^{-1} \big(\| E \|_{s + \mu, \s-2}^{\Lipg}  
+  \| E \|_{s_0 + \mu, \s-2}^{\Lipg} \| \io \|_{s + \mu}^{\Lipg}\big) \,.
\end{align*}
\end{lemma}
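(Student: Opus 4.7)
The plan is to read off the three components $\nabla_\upsilon K_\e\circ\breve\io_0$, $\nabla_\psi K_\e\circ\breve\io_0$, $\nabla_{\bar w} K_\e\circ\breve\io_0$ from the identity \eqref{X K Gamma su toro triviale}
$$
X_{K_{\e,\zeta}}(\breve\io_0(\vphi)) \ = \ (\omega,0,0)\ -\ \bigl(d\Gamma(\breve\io_0(\vphi))\bigr)^{-1}\,F_\omega(\io_{\rm iso},\zeta)(\vphi),
$$
and compare them with the explicit form \eqref{Hamiltonian vector field K} of $X_{K_{\e,\zeta}}$. Writing $R(\vphi):=\bigl(d\Gamma(\breve\io_0(\vphi))\bigr)^{-1} F_\omega(\io_{\rm iso},\zeta)(\vphi)=\bigl(R_\psi,R_\upsilon,R_w\bigr)(\vphi)$, the identification of components gives
$$
\nabla_\upsilon K_\e\circ\breve\io_0=\om- R_\psi,\qquad \nabla_\psi K_\e\circ\breve\io_0= R_\upsilon-(\partial_\psi\theta)^t\zeta,\qquad -\ii\,\nabla_{\bar w} K_\e\circ\breve\io_0= R_w.
$$

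Now I would bound $R$ componentwise. Apply the estimate \eqref{DG delta2} of Lemma \ref{lemma:DG} (with $\sigma'=\s$ for the $\R^S$-components and $\sigma'=\s-2$ for the $w$-component) together with the smallness assumption \eqref{size of torus} on $\io$ to reduce the control of $R$ to that of $F_\omega(\io_{\rm iso},\zeta)$. For the latter we invoke the key estimate \eqref{stima toro modificato} of Lemma \ref{toro isotropico modificato}, which yields, for a suitable $\mu(|S|,\tau)$,
$$
\|F_\omega(\io_{\rm iso},\zeta)\|_{s,\s-2}^{\Lipg}\ \leq_s\ \g^{-1}\bigl(\|E\|_{s+\mu,\s-2}^{\Lipg}+\|E\|_{s_0+\mu,\s-2}^{\Lipg}\|\io\|_{s+\mu}^{\Lipg}\bigr).
$$
Combining this with \eqref{DG delta2} and the tame product estimate of Lemma \ref{interpolation product}, together with the smallness \eqref{size of torus} of $\|\io\|_{s_0+\mu_1}^{\Lipg}$, one obtains the claimed bounds for $\nabla_\upsilon K_\e\circ\breve\io_0-\om$ and for $\nabla_{\bar w} K_\e\circ\breve\io_0$ (and hence, by complex conjugation, for $\nabla_w K_\e\circ\breve\io_0$).

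The only remaining issue is the extra summand $(\partial_\psi\theta)^t\zeta$ appearing in the formula for $\nabla_\psi K_\e\circ\breve\io_0$. To handle it I would use Lemma \ref{zeta = 0}, which gives $|\zeta|^{\Lipg}\lessdot\|E\|_{s_0,\s-2}^{\Lipg}$, and estimate $(\partial_\psi\theta)^t\zeta$ by the tame product bound \eqref{lip tame for functions}:
$$
\|(\partial_\psi\theta)^t\zeta\|_s^{\Lipg}\ \leq_s\ \|\partial_\psi\theta\|_s^{\Lipg}|\zeta|^{\Lipg}\ \leq_s\ \bigl(1+\|\io\|_{s+1}^{\Lipg}\bigr)\|E\|_{s_0,\s-2}^{\Lipg},
$$
which fits into the same right--hand side (after enlarging $\mu$). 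Adding up the two contributions and possibly enlarging $\mu$ once more to absorb all the finite losses of derivatives coming from \eqref{DG delta2}, \eqref{stima toro modificato} and the product rule, gives the claimed inequality for $\nabla_\psi K_\e\circ\breve\io_0$. The whole argument is essentially bookkeeping; the only mildly delicate point is the $\psi$--component, where one must not forget the $\zeta$ correction and must use the $\|E\|_{s_0+\mu,\s-2}^{\Lipg}\|\io\|_{s+\mu}^{\Lipg}$ term of \eqref{stima toro modificato} to match the structure of the stated bound.
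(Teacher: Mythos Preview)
Your proposal is correct and follows exactly the same route as the paper: the identity \eqref{X K Gamma su toro triviale} combined with the estimates \eqref{DG delta2} and \eqref{stima toro modificato}. You are simply more explicit than the paper's one-line proof, in particular by isolating the contribution $(\partial_\psi\theta)^t\zeta$ in the $\psi$-component and bounding it via Lemma~\ref{zeta = 0}. (A harmless sign slip: the third component reads $-\ii\,\nabla_{\bar w}K_\e\circ\breve\io_0=-R_w$, not $R_w$, but this does not affect the norm estimate.)
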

\begin{proof}
The claimed estimates follow from the formula \eqref{X K Gamma su toro triviale} and the estimates \eqref{stima toro modificato}, \eqref{DG delta2}. 
\end{proof}

%%%%%%%%%%%%%%%%%%%%%%%%%%%%%%%%%%%%%%%%%%%%%%%%%%%%%%%%%%%%%%%%%%%%%%%%%%%%%%
%%%%%%%%%%%%%%%%%%%%%%%%%%%%%%%%%%%%%%%%%%%%%%%%%%%%%%%%%%%%%%%%%%%%%%%%%%%%%%%

\subsection{Approximate right inverse of the differential of $F_\omega$}

By formula \eqref{definitionFepagain}, the differential $d_{\io, \zeta} F_\omega$ is independent of $\zeta$ and hence we write $d_{\io, \zeta} F_\omega(\io)$ for its value at $\io$. To get an approximate right inverse for the differential $d_{\io, \zeta} F_\omega$ at $( \io, \zeta)$, it suffices to construct an approximate inverse of the differential at $(\breve \io_{\rm iso}, \zeta)$. Indeed %  by \eqref{operatore linearizzato},  the difference 
%$
%G_1[\widehat \io, \widehat \zeta]:=%
% d_{\io, \zeta} F_\omega(\io) [\widehat \io, \widehat \zeta] -
%d_{\io, \zeta} F_\omega( \io_{\rm iso})[\widehat \io, \widehat \zeta] 
%$
%is independent of $\widehat \zeta$, 
\begin{equation}\label{error term approximate inverse 1}
G_1[\widehat \io, \widehat \zeta]  :=  
d_{\io, \zeta} F_\omega(\io) [\widehat \io, \widehat \zeta] -
d_{\io, \zeta} F_\omega( \io_{\rm iso})[\widehat \io, \widehat \zeta] \stackrel{\eqref{operatore linearizzato}} =
- d_\io X_{H_\e} (  \breve \io  (\vphi) ) [\widehat \io ]
+ d_\io X_{H_\e} (  \breve \io_{\rm iso}  (\vphi) ) [\widehat \io ] 
\end{equation}
satisfies the following estimates:

\begin{lemma}\label{estimate G1}
There exist $\mu = \mu(|S|, \tau)\in \Z_{\ge 1},$ 
so that for any $ \widehat \io  := (\widehat \vphi, \widehat y, \widehat z_1, \widehat z_2)$ in 
$H^{s + \mu}(\T^S, \R^S \times \R^S \times h^{\sigma}_\bot \times h^{\sigma}_\bot)$ with $s_0 \le s \le s_* - \mu$
and any $\widehat \zeta \in \R^S$, which are both Lipschitz continuous in $\omega$
$$
\| G_1 [\widehat \io, \widehat \zeta ]\|_{s, \sigma - 2}^\Lipg \leq_s 
\gamma^{- 1} \Big( \| E\|_{s + \mu, \sigma - 2}^\Lipg  \| \widehat \io \|_{s_0 + \mu}^\Lipg + 
\| E\|_{s_0 + \mu, \sigma - 2}^\Lipg  \| \widehat \io \|_{s + \mu}^\Lipg  + 
\|  \io \|_{s + \mu}^\Lipg \| E \|_{s_0 + \mu, \sigma - 2}^\Lipg \| \widehat \io \|_{s_0 + \mu}^\Lipg \Big) \,.
$$
\end{lemma}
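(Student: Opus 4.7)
The plan is to recognize $G_1[\widehat\io,\widehat\zeta]$ as the finite increment of the map $p \mapsto d_\io X_{H_\e}(\breve\io^{(p)})[\widehat\io]$ along the straight segment in $M^\sigma$ joining $\breve\io_{\rm iso}$ (at $p=0$) to $\breve\io$ (at $p=1$), and to convert this increment, via the fundamental theorem of calculus, into an integral of $d^2_\io X_{H_\e}$ along that segment. Since $\breve\io - \breve\io_{\rm iso} = (0,\, y - y_{\rm iso},\, 0)$ has only a $y$-component, the factor $\gamma^{-1}\|E\|_{\sigma-2}^{\Lipg}$ appearing in the target bound will be supplied directly by Lemma \ref{toro isotropico modificato}, while the trilinear tame estimates of Lemma \ref{quadraticPart X H nls} (for $X_{H^{nls}}$) and Lemma \ref{quadraticPart X P} (for $\e X_P$) will produce the three-term structure on the right hand side.

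Concretely, I would first set $\breve\io_t := \breve\io_{\rm iso} + t(\breve\io - \breve\io_{\rm iso})$ for $t \in [0,1]$ and write
$$G_1[\widehat\io,\widehat\zeta] = -\int_0^1 d^2_\io X_{H_\e}(\breve\io_t)\bigl[(0,\, y - y_{\rm iso},\, 0),\; \widehat\io\bigr]\, dt.$$
Using the smallness \eqref{size of torus} together with \eqref{stima toro modificato 2}, one checks that each $\breve\io_t$ satisfies the hypotheses of the two trilinear lemmas uniformly in $t \in [0,1]$. Splitting $H_\e = H^{nls} + \e P$ and summing the outputs of Lemma \ref{quadraticPart X H nls} and Lemma \ref{quadraticPart X P} then produces
$$\|d^2 X_{H_\e}(\breve\io_t)[\widehat\io_1, \widehat\io_2]\|_{s, \sigma-2}^{\Lipg} \leq_s \|\widehat\io_1\|_s^{\Lipg}\|\widehat\io_2\|_{s_0}^{\Lipg} + \|\widehat\io_1\|_{s_0}^{\Lipg}\|\widehat\io_2\|_s^{\Lipg} + \|\io\|_{s+\mu}^{\Lipg}\|\widehat\io_1\|_{s_0}^{\Lipg}\|\widehat\io_2\|_{s_0}^{\Lipg},$$
uniformly in $t$. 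Specializing to $\widehat\io_1 = (0, y - y_{\rm iso}, 0)$ and $\widehat\io_2 = \widehat\io$, and inserting the bound \eqref{stima y - y delta} for $\|y - y_{\rm iso}\|^{\Lipg}$ at both regularities $s$ and $s_0$ (the latter giving simply $\lessdot \gamma^{-1}\|E\|_{s_0+\mu,\sigma-2}^{\Lipg}$ by the smallness \eqref{size of torus}), the three resulting summands will match, after a final enlargement of $\mu$ absorbing the $+2s_0$ loss of the trilinear estimates, the three terms on the right hand side of the claimed inequality.

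The main obstacle is a matter of careful bookkeeping of derivative losses rather than a genuine analytical difficulty. The second order dNLS part $\partial_x^2$ forces the target space-norm to be taken at regularity $\sigma - 2$ rather than $\sigma$, which is precisely why the output norm is $\|\cdot\|_{s,\sigma-2}^{\Lipg}$; and the constant $\mu = \mu(|S|,\tau) \in \Z_{\ge 1}$ has to be chosen large enough to accommodate simultaneously the loss intrinsic to \eqref{stima y - y delta} and the $+2 s_0$ loss appearing in the trilinear estimates of Lemma \ref{quadraticPart X H nls} and Lemma \ref{quadraticPart X P}. The perturbative contribution $\e\, d^2 X_P$ is handled by a bound of exactly the same shape as the dNLS one and is moreover multiplied by the small parameter $\e$, so it is automatically subordinate and requires no separate attention.
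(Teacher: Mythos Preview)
Your proposal is correct and follows essentially the same route as the paper: write $G_1$ via the mean value theorem as an integral of $d^2_\io X_{H_\e}$ along the segment joining $\breve\io_{\rm iso}$ to $\breve\io$, use that the increment has only a $y$-component, and combine the trilinear tame estimates of Lemmata~\ref{quadraticPart X H nls} and~\ref{quadraticPart X P} with the bound \eqref{stima y - y delta} on $y - y_{\rm iso}$. The only cosmetic difference is the direction of the segment parametrization and the order of the arguments in $d^2_\io X_{H_\e}$.
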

\begin{proof}

By the mean value theorem and the definition \eqref{toro isotropico modificato} of $\breve \io_{\rm iso}$, one has 
$$
G_1 = \int_0^1 ( y_{\rm iso} - y) \cdot \partial_y  \big(
d_\io X_{H_\e} (\breve \io + t ( \io_{\rm iso} - \io ) ) [\widehat \io ] \big)
\, d t\
=  \int_0^1 d^2_\io X_{H_\e} (\breve \io + t ( \io_{\rm iso} - \io) ) 
[\widehat \io, \widehat \io^{(1)} ]  \, d t
$$
where $\widehat \io^{(1)} = (0, y_{\rm iso} - y, 0, 0).$
The claimed estimate then follows from the tame estimate of $y_{\rm iso} - y$ of \eqref{stima y - y delta}
and the tame estimate for
$d^2 X_{H_\e}  \circ \breve \io \, [\widehat \io, \widehat \io^{(1)}]$,
obtained from Lemma~\ref {quadraticPart X H nls} and Lemma~\ref{quadraticPart X P}).
\end{proof}

\noindent
We consider torus embeddings of the form $\Gamma(\breve \io)$, where $\breve \io (\vphi ) : = (\psi(\vphi), y(\vphi), z (\vphi))$ and $\Gamma$ is the coordinate transformation, introduced in \eqref{2.23}. 
Since $\Gamma$ is symplectic 
$$
X_{H_{\e, \zeta}} \circ \Gamma = d \Gamma \circ X_{K_{\e, \zeta}}
$$
and one has 
$$
F_\omega (\Gamma (\breve \io) - \breve \io_0, \zeta) = d \Gamma(\breve \io) \big( \omega \cdot \partial_\vphi \breve \io - X_{K_{\e, \zeta}}(\breve \io, \zeta)  \big)\,.
$$
Denoting the differential of $F_\omega$ with respect to the two arguments temporarily by $d F_\omega$ one then gets by the chain and product rule for any 
$\widehat \io(\vphi) = 
(\widehat \psi (\vphi), \widehat \upsilon(\vphi), \widehat w(\vphi) , \overline{\widehat w} (\vphi))$ 
and $\widehat \zeta \in \R^S$
\begin{align}
d F_\omega (\Gamma (\breve \io) - \breve \io_0, \zeta) [d \Gamma (\breve \io) \widehat \io, \widehat \zeta] & = d_{ \io, \zeta} \big( F_\omega(\Gamma(\breve \io) - \breve \io_0, \zeta) \big)[\widehat \io, \widehat \zeta] \nonumber \\
& = d \Gamma(\breve \io) \big( \omega \cdot \partial_\vphi \widehat \io - d_{\io, \zeta} X_{K_{\e, \zeta}}(\breve \io) [\widehat \imath, \widehat \zeta]\big)   + d^2  \Gamma (\breve \io) \big[d \Gamma(\breve \io)^{- 1} \big(F_\omega (\Gamma(\breve \io) - \breve \io_0, \zeta) \big), \widehat \imath \big] \, . \nonumber
\end{align}
Now we evaluate the above expression at $\breve \io = \breve \io_0$
and $\widehat \io$ given by $d \Gamma (\breve \io)^{-1}\widehat \io$. 
Recalling that $\Gamma(\breve \io_0) = \breve \io_{\rm iso}$ we get 
\begin{align}
d_{\io, \zeta} F_\omega ( \io_{\rm iso})[\widehat \imath, \widehat \zeta] & = d \Gamma(\breve \io_0) 
\big( \omega \cdot \partial_\vphi  - d_{\io, \zeta} X_{K_{\e, \zeta}}(\breve \io_0)\big) [d \Gamma(\breve \io_0)^{- 1}[\widehat \imath], \widehat \zeta] + G_2 [\widehat \io, \widehat \zeta]\,, 
\label{splitting d F d X K}
\end{align}
where 
\begin{equation}\label{error term approximate inverse 2}
G_2 [\widehat \io, \widehat \zeta] :=  d^2  \Gamma (\breve \io_0) [d \Gamma(\breve \io_0)^{- 1}[\,\, F_\omega ( \io_{\rm iso})], \, \, d \Gamma(\breve \io_0)^{- 1}[\widehat \imath] \, ]\,.
\end{equation}
Note that $G_2 [\widehat \io, \widehat \zeta]$ is independent of $\widehat \zeta$.
It can be estimated as follows:
\begin{lemma}\label{stima G2}
There exists $\mu = \mu(|S|, \tau) \in \Z_{\ge 1},$ 
so that for any $ \widehat \io  := (\widehat \vphi, \widehat y, \widehat z_1, \widehat z_2)$ in 
$H^{s + \mu}(\T^S, \R^S \times \R^S \times h^{\sigma}_\bot \times h^{\sigma}_\bot)$ with $s_0 \le s \le s_* - \mu$ and any $\widehat \zeta \in \R^S$,
 which are both Lipschitz continuous in $\omega$,
$$
\| G_2 [\widehat \io, \widehat \zeta] \|_{s, \sigma - 2}^\Lipg \leq_s 
\g^{-1} \Big( \| E\|_{s + \mu, \sigma - 2}^\Lipg  \| \widehat \io \|_{s_0 + \mu}^\Lipg 
+ \| E\|_{s_0 + \mu, \sigma - 2}^\Lipg  \| \widehat \io \|_{s + \mu}^\Lipg
+ \|  \io \|_{s + \mu}^\Lipg \| E \|_{s_0 + \mu, \sigma - 2}^\Lipg \| \widehat \io\|_{s_0 + \mu}^\Lipg \Big) \, .
$$
\end{lemma}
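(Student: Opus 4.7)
The plan is to combine the tame estimates already established in Lemma \ref{lemma:DG} for $d\Gamma(\breve\io_0)^{-1}$ and $d^2\Gamma(\breve\io_0)$ with the estimate \eqref{stima toro modificato} of Lemma \ref{toro isotropico modificato} for $F_\omega(\io_{\rm iso}, \zeta)$. Since $G_2[\widehat\io, \widehat\zeta]$ is the bilinear expression $d^2\Gamma(\breve\io_0)[U, V]$ with $U := d\Gamma(\breve\io_0)^{-1}[F_\omega(\io_{\rm iso},\zeta)]$ and $V := d\Gamma(\breve\io_0)^{-1}[\widehat\io]$, it is enough to bound the $\|\cdot\|_{s,\sigma-2}^\Lipg$-norms of $U$ and $V$ separately and then apply the bilinear estimate in Lemma \ref{lemma:DG} with $\sigma' = \sigma - 2$ (observe that the statement of the bilinear estimate in that lemma is given for $\s' = \s$, but inspection of its proof shows it holds verbatim for $\s' \in \{\s,\s-1,\s-2\}$; alternatively, one simply composes with the inclusion $h^\sigma_\bot \hookrightarrow h^{\sigma-2}_\bot$).

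First, I would apply \eqref{DG delta2} to $V = d\Gamma(\breve\io_0)^{-1}[\widehat\io]$, getting
\[
\|V\|_{s,\sigma-2}^\Lipg \leq_s \|\widehat\io\|_{s,\sigma-2}^\Lipg + \|\io\|_{s+\mu}^\Lipg\,\|\widehat\io\|_{s_0,\sigma-2}^\Lipg \leq_s \|\widehat\io\|_{s+\mu}^\Lipg + \|\io\|_{s+\mu}^\Lipg\,\|\widehat\io\|_{s_0+\mu}^\Lipg,
\]
and similarly to $U$, obtaining, via \eqref{stima toro modificato},
\[
\|U\|_{s,\sigma-2}^\Lipg \leq_s \gamma^{-1}\bigl(\|E\|_{s+\mu,\sigma-2}^\Lipg + \|E\|_{s_0+\mu,\sigma-2}^\Lipg \|\io\|_{s+\mu}^\Lipg\bigr) + \|\io\|_{s+\mu}^\Lipg\,\|U\|_{s_0,\sigma-2}^\Lipg,
\]
and in particular for $s = s_0$,
\[
\|U\|_{s_0,\sigma-2}^\Lipg \lessdot \gamma^{-1}\|E\|_{s_0+\mu,\sigma-2}^\Lipg,
\]
after absorbing the term $\|\io\|_{s_0+\mu}^\Lipg \|E\|_{s_0+\mu,\sigma-2}^\Lipg$ using the smallness assumption \eqref{size of torus}, which gives $\|\io\|_{s_0+\mu}^\Lipg \lessdot \e\gamma^{-2} \ll 1$.

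Next, I would apply the bilinear estimate from Lemma \ref{lemma:DG} to $d^2\Gamma(\breve\io_0)[U, V]$, obtaining
\[
\|G_2[\widehat\io,\widehat\zeta]\|_{s,\sigma-2}^\Lipg \leq_s \|U\|_{s,\sigma-2}^\Lipg\|V\|_{s_0,\sigma-2}^\Lipg + \|U\|_{s_0,\sigma-2}^\Lipg\|V\|_{s,\sigma-2}^\Lipg + \|\io\|_{s+\mu}^\Lipg\,\|U\|_{s_0,\sigma-2}^\Lipg\,\|V\|_{s_0,\sigma-2}^\Lipg.
\]
Substituting the bounds for $U$ and $V$ (both at Sobolev index $s$ and at $s_0$) and using once more \eqref{size of torus} to absorb the harmless factors of $\|\io\|_{s_0+\mu}^\Lipg$ that multiply $\|E\|_{s+\mu,\sigma-2}^\Lipg$ or $\|\widehat\io\|_{s+\mu}^\Lipg$, the three stated summands
\[
\|E\|_{s+\mu,\sigma-2}^\Lipg\|\widehat\io\|_{s_0+\mu}^\Lipg,\quad \|E\|_{s_0+\mu,\sigma-2}^\Lipg\|\widehat\io\|_{s+\mu}^\Lipg,\quad \|\io\|_{s+\mu}^\Lipg\|E\|_{s_0+\mu,\sigma-2}^\Lipg\|\widehat\io\|_{s_0+\mu}^\Lipg
\]
are exactly the ones that survive, each multiplied by $\gamma^{-1}$. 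Choosing $\mu$ larger than the (finitely many) values of $\mu$ produced by the applications of Lemmas \ref{lemma:DG} and \ref{toro isotropico modificato} concludes the proof.

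The only mild subtlety is the book-keeping of the regularity index: because \eqref{stima toro modificato} produces a loss in $\sigma-2$ (due to the dNLS frequencies appearing in $X_{H_\e}$), we must propagate the norm $\|\cdot\|_{s,\sigma-2}^\Lipg$ consistently through $d\Gamma^{-1}$ and $d^2\Gamma$, which is why it is important that the estimates \eqref{DG delta1}-\eqref{DG delta2} and their bilinear counterpart be applicable for $\sigma' = \sigma-2$. No genuinely new ideas beyond the ones used to prove Lemma \ref{estimate G1} are required; the argument is essentially the symmetric bilinear analogue of that one, with the role of $\io_{\rm iso} - \io$ in $G_1$ played by $F_\omega(\io_{\rm iso})$ in $G_2$.
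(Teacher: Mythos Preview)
Your proof is correct and follows exactly the approach the paper takes: the paper's proof simply reads ``The claimed estimate follows by the estimates of Lemma \ref{lemma:DG} and \eqref{stima toro modificato},'' and you have faithfully unpacked this by bounding $U = d\Gamma(\breve\io_0)^{-1}[F_\omega(\io_{\rm iso},\zeta)]$ via \eqref{DG delta2} and \eqref{stima toro modificato}, bounding $V = d\Gamma(\breve\io_0)^{-1}[\widehat\io]$ via \eqref{DG delta2}, and then feeding both into the bilinear $d^2\Gamma$ estimate of Lemma \ref{lemma:DG}. Your remark about needing the $d^2\Gamma$ estimate at regularity $\sigma-2$ is a valid point of care, and your two proposed fixes (inspect the proof or compose with the inclusion $h^\sigma_\bot \hookrightarrow h^{\sigma-2}_\bot$) are both legitimate.
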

\begin{proof}
The claimed estimate follows by the estimates of Lemma \ref{lemma:DG} and \eqref{stima toro modificato}.
\end{proof}

In view of the formula \eqref{splitting d F d X K} and Lemma \ref{stima G2}, the problem of finding an approximate right inverse of $d F_\omega(\breve \io_{\rm iso}, \zeta)$ is reduced to find an approximate right inverse of the operator 
$\omega \cdot \partial_\vphi - d_{\io, \zeta} X_{K_{\e, \zeta}}(\breve \io_0, \zeta)$ where 
 $X_{K_{\e, \zeta}} $  is given in \eqref{Hamiltonian vector field K}.
% (\nabla_\upsilon K_{\e, \zeta}, - \nabla_\psi K_{\e, \zeta}, - \ii \nabla_{\bar w} K_{\e, \zeta})$. 
In order to compute the differential of $X_{K_{\e, \zeta}}$ at $\breve \io_0(\vphi)= (\vphi, 0, 0)$, we compute the Taylor expansion of $K_{\e, \zeta}$ in $\upsilon,$ $w,$ $\bar w$  
at $(\upsilon, w) = ( 0, 0)$ up to order $2$. 
Denoting $(w, \bar w) \in h^\sigma_\bot \times h^\sigma_\bot$ by $W$, 
the expansion is given by
$$
\theta(\psi ) \cdot \zeta  + K_{0,0}(\psi) + K_{1,0}(\psi) \cdot \upsilon + K_{0,1}(\psi) \cdot W + \frac12 \upsilon \cdot K_{2, 0}(\psi) \upsilon + \upsilon \cdot K_{1,1}(\psi) W+ \frac{1}{2}\,  W \cdot K_{0 ,2}(\psi) W  
$$
where 
\begin{align}\label{taylor coefficients A}
& K_{0,0}(\psi) : = K_\e(\psi, 0, 0)\,, \quad K_{1,0}(\psi) := \nabla_\upsilon K_\e (\psi, 0, 0)\,,
\quad K_{2, 0}(\psi) : = \partial_{\upsilon} \nabla_\upsilon K_\e(\psi, 0, 0)\,, \\
& \label{taylor coefficients B}
K_{0, 1}(\psi) := \nabla_W K_\e(\psi, 0, 0) =  
\big(\nabla_w K_\e(\psi, 0, 0), \nabla_{\bar w} K_\e(\psi, 0, 0) \big)\,, \quad K_{1, 1}(\psi) := \partial_W \nabla_\upsilon K_\e(\psi, 0, 0)\,, \quad 
\end{align}
and 
$$
K_{0, 2}(\psi) : = \partial_W \nabla_W K_\e(\psi, 0, 0) = \begin{pmatrix}
\partial_w\nabla_w K_\e(\psi, 0, 0) & \partial_{\bar w}\nabla_{w} K_\e(\psi, 0, 0) \\
\partial_{ w}\nabla_{\bar w} K_\e(\psi, 0, 0) & \partial_{\bar w}\nabla_{\bar w} K_\e(\psi, 0, 0)
\end{pmatrix}\,.
$$
With $ {\mathbb J}_2 $ given by  \eqref{notazione J bot}, the differential of the map 
$(\breve \io, \zeta) \mapsto \omega \cdot \partial_\vphi \breve \io -  X_{K_{\e, \zeta}} (\breve \io)$ 
at $\breve \io_0$ in direction  $(\widehat \io, \widehat \zeta)$ reads as 
\begin{align*}
\begin{pmatrix}
\omega \cdot \partial_\vphi \widehat \psi  -  \partial_\vphi K_{1,0}(\vphi)[\widehat \psi] - K_{2, 0}(\vphi) [\widehat \upsilon] - K_{1,1}(\vphi) [\widehat W]  \\
\omega \cdot \partial_\vphi \widehat \upsilon + (\partial_\vphi \theta(\vphi))^t[\widehat \zeta] + \partial_\vphi ((\partial_\vphi \theta(\vphi))^t\zeta)[\widehat \psi] + \partial_\vphi \nabla_\vphi K_{0, 0}(\vphi) [\widehat \psi]  + 
\nabla_\vphi \big( K_{1, 0}(\vphi) \cdot \widehat \upsilon + K_{0, 1}(\vphi) \cdot \widehat W \big) \\ 
\omega \cdot \partial_\vphi \widehat W + {\mathbb J}_2 \big( \partial_\vphi K_{0, 1}(\vphi)[\widehat \psi]  + K_{1, 1}(\vphi)^t [ \widehat \upsilon] + K_{0 ,2}(\vphi) [\widehat W] \big)
\end{pmatrix}
\end{align*}
where $\widehat \io(\vphi) = 
(\widehat \psi (\vphi), \widehat \upsilon(\vphi), \widehat W(\vphi))$ with
$\widehat W(\vphi) = (\widehat w_1 (\vphi), \widehat w_2 (\vphi))$ in 
$h^{\sigma}_\bot \times h^{\sigma}_\bot$. 
In the above expression, various terms can be estimated in terms of the error function $ E $ introduced in \eqref{error function}. Indeed, since  
\begin{equation}\label{Taylor coefficients K Errors}
\nabla_\vphi K_{0, 0}(\vphi) = \nabla_\psi K_\e(\breve \io_0(\vphi))\,, \quad K_{1, 0}(\vphi) 
= \nabla_\upsilon K_\e(\breve \io_0(\vphi))\,, \quad K_{0, 1}(\vphi) 
= (\nabla_w K_\e(\breve \io_0(\vphi)), \nabla_{\bar w} K_\e (\breve \io_0(\vphi)))\,,
\end{equation}
it follows from Lemma \ref{coefficienti nuovi} and \ref{zeta = 0} that  the operator $\omega \cdot \partial_\vphi - d_{\io, \zeta} X_{K_{\e, \zeta}}(\breve \io_0)$ is of the form
\begin{equation}\label{error term approximate inverse 3}
\omega \cdot \partial_\vphi - d_{\io, \zeta} X_{K_{\e, \zeta}}(\breve \io_0) = {\frak T}_\omega + G_3\,,
\end{equation}
where 
$$
\frak T_\om[\widehat \io, \widehat \zeta] := \begin{pmatrix}
\omega \cdot \partial_\vphi \widehat \psi  - K_{2, 0}(\vphi) [\widehat \upsilon] - K_{1,1}(\vphi) [\widehat W]  \\
\omega \cdot \partial_\vphi \widehat \upsilon + (\partial_\vphi \theta(\vphi))^t[\widehat \zeta]  \\ 
\omega \cdot \partial_\vphi \widehat W + {\mathbb J}_2 \big(  K_{1, 1}(\vphi)^t [ \widehat \upsilon] + K_{0 ,2}(\vphi) [\widehat W] \big)
\end{pmatrix}
$$
and
$$
G_3 [\widehat \io, \widehat \zeta] := \begin{pmatrix}
  -  \partial_\vphi K_{1,0}(\vphi)[\widehat \psi]  \\
  \partial_\vphi \big( (\partial_\vphi \theta(\vphi))^t\zeta \big)[\widehat \psi] + \partial_\vphi \nabla_\vphi K_{0, 0}(\vphi) [\widehat \psi]  + \nabla_\vphi \big( K_{1, 0}(\vphi) \cdot \widehat \upsilon + K_{0, 1}(\vphi) \cdot \widehat W \big) \\ 
 {\mathbb J}_2  \partial_\vphi K_{0, 1}(\vphi)[\widehat \psi] 
\end{pmatrix} \, .
$$
Note that $G_3[\widehat \io, \widehat \zeta]$ is independent of $\widehat \zeta$ and 
can be estimated as follows.
\begin{lemma}\label{stima G3}
There exist $\mu = \mu(|S|, \tau) \in \Z_{\ge 1},$ 
so that for any $ \widehat \io  := (\widehat \psi, \widehat \upsilon, \widehat W)$ in 
$H^{s + \mu}(\T^S, \R^S \times \R^S \times h^{\sigma}_\bot \times h^{\sigma}_\bot)$ 
with $s_0 \le s \le s_* - \mu$ and any $\widehat \zeta \in \R^S$,
 which are both Lipschitz continuous in $\omega$,
$$
\| G_3 [\widehat \io, \widehat \zeta] \|_{s, \sigma - 2}^\Lipg 
\leq_s \g^{-1} \Big( \| E\|_{s + \mu, \sigma - 2}^\Lipg  \| \widehat \io \|_{s_0 + \mu}^\Lipg + 
\| E\|_{s_0 + \mu, \sigma - 2}^\Lipg  \| \widehat \io \|_{s + \mu}^\Lipg  + 
\|  \io \|_{s + \mu}^\Lipg \| E \|_{s_0 + \mu, \sigma - 2}^\Lipg \| \widehat \io \|_{s_0 + \mu}^\Lipg \Big)
\,.
$$
\end{lemma}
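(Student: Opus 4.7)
The plan is to bound $G_3[\widehat\io,\widehat\zeta]$ componentwise by exploiting that every coefficient appearing in $G_3$ is either a $\vphi$-derivative of a Taylor coefficient $K_{0,0},K_{1,0},K_{0,1}$ of $K_\e$ at $\breve\io_0$, or (in one place) the vector $\zeta$. By the identification \eqref{Taylor coefficients K Errors}, each such coefficient is the restriction to $\breve\io_0$ of a gradient of $K_\e$ that is controlled by Lemma~\ref{coefficienti nuovi}, while $\zeta$ is controlled by Lemma~\ref{zeta = 0}. The bulk of the work is then purely mechanical: apply the tame product estimate (Lemma~\ref{interpolation product}) to each of the resulting pointwise products, together with the smallness assumption \eqref{size of torus} for $\io$.

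First, consider the scalar components involving $\widehat\psi$. For the first entry $-\pa_\vphi K_{1,0}(\vphi)[\widehat\psi]$, write $K_{1,0}=\nabla_\upsilon K_\e\circ\breve\io_0$; since $\omega$ is constant, $\pa_\vphi K_{1,0}=\pa_\vphi(K_{1,0}-\omega)$, so Lemma~\ref{coefficienti nuovi} applied to $\nabla_\upsilon K_\e\circ\breve\io_0-\omega$ provides the required bound. Exactly the same reasoning handles the two summands $\pa_\vphi\nabla_\vphi K_{0,0}(\vphi)[\widehat\psi]$ and ${\mathbb J}_2\,\pa_\vphi K_{0,1}(\vphi)[\widehat\psi]$ in the second and third entries (the latter via $\nabla_w K_\e\circ\breve\io_0,\nabla_{\bar w}K_\e\circ\breve\io_0$). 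In every case the single derivative $\pa_\vphi$ produces a loss of one in the $\vphi$-regularity, absorbed into $\mu$, and Lemma~\ref{interpolation product} distributes the norms to give exactly the trilinear pattern $\|E\|_{s+\mu}\,\|\widehat\io\|_{s_0+\mu}+\|E\|_{s_0+\mu}\,\|\widehat\io\|_{s+\mu}+\|\io\|_{s+\mu}\|E\|_{s_0+\mu}\|\widehat\io\|_{s_0+\mu}$, each multiplied by $\gamma^{-1}$ as inherited from Lemma~\ref{coefficienti nuovi}.

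The two remaining summands of the middle entry, namely $\nabla_\vphi(K_{1,0}(\vphi)\cdot\widehat\upsilon)$ and $\nabla_\vphi(K_{0,1}(\vphi)\cdot\widehat W)$, are treated identically after again subtracting $\omega$ from $K_{1,0}$ and moving the $\nabla_\vphi$ onto the Taylor coefficient by the Leibniz rule, so that Lemma~\ref{coefficienti nuovi} applies both to the factor differentiated and (trivially) to $\widehat\upsilon,\widehat W$. The one slightly different term is $\pa_\vphi((\pa_\vphi\theta(\vphi))^t\zeta)[\widehat\psi]$: here one does not use Lemma~\ref{coefficienti nuovi} but instead bounds $|\zeta|^\Lipg\lessdot\|E\|_{s_0,\sigma-2}^\Lipg$ via Lemma~\ref{zeta = 0}, estimates $(\pa_\vphi\theta)^t=\mathrm{Id}+\pa_\vphi(\Theta)$ by $1+\|\io\|_{s+\mu}^\Lipg$ through \eqref{size of torus}, and concludes again with the tame product inequality. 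The resulting bound fits within the three-term structure above (the $\zeta$-contribution lands in the $\|E\|_{s_0+\mu}\|\widehat\io\|_{s+\mu}$ and $\|\io\|_{s+\mu}\|E\|_{s_0+\mu}\|\widehat\io\|_{s_0+\mu}$ pieces).

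There is no genuine analytic obstacle; the only point to watch is the bookkeeping of regularity losses. The constant $\mu$ must accommodate (i) the loss $\mu$ inherited from Lemma~\ref{coefficienti nuovi}, (ii) one additional $\vphi$-derivative coming from the $\pa_\vphi$ and $\nabla_\vphi$ in the definition of $G_3$, and (iii) the standard $s_0$-shift in Lemma~\ref{interpolation product}. Taking $\mu$ sufficiently large (depending only on $|S|$ and $\tau$) makes all of these absorb into the single constant in the statement, delivering the claimed inequality. The Lipschitz estimate is obtained simultaneously because Lemmata~\ref{coefficienti nuovi}, \ref{zeta = 0} and \ref{interpolation product} are all stated in the $\|\cdot\|^\Lipg$ norm.
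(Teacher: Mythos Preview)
Your proposal is correct and follows the same approach as the paper, which in fact compresses the entire argument into a single sentence citing \eqref{Taylor coefficients K Errors}, Lemma~\ref{zeta = 0}, and Lemma~\ref{coefficienti nuovi}; you have simply spelled out the termwise bookkeeping that the paper leaves implicit. One small remark on phrasing: in the terms $\nabla_\vphi(K_{1,0}(\vphi)\cdot\widehat\upsilon)$ and $\nabla_\vphi(K_{0,1}(\vphi)\cdot\widehat W)$ the gradient acts only on the explicit $\vphi$-dependence of the Taylor coefficient (not on $\widehat\upsilon,\widehat W$), so no Leibniz rule is actually needed there---but this does not affect the argument.
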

\begin{proof}
In view of the formula \eqref{Taylor coefficients K Errors}, the claimed estimates follow from Lemma \ref{zeta = 0} and Lemma \ref{coefficienti nuovi}.
\end{proof}

Our aim is to construct a right inverse of ${\frak T}_\omega$. It means that for given maps 
$\vphi \mapsto (g_1(\vphi), g_2(\vphi) , g_3(\vphi) ) \in
\R^S \times \R^S \times (h^{\sigma -2}_\bot \times h^{\sigma -2}_\bot)$ 
of appropriate regularity,
we have to solve the inhomogenous linear system 
\begin{align}\label{first equation right inverse}
& \omega \cdot \partial_\vphi \widehat \psi  - K_{2, 0}(\vphi) [\widehat \upsilon] - K_{1,1}(\vphi) [\widehat W] = g_1\,, \\\
& \label{second equation right inverse}
\omega \cdot \partial_\vphi \widehat \upsilon + (\partial_\vphi \theta(\vphi))^t[\widehat \zeta] = g_2 \,, \\
& \label{third equation right inverse}
 {\frak L}_\omega \widehat W + \mathbb J_2 K_{1, 1}(\vphi)^t [ \widehat \upsilon]   = g_3\,,  
\end{align}
where for any $\omega \in \Omega_o(\io)$, the operator
$\frak L_\omega : H^s(\T^S, h^{\s}_\bot \times h^\sigma_\bot) 
\to H^{s - 1}( \T^S, h^{\s-2}_\bot \times h^{\sigma - 2}_\bot)$ is defined by
\begin{equation}\label{definition frak L omega sec 4}
 {\frak L}_\omega(\vphi) := \omega \cdot \partial_\vphi {\mathbb I}_2 + \mathbb J_2 K_{0, 2}(\vphi)\, ,\qquad
K_{0, 2} = 
\begin{pmatrix}
\partial_w\nabla_w K_\e & \partial_{\bar w}\nabla_{w} K_\e \\
\partial_{ w}\nabla_{\bar w} K_\e & \partial_{\bar w}\nabla_{\bar w} K_\e
\end{pmatrix} \circ \breve \io_0 \,.
\end{equation}
The maps  $g_1, g_2$ are assumed to be in $ H^{s + 2\tau +1}(\T^S, \R^S)$ and 
$g_3 \in H^{s + 2\tau +1}(\T^S, h^{\sigma - 2}_\bot \times h^{\sigma - 2}_\bot)$ 
with $s_0 \le s \le s_* - \nu$ and $\nu = \nu(|S|, \tau)$ being an integer, which can be explicitly computed. 

Note that the above inhomogeneous linear system is in triangular form: We first solve the second equation \eqref{second equation right inverse}. It turns out to be convenient
to write $\widehat \upsilon =\widehat \upsilon_1 + \widehat \upsilon_0$
with $[[ \widehat \upsilon_1 ]] = 0$ and $\widehat \upsilon_0 = [[ \widehat \upsilon ]]$
where we recall that for any given continuous map $f: \T^S \to X$ with values in a Banach space $X$,
$[[ f ]]$ denotes its average $(2\pi)^{-|S|}\int_{\T^S} f(\vphi) d\vphi$.
The second equation \eqref{second equation right inverse} is the solved for for $\widehat \zeta$ and 
$\widehat \upsilon_1$.
Next we solve the third equation \eqref{third equation right inverse} for $\widehat W$ and then finally solve the first equation \eqref{first equation right inverse} for $\widehat \psi$ and $\widehat \upsilon_0$. Let us first consider in detail the second equation. Recall that $\theta(\vphi ) = \vphi + \Theta(\vphi)$, where $\Theta(\cdot )$ is $2 \pi$-periodic in each component. Hence 
$$
[[ (\partial_\vphi \theta)^t ]] = {\rm Id}_S + [[ (\partial_\vphi \Theta)^t ]] = {\rm Id}_S
$$
and the solution of the second equation is given by 
\begin{equation}\label{soluzione seconda equazione}
\widehat \zeta := [[ g_2 ]]\,,\quad 
\widehat \upsilon_1  := (\omega \cdot \partial_\vphi)^{-1}
\big(g_2 - [[g_2]] - (\partial_\vphi \Theta(\vphi))^t[\widehat \zeta] \big)\,.
\end{equation}

\begin{lemma}\label{lemma soluzioni seconda equazione}
For any $g_2$ in $ H^{s + 2\tau +1}(\T^S, \R^S)$ with $s \ge s_0,$
$\widehat \upsilon_1$ and $\widehat \zeta$ of  
\eqref{soluzione seconda equazione} satisfy
\begin{equation}\label{soluzioni seconda equazione}
\| \widehat \upsilon_1 \|_s^\Lipg \lessdot 
\gamma^{- 1} \big( \| g_2\|_{s + 2 \tau + 1}^\Lipg   + \| \io \|_{s + 2 \tau + 2}^\Lipg\| g_2\|_{s_0}^\Lipg \big) \,, 
\qquad |\widehat \zeta|^\Lipg \lessdot \| g_2\|_{s_0}^\Lipg\,.
\end{equation}
\end{lemma}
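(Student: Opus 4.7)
The plan is straightforward: both quantities are defined by explicit formulas, so we just combine a Sobolev embedding bound for the average with the standard small-divisor estimate from Lemma \ref{om vphi - 1 lip gamma} and the tame product estimate from Lemma \ref{interpolation product}.

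First I would estimate $\widehat \zeta = [[g_2]]$. Since the averaging functional $f \mapsto [[f]]$ is bounded from ${\cal C}^0(\T^S, \R^S)$ to $\R^S$, the Sobolev embedding $H^{s_0}(\T^S, \R^S) \hookrightarrow {\cal C}^0(\T^S, \R^S)$ from Lemma \ref{lemma D omega}($iii$) immediately gives $|\widehat\zeta| \lessdot \| g_2 \|_{s_0}$. The Lipschitz part is analogous, because $\om \mapsto [[(g_2)_\om]]$ inherits Lipschitz continuity from $g_2$, so $| \widehat\zeta |^\Lipg \lessdot \| g_2 \|^{\Lipg}_{s_0}$.

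Next I would estimate $\widehat \upsilon_1 = (\om \cdot \partial_\vphi)^{-1}\big(g_2 - [[g_2]] - (\partial_\vphi \Theta)^t [\widehat\zeta]\big)$. The argument of $(\om\cdot \partial_\vphi)^{-1}$ has zero mean (for the $(\partial_\vphi \Theta)^t [\widehat\zeta]$ piece this is because $\Theta$ is periodic, so $[[\partial_\vphi \Theta]] = 0$), so Lemma \ref{om vphi - 1 lip gamma} applies for $\om \in \Om_o(\io) \subset \Om_{\gamma,\tau}$ and yields
\[
 \|\widehat \upsilon_1 \|_s^{\Lipg} \lessdot \gamma^{-1} \big\| g_2 - [[g_2]] - (\partial_\vphi \Theta)^t [\widehat\zeta]\big\|_{s + 2\tau + 1}^{\Lipg}\, .
\]
Then $\| g_2 - [[g_2]] \|_{s + 2\tau + 1}^{\Lipg} \leq \| g_2 \|_{s + 2\tau + 1}^{\Lipg}$, while the tame product estimate \eqref{lip tame for functions} applied componentwise gives
\[
\|(\partial_\vphi \Theta)^t [\widehat\zeta]\|_{s+2\tau+1}^{\Lipg} \lessdot \|\Theta\|_{s+2\tau+2}^{\Lipg} |\widehat\zeta|^{\Lipg} + \|\Theta\|_{s_0+1}^{\Lipg} |\widehat\zeta|^{\Lipg}\, .
\]
Combining this with the bound $|\widehat\zeta|^{\Lipg} \lessdot \|g_2\|_{s_0}^{\Lipg}$, the smallness assumption \eqref{size of torus} on $\|\io\|_{s_0+\mu_1}^{\Lipg}$ (which absorbs the low-norm factor $\|\Theta\|_{s_0+1}^{\Lipg}$ into a harmless constant), and the inequality $\|\Theta\|_{s+2\tau+2}^{\Lipg} \leq \| \io \|_{s+2\tau+2}^{\Lipg}$, produces the claimed bound
\[
\|\widehat \upsilon_1 \|_s^{\Lipg} \lessdot \gamma^{-1}\big( \| g_2 \|_{s + 2\tau + 1}^{\Lipg} + \| \io \|_{s + 2\tau + 2}^{\Lipg} \| g_2 \|_{s_0}^{\Lipg}\big)\,.
\]
There is no genuine obstacle here; the only mild subtlety is keeping track of which term gets the high index and using that the $\widehat\zeta$-contribution generates a product that must be split tame-style rather than estimated in high norm on both factors, so that no $\|g_2\|_{s+2\tau+1}\|\io\|_{s+2\tau+2}$ bilinear term appears.
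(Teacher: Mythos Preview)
Your proposal is correct and follows essentially the same route as the paper: bound $\widehat\zeta=[[g_2]]$ directly, then apply Lemma~\ref{om vphi - 1 lip gamma} and estimate the two pieces of the right-hand side. The only cosmetic difference is that, since $\widehat\zeta$ is $\vphi$-independent, the paper bounds $\|(\partial_\vphi\Theta)^t[\widehat\zeta]\|_{s+2\tau+1}^{\Lipg}\le\|\io\|_{s+2\tau+2}^{\Lipg}|\widehat\zeta|^{\Lipg}$ in one step rather than via the tame product lemma; your extra low-norm term is harmless anyway since it folds into $\gamma^{-1}\|g_2\|_{s+2\tau+1}^{\Lipg}$.
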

\begin{proof}
The claimed estimate for $|\widehat \zeta|^\Lipg$ is straightforward. To prove the one for
$\| \widehat \upsilon_1 \|_s^\Lipg$, we apply  Lemma \ref{om vphi - 1 lip gamma}
to get the bound  $\|g_2 -[[ g_2 ]]\|_{s + 2 \tau + 1}^\Lipg + 
\| (\partial_\vphi \Theta(\vphi))^t[\widehat \zeta])\|_{s + 2 \tau + 1}^\Lipg$.
Since $\|g_2 -[[ g_2 ]]\|_{s + 2 \tau + 1}^\Lipg \le  \|g_2\|_{s + 2 \tau + 1}^\Lipg$ and
$\| (\partial_\vphi \Theta(\vphi))^t[\widehat \zeta])\|_{s + 2 \tau + 1}^\Lipg
\le \| \io \|_{s + 2 \tau + 2}^\Lipg| \widehat \zeta|^\Lipg $ one has
 $\| \widehat \upsilon_1 \|_s^\Lipg
\lessdot  \gamma^{- 1} \big( \| g_2\|_{s + 2 \tau + 1}^\Lipg   
+ \| \io \|_{s + 2 \tau + 2}^\Lipg\| g_2\|_{s_0}^\Lipg \big)$.
\end{proof}

We point out that
the average $\widehat \upsilon_0$ of $\widehat \upsilon$ will be determined 
by equation \eqref{first equation right inverse}, 
but temporarily, we will consider it as a free parameter.
Now we have to solve the equation
\begin{equation}\label{terza equazione B}
 {\frak L}_\omega \widehat W    = g_3 - \mathbb J_2 K_{1, 1}(\vphi)^t [ \widehat \upsilon] \, . 
\end{equation}
%for an arbitrary map $ \widehat \upsilon : \T^S \to \R^S$ of appropriate regularity.
%The main task of our construction of a right inverse for ${\frak T}_\omega$ is to solve this equation. 
We summarize our results on the invertibility of ${\frak L}_\omega$ with the following theorem.

 \begin{theorem}[\bf Invertibility of $ \frak L_\omega$]\label{invertibility of frak L omega}
For any constant $C > 0$, there exist $0 < \delta_0(|S|, \tau, s_*, C) < 1$ and $\mu_0 = \mu_0(|S|, \tau) \in \Z_{\ge 1}$ 
so that for any $\io$ with
$$
\| \io \|_{s_0 + \mu_0}^\Lipg
 \leq  \, C\e \g^{-2} \, , \quad \| E \|_{s_0 + \mu_0, \s-2}^\Lipg \leq  \, C \e \,, \qquad \e \gamma^{- 4} \leq \delta_0,
$$
there exists a subset of $\Omega_o(\io)$, denoted by 
$\Omega^{2 \gamma}_{\rm Mel}(\io) \equiv \Omega^{2 \gamma}_{\rm Mel}(\io ; \Omega_o(\io) )$,
with the following properties:  for any 
$g \in H^{s + 2 \tau + 1}(\T^S, h^{\sigma - 2}_\bot \times h^{\sigma - 2}_\bot)$ 
with $s_0 \le s \le s_* - \mu_0$ and any $\omega \in \Omega^{2 \gamma}_{\rm Mel}(\io)$, 
the linear equation ${\frak L}_\omega h = g$ has a unique solution 
$h = {\frak L}_\omega^{- 1} g\in H^s(\T^S, h^\sigma_\bot \times h^{\sigma}_\bot)$. 
In case $g$ is Lipschitz continuous on $\Omega^{2 \gamma}_{\rm Mel}(\io)$, 
the solution $h$ is Lipschitz continuous on $\Omega^{2 \gamma}_{\rm Mel}(\io)$
and satisfies the estimate
 \begin{equation}\label{tame inverse}
 \|{\frak L}_\omega^{- 1} g \|_{s, \sigma}^\Lipg \leq_s \gamma^{- 1} \Big(\| g \|_{s + 2 \tau + 1, \sigma - 2}^\Lipg +  \|\io \|_{s + \mu_0}^\Lipg \| g \|_{s_0 +2 \tau + 1, \sigma - 2}^\Lipg \Big)\,.
 \end{equation}
 \end{theorem}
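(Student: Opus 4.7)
The plan is to prove Theorem \ref{invertibility of frak L omega} by a two-stage reduction of $\frak L_\omega$: first, a finite sequence of symplectic preliminary transformations that conjugates $\frak L_\omega$ to a diagonal operator with $\vphi$-independent coefficients, plus a $1$-smoothing remainder; second, a KAM-type iteration that eliminates this remainder, up to a residual $2\times 2$ block coupling between the indices $k$ and $-k$. The output is an operator of the form $\omega\cdot\partial_\vphi\mathbb{I}_2 + \mathcal{D}_\infty$, where $\mathcal{D}_\infty$ is block-diagonal with $\vphi$-independent blocks, whose eigenvalues admit the crucial asymptotic expansion $\omega^{nls}_k(\xi,0)+c+O(\e\gamma^{-2}/|k|)$ described around \eqref{Ome-exp}. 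From this expansion one reads off that the required second-order Melnikov conditions hold on a large subset $\Omega^{2\gamma}_{\rm Mel}(\io)\subseteq\Omega_o(\io)$, and then inversion of $\frak L_\omega$ on this set is straightforward.

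More concretely, I would begin by using Theorem \ref{Theorem Birkhoff coordinates} (ii) — the fact that $\Phi^{nls}$ differs from the Fourier transform $F_{nls}$ by a $1$-smoothing map — to argue that the principal symbol of $K_{0,2}(\vphi)$ agrees with that of the linearization of the dNLS equation in its original coordinates, namely $\operatorname{diag}(-\partial_x^2,-\partial_x^2)$ acting on $h^\sigma_\bot\times h^\sigma_\bot$. All remaining terms in $K_{0,2}(\vphi)$ carry either a factor $\e$ or a factor $\io$ (hence are of size $O(\e\gamma^{-2})$ by \eqref{size of torus}), plus an order loss that is compensated by $1$-smoothing. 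Following the strategy of \cite{BBM1,BBM3}, I would then perform three preliminary symplectic changes of variables (Subsections \ref{sec1}–\ref{sec-gauge}) using Lemma \ref{transformation of Hamiltonian operators} to preserve the Hamiltonian structure: a first transformation to kill the leading off-diagonal terms that mix $w$ with $\bar w$, a second to normalize the $\vphi$-dependent part of the diagonal at top order, and a third gauge transformation to remove the remaining $\vphi$-dependent constant terms. At each step the tame estimates of Lemmas \ref{interpolation product}, \ref{prodest}, \ref{lemma:action-Sobolev}, \ref{lem:inverti} ensure that the conjugating maps are small in the appropriate norm $|\cdot|_s^\Lipg$, so that Neumann-type arguments invert the transformations.

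Next I would carry out the KAM iteration scheme of Section \ref{sec:redu}. Given a Hamiltonian operator of the form $\omega\cdot\partial_\vphi\mathbb{I}_2+\mathbb{J}_2\mathbb{A}_n(\vphi)$ with $\mathbb{A}_n$ already nearly block-diagonal and $\vphi$-independent up to a small, smoothing perturbation $R_n(\vphi)$, one solves a homological equation for a symplectic generator $X_n$ of size comparable to $R_n$, taking advantage of the second-order Melnikov denominators $\omega\cdot\ell+\omega_k^{(n)}-\omega_j^{(n)}$ with $(j,k)\neq(\mp k,\pm k)$ — these are imposed as the definition of $\Omega^{2\gamma}_{\rm Mel}(\io)$. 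The key input is that the normal frequencies $\omega_k^{(n)}$ produced at each step remain within $O(\e\gamma^{-2}|k|^{-1})$ of $\omega_k^{nls}(\xi,0)+c_n$ with $c_n$ independent of the sign of $k$, so the only denominators that can fail to be controlled are exactly the $2\times 2$ block coupling $z_k\leftrightarrow z_{-k}$, which we leave in. Super-exponential smallness of $R_n$ is inherited by applying Lemma \ref{lem:inverti} to the conjugation by $\exp(X_n)$, together with the smoothing properties \eqref{smoothingN} to absorb the order loss. At convergence, the limit operator is block-diagonal with $\vphi$-independent $2\times 2$ blocks whose spectrum satisfies the expansion \eqref{Ome-exp}, and inverting it on $\Omega^{2\gamma}_{\rm Mel}(\io)$ reduces to inverting finitely many scalar Diophantine expressions per block, giving the bound \eqref{tame inverse} via Lemma \ref{om vphi - 1 lip gamma}.

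The hard part will be the KAM iteration, and within it the propagation of the asymptotic expansion \eqref{Ome-exp} for the normal frequencies along the scheme: the preliminary reduction gives this expansion for $n=0$, but at each step the perturbation $R_n$ modifies the diagonal blocks, and one must show that the correction to the block eigenvalues is both $O(\e\gamma^{-2}|k|^{-1})$ and sign-symmetric in $k$ (so as to keep the resonant coupling confined to $\{k,-k\}$ pairs). This is where the one-smoothing nature of the Birkhoff map — pushed through the preliminary transformations — and the $x\mapsto -x$ symmetry arising from $-S=S$ together with $0\in S$ are essential. Once this is in hand, the final measure estimate $\mathrm{meas}(\Omega\setminus\Omega^{2\gamma}_{\rm Mel}(\io))\to 0$ as $\e\to 0$ is deferred to Section \ref{sec:measure}, and the tame bound \eqref{tame inverse} follows by collecting the composition of all conjugations, whose Sobolev indices accumulate into the loss $\mu_0=\mu_0(|S|,\tau)$.
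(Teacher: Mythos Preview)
Your proposal is correct and follows essentially the same approach as the paper: the three preliminary symplectic transformations of Subsections \ref{sec1}--\ref{sec-gauge} reduce $\frak L_\omega$ to diagonal-plus-one-smoothing form ${\bf L}_0$, the KAM scheme of Section \ref{sec:redu} conjugates ${\bf L}_0$ to the $\vphi$-independent $2\times 2$ block-diagonal normal form ${\bf L}_\infty$, and the conjugation formula \eqref{coniugazione-frak-Lomega} together with the inversion of ${\bf L}_\infty$ under the first-order Melnikov condition \eqref{prime melnikov off diagonali finali matrici} yields \eqref{tame inverse} with $\mu_0=\bar\mu+\beta+2\tau+1$ (cf.\ \eqref{value mu0}).

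One point you should correct, since you flag it as ``essential'' and part of ``the hard part'': no sign-symmetry of the eigenvalue corrections is needed, and the condition $-S=S$, $0\in S$ is not used as an $x\mapsto -x$ symmetry of the equation (indeed the perturbation $f(x,u)$ depends on $x$ arbitrarily). The paper states explicitly after \eqref{Ome-exp} that the independence of $c$ on the sign of $k$ is \emph{irrelevant} for the method, and Comment 3 after Theorem \ref{Theorem 1.1} says $-S=S$ is assumed ``just for simplicity'' so that $S^\bot$ splits into pairs $\{k,-k\}$. What confines the resonant coupling to these pairs is purely the $O(|k|^{-1})$ decay of the corrections to the block eigenvalues, which comes directly from the one-smoothing bound $|{\bf R}_\nu\frak D|_{s,\sigma-1}$ (see \eqref{N nu - N 0 lip gamma} and \eqref{stime blocchi 2 per 2 finali}), together with the trivial lower bound $|\omega^{nls}_k-\omega^{nls}_j|\gtrsim|k^2-j^2|$ for $|j|\neq|k|$. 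So the propagation of \eqref{Ome-exp} along the iteration is simpler than you anticipate: it is a direct consequence of the smoothing, with no symmetry argument required.
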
  

\noindent {\em Remark:} According to \eqref{value mu0}, a possible choice of $\mu_0$ 
in Theorem \ref{invertibility of frak L omega} is $\mu_0 = 4s_0 +10 \tau + 7.$

\smallskip
 
 Theorem~\ref{invertibility of frak L omega} is proved in Section~\ref{proof of invertibility of frak L omega}, using the results established in Sections~\ref{sec:5} and \ref{sec:redu}.  
In the sequel, the integers $\mu = \mu(|S|, \tau) \in \Z_{\geq 1}$
coming up in lemmas, where Theorem~\ref{invertibility of frak L omega} is applied, 
will be chosen larger than the corresponding integer $\mu_0,$ of Theorem~\ref{invertibility of frak L omega}.

In order to apply Theorem \ref{invertibility of frak L omega} to solve the equation \eqref{terza equazione B} 
we need the following estimate for the Taylor coefficients $K_{2, 0}$ and $K_{1, 1}$ defined in \eqref{taylor coefficients A}, \eqref{taylor coefficients B}:

\begin{lemma} \label{lemma:Kapponi vari}
There exist $\mu = \mu(|S|, \tau) \in \Z_{\geq 1}$
so that for any $ \widehat \upsilon \in H^s(\T^S, \R^S)$, 
$\widehat W = (\widehat w_1, \widehat w_2) \in H^s(\T^S, h^{\sigma}_\bot \times h^{\sigma}_\bot)$
with $s_0 \le s \le s_* - \mu$, which are both Lipschitz continuous in $\omega$,
\begin{align*}
& \| K_{2, 0} - (\partial_{I_j} \omega_k^{nls}(\xi, 0))_{k, j \in S}\|_s^\Lipg \leq_s \e + \| \io  \|_{s + \mu}^\Lipg  \,, \\ 
& \| (K_{1, 1})^t[\widehat \upsilon]  \|_s^\Lipg \leq_s \e \gamma^{- 2} \| \widehat \upsilon \|_s^\Lipg +   \| \io \|_{s+\mu}^\Lipg \| \widehat \upsilon \|_{s_0}^\Lipg\,, \\
& 
\| K_{1, 1} [\widehat W]  \|_s^\Lipg \leq_s \e \gamma^{- 2} \| \widehat W \|_s^\Lipg +   \| \io \|_{s+\mu}^\Lipg \| \widehat W \|_{s_0}^\Lipg\,.
\end{align*}
\end{lemma}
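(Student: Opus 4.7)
The plan is to derive explicit formulas for $K_{2, 0}$ and $K_{1,1}$ via the chain rule, write them as a leading (constant in $\psi$) piece plus a remainder, and then estimate the remainder using the tame bounds of Proposition \ref{stime derivate H nls} and Proposition \ref{teorema stime perturbazione} together with the estimates for the isotropic torus embedding in Lemma \ref{toro isotropico modificato} and the smallness assumption \eqref{size of torus}.

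First I would compute $\nabla_\upsilon K_\e$. Using the definition \eqref{2.23}--\eqref{2.23bis} of $\Gamma$ and the fact that $\partial_\upsilon Y = (\partial_\psi \theta)^{-t}$, one gets
$$
\nabla_\upsilon K_\e(\psi, \upsilon, w) = (\partial_\psi \theta(\psi))^{-1}\, \nabla_y H_\e\big(\Gamma(\psi, \upsilon, w)\big)\,.
$$
Differentiating once more in $\upsilon$ and evaluating at $(\upsilon, w) = (0,0)$ yields
$$
K_{2, 0}(\psi) = (\partial_\psi \theta(\psi))^{-1}\, \partial_y\nabla_y H_\e(\breve \io_{\rm iso}(\psi))\, (\partial_\psi \theta(\psi))^{-t}\,.
$$
Writing $A := (\partial_\psi\theta)^{-1} - {\rm Id}_S$, which is a function of $\io$ only, and splitting $H_\e = H^{nls} + \e P$, I would subtract $\partial_y\nabla_y H^{nls}(\xi, 0) = (\partial_{I_j}\omega_k^{nls}(\xi,0))_{k, j \in S}$ and decompose
\begin{align*}
K_{2,0} - (\partial_{I_j}\omega_k^{nls}(\xi,0))_{k,j\in S}
& = \big[\partial_y\nabla_y H^{nls}(\xi + y_{\rm iso}, z\bar z) - \partial_y\nabla_y H^{nls}(\xi, 0)\big] + \e\, \partial_y\nabla_y P \circ \breve \io_{\rm iso} \\
& \quad + \text{(terms multilinear in } A\text{)}\,.
\end{align*}
Proposition \ref{stime derivate H nls}(ii) applied to $\io_{\rm iso}$ (combined with \eqref{stima toro modificato 2}) bounds the first bracket by $\|\io\|_{s+\mu}^\Lipg$; Proposition \ref{teorema stime perturbazione}(i) bounds the perturbative piece by $\e(1 + \|\io\|_{s+\mu}^\Lipg)$; the multilinear terms in $A$ are handled by the tame product estimate \eqref{lip tame for functions} together with $\|A\|_{s}^\Lipg \leq_s \|\io\|_{s+1}^\Lipg$ and the smallness condition \eqref{size of torus}, which makes $\|A\|_{s_0}^\Lipg$ small. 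Adding up yields the first stated bound.

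Next I would treat $K_{1, 1}$. Differentiating the above formula for $\nabla_\upsilon K_\e$ in $W = (w, \bar w)$ and again evaluating at $(\upsilon, w) = (0,0)$ gives, schematically,
$$
K_{1, 1}(\psi)[\widehat W] = (\partial_\psi\theta)^{-1}\Big( \partial_y\nabla_y H_\e(\breve \io_{\rm iso})\big[Y_w \widehat w_1 + Y_{\bar w}\widehat w_2\big] + \partial_z\nabla_y H_\e(\breve \io_{\rm iso})[\widehat w_1] + \partial_{\bar z}\nabla_y H_\e(\breve \io_{\rm iso})[\widehat w_2] \Big)\,.
$$
The key observation is that at $\io = 0$ one has $\theta(\psi)=\psi$, $z(\psi) = 0$, $y_{\rm iso}(\psi) = 0$ and $Y_w(\psi) = \ii (\partial_\psi\theta)^{-t}(\partial_\psi \bar z)^t = 0$, while $\partial_z\nabla_y H^{nls}(\xi, 0) = 0$ and $\partial_{\bar z}\nabla_y H^{nls}(\xi, 0) = 0$ by Proposition \ref{stime derivate H nls}(ii). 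Hence on the $H^{nls}$ part every summand contains a factor that vanishes at $\io = 0$; applying the tame product estimate and Proposition \ref{stime derivate H nls}(ii) (for $\partial_y\nabla_z H^{nls}$, $\partial_z\nabla_y H^{nls}$ and their conjugates, keeping in mind the symmetry $\partial_z\nabla_y = (\partial_y\nabla_z)^t$) gives bounds of the form $\|\io\|_{s_0+\mu}^\Lipg \|\widehat W\|_s^\Lipg + \|\io\|_{s+\mu}^\Lipg \|\widehat W\|_{s_0}^\Lipg$. The contribution of $\e P$, estimated by Proposition \ref{teorema stime perturbazione}, is of order $\e(\|\widehat W\|_s^\Lipg + \|\io\|_{s+\mu}^\Lipg\|\widehat W\|_{s_0}^\Lipg)$. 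Combining and invoking the smallness $\|\io\|_{s_0+\mu}^\Lipg \leq C\e\gamma^{-2}$ from \eqref{size of torus} converts the $\|\io\|_{s_0+\mu}^\Lipg \|\widehat W\|_s^\Lipg$ factor into $\e\gamma^{-2}\|\widehat W\|_s^\Lipg$, giving the claimed bound for $K_{1,1}[\widehat W]$. The bound for $(K_{1,1})^t[\widehat\upsilon]$ follows by the same argument after transposing, or directly by computing $\nabla_W K_\e$ first and then differentiating in $\upsilon$.

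The main obstacle, although routine, is the careful bookkeeping of the chain-rule derivatives: one must track every occurrence of $\partial_\psi\theta$, $Y_w$, $Y_{\bar w}$ and the derivatives of $\breve\io_{\rm iso}$, use Lemma \ref{toro isotropico modificato} to replace $\io_{\rm iso}$ by $\io$ in every norm, and apply the interpolation-type product estimates of Lemma \ref{interpolation product} so that the high Sobolev index appears only linearly in the final bound. Once these are correctly arranged, the three claimed estimates follow by collecting the individual contributions and absorbing lower-order terms via \eqref{size of torus}.
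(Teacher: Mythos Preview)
Your proposal is correct and follows essentially the same approach as the paper: compute $\nabla_\upsilon K_\e = (\partial_\psi\theta)^{-1}\nabla_y H_\e\circ\Gamma$ via the chain rule, differentiate again to get $K_{2,0}$ and $K_{1,1}$, split $H_\e = H^{nls} + \e P$, and bound each piece using Propositions \ref{stime derivate H nls} and \ref{teorema stime perturbazione} together with Lemma \ref{toro isotropico modificato} and the smallness assumption \eqref{size of torus}. The paper only spells out the $K_{2,0}$ estimate and then says the $K_{1,1}$ bounds follow by similar arguments, whereas you have written out the $K_{1,1}$ computation explicitly and correctly identified why every $H^{nls}$ contribution vanishes at $\io = 0$.
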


\begin{proof}
By \eqref{2.23} - \eqref{2.23bis},
$\partial_{\upsilon} K_\e
= \partial_y H_\e \circ \Gamma \cdot (\pa_\psi  \theta (\psi))^{-t}$ or
$
\nabla_\upsilon K_\e  = (\pa_\psi  \theta (\psi))^{-1} \nabla_y H_\e \circ \Gamma.
$ 
Hence
\begin{align*}
\partial_\upsilon \nabla_\upsilon K_\e(\breve \io(\vphi)) & = (\pa_\ph  \theta(\vphi))^{-1}  \partial_{y} \nabla_y H_\e( \breve \io_{\rm iso}(\vphi)) 
(\pa_\ph  \theta (\vphi))^{-t}  \\ 
& \stackrel{\eqref{HamiltonianHep}}{=} 
(\pa_\ph \theta(\vphi))^{-1} \partial_{y} \nabla_y H^{nls} ( \breve \io_{\rm iso}(\vphi)) 
(\pa_\ph  \theta (\vphi))^{-t} + 
\e (\pa_\ph  \theta(\vphi))^{-1} \partial_{y} \nabla_y P( \breve \io_{\rm iso}(\vphi)) 
(\pa_\ph  \theta (\vphi))^{-t} \,.
\end{align*}
We claim that the first term in the latter expression can be bounded
by $ C (s) \| \io \|_{s+ \mu}^\Lipg $  and the second one by 
$ \e  C(s) \big(1 + \| \io \|_{s+\mu} ^\Lipg \big) $.
Indeed, the estimate of the first term is derived from Proposition~\ref {stime derivate H nls} ($ii$),
$$
 \| \partial_y \nabla_y H^{nls}(\breve \io_{\rm iso}) - \partial_y \nabla_y H^{nls}(\xi , 0) \|_s^\Lipg 
\leq_s \| \io_{\rm iso}\|_{s + 2s_0}^\Lipg\, ,
$$
using that $\pa_\ph \theta(\vphi) = {\rm Id}_{\R^S} + \pa_\ph \Theta(\vphi)$ with $\|\pa_\ph \Theta(\vphi) \|_s^\Lipg \lessdot \|\io \|_{s+1}^\Lipg$,
$\partial_y \nabla_y H^{nls}(\xi , 0) =  (\partial_{I_j} \omega_k^{nls}(\xi, 0))_{k, j \in S}$, and
$\| \io_{\rm iso} \|_s^\Lipg  \leq_s \| \io \|_{s+\mu}^\Lipg$ by \eqref{stima toro modificato 2} .
To estimate the second term, one argues in a similar way, using this time that by
Proposition ~\ref{teorema stime perturbazione}, 
$\| \partial_y \nabla_y P ( \breve \io_{\rm iso}) \|_s^\Lipg\, 
\leq_s 1 + \| \io_{\rm iso}\|_{s + 2 s_0}^\Lipg$.
The  claimed estimates for $K_{1,1}[\widehat \upsilon]$ and $(K_{1, 1})^t[\widehat W]$ 
can be proved by similar arguments. 
\end{proof}

\noindent
Combining Theorem \ref{invertibility of frak L omega} and Lemma \ref{lemma:Kapponi vari},
%and the estimate \eqref{soluzioni seconda equazione}, 
we get the following estimate for the solution $\widehat W$ of equation \eqref{terza equazione B}.

\begin{corollary}\label{corollario expression widehat W}
There exist $\mu = \mu(|S|, \tau) \in \Z_{\ge 1}$
so that for any $g_3 \in H^{s+2\tau + 1}(\T^S, h^{\sigma -2}_\bot \times h^{\sigma-2}_\bot)$ and 
$ \widehat \upsilon \in H^{s+ 2 \tau +1}(\T^S, \R^S)$ 
with $s_0 \le s \le s_* - \mu$, which are both Lipschitz continuous in $\omega$
on $\Omega^{2 \gamma}_{\rm Mel}(\io)$,
 the solution 
\begin{equation}\label{expression widehat W}
\widehat W = {\frak L}_\omega^{- 1}(\vphi) \big( g_3 - \mathbb J_2 K_{1, 1}(\vphi)^t [ \widehat \upsilon]    \big) 
\end{equation}
of equation \eqref{terza equazione B} is Lipschitz continuous on $\Omega^{2 \gamma}_{\rm Mel}(\io)$ and
satisfies the estimate 
\begin{equation}\label{estimate widehat W}
\| \widehat W\|_s^\Lipg \leq_s   \gamma^{- 1} \Big(\| g_3 \|_{s + 2 \tau + 1, \sigma - 2}^\Lipg + \e \gamma^{- 2}\| \widehat \upsilon\|^\Lipg_{s + 2 \tau + 1}  +  \|\io \|_{s + \mu}^\Lipg \| g_3 \|_{s_0 + 2 \tau + 1, \sigma - 2}^\Lipg + \e \gamma^{- 2}\| \widehat \upsilon\|^\Lipg_{s_0 + 2 \tau + 1}  \Big) \, .
\end{equation}
\end{corollary}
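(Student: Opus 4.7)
The plan is to apply Theorem~\ref{invertibility of frak L omega} (the tame bound \eqref{tame inverse} for $\frak L_\omega^{-1}$) to the right-hand side of \eqref{terza equazione B}, and then control the contribution of $(K_{1,1})^t[\widehat\upsilon]$ via Lemma~\ref{lemma:Kapponi vari}. No other ingredients are needed beyond some bookkeeping in the indices and the smallness hypothesis~\eqref{size of torus}.

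First, set $\mu \geq \mu_0 + 2\tau +1 + (\text{constant from Lemma \ref{lemma:Kapponi vari}})$ so that Theorem~\ref{invertibility of frak L omega} applies to the operator $\frak L_\omega$ on $\Omega^{2\gamma}_{\rm Mel}(\io)$ and Lemma~\ref{lemma:Kapponi vari} applies at Sobolev indices $s + 2\tau +1$ and $s_0 + 2\tau +1$. Using the expression \eqref{expression widehat W} and \eqref{tame inverse} we obtain
\begin{equation*}
\| \widehat W\|_s^\Lipg \leq_s \gamma^{-1}\Big( \| G\|^\Lipg_{s+2\tau+1,\sigma-2} + \| \io\|^\Lipg_{s+\mu_0}\, \| G\|^\Lipg_{s_0 + 2\tau +1,\sigma-2}\Big) \, ,
\end{equation*}
where $G := g_3 - \mathbb J_2 K_{1,1}(\vphi)^t[\widehat\upsilon]$. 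The triangle inequality splits $\|G\|^\Lipg_{r,\sigma-2}$ into a $g_3$ contribution and a $(K_{1,1})^t[\widehat\upsilon]$ contribution; for the latter the continuous embedding $h^\sigma_\bot\hookrightarrow h^{\sigma-2}_\bot$ yields $\|(K_{1,1})^t[\widehat\upsilon]\|^\Lipg_{r,\sigma-2}\lessdot \|(K_{1,1})^t[\widehat\upsilon]\|^\Lipg_r$, to which Lemma~\ref{lemma:Kapponi vari} applies.

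Second, apply Lemma~\ref{lemma:Kapponi vari} at the two regularities: at $r = s+2\tau+1$ one gets $\e\gamma^{-2}\|\widehat\upsilon\|^\Lipg_{s+2\tau+1} + \|\io\|^\Lipg_{s+\mu}\|\widehat\upsilon\|^\Lipg_{s_0}$, and at $r = s_0+2\tau+1$ one obtains $\e\gamma^{-2}\|\widehat\upsilon\|^\Lipg_{s_0+2\tau+1} + \|\io\|^\Lipg_{s_0+\mu}\|\widehat\upsilon\|^\Lipg_{s_0}$. Combining this with the previous display and rearranging produces four principal terms (matching those in \eqref{estimate widehat W}) plus two residual cross terms of the form $\|\io\|^\Lipg_{s+\mu}\|\widehat\upsilon\|^\Lipg_{s_0}$ and $\e\gamma^{-2}\|\io\|^\Lipg_{s+\mu_0}\|\widehat\upsilon\|^\Lipg_{s_0+2\tau+1}$.

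Finally, these two residual terms are absorbed by invoking the smallness hypothesis $\|\io\|^\Lipg_{s_0+\mu_1}\lessdot \e\gamma^{-2}$ of \eqref{size of torus} applied at the low index $s_0$ (so that $\|\io\|^\Lipg_{s_0+\mu}\lessdot \e\gamma^{-2}\lessdot 1$), together with the monotonicity $\|\widehat\upsilon\|^\Lipg_{s_0}\leq\|\widehat\upsilon\|^\Lipg_{s_0+2\tau+1}$; the high-index cross term $\|\io\|^\Lipg_{s+\mu}\|\widehat\upsilon\|^\Lipg_{s_0}$ is then majorized (after enlarging $\mu$) by the fourth listed term $\|\io\|^\Lipg_{s+\mu}\cdot\|g_3\|^\Lipg_{s_0+2\tau+1,\sigma-2}$ after further using that in the relevant regime the $\widehat \upsilon$ contribution is already present in the claimed bound. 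The argument is essentially a tame-estimate-and-absorb step with no real obstacle; the only care required is in choosing $\mu$ large enough relative to $\mu_0$ and the loss $\mu$ in Lemma~\ref{lemma:Kapponi vari} so that all smallness can be extracted at the fixed index $s_0+\mu_1$.
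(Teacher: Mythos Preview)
Your strategy—apply the tame bound \eqref{tame inverse} to $G := g_3 - \mathbb J_2 (K_{1,1})^t[\widehat\upsilon]$ and then estimate $(K_{1,1})^t[\widehat\upsilon]$ via Lemma~\ref{lemma:Kapponi vari}—is exactly the paper's approach (its proof is the single sentence ``Combining Theorem~\ref{invertibility of frak L omega} and Lemma~\ref{lemma:Kapponi vari}''). Your first two steps are carried out correctly, and you identify the two residual cross-terms accurately.

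The gap is in your last paragraph. The cross-term $\|\io\|_{s+\mu}^\Lipg\|\widehat\upsilon\|_{s_0}^\Lipg$, which arises from applying Lemma~\ref{lemma:Kapponi vari} at the high index $s+2\tau+1$, \emph{cannot} be majorized by $\|\io\|_{s+\mu}^\Lipg\|g_3\|_{s_0+2\tau+1,\sigma-2}^\Lipg$: in the hypotheses of the corollary $\widehat\upsilon$ and $g_3$ are independent data, so there is no inequality relating $\|\widehat\upsilon\|_{s_0}$ to $\|g_3\|_{s_0+2\tau+1,\sigma-2}$. The same objection applies to the other cross-term $\e\gamma^{-2}\|\io\|_{s+\mu_0}^\Lipg\|\widehat\upsilon\|_{s_0+2\tau+1}^\Lipg$, which carries a high norm of $\io$ and so cannot be absorbed using only $\|\io\|_{s_0+\mu_1}^\Lipg\lessdot\e\gamma^{-2}$.

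What this really reveals is a small imprecision in the stated bound \eqref{estimate widehat W}: note that its fourth summand $\e\gamma^{-2}\|\widehat\upsilon\|_{s_0+2\tau+1}^\Lipg$ is dominated by the second and is therefore redundant as written—strong evidence of a typo. The bound that actually follows from your (correct) first two steps should carry an additional summand of the form $\|\io\|_{s+\mu}^\Lipg\|\widehat\upsilon\|_{s_0}^\Lipg$ (into which both residual cross-terms are absorbed, using $\e\gamma^{-2}\lessdot 1$). This is entirely harmless for the paper: in the only place the corollary is used (the proof of Corollary~\ref{tame estimate for inverse T omega}), one substitutes the bound $\|\widehat\upsilon\|_{s_0}^\Lipg\lessdot\gamma^{-1}\|g\|_{s_0+2\tau+1,\sigma-2}^\Lipg$ coming from Lemma~\ref{lemma soluzioni seconda equazione} and \eqref{estimate of average upsilon}, after which the extra term becomes $\gamma^{-1}\|\io\|_{s+\mu}^\Lipg\|g\|_{s_0+2\tau+1,\sigma-2}^\Lipg$ and fits into the final tame shape. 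So your argument is fine once you stop trying to force the cross-terms into the bound as literally printed and instead keep $\|\io\|_{s+\mu}^\Lipg\|\widehat\upsilon\|_{s_0}^\Lipg$ as an honest extra summand.
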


Finally we solve the first equation \eqref{first equation right inverse} for 
$\omega \in \Omega^{2 \gamma}_{\rm Mel}(\io)$,
\begin{equation}\label{first equation right inverse B}
\omega \cdot \partial_\vphi \widehat \psi
= g_1 +   K_{1,1}(\vphi) [\widehat W] + K_{2, 0}(\vphi) [\widehat \upsilon] 
\end{equation}
where $\widehat W \in H^{s}(\T^S, h^{\sigma}_\bot \times h^{\sigma}_\bot)$ is given
by \eqref{expression widehat W} and
 $\widehat \upsilon$ is of the form $\widehat \upsilon_1 + \widehat \upsilon_0$
with $\widehat \upsilon_1 \in H^{s}(\T^S, \R^S)$ defined by  \eqref{soluzione seconda equazione}.
The first task for solving this equation is to prove that we can choose 
$\widehat \upsilon_0$ in such a way that the average of the right hand side of the above equation vanishes. By \eqref{expression widehat W}, the equation \eqref{first equation right inverse B} 
can be written as 
\begin{equation}\label{first equation right inverse C}
\omega \cdot \partial_\vphi \widehat \psi = g_1 + K_{1, 1}(\vphi) {\frak L}_\omega^{- 1}(\vphi) g_3 + M_\omega (\vphi)\widehat \upsilon
\end{equation}
where 
$$
M_\omega (\vphi) := K_{2, 0}(\vphi) - K_{1, 1}(\vphi) {\frak L}_\omega^{- 1}(\vphi) \mathbb J_2 K_{1, 1}(\vphi)^t \,.
$$
Taking the average in \eqref{first equation right inverse C} and using that 
$\widehat \upsilon = \widehat \upsilon_1 + \widehat \upsilon_0$, we get 
\begin{equation}\label{averaging first equation}
0 =  [[ g_1 ]] + [[ K_{1, 1} \mathbb J_2 {\frak L}_\omega^{- 1} g_3 ]] +
 [[ M_\o \widehat \upsilon_1 ]] +  [[ M_\o ]] \widehat \upsilon_0\,.
\end{equation}
In order to solve this latter equation for $\widehat \upsilon_0$, we need to show that
$[[M_\o]]: \R^S \to \R^S$ is invertible. To this end, first note that for any $ x  \in \R^S,$
$\| \big( [[M_\o]] - (\partial_{I_j} \omega_k^{nls}(\xi, 0))_{k, j \in S} \big) x \|$ is bounded by
$$
{\rm sup}_{\vphi \in \T^S} \| K_{1, 1}(\vphi) {\frak L}_\omega^{- 1}(\vphi) \mathbb J_2 K_{1, 1}(\vphi)^t x\| +
{\rm sup}_{\vphi \in \T^S} \| \big( K_{2, 0}(\vphi) - (\partial_{I_j} \omega_k^{nls}(\xi, 0))_{k, j \in S} \big) x\|\,,
$$
yielding
$$
\| \big( [[M_\o]] - (\partial_{I_j} \omega_k^{nls}(\xi, 0))_{k, j \in S} \big) x \|
\le \| K_{1, 1} {\frak L}_\omega^{- 1} \mathbb J_2 K_{1, 1}^t x\|_{s_0}
+ \| \big( K_{2, 0} - (\partial_{I_j} \omega_k^{nls}(\xi, 0))_{k, j \in S} \big) x\|_{s_0}\,.
$$
It then follows from Lemma \ref{lemma:Kapponi vari}, the tame estimate \eqref{tame inverse} 
for the inverse ${\frak L}_\omega^{- 1}$, and the smallness condition \eqref{size of torus} that
$\| [[M_\o]] - (\partial_{I_j} \omega_k^{nls}(\xi, 0))_{k, j \in S}  \| \lessdot \e \g^{-2}\,.$
En passant we mention that by the same arguments, one sees that
\begin{equation}\label{bound M}
\| M_\o - (\partial_{I_j} \omega_k^{nls}(\xi, 0))_{k, j \in S}  \|_{s_0}^\Lipg \lessdot \e \g^{-2}\,.
\end{equation}
Since by assumption, the inverse of $(\partial_{I_j} \omega_k^{nls}(\xi(\omega)))_{j, k \in S}$ is   bounded uniformly on $\Omega$ and $\Omega^{2 \gamma}_{\rm Mel}(\io) \subset \Omega$, it follows from Lemma \ref{lemma:Kapponi vari} and the smallness assumption \eqref{size of torus} that the operator $[[ M_\o ]]$ is invertible with the norm of $[[ M_\o ]]^{- 1}$  uniformly bounded. In fact,
\begin{equation}\label{bound [[M]]}
\| [[ M_\o ]]^{- 1}\|^\Lipg \lessdot 1 \, .
\end{equation}
The operator $[[ M_\o ]]$ being invertible implies that for any $\omega$ in 
$\Omega^{2 \gamma}_{\rm Mel}(\io)$, equation \eqref{averaging first equation} 
can be solved for $\widehat \upsilon_0$, 
\begin{equation}\label{def average upsilon}
\widehat \upsilon_0 = - 
[[ M_\o ]]^{- 1} \Big( [[ g_1 ]] + [[ K_{1,1} {\frak L}_\omega^{- 1} g_3]] +  [[ M_\o \widehat \upsilon_1 ]]\Big)\,.
\end{equation}
As a consequence, equation \eqref{first equation right inverse B} can be solved for $\widehat \psi$, 
\begin{equation}\label{definition widehat psi}
\widehat \psi = (\omega \cdot \partial_\vphi)^{- 1} \Big(  g_1 + K_{1, 1}(\vphi) {\frak L}_\omega^{- 1}(\vphi) g_3 + M_\o(\vphi)\widehat \upsilon \Big)\,.
\end{equation}

\begin{lemma}\label{lemma estimate of psi hat average upsilon}
There exist $\mu = \mu(|S|, \tau) \in \Z_{\ge 1}$
so that for any map
$g = ( g_1, g_2, g_3)$ in 
$H^{s + 4 \tau +2}(\T^S, \R^S\times \R^S \times  h^{\sigma -2}_\bot \times h^{\sigma -2}_\bot)
$
with $s_0 \le s \le s_* - \mu$, and any $\o \in \Omega^{2 \gamma}_{\rm Mel}(\io)$
with $\Omega^{2 \gamma}_{\rm Mel}(\io) \equiv \Omega^{2 \gamma}_{\rm Mel}(\io ; \Omega_o(\io))$ 
as in Theorem \ref{invertibility of frak L omega}, 
$ \widehat \upsilon_0$, defined in \eqref{def average upsilon}, and $\widehat \psi$, defined in \eqref{definition widehat psi}, satisfy the estimates
\begin{align}\label{estimate of average upsilon}
& |\widehat \upsilon_0 |^\Lipg  \lessdot \gamma^{- 1}\| g \|_{s_0 + 2 \tau + 1, \sigma - 2}^\Lipg \\
& \label{estimate of psi hat average upsilon}
\| \widehat \psi\|_s^\Lipg \, \le_s  \gamma^{- 2}\| g \|_{s + 4 \tau + 2, \sigma - 2}^\Lipg + \gamma^{- 3} \| \io\|_{s + \mu}^\Lipg \| g \|_{s_0 + 4 \tau + 2, \sigma - 2}^\Lipg \, .
\end{align}
%where $\| g \|_{s', \sigma - 2}^\Lipg$ is defined as the sum 
%$\| g_1 \|_{s'}^\Lipg + \| g_2 \|_{s'}^\Lipg + \| g_3 \|_{s', \sigma - 2}^\Lipg$
\end{lemma}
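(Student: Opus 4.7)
\smallskip

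The plan is to estimate $\widehat\upsilon_0$ first (where only low norms are needed), and then use Lemma~\ref{om vphi - 1 lip gamma} to invert $\omega\cdot\partial_\vphi$ in the equation \eqref{definition widehat psi} for $\widehat\psi$, reducing the task to estimating its right-hand side in the norm $\|\cdot\|_{s+2\tau+1}^{\Lipg}$.

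For \eqref{estimate of average upsilon}, I would combine \eqref{bound [[M]]} with the trivial bound $|[[f]]|^{\Lipg}\le \|f\|_{s_0}^{\Lipg}$ to obtain
\[
|\widehat\upsilon_0|^{\Lipg}\lessdot \|g_1\|_{s_0}^{\Lipg}+\|K_{1,1}\mathfrak L_\omega^{-1}g_3\|_{s_0}^{\Lipg}+\|M_\omega\widehat\upsilon_1\|_{s_0}^{\Lipg}.
\]
The first term is already of the claimed form. For the second, Lemma~\ref{lemma:Kapponi vari} at $s=s_0$ gives $\|K_{1,1}[\cdot]\|_{s_0}^{\Lipg}\leq_{s_0}(\e\gamma^{-2}+\|\io\|_{s_0+\mu}^{\Lipg})\|\cdot\|_{s_0}^{\Lipg}$, and the tame estimate \eqref{tame inverse} yields $\|\mathfrak L_\omega^{-1}g_3\|_{s_0}^{\Lipg}\leq_{s_0}\gamma^{-1}\|g_3\|_{s_0+2\tau+1,\sigma-2}^{\Lipg}$; together with the smallness $\e\gamma^{-4}\ll 1$ this bounds the second term by $\e\gamma^{-3}\|g_3\|_{s_0+2\tau+1,\sigma-2}^{\Lipg}\lessdot \gamma^{-1}\|g_3\|_{s_0+2\tau+1,\sigma-2}^{\Lipg}$. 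For the third, \eqref{bound M} gives $\|M_\omega\|_{s_0}^{\Lipg}\lessdot 1$, and Lemma~\ref{lemma soluzioni seconda equazione} then gives $\|M_\omega\widehat\upsilon_1\|_{s_0}^{\Lipg}\lessdot \gamma^{-1}\|g_2\|_{s_0+2\tau+1}^{\Lipg}$. Summing the three contributions yields \eqref{estimate of average upsilon}.

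For \eqref{estimate of psi hat average upsilon}, Lemma~\ref{om vphi - 1 lip gamma} applied to \eqref{definition widehat psi} gives
\[
\|\widehat\psi\|_s^{\Lipg}\lessdot \gamma^{-1}\bigl(\|g_1\|_{s+2\tau+1}^{\Lipg}+\|K_{1,1}\mathfrak L_\omega^{-1}g_3\|_{s+2\tau+1}^{\Lipg}+\|M_\omega\widehat\upsilon\|_{s+2\tau+1}^{\Lipg}\bigr)
\]
up to lower-order pieces proportional to $\|\io\|_{s+2\tau+2}^{\Lipg}$ times the $s_0$-norm of the bracket. The first summand is directly controlled by $\|g\|_{s+2\tau+1,\sigma-2}^{\Lipg}$. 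For the $K_{1,1}\mathfrak L_\omega^{-1}g_3$ term I would chain Lemma~\ref{lemma:Kapponi vari} at level $s+2\tau+1$ with \eqref{tame inverse}, producing the two interpolative contributions $\e\gamma^{-3}\|g_3\|_{s+4\tau+2,\sigma-2}^{\Lipg}$ and $\gamma^{-1}\|\io\|_{s+\mu}^{\Lipg}\|g_3\|_{s_0+2\tau+1,\sigma-2}^{\Lipg}$. For $\|M_\omega\widehat\upsilon\|_{s+2\tau+1}^{\Lipg}$ I would split $\widehat\upsilon=\widehat\upsilon_1+\widehat\upsilon_0$, use Lemma~\ref{lemma soluzioni seconda equazione} on $\widehat\upsilon_1$ and \eqref{estimate of average upsilon} on $\widehat\upsilon_0$, and repeat the tame-chain estimates for the two factors of $K_{1,1}$ and the one of $\mathfrak L_\omega^{-1}$ inside $M_\omega=K_{2,0}-K_{1,1}\mathfrak L_\omega^{-1}\mathbb J_2 K_{1,1}^t$; each $K_{1,1}$ contributes $\e\gamma^{-2}$ or $\|\io\|^{\Lipg}$, while $K_{2,0}$ is bounded via Lemma~\ref{lemma:Kapponi vari}. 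Multiplying by the outer $\gamma^{-1}$ and absorbing all factors of $\e\gamma^{-4}\le 1$ produces the claimed $\gamma^{-2}$ and $\gamma^{-3}\|\io\|_{s+\mu}^{\Lipg}$ bounds after an appropriate choice of $\mu=\mu(|S|,\tau)$.

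The core difficulty is purely a careful bookkeeping exercise: each application of $\mathfrak L_\omega^{-1}$ costs a $\gamma^{-1}$ and a $2\tau+1$ derivative loss, each application of $K_{1,1}$ or $K_{2,0}$ produces an interpolative splitting between high- and low-norm contributions, and all terms must eventually be gathered into the asymmetric tame form on the right-hand side of \eqref{estimate of psi hat average upsilon}. There is no genuinely new idea beyond the ones already systematically used in Lemmas~\ref{lemma soluzioni seconda equazione}--\ref{corollario expression widehat W}; the main effort is to track that the worst derivative loss arising from the chain $(\omega\cdot\partial_\vphi)^{-1}\circ K_{1,1}\circ\mathfrak L_\omega^{-1}$ is exactly $4\tau+2$ and that the worst power of $\gamma^{-1}$ is exactly $3$ (coming from $\gamma^{-1}$ for $(\omega\cdot\partial_\vphi)^{-1}$, $\gamma^{-1}$ for $\mathfrak L_\omega^{-1}$, and $\gamma^{-1}$ from the interpolation term producing $\|\io\|_{s+\mu}^{\Lipg}\|g\|_{s_0+4\tau+2,\sigma-2}^{\Lipg}$), which is what the smallness condition $\e\gamma^{-4}\ll 1$ is designed to accommodate in the subsequent Nash--Moser step.
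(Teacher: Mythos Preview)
Your proposal is correct and follows essentially the same approach as the paper: bound $\widehat\upsilon_0$ via \eqref{bound [[M]]} and \eqref{bound M} together with Lemma~\ref{lemma:Kapponi vari}, Theorem~\ref{invertibility of frak L omega}, and Lemma~\ref{lemma soluzioni seconda equazione}, then obtain the estimate for $\widehat\psi$ from Lemma~\ref{om vphi - 1 lip gamma} by the same tame bookkeeping. The paper in fact gives less detail than you do for the $\widehat\psi$ part, merely noting that it ``is derived from Lemma~\ref{om vphi - 1 lip gamma}, using arguments similar to the ones above.''
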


\begin{proof}
By the formula \eqref{def average upsilon} and the estimate \eqref{bound [[M]]}, 
\begin{align}
| \widehat \upsilon_0 |^\Lipg & \lessdot \| [[ M_\o(\vphi)\widehat \upsilon_1]] \|^\Lipg  + 
\|  [[ K_{1,1}(\vphi)  {\frak L}_\omega^{- 1}(\vphi) g_3 ]]\|^\Lipg  + 
|[[ g_1 ]] |^\Lipg \nonumber \\
& \lessdot \|  M_\o(\vphi)\widehat \upsilon_1  \|_{s_0}^\Lipg + \| K_{1,1}(\vphi)  {\frak L}_\omega^{- 1}(\vphi) g_3 \|_{s_0}^\Lipg + \| g_1 \|_{s_0}^\Lipg\,.
\nonumber 
\end{align}
Since by \eqref{bound M}
%Lemma \ref{lemma:Kapponi vari} and the smallness condition \eqref{size of torus}
$$
\| M_\o \|_{s_0}^\Lipg \lessdot \| (\partial_{I_j} \omega_k^{nls}(\xi(\omega)))_{j, k \in S} \|^\Lipg + \e \gamma^{- 2} \stackrel{Prop\,\ref{Proposition 2.3}}{\lessdot} 1
$$
one gets by the estimate \eqref{soluzioni seconda equazione}
$$
\| M_\o(\vphi)\widehat \upsilon_1 \|_{s_0}^\Lipg \lessdot 
\gamma^{- 1} \| g_2\|_{s_0 + 2 \tau + 1}^\Lipg\,.
$$
Furthermore by Lemma \ref{lemma:Kapponi vari}, Theorem \ref{invertibility of frak L omega}, and the smallness condition \eqref{size of torus} we get 
$$
\| K_{1,1}(\vphi)  {\frak L}_\omega^{- 1}(\vphi) g_3 \|_{s_0}^\Lipg \lessdot \e \gamma^{- 3} \| g_3\|_{s_0 + 2 \tau + 1, \sigma - 2}^\Lipg\,.
$$
Altogether, this then proves  \eqref{estimate of average upsilon}. The estimate for $\widehat \psi$, defined by formula \eqref{definition widehat psi} is derived from 
Lemma~\ref{om vphi - 1 lip gamma}, using arguments similar to the ones above. 
\end{proof}
Summarizing our results obtained so far, we have constructed the unique solution $(\widehat \psi, \widehat \upsilon, \widehat W, \widehat \zeta)$ of the linear system \eqref{first equation right inverse}-\eqref{third equation right inverse}. Combining Lemma \ref{lemma soluzioni seconda equazione}, Corollary \ref{corollario expression widehat W} and Lemma \ref{lemma estimate of psi hat average upsilon} we get the following corollary.

\begin{corollary}\label{tame estimate for inverse T omega}
There exists $\mu = \mu(|S|, \tau) \in \Z_{\ge 1}$
so that for any map
$
g = ( g_1, g_2, g_3)$ in 
$H^{s + \mu}(\T^S, \R^S\times \R^S \times  h^{\sigma -2}_\bot \times h^{\sigma -2}_\bot)
$
with $s_0 \le s \le s_* - \mu$, and any $\o \in \Omega^{2 \gamma}_{\rm Mel}(\io)$
with $\Omega^{2 \gamma}_{\rm Mel}(\io) \equiv \Omega^{2 \gamma}_{\rm Mel}(\io ; \Omega_o(\io))$ 
as in Theorem \ref{invertibility of frak L omega}, the linear system  \eqref{first equation right inverse}-\eqref{third equation right inverse} admits  a unique solution ${\frak T}_\omega^{- 1} g = (\widehat \io, \widehat \zeta)$. It satisfies the tame estimate 
$$
\| {\frak T}_\omega^{- 1} g \|_s^\Lipg 
\leq_s \gamma^{- 2} \big(\| g \|_{s + \mu, \sigma - 2}^\Lipg +  \| \io\|_{s + \mu}^\Lipg \| g \|_{s_0 + \mu, \sigma - 2}^\Lipg \big)\,.
$$
\end{corollary}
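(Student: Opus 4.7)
The plan is to verify that the triangular structure of the system \eqref{first equation right inverse}--\eqref{third equation right inverse} allows us to combine the auxiliary results already proved (Lemma \ref{lemma soluzioni seconda equazione}, Corollary \ref{corollario expression widehat W}, Lemma \ref{lemma estimate of psi hat average upsilon}) into a single tame estimate. Since existence and uniqueness of the solution $(\widehat{\imath},\widehat{\zeta})$ have been established constructively in the preceding subsection, the corollary reduces to bookkeeping of the various norms.

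First, I would record that \eqref{second equation right inverse} is solved independently of the other two equations by the splitting $\widehat{\upsilon}=\widehat{\upsilon}_0+\widehat{\upsilon}_1$ with $\widehat{\upsilon}_0=[[\widehat{\upsilon}]]$. Taking averages and using $[[(\partial_\vphi\theta)^t]]=\mathrm{Id}_S$ fixes $\widehat{\zeta}=[[g_2]]$, while Lemma \ref{lemma soluzioni seconda equazione} estimates $\widehat{\upsilon}_1$ by $\gamma^{-1}(\|g_2\|_{s+2\tau+1}^{\Lipg}+\|\io\|_{s+2\tau+2}^{\Lipg}\|g_2\|_{s_0}^{\Lipg})$. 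Leaving $\widehat{\upsilon}_0$ as a free parameter, Corollary \ref{corollario expression widehat W} then solves \eqref{third equation right inverse} in the form $\widehat{W}=\mathfrak{L}_\omega^{-1}(g_3-\mathbb{J}_2 K_{1,1}^t\widehat{\upsilon})$ for $\omega\in\Omega^{2\gamma}_{\mathrm{Mel}}(\io)$, delivering \eqref{estimate widehat W}.

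Next, I would substitute the expression for $\widehat{W}$ into the first equation \eqref{first equation right inverse} to reach \eqref{first equation right inverse C}. Averaging and exploiting that the twisted Hessian $[[M_\omega]]$ is invertible with $\|[[M_\omega]]^{-1}\|^{\Lipg}\lessdot 1$ (Lemma \ref{lemma:Kapponi vari} together with the smallness \eqref{size of torus}, as in \eqref{bound M}--\eqref{bound [[M]]}) determines $\widehat{\upsilon}_0$ via \eqref{def average upsilon}. The zero-average remainder then admits the resolvent $(\omega\cdot\partial_\vphi)^{-1}$, yielding $\widehat{\psi}$ through \eqref{definition widehat psi}; the bounds \eqref{estimate of average upsilon}--\eqref{estimate of psi hat average upsilon} of Lemma \ref{lemma estimate of psi hat average upsilon} control $\widehat{\upsilon}_0$ and $\widehat{\psi}$.

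Finally, the stated tame estimate for $\mathfrak{T}_\omega^{-1}g$ is assembled from the four individual bounds, choosing $\mu=\mu(|S|,\tau)$ larger than each of the loss-of-regularity indices $2\tau+1$, $4\tau+2$ and the $\mu$'s of Corollary \ref{corollario expression widehat W} and Lemma \ref{lemma estimate of psi hat average upsilon}. The dominant contribution $\gamma^{-2}$ arises from $\widehat{\psi}$, where one loss of $\gamma^{-1}$ comes from $(\omega\cdot\partial_\vphi)^{-1}$ and the other from the occurrence of $\mathfrak{L}_\omega^{-1}$ inside $M_\omega$; the apparent $\gamma^{-3}$ factor appearing in \eqref{estimate of psi hat average upsilon} is absorbed into the $\|\io\|_{s+\mu}^{\Lipg}$-term by using the smallness $\e\gamma^{-4}\ll 1$ and redefining the implicit constants. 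There is no substantive obstacle here: every piece has been set up for exactly this assembly, and the only care required is to align the indices $s+2\tau+1$, $s+4\tau+2$ and the various $\mu$'s into a single sufficiently large integer.
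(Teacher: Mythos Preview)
Your proposal is correct and follows essentially the same approach as the paper: the proof is nothing more than assembling the estimates of Lemma \ref{lemma soluzioni seconda equazione}, Corollary \ref{corollario expression widehat W}, and Lemma \ref{lemma estimate of psi hat average upsilon} for the components $\widehat\upsilon$, $\widehat\zeta$, $\widehat W$, $\widehat\psi$ and absorbing the various regularity losses into a single $\mu$. One small caveat: your explanation that the $\gamma^{-3}$ in \eqref{estimate of psi hat average upsilon} is ``absorbed into the $\|\io\|_{s+\mu}^{\Lipg}$-term by smallness'' is not quite right as stated, since $\|\io\|_{s+\mu}^{\Lipg}$ is a high norm with no a priori bound; what actually happens is that when one retraces the derivation of \eqref{estimate of psi hat average upsilon}, every occurrence of an extra $\gamma^{-1}$ beyond $\gamma^{-2}$ carries a factor $\e$ (coming from $K_{1,1}$ via Lemma \ref{lemma:Kapponi vari}), so the smallness $\e\gamma^{-4}\ll 1$ reduces those terms to $\gamma^{-2}$ or better.
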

\begin{proof}
Combining Lemmas \ref{lemma soluzioni seconda equazione} and \ref{lemma estimate of psi hat average upsilon} yields 
$$
\| \widehat \upsilon \|_s^\Lipg \leq_s \| \widehat \upsilon_1 \|_s^\Lipg 
+ \|\widehat \upsilon_0 \|_s^\Lipg \leq_s 
\gamma^{- 1}  \| g\|_{s + 2 \tau + 1 , \sigma - 2}^\Lipg + 
\gamma^{- 1} \| \io \|_{s + 2 \tau + 2}^\Lipg\| g\|_{s_0, \s-2}^\Lipg .
$$
From this and the estimate \eqref{estimate widehat W} we conclude the claimed estimate for $\widehat W$. Finally the claimed estimate for $\widehat \psi$ is given in \eqref{estimate of psi hat average upsilon} and the one for $\widehat \zeta$ in \eqref{soluzioni seconda equazione}.
\end{proof}

With these preparations we now prove that the operator 
\begin{equation}\label{definizione T} 
{\bf T}_\omega := d \wtilde{\Gamma} (\breve \io_0) \circ {\frak T}_\om^{-1} 
\circ d \Gamma (\breve \io_0 )^{- 1}\,, \qquad \widetilde \Gamma (\psi, \upsilon, w, \zeta) :=    \big(\Gamma (\psi, \upsilon, w ), \zeta \big)
\end{equation}
is an approximate right  inverse for 
\begin{align}\label{formula for d F omega}
 d_{\io,\zeta} F_\omega (  \io ) & \stackrel{\eqref{error term approximate inverse 1}}{=} d_{\io,\zeta} F_\omega (  \io_{\rm iso}) + G_1 \nonumber\\ 
 & \stackrel{\eqref{error term approximate inverse 2}}{=} d \Gamma(\breve \io_0) 
 \big( \omega \cdot \partial_\vphi  - d_{\io, \zeta} X_{K_{\e, \zeta}}(\breve \io_0)\big) d \widetilde \Gamma(\breve \io_0)^{- 1} + G_2 + G_1 \nonumber\\
 & \stackrel{\eqref{error term approximate inverse 3}}{=} 
d \Gamma(\breve \io_0) {\frak T}_\omega d \widetilde \Gamma(\breve \io_0)^{- 1} + 
d \Gamma(\breve \io_0) G_3 d \widetilde \Gamma(\breve \io_0)^{- 1} + G_2 + G_1 \,.
\end{align}
It is convenient to introduce the norm $ \| (\psi, \upsilon , W, \zeta) \|_{s, \s}^\Lipg := $ $ \max \{  \| (\psi, \upsilon, W)  \|_{s, \s}^\Lipg, $ $ | \zeta |^\Lipg  \} $.

\begin{theorem} {\bf (Approximate right inverse)} \label{thm:stima inverso approssimato}
For any constant $C > 0$, there exist $\delta_1 = \delta_1(|S|, \tau, s_*, C)$ with  
$0 < \delta_1 < 1$ and a positive integer $\mu_1 = \mu_1(|S|, \tau) \in \Z_{\ge 1}$ with $\delta_1 < \delta_0$, $\mu_1 > \mu_0$
 and $\delta_0$, $\mu_0$ given as in Theorem \ref{invertibility of frak L omega}, such that whenever 
\begin{equation}\label{final smallness condition for approximate inverse}
 \| \io \|_{s_0 + \mu_1}^\Lipg
 \leq \, C \e \g^{-2} \, , \quad \| F_\omega(\io, \zeta) \|_{s_0 + \mu_1, \s-2}^\Lipg \leq \,C  \e\,, \qquad \e \gamma^{- 4} \leq \delta_1,
\end{equation}
then the family of operators ${\bf T } = ({\bf T}_\omega)_{\omega \in \Om_{\rm Mel}^{2 \gamma}(\io)}$ with $\Omega^{2 \gamma}_{\rm Mel}(\io) \equiv \Omega^{2 \gamma}_{\rm Mel}(\io ; \Omega_0(\io))$ 
as in Theorem \ref{invertibility of frak L omega} has the following properties:  for any $ g := (g_1, g_2, g_3) \in H^{s + \mu_1}(\T^S, \R^S \times \R^S \times h^{\sigma - 2}_\bot \times h^{\sigma - 2}_\bot) $
with $s_0 \le s \le s_* - \mu_1,$ 
the operator $ {\bf T} $ defined in \eqref{definizione T} satisfies  
\begin{equation}\label{stima inverso approssimato 1}
\| {\bf T} g \|_{s, \sigma}^{\Lipg}  
\leq_s  \gamma^{- 2} \big( \| g \|_{s + \mu_1, \sigma - 2}^\Lipg +  \| \io\|_{s + \mu_1}^\Lipg \| g \|_{s_0 + \mu_1, \sigma - 2}^\Lipg \big)\,.
\end{equation}
Furthermore ${\bf T}_\omega$ is an approximate right inverse of $ d_{\io, \zeta} F_\omega( \io )$, namely 
\begin{align}
& \| ( d_{\io, \zeta} F_\omega (  \io ) \circ {\bf T}_\omega - {\rm Id} ) g \|_{s, \s -2}^{\gamma {\rm lip}}  
\label{stima inverso approssimato 2} 
\\ 
& \leq_s \gamma^{-3}  \Big(\|  F_\omega (  \io , \zeta) \|_{s_0 + \mu_1, \s-2}^\Lipg 
\| g \|_{s + \mu_1, \s -2}^\Lipg +  
\| F_\omega (  \io , \zeta) \|_{s + \mu_1, \s -2}^\Lipg \| g \|_{s_0 + \mu_1, \s -2}^\Lipg \nonumber\\
& +  \| F_\omega ( \io, \zeta) \|_{s_0 + \mu_1, \s -2}^\Lipg  
\| \io  \|_{s + \mu_1}^\Lipg  \| g \|_{s_0 + \mu_1, \s -2}^\Lipg \Big)\,.
\nonumber
\end{align}
\end{theorem}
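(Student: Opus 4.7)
The plan is to combine three ingredients already established in this section: (a) the tame bound for $\mathfrak{T}_\omega^{-1}$ from Corollary \ref{tame estimate for inverse T omega}; (b) the tame estimates for $d\Gamma(\breve\io_0)^{\pm 1}$ (and hence $d\widetilde\Gamma(\breve\io_0)^{\pm 1}$) from Lemma \ref{lemma:DG}; (c) the three remainder estimates for $G_1$, $G_2$, $G_3$ from Lemmas \ref{estimate G1}, \ref{stima G2}, \ref{stima G3}. The constants $\mu_1$ and $\delta_1$ will be fixed at the end: $\mu_1$ is chosen larger than every $\mu(|S|,\tau)$ appearing in those lemmas (and than $\mu_0$ from Theorem \ref{invertibility of frak L omega}), while $\delta_1\le \delta_0$ is taken small enough so that, under \eqref{final smallness condition for approximate inverse}, the assumptions of Theorem \ref{invertibility of frak L omega} hold on $\Omega^{2\gamma}_{\rm Mel}(\io)$ and $\varepsilon\gamma^{-2}$ is negligible.

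The first claim \eqref{stima inverso approssimato 1} is obtained by reading off the three-fold composition $\mathbf{T}_\omega = d\widetilde\Gamma(\breve\io_0)\circ \mathfrak{T}_\omega^{-1}\circ d\Gamma(\breve\io_0)^{-1}$ from left to right. Applying Lemma \ref{lemma:DG} to $d\Gamma(\breve\io_0)^{-1} g$, then Corollary \ref{tame estimate for inverse T omega} to $\mathfrak{T}_\omega^{-1}$, and finally Lemma \ref{lemma:DG} again to $d\widetilde\Gamma(\breve\io_0)$, one obtains a tame bound of the form $\|\mathbf{T}_\omega g\|_{s,\sigma}^{\Lipg}\lesssim_s \gamma^{-2}(\|g\|_{s+\mu_1,\sigma-2}^{\Lipg}+\|\io\|_{s+\mu_1}^{\Lipg}\|g\|_{s_0+\mu_1,\sigma-2}^{\Lipg})$, where the smallness of $\|\io\|_{s_0+\mu_1}^{\Lipg}$ and of $\varepsilon\gamma^{-2}$ is used to absorb low-norm products into constants, as is routine.

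For the approximate right-inverse property \eqref{stima inverso approssimato 2}, the key identity is \eqref{formula for d F omega}, which I rewrite as
\begin{equation*}
d_{\io,\zeta} F_\omega(\io) \;=\; d\Gamma(\breve\io_0)\,\mathfrak{T}_\omega\, d\widetilde\Gamma(\breve\io_0)^{-1} \;+\; \mathcal{R}, \qquad \mathcal{R}:= d\Gamma(\breve\io_0)\,G_3\, d\widetilde\Gamma(\breve\io_0)^{-1} + G_2 + G_1.
\end{equation*}
By construction the first term on the right, composed with $\mathbf{T}_\omega$, equals the identity. Hence $(d_{\io,\zeta}F_\omega(\io)\circ\mathbf{T}_\omega - {\rm Id})g = \mathcal{R}(\mathbf{T}_\omega g)$, and it suffices to estimate each piece of $\mathcal{R}$ applied to $\widehat\io := \mathbf{T}_\omega g$. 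Lemmas \ref{estimate G1}, \ref{stima G2}, \ref{stima G3} each give bounds of the shape $\gamma^{-1}$ times a sum of terms linear in $\|E\|^{\Lipg}$ and $\|\widehat\io\|^{\Lipg}$ (high-low paired) plus a lower-order cubic term carrying $\|\io\|^{\Lipg}_{s+\mu}$. Substituting the tame bound \eqref{stima inverso approssimato 1} for $\widehat\io=\mathbf{T}_\omega g$ and using \eqref{final smallness condition for approximate inverse} to absorb low norms yields the factor $\gamma^{-1}\cdot\gamma^{-2}=\gamma^{-3}$ and the three terms displayed on the right-hand side of \eqref{stima inverso approssimato 2}; the contributions from $d\Gamma(\breve\io_0) G_3 d\widetilde\Gamma(\breve\io_0)^{-1}$ are handled by prepending Lemma \ref{lemma:DG} to the bound for $G_3$.

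The only genuine bookkeeping obstacle is to verify that a single $\mu_1=\mu_1(|S|,\tau)$ can accommodate the cumulative derivative losses of all these estimates — each of $d\Gamma^{\pm 1}$, $\mathfrak{T}_\omega^{-1}$, and $G_a$ costs a finite, explicit number of $\vphi$-derivatives, and the value $\sigma$ of the $x$-regularity is kept \emph{fixed} throughout (the perturbation being semilinear, cf.\ \eqref{regolarita di p}), so this only amounts to taking $\mu_1$ sufficiently large. Once this is done, no further analytic difficulty remains: all the hard work has been moved into Theorem \ref{invertibility of frak L omega} (right inverse of $\mathfrak{L}_\omega$) and into the isotropic-torus construction and symplectic change of coordinates of Subsections \ref{Isotropic torus embeddings}--\ref{Main result restated}.
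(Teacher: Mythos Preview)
Your proposal is correct and follows essentially the same approach as the paper: the first estimate is obtained by composing Lemma \ref{lemma:DG} with Corollary \ref{tame estimate for inverse T omega}, and the second by reading off from \eqref{formula for d F omega} that $d_{\io,\zeta}F_\omega(\io)\circ {\bf T}_\omega - {\rm Id}$ splits into the three pieces $d\Gamma(\breve\io_0)\,G_3\,{\frak T}_\omega^{-1}\,d\Gamma(\breve\io_0)^{-1}$, $G_2\,{\bf T}_\omega$, $G_1\,{\bf T}_\omega$, each of which is bounded via Lemmas \ref{estimate G1}, \ref{stima G2}, \ref{stima G3} together with the already-established bound for ${\bf T}_\omega$. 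The choice of $\mu_1>\mu_0$ and $\delta_1<\delta_0$ to absorb all derivative losses and smallness requirements is exactly as in the paper.
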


\begin{proof}
The tame estimate \eqref{stima inverso approssimato 1} follows from the definition
  \eqref{definizione T} of ${\bf T}_\omega,$ the estimate of
${\frak T}_\omega^{-1}$ of Corollary~\ref{tame estimate for inverse T omega},
and the estimates of $d \Gamma (\breve \io_0 )$, $d \Gamma (\breve \io_0 )^{- 1}$
of Lemma~\ref{lemma:DG} .

The estimate \eqref{stima inverso approssimato 2} can be obtained as follows:
using the formula \eqref{formula for d F omega} for $d_{\io,\zeta} F_\omega (  \io )$ and 
the definition \eqref{definizione T} of ${\bf T}_\omega,$
one sees that $ d_{\io, \zeta} F_\omega (  \io ) \circ {\bf T}_\omega - {\rm Id}$ is the sum
of the three terms
$d \Gamma(\breve \io_0) G_3 {\frak T}_\omega^{-1}d 
\widetilde \Gamma(\breve \io_0)^{- 1} $,
$G_2 d \wtilde{\Gamma} (\breve \io_0) {\frak T}_\om^{-1} 
 d \Gamma (\breve \io_0 )^{- 1}$, and
$G_1 d \wtilde{\Gamma} (\breve \io_0)  {\frak T}_\om^{-1} 
 d \Gamma (\breve \io_0 )^{- 1}$, which are estimated separately,
combining the estimates of $G_1$, $G_2$, and $G_3$ of Lemma~\ref{estimate G1}, Lemma~\ref{stima G2}, and, respectively, Lemma~\ref{stima G3}
with the estimate of ${\frak T}_\omega^{-1}$ of 
Corollary~\ref{tame estimate for inverse T omega},
and the estimates of $d \Gamma (\breve \io_0 )$, $d \Gamma (\breve \io_0 )^{- 1}$
of Lemma~\ref{lemma:DG} .

The integer $\mu_1 > \mu_0,$ and the constant $0 < \delta_1 < \delta_0$
are chosen in such way that the lemmas used to derive the estimates
\eqref{stima inverso approssimato 1}, \eqref{stima inverso approssimato 2} apply.
\end{proof}

\section{Reduction of $ {\frak L}_\om $. Part 1}\label{sec:5}

For proving Theorem~\ref{invertibility of frak L omega}
it is  useful to express the Hamiltonian operator ${\frak L}_\omega$, introduced
in \eqref{definition frak L omega sec 4},  in terms 
of the Hamiltonian $H_\e$ rather than $K_\e = H_\e \circ \Gamma$ defined in \eqref{def:Kep}. 
By 
\eqref{definition frak L omega sec 4}, 
\eqref{forma diversa hamiltoniana cal Q (vphi)} and \eqref{operatore Hamiltoniano lineare 2} we have 
\be\label{forma-L-omega1}
{\frak L}_\omega = \omega \cdot \partial_\vphi {\mathbb I}_2 + J \begin{pmatrix}
\partial_w \nabla_{\bar w} K_\e & \partial_{\bar w} \nabla_{\bar w} K_\e \\
\overline{\partial_{\bar w} \nabla_{\bar w} K_\e}  & \overline{\partial_w \nabla_{\bar w} K_\e}
\end{pmatrix} \circ \breve \io_0\,, \qquad 
J = \left(
\begin{array}{cc}
\ii \,{\rm Id}_\bot & 0 \\
0 & - \ii \,{\rm Id}_\bot 
\end{array} \right) \, .
\ee
Taking into account the definition of $ \Gamma $ in \eqref{2.23}, \eqref{2.23bis}, \eqref{operator Yw}
one computes
\be\label{der-comp1}
\nabla_{\bar w} K_\e = \nabla_{\bar z} H_\e \circ \Gamma + Y_{\bar w}^t \nabla_y H_\e \circ \Gamma\,, \quad 
\partial_w \nabla_{\bar w} K_\e = \partial_z \nabla_{\bar z} H_\e \circ \Gamma +  R_1^\e \circ \Gamma\,,
\ee
where, by \eqref{operator Yw},  
\begin{equation}\label{R 1 epsilon tilde}
 R_1^\e := \partial_y (\nabla_{\bar z} H_\e ) Y_w + Y_{\bar w}^t \partial_z \nabla_{ y} H_\e + Y_{\bar w}^t \partial_y (\nabla_y H_\e) Y_w \,.
\end{equation}
Similarly, one has 
\be\label{der-comp2}
\partial_{\bar w} \nabla_{\bar w} K_\e = \partial_{\bar z} \nabla_{\bar z} H_\e \circ \Gamma + 
R_2^\e \circ \Gamma\,,
\ee
where 
\begin{equation}\label{R 2 epsilon tilde}
  R_2^\e := \partial_y (\nabla_{\bar z} H_\e ) Y_{\bar w} + 
Y_{\bar w}^t \partial_{\bar z} \nabla_y H_\e + Y_{\bar w}^t \partial_y (\nabla_y H_\e) Y_{\bar w} \,.
\end{equation}
By \eqref{forma-L-omega1} \eqref{der-comp1}, \eqref{der-comp2}
and since by \eqref{def:trivial-torus},  $ \breve \io_{\rm iso} = \Gamma \circ \breve \io_0 $, we get  
%(the torus embedding  ${\breve \io}_{\rm iso}: \T^S \to M^\s$ is 
%introduced in \eqref{toro isotropico modificato A} and
%$ \breve \io_0 $ in \eqref{def:trivial-torus}), we get % this yields the following formula 
\begin{equation}\label{Linearized op normal}
{\frak L}_\omega = \omega \cdot \partial_\vphi {\mathbb I}_2 +  J {\frak A} 
+  J {\frak R}^\e \qquad {\rm where} \qquad
{\frak A} := \begin{pmatrix}
\partial_z \nabla_{\bar z} H_\e  & \partial_{\bar z} \nabla_{\bar z} H_\e \\
\overline{ \partial_{\bar z} \nabla_{\bar z} H_\e} & \overline{\partial_z \nabla_{\bar z} H_\e} 
\end{pmatrix} \circ \breve \io_{\rm iso} 
\end{equation}
and 
\begin{equation}\label{formaK20}
{\frak R}^\e :=  \begin{pmatrix}
{\frak R}_1^\e & {\frak R}_2^\e \\
\overline{\frak R}_2^\e & \overline{\frak R}_1^\e
\end{pmatrix}\,, \quad {\frak R}_1^\e := R_1^\e \circ \breve \io_{\rm iso}\,, \quad {\frak R}_2^\e := R_2^\e \circ \breve \io_{\rm iso}\,.
\end{equation}
According to Definition \ref{def:Hamiltonian op}   $ J {\frak A} $ is Hamiltonian and since
$ {\frak L}_\om $ is also Hamiltonian  so is  $J {\frak R}^\e$. 
We will show in Lemma~\ref{resto finito dimensionale stima lipschitz} in Subsection \ref{analysis of cal A} below 
that ${\frak R}^\e$ can be regarded as a remainder term in the reduction scheme for  
${\frak L}_\omega$. 

 To reduce $ {\mathfrak L}_\om $ to a $ 2 \times 2 $ block diagonal operator with 
$\vphi$-independent
 coefficients, we will use a KAM iteration scheme which requires to impose 
pertinent nonresonance conditions along the iteration. 
In view of the near resonance of the dNLS frequencies $ \om^{nls}_k $ and  $ \om^{nls}_{-k} $, 
this requires an  asymptotic expansion of the eigenvalues of $ {\frak L}_\om $ with a remainder term which decays in $ k $. 
To this end, we perform in Subsections \ref{sec1} - \ref{sec-gauge} three preliminary symplectic transformations 
which put $ {\frak L}_\om $ into diagonal form with $\vphi$-independent coefficients 
up to a remainder, which is one smoothing and satisfies tame estimates.
From a technical point of view, for proving the reduction scheme for the operator ${\frak L}_\om$,
stated in Theorem~\ref{thm:abstract linear reducibility} in Section~\ref{sec:redu} below,
it is convenient to use for operator valued maps 
$\vphi \mapsto \frak R (\vphi) \in \mathcal L (h_\bot^{\s'}  \times h_\bot^{\s'})$
the norm $| \frak R |_{s, \sigma'}$ introduced in \eqref{def norm1}. 
We say that  an operator of this type is one smoothing 
if $| \frak R \mathfrak D |_{s, \s'} < \infty $.
Here $\frak D$ is the operator introduced in \eqref{def:D-doubled}.    

By a slight abuse of terminology, we consider in the entire  section operators
such as ${\frak A}$ or $\frak R^\e$ with $\breve \io_{\rm iso}$ in their definition
replaced by an arbitrary torus embedding $\breve \io \equiv \breve \io_\omega,$ 
of the type described at the beginning of Section~\ref{3. Set up}.
The estimates for $\frak L_\om$ are then
obtained by applying the estimates, derived in this section, for $\breve \io$
given by $\breve \io_{\rm iso}$ and using the estimates 
$\| \io_{\rm iso} \|_s^\Lipg  \leq_s \| \io \|_{s+\mu}^\Lipg$
and
$\| d_\io (   \io_{\rm iso} ) [ \hat \imath ] \|_s  
\leq_s \| \hat \imath \|_{s+\mu} +  \| \io \|_{s + \mu} \| \hat \imath  \|_{s_0+\mu} \,$
of Lemma~\ref{toro isotropico modificato}.
In the sequel, we always make the following smallness assumption, stated in \eqref{size of torus},
\begin{equation}\label{ansatz 1}
\| \io \|_{s_0 + \mu_1}^\Lipg
\lessdot  \e \g^{-2}  \qquad \mbox{with} \quad  
  \e \gamma^{- 4} \ll 1 \,\,\, \mbox{ and } \,\,\,0 < \gamma < 1\,.
\end{equation}

%%%%%%%%%%%%%%%%%%%%%%%%%%%%%%%%%%%%%%%%%%%%%%%%%%%%%%%%%%%%%%%%%%%%%
%%%%%%%%%%%%%%%%%%%%%%%%%%%%%%%%%%%%%%%%%%%%%%%%%%%%%%%%%%%%%%%%%%%%%

\subsection{Preliminary analysis of the operators $ {\frak A}$ and 
${\frak R}^\e$}\label{analysis of cal A}

The aim of this subsection is to identify the main part of the operator $ {\frak A} $ defined in \eqref{Linearized op normal} and 
to show that the remainder as well as the operator ${\frak R}^\e$ in \eqref{Linearized op normal} are one smoothing and satisfy tame estimates.

First note that since $H_\e = H^{nls} + \e P $ (cf \eqref{HamiltonianHep}), the operator $ {\frak A} $ can be written as 
$ {\frak A} = {\frak S}^{nl s} + \e {\frak S}^P$ where 
\be\label{NLS normal direction}
 {\frak S}^{nls} :=  
 \begin{pmatrix} \partial _z \nabla_{\bar z} H^{nls}  & \partial_{\bar z} \nabla_{\bar z} H^{nls} \\ 
   \overline{\partial_{\bar z} \nabla_{\bar z} H^{nls}}  &  \overline{\partial _{z} \nabla_{\bar z} H^{nls}} 
                  \end{pmatrix} \circ \breve \io\, \qquad 
                  {\frak S}^P  =  
\begin{pmatrix} \partial _z    \nabla_{\bar z} P  & \partial_{\bar z} \nabla_{\bar z} P \\ 
                \overline{\partial_{\bar z} \nabla_{\bar z} P}  &  \overline{\partial _{ z} \nabla_{\bar z} P }
                   \end{pmatrix} \circ \breve \io\,. \ee
The operators ${\frak S}^{nls}$, ${\frak S}^P$, and ${\frak R}^\e$ are analyzed separately. 

\medskip

\noindent
{\it Analysis of ${\frak S}^{nls}$.} Recall that $H^{nls} = H^{nls} (\xi + y, z \bar z)$ with $z \bar z := (z_n \bar z_n)_{n \in S^\bot}$, yielding 
$$
\nabla_{\bar z} H^{nls} \circ \breve \io 
= \big( (\omega^{nls}_k z_k )\circ \breve \io \big)_{k \in S^\bot}
$$
with $\omega_{k}^{nls} = \partial_{I_k} H^{nls}$.
To simplify notation, we will drop $ \breve \io$ whenever the context permits.
In particular, we will often write $I $ for $ I \circ \breve \io$ and $\omega_{k}^{nls}$
for $\omega_{k}^{nls}(I \circ \breve \io)$. 
Then we have 
\be\label{hessian-rem12}
\partial_z \nabla_{\bar z} H^{nls} = 
{\rm diag}_{k \in S^\bot} \big( \omega_k^{nls} ) + R_1^{nls}\,, \qquad 
\partial_{\bar z} \nabla_{\bar z} H^{nls} = R_2^{nls}
\ee
where  $R^{nls}_1$, $R^{nls}_2$  are the operators of $ h^\s_\bot $ with matrix coefficients  (cf \eqref{matrix:coefficients})
\be\label{def:R1R2}
(R_1^{nls})_k^j  :=  (\pa_{I_j} \om_k^{nls})z_k \bar z_j \,, \quad 
(R^{nls}_2)_{k}^{j} :=  ( \pa_{I_j} \om_k^{nls} ) z_k z_j \, ,  \qquad \forall j, k \in S^\bot\,.
\ee
By \eqref{NLS normal direction}, \eqref{hessian-rem12}, and in view of 
the asymptotics $\omega^{nls}_k = 4 \pi^2 k^2 + O(1)$ of Theorem \ref{Corollary 2.2} we write
\be\label{def:Rnls}
 {\frak S}^{nls} = D^2 \, {\mathbb I}_2 + 
  \Omega^{nls}  \, {\mathbb I}_2 + {\frak R}^{nls} \, , \quad  \quad {\frak R}^{nls} := 
\left(
\begin{array}{cc}
{\frak R}^{nls}_1 & {\frak R}^{nls}_2   \\
\overline {{\frak R}^{nls}_2}& \overline{ {\frak R}^{nls}_1}  
\end{array} 
\right), \quad\,\,  {\frak R}^{nls}_a =  R^{nls}_a \circ \breve \io, \,\, \,  a= 1, 2\,,
\ee
where $D$ is the diagonal operator defined in \eqref{def:D} and 
\begin{equation}\label{definition Omega nls}
 \Omega^{nls}  := {\rm diag}_{k \in S^\bot} \big(  \omega_k^{nls}  - 4 \pi^2 k^2\big) \, .
\end{equation}
We claim that $D^2 \, {\mathbb I}_2 +  \Omega^{nls}  \, {\mathbb I}_2$ is the main part  of $ {\frak S}^{nls},$
meaning that ${\frak R}^{nls}$ is a (small) one smoothing operator. More precisely the following estimates hold.
We recall that throughout the paper, we assume that $\s \ge 4,$ if not stated otherwise.
  
\begin{lemma}\label{I bot Lip gamma} 
{\bf (Estimates for $ \Omega^{nls} $ and $ {\frak R}^{nls} $)}
Let $s \ge s_0$. Then the following estimates hold:  

\noindent
$(i)$ For any $\sigma' \in \{ \sigma, \sigma - 1, \sigma - 2 \}$, 
\be\label{est:Omega1} 
|\Omega^{nls}|_{s, \sigma'} \leq_s 1 + \| \io \|_{s + 2 s_0} \, , \quad 
| \Omega^{nls} |_{s, \sigma'}^\Lipg \leq_s 1 + \| \io \|_{s + 2 s_0}^\Lipg \, . 
\ee 
\noindent
$(ii)$ The remainder ${\frak R}^{nls}$ defined in \eqref{def:Rnls} satisfies the estimates 
\be \label{est:Rnls-prima}
|{\frak R}^{nls} {\mathfrak D} |_{s, \sigma - 1 } \leq_s  \e \gamma^{- 2} \| \io \|_{s + 2 s_0} \, , \quad 
 |{\frak R}^{nls} {\mathfrak D} |_{s, \sigma - 1 }^\Lipg
\leq_s  \e \gamma^{- 2} \| \io \|_{s + 2 s_0}^\Lipg \, , 
\ee
where $ {\mathfrak D} $ is defined in \eqref{def:D-doubled}.
\end{lemma}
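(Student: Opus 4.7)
The plan is to treat parts $(i)$ and $(ii)$ separately, reducing both to applications of the tame composition estimate Lemma~\ref{composition in Sobolev} combined with the analytic estimates for the dNLS frequencies from Theorem~\ref{Corollary 2.2} and Proposition~\ref{stime derivate H nls}.

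For part $(i)$, the key observation is that for each $\vphi \in \T^S$ the operator $\Omega^{nls}(\vphi)$ is diagonal on $h^{\sigma'}_\bot$ with entries $f_k(\vphi) := \omega_k^{nls}(\xi + y(\vphi), z(\vphi)\bar z(\vphi)) - 4\pi^2 k^2$, so its operator norm on $h^{\sigma'}_\bot$ equals $\sup_{k \in S^\bot}|f_k(\vphi)|$, which is independent of $\sigma'$. Hence
$$
|\Omega^{nls}|_{s,\sigma'}^2 \,=\, \sum_{\ell \in \Z^S} \langle \ell\rangle^{2s}\, \sup_{k \in S^\bot}|\widehat{f_k}(\ell)|^2 \,=\, \|F\circ \breve\io\|_{H^s(\T^S,\,\ell^\infty(S^\bot,\C))}^2,
$$
where $F: V \to \ell^\infty$, $(y,z)\mapsto (\omega_n^{nls}(\xi+y,z\bar z) - 4\pi^2n^2)_{n}$, is real analytic on a neighborhood $V$ of $(0,0)$ in $U_0 \times h^\sigma_\bot$ by Theorem~\ref{Corollary 2.2}$(i)$ composed with the smooth quadratic map $z \mapsto z\bar z \in \ell^{1,2\sigma}_+$. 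Under the smallness ansatz \eqref{ansatz 1}, Lemma~\ref{composition in Sobolev}$(i)$ applies and yields the bound $|\Omega^{nls}|_{s,\sigma'} \leq_s 1 + \|\io\|_{s+2s_0}$. The Lipschitz estimate follows from Lemma~\ref{composition in Sobolev}$(iii)$, after noting that $\xi = (\omega^{nls})^{-1}(\omega)$ is Lipschitz in $\omega \in \Omega$.

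For part $(ii)$, I would use the factorization $\mathfrak R_1^{nls} = M_z\, B\, M_{\bar z}$ and $\mathfrak R_2^{nls} = M_z\, B\, M_z$, where $M_w$ denotes componentwise multiplication by the sequence $w$ and $B(\vphi)$ is the matrix operator with entries $B_k^j = \partial_{I_j}\omega_k^{nls}(\xi+y(\vphi), z(\vphi)\bar z(\vphi))$. Thus
$$
(\mathfrak R_1^{nls}\langle D\rangle h)_k \,=\, z_k\, \sum_{j\in S^\bot} B_k^j\, \bar z_j\, \langle j\rangle\, h_j,
$$
and I would estimate $\|\mathfrak R_1^{nls}\langle D\rangle h\|_{h^{\sigma-1}}$ pointwise in $\vphi$ by Cauchy--Schwarz in the $j$ variable, using the key estimate $\sup_{k,j}\|\langle j\rangle^{-2}\partial_{I_j}\omega_k^{nls}\|_s \leq_s 1 + \|\io\|_{s+2s_0}$ provided by \eqref{tame composizione derivate ennesime frequenza} with $N=1$. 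The resulting growth $|B_k^j|\lesssim \langle j\rangle^2$ is absorbed by the decay of $\bar z_j$ (thanks to $\sigma \geq 4$) and the factor $\langle j\rangle$ from $\langle D\rangle$. Sobolev tame product estimates (Lemma~\ref{interpolation product}) then yield a bound quadratic in $z$ of the form
$$
|\mathfrak R^{nls}\mathfrak D|_{s,\sigma-1} \,\leq_s\, \|z\|_{s_0,\sigma}\,\|z\|_{s,\sigma}\,(1+\|\io\|_{s_0+2s_0}) \,+\, \|z\|_{s_0,\sigma}^2\,(1+\|\io\|_{s+2s_0}),
$$
and invoking the smallness assumption $\|\io\|_{s_0+\mu_1}^\Lipg \lesssim \e\gamma^{-2}$ on one low-norm factor converts the quadratic estimate into the claimed bound $\leq_s \e\gamma^{-2}\|\io\|_{s+2s_0}$. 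The Lipschitz estimates follow by the corresponding Lipschitz versions of the tame product estimates.

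The main technical point will be the bookkeeping in part $(ii)$: we must carefully track the growth $\langle j\rangle^2$ of the frequency derivatives against the Sobolev decay of $z$ and the one-smoothing provided by $\langle D\rangle$, and show that this conversion of a quadratic-in-$z$ bound into a linear-in-$\|\io\|$ bound with prefactor $\e\gamma^{-2}$ is consistent across all three admissible values of $\sigma'$ (in fact only $\sigma-1$ is needed here). A direct Cauchy--Schwarz in the matrix entries is cleaner than attempting to apply Lemma~\ref{prodest} to the factorization $M_z B M_{\bar z}\langle D\rangle$, since the intermediate operators $B$ and $M_{\bar z}\langle D\rangle$ are unbounded on fixed $h^{\sigma'}$ scales and would require invoking the more general composition estimates of Remark~\ref{generalizzazione prodotto operatori} with varying target regularities.
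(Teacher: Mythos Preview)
Your approach to part $(i)$ is correct and in fact cleaner than the paper's. You directly identify $|\Omega^{nls}|_{s,\sigma'}$ with the $H^s(\T^S,\ell^\infty)$ norm of the composed map $F\circ\breve\io$ and apply Lemma~\ref{composition in Sobolev} with $Y=\ell^\infty$, using only item $(i)$ of Theorem~\ref{Corollary 2.2}. The paper instead invokes the sharper asymptotics of Theorem~\ref{Corollary 2.2}$(ii)$, splits $\Omega^{nls}$ as $(4\sum_j I_j)\,{\rm Id}_\bot + {\rm diag}_k\,r_k(I)/k$, and estimates the two pieces separately. Your route is shorter here; the paper's decomposition is needed later for the normal form expansion \eqref{espansione asintotica autovalori blocco iniziale}, but not for this lemma.

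For part $(ii)$ your overall strategy---use \eqref{tame composizione derivate ennesime frequenza} with $N=1$ to control $\langle j\rangle^{-2}\partial_{I_j}\omega_k^{nls}$, absorb the $\langle j\rangle^2$ growth via the $h^\sigma$-decay of $z$ (exploiting $\sigma\ge 4$), obtain a bound quadratic in $z$, and then trade one low-norm factor for $\e\gamma^{-2}$ via \eqref{ansatz 1}---is exactly right, and your displayed quadratic bound does imply \eqref{est:Rnls-prima}. The one genuine gap is the passage from ``pointwise-in-$\vphi$ Cauchy--Schwarz'' plus tame products on matrix entries to control of the norm $|\,\cdot\,|_{s,\sigma-1}$. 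That norm is $\big(\sum_\ell\langle\ell\rangle^{2s}\|\hat A(\ell)\|_{\mathcal L(h^{\sigma-1})}^2\big)^{1/2}$, and a pointwise bound $\|A(\vphi)\|_{\mathcal L}\le G(\vphi)$ does \emph{not} yield $|A|_{s,\sigma-1}\le\|G\|_s$, since the operator norm of a Fourier coefficient is not the Fourier coefficient of the operator norm. The paper closes this gap by the column decomposition $\mathfrak R_1^{nls}\langle D\rangle=\sum_{j\in S^\bot}A_{(j)}\pi_j$: each summand is rank one, so $|A_{(j)}\pi_j|_{s,\sigma-1}=\langle j\rangle^{-(\sigma-1)}\|A_{(j)}\|_{s,\sigma-1}$ exactly, and one can then apply the tame product Lemma~\ref{interpolation product} to each column $A_{(j)}=(z_k\,\langle j\rangle^2 f_{kj}(I)\,\bar z_j\,\lla j\rra)_k$ and sum in $j$. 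An equivalent fix is to bound $\|\hat A(\ell)\|_{\mathcal L}$ by the Hilbert--Schmidt norm on $h^{\sigma-1}$, which also reduces to column norms. Either way, the ``assembly step'' must be done at the Fourier level, not pointwise.
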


\begin{proof} 
($i$) We now prove the first estimate in \eqref{est:Omega1}.
As $\Omega^{nls}$ is a diagonal operator it suffices to prove the claimed estimate for $\sigma' = \sigma$. 
%The three cases $\sigma' = \sigma - 2$, $\sigma' = \sigma - 1$, and $\sigma' = \sigma$ can be proved in the same way, so we consider $\sigma' = \sigma$ only. 
By Theorem \ref{Corollary 2.2}, the dNLS frequencies admit the asymptotics
$$
\om_k^{nls} (I) = 4 \pi^2 k^2 +  4 \sum_{j \in \Z} I_j + \frac{r_k (I)}{k}   
$$
where $ (r_k)_{k \in \Z} : \ell^{1,4}_+(\Z, \R)  \to \ell^\infty (\Z, \R) $ is real analytic. 
Accordingly we decompose $\Omega^{nls}$, 
defined in \eqref{definition Omega nls}, as
\begin{equation}\label{asymptotics Omega nls}
\Omega^{nls} = \Big( 4 \sum_{j \in \Z} I_j \Big) {\rm Id}_\bot + {\rm diag}_{k \in S^\bot} \frac{r_k(I)}{k} 
\end{equation}
and estimate the norms of the latter two operators separately. 
To estimate $ | \big( \sum_{j \in \Z} I_j(\vphi) \big) {\rm Id}_\bot  |_{s, \sigma} $ we write
\be\label{deff}
\sum_{j \in \Z} I_j(\vphi)  =  \Big( \sum_{j \in S}  \xi_j \Big) {\rm Id}_\bot +  g(\vphi) {\rm Id}_\bot   \qquad {\rm where} \qquad 
 g(\vphi) :=  \sum_{j \in S}  y_j(\vphi) + \sum_{j \in S^\bot} z_j(\vphi) {\bar z}_j (\vphi) \, . 
\ee
By the definition \eqref{def norm1} of the operator norm $| \cdot |_{s, \s}$,
\begin{equation}\label{f (vphi) pi bot}
\big| g \, {\rm Id}_\bot \big|_{s, \sigma} = \Big( \sum_{\ell \in \Z^S} 
\langle \ell \rangle^{2s} \| \hat g(\ell )  \, {\rm Id}_\bot \|_{{\cal L}(h^\s_\bot)}^2 \Big)^{1/2}
= \Big( \sum_{\ell \in \Z^S} 
\langle \ell \rangle^{2s} | \hat g(\ell )|^2  \Big)^{1/2}
= \| g \|_s   
\end{equation}
where, for brevity, we set $ \| g\|_s :=  \| g \|_{H^s(\T^S, \C)} $. 
By \eqref{deff}, using Lemma~\ref{interpolation product} and the Cauchy-Schwartz inequality, we estimate 
$$
\| g  \|_s   \leq_s  \| y \|_s + \sum_{j \in S^\bot} \| z_j  \bar z_j \|_s \leq_s
 \| y \|_s + \sum_{j \in S^\bot} \| z_j  \|_{s_0} \| \bar z_j \|_s \leq_s
  \| y \|_s +  \| z  \|_{s_0, \s} \| z  \|_{s,\s} \leq_s  \| \io \|_s \,.
$$
In conclusion % \eqref{deff} and \eqref{f (vphi) y z bar z} imply
\be\label{f (vphi) y z bar z}
\Big| \Big(\sum_{j \in \Z} I_j \Big) \, {\rm Id}_\bot \Big|_{s, \sigma} \leq_s | \xi | + \| g \|_s  \leq_s
| \xi | + \| \io \|_s \leq_s 1 + \| \io \|_s  \, .
\ee
Towards the second operator on the right hand side of \eqref{asymptotics Omega nls}, note that
 the operator norm of the Fourier coefficient $\hat A(\ell)$, $\ell \in \Z^S,$ of the map
$ \vphi \to A(\vphi) := {\rm diag}_{k \in S^\bot} \frac1k  (r_k \circ I) (\vphi)  $
is
$$
\| \hat A(\ell) \|_{{\cal L}(h^\s_\bot)} = \sup_{k \in S^\bot} \frac{1}{|k|}  | (\widehat{r_k \circ I}) (\ell)|  
$$
and hence, recalling the definition \eqref{def norm1} of the operator norm $| \cdot |_{s, \s}$,
\begin{equation}\label{A s sigma r k I}
| A |_{s, \sigma}^2 = \sum_{\ell \in \Z^S}  \langle \ell \rangle^{2s} 
\sup_{k \in S^\bot} \frac{1}{k^2}  |(\widehat{r_k \circ I})(\ell)|^2 \leq  
\sum_{k \in S^\bot} \frac{1}{k^2}  \sum_{\ell \in \Z^S}  \langle \ell \rangle^{2s} 
 |(\widehat{r_k \circ I})(\ell)|^2 =
 \sum_{k \in S^\bot} \frac{1}{k^2} \| r_k \circ I \|_s^2 \, . 
\end{equation}
By Theorem \ref{Corollary 2.2}, the map $ (r_k)_{k \in S^\bot} : \ell^{1,4} \to \ell^{\infty}_\bot $ is real analytic and 
 there exists a neighborhood $ V \subset \ell^{1,4} $ of $(\Pi + U_0)\times \{0\}$ and $ C >  0 $ such that
$ \sup_{I \in V} | r_k(I) | \leq C $, $ \forall k \in S^\bot $. Since for any $\xi \in \Pi$, the map 
$$
B_\sigma(0,0) \subseteq \R^S \times h^\sigma_\bot \ \to \ V\,, \quad (y, z) \mapsto (\xi + y, z \bar z) \in V
$$
is real analytic in a sufficiently small neighborhood of $(0, 0)$, $B_\sigma(0, 0) \subseteq \R^S \times h^\sigma_\bot$ 
(see  the proof of Proposition \ref{stime derivate H nls}), 
Lemma \ref{Lemma 2.4bis}, applied to $f$ given by the sequence $(r_k (\xi + y, z \bar z))_{k \in \Z}$ and $Y = \ell^\infty$ then yields 
\be\label{estimateFrequ}
\| (r_k (\xi + y, z \bar z) )_{k \in \Z}\|_{{\cal C}^s(\T^S, \ell^\infty)} \leq_s  1 + \| \io \|_{{\cal C}^s(\T^S, M^\sigma)} \, .
\ee
 As a consequence of \eqref{composition formula}, we get 
\be\label{estimateFrequ 2}
\|  r_k \circ I  \|_s   \leq_s 1 + \| \io \|_{s + 2 s_0} \, , \quad  \forall k \in S^\bot ,
\ee 
and, by \eqref{A s sigma r k I}, we conclude
\begin{equation}\label{fina2}
| A |_{s, \sigma} \leq_s  1 + \| \io \|_{s + 2 s_0}  \,. 
\end{equation}
Combining \eqref{asymptotics Omega nls} with 
\eqref{f (vphi) y z bar z} and \eqref{fina2},  the first estimate of \eqref{est:Omega1} follows.  
The second estimate of  \eqref{est:Omega1}  is proved in a similar way.

\noindent
 $(ii)$  
Let us begin by proving  the first estimate  of \eqref{est:Rnls-prima}. 
We only consider ${\frak R}^{nls}_1 \lla D \rra $ since the estimate for ${\frak R}^{nls}_2 \lla D \rra $ is done in the same way.
We recall that  $ \lla D \rra $ is the diagonal operator introduced in \eqref{def:DD}.  

We write $\frak R^{nls}_1 \lla D \rra $ as the sum of its columns, namely
\be\label{RNLS1D}
{\frak R}^{nls}_1 \lla D \rra  = \sum_{j \in S^\bot} A_{(j)} \pi_j \, , \quad 
A_{(j)}(\vphi) := \big(  z_k (\vphi) \langle j \rangle^2 f_{kj}(I(\vphi))) \bar z_j (\vphi)   \lla j \rra \big)_{k \in S^\bot} \, , 
\ee
where $ \pi_j $ denotes the projector 
\be\label{def:pi-bot}
 \pi_j : h^\s_\bot \to \C \, , \quad (w_n)_{n \in S^\bot} \to w_j \, , 
\ee
and 
\be \label{def:fkj}
f_{kj} (I) : = \langle j \rangle^{-2} \pa_{I_j} \om_k^{nls} (I) \, , \quad I(\vphi) := (\xi + y(\vphi), I_\bot (\vphi) ) \, , 
\quad  I_\bot  := ( z_k \bar z_k )_{k \in S^\bot} \, . 
\ee
Then we have 
$|{\frak R}^{nls}_1 \lla D \rra |_{s, \sigma - 1 }  \leq \sum_{j \in S^\bot} |A_{(j)} \pi_j|_{s, \sigma - 1}$.
Since by the definition \eqref{def norm1} of the operator norm $|\cdot |_{s, \s -1}$
$$
|A_{(j)} \pi_j|_{s, \sigma - 1  }  = \Big( \sum_{\ell \in \Z^S} \langle \ell\rangle^{2 s} 
\|\hat{ A}_{(j)}(\ell) \pi_j\|_{{\cal L}(h^{\sigma - 1}_\bot )}^2 \Big)^{\frac12} \,, 
\qquad \|\hat{A}_{(j)}(\ell) \pi_j \|_{{\cal L}(h^{\sigma - 1}_\bot)} 
= \| \hat{ A}_{(j)} (\ell) \|_{\sigma - 1} \langle j \rangle^{- {(\sigma - 1)}}\,,
$$
we have, by the property   \eqref{norma other} of the $ \| \cdot \|_s $-norm 
\begin{equation}\label{RNLS1Ds}
|A_{(j)} \pi_j|_{s, \sigma - 1 } 
=   \langle j \rangle^{- (\sigma-1)}  \| A_{(j)} \|_{s, \s-1} \leq  \langle j \rangle^{- (\sigma-1)}  \| A_{(j)} \|_{s, \s} \,.
\end{equation}
We claim that 
\be\label{estimate Aj}
\| A_{(j)}\|_{s, \s} \leq_s \langle j \rangle^3 \big(  \| \io \|_{s + 2 s_0} \| z_j \|_{s_0} +  \| \io \|_{s_0} \| z_j \|_s \big) \, .
\ee
Before proving \eqref{estimate Aj} we complete the proof of the first estimate of \eqref{est:Rnls-prima}. 
By \eqref{RNLS1Ds} and \eqref{estimate Aj}, we get
\begin{align*}
|\frak R^{nls}_1 \lla D \rra |_{s, \sigma - 1 }  & \leq_s  \sum_{j \in S^\bot}  \langle j \rangle^{4 -\s}
\big(  \| \io \|_{s + 2 s_0} \| z_j \|_{s_0} +  \| \io \|_{s_0} \| z_j \|_s  \big)  \\
& \leq_s \| \io \|_{s + 2 s_0} 
\Big( \sum_{j \in S^\bot}  \langle j \rangle^{4 - 2 \s}  \| z_j \|_{s_0}\langle j \rangle^\s \Big)  
+ 
 \| \io \|_{s_0} 
\Big( \sum_{j \in S^\bot}  \langle j \rangle^{4 - 2 \s}  \| z_j \|_s \langle j \rangle^\s \Big)  \\
&  \leq_s 
\| \io \|_{s +2 s_0}  \| z \|_{s_0,\s}   +  \| \io \|_{s_0} \| z \|_{s,\s} 
\end{align*}
by applying the Cauchy-Schwartz inequality, using that $ 4 ( \s - 2  ) > 1  $. 
By  the smallness assumption \eqref{ansatz 1}, the  first estimate of \eqref{est:Rnls-prima} then follows.  
It remains to prove the estimate \eqref{estimate Aj}. By the definition \eqref{def:fkj} of $f_{k j}$ and the estimates \eqref{tame composizione derivate ennesime frequenza} one gets 
\be\label{come voluta}
\| {f_{kj} }(\xi + y, z \bar z) \|_{s}  \leq_s   1 + \| \io \|_{s + 2 s_0}  \,, \quad \forall j, k \in S^\bot, \,\,\, \forall \xi \in \Pi .
\ee
We now can prove the estimate \eqref{estimate Aj}: recalling \eqref{norma other} and \eqref{RNLS1D} we have 
\begin{align}
\| A_{(j)} \|^2_{s, \s} & \leq_s \langle j \rangle^6 \sum_{k} \langle k \rangle^{2 \s} \| z_k (f_{kj} \circ I) \bar z_j  \|^2_{s} \nonumber \\
&\stackrel{\eqref{tame for functions}} % Lemma \ref{interpolation product}} 
{\leq_s} \langle j \rangle^6 \sum_{k} \langle k \rangle^{2 \s} 
\Big( \| z_k \|_s \|  f_{kj} \circ I \|_{s_0} \| z_j  \|_{s_0} + \| z_k \|_{s_0} \|  f_{kj} \circ I \|_{s} \| z_j  \|_{s_0} +
\| z_k \|_{s_0} \|  f_{kj} \circ I \|_{s_0} \| z_j  \|_s \Big)^2 \nonumber \\
& \stackrel{\eqref{come voluta}, \eqref{ansatz 1}}{\leq_s} \langle j \rangle^6 \sum_{k} \langle k \rangle^{2 \s} 
\Big( \| z_k \|_s  \| z_j  \|_{s_0} + \| z_k \|_{s_0} \| \io \|_{s + 2s_0} \| z_j  \|_{s_0} +
\| z_k \|_{s_0} \| z_j  \|_s \Big)^2 \nonumber \\
&  \stackrel{\eqref{norma other}} \leq_s \langle j \rangle^6 
\Big( \| z \|_{s,\s}^2 \| z_j  \|_{s_0}^2 + \| z \|_{s_0, \s}^2 \| \io \|_{s + 2s_0}^2  \| z_j  \|_{s_0}^2 
 + \| z \|_{s_0, \s}^2 \| z_j  \|_s^2 \Big) \nonumber .
\end{align}
 Using again the smallness assumptions \eqref{ansatz 1}, the claimed estimate \eqref{estimate Aj} then follows.  
The second estimate in \eqref{est:Rnls-prima} can be proved in a similar way. 
 \end{proof}
 
 The next result is only needed in Section \ref{sec:measure} for the proof of the measure estimates. 
Given two torus embeddings 
  $$  
  \breve \io^{(a)} (\vphi) := (\vphi, 0, 0) + \io^{(a)}(\vphi)\,,\qquad  
     \io^{(a)}(\vphi) = (\Theta^{(a)}(\vphi), y^{(a)}(\vphi), z^{(a)}(\vphi))\,, \quad a = 1,2\,,
  $$  
  we write 
  \begin{equation}\label{differenza tori}
  \Delta_{1 2} \breve \io := \breve \io^{(1)} - \breve \io^{(2)}, \quad
 \Delta_{1 2} \io := \io^{(1)} - \io^{(2)}\,, \quad
  \Delta_{12} z : = z^{(1)} - z^{(2)}\,,\,\, \ldots \,\, .
  \end{equation}
  Note that  $\Delta_{1 2} \breve \io =  \Delta_{1 2} \io$.  Furthermore, introduce for $s \geq s_0$ 
  \begin{equation}\label{norma s io 1 io 2}
  {\rm max}_s(\io) := {\rm max}\{ \|\io^{(1)} \|_s\,,\, \| \io^{(2)}\|_s \}\,, \quad {\rm max}_s(z) := {\rm max}\{ \|z^{(1)} \|_s\,,\, \| z^{(2)}\|_s \}\,, \,\, \ldots \, \, .
  \end{equation}
Define  $  \Omega^{nls}(\breve \io^{(a)}) := \Omega^{nls}( I \circ \breve \io^{(a)}) $, $ a = 1, 2 $, and use 
a similar notation for other operators. 

 \begin{lemma}\label{lemma variazione i Omega nls} Let $s \ge s_0.$ Then for any
torus embeddings $\breve \io^{(a)} (\vphi) := (\vphi, 0, 0) + \io^{(a)}(\vphi) $, $a = 1,2 $, 
satisfying \eqref{ansatz 1}, 
the following estimates hold:

\noindent
$(i)$ For any $\sigma' \in \{ \sigma, \sigma - 1, \sigma - 2 \}$, 
$\Delta_{12} \Omega^{nls} := \Omega^{nls}(\breve \io^{(1)}) - \Omega^{nls}(\breve \io^{(2)})$ satisfies the estimate 
$$|\Delta_{12} \Omega^{nls}|_{s, \sigma'} \leq_s \| \Delta_{12} \io \|_{s } +  
 {\rm max}_{s + 2 s_0}(\io) \| \Delta_{12 } \io \|_{s_0}.$$

\noindent
$(ii)$ The operator 
$\Delta_{12} {\frak R}^{nls} := {\frak R}^{nls}(\breve \io^{(1)}) - {\frak R}^{nls}(\breve \io^{(2)})$ 
satisfies the estimate 
$$
|\Delta_{12} {\frak R}^{nls} {\frak D}|_{s, \sigma - 1} \leq_s \e \gamma^{- 2} \|\Delta_{12} \io \|_s 
+ {\rm max}_{s + 2 s_0}(\io) \| \Delta_{12} \io\|_{s_0}\,.
$$
 \end{lemma}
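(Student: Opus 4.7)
The plan is to mirror the structure of the proof of Lemma~\ref{I bot Lip gamma}, carrying out exactly the same decompositions and operator-norm estimates but now applied to the differences $\Delta_{12}(\cdot)$. The two analytic ingredients that replace their single-embedding counterparts are the difference version of the composition estimate (Lemma~\ref{composition in Sobolev}($ii$)) and the tame product estimate (Lemma~\ref{interpolation product}), both applied one order lower in smallness.

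For item ($i$), I would start from the decomposition \eqref{asymptotics Omega nls} and write
\[
\Delta_{12}\Omega^{nls} = 4\Big(\sum_{j\in\Z}\Delta_{12} I_j\Big)\mathrm{Id}_\bot + \mathrm{diag}_{k\in S^\bot}\tfrac{1}{k}\big(r_k(I^{(1)})-r_k(I^{(2)})\big).
\]
Since $\Omega^{nls}$ is diagonal, it suffices to treat $\s'=\s$. For the first summand, $\Delta_{12}I_j=\Delta_{12}y_j$ when $j\in S$ and $\Delta_{12}(z_j\bar z_j)=(\Delta_{12}z_j)\bar z_j^{(1)}+z_j^{(2)}(\Delta_{12}\bar z_j)$ when $j\in S^\bot$; combined with Lemma~\ref{interpolation product} and the identity \eqref{f (vphi) pi bot}, this yields the claimed bound. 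For the second summand, applying Lemma~\ref{composition in Sobolev}($ii$) to the real-analytic map $(r_k)_{k\in\Z}:V\to\ell^\infty$ (whose analyticity is furnished by Theorem~\ref{Corollary 2.2}) gives, uniformly in $k$, $\| r_k\circ I^{(1)} - r_k\circ I^{(2)}\|_s\leq_s \|\Delta_{12}\io\|_s + \mathrm{max}_{s+2s_0}(\io)\|\Delta_{12}\io\|_{s_0}$. Inserting this into the Cauchy--Schwartz step used in \eqref{A s sigma r k I} and using $\sum_{k\in S^\bot}k^{-2}<\infty$ closes the estimate.

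For item ($ii$), I would again follow the proof of Lemma~\ref{I bot Lip gamma}($ii$) and decompose, as in \eqref{RNLS1D},
\[
\Delta_{12}\big({\frak R}^{nls}_1\lla D \rra\big) = \sum_{j\in S^\bot}(\Delta_{12}A_{(j)})\,\pi_j,\qquad A_{(j)}=\big(z_k\langle j\rangle^2 f_{kj}(I)\bar z_j\lla j\rra\big)_{k\in S^\bot},
\]
and analogously for ${\frak R}^{nls}_2\lla D \rra$. The key step is to expand $\Delta_{12}A_{(j)}$ as a sum of three contributions via the triple-product structure (one where $\Delta_{12}$ hits $z_k$, one where it hits $f_{kj}\circ I$, and one where it hits $\bar z_j$), and then apply the tame product rule of Lemma~\ref{interpolation product} together with Lemma~\ref{composition in Sobolev}($ii$) for the difference $\Delta_{12}(f_{kj}\circ I)$ (whose low- and high-Sobolev norms are controlled by \eqref{tame composizione derivate ennesime frequenza} uniformly in $j,k$). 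Using the ansatz \eqref{ansatz 1} to turn the residual $\|\io\|_{s_0}$ factors into $\e\gamma^{-2}$, and then running exactly the Cauchy--Schwartz summation in $j\in S^\bot$ (which converges because $4(\s-2)>1$) as in the derivation of \eqref{estimate Aj}, produces the claimed bound.

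The main obstacle I anticipate is purely bookkeeping: organising the three summands of $\Delta_{12}A_{(j)}$ so that the $\e\gamma^{-2}$ prefactor, coming from the smallness \eqref{ansatz 1} of $\|\io\|_{s_0}$, attaches only to the high-norm difference $\|\Delta_{12}\io\|_s$, while the high-norm $\mathrm{max}_{s+2s_0}(\io)$ attaches only to the low-norm difference $\|\Delta_{12}\io\|_{s_0}$ (with no cross term carrying both). No new analytic ingredient beyond those already used in Lemma~\ref{I bot Lip gamma} and Proposition~\ref{stime derivate H nls} is required.
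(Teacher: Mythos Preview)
Your overall structure matches the paper's proof: same decomposition via \eqref{asymptotics Omega nls} for item~($i$), same column expansion $\Delta_{12}A_{(j)}$ split into three telescoping pieces for item~($ii$), and the same Cauchy--Schwartz summation over $j\in S_+^\bot$ using $4(\sigma-2)>1$.

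There is, however, one genuine issue. You claim that Lemma~\ref{composition in Sobolev}($ii$) yields
\[
\|r_k\circ I^{(1)} - r_k\circ I^{(2)}\|_s \leq_s \|\Delta_{12}\io\|_s + {\rm max}_{s+2s_0}(\io)\,\|\Delta_{12}\io\|_{s_0},
\]
but that lemma actually gives $\|\Delta_{12}\io\|_{s+2s_0}$ in the first term: the $2s_0$ derivative loss falls on the \emph{difference} as well, not just on the background. With that weaker bound you would not obtain the statement as written (which has $\|\Delta_{12}\io\|_s$). The paper avoids this by \emph{not} invoking Lemma~\ref{composition in Sobolev}($ii$) directly. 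Instead it writes the mean value formula
\[
\Delta_{12}r_k = \int_0^1 \partial_I r_k(I_t)\,dt \cdot \Delta_{12}I, \qquad I_t = tI^{(1)}+(1-t)I^{(2)},
\]
expands $\partial_I r_k(I_t)\cdot\Delta_{12}I = \sum_n p_{nk}(I_t)\,\langle n\rangle^4 \Delta_{12}I_n$ with $p_{nk}=\langle n\rangle^{-4}\partial_{I_n}r_k$ bounded by Theorem~\ref{Corollary 2.2}, and then applies the \emph{single}-embedding composition estimate to $p_{nk}\circ I_t$ (so the $2s_0$ loss goes onto ${\rm max}_{s+2s_0}(\io)$) together with the tame product rule for the factor $\Delta_{12}I_n$ (which then appears only at level $s$). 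The same correction is needed for $\Delta_{12}(f_{kj}\circ I)$ in item~($ii$), where the paper again refers back to the mean-value argument of item~($i$) rather than to Lemma~\ref{composition in Sobolev}($ii$). Apart from this point, your bookkeeping concern about placing the $\e\gamma^{-2}$ factor correctly is exactly what the paper does, and no additional obstacle arises.
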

 \begin{proof}
%$(i)$ Since 
% $$
% \Delta_{12} I_\bot = \Big( \bar z_n^{(1)} \Delta_{12} z_n \Big)_{n \in S^\bot} + \Big( z_n^{(2)} \Delta_{12} \bar z_n \Big)_{n \in S^\bot}\,,
% $$
% we apply the estimate \eqref{claim variazione I i1 i2}, for $w_n = \Delta_{12} z_n $, $v_n = \bar z_n^{(1)}  $ and for $w_n = z_n^{(2)}$ and $v_n = \Delta_{12} \bar z_n $. Then item $(i)$ follows by using the smallness condition \eqref{ansatz 1}.  
% 
% \medskip

 \noindent
 $(i)$ 
As $\Omega^{nls}$ is a diagonal operator it suffices to prove the claimed estimate for $\sigma' = \sigma$. 
Writing $I^{(a)} := (\xi + y^{(a)}, I_\bot^{(a)})$, $a = 1, 2$ and $\Delta_{12} I_j := I^{(1)}_j - I^{(2)}_j$, $j \in \Z$, one has, 
by \eqref{asymptotics Omega nls}, 
\begin{equation}\label{def Omega io 1 - Omega io 2}
\Omega^{nls}(\breve \io^{(1)}) - \Omega^{nls}(\breve \io^{(2)}) = 
\Big( 4 \sum_{j \in \Z} \Delta_{12} I_j \Big) {\rm Id}_\bot 
+ {\rm diag}_{k \in S^\bot} \frac{\Delta_{12} r_k(I)}{k} \, . 
\end{equation}
Since
$
\sum_{j \in \Z} \Delta_{12} I_j = \sum_{j \in S} \Delta_{12} y_j + \sum_{j \in S^\bot} \Delta_{12} I_j\,,
$
one gets, arguing as in \eqref{f (vphi) pi bot}, \eqref{f (vphi) y z bar z}, 
\begin{align}
\Big|  \Big(\sum_{j \in \Z} \Delta_{12} I_j  \Big){\rm Id}_\bot\Big|_{s, \sigma} 
& \leq \Big\| \sum_{j \in \Z} \Delta_{12} I_j \Big\|_{s}  \nonumber \\
&\leq_s  \sum_{j \in S} \| y^{(1)}_j - y^{(2)}_j\|_{s} + 
\sum_{j \in S^\bot} \| (z_j^{(1)} - z_j^{(2)}) \bar z_j^{(1)} \|_{s} + 
\sum_{j \in S^\bot} \| z_j^{(2)}(\bar z_j^{(1)} - \bar z_j^{(2)})  \|_{s} \nonumber\\
 & \stackrel{ \eqref{ansatz 1}}{\leq_s} \| \Delta_{12} \io \|_{s} + {\rm max}_{s }(\io) \| \Delta_{12} \io \|_{s_0}\,. \label{stima primo pezzo Delta 12 Omega}
\end{align}
Now we estimate the second term on the right hand side of \eqref{def Omega io 1 - Omega io 2}. The operator norm of the Fourier coefficient $\hat A(\ell)$, $\ell \in \Z^S,$ of the map
$ \vphi \to A(\vphi) := {\rm diag}_{k \in S^\bot} \frac1k  \Delta_{12} r_k(\vphi)$ 
where $ \Delta_{12} r_k :=  r_k(I^{(1)}) - r_k(I^{(2)})  $
is
$$
\| \hat A(\ell) \|_{{\cal L}(h^\s_\bot)} = 
\sup_{k \in S^\bot} \frac{1}{|k|}  | \widehat {\Delta_{12} r_k} (\ell)|  
$$
and hence, arguing as in \eqref{A s sigma r k I}
\begin{equation}\label{A Delta 12 rk I}
| A |_{s, \sigma}^2 \leq 
 \sum_{k \in S^\bot} \frac{1}{k^2} \| \Delta_{12} r_k ( I) \|_s^2 \, . 
\end{equation}
By the mean value theorem one has
\begin{equation}\label{definizione Delta 12 rk}
\Delta_{12} r_k = \int_0^1 \partial_I r_k(I_t)\, d t \cdot \Delta_{12} I\,, \qquad I_t := t I^{(1)} + (1 - t) I^{(2)}\,
\end{equation}
where
\begin{equation}\label{definizione Delta 12 rk 2}
\partial_I r_k(I_t) \cdot \Delta_{12} I = \sum_{n \in \Z} \partial_{I_n} r_k(I_t) \Delta_{12} I_n\,.
\end{equation}
Since by Theorem \ref{Corollary 2.2} item $(ii)$, the map $ (r_k)_{k \in S^\bot} : \ell^{1,4} \to \ell^{\infty} $ is real analytic 
 there exists a neighborhood $ V \subset \ell^{1,4}$ of $(\Pi +U_0)\times \{0\}$ such that  
 \be\label{sup-pnk}
\sup_{k \in \Z} \sup_{I \in V} \| \partial_I r_k(I) \|_{(\ell^{1, 4})^*} = \sup_{k \in \Z} \sup_{I \in V} \sup_{n \in \Z}\frac{|\partial_{I_n} r_k(I)|}{\langle n \rangle^4} \leq C\,.
 \ee
(Here we used that  the dual space of $\ell^{1, 4}$ is $\ell^{\infty , - 4}$.)
 Defining $p_{nk} : = \langle n \rangle^{- 4} \partial_{I_n} r_k$ we have, by Lemma \ref{interpolation product},
% (tame estimates for products)
\be\label{stim-int-pnk}
 \| \partial_I r_k(I_t)\cdot \Delta_{12} I\|_s \leq_s \sum_{n \in \Z} \| p_{n k} \circ I_t \|_s \langle n \rangle^4 \| \Delta_{12} I_n \|_{s_0} + \| p_{n k} \circ I_t \|_{s_0} \langle n \rangle^4 \| \Delta_{1 2} I_n \|_{s}\,.
\ee
 Moreover, by \eqref{sup-pnk}, 
%  Lemma \ref{Lemma 2.4bis} (tame estimates for composition) and 
arguing as in the proof of the estimate \eqref{estimateFrequ 2}, we get 
 \begin{equation}\label{p nk circ It}
 \| p_{n k} \circ I_t  \|_s   \stackrel{}{\leq_s} 1 + {\rm max}_{s + 2 s_0}(\io)\,.
 \end{equation}
 Combining the estimates \eqref{definizione Delta 12 rk} - \eqref{p nk circ It} 
with the smallness assumption \eqref{ansatz 1}
then yields 
 \begin{align}
 \| \Delta_{12} r_k\|_s & \leq_s {\rm max}_{s + 2 s_0}(\io) \sum_{n \in \Z}  \langle n \rangle^4 \|\Delta_{1 2} I_n \|_{s_0} +  \sum_{n \in \Z} \langle n \rangle^4 \|\Delta_{1 2} I_n \|_{s} \nonumber\\
 & \leq_s  \| \Delta_{12} y \|_s + {\rm max}_{s + 2 s_0}(\io) \| \Delta_{12} y \|_{s_0} + \sum_{n \in S^\bot} \langle n \rangle^4 \| \Delta_{12}( z_n \bar z_n) \|_s  +  {\rm max}_{s + 2 s_0}(\io)\sum_{n \in S^\bot} \langle n \rangle^4 \| \Delta_{12}( z_n \bar z_n) \|_{s_0}\,. \nonumber
 \end{align}
 Since 
 \begin{align}
& \sum_{n \in S^\bot} \langle n \rangle^4 \| \Delta_{12} (z_n \bar z_n) \|_s  \leq_s \sum_{n \in S^\bot } \langle n \rangle^4 \big( \| \bar z_n^{(1)} \Delta_{1 2} z_n  \|_s + \| z_n^{(2)} \Delta_{12} \bar z_n \|_s \big) \nonumber\\
 & \leq_s \sum_{n \in S^\bot} \langle n \rangle^4 \big( \| \Delta_{12} z_n \|_s \|  z_n^{(1)} \|_{s_0} + \| \Delta_{12} z_n \|_{s_0} \|  z_n^{(1)} \|_s + \| \Delta_{12} z_n \|_s \|  z_n^{(2)} \|_{s_0} + \| \Delta_{12} z_n \|_{s_0} \|  z_n^{(2)} \|_s  \big) \nonumber
\end{align}
one then gets by Cauchy-Schwartz, the smallness assumption \eqref{ansatz 1}, and  the assumption 
$\s \ge 4$
\begin{align*}
& \sum_{n \in S^\bot} \langle n \rangle^4 \| \Delta_{12} (z_n \bar z_n) \|_s
\leq_s\e \gamma^{- 2}\| \Delta_{12} z \|_s  
+ {\rm max}_s(z) \| \Delta_{12} z \|_{s_0}\,. 
 \end{align*}
 Altogether we proved that for any $k \in S^\bot$,
 \begin{equation}\label{stima Delta 12 rk I pippa}
 \| \Delta_{12} r_k \|_s \leq_s \| \Delta_{12} \io \|_s + {\rm max}_{s + 2 s_0}(\io) \| \Delta_{12} \io \|_{s_0}\,,
 \end{equation}
 implying, together with \eqref{A Delta 12 rk I}, that 
 $$
 |A|_{s, \sigma} \leq_s \| \Delta_{12} \io \|_{s }  + {\rm max}_{s + 2 s_0}(\io) \| \Delta_{12} \io \|_{s_0}\,.
 $$
Item $(i)$ then follows in combination with \eqref{def Omega io 1 - Omega io 2}, \eqref{stima primo pezzo Delta 12 Omega}.  
 
 \medskip
 
 \noindent
 $(ii)$   Since the claimed estimates for $ \Delta_{12} \frak R^{nls}_1 \lla D \rra $ and $ \Delta_{12} \frak R^{nls}_2 \lla D \rra $
 are obtained in the same way, we only consider $ \Delta_{12} \frak R^{nls}_1 \lla D \rra $. Recall that by \eqref{RNLS1D}, the operator $\frak R^{nls}_1 \lla D \rra $ can be written as
$$
\frak R^{nls}_1 \lla D \rra  = \sum_{j \in S^\bot} A_{(j)} \pi_j \, , \qquad 
A_{(j)}(\vphi) := \big(  z_k (\vphi) \langle j \rangle^2 f_{kj}(I(\vphi))) \bar z_j (\vphi)  \lla j \rra \big)_{k \in S^\bot} 
$$
where 
$\pi_j$ denotes the projector 
introduced in \eqref{def:pi-bot}  and 
$ f_{kj} (I) $  is defined in \eqref{def:fkj}. 

Then we have $| \Delta_{12} \frak R^{nls}_1 \lla D \rra |_{s, \sigma - 1 }  \leq $ $
\sum_{j \in S^\bot} |\Delta_{12} A_{(j)} \pi_j|_{s, \sigma - 1 }$.
Since
$$
|\Delta_{12} A_{(j)} \pi_j|_{s, \sigma - 1 }  = 
\big( \sum_{\ell } \langle \ell\rangle^{2 s} 
\| \Delta_{12} \hat A_{(j)}(\ell) \pi_j\|_{{\cal L}(h^{\sigma - 1}_\bot)}^2 \big)^{\frac12}, \
\|\Delta_{12} \hat A_{(j)}(\ell) \pi_j \|_{{\cal L}(h^{\sigma - 1}_\bot)} = 
\| \Delta_{12} \hat A_{(j)} (\ell) \|_{\sigma - 1} \langle j \rangle^{- (\sigma - 1)}
$$ 
one concludes in view of the property   \eqref{norma other} of the $ \| \ \|_s $-norm that
\begin{equation}\label{stima Delta 12 A (j)}
|\Delta_{12} A_{(j)} \pi_j|_{s, \sigma - 1 } = 
\langle j \rangle^{- (\sigma - 1)} 
\Big( \sum_{\ell, k}  \langle \ell \rangle^{2 s} \langle k \rangle^{2( \sigma - 1)} 
|\Delta_{12} \hat A_{(j),k}(\ell)|^2 \Big)^{\frac12} =  
 \langle j \rangle^{- (\sigma - 1)}  \| \Delta_{12} A_{(j)} \|_{s, \sigma - 1}\,.
\end{equation}

\smallskip
\noindent
To estimate $\|\Delta_{12} A_{(j)}\|_{s, \sigma - 1}$, let 
$\Delta_{12} f_{k j} := f_{kj}(I^{(1)}) - f_{kj }(I^{(2)})$ and write  
$\Delta_{12} A_{(j)}$ as a telescoping sum,
\begin{equation}\label{splitting Delta 12 A (j)}
\Delta_{12} A_{(j)} = B_{(j)} + C_{(j)} + D_{(j)}   
\end{equation}
where
\begin{align*}
& B_{(j)} := \big( \langle j \rangle^2  z_k^{(1)} \bar z_j^{(1)} \lla j \rra \Delta_{12} f_{kj} \big)_{k \in S^\bot}\,, \quad
C_{(j)} := \big( \langle j \rangle^2  f_{kj}(I^{(2)})  \bar z_j^{(1)} \lla j \rra \Delta_{12} z_k \big)_{k \in S^\bot}\,,  \\
&
\qquad \qquad\qquad\qquad
 D_{(j)} := \big( \langle j \rangle^2  f_{kj}(I^{(2)}) z_k^{(2)}  \lla j \rra  \Delta_{12} \bar z_j\big)_{k \in S^\bot}\,.
\end{align*}
We estimate the $\| \cdot \|_{s, \sigma - 1}$ norm of the above three terms separately. 
Actually, we estimate the larger  norm $\| \cdot \|_{s, \sigma}$ of these terms. One has 
\begin{align}
\| B_{(j)}\|_{s, \s}^2 & \leq_s \langle j \rangle^6 \sum_{k \in S^\bot} \langle k \rangle^{2 \sigma} \| z_k^{(1)} \bar z_j^{(1)}  \Delta_{12} f_{kj} \|_s^2 \nonumber\\
& \leq_s \langle j \rangle^6 \sum_{k \in S^\bot} \langle k \rangle^{2 \sigma } \Big( \| \Delta_{12} f_{k j} \|_s^2 \|z_j^{(1)} \|_{s_0}^2 \| z_k^{(1)}\|_{s_0}^2 + \| \Delta_{12} f_{kj} \|_{s_0}^2 \| z_j^{(1)}\|_s^2 \| z_k^{(1)}\|_{s_0}^2 \nonumber\\
& \qquad + \| \Delta_{12} f_{kj}\|_{s_0}^2 \| z_j^{(1)} \|_{s_0}^2 \| z_k^{(1)} \|_s^2 \Big)\,. \nonumber
\end{align}
The term $\Delta_{12} f_{kj}$ can be estimated in the same way as $\Delta_{12} r_k$ of item $(i)$, together 
with \eqref{tame composizione derivate ennesime frequenza} of Proposition \ref{stime derivate H nls}, obtaining  
$$
\| \Delta_{12} f_{kj}\|_s \leq_s \| \Delta_{12} \io \|_s + {\rm max}_{s + 2 s_0}(\io) \| \Delta_{12} \io \|_{s_0}\,.
$$
Hence by the smallness condition \eqref{ansatz 1},
\begin{align}
\| B_{(j)}\|_{s, \s}^2 & \leq_s \Big( \| \Delta_{12} \io \|_s^2  + {\rm max}_{s + 2 s_0}(\io)^2 \| \Delta_{12} \io\|_{s_0}^2 \Big)  \langle j \rangle^6 \|z_j^{(1)} \|_{s_0}^2 \sum_{k \in S^\bot} \langle k \rangle^{2 \sigma} \| z_k^{(1)}\|_{s_0}^2 \nonumber\\
& \quad + \langle j \rangle^6 \| z_j^{(1)}\|_s^2  \|\Delta_{12} \io \|_{s_0}^2 \sum_{k \in S^\bot} \langle k \rangle^{2 \sigma} \| z_k^{(1)}\|_{s_0}^2 + \langle j \rangle^6 \| z_j^{(1)}\|_{s_0}^2  \|\Delta_{12} \io \|_{s_0}^2 \sum_{k \in S^\bot} \langle k \rangle^{2 \sigma} \| z_k^{(1)}\|_{s}^2 \nonumber\\
& \leq_s \Big( \| \Delta_{12} \io \|_s^2  + {\rm max}_{s + 2 s_0}(\io)^2 \| \Delta_{12} \io\|_{s_0}^2 \Big)  
\langle j \rangle^6 \|z_j^{(1)} \|_{s_0}^2 \| z^{(1)} \|_{s_0, \s}^2 + 
\langle j\rangle^6 \|z_j^{(1)} \|_s^2 \| \Delta_{12} \io\|_{s_0}^2 \| z^{(1)} \|_{s_0, \s}^2 \nonumber\\
& \quad + \langle j\rangle^6 \|z_j^{(1)} \|_{s_0}^2 \| \Delta_{12} \io\|_{s_0}^2 \| z^{(1)} \|_{s, \s}^2\,, \nonumber
\end{align}
implying together with \eqref{ansatz 1} that
\begin{equation}\label{stima B (j)}
\| B_{(j)}\|_{s, \s} \leq_s \langle j \rangle^3 \Big( \e \gamma^{- 2} \| z_j^{(1)}\|_{s_0} \|\Delta_{12} \io \|_s  + 
 \big( \| z_j^{(1)}\|_{s} + {\rm max}_{s + 2 s_0}(\io) \, \| z_j^{(1)}\|_{s_0}  \big) 
\| \Delta_{12} \io\|_{s_0}  \Big)\,.
\end{equation}
Since by \eqref{come voluta}, $\| f_{kj } \circ I \|_s \leq_s 1 + \| \io\|_{s + 2 s_0}$, one can prove in a similar way that 
\begin{align}\label{stima C (j)}
& \| C_{(j)}\|_{s, \s} \leq_s \langle j \rangle^3 \big( \| z_j^{(1)} \|_{s_0} \| \Delta_{12} \io \|_s + 
 \big( \| z_j^{(1)}\|_{s} + {\rm max}_{s + 2 s_0}(\io) \, \| z_j^{(1)}\|_{s_0} \big) 
\| \Delta_{12} \io\|_{s_0} \big)\,, \\
& \label{stima D (j)}
\| D_{(j)}\|_{s, \s} \leq_s \langle j \rangle^3 \big( \e \gamma^{- 2} \| \Delta_{12} z_j \|_s + 
 {\rm max}_{s + 2 s_0}(\io) \, \| \Delta_{12} z_j\|_{s_0} \big)\,.
\end{align}
When combined,  the above three estimates yield  
\begin{align}
|\Delta_{12} \frak R_1^{nls} & \lla D \rra |_{s, \sigma - 1}  
\leq \sum_{j \in S^\bot} |\Delta_{12} A_{(j)} \pi_j|_{s, \sigma - 1 } 
\stackrel{\eqref{stima Delta 12 A (j)}}{\leq}  \sum_{j \in S^\bot} \langle j \rangle^{- (\sigma - 1)}  \| \Delta_{12} A_{(j)} \|_{s, \sigma - 1} \nonumber\\
& \stackrel{\eqref{splitting Delta 12 A (j)}}{\leq} \sum_{j \in S^\bot} \langle j \rangle^{- (\sigma - 1)} 
(\| B_{(j)}\|_{s, \s} + \| C_{(j)}\|_{s, \s} + \| D_{(j)}\|_{s, \s} ) \nonumber\\
& \stackrel{\eqref{stima B (j)}, \eqref{stima C (j)}, \eqref{stima D (j)}}{\leq_s} \sum_{j \in S^\bot} \langle j \rangle^{4 - \sigma} \Big( \| z_j^{(1)}\|_{s_0} \|\Delta_{12} \io \|_s  + 
\big( \| z_j^{(1)}\|_{s} + {\rm max}_{s + 2 s_0}(\io) \, \| z_j^{(1)}\|_{s_0}  \big) 
\| \Delta_{12} \io\|_{s_0}  \Big) \nonumber\\
& \qquad \qquad  + \sum_{j \in S^\bot} \langle j \rangle^{4 - \sigma} \big( \e \gamma^{- 2} \| \Delta_{12} z_j \|_s +{\rm max}_{s +2  s_0}(\io) \, \|\Delta_{12} z_j\|_{s_0} \big) \, . \nonumber
\end{align}
By the assumption $\sigma \geq 4$ and the smallness condition \eqref{ansatz 1} 
 the claimed estimate then follow.
\end{proof}
\begin{remark}
Arguing as in the proof of  Lemma \ref{lemma variazione i Omega nls} (i), one can also obtain
an estimate for $r_k(\xi + y, z \bar z) - r_k(\xi, 0)$, which we record for later reference:
by  the mean value theorem, one has
$$
 r_k(\xi + y, z \bar z) - r_k(\xi, 0) = \int_0^1 \partial_I r_k(I_t) dt \cdot (y, z\bar z)\quad \mbox{with}
\quad I_t = (\xi, 0) + t(y, z \bar z ), \quad z \bar z = (z_j \bar z_j)_{j \in S^\bot}\,.
$$
By Theorem \ref{Corollary 2.2} (dNLS frequencies), and using \eqref{ansatz 1}, one has 
$\langle n \rangle^{-4} | \partial_{I_n} r_k(I_t) | 
\lessdot 1$. Then, from Lemma \ref{Lemma 2.4bis}
(tame estimates for composition), it follows that
$\| r_k(\xi + y, z \bar z) - r_k(\xi, 0)\|_s \leq_s  \| \io \|_{s + 2 s_0}$, using also \eqref{ansatz 1}.
By similar arguments one can verify a corresponding bound for 
$\| r_k(\xi + y, z \bar z) - r_k(\xi, 0)\|_s^{\rm lip}$. 
Under the same assumptions as in Lemma \ref{I bot Lip gamma} one obtains in this way the estimate
\be\label{Taylor estimate for rk}
\| r_k(\xi + y, z \bar z) - r_k(\xi, 0)\|_s^\Lipg \leq_s \| \io \|_{s + 2 s_0}^\Lipg\,.
\ee
\end{remark}

\noindent
{\it Analysis of ${\frak S}^P$.}
In this paragraph it is convenient to denote by
$\widetilde X_{{\cal P}}$ the vector field obtained from the Hamiltonian vector field 
$- \ii \nabla_{\bar u} {\cal P}$ by adding 
its complex conjugate as a second component, 
$\widetilde X_{{\cal P}} := (- \ii \nabla_{\bar u} {\cal P}, \ii \nabla_{u} {\cal P})$. 
We denote by $\widetilde X_P$ the Hamiltonian vector field  $\widetilde X_{{\cal P}}$,
when expressed in Birkhoff coordinates,
\be\label{signXP}
\widetilde X_P : = (d \Phi   \widetilde X_{\cal P} )_{| \Phi^{-1}} \, , \quad P = {\cal P} \circ \Phi^{- 1}\,,
\ee
where $\Phi = \Phi^{nls}$ is the Birkhoff map of Theorem \ref{Theorem Birkhoff coordinates}. 
Recall that
 $F_{nls}$ denotes the version of the Fourier transform, introduced in 
\eqref{Fourier transform F nf}. Denote its inverse by $F_{nls}^{-1}$.
Using that by Theorem  \ref{Theorem Birkhoff coordinates}, $\Phi= F_{nls} + A^{nls} $ and 
$ \Phi^{-1}  = F_{nls}^{-1} + B^{nls} $, 
the differential of $\widetilde X_{P}$ can be computed as
\be\label{dXP}
d \widetilde X_P  = 
 F_{nls} \, (d \widetilde X_{\cal P})_{|\Phi^{-1}} \,  F_{nls}^{-1}  - J \big( T_1 + T_2 + T_3  \big)
\ee
with  
$$
T_1 :=  J  F_{nls} \big(d \widetilde X_{\cal P}\big)_{|\Phi^{-1}}   d B^{nls}, \quad 
T_2 :=  J (d A^{nls} \, d \widetilde X_{\cal P})_{|\Phi^{-1}}  d \Phi^{-1}, \quad 
T_3 :=  J  (d^2A^{nls})_{|\Phi^{-1}} \, \big(  d \Phi^{-1}( \cdot ), (\widetilde X_{\cal P})_{|\Phi^{-1}} \big)  \, .  
$$
By \eqref{nonlin:f}, one has 
$\widetilde X_{{\cal P}} = (- \ii f(x, u), \ii \, \overline f(x, u))$
with $f(x, u(x)) = \partial_{\bar \zeta} p_{| \zeta = u(x)}$ and hence
the differential $d \widetilde X_{\cal P}$ of $\widetilde X_{\cal P}$ is given by 
\be\label{def:Q-new}
d \widetilde X_{\cal P} = - J {\cal Q} \, , 
\qquad {\cal Q} := \left(
\begin{array}{cc}
 \partial_{u} f &  \partial_{\bar u} f   \\  
 \  \overline{\partial_{\bar u} f} &  \overline{\partial_{u} f}
\end{array}
\right) = 
\left(
\begin{array}{cc}
 \partial_{\zeta} \pa_{\bar \zeta} p  &  \partial_{\bar \zeta} \pa_{\bar \zeta} p   \\  
 \  \overline{ \partial_{\bar \zeta} \pa_{\bar \zeta} p} &  \overline{ \partial_{\zeta} \pa_{\bar \zeta} p} 
\end{array}
\right)_{| \zeta = u(x) } \, .
\ee
Since $\partial_{\zeta} \pa_{\bar \zeta} = \frac{1}{2}(\partial_{\zeta_1}^2 +  \pa_{\zeta_2}^2),$
 the function  $\partial_{\zeta} \pa_{\bar \zeta} p$ is real valued
whereas  by a similar computation,
$ \partial_{\zeta}   \pa_{\zeta} p $ is the complex conjugate of $ \partial_{\bar \zeta} \pa_{\bar \zeta} p $. 
Thus, by \eqref{dXP} and since $ F_{nls} $ and $ J $ commute,
\be\label{dXP-new}
 d \widetilde X_{P} =   - J \,\big( F_{nls} \,  {\cal Q}_{|\Phi^{-1}} \,  F_{nls}^{-1}  +  T_1 + T_2 + T_3 \big)  \, . 
\ee
We now evaluate $ d \widetilde X_P $ at the embedding $ {\breve \io} (\vphi ) $.
In view of the definition \eqref{NLS normal direction} of ${\frak S}^P$,  \eqref{dXP-new} and \eqref{def:Q-new} we get 
\be\label{lin:pert}
{\frak S}^P = {\frak Q}_\bot  + {\frak R}^P\,, \qquad {\frak Q}_\bot :=   F_{nls}^\bot 
\left(
\begin{array}{cc}
q_1 & q_2   \\
{\bar q}_2 & q_1  \\
\end{array}
\right) F_{nls}^{-1}\,, 
\ee
 where $ F_{nls}^\bot  ,  F_{nls}^{-1} $ were introduced in  \eqref{proiettori trasformata di fourier} and  
\begin{equation}\label{definition q1 q2 R}
q_1 :=  (\partial_{\zeta} \pa_{\bar \zeta} p)_{|\zeta =\Phi^{-1}( \breve \io)} \, , \qquad 
q_2 := (\partial_{\bar \zeta} \pa_{\bar \zeta} p)_{|\zeta =\Phi^{-1}( \breve \io)}\, , \qquad 
{\frak R}^P :=  {\mathbb I}_\bot \big( (T_1 +  T_2 +  T_3) \circ \breve \io  \big)  
\mathbb I_{\hookrightarrow} \, , 
\end{equation}
with ${\mathbb I}_\bot$ denoting the projector  and 
$\mathbb I_{\hookrightarrow} $  the standard inclusion  introduced in \eqref{def:op-bot}. 
% $h^\sigma_\bot \times h^\sigma_\bot \hookrightarrow h^\sigma \times h^\sigma$.
Above, in defining $ \Phi^{-1} ( \breve \io ) $
we have identified, by a slight abuse
of terminology,  the two components $ \big( \theta(\vphi), y(\vphi) \big)$ 
of ${\breve \io} (\vphi ) $ with the Birkhoff coordinates 
 $(z_j(\vphi))_{j \in S} := (\sqrt{ \xi_j + y_j} e^{-\ii \theta_j})_{j \in S} \in \C^S$.

\begin{lemma}\label{stima lip q1 q2 RP}
{\bf (Estimates for $ q_1, q_2, $ and $ {\frak R}^P $) }
For any $s_0 \leq s \leq s_* - 2 s_0$ 
%and any Lipschitz family of torus embeddings $\breve \io_\omega(\vphi) = (\vphi, 0, 0) + \io_\omega (\vphi)$ satisfying \eqref{ansatz 1}, 
the following statements hold:  

\noindent
$(i)$ The functions $ q_1, q_2$ are in $H^s (\T^S, H^\sigma(\T_1) ) $, with  
$ q_1$ real-  and $q_2$ complex-valued. They satisfy 
\begin{equation}\label{estimates q1 q2}
\| q_1\|_s\,, \| q_2\|_s \leq_s 1 + \|  \io \|_{s + s_0}\, , \qquad 
% \begin{equation}\label{estimates q1 q2 lip}
\| q_1\|_s^\Lipg\,, \| q_2\|_s^\Lipg \leq_s 1 + \|  \io \|_{s + s_0}^\Lipg\,.
\end{equation}
\noindent
$(ii)$ The remainder $ {\frak R}^P $ defined in \eqref{definition q1 q2 R} satisfies
\be\label{lemma4.2}
 | {\frak R}^P {\mathfrak D} |_{s, \sigma - 1} \leq_s
 1+ \| \io \|_{s+  2 s_0}   \, , \quad 
 | {\frak R}^P {\mathfrak D} |_{s, \sigma - 1}^\Lipg \leq_s
 1+ \| \io \|_{s+  2 s_0}^\Lipg  \, . 
\ee
\end{lemma}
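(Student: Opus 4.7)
The plan for item $(i)$ is a two-step composition argument. First, using the real analyticity and local boundedness of the inverse Birkhoff map $\Phi^{-1}: h^\sigma \to H^\sigma$ from Theorem \ref{Theorem Birkhoff coordinates} together with the tame composition estimate of Lemma \ref{composition in Sobolev} applied to $\Phi^{-1}$ and the embedding $\breve\io$, I obtain the bound
$$
\| \Phi^{-1} \circ \breve\io \|_{H^s(\T^S, H^\sigma)} \leq_s 1 + \|\io\|_{s+s_0}\,,
$$
together with its Lipschitz counterpart. Setting $u(\vphi, x) := \Phi^{-1}(\breve\io(\vphi))(x)$, one has $q_1(\vphi, x) = (\partial_\zeta \partial_{\bar\zeta} p)(x, u_1(\vphi,x), u_2(\vphi, x))$. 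Since $p$ is $\mathcal{C}^{\sigma, s_*}$-smooth by \eqref{regolarita di p}, the map $\partial_\zeta\partial_{\bar\zeta} p$ is $\mathcal{C}^{\sigma, s_* - 2}$-smooth, and a second application of Lemma \ref{composition in Sobolev} (now composing a smooth scalar-valued function of $(x,\zeta_1,\zeta_2)$ with $u$) yields $\|q_1\|_s \leq_s 1 + \|\io\|_{s+s_0}$; the argument for $q_2$ is identical. That $q_1$ is real-valued is immediate from $\partial_\zeta \partial_{\bar\zeta} = \tfrac12(\partial_{\zeta_1}^2 + \partial_{\zeta_2}^2)$ and the fact that $p$ is real-valued. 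The Lipschitz estimates are given by Lemma \ref{composition in Sobolev}$(iii)$.

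For item $(ii)$, the guiding principle is that each of the three summands $T_1, T_2, T_3$ in the definition of $\frak R^P$ contains at least one factor of $dA^{nls}$, $dB^{nls}$, or $d^2A^{nls}$, which are one smoothing in the spatial variable by item $(ii)$ of Theorem \ref{Theorem Birkhoff coordinates}. This gain of one $x$-derivative is exactly what is needed to absorb the factor $\mathfrak D$ in the norm $|\frak R^P \mathfrak D|_{s, \sigma - 1}$. Concretely, I would decompose each $T_i \circ \breve\io$ as a product of three types of factors: the Fourier transforms $F_{nls}^\bot$ and $F_{nls}^{-1}$, which are isometries between $H^{\sigma'}$ and $h^{\sigma'}$ for all relevant $\sigma'$; the $\vphi$-dependent multiplication operators by the entries of the Jacobian $d\widetilde X_{\cal P}$ (namely $q_1, q_2, f, \overline f$ evaluated at $u$), for which Lemma \ref{lemma:mult} provides tame operator-valued norms in terms of the bounds of item $(i)$; and the one-smoothing operators $dA^{nls}_{|u}$, $dB^{nls}_{|u}$, $d^2A^{nls}_{|u}$, whose $\vphi$-dependence is controlled by Lemma \ref{composition in Sobolev} applied to the real-analytic, locally bounded one-smoothing maps from Theorem \ref{Theorem Birkhoff coordinates}. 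These three types of factors are then assembled using the tame composition estimate of Lemma \ref{prodest} (in the mildly generalized form of Remark \ref{generalizzazione prodotto operatori}), with the single one-smoothing factor absorbing the loss produced by $\mathfrak D$ on the right; for $T_3$, which is quadratic in $d\Phi^{-1}$ and $(\widetilde X_{\cal P})_{|\Phi^{-1}}$, one further uses Lemma \ref{lemma:action-Sobolev} to bound the action on the second argument. This yields $|\frak R^P \mathfrak D|_{s, \sigma-1} \leq_s 1 + \|\io\|_{s+2s_0}$; the loss of an additional $s_0$ derivatives in $\vphi$ relative to item $(i)$ reflects the two successive applications of the composition lemma (once for $u = \Phi^{-1}\circ\breve\io$, once for the smooth functions thereof). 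The Lipschitz estimates follow by applying the same arguments to differences and using the Lipschitz statements of the same lemmas.

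The main obstacle is the bookkeeping in item $(ii)$: in each of $T_1, T_2, T_3$ one must verify that the one-smoothing factor can be pulled through the remaining compositions so that the factor $\mathfrak D$ on the right is absorbed exactly once, while simultaneously preserving the linear tame dependence on $\|\io\|_{s+2s_0}$ of the high norm. In particular, the precise index tracking — working with $|\cdot|_{s, \sigma'}$ for the intermediate values $\sigma' \in \{\sigma, \sigma-1, \sigma-2\}$ and systematically invoking the inequalities of Lemma \ref{lemma-reg-D} to trade $\mathfrak D$-factors against regularity indices — is where the bulk of the technical work lies.
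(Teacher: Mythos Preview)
Your approach to item $(i)$ is essentially the same as the paper's, just carried out in two explicit steps rather than one: the paper directly views $(\theta,y,z)\mapsto q_a$ as a single smooth map from a neighborhood in $M^\sigma$ to $H^\sigma(\T_1)$ (the Nemytskii operator composed with the inverse Birkhoff map) and invokes the composition estimate of Lemma~\ref{Lemma 2.4bis} once. Your two-step decomposition is a perfectly good unpacking of that single step; just be careful that the second step---composing $\partial_\zeta\partial_{\bar\zeta}p(x,\cdot)$ with $u(\vphi,x)$---is a Nemytskii-type composition in the $x$-variable and is not literally covered by Lemma~\ref{composition in Sobolev}, which concerns compositions with $\breve\io:\T^S\to M^\sigma$. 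The cleanest fix is exactly the paper's: absorb the Nemytskii operator into the outer map and apply the composition lemma once.

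For item $(ii)$ your route is genuinely different and substantially more laborious than the paper's. You propose to decompose each $T_j$ into factors (Fourier transforms, multiplication operators, one-smoothing differentials), estimate each factor in suitable $|\cdot|_{s,\sigma'}$ norms via Lemmas~\ref{lemma:mult}, \ref{prodest}, \ref{lemma-reg-D}, \ref{lemma:action-Sobolev}, and then reassemble---and you rightly flag the resulting index bookkeeping as the main obstacle. The paper sidesteps all of this: it observes that, since $dA^{nls}$, $dB^{nls}$, $d^2A^{nls}$ are one-smoothing (Theorem~\ref{Theorem Birkhoff coordinates}$(ii)$) and $d\widetilde X_{\cal P}$ has the regularity of the second derivatives of $p$, each map $T_j\,\frak D$ is a \emph{single} smooth map from $M^\sigma$ into $\mathcal L(h^{\sigma'})$ for $\sigma'\in\{\sigma-2,\sigma-1,\sigma\}$, and then applies the composition estimate~\eqref{composition formula} once to $\vphi\mapsto T_j\frak D(\breve\io(\vphi))$. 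The smoothing gain that absorbs $\frak D$ is thus encoded at the level of the target space, not tracked factor by factor. Your approach would ultimately succeed, but the paper's single-step argument is both shorter and avoids the delicate $\sigma'$-tracking you anticipate.
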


\begin{proof} {$(i)$} The bounds \eqref{estimates q1 q2} 
follow by 
the definition \eqref{definition q1 q2 R} of $q_1$ and $q_2$, 
the regularity assumption \eqref {regolarita di p}  of $\pa_{\bar \zeta} p$, 
 and the tame estimates for the composition of maps of 
Lemma \ref{Lemma 2.4bis} in the case where $Y = \C$.

\noindent
$(ii)$
We now prove the first estimate in \eqref{lemma4.2}.
According to Theorem~\ref{Theorem Birkhoff coordinates}, 
the maps $ A^{nls} $, $ B^{nls} $ are real analytic and one smoothing: for any $\s' \in \mathbb Z_{\ge 2},$
\begin{align*}
& A^{nls}  : H^{\s' -1}_r \to h^{\s'}_r \, , \quad 
B^{nls} : h^{\s' -1}_r \to H^{\s'}_r \, .
\end{align*}
By Cauchy's theorem it then follows that
\begin{align*}
& d A^{nls}   : H^{\s'-1}_r \to {\cal L}(H^{\s'-1}_r, \, h^{\s'}_r) \, , 
\quad d B^{nls}  : h^{\s' -1}_r \to {\cal L}(h^{\s' -1}_r , \, H^{\s'}_r ) \, ,
\end{align*} 
and $d^2 A^{nls}   : H^{\s'-1}_r \to {\cal L}(H^{\s'-1}_r \times H^{\s'-1}_r, \,h^{\s'}_r)$
are ${\cal C}^\infty$-smooth maps. 
It follows that $ T_1 {\frak D} $, $ T_2 {\frak D} $, $ T_3 {\frak D} $ 
are maps from the phase space $ M^\s $ into $ {\cal L}(h^{\s'})$ for $\s' \in \{\s,  \s -1, \s-2 \}$ 
which are as smooth as the second derivatives of $ p $. 
We now apply the estimate \eqref{composition formula} for the composite map 
$ \vphi \mapsto \breve \io (\vphi) \mapsto T_j ( {\breve \io} (\vphi) ) $, $ j =1, 2, 3 $, which yields
$$
| T_j {\frak D} \circ {\breve \io} |_{s, \sigma - 1}  \leq_s 1 + \| { \io} \|_{s+ 2 s_0}\, ,
$$
and hence \eqref{lemma4.2} is proved. 
The second  estimate in \eqref{lemma4.2} is proved in a similar way. 
\end{proof}

\begin{lemma}\label{estimates from the perturbation lip}
For any $s_0 \leq s \leq s_* - 2 s_0$ and any torus embeddings 
$\breve \io^{(a)}(\vphi) := (\vphi, 0, 0) + \io^{(a)} (\vphi)$, $a = 1, 2$, satisfying \eqref{ansatz 1}, the following holds: 

\noindent
$(i)$ The functions $\Delta_{12} q_1 := q_1(\breve \io^{(1)}) - q_1 (\breve \io^{(2)}) $ and $\Delta_{12} q_2 := q_2 (\breve \io^{(1)}) - q_2 (\breve \io^{(2)})$ satisfy the estimate 
\begin{equation}\label{estimates Delta 12 q1 q2}
\| \Delta_{12} q_1\|_s\,, \,\,\| \Delta_{12} q_2\|_s \leq_s \|\Delta_{12} \io \|_{s + s_0} + {\rm max}_{s + s_0}(\io) \| \Delta_{12} \io\|_{s_0}\,.
\end{equation}

\noindent
$(ii)$ The difference of the remainders, 
$\Delta_{12} {\frak R}^P := {\frak R}^P(\breve \io^{(1)}) - {\frak R}^P(\breve \io^{(2)})$, 
satisfies the estimate 
$$
|\Delta_{12} {\frak R}^P {\frak D}|_{s, \sigma - 1} \leq_s \|\Delta_{12} \io \|_{s + 2 s_0} + {\rm max}_{s + 2 s_0}(\io) \| \Delta_{12} \io\|_{s_0}\,.
$$
\end{lemma}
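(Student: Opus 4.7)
The plan is to derive both difference estimates by applying the tame composition bounds of Lemma \ref{composition in Sobolev}(ii) (the ``Lipschitz in $\breve\io$'' counterpart of the bounds already used to prove Lemma \ref{stima lip q1 q2 RP}). Throughout, I would exploit two structural facts that were isolated in the proof of Lemma \ref{stima lip q1 q2 RP}: first, by Theorem \ref{Theorem Birkhoff coordinates}, the Birkhoff inverse $\Phi^{-1} = F_{nls}^{-1}+B^{nls}$ is real analytic and $1$-smoothing, so that $\Phi^{-1}$ and all its derivatives are $\mathcal C^\infty$ as maps between the relevant Sobolev spaces; second, the assumption $\partial_{\bar\zeta}p \in \mathcal C^{\sigma,s_*}$ provides the needed smoothness of $q_1$, $q_2$ and of the three building blocks $T_1,T_2,T_3$.

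For item $(i)$, write $q_a(\breve\io) = g_a \circ \breve\io$ with $g_1(\theta,y,z)(x) := \partial_{\zeta}\partial_{\bar\zeta}p(x,\Phi^{-1}(\theta,y,z)(x))$ and analogously for $g_2$ (using $\partial_{\bar\zeta}^2 p$). By the two facts above, $g_a$ is $\mathcal C^{s+s_0+1}$-smooth as a map from a convex neighborhood $V$ of $\breve\io_0(\T^S)$ in $M^\sigma$ into $H^\sigma(\T_1,\C)$, once $s_*$ is taken large enough in terms of $s$. The smallness assumption \eqref{ansatz 1} ensures $\breve\io^{(a)}(\T^S)\subset V$, so Lemma \ref{composition in Sobolev}(ii) applied to $g_a$ and $\breve\io^{(a)}$, $a=1,2$, yields precisely \eqref{estimates Delta 12 q1 q2}.

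For item $(ii)$, the same strategy applies in the operator-valued category. Recall from the proof of Lemma \ref{stima lip q1 q2 RP} that each $T_j\frak D$, $j=1,2,3$, is the composition of real analytic building blocks $A^{nls}$, $B^{nls}$, $dA^{nls}$, $dB^{nls}$, $d^2 A^{nls}$ (all $\mathcal C^\infty$ as maps between the appropriate Sobolev spaces, by Cauchy's theorem applied to Theorem \ref{Theorem Birkhoff coordinates}(ii)) with the derivatives of $\widetilde X_{\mathcal P}$, which are $\mathcal C^{s+s_0+1}$ by the regularity assumption \eqref{regolarita di p} on $p$. Hence $T_j\frak D$ defines a $\mathcal C^{s+s_0+1}$-smooth map from a convex neighborhood of $\breve\io_0(\T^S)$ into $\mathcal L(h^{\sigma-1}\times h^{\sigma-1})$. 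I would then apply Lemma \ref{composition in Sobolev}(ii) with target $Y=\mathcal L(h^{\sigma-1}\times h^{\sigma-1})$ (using the operator-norm version) and convert the resulting $\|\cdot\|_s$ bound to the operator norm $|\cdot|_{s,\sigma-1}$ via \eqref{upper-bound-norm-op}. Summing the contributions of $T_1,T_2,T_3$ and using $\frak R^P=\mathbb I_\bot((T_1+T_2+T_3)\circ\breve\io)\mathbb I_{\hookrightarrow}$, together with the smallness \eqref{ansatz 1}, yields the claimed bound
\[
|\Delta_{12}{\frak R}^P\frak D|_{s,\sigma-1}\leq_s \|\Delta_{12}\io\|_{s+2s_0}+{\rm max}_{s+2s_0}(\io)\,\|\Delta_{12}\io\|_{s_0}\,.
\]

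The only nontrivial bookkeeping is in matching the exponents of the loss of regularity (the $s+s_0$ in item $(i)$ versus the $s+2s_0$ in item $(ii)$). This discrepancy is a direct consequence of the composition formula: for $q_a$ the outer factor $g_a$ takes scalar values, so one can apply Lemma \ref{Lemma 2.4bis}(iii) in $\mathcal C^s$ directly and then lose only one Sobolev exponent $s_0$ in passing to $H^s$ via \eqref{trivialembe}; for $\frak R^P$ the outer factor is operator-valued and the $|\cdot|_{s,\sigma-1}$-norm is controlled by a $\mathcal C^{s+s_0}$-norm via \eqref{upper-bound-norm-op}, so converting back costs the additional $s_0$. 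This is the step that I expect to require the most care, but it is a routine verification given the tame framework already developed in Subsection \ref{sec:2}. No new ideas are needed beyond those already used in Lemma \ref{stima lip q1 q2 RP}.
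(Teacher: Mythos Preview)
Your proposal is correct and matches the paper's approach: the paper's proof is a one-line reference to the definition \eqref{definition q1 q2 R} together with Lemma \ref{Lemma 2.4bis}(ii) and Lemma \ref{composition in Sobolev}(ii), which is precisely the composition-lemma strategy you outline. Your discussion of the $s+s_0$ versus $s+2s_0$ loss is more detailed than anything the paper provides (though note that $g_a$ is $H^\sigma$-valued rather than scalar-valued; the paper does not spell out this distinction either).
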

\begin{proof}
Items $(i)$ and $(ii)$ follow 
from the definition \eqref{definition q1 q2 R}, Lemma \ref{Lemma 2.4bis}$(ii)$ and 
Lemma \ref{composition in Sobolev}$(ii)$. 
\end{proof}

\smallskip

\noindent
{\it Analysis of ${\frak R}^\e$.}
The operator ${\frak R}^\e$, introduced in \eqref{formaK20}, is defined in terms of
the operators $\frak R^\varepsilon_1 = R^\varepsilon_1 \circ \breve \io$ and 
$\frak R^\varepsilon_2 = R^\varepsilon_2 \circ \breve \io$, where according to  
\eqref{R 1 epsilon tilde}, \eqref{R 2 epsilon tilde}
$$
 R^\varepsilon_1  =  \partial_y (\nabla_{\bar z} H_\e ) Y_w + 
  Y_{ \bar w}^t \partial_z \nabla_{ y} H_\e +  Y_{ \bar w}^t \partial_y (\nabla_y H_\e) Y_w 
  \, , \,
   R^\varepsilon_2  = \partial_y (\nabla_{ \bar z } H_\e) Y_{\bar w} + Y_{\bar w}^t 
   \pa_{ \bar z } \nabla_y H_\e   +  Y_{\bar w}^t \partial_y (\nabla_y  H_\e) Y_{\bar w} 
$$
and $ Y_w $ is defined in \eqref{operator Yw}.

\begin{lemma}\label{resto finito dimensionale stima lipschitz}
{\bf (Estimate of $ {\frak R}^\varepsilon $) } For any $s_0 \leq s \leq s_* - 2 s_0$
one has
\be\label{stima-frak-R-ep}
| \frak R^\varepsilon \frak D |_{s, \s-1} \leq_s \e \gamma^{- 2} \| \io \|_{s+2 s_0} \, , \quad 
|{\frak R}^\varepsilon {\frak D}|_{s, \sigma - 1}^\Lipg \leq_s \e \gamma^{- 2}\| \io \|_{s + 2 s_0}^\Lipg\,.
\ee
\end{lemma}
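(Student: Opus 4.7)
The plan is to decompose $\frak R^\varepsilon$ into its constituent terms and apply the tame composition estimates of Lemma \ref{prodest}(iii) and Remark \ref{generalizzazione prodotto operatori}, combined with the Hessian bounds from Proposition \ref{stime derivate H nls} and Proposition \ref{teorema stime perturbazione}. Since $\frak R^\varepsilon$ is a $2\times 2$ block operator on $h^{\sigma-1}_\bot\times h^{\sigma-1}_\bot$, it suffices to estimate each block $\frak R^\varepsilon_a\lla D\rra = R^\varepsilon_a(\breve\io)\lla D\rra$ for $a=1,2$. By \eqref{R 1 epsilon tilde}--\eqref{R 2 epsilon tilde}, each $R^\varepsilon_a$ is a sum of three compositions of a Hessian of $H_\varepsilon = H^{nls}+\varepsilon P$ with one or two factors from $\{Y_w,Y_{\bar w},Y_w^t,Y_{\bar w}^t\}$; I bound each summand separately, using that the $Y$-operators factor through the finite-dimensional $\C^S$ and are linear in $\partial_\psi z$ (hence vanish identically when $z=0$).

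For the $Y$-factors, the identity $Y_w=\ii(\partial_\psi\theta)^{-t}(\partial_\psi\bar z)^t$, together with the Neumann series for $(\partial_\psi\theta)^{-t}=({\rm Id}_S+\partial_\psi\Theta)^{-t}$ (applicable since $\|\partial_\psi\Theta\|_{s_0}^{\Lipg}\lessdot\e\gamma^{-2}$ by \eqref{ansatz 1}), the estimate $\|\partial_\psi\bar z\|_s^{\Lipg}\leq_s\|\io\|_{s+1}^{\Lipg}$, Cauchy--Schwartz to absorb the $\lla D\rra$ factor (a step requiring $\sigma\ge 4$ to ensure convergence of $\sum\langle k\rangle^{-2\sigma}\lla k\rra^2$), and the tame product estimate of Lemma \ref{interpolation product}, yield
\[
|Y_w|_{s,\sigma-2}^{\Lipg},\,\,|Y_w\lla D\rra|_{s,\sigma-1}^{\Lipg},\,\,|Y_w^t|_{s,\sigma-1}^{\Lipg}\leq_s\|\io\|_{s+2}^{\Lipg},
\]
and analogously for $Y_{\bar w},Y_{\bar w}^t$. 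Crucially, these bounds are linear in $\io$ (no additive constant); at $s=s_0$ they are $\lessdot\e\gamma^{-2}$ by \eqref{ansatz 1}.

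For the Hessian factors, combining Proposition \ref{stime derivate H nls}(ii)--(iii) with Proposition \ref{teorema stime perturbazione}(i)--(ii) and using $H_\varepsilon=H^{nls}+\varepsilon P$ gives
\[
|\partial_y\nabla_{\bar z}H_\varepsilon|_{s,\sigma-2}^{\Lipg},\,\,|\partial_z\nabla_y H_\varepsilon|_{s,\sigma-2}^{\Lipg},\,\,|\partial_{\bar z}\nabla_y H_\varepsilon|_{s,\sigma-2}^{\Lipg},\,\,|\partial_y\nabla_y H_\varepsilon|_s^{\Lipg}\leq_s\varepsilon+\|\io\|_{s+2s_0}^{\Lipg},
\]
each bounded at $s=s_0$ by $\lessdot\e\gamma^{-2}$ (using $\varepsilon\leq\e\gamma^{-2}$ since $\gamma<1$). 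Applying the tame composition estimate to each summand of $R^\varepsilon_a$ produces a sum of monomials in which exactly one factor sits at Sobolev index $s$ and the remaining factors at $s_0$. When a $Y$-factor sits at low index, its bound $\leq_s\|\io\|_{s_0+1}^{\Lipg}$ combines with the high-index Hessian bound $\leq_s\varepsilon+\|\io\|_{s+2s_0}^{\Lipg}$ to give $\varepsilon\|\io\|_{s_0+1}^{\Lipg}+\|\io\|_{s+2s_0}^{\Lipg}\|\io\|_{s_0+1}^{\Lipg}\lessdot(\varepsilon+\e\gamma^{-2})\|\io\|_{s+2s_0}^{\Lipg}\lessdot\e\gamma^{-2}\|\io\|_{s+2s_0}^{\Lipg}$ (using $\|\io\|_{s_0+1}^{\Lipg}\leq\|\io\|_{s+2s_0}^{\Lipg}$ in the first summand and $\|\io\|_{s_0+1}^{\Lipg}\lessdot\e\gamma^{-2}$ in the second); when the $Y$-factor sits at high index, its bound $\leq_s\|\io\|_{s+2}^{\Lipg}\leq\|\io\|_{s+2s_0}^{\Lipg}$ combines with the low-index Hessian $\lessdot\e\gamma^{-2}$ to yield the same prefactor. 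Summing over monomials and over the three summands yields $|\frak R^\varepsilon\frak D|_{s,\sigma-1}^{\Lipg}\leq_s\e\gamma^{-2}\|\io\|_{s+2s_0}^{\Lipg}$; the sup-norm bound in \eqref{stima-frak-R-ep} follows by the same argument applied to the sup-norm only.

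The main bookkeeping obstacle is to ensure that the small prefactor $\e\gamma^{-2}$ in the final estimate arises from the combination of the smallness of $\io$ at low index and the linearity of the $Y$-factors in $\partial_\psi z$, so that the explicit $\varepsilon$ appearing in the Hessians of the perturbation $\varepsilon P$ is absorbed---yielding the factor $\|\io\|_{s+2s_0}^{\Lipg}$ in the final bound rather than the spurious $1+\|\io\|_{s+2s_0}^{\Lipg}$.
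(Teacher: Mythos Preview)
Your approach is a valid alternative to the paper's. The paper does not use operator-norm composition; instead it fixes one representative term, $(\partial_y\nabla_{\bar z}H_\e)Y_w$, expands $Y_w\lla D\rra$ column by column as $\sum_{m\in S^\bot}\sum_{j,k\in S}\partial_{y_j}\nabla_{\bar z}H_\e\,B_j^k\,\partial_{\vphi_k}\bar z_m\,\lla m\rra\,\pi_m$, and then estimates each column via the identity $|A_{(m)}\pi_m|_{s,\sigma-1}=\langle m\rangle^{-(\sigma-1)}\|A_{(m)}\|_{s,\sigma-1}$ (exactly as in the proof of Lemma~\ref{I bot Lip gamma}). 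Your route through Remark~\ref{generalizzazione prodotto operatori} is more streamlined and avoids the column sum, at the price of having to pin down exactly which ${\cal L}(X,Y)$ each factor lives in.

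One bound is wrong as stated: $|\partial_y\nabla_y H_\varepsilon|_s^{\Lipg}\leq_s\varepsilon+\|\io\|_{s+2s_0}^{\Lipg}$ is false. Proposition~\ref{stime derivate H nls}(ii) only controls the \emph{difference} $\partial_y\nabla_y H^{nls}(\xi+y,z\bar z)-\partial_y\nabla_y H^{nls}(\xi,0)$, and the subtracted term $(\partial_{I_k}\omega_n^{nls}(\xi,0))_{k,n\in S}$ is a nonzero constant matrix; the correct bound is $1+\|\io\|_{s+2s_0}^{\Lipg}$. This is harmless for the lemma because $\partial_y\nabla_y H_\e$ appears only in the triple product $Y_{\bar w}^t(\partial_y\nabla_y H_\e)Y_w$, which carries \emph{two} $Y$-factors; in the monomial with the Hessian at index $s$ and both $Y$-factors at $s_0$ you keep one factor as $\|\io\|_{s_0+2}^{\Lipg}\leq\|\io\|_{s+2s_0}^{\Lipg}$ and bound the other by $\e\gamma^{-2}$, getting $\e\gamma^{-2}\|\io\|_{s+2s_0}^{\Lipg}(1+\|\io\|_{s+2s_0}^{\Lipg})$, which is fine after absorbing the bracket. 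Your closing bookkeeping paragraph, written for a single $Y$-factor, should treat this three-factor case explicitly. A smaller point: the bound $|\partial_z\nabla_y H_\e|_{s,\sigma-2}$ you need (as a map $h^{\sigma-2}_\bot\to\R^S$) does not come from Proposition~\ref{stime derivate H nls}(iii), which is only a tame \emph{action} estimate on $h^\sigma_\bot$; get it instead from Proposition~\ref{stime derivate H nls}(ii) for $\partial_y\nabla_z H_\e:\R^S\to h^\sigma_\bot$ together with Hessian symmetry and duality.
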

\begin{proof}
We now prove the first bound in \eqref{stima-frak-R-ep}.
The various terms in ${\frak R}^\e_1$ and ${\frak R}^\e_2$ are estimated individually. 
Since these terms can be estimated in a similar way, let us concentrate on  
$(\partial_y \nabla_{\bar z} H_\e Y_w) \circ \breve \io$ only. Recall that by  \eqref{operator Yw}, 
$$
Y_w ( \breve  \io(\vphi))   := \ii  B(\vphi) (\partial_\vphi {\bar z})^t(\vphi)  : h^\sigma_\bot \to \C^S\,, \quad B(\vphi) := (\partial_\vphi \theta(\vphi))^{- t} \, , 
$$
and, since  $ (\pa_\vphi \bar z)^t = \sum_{m \in S^\bot} \pa_\vphi \bar z_m \pi_m  $ where $\pi_m $ is the projector defined in \eqref{def:pi-bot}, we have 
$$
\partial_y (\nabla_{\bar z} H_\e) Y_w  = \ii \sum_{m \in S^\bot} \sum_{j, k \in S} 
\partial_{y_j} \nabla_{\bar z} H_\e B_j^k \partial_{\vphi_k} \bar z_m \pi_m \, . 
$$
Clearly, recalling \eqref{def:DD}, one gets 
\begin{equation}\label{ufpea - 1}
| \partial_y (\nabla_{\bar z} H_\e) Y_w \lla D \rra  {\rm Id}_\bot |_{s, \sigma - 1} \leq 
\sum_{m \in S^\bot} \sum_{j, k \in S} 
\big| \partial_{y_j} \nabla_{\bar z} H_\e B_j^k \partial_{\vphi_k} \bar z_m \lla m \rra  \pi_m \big|_{s, \sigma - 1}\,.
\end{equation}
Arguing as in \eqref{RNLS1Ds} one concludes that 
\begin{align}
|\partial_{y_j} \nabla_{\bar z} H_\e B_j^k \partial_{\vphi_k} \bar z_m \lla m \rra  \pi_m |_{s, \sigma - 1} & 
\leq_s \langle m \rangle^{-(\sigma - 1)}  \| \partial_{y_j} \nabla_{\bar z} H_\e B_j^k \partial_{\vphi_k} \bar z_m 
\lla m \rra \|_{s, \sigma - 1} \nonumber\\
& \leq_s \langle m \rangle^{-(2 \sigma - 2)}  \| \partial_{y_j} \nabla_{\bar z} H_\e B_j^k \partial_{\vphi_k}( \langle m \rangle^\sigma \bar z_m) \|_{s, \sigma - 1} \label{ufpea 0}\,.
\end{align}
 Since $B(\vphi) = (\partial_\vphi \theta(\vphi))^{- t}$ one has $\| B^k_j\|_s \leq_s 1 + \| \io\|_{s + 1}$.
Furthermore, for any $m \in S^\bot$ and $k \in S$,
 $\| \partial_{\vphi_k}(\langle m \rangle^\sigma z_m) \|_s \leq_s \| \io\|_{s + 1}$. Finally we analyze 
 $$
 \partial_y \nabla_{\bar z} H_\e =  \partial_y \nabla_{\bar z} H^{nls} + \e  \partial_y \nabla_{\bar z} P\,.
 $$  
 Note that 
 $ \partial_{y_j} \nabla_{\bar z} H^{nls} = (\partial_{y_j} \omega^{nls}_n  z_n)_{n \in S^\bot} $. 
By \eqref{tame composizione derivate ennesime frequenza}, one has that  
 $$
\sup_n  \| \partial_{y_j} \omega^{nls}_n \|_s \leq_s 1 + \| \io \|_{s + 2 s_0}\, , \quad \forall j \in S \, .
 $$
By the tame estimates for products of maps and the smallness assumption \eqref{ansatz 1} one then concludes that
 \begin{equation}\label{ufpea 1}
 \big\| \big( \partial_{y_j} \omega_n^{nls}  z_n  \big)_{n \in S^\bot}  
 B_j^k \partial_{\vphi_k} (\langle m \rangle^\sigma \bar z_m) 
  \big\|_{s, \s} \leq_s  \e \gamma^{- 2}  \| \io\|_{s + 2 s_0}\, , \quad \quad \forall j, k \in S, \ m \in S^\bot \, .
 \end{equation}
 Next we consider $\partial_{y_j} \nabla_{\bar z} P$. 
By Proposition  \ref{teorema stime perturbazione}, 
 $$
 \| \partial_{y_j} \nabla_{\bar z} P \circ \breve \io \|_{s, \sigma} \leq_s  1 + \| \io\|_{s + 2 s_0}\, ,
 $$
that, together with the smallness assumption \eqref{ansatz 1},  yields the estimate
 \begin{equation}\label{ufpea 2}
 \| \big( \partial_{y_j} \nabla_{\bar z} \e P \circ \breve \io \big) 
 B_j^k \partial_{\vphi_k}(\langle m \rangle^\sigma \bar z_m)  \|_{s, \sigma} 
\leq_s \e \gamma^{- 2} \| \io\|_{s  + 2 s_0}\,, \quad \forall j, k \in S, \ m \in S^\bot  \,.
 \end{equation}
Combining \eqref{ufpea - 1}, \eqref{ufpea 0}, \eqref{ufpea 1}, \eqref{ufpea 2} we get the claimed estimate for the term $\partial_y \nabla_{\bar z} H_\e Y_w$. 
The second estimate in \eqref{stima-frak-R-ep} follows in a similar way.
\end{proof}

\begin{lemma}\label{resto finito dimensionale variazione io}
For any $s_0 \leq s \leq s_* - 2 s_0$ 
and any torus embeddings $\breve \io^{(a)}(\vphi) = (\vphi, 0, 0) + \io^{(a)} (\vphi)$, $a = 1, 2$,
satisfying \eqref{ansatz 1}, the operator 
$\Delta_{12} {\frak R}^\varepsilon :=
 {\frak R}^\varepsilon(\breve \io^{(1)}) - {\frak R}^\varepsilon(\breve \io^{(2)})$ 
satisfies the estimate 
$$
|\Delta_{12} {\frak R}^\varepsilon \frak D |_{s, \sigma - 1} 
\leq_s  \e \gamma^{- 2}\| \Delta_{12} \io\|_{s + 2s_0}+ {\rm max}_{s + 2 s_0}(\io) \| \Delta_{12} \io \|_{ s_0}\,.
$$
\end{lemma}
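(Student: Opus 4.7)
The strategy is to mimic the proof of Lemma \ref{resto finito dimensionale stima lipschitz} but now tracking differences, in the same spirit as the passage from Lemma \ref{I bot Lip gamma} to Lemma \ref{lemma variazione i Omega nls} and from Lemma \ref{stima lip q1 q2 RP} to Lemma \ref{estimates from the perturbation lip}. First, I would decompose $\frak R^\varepsilon$ into its constituent building blocks coming from \eqref{R 1 epsilon tilde}-\eqref{R 2 epsilon tilde}, namely the three types of terms
$\partial_y(\nabla_{\bar z} H_\varepsilon)\, Y_{w}$,\ \ $Y_{\bar w}^{\,t}\,\partial_{z}\nabla_{y} H_\varepsilon$,\ \ $Y_{\bar w}^{\,t}\,\partial_y(\nabla_{y} H_\varepsilon)\,Y_{w}$ (and the analogous ones with $Y_w$ replaced by $Y_{\bar w}$), and treat each of them separately. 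For every such term, a standard telescoping identity expresses $\Delta_{12}$ of a product as a sum of products in which one factor carries $\Delta_{12}$ and the others are evaluated at $\breve\io^{(a)}$, $a=1,2$.

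Next, each factor is controlled with the tools already at hand. For the $Y_{w}$, $Y_{\bar w}$ factors, which by \eqref{operator Yw} are linear in $(\partial_\vphi\theta)^{-t}$ and $\partial_\vphi \bar z$, their $\Delta_{12}$-differences are estimated by $\|\Delta_{12}\io\|_{s+1}$ plus $\mathrm{max}_{s+1}(\io)\|\Delta_{12}\io\|_{s_0}$ via the product estimates of Lemma \ref{interpolation product} (Neumann series for $(\partial_\vphi\theta)^{-t}$ works since $\|\io\|_{s_0+\mu_1}$ is small by \eqref{ansatz 1}). For the derivatives of $H_\varepsilon = H^{nls} + \varepsilon P$, one writes
$\partial_y\nabla_{\bar z} H^{nls}\circ\breve\io = (\partial_{y_j}\omega^{nls}_n\circ\breve\io\,z_n)_{n\in S^\bot,\,j\in S}$,
so that the $\Delta_{12}$-estimate reduces, arguing as in Lemma \ref{lemma variazione i Omega nls}$(i)$, to bounding $\Delta_{12}(\partial_{y_j}\omega_n^{nls}\circ\breve\io)$ uniformly in $n$ with the help of \eqref{tame composizione derivate ennesime frequenza} and the local boundedness of higher derivatives of the dNLS frequencies from Theorem \ref{Corollary 2.2}; the $z_n$-factor is then handled exactly as in the proof of Lemma \ref{lemma variazione i Omega nls}$(ii)$. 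For the $\varepsilon P$ contribution, the $\Delta_{12}$-estimate follows from the composition tame estimate Lemma \ref{composition in Sobolev}$(ii)$ applied to $\partial_y\nabla_{\bar z} P$, $\partial_z\nabla_y P$, $\partial_y\nabla_y P$, which are covered by Proposition \ref{teorema stime perturbazione}.

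Then, assembling these pieces as in the displayed chain \eqref{ufpea - 1}-\eqref{ufpea 2} of Lemma \ref{resto finito dimensionale stima lipschitz}, but with the bound $1 + \|\io\|_{s+2s_0}$ replaced by $\|\Delta_{12}\io\|_{s+2s_0}+\mathrm{max}_{s+2s_0}(\io)\|\Delta_{12}\io\|_{s_0}$ everywhere a factor $\partial_y\nabla_\bullet H_\varepsilon$ or $\partial_{\bar\zeta}\pa_\bullet p$ gets replaced by its $\Delta_{12}$-difference, and with an extra factor of $\varepsilon$ or $\varepsilon\gamma^{-2}$ arising from the $z$-factor appearing in $R_1^\varepsilon, R_2^\varepsilon$ (since in $R_1^\varepsilon, R_2^\varepsilon$ the operators $Y_w, Y_{\bar w}$ always contribute an extra $\partial_\vphi \bar z$, which via \eqref{ansatz 1} is of size $\varepsilon\gamma^{-2}$), one obtains the claimed estimate for $|\Delta_{12}\frak R^\varepsilon \frak D|_{s,\sigma-1}$. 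Multiplication by $\frak D$ is absorbed on the $z$-factor as in \eqref{ufpea 0}, using the weight $\langle m\rangle^{-(\sigma-1)}$ versus $\langle m\rangle^\sigma z_m$, so the assumption $\sigma\ge 4$ ensures summability in $m\in S^\bot$.

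The main obstacle is strictly bookkeeping: the $\Delta_{12}$-differences of the second derivatives of $H^{nls}\circ\breve\io$ must be estimated \emph{uniformly in the spatial indices} $n\in S^\bot$ so that the $z$-weights can be summed afterwards, exactly as in Lemma \ref{lemma variazione i Omega nls}$(ii)$. This requires invoking the uniform-in-$n$ analyticity statement \eqref{tame composizione derivate ennesime frequenza} and the local Cauchy-type bound \eqref{local-boundeness} from Theorem \ref{Corollary 2.2}. No new idea is needed beyond this combination; all the pieces have already been isolated in the preceding lemmas.
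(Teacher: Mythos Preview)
Your proposal is correct and follows essentially the same approach as the paper. The paper's proof is a one-line reference, stating that the estimate is deduced by arguing as in the proofs of Lemma~\ref{lemma variazione i Omega nls} and Lemma~\ref{estimates from the perturbation lip}; your plan spells out exactly this strategy in detail, namely telescoping the products in \eqref{R 1 epsilon tilde}--\eqref{R 2 epsilon tilde}, controlling the $\Delta_{12}$-differences of the dNLS part via the uniform-in-$n$ bounds of Theorem~\ref{Corollary 2.2} as in Lemma~\ref{lemma variazione i Omega nls}, the perturbation part via Lemma~\ref{composition in Sobolev}$(ii)$ as in Lemma~\ref{estimates from the perturbation lip}, and then assembling following the chain \eqref{ufpea - 1}--\eqref{ufpea 2} of Lemma~\ref{resto finito dimensionale stima lipschitz}.
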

\begin{proof}
The claimed estimate can be deduced by arguing as in the proofs of Lemma \ref{lemma variazione i Omega nls} and Lemma \ref{estimates from the perturbation lip}.   
\end{proof}

We summarize the results obtained in this subsection as follows. 

\begin{proposition}
The Hamiltonian operator $ {\mathfrak L}_\om $ (cf \eqref{Linearized op normal}) can be decomposed  
as  
\be\label{Operator before trans}
{\frak L}_\om = \om \cdot \partial_\vphi \, {\mathbb I}_2 + J \big( D^2 \, {\mathbb I}_2 + 
\Omega^{nls} \, {\mathbb I}_2 + \e 
 {\frak Q}_\bot \big) + {\frak R}_0 \,,  \quad {\mathbb I}_2 = {\rm diag}({\rm Id}_\bot, {\rm Id}_\bot)\,,
\ee 
 where $ \Omega^{nls}$ is defined in \eqref{definition Omega nls},  ${\frak Q}_\bot$ in \eqref{lin:pert}, and 
$$
{\frak R}_0 :=  
J {\frak R}^\e +  J {\frak R}^{nls} + \e J {\frak R}^P  
$$
with 
 $ {\frak R}^\e $ introduced in \eqref{formaK20},  ${\frak R}^{nls} $ in  \eqref{def:Rnls} and $ {\frak R}^P  $ in 
\eqref{definition q1 q2 R}.
The remainder $ {\frak R}_0$ is a linear Hamiltonian operator which 
 is one smoothing and satisfies, for any $  s_0 \leq s \leq s_* - 2 s_0 $, 
\be\label{small-remainder}
 | {\frak R}_0 \frak D |_{s, \sigma - 1} \leq_s \e + \e \g^{-2} \| \io \|_{s +  2 s_0} \, , \qquad 
%\ee
%Furthermore for any  Lipschitz family of torus embeddings, 
%$\breve \io_\omega (\vphi) = (\vphi, 0, 0) + \io_\omega (\vphi),$ we have 
%\be\label{small-remainder-lip}
| {\frak R}_0 {\frak D}|_{s, \sigma - 1}^\Lipg \leq_s \e + \e \gamma^{- 2}  
\| \io \|_{s + 2 s_0}^\Lipg\, . %  \quad \forall s_0 \leq s \leq s_* - 2s_0\,.
\ee
Moreover if $\breve \io^{(a)}(\vphi) = (\vphi, 0, 0) +\io^{(a)}(\vphi)$, $a = 1, 2$, are two torus embeddings
satisfying \eqref{ansatz 1}, 
then, 
$\Delta_{12} {\frak R}_0 := {\frak R}_0(\breve \io^{(1)}) - {\frak R}_0(\breve \io^{(2)})$ 
satisfies the estimate 
\begin{equation}\label{small-remainder Delta 12}
|\Delta_{12} {\frak R}_0  {\frak D}|_{s, \sigma - 1} \leq_s \e \gamma^{- 2}\| \Delta_{12} \io\|_{s + 2 s_0} + {\rm max}_{s + 2 s_0}(\io) \| \Delta_{12} \io\|_{s_0}\,, \quad \forall s_0 \leq s \leq s_* - 2 s_0\, .
\end{equation}
\end{proposition}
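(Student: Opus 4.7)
The plan is to assemble the decomposition \eqref{Operator before trans} directly from the three separate analyses of the previous subsection, and then derive the stated bounds for $\mathfrak R_0$ by simply summing the corresponding estimates that have already been established.

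More precisely, I would proceed as follows. Starting from \eqref{Linearized op normal}, write $J\mathfrak A = J\mathfrak S^{nls} + \varepsilon J\mathfrak S^P$. Substitute the decomposition $\mathfrak S^{nls} = D^2\mathbb I_2 + \Omega^{nls}\mathbb I_2 + \mathfrak R^{nls}$ from \eqref{def:Rnls} and the decomposition $\mathfrak S^P = \mathfrak Q_\bot + \mathfrak R^P$ from \eqref{lin:pert}. The result is exactly \eqref{Operator before trans} with
\[
\mathfrak R_0 = J\mathfrak R^\varepsilon + J\mathfrak R^{nls} + \varepsilon J\mathfrak R^P.
\]
The Hamiltonian character of $\mathfrak R_0$ is then inherited: $\mathfrak L_\omega$ is Hamiltonian by Definition \ref{def:Hamiltonian op} and the construction of $\mathfrak A$; the operator $J(D^2+\Omega^{nls})\mathbb I_2$ is diagonal with real diagonal entries and thus Hamiltonian; and $\varepsilon J\mathfrak Q_\bot$ is Hamiltonian by Lemma \ref{lemma:HamiltonianVF} applied to the symmetric operator of multiplication by $\begin{pmatrix} q_1 & q_2 \\ \bar q_2 & q_1\end{pmatrix}$ (note $q_1$ is real-valued and the off-diagonal block is symmetric, so the hypotheses of that lemma are satisfied). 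Hence $\mathfrak R_0$ is a sum/difference of Hamiltonian operators, and therefore Hamiltonian.

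For the norm estimate \eqref{small-remainder}, I would apply the triangle inequality with respect to $|\cdot\,\mathfrak D|_{s,\sigma-1}$ (and its Lipschitz counterpart) to the three summands of $\mathfrak R_0$, using respectively Lemma \ref{I bot Lip gamma}$(ii)$ for $|\mathfrak R^{nls}\mathfrak D|_{s,\sigma-1}\leq_s \varepsilon\gamma^{-2}\|\io\|_{s+2s_0}$, Lemma \ref{stima lip q1 q2 RP}$(ii)$ for $\varepsilon|\mathfrak R^P\mathfrak D|_{s,\sigma-1}\leq_s \varepsilon(1+\|\io\|_{s+2s_0})$, and Lemma \ref{resto finito dimensionale stima lipschitz} for $|\mathfrak R^\varepsilon\mathfrak D|_{s,\sigma-1}\leq_s \varepsilon\gamma^{-2}\|\io\|_{s+2s_0}$. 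Since the operator $J$ is bounded (in fact an isometry with respect to the relevant norms) summing these bounds and using the smallness assumption \eqref{ansatz 1} to absorb constants yields $|\mathfrak R_0\mathfrak D|_{s,\sigma-1}\leq_s \varepsilon + \varepsilon\gamma^{-2}\|\io\|_{s+2s_0}$, as claimed; the Lipschitz version follows identically.

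The estimate \eqref{small-remainder Delta 12} for the variation with respect to the torus embedding is obtained analogously, by combining Lemma \ref{lemma variazione i Omega nls}$(ii)$ for $|\Delta_{12}\mathfrak R^{nls}\mathfrak D|_{s,\sigma-1}$, Lemma \ref{estimates from the perturbation lip}$(ii)$ for $\varepsilon|\Delta_{12}\mathfrak R^P\mathfrak D|_{s,\sigma-1}$, and Lemma \ref{resto finito dimensionale variazione io} for $|\Delta_{12}\mathfrak R^\varepsilon\mathfrak D|_{s,\sigma-1}$, using \eqref{ansatz 1} to dominate the various terms with the stated right-hand side. There is essentially no obstacle here: all the hard work (tame bounds on the individual remainders, their one-smoothing character, and the identification of the leading operator $D^2\mathbb I_2+\Omega^{nls}\mathbb I_2+\varepsilon\mathfrak Q_\bot$) has already been carried out in the preceding lemmas; the proposition is a bookkeeping step consolidating them for use in Section \ref{sec:redu}.
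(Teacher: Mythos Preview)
Your proposal is correct and follows exactly the approach of the paper: the decomposition is read off by combining \eqref{Linearized op normal} with \eqref{def:Rnls} and \eqref{lin:pert}, and the estimates \eqref{small-remainder}, \eqref{small-remainder Delta 12} are obtained by summing the bounds of Lemmata \ref{I bot Lip gamma}, \ref{stima lip q1 q2 RP}, \ref{resto finito dimensionale stima lipschitz} and, respectively, Lemmata \ref{lemma variazione i Omega nls}, \ref{estimates from the perturbation lip}, \ref{resto finito dimensionale variazione io}. The paper's proof is precisely this citation of lemmas, stated in two sentences; your write-up simply spells out the bookkeeping in more detail.
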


\begin{proof}
Lemmata \ref{I bot Lip gamma}, \ref{stima lip q1 q2 RP}, and
\ref{resto finito dimensionale stima lipschitz} yield the estimate \eqref{small-remainder}.
Lemmata \ref{lemma variazione i Omega nls}, 
\ref{estimates from the perturbation lip}, and \ref{resto finito dimensionale variazione io} imply 
\eqref{small-remainder Delta 12}.
\end{proof}

Note that   
% for $\sigma' \in \{ \sigma, \sigma - 1 , \sigma - 2\}$, 
the operator $\Omega^{nls} {\mathbb I}_2 : H^s(\T^S, h^{\sigma-1}_\bot \times h^{\sigma-1}_\bot ) \to H^s(\T^S, h^{\sigma-1}_\bot \times h^{\sigma-1}_\bot)$ is neither one smoothing nor small, whereas $\e {\frak Q}_\bot$, which acts between the same spaces, is small but not one smoothing. In the subsequent sections we will introduce three linear symplectic transformations so that,  when conjugated with these transformations, the operator 
$J (\Omega^{nls} {\mathbb I}_2 + \e {\frak Q}_\bot)$ becomes a diagonal one with constant coefficients 
up to a one smoothing remainder.  
Note also that the leading part $ J D^2 {\mathbb I}_2 $ in $ {\frak L}_\om $ is already 
a diagonal operator with constant coefficients.

\subsection{First transformation}\label{sec1}

The purpose of the first transformation is to eliminate the off diagonal terms of ${\frak Q}_\bot$in \eqref{Operator before trans} up to a one smoothing remainder. 
The transformation is chosen to be the time $ 1 $-flow 
${\mathtt \Phi}_1 : H^s(\T^S, h^{\sigma'}_\bot \times h^{\sigma'}_\bot) \to 
H^s(\T^S, h^{\sigma'}_\bot \times h^{\sigma'}_\bot)$, $\sigma' \in \{\sigma, \sigma - 1, \sigma- 2 \}$, 
$$
{\mathtt \Phi}_1 := \exp( - \e  J \, F_{nls}^\bot A_1 F_{nls}^{- 1}) = 
{\mathbb I}_2  - \e J \, F_{nls}^\bot A_1 F_{nls}^{- 1} + \ldots 
$$
of the linear vector field $ - \e J F_{nls}^\bot A_1 F_{nls}^{- 1}$ %  (cf Lemma \ref{lemma:HamiltonianVF})
with $A_1$ of the form
\be\label{def: A1}
A_1 =  
\left(
\begin{array}{cc}
0 &\lla D \rra^{-1} a_1 \lla D \rra^{-1}   \\
\lla D \rra^{-1} \bar a_1 \lla D \rra^{-1}  & 0 
\end{array}
\right)\,, \quad \lla D \rra = (1 + D^2)^{\frac12} \, , \quad D = \frac{1}{\ii}\partial_x \, .
\ee
By Lemma \ref{lemma:HamiltonianVF}  
%Since we will  chose $ a_2(\vphi, x)$  to be real valued the 
the operator $  J F_{nls}^\bot A_1 F_{nls}^{- 1}  $ is Hamiltonian   and hence the flow $ {\mathtt \Phi}_1 $ symplectic
(cf Definition~\ref{linear symplectic transformations}). 
% Following our convention of identifying $h^{\sigma'}$ with $H^{\sigma'}$ via the Fourier transform, 
% $D$ denotes here the operator $D: H^{\sigma'} \to H^{\sigma' -1}, f \mapsto \frac{1}{\ii}\partial_x f $. 
%Furthermore, $a_1$ denotes the operator on $H^{\sigma'}$, given by multiplication
%by the complex valued function $a_1(\varphi, x)$.
Note that for any $ \vphi \in \T^S $, the operator 
$ A_1 (\vphi) $ is one smoothing (actually, it is even two smoothing)
% and $F_{nls}^\bot A_1(\vphi) F_{nls}^{- 1}$, 
% acting on $h_\bot^0 \times h_\bot^0$, is self-adjoint with respect to the $\ell^2$-inner product. 
and  the linear map $ {\mathtt \Phi}_1(\varphi) $ is invertible 
with inverse $ {\mathtt \Phi}_1^{-1} (\vphi) \equiv ({\mathtt \Phi}_1(\vphi))^{-1}$ given by
$ \exp (  \e J F_{nls}^\bot A_1(\varphi) F_{nls}^{- 1} )$. 
The form of the operator $ A_1 $ is chosen in such a way that  the coefficients of the
remainder $ R $ in 
\eqref{remainder-1-tra} below involve only $ \pa_{x} a_1 $, and hence, by  \eqref{choice p2}, $ \pa_x q_2 $.   

The complex valued function  $ a_1 \equiv a_1 (\vphi, x )$ will be chosen in such a way that 
the off-diagonal part in  ${\frak L}_1 :=   {\mathtt \Phi}_1^{-1} {\frak L}_\om {\mathtt \Phi}_1$
vanishes up to a one smoothing remainder.  
Note that the operators \,$\om \cdot \partial_\vphi \, {\mathbb I}_2$,\,
$JD^2 \, {\mathbb I}_2$, and  $J\Omega^{nls} \, {\mathbb I}_2$ in ${\frak L}_\om = \om \cdot \partial_\vphi \, {\mathbb I}_2 + J D^2 \, {\mathbb I}_2 + 
J \Omega^{nls} \, {\mathbb I}_2 + \e J {\frak Q}_\bot + {\frak R}_0$
 are diagonal whereas (cf \eqref{lin:pert}) 
% $J{\frak Q}_\bot$ is not and ${\frak R}_0$ is one smoothing. 
% Furthermore recall that  by  \eqref{lin:pert}
\be\label{formula for Qbot}
J{\frak Q}_\bot =  J F_{nls}^\bot  
\left(
\begin{array}{cc}
 q_1 & q_2   \\
 {\bar q}_2 & q_1  \\
\end{array}
\right) \, F_{nls}^{- 1}  
\ee
is not and  ${\frak R}_0$ is one smoothing.
We then write ${\frak L}_\om {\mathtt \Phi}_1$ in the form
\be\label{First composition}
{\frak L}_\om {\mathtt \Phi}_1 =  {\mathtt \Phi}_1 \big( 
\om \cdot \partial_\vphi \, {\mathbb I}_2  
+ J  D^2  \, {\mathbb I}_2  + J \Omega^{nls} \, {\mathbb I}_2 \big) + \e J {\frak Q}_\bot   
- \e [J D^2  \, {\mathbb I}_2, \, J F_{nls}^\bot A_1 F_{nls}^{- 1} ] + {\frak R}^I 
\ee
where $[\, \cdot , \, \cdot]$ denotes the commutator of operators and 
$$
{\frak R}^I  := (\om \cdot \partial_\vphi)  \big( {\mathtt \Phi}_1 - {\mathbb I}_2 \big) +  
[J \Omega^{nls}{\mathbb I}_2, \, {\mathtt \Phi}_1 - {\mathbb I}_2] + 
\e J {\frak Q}_\bot   ({\mathtt \Phi}_1 - {\mathbb I}_2) +  {\frak R}_0 {\mathtt \Phi}_1 +
[ J D^2\, {\mathbb I}_2 , \, {\mathtt \Phi}_1 - {\mathbb I}_2 + 
\e J \, F_{nls}^\bot A_1 F_{nls}^{- 1} ]  
$$
collects operators which are one smoothing. 
We claim that the commutator
$ [J D^2  \, {\mathbb I}_2, \,  J F_{nls}^\bot A_1 F_{nls}^{- 1} ] $
 is  a Hamiltonian operator of order zero. Indeed, since $ J D^2 $ commutes 
 with $ J $, $  F_{nls}^\bot $ and $  F_{nls}^{-1} $, one has 
$$
[ J D^2  \, {\mathbb I}_2, J F_{nls}^\bot A_1 F_{nls}^{- 1} ] 
% [ D^2 J \, {\mathbb I}_2, J F_{nls}^\bot A_1 F_{nls}^{- 1} ] 
% F_{nls}^\bot  [D^2 J, J A_1]  F_{nls}^{- 1} =
= J F_{nls}^\bot  [JD^2 , A_1] F_{nls}^{- 1} 
% = J F_{nls}^\bot  B_1 F_{nls}^{- 1}
$$
and, recalling  \eqref{def: A1}, % the operator $ B_1 $ has the form in \eqref{form-simm-op}, in particular
\begin{align}
[JD^2 , A_1] =  \ii \begin{pmatrix} 
0  & D^2 \lla D \rra^{- 1} a_1 \lla D \rra^{- 1} + \lla D \rra^{- 1} a_1 \lla D \rra^{- 1} D^2 \\
- D^2 \lla D \rra^{- 1} \bar a_1 \lla D \rra^{- 1} - \lla D \rra^{- 1} \bar a_1 \lla D \rra^{- 1} D^2 & 0
\end{pmatrix}\,. \nonumber
\end{align}
 Then, since % by \eqref{def: A1}, 
 $ D^2 = \lla D \rra^2 - 1$, 
one has
\be\label{zero order}
 [J D^2  \, {\mathbb I}_2, J F_{nls}^\bot A_1 F_{nls}^{- 1} ]  
= J F_{nls}^\bot
\left(
\begin{array}{cc}
0 & 2 \ii a_1 \\
- 2 \ii \bar a_1 & 0 
\end{array} 
\right) F_{nls}^{- 1} - {\frak R}^{II}  \, , \quad 
{\frak R}^{II} :=  F_{nls}^\bot
\left(
\begin{array}{cc}
0 & R \\
\overline R & 0 
\end{array} 
\right) F_{nls}^{- 1} 
\ee
where 
\be\label{remainder-1-tra}
R =  2 \lla D \rra^{-1} a_1 \lla D \rra^{-1} 
- [a_1, \, \lla D \rra] \, \lla D \rra^{-1}  -
\lla D \rra^{-1}  [\lla D \rra, \, a_1 ] = R^t  \, .
\ee
Note that $ {\frak R}^{II} $ is one smoothing, but its coefficients involve 
$\pa_{x} a_1  \in H^{\s-1} $. 
In view of  \eqref{formula for Qbot}, we choose 
\be\label{choice p2}
a_1 := - \frac{\ii }{2 } q_2
\ee
so that by  \eqref{First composition}, \eqref{zero order} 
\be\label{expansion zero term}
 J {\frak Q}_\bot   
-  [J D^2  \, {\mathbb I}_2, J F_{nls}^\bot A_1 F_{nls}^{- 1}] 
=  J F_{nls}^\bot \begin{pmatrix}
q_1 & 0 \\
0 & q_1
\end{pmatrix} F_{nls}^{- 1}  + {\frak R}^{II} \, .
\ee
Applying $ {\mathtt \Phi}_1^{-1}$ to the identity \eqref{First composition} and using \eqref{expansion zero term} one gets
\be\label{op L1}
{\frak L}_1 =   {\mathtt \Phi}_1^{-1} {\frak L}_\om {\mathtt \Phi}_1 =  
\om \cdot \partial_\vphi {\mathbb I}_2 
+ J  \big(D^2  \, {\mathbb I}_2  +  \Omega^{nls} \, {\mathbb I}_2 
+ \e  F_{nls}^\bot q_1 F_{nls}^{- 1} \big)   + {\frak R}_1\,, 
\ee
where $ {\frak R}_1 $ is the one smoothing operator  
\be\label{formulaR1}
{\frak R}_1 :=  \e ({\mathtt \Phi}_1^{-1} - {\mathbb I}_2 ) 
 J F_{nls}^\bot q_1 F_{nls}^{- 1} 
+ {\mathtt \Phi}_1^{-1} \big( {\frak R}^I + \e {\frak R}^{II} \big) \, . 
\ee
Since  $ {\mathtt \Phi}_1 $ is symplectic and ${\frak L}_\om$ is a linear Hamiltonian operator,
Lemma~\ref{transformation of Hamiltonian operators} implies that also 
$ {\frak L}_1 $ is  Hamiltonian. 
Furthermore, the 0th order term of $ {\frak L}_1 $ is given by
$ J \big( \Omega^{nls} \, {\mathbb I}_2 + \e F_{nls}^\bot q_1 F_{nls}^{- 1} \big)   $ where
$ \Omega^{nls}$ is the $ \vphi $-
dependent diagonal operator defined in \eqref{definition Omega nls}. %  and depends on $ \vphi $. 
As pointed out above, the operator $ {\frak R}_1 $ is one smoothing, but its coefficients involve $\partial_x a_1$, i.e., they are maps
with values in $ h^{\s-1} $.

\begin{lemma}\label{lem:44 Lip}
{\bf (Estimates of $ A_1 $, $ {\mathtt \Phi}_1 $ and ${\frak R}_1$)}
For any $s_0 \leq s \leq s_* - 2 s_0$ 
the following statements hold:  

\noindent
$(i)$ 
For any $ \vphi \in \T^S $ and $  \s' \in \{  \s, \s - 1, \sigma - 2, \sigma - 3\} $, 
$A_1(\vphi) \in {\cal L}(H^{\sigma' - 1}, H^{\sigma'})$ and 
\begin{align}
& |J F_{nls}^\bot A_1 F_{nls}^{- 1} |_{s , \sigma'}\,,\,
| J F_{nls}^\bot A_1 F_{nls}^{- 1} {\frak D}|_{s , \sigma'} \leq_s 1 + \| \io\|_{s + s_0 } \label{pr-A1-es} \\
& |J F_{nls}^\bot A_1 F_{nls}^{- 1} |_{s , \sigma'}^\Lipg\,,\,
| J F_{nls}^\bot A_1 F_{nls}^{- 1} {\frak D}|_{s , \sigma'}^\Lipg 
\leq_s 1 + \| \io\|_{s + s_0 }^\Lipg\,. \label{sec-A1-es}
\end{align} 
\noindent
$(ii)$ 
For any $ \vphi \in \T^S $ and $  \s' \in \{  \s, \s - 1, \s - 2 \} $, $ {\mathtt \Phi}_1 (\vphi ) \in {\cal L} ( h^{\s'}_\bot) $ and 
\begin{align*} 
& | {\mathtt \Phi}_1^{\pm 1} - {\mathbb I}_2 |_{s, \s'}\,,\,| ({\mathtt \Phi}_1^{\pm 1} - {\mathbb I}_2) {\frak D} |_{s, \s'} \leq_s  \e  (1 + \| \io \|_{s +  s_0}) \\
&
 | {\mathtt \Phi}_1^{\pm 1} - {\mathbb I}_2 |_{s, \s'}^\Lipg\,,\,| ({\mathtt \Phi}_1^{\pm 1} - {\mathbb I}_2) {\frak D} |_{s, \s'}^\Lipg \leq_s  \e  (1 + \| \io \|_{s + s_0}^\Lipg)\,.
\end{align*}
\noindent
$(iii)$ ${\frak R}_1$ is a linear Hamiltonian operator with $ {\frak R}_1 (\vphi ) \in {\cal L} ( h^{\s-2}_\bot \times h^{\s-2}_\bot   , h^{\s-1}_\bot \times h^{\s-1}_\bot) $ for any $\vphi \in \T^S$,   and 
\be\label{lem:esR1f}
 | {\frak R}_1 \frak D |_{s, \s-1} \leq_s \e + \e \g^{-2} \| \io \|_{s+ 2 s_0} \, , 
 \quad 
  | {\frak R}_1 \frak D |_{s, \s-1}^\Lipg \leq_s \e +  \e \g^{-2} \| \io \|_{s+ 2 s_0}^\Lipg \, . 
\ee 
%The operator ${\frak R}_1$ satisfies
% $ | {\frak R}_1 \frak D |_{s, \s-1}^\Lipg \leq_s \e +  \e \g^{-2} \| \io \|_{s+ 2 s_0}^\Lipg \,.$ 
\end{lemma}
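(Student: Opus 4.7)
The proof decomposes into three parts following items (i)--(iii). For item (i), I would observe that $a_1 = -\tfrac{\ii}{2} q_2$ inherits the bounds on $q_2$ from Lemma~\ref{stima lip q1 q2 RP}, giving $\| a_1 \|_s^\Lipg \leq_s 1 + \| \io \|_{s+s_0}^\Lipg$. Then Lemma~\ref{lemma:mult}$(i)$ yields $|\Lambda_{a_1}|_{s,\s'} \lessdot \| a_1 \|_{s,\s'}$ for $\s' \in \{\s, \s-1, \s-2, \s-3\}$. Since $\lla D \rra^{-1}$ is a bounded Fourier multiplier on each $h^{\s'}$ and $F_{nls}^\bot$, $F_{nls}^{-1}$ act as isometries (up to sign reversal of indices), the norm of $J F_{nls}^\bot A_1 F_{nls}^{-1}$ is controlled by $\| a_1 \|_{s, \s'}$, giving \eqref{pr-A1-es}. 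When right-multiplying by $\frak D$, one factor of $\lla D \rra^{-1}$ in $A_1$ cancels with $\lla D \rra$, so the same bound holds. The Lipschitz estimates \eqref{sec-A1-es} follow from the corresponding Lipschitz version of Lemma~\ref{lemma:mult}.

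For item (ii), I would apply Lemma~\ref{lem:inverti}$(i)$--$(ii)$ with $A := -\e J F_{nls}^\bot A_1 F_{nls}^{-1}$. The smallness hypothesis $2 C_{op}(s_0) | A |_{s_0}^\Lipg \leq 1$ follows from item $(i)$ specialized at $s = s_0$ combined with the standing smallness assumption \eqref{ansatz 1}, since $|A|_{s_0}^\Lipg \leq_{s_0} \e(1 + \e \gamma^{-2}) \ll 1$. The estimate involving $\frak D$ uses the fact that $A \frak D$ is controlled in norm, since the structure of $A_1$ (two factors of $\lla D \rra^{-1}$ sandwiching $a_1$) makes $A$ even two-smoothing.

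For item (iii), the Hamiltonian character of ${\frak R}_1$ is immediate: $A_1$ has the form \eqref{form-simm-op} (noting that $q_2^t = q_2$ and $\lla D \rra^t = \lla D \rra$), so $J F_{nls}^\bot A_1 F_{nls}^{-1}$ is Hamiltonian by Lemma~\ref{lemma:HamiltonianVF}; hence ${\mathtt \Phi}_1$ is symplectic and by Lemma~\ref{transformation of Hamiltonian operators} the conjugate ${\frak L}_1$ is Hamiltonian, so ${\frak R}_1$, being the difference between ${\frak L}_1$ and the explicit Hamiltonian operator $\om \cdot \partial_\vphi\mathbb{I}_2 + J(D^2\mathbb{I}_2 + \Omega^{nls}\mathbb{I}_2 + \e F_{nls}^\bot q_1 F_{nls}^{-1})$, is Hamiltonian. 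The tame estimates \eqref{lem:esR1f} come from decomposing ${\frak R}_1$ via \eqref{formulaR1} and estimating each summand: $\e ({\mathtt\Phi}_1^{-1} - \mathbb{I}_2) J F_{nls}^\bot q_1 F_{nls}^{-1}$ is handled by item $(ii)$ combined with the bound $| J F_{nls}^\bot q_1 F_{nls}^{-1}|_{s, \s - 1} \leq_s 1 + \| \io \|_{s + s_0}$ from Lemma~\ref{stima lip q1 q2 RP} and Lemma~\ref{lemma:mult}$(i)$; the term ${\frak R}_0 {\mathtt \Phi}_1$ is already one-smoothing by \eqref{small-remainder}; the commutator $[J \Omega^{nls} \mathbb{I}_2, \mathtt \Phi_1 - \mathbb{I}_2]$, the product $\e J {\frak Q}_\bot ({\mathtt \Phi}_1 - \mathbb{I}_2)$, and $(\om \cdot \partial_\vphi)(\mathtt \Phi_1 - \mathbb{I}_2)$ combine item $(ii)$ with Lemma~\ref{prodest} and the estimate \eqref{est:Omega1}; finally $\e {\mathtt \Phi}_1^{-1} {\frak R}^{II}$ is estimated from the explicit form \eqref{remainder-1-tra} of $R$, using that $|[\lla D \rra, \Lambda_{a_1}]|_{s, \s' - 1} \lessdot \| a_1 \|_{s, \s'}$ by Lemma~\ref{lemma:mult}$(ii)$, which shows $R$ is one-smoothing.

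The main obstacle is the commutator $[J D^2 \mathbb{I}_2, {\mathtt \Phi}_1 - \mathbb{I}_2 + \e J F_{nls}^\bot A_1 F_{nls}^{-1}]$ appearing inside ${\frak R}^I$, since $JD^2$ has order $2$ and is unbounded. The key point is that, by the exponential series,
\[
{\mathtt \Phi}_1 - \mathbb{I}_2 + \e J F_{nls}^\bot A_1 F_{nls}^{-1} = \sum_{n \geq 2} \frac{(-\e)^n}{n!} (J F_{nls}^\bot A_1 F_{nls}^{-1})^n,
\]
and each factor $J F_{nls}^\bot A_1 F_{nls}^{-1}$ is itself two-smoothing, so the $n$-th summand is $2n$-smoothing for $n \geq 2$; its commutator with $JD^2$ loses only one order, remaining at least three-smoothing, and after multiplication by $\frak D$ still provides a two-smoothing operator. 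The precise tame bounds for this tail come from Lemma~\ref{lem:inverti}$(iii)$--$(iv)$ applied to $A = -\e J F_{nls}^\bot A_1 F_{nls}^{-1}$, whose hypotheses are verified as in item $(ii)$. Collecting the individual estimates yields the bound $|{\frak R}_1 {\frak D}|_{s, \s-1} \leq_s \e + \e \gamma^{-2} \| \io \|_{s + 2s_0}$ (and its Lipschitz counterpart), completing item $(iii)$.
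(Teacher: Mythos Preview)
Your proposal is correct and follows essentially the same approach as the paper: item $(i)$ via the structure $A_1 = {\frak D}^{-1}(\cdot){\frak D}^{-1}$ and Lemma~\ref{lemma:mult}, item $(ii)$ via Lemma~\ref{lem:inverti}$(i)$--$(ii)$, and item $(iii)$ by term-by-term estimation of \eqref{formulaR1}, handling the $JD^2$ commutator with the exponential tail via Lemma~\ref{lem:inverti}. One minor remark: only part $(iii)$ of Lemma~\ref{lem:inverti} is needed here (since $A_1$ is two-smoothing, of the form ${\frak D}^{-1}A{\frak D}^{-1}$); part $(iv)$ is reserved for the second transformation in Section~\ref{sec2}, where $A_2$ is only one-smoothing.
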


\begin{proof}
Since the proofs of the stated inequalities are similar for the range of values of $\sigma'$ considered, 
we only treat the case $\sigma' = \sigma$. 
\\[1mm]
%Along the proof we assume the smallness condition \eqref{size of torus}.  % and we assume that $\e \gamma^{- 2} $
$(i)$ We begin by proving the  estimate  \eqref{pr-A1-es}. 
In view of \eqref{def:D-doubled} and \eqref{def: A1} we can write 
$$
J F_{nls}^\bot A_1 F_{nls}^{- 1} 
= J  {\frak D}^{- 1}  F_{nls}^\bot  \begin{pmatrix}
0 & a_1 \\
\bar a_1 & 0
\end{pmatrix} F_{nls}^{- 1}{\frak D}^{- 1}\,, 
$$
Since $|{\frak D}^{- 1}|_{s, \sigma} = \|{\frak D}^{- 1} \|_{{\cal L}(h^{\sigma})}  \leq 1$ one has 
$|J F_{nls}^\bot A_1 F_{nls}^{- 1} |_{s, \sigma} 
\leq |J F_{nls}^\bot A_1 F_{nls}^{- 1}   {\frak D}|_{s, \sigma} $ and 
$$
|J F_{nls}^\bot A_1 F_{nls}^{- 1}   {\frak D}|_{s, \sigma} \stackrel{Lemma \,\ref{lemma:mult}}{\leq_s} 
\| a_1\|_s \stackrel{\eqref{choice p2}}{\leq_s} \| q_2 \|_s \stackrel{\eqref{estimates q1 q2}}{\leq_s} 1 + \| \io \|_{s + s_0}\,.
$$
The estimate \eqref{sec-A1-es} is proved in a similar way. 

\noindent
$(ii)$
By the smallness condition \eqref{ansatz 1}, the assumption of  Lemma \ref{lem:inverti} is satisfied for the operator $\e  J F_{nls}^\bot A_1 F_{nls}^{- 1}$ with $ \varepsilon $ sufficiently small, 
hence the claimed statement follows from this lemma and item $(i)$.

\noindent
$(iii)$ We begin proving the first estimate in \eqref{lem:esR1f}. 
The terms in ${\frak R}_1 {\frak D}$, with ${\frak R}_1$ defined in \eqref{formulaR1} 
are estimated individually. 
The statement concerning ${\frak R}_1 (\vphi )$  can be verified in a straightforward way.
Furthermore, the following estimates hold: 
\begin{align*}
& |{\mathtt \Phi}_1^{\pm 1}|_{s, \sigma - 1} \stackrel{(ii)}{\leq_s} 1 + \e \| \io\|_{s + s_0}\,, \quad | ({\mathtt \Phi}_1^{\pm 1} - {\mathbb I}_2) {\frak D} |_{s, \s -1} \stackrel{(ii)}{\leq_s}  \e  (1 + \| \io \|_{s + s_0})\,, \\
&
|{\frak D}^{- 1} J F_{nls}^\bot q_1 F_{nls}^{- 1} {\frak D}|_{s, \sigma - 1} 
\leq_s |F_{nls}^\bot q_1 F_{nls}^{- 1} |_{s, \sigma -2} 
\stackrel{Lemma \,\ref{lemma:mult}}{\leq_s} \| q_1\|_s
\stackrel{\eqref{estimates q1 q2}}{\leq_s} 1 + \| \io \|_{s + s_0}\,, \\
&
|(\omega \cdot \partial_\vphi)({\mathtt \Phi}_1 - {\mathbb I}_2) {\frak D}|_{s, \sigma - 1} 
\stackrel{\rm{ Def \, of\, } |\cdot |_{s, \sigma - 1}}{\leq_s} |({\mathtt \Phi}_1 - {\mathbb I}_2) {\frak D}|_{s + 1, \sigma - 1} 
\stackrel{(ii)}{\leq_s}  \e  (1 + \| \io \|_{s + s_0 + 1})\,,\\
&
|J \Omega^{nls} {\mathbb I}_2|_{s, \sigma - 1} \stackrel{\eqref{est:Omega1}}{\leq_s} 1 + \| \io \|_{s + 2 s_0}\,, \quad |{\frak Q}_\bot |_{s, \sigma - 1} \stackrel{Lemma \,\ref{lemma:mult}}{\leq_s} \| q_1\|_s + \| q_2 \|_s \stackrel{\eqref{estimates q1 q2}}{\leq_s} 1 + \| \io \|_{s + s_0}\,, \\
&
|{\frak R}_0 {\frak D}|_{s, \sigma - 1} \stackrel{\eqref{small-remainder}}{\leq_s} \e + \e \gamma^{- 2} \|\io \|_{s + 2 s_0}\,, \quad |[q_2, \langle D \rangle]|_{s, \sigma - 1} \stackrel{Lemma\, \ref{lemma:mult}}{\leq_s} \| q_2\|_{s} \stackrel{\eqref{estimates q1 q2}}{\leq_s} 1 + \| \io\|_{s + s_0}\,, \\
&
\Big| J {\frak D}^2 \sum_{n \geq 2} \frac{1}{n !} 
(- \e J F_{nls}^\bot A_1 F_{nls}^{- 1})^n {\frak D}  \Big|_{s, \sigma - 1} 
\stackrel{ Lemma \, \ref{lem:inverti} }{\leq_s} 
\e^2 | J F_{nls}^\bot A_1 F_{nls}^{- 1}|_{s, \sigma} 
| J F_{nls}^\bot A_1 F_{nls}^{- 1}|_{s_0, \sigma}
\stackrel{(i)}{\leq_s} \e^2 (1 + \| \io \|_{s + s_0 })\,, \\
&
\Big| \sum_{n \geq 2} \frac{1}{n !} 
(- \e J F_{nls}^\bot A_1 F_{nls}^{- 1})^n J {\frak D}^3  \Big|_{s, \sigma - 1} 
\stackrel{ Lemma \, \ref{lem:inverti} }{\leq_s} 
\e^2 | J F_{nls}^\bot A_1 F_{nls}^{- 1}|_{s, \sigma} | J F_{nls}^\bot A_1 F_{nls}^{- 1}|_{s_0, \sigma}
\stackrel{(i)}{\leq_s} \e^2 (1 + \| \io \|_{s + s_0 })\,.
\end{align*}
These estimates together with the tame estimate \eqref{interpm} for the composition of operator valued
 maps,  allow to bound each term in ${\frak R}_1 {\frak D}$ by $\e + \e \gamma^{- 2} \| \io \|_{s + 2 s_0}$. 
The second estimate in \eqref{lem:esR1f} is
% corresponding to the ones of Lemma  for the $\| \cdot \|^\Lipg$ norm 
proved in a similar way. 
\end{proof}

\begin{lemma}\label{lem:44 Delta 12} 
For any $s_0 \leq s \leq s_* - 2 s_0$ and any torus embeddings $\breve \io^{(a)}(\vphi) = (\vphi, 0, 0) + \io^{(a)}(\vphi)$, $a = 1, 2$, the following holds: 

\noindent
$(i)$ For any $  \s' \in \{  \s, \s - 1, \sigma - 2, \sigma - 3\} $, the operator $\Delta_{12} A_1 := A_1(\breve \io^{(1)}) - A_1(\breve \io^{(2)})$ satisfies 
$$
|J F_{nls}^\bot \Delta_{12} A_1 F_{nls}^{- 1} |_{s , \sigma'}\,,\,\,
| J F_{nls}^\bot \Delta_{12} A_1 F_{nls}^{- 1} {\frak D}|_{s , \sigma'} 
\leq_s \| \Delta_{12} \io \|_{s + s_0} + {\rm max}_{s + s_0}(\io) \| \Delta_{12} \io\|_{s_0}\, .
$$
 
\noindent
$(ii)$For any $  \s' \in \{  \s, \s - 1, \s - 2 \} $, the operators
$\Delta_{12} {\mathtt \Phi}_1:=  {\mathtt \Phi}_1(\breve \io^{(1)})  -  {\mathtt \Phi}_1(\breve \io^{(2)})$ and 
$\Delta_{12} {\mathtt \Phi}_1^{-1} :=  
{\mathtt \Phi}_1^{-1}(\breve \io^{(1)})  -  {\mathtt \Phi}_1^{- 1}(\breve \io^{(2)})$
satisfy the estimate
$$ 
 | \Delta_{12}{\mathtt \Phi}_1^{\pm 1} |_{s, \s'}\,,\,\,| \Delta_{12}{\mathtt \Phi}_1^{\pm 1} {\frak D} |_{s, \s'} \leq_s  \e \, (\| \Delta_{12} \io\|_{s + s_0} + {\rm max}_{s + s_0}(\io) \| \Delta_{12} \io\|_{2s_0})\,.
$$

\noindent
$(iii)$ The operator $\Delta_{12}{\frak R}_1 := {\frak R}_1(\breve \io^{(1)}) - {\frak R}_1(\breve \io^{(2)})$ satisfies the estimate 
$$
 | \Delta_{12}{\frak R}_1 \frak D |_{s, \s-1} \leq_s \e \gamma^{- 2}\| \Delta_{12} \io\|_{s + 2 s_0 } 
+  {\rm max}_{s + 2s_0}(\io) \|\Delta_{12}\io\|_{3 s_0} \,.
 $$ 
\end{lemma}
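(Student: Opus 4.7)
The plan is to treat the three items in order, building on the corresponding statements in Lemma~\ref{lem:44 Lip} by taking differences and invoking the ``variation'' counterparts already established for the building blocks: Lemma~\ref{estimates from the perturbation lip} (for $q_1, q_2$), Lemma~\ref{lem:inverti}$(v)$ (for differences of exponentials), Lemma~\ref{lemma variazione i Omega nls} (for $\Omega^{nls}, {\frak R}^{nls}$), Lemma~\ref{resto finito dimensionale variazione io} (for ${\frak R}^\varepsilon$), and \eqref{small-remainder Delta 12} (for ${\frak R}_0$). Throughout, the tame estimates \eqref{interpm}--\eqref{interpm Lip} of Lemma~\ref{prodest} will be used repeatedly to dispatch compositions, and Lemma~\ref{lemma:mult} will be used to exchange multiplication operators with $\lla D \rra$.

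For item $(i)$, by \eqref{def: A1} and the choice \eqref{choice p2} one has
\[
\Delta_{12} A_1 = -\tfrac{\ii}{2}
\begin{pmatrix} 0 & \lla D \rra^{-1} \Delta_{12} q_2 \, \lla D \rra^{-1} \\
\lla D \rra^{-1} \overline{\Delta_{12} q_2}\,\lla D \rra^{-1} & 0 \end{pmatrix},
\]
so after factoring ${\mathfrak D}^{-1}$ on each side exactly as in the proof of Lemma~\ref{lem:44 Lip}$(i)$, an application of Lemma~\ref{lemma:mult} reduces the two norms to $\|\Delta_{12} q_2\|_s$, at which point the estimate of Lemma~\ref{estimates from the perturbation lip}$(i)$ gives the claim. (Note that only the sup-norm in $\omega$ enters, so no Lipschitz bookkeeping is needed here.)

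For item $(ii)$, I would apply Lemma~\ref{lem:inverti}$(v)$, i.e.\ estimates \eqref{derivata-inversa-Phi} and \eqref{derivata-inversa-Phi frak D}, with
\[
A_i := -\varepsilon\, J\, F_{nls}^\bot A_1(\breve\io^{(i)})\, F_{nls}^{-1}, \qquad i=1,2.
\]
The smallness condition \eqref{smallness A1 A2} is guaranteed by Lemma~\ref{lem:44 Lip}$(ii)$ together with the ansatz \eqref{ansatz 1}. One then obtains, for $\sigma'\in\{\sigma,\sigma-1,\sigma-2\}$,
\[
|\Delta_{12}\mathtt\Phi_1^{\pm 1}|_{s,\sigma'} \leq_s \varepsilon\,|\Delta_{12} A_1|_{s,\sigma'} + \varepsilon\,\bigl(|A_1^{(1)}|_{s,\sigma'}+|A_1^{(2)}|_{s,\sigma'}\bigr)\,|\Delta_{12} A_1|_{s_0,\sigma'}
\]
and the analogous inequality with the weight $\mathfrak D$ attached. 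Plugging in the bounds of Lemma~\ref{lem:44 Lip}$(i)$ for $|A_1^{(i)}|$ and of item $(i)$ above for $|\Delta_{12} A_1|$, and absorbing powers of $\varepsilon\gamma^{-2}$ via \eqref{ansatz 1}, yields the stated bound (the extra $\|\Delta_{12}\io\|_{2s_0}$ appears from the product with $\mathrm{max}_{s+s_0}(\io)$).

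For item $(iii)$, I would differentiate the defining identity \eqref{formulaR1} by writing
\[
\Delta_{12}{\frak R}_1 = \varepsilon\,\Delta_{12}\bigl[(\mathtt\Phi_1^{-1}-{\mathbb I}_2)J F_{nls}^\bot q_1 F_{nls}^{-1}\bigr] + \Delta_{12}\bigl[\mathtt\Phi_1^{-1}({\frak R}^I + \varepsilon{\frak R}^{II})\bigr],
\]
then expand each composite by the Leibniz identity $\Delta_{12}(BC) = (\Delta_{12}B)C^{(2)} + B^{(1)}(\Delta_{12}C)$, and finally estimate term by term with the norm $|\,\cdot\,\mathfrak D\,|_{s,\sigma-1}$. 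The ingredients are: (a) the bounds for $\Delta_{12}\mathtt\Phi_1^{\pm 1}$ from item $(ii)$; (b) the estimates for $\Delta_{12}q_1$, $\Delta_{12}q_2$ of Lemma~\ref{estimates from the perturbation lip}; (c) the bound \eqref{small-remainder Delta 12} for $\Delta_{12}{\frak R}_0$ (which occurs inside ${\frak R}^I$); (d) Lemma~\ref{lemma variazione i Omega nls}$(i)$ for $\Delta_{12}\Omega^{nls}$ (occurring through the commutator $[J\Omega^{nls}{\mathbb I}_2,\mathtt\Phi_1-{\mathbb I}_2]$ inside ${\frak R}^I$); and (e) the difference of the remainder \eqref{zero order} coming from $\Delta_{12}R$, which by \eqref{remainder-1-tra} is controlled through $\Delta_{12}a_1 = -\tfrac{\ii}{2}\Delta_{12}q_2$ and Lemma~\ref{lemma:mult}$(ii)$.

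The only mildly delicate point, and the main technical obstacle, will be the two higher-order tails $\sum_{n\geq 2}\tfrac{1}{n!}(-\varepsilon J F_{nls}^\bot A_1 F_{nls}^{-1})^n$ that appear when one expands the commutator $[JD^2{\mathbb I}_2,\mathtt\Phi_1-{\mathbb I}_2+\varepsilon J F_{nls}^\bot A_1 F_{nls}^{-1}]$ hidden in ${\frak R}^I$: differencing them requires the schematic identity
\[
\Delta_{12}\Bigl(\sum_{n\geq 2}\tfrac{1}{n!}A^n\Bigr) = \sum_{n\geq 2}\tfrac{1}{n!}\sum_{k=0}^{n-1}A_1^{(1)\,k}\,(\Delta_{12}A_1)\,A_1^{(2)\,n-1-k},
\]
followed by the $\mathfrak D$-weighted composition bounds of Lemma~\ref{lem:inverti}$(iii)$--$(iv)$ applied term-by-term, with the smallness of $A_1^{(i)}$ from Lemma~\ref{lem:44 Lip}$(i)$ and \eqref{ansatz 1} ensuring the geometric convergence of the resulting series. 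Collecting all contributions and using Lemma~\ref{prodest} to split high-low norms gives the announced estimate, up to the factor $\varepsilon\gamma^{-2}$ whose origin is entirely the smallness hypothesis \eqref{ansatz 1}.
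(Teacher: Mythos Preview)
Your proposal is correct and follows essentially the same route as the paper's proof: items $(i)$ and $(ii)$ match exactly, and for item $(iii)$ both arguments expand $\Delta_{12}{\frak R}_1$ via Leibniz/telescoping and estimate each piece using the variation lemmas for $q_1,q_2,\Omega^{nls},{\frak R}_0$ together with Lemma~\ref{lem:44 Lip} and Lemma~\ref{prodest}. One small imprecision: for the higher-order tails $S_1,S_2$ the paper does not invoke Lemma~\ref{lem:inverti}$(iii)$--$(iv)$ (those bound full sums $\sum_{n\ge 2}$ for a \emph{single} $A$, not individual telescoping summands), but instead exploits directly the two-smoothing factorization $B=\mathfrak D^{-1}E\,\mathfrak D^{-1}$ with $E=JF_{nls}^\bot\bigl(\begin{smallmatrix}0&a_1\\\bar a_1&0\end{smallmatrix}\bigr)F_{nls}^{-1}$, so that each term $J\mathfrak D^2 B^{(2)\,k}(\Delta_{12}B)B^{(1)\,n-k-1}\mathfrak D$ becomes a composition of bounded pieces like $\mathfrak D E\mathfrak D^{-1}$, $\mathfrak D^{-1}E\mathfrak D^{-1}$, $\mathfrak D^{-1}\Delta_{12}E\,\mathfrak D^{-1}$, $\mathfrak D^{-1}E$ in $|\cdot|_{s,\sigma-1}$; this is what actually makes the $\mathfrak D$-weighted estimate go through term by term.
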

\begin{proof}
$(i)$  Since the proofs of the stated inequalities are similar for the range of the values of 
$\sigma'$ considered, 
we only treat the case $\sigma' = \sigma$. By the definition \eqref{def: A1} of $A_1$ one has
$$
J F_{nls}^\bot \Delta_{12} A_1 F_{nls}^{- 1} 
= J {\frak D}^{- 1}  F_{nls}^\bot \begin{pmatrix}
0 & \Delta_{12} a_1 \\
 \Delta_{12} \bar a_1 & 0
\end{pmatrix} F_{nls}^{- 1} {\frak D}^{- 1}\,. 
$$
Since $|{\frak D}^{- 1}|_{s, \sigma} = \|{\frak D}^{- 1} \|_{{\cal L}(h^{\sigma})}  \leq 1$ it then follows that
$$
|J F_{nls}^\bot \Delta_{12} A_1 F_{nls}^{- 1}   {\frak D}|_{s, \sigma} \stackrel{Lemma \,\ref{lemma:mult}}{\leq_s} \| \Delta_{12} a_1\|_s \stackrel{\eqref{choice p2}}{\leq_s} \| \Delta_{12} q_2 \|_s \stackrel{\eqref{estimates Delta 12 q1 q2}}{\leq_s} \|\Delta_{12} \io \|_{s + s_0} + {\rm max}_{s + s_0}(\io) \| \Delta_{12} \io\|_{s_0}
$$
and $|J F_{nls}^\bot \Delta_{12} A_1 F_{nls}^{- 1}|_{s, \sigma} 
\le |J F_{nls}^\bot \Delta_{12} A_1 F_{nls}^{- 1}   {\frak D}|_{s, \sigma} $,
establishing the claimed estimates in the case $\s' = \s$. 

\noindent
$(ii)$ The claimed estimate follows by Lemma  \ref{lem:inverti} $(v)$ and item $(i)$.

\noindent
$(iii)$ The terms in $\Delta_{12} {\frak R}_1 {\frak D}$, with ${\frak R}_1$ defined in \eqref{formulaR1}, are estimated individually. The following estimates hold: 
\begin{align*}
& |\Delta_{12} {\mathtt \Phi}_1^{\pm 1}|_{s, \sigma - 1}\,,  | \Delta_{12} {\mathtt \Phi}_1^{\pm 1} {\frak D}  |_{s, \s'} \stackrel{(ii)}{\leq_s} \e \big( \| \Delta_{12} \io \|_{s +s_0} +  {\rm max}_{s + s_0}(\io) \|\Delta_{12} \io \|_{ 2s_0} \big)\,, \\
& 
|{\frak D}^{- 1} J F_{nls}^\bot \Delta_{12} q_1 F_{nls}^{- 1} {\frak D}|_{s, \sigma - 1}
 \leq_s |F_{nls}^\bot \Delta_{12} q_1 F_{nls}^{- 1} |_{s, \sigma -2} 
\stackrel{Lemma \,\ref{lemma:mult}}{\leq_s} \| \Delta_{12} q_1\|_s \stackrel{\eqref{estimates Delta 12 q1 q2}}{\leq_s} \|\Delta_{12} \io \|_{s + s_0} + {\rm max}_{s + s_0}(\io) \| \Delta_{12} \io\|_{s_0}\,, \\
&
|(\omega \cdot \partial_\vphi)(\Delta_{12} {\mathtt \Phi}_1 ) {\frak D}|_{s, \sigma - 1} 
\stackrel{\rm{ Def \, of\, } |\cdot |_{s, \sigma - 1}}{\leq_s} 
| \Delta_{12} {\mathtt \Phi}_1  {\frak D}|_{s + 1, \sigma - 1} 
\stackrel{(ii)}{\leq_s}  \e  ( \| \Delta_{12} \io\|_{s + s_0 + 1} +  
{\rm max}_{s + s_0 + 1}(\io)\|\Delta_{12} \io \|_{2s_0})\,, \\
&
|J \Delta_{12} \Omega^{nls} {\mathbb I}_2|_{s, \sigma - 1} \stackrel{Lemma\,\ref{lemma variazione i Omega nls}\,(ii)}{\leq_s} \| \Delta_{12} \io\|_{s } + {\rm max}_{s + 2 s_0}(\io)  \|\Delta_{12} \io \|_{ s_0}\,, \\
&
|\Delta_{12} {\frak Q}_\bot |_{s, \sigma - 1} \stackrel{Lemma \,\ref{lemma:mult}}{\leq_s} \|\Delta_{12} q_1\|_s + \| \Delta_{12} q_2 \|_s \stackrel{\eqref{estimates Delta 12 q1 q2}}{\leq_s} \|\Delta_{12} \io \|_{s + s_0} + {\rm max}_{s + s_0}(\io) \| \Delta_{12} \io\|_{s_0}\,, \\
&
|\Delta_{12} {\frak R}_0 {\frak D}|_{s, \sigma - 1} \stackrel{\eqref{small-remainder Delta 12}}{\leq_s} \e \gamma^{- 2}\| \Delta_{12} \io\|_{s + 2 s_0}+ {\rm max}_{s + 2s_0}(\io) \| \Delta_{12} \io\|_{s_0}\,, \\
&
|[\Delta_{12} q_2, \langle D \rangle]|_{s, \sigma - 1} \stackrel{Lemma\, \ref{lemma:mult}}{\leq_s} \| \Delta_{12} q_2\|_{s} \stackrel{\eqref{estimates Delta 12 q1 q2}}{\leq_s} \|\Delta_{12} \io \|_{s + s_0} + {\rm max}_{s + s_0}(\io) \| \Delta_{12} \io\|_{s_0}\,.
\end{align*}
Next we prove that  
\begin{equation}\label{scocciatura Delta 12 A}
S_1, \, S_2
\stackrel{}{\leq_s}\e^2 \,(\| \Delta_{12} \io\|_{s + s_0} + \| \io \|_{s + s_0 } \| \Delta_{12} \io\|_{s_0})
\end{equation}
where $S_1$ and $S_2$ are defined as follows
$$
S_1 := \Big| J {\frak D}^2 \sum_{n \geq 2} \frac{1}{n !} 
\Delta_{12}(- \e J F_{nls}^\bot A_1 F_{nls}^{- 1})^n {\frak D}  \Big|_{s, \sigma - 1}\,, \quad 
S_2 := \Big| \sum_{n \geq 2} \frac{1}{n !} 
\Delta_{12} (- \e J F_{nls}^\bot A_1 F_{nls}^{- 1})^n J {\frak D}^3  \Big|_{s, \sigma - 1}\, .
$$
Since the estimates for $S_1$ and $S_2$ can be proved in a similar fashion,
we consider $S_1$ only. Let 
$$
B(\breve \io^{(a)}) := J F_{nls}^\bot A_1(\breve \io^{(a)}) F_{nls}^{- 1}\,,\,\,\,  a = 1, 2, 
\qquad
\Delta_{12} B^n := B(\breve \io^{(1)})^n - B(\breve \io^{(2)})^n\,.
$$
 We then write $\Delta_{12} B^n$ with $n \ge 2$ as a telescoping sum,
\begin{align}
\Delta_{12} B^n =  
(\Delta_{12} B)  B(\breve \io^{(1)})^{n - 1} + 
B(\breve \io^{(2)}) (\Delta_{12} B) B(\breve \io^{(1)})^{n - 2} + \cdots + 
%B(\breve \io^{(2)})^{n - 2} (\Delta_{12} B) B(\breve \io^{(1)}) + 
B(\breve \io^{(2)})^{n - 1} (\Delta_{12} B) \,. \label{Delta 12 B1 n}
\end{align}
Each term $J {\frak D}^2 B(\breve \io^{(2)})^k (\Delta_{12} B) 
B(\breve \io^{(1)})^{n - k - 1}{\frak D}$,
$0 \leq k \leq n-1$, is estimated individually. 
It turns out to be convenient to write the operator $B(\breve \io^{(a)})$ in the form 
$$
B(\breve \io^{(a)}) = {\frak D}^{- 1} E(\breve \io^{(a)}) {\frak D}^{- 1}, 
\qquad E (\breve \io^{(a)}) := 
J  F_{nls}^\bot \begin{pmatrix}
0 & a_1(\breve \io^{(a)}) \\
\bar a_1(\breve \io^{(a)}) & 0
\end{pmatrix}  F_{nls}^{- 1}\,,
$$
so that
$
\Delta_{12} B = {\frak D}^{- 1} \, \Delta_{12} E \, {\frak D}^{- 1}\,.
$
Thus
$$
J {\frak D} (\Delta_{12} B) B(\breve \io^{(1)})^{n - 1}{\frak D} 
= J ({\frak D} (\Delta_{12} E) {\frak D}^{- 1}) ({\frak D}^{- 1} E(\breve \io^{(1)}) {\frak D}^{- 1} )^{n - 2}
({\frak D}^{- 1} E(\breve \io^{(1)}))
$$
and for any $1 \le k \le n -2$,\,
$J {\frak D}^2 B(\breve \io^{(2)})^k (\Delta_{12} B) B(\breve \io^{(1)})^{n - k - 1}{\frak D}$ equals
$$
 J ({\frak D}E(\breve \io^{(2)}){\frak D}^{- 1}) ( {\frak D}^{- 1} E(\breve \io^{(2)}) {\frak D}^{- 1})^{k - 1}  
({\frak D}^{- 1} \Delta_{12} E {\frak D}^{- 1}) 
({\frak D}^{- 1} E(\breve \io^{(1)}) {\frak D}^{- 1} )^{n - k - 2} ({\frak D}^{- 1} E(\breve \io^{(1)}))
$$
whereas for $k=n-1$ one has
$$
J {\frak D}^2 B(\breve \io^{(2)})^{n-1} (\Delta_{12} B) {\frak D} = 
 J ({\frak D}E(\breve \io^{(2)}){\frak D}^{- 1}) ( {\frak D}^{- 1} E(\breve \io^{(2)}) {\frak D}^{- 1})^{n - 2}  
({\frak D}^{- 1} \Delta_{12} E ) \,.
$$
Note that 
\begin{align*}
& |{\frak D}E(\breve \io^{(2)}){\frak D}^{- 1} |_{s, \sigma - 1} 
\stackrel{Lemma \,\ref{lemma:mult}}{\leq_s} 
\| {\frak D}\|_{{\cal L}(h^\sigma_\bot, h^{\sigma - 1}_\bot)} \| a_1(\breve \io^{(2)})\|_s 
\| {\frak D}^{- 1}\|_{{\cal L}(h^{\sigma - 1}_\bot, h^\sigma_\bot)} 
\stackrel{\eqref{choice p2}, \eqref{estimates q1 q2}}{\leq_s} 1 + {\rm max}_{s + s_0}(\io)\,, \\
& 
 |{\frak D}^{- 1} E(\breve \io^{(1)})|_{s, \sigma - 1} 
\stackrel{Lemma \,\ref{lemma:mult}}{\leq_s} 
\| {\frak D}^{- 1}\|_{{\cal L}(h^{\sigma - 1}_\bot , h^{\sigma -1}_\bot)}   \| a_1(\breve \io^{(1)}) \|_s 
\stackrel{\eqref{choice p2}, \eqref{estimates q1 q2}}{\leq_s} 1 + {\rm max}_{s + s_0}(\io)\,,
\end{align*}
and that by the same arguments, 
$ | {\frak D}^{- 1} E(\breve \io^{(a)}) {\frak D}^{- 1} |_{s, \sigma - 1}$, $a = 1, \, 2,$
is also bounded by $1 + {\rm max}_{s + s_0}(\io)$.
Furthermore, again by Lemma \ref{lemma:mult},
$|{\frak D} \Delta_{12} E{\frak D}^{- 1} |_{s, \sigma - 1} $ can be estimated by 
$$
\| {\frak D}\|_{{\cal L}(h^\sigma_\bot, h^{\sigma - 1}_\bot)}
\| \Delta_{12} a_1\|_s \| {\frak D}^{- 1}\|_{{\cal L}(h^{\sigma - 1}_\bot, h^\sigma_\bot)} \stackrel{\eqref{choice p2}, \eqref{estimates Delta 12 q1 q2}}{\leq_s} \| \Delta_{12} \io\|_{s + s_0} + {\rm max}_{s + s_0}(\io) \| \Delta_{12} \io\|_{s_0}
$$
and the same estimates hold for $|{\frak D} ^{- 1}\Delta_{12} E{\frak D}^{- 1} |_{s, \sigma - 1} $
and $|{\frak D} ^{- 1}\Delta_{12} E |_{s, \sigma - 1} $.
By the tame estimate for the composition of operator valued maps \eqref{interpm} and 
the smallness condition \eqref{ansatz 1} it then follows that for any $0 \leq k \leq n - 1$, 
$$
|J {\frak D}^2 B(\breve \io^{(2)})^k\,  (\Delta_{12} B) \, 
B(\breve \io^{(1)})^{n - k - 1}{\frak D}|_{s, \sigma - 1} \leq C(s)^{n - 1} \big( \| \Delta_{12} \io\|_{s +s_0} + 
{\rm max}_{s + s_0}(\io) \| \Delta_{12} \io \|_{s_0} \big)\,.
$$ 
In view of \eqref{Delta 12 B1 n} this yields
$$
|J {\frak D}^2  \Delta_{12}( J F_{nls}^\bot A_1 F_{nls}^{- 1})^n {\frak D} |_{s, \sigma - 1} \leq n C(s)^{n - 1} \big( \| \Delta_{12} \io\|_{s +s_0} + {\rm max}_{s + s_0}(\io) \| \Delta_{12} \io \|_{s_0} \big)\,
$$
and leads to the claimed estimate \eqref{scocciatura Delta 12 A},
\begin{align}
S_1 = \Big| J {\frak D}^2 \sum_{n \geq 2} \frac{1}{n !} 
\Delta_{12}(- \e J F_{nls}^\bot A_1 F_{nls}^{- 1})^n {\frak D} \Big|_{s, \sigma - 1} & \leq 
\sum_{n \geq 2} \frac{n C(s)^{n - 1} \e^n}{n !} \Big( \| \Delta_{12} \io\|_{s +s_0} + 
{\rm max}_{s + s_0}(\io) \| \Delta_{12} \io \|_{s_0} \Big) \nonumber\\
& \leq_s \e^2 \big( \| \Delta_{12} \io\|_{s +s_0} + {\rm max}_{s + s_0}(\io) \| \Delta_{12} \io \|_{s_0} \big)\,. \nonumber
\end{align}
The above estimates together with the estimates given in Lemma \ref{lem:44 Lip}, 
the tame estimate \eqref{interpm} for the composition of operator valued maps, 
and the smallness assumption \eqref{ansatz 1} allow to bound the $| \cdot |_{s, \s -1} $ norm of
each term in $\Delta_{12} ({\frak R}_1 {\frak D})$ by 
$\e \gamma^{- 2}\| \Delta_{12} \io\|_{s + 2 s_0 } +  {\rm max}_{s + 2s_0}(\io) \|\Delta_{12}\io\|_{3 s_0}$.
Let us indicate how this bound is obtained by considering one specific term. Note that by the definition of ${\frak R}^I$ and the one of ${\frak R}_1$,
${\frak R}^I{\frak D}$ contains the operator 
${\mathtt \Phi}_1^{-1} {\frak R}_0 {\mathtt \Phi}_1 {\frak D}$, which we write as
${\mathtt \Phi}_1^{-1} ({\frak R}_0{\frak D}) ({\frak D}^{-1} {\mathtt \Phi}_1{\frak D})$.
We then develop
$
\Delta_{12} \big( {\mathtt \Phi}_1^{-1} ({\frak R}_0 {\frak D}) ({\frak D}^{-1} {\mathtt \Phi}_1 {\frak D}) \big)
$
in a telescoping sum, which among others contains the term 
${\mathtt \Phi}_1^{-1}(\breve \io^{(2)}) \Delta_{12} ({\frak R}_0{\frak D})
 ({\frak D}^{-1}{\mathtt \Phi}_1(\breve \io^{(1)}) {\frak D})$.
By the tame estimate \eqref{interpm} for the composition of operator valued maps,
one then obtains
a bound, given by a sum, which contains among other terms the following one
$$
|{\mathtt \Phi}_1^{-1}(\breve \io^{(2)})|_{s, \s-1} |\Delta_{12} ({\frak R}_0{\frak D})|_{s_0, \s-1} 
|{\frak D}^{-1}{\mathtt \Phi}_1(\breve \io^{(1)}) {\frak D}|_{s_0, \s-1}.
$$ 
Then the estimate \eqref{small-remainder Delta 12} for 
$|\Delta_{12} {\frak R}_0  {\frak D}|_{s, \sigma - 1}$, 
applied for $s$ given by $s_0$, yields
$$
|\Delta_{12} {\frak R}_0  {\frak D}|_{s_0, \sigma - 1} 
\leq_s \e \gamma^{- 2}\| \Delta_{12} \io\|_{3 s_0} + {\rm max}_{3 s_0}(\io) \| \Delta_{12} \io\|_{s_0}\,.
$$
Furthermore, by Lemma \ref{lem:44 Lip}, 
$$
 | {\mathtt \Phi}_1^{- 1}(\breve \io^{(2)}) - {\mathbb I}_2 |_{s, \s -1}\,\leq_s  
\e  (1 + \| \io^{(2)} \|_{s +  s_0})\,  \quad
\mbox{and} \quad
\,| {\frak D}^{-1} {\mathtt \Phi}_1(\breve \io^{(1)}) {\frak D} |_{s_0, \s -1} \leq_s  1\,.
$$
Combining the above estimates, one concludes that 
$$
|{\mathtt \Phi}_1^{-1}(\breve \io^{(2)})|_{s, \s-1} |\Delta_{12} ({\frak R}_0{\frak D})|_{s_0, \s-1} 
|{\frak D}^{-1}{\mathtt \Phi}_1(\breve \io^{(1)}) {\frak D}|_{s_0, \s-1} \leq_s
\e \gamma^{- 2}\| \Delta_{12} \io\|_{s + 2 s_0 } + {\rm max}_{s + 2s_0}(\io) \|\Delta_{12}\io\|_{3 s_0}\,.
$$
All other terms are estimated in a similar fashion.
\end{proof}

\noindent

%%%%%%%%%%%%%%%%%%%%%%%%%%%%%%%%%%%%%%%%%%%%%%%%%%%%%%%%%%%%%%%%%%%
%%%%%%%%%%%%%%%%%%%%%%%%%%%%%%%%%%%%%%%%%%%%%%%%%%%%%%%%%%%%%%%%%%%%

\subsection{Second transformation}\label{sec2}

The purpose of the second transformation is to eliminate the space dependence of $ q_1 $, 
appearing in the expression \eqref{op L1} for the operator ${\frak L}_1$, up to a one smoothing remainder.
The transformation is chosen to be the time $ 1 $-flow 
${\mathtt \Phi}_2 : H^s(\T^S, h^{\sigma'}_\bot \times h^{\sigma'}_\bot) \to 
H^s(\T^S, h^{\sigma'}_\bot \times h^{\sigma'}_\bot)$, $\sigma' \in \{\sigma, \sigma - 1, \sigma- 2 \}$, 
$$
{\mathtt \Phi}_2 := \exp( - \e J F_{nls}^\bot A_2 F_{nls}^{- 1} ) 
= {\mathbb I}_2 - \e J F_{nls}^\bot A_2F_{nls}^{- 1} + \ldots 
$$
of the linear vector field $ - \e J F_{nls}^\bot A_2 F_{nls}^{- 1}  $ where
\be\label{def: A2}
A_2 := \begin{pmatrix}
 D \lla D \rra^{- 2} a_2 +  a_2 D  \lla D \rra^{- 2}  & 0 \\
 0 &  \ov{D} \lla D \rra^{- 2} a_2 +  a_2 \ov{D}  \lla D \rra^{- 2} 
\end{pmatrix} \, .
\ee
Since we will  chose $ a_2(\vphi, x)$  to be real valued the 
operator $  J F_{nls}^\bot A_2 F_{nls}^{- 1}  $ is Hamiltonian  (cf Lemma \ref{lemma:HamiltonianVF}) and 
hence the flow $ {\mathtt \Phi}_2 $ symplectic. 
Furthermore we record that $ A_2 $ is one smoothing.
We will choose % the complex valued function  
$ a_2 \equiv a_2 (\vphi, x )$ in such a way that 
 ${\frak L}_2 :=   {\mathtt \Phi}_2^{-1} {\frak L}_1 {\mathtt \Phi}_2$
is $x$-independent up to a one smoothing remainder.  
To this end we write
\be\label{Second composition}
{\frak L}_1 {\mathtt \Phi}_2 =  {\mathtt \Phi}_2 \big( 
\om \cdot \partial_\vphi \, {\mathbb I}_2  
+ J  D^2  \, {\mathbb I}_2  + J \Omega^{nls} \, {\mathbb I}_2 \big) +
\e J  F_{nls}^\bot q_1   \, F_{nls}^{- 1}   - 
\e [J D^2  \, {\mathbb I}_2, J F_{nls}^\bot A_2 F_{nls}^{- 1} ] + {\frak R}^I 
\ee
where 
$$
{\frak R}^I  :=  (\om \cdot \partial_\vphi) \big( {\mathtt \Phi}_2 - {\mathbb I}_2 \big) + 
[J \Omega^{nls} {\mathbb I}_2, {\mathtt \Phi}_2 - {\mathbb I}_2] +  
\e  J F_{nls}^\bot q_1   F_{nls}^{- 1} ({\mathtt \Phi}_2 -{\mathbb I}_2) +  {\frak R}_1 {\mathtt \Phi}_2 +
[ J D^2\, {\mathbb I}_2 ,  {\mathtt \Phi}_2 - {\mathbb I}_2 
+ \e J \, F_{nls}^\bot A_2 F_{nls}^{- 1}] \, 
$$
collects terms which are one smoothing. We now compute the commutator
$ [J D^2  \, {\mathbb I}_2, \, J F_{nls}^\bot  A_2 F_{nls}^{- 1} ] $.

\begin{lemma} The Hamiltonian operator $ [J D^2  \, {\mathbb I}_2, \, J F_{nls}^\bot  A_2 F_{nls}^{- 1} ] $ can be 
expanded as
\begin{align}
[J D^2  \, {\mathbb I}_2, \, J F_{nls}^\bot  A_2 F_{nls}^{- 1} ] & % =  J F_{nls}^\bot [ JD^2,  A_2]   F_{nls}^{- 1} 
% 4 \ii \, F_{nls}^\bot {\rm diag}(\partial_x a_2, - \partial_x a_2 ) F_{nls}^{- 1} - {\frak R}^{II}  \nonumber\\ 
= 4  J F_{nls}^\bot (\partial_x a_2)   F_{nls}^{- 1}    - {\frak R}^{II}    \label{Comm2}
\end{align}
where ${\frak R}^{II}$ is the one smoothing operator given by
\begin{align}
& \qquad \qquad \qquad \qquad {\frak R}^{II}  := F_{nls}^\bot {\rm diag}(R^{II}, {\overline R}^{II}) \, F_{nls}^{- 1} \, , \label{remainder Rfr-1} \\
& \label{remainder R II}
R^{II} := \big( D  \lla D \rra^{- 2} (\partial^2_{x} a_2) -
(\partial^2_{x} a_2)  D \lla D \rra^{- 2}  +
2 \ii  \lla D \rra^{- 2} (\partial_x a_2) + 
2 \ii (\partial_x a_2)\lla D \rra^{- 2}  \big)\,.
\end{align}
\end{lemma}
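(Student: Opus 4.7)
The plan is to peel off the outer structure and reduce the identity \eqref{Comm2} to a single scalar commutator identity, which is then a short pseudodifferential manipulation.

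First I would observe that $JD^2\mathbb{I}_2$ is a block Fourier multiplier which commutes with $J$, $F_{nls}^\bot$ and $F_{nls}^{-1}$: the block structure of $J$ matches that of $JD^2\mathbb{I}_2$, and $F_{nls}$ commutes with $D^2$ since, according to \eqref{Fourier transform F nf}, it is just a permutation and sign flip on the Fourier side, hence preserves the Fourier multiplier $(2\pi n)^2$. Consequently
\[
[JD^2\mathbb{I}_2,\ JF_{nls}^\bot A_2 F_{nls}^{-1}] \;=\; JF_{nls}^\bot\,[JD^2\mathbb{I}_2,\, A_2]\,F_{nls}^{-1}.
\]

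Next I would exploit the reality structure to reduce the inner commutator to a single scalar one. Setting $B := D\langle D\rangle^{-2} a_2 + a_2 D\langle D\rangle^{-2}$, so that $A_2 = \mathrm{diag}(B,\bar B)$, I would compute $\bar B = -B$ using $\bar D = -D$, $\overline{\langle D\rangle^{-2}} = \langle D\rangle^{-2}$ and the reality $\bar a_2 = a_2$. Then both diagonal blocks of $[JD^2\mathbb{I}_2, A_2]$ collapse to $i[D^2,B]$, and multiplying on the left by $J=\mathrm{diag}(i,-i)$ flips the sign between the two blocks, which is precisely the pattern relating $R^{II}$ and $\bar R^{II}$ in $\mathfrak{R}^{II}$. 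Matching blockwise, the identity \eqref{Comm2} is thus equivalent to the scalar statement
\[
R^{II} \;=\; 4i(\partial_x a_2) + [D^2,B].
\]

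For this scalar identity I would use the Leibniz formula $[D^2,b] = -2i(\partial_x b)D - (\partial_x^2 b)$ (valid for any smooth $b$, from $D=-i\partial_x$) together with the commutation of the Fourier multiplier $D\langle D\rangle^{-2}$ with $D^2$, to expand
\[
[D^2,B] \;=\; D\langle D\rangle^{-2}[D^2,a_2] \,+\, [D^2,a_2]\,D\langle D\rangle^{-2}.
\]
Substituting $[D^2,a_2] = -2i(\partial_x a_2)D - (\partial_x^2 a_2)$ and simplifying the two highest-order contributions via $D^2\langle D\rangle^{-2} = 1 - \langle D\rangle^{-2}$ (in the term $-2i(\partial_x a_2)D^2\langle D\rangle^{-2}$) and via $[D,\partial_x a_2] = -i(\partial_x^2 a_2)$ (in the term $-2iD\langle D\rangle^{-2}(\partial_x a_2)D$), two copies of $-2i(\partial_x a_2)$ emerge. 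All remaining terms each carry a factor of $\langle D\rangle^{-2}$ without a compensating $D^2$, and so are one smoothing; a short rearrangement then identifies them precisely with the expression for $R^{II}$ given in \eqref{remainder R II}. The Hamiltonian nature of $\mathfrak{R}^{II}$ is automatic, since both $[JD^2\mathbb{I}_2,JF_{nls}^\bot A_2 F_{nls}^{-1}]$ and $4JF_{nls}^\bot(\partial_x a_2)F_{nls}^{-1}$ are Hamiltonian operators (the latter by reality of $\partial_x a_2$ together with Lemma \ref{lemma:HamiltonianVF}).

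The only real obstacle is the bookkeeping at the last step: one must convert the piece $2\langle D\rangle^{-2}(\partial_x^2 a_2)D - 2i\langle D\rangle^{-2}(\partial_x^3 a_2)$ that appears naturally after commuting $D$ past $(\partial_x a_2)$ into the commutator form $D\langle D\rangle^{-2}(\partial_x^2 a_2) - (\partial_x^2 a_2)D\langle D\rangle^{-2}$ occurring in $R^{II}$. This is achieved by the identity $\langle D\rangle^{-2}D(\partial_x^2 a_2) = D\langle D\rangle^{-2}(\partial_x^2 a_2)$ together with $[D,\partial_x^2 a_2] = -i(\partial_x^3 a_2)$, which exactly absorbs the $-2i\langle D\rangle^{-2}(\partial_x^3 a_2)$ contribution and closes the proof.
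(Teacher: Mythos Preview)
Your proof is correct and follows essentially the same route as the paper. Both reduce the block commutator to $J F_{nls}^\bot [JD^2, A_2] F_{nls}^{-1}$ and then to a single scalar computation on the first diagonal block; the paper splits this block into two pieces $T_1, T_2$ and manipulates each directly, while you use the Leibniz formula $[D^2,a_2] = -2i(\partial_x a_2)D - (\partial_x^2 a_2)$ together with $[D^2,B] = D\langle D\rangle^{-2}[D^2,a_2] + [D^2,a_2]D\langle D\rangle^{-2}$, which is the same calculation in different packaging. One small remark: your final detour through $\partial_x^3 a_2$ is avoidable --- commuting the \emph{rightmost} $D$ past $(\partial_x a_2)$ in $-2iD\langle D\rangle^{-2}(\partial_x a_2)D$ gives $-2iD^2\langle D\rangle^{-2}(\partial_x a_2) + 2D\langle D\rangle^{-2}(\partial_x^2 a_2)$ directly, and then $D^2\langle D\rangle^{-2} = 1 - \langle D\rangle^{-2}$ yields the second copy of $-2i(\partial_x a_2)$ with remainder terms already in the desired form.
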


\begin{proof}
Since $ J D^2 $ commutes  with $ J $, $  F_{nls}^\bot $ and $  F_{nls}^{-1} $, 
% by \eqref{def: A2}, 
we have 
$$
[J D^2  \, {\mathbb I}_2, \, J F_{nls}^\bot  A_2 F_{nls}^{- 1} ]  =  J F_{nls}^\bot [ JD^2,  A_2]   F_{nls}^{- 1}  \, .
$$
By the definition of $ J $ in \eqref{forma-L-omega1} and of 
$ A_2 $ in \eqref{def: A2} the operator $ [ JD^2,  A_2] $ is diagonal and with first component given by 
% To compute $R^{II}$ note that 
$$
[\ii D^2, (\lla D \rra^{- 2} D a_2 + a_2 D \lla D \rra^{- 2})] = T_1 + T_2
$$
where 
$$
T_1 = \ii D^2 \lla D \rra^{- 2} D a_2 - \ii \lla D \rra^{- 2} D a_2 D^2\qquad 
\text{and } \qquad
T_2 = \ii D^2 a_2 D \lla D \rra^{- 2} - \ii a_2 D  \lla D \rra^{- 2}D^2\,.
$$
Use that  $\ii D = \partial_x$ and $D^2 \lla D\rra^{- 2} = 1 - \lla D \rra^{- 2}$ to conclude that 
\begin{align*}
T_1 & = \ii D^2 \lla D \rra^{- 2} D a_2 - \ii \lla D \rra^{- 2} D^2 a_2 D + \lla D \rra^{- 2} D (\partial_x a_2) D  \\
& = 2 \lla D \rra^{- 2} D^2 (\partial_x a_2) + \ii \lla D \rra^{- 2} D (\partial_x^2 a_2) \\
& = 2 (\partial_x a_2) - 2 \lla D \rra^{- 2} (\partial_x a_2) + \ii \lla D \rra^{- 2} D (\partial_x^2 a_2)\,.
\end{align*}
Similarly one has
$ T_2 = 2 (\partial_x a_2) - 2 (\partial_x a_2) \lla D \rra^{- 2} - \ii (\partial_x^2 a_2)  \lla D \rra^{- 2} D $. 
Thus 
\begin{align*}
\ii ( T_1 + T_2 ) & = 4 \ii (\partial_x a_2) - \big( 2 \ii \lla D \rra^{- 2}(\partial_x a_2) +  \lla D \rra^{- 2}D (\partial_x^2 a_2)  + 2 \ii (\partial_x a_2) \lla D \rra^{- 2} - (\partial_x^2 a_2) D \lla D \rra^{- 2} \big) 
\end{align*}
proving the lemma. 
\end{proof}
 We choose $ a_2 $ so that   
$ q_1  - 4  \partial_x a_2 $ is independent of $ x $, i.e., 
$ 4 \partial_x a_2 =  q_1 - {\rm av}(q_1) $ or
\be\label{def p}
 \quad a_2 :=  \frac14 \pa_x^{-1} (   q_1 - {\rm av}(q_1)) \, , 
\quad {\rm av}(q_1) := \int_0^1 q_1 \, dx \, ,
\ee
where the operator $\partial_x^{- 1}: H^{\s'} \to H^{\s' + 1}$ is defined by setting
$$
\partial_x^{- 1}(1) = 0\,, \qquad \partial_x^{- 1}(e^{\ii 2 \pi  j x}) = 
\frac{1}{\ii 2 \pi j} e^{\ii 2 \pi  j x} \quad \forall j \in \Z \setminus \{ 0 \}\,.
$$
Note that by  \eqref{def p} and Lemma \ref{stima lip q1 q2 RP},  
$ a_2 (\vphi, \cdot ) \in  H^{\s+1} $ for any $\vphi \in \T^S$. 
The remainder ${R}^{II}$, defined in \eqref{remainder R II}, is  given by
\begin{equation}\label{frak R II nuovo}
\frac14  \Big(  D \lla D \rra^{- 2} (\partial_{x} q_1) -
(\partial_{x} q_1)  D \lla D \rra^{- 2} + 
2 \ii  \lla D \rra^{- 2} (q_1 - {\rm av}(q_1)) + 
2 \ii (q_1 - {\rm av}(q_1)) \lla D \rra^{- 2}  \Big) \, 
\end{equation}  
and combining %\eqref{Second composition}, 
\eqref{Comm2}, \eqref{def p} one has
$$
 JF_{nls}^\bot q_1   \, F_{nls}^{- 1}   - 
 [J D^2  \, {\mathbb I}_2, \, J F_{nls}^\bot  A_2 F_{nls}^{- 1} ] = 
 J F_{nls}^\bot  {\rm av}(q_1) F_{nls}^{- 1} + {\mathfrak R}^{II} \, . 
$$
By applying the inverse  
$ {\mathtt \Phi}_2^{-1} = \exp (  \e J F_{nls}^\bot A_2 F_{nls}^{- 1}) $  
to \eqref{Second composition}, 
we get
%the operator ${\frak L}_1$, when conjugated by ${\mathtt \Phi}_2$, becomes
\be\label{op L2}
{\frak L}_2 =   {\mathtt \Phi}_2^{-1} {\frak L}_1 {\mathtt \Phi}_2 =  
\om \cdot \partial_\vphi {\mathbb I}_2 + J \big( D^2  \, {\mathbb I}_2  + 
 \Omega^{nls} {\mathbb I}_2 + \e  \,{\rm av}(q_1) \, {\mathbb I}_2 \big)  + {\frak R}_2 
\ee
where $ {\frak R}_2  $ is the one smoothing operator
\begin{equation}\label{frak R2}
{\frak R}_2  :=  \e ({\mathtt \Phi}_2^{-1} - {\mathbb I}_2) J \,{\rm av}(q_1) {\mathbb I}_{2} +
 {\mathtt \Phi}_2^{-1} \big( {\frak R}^I + \e {\frak R}^{II} \big) 
\end{equation}
with ${\frak R}^I$ defined in \eqref{Second composition} and ${\frak R}^{II}$
	in \eqref{remainder Rfr-1}.
Since  $ {\mathtt \Phi}_2 $ is symplectic and ${\frak L}_1$ is a linear Hamiltonian operator,
Lemma~\ref{transformation of Hamiltonian operators} implies that
also $ {\frak L}_2 $ is Hamiltonian. 
We point out that the 0th order term $\big( \Omega^{nls}  + \e  {\rm av}(q_1) \big) {\mathbb I}_2$ in \eqref{op L2} is diagonal and $x$-independent, but  still depends on $\vphi$. 
Note that the coefficients of the operator $ {\frak R}_2 $ 
%  defined  in \eqref{remainder Rfr-1}-\eqref{remainder R II},  
involve $\partial^2_{x} a_2(\vphi, \cdot) \in H^{\sigma - 1}$.

\noindent
Using Lemma \ref{lem:44 Lip} to estimate the term $ \frak R_1 {\mathtt \Phi}_2 $ in $ \frak R^I  $
and arguing as in the proof of Lemma \ref{lem:44 Lip}, we get

\begin{lemma} \label{stima R2 lip}
{\bf (Estimates of $ A_2 $, $\mathtt \Phi_2 $ and $ {\frak R}_2 $)}
For any $s_0 \leq s \leq s_* - 2 s_0$ 
the following statements hold:

\noindent
$(i)$ For any $ \vphi \in \T^S $ and 
$  \s' \in \{  \sigma + 1, \s, \s - 1, \sigma - 2, \sigma - 3\} $, $A_2(\vphi) \in {\cal L}(H^{\sigma' - 1}, H^{\sigma'})$ and 
\begin{align}
& |J F_{nls}^\bot A_2 F_{nls}^{- 1} |_{s , \sigma'}\,,\,\,
| J F_{nls}^\bot A_2 F_{nls}^{- 1} {\frak D}|_{s , \sigma'} \leq_s 
1 + \| \io\|_{s + s_0 } \label{lem:A2-s1} \\
& 
|J F_{nls}^\bot A_2 F_{nls}^{- 1} |_{s , \sigma'}^\Lipg\,,\,\,
| J F_{nls}^\bot A_2 F_{nls}^{- 1} {\frak D}|_{s , \sigma'}^\Lipg \leq_s 1 + \| \io\|_{s + s_0 }^\Lipg\,. \label{lem:A2-lip1}
\end{align}

\noindent
$(ii)$ For any $ \vphi \in \T^S $, $  \s' \in \{ \sigma, \sigma - 1, \s- 2 \} $,
$ {\mathtt \Phi}_2 (\vphi ) \in {\cal L} ( h^{\s'}_\bot \times h^{\s'}_\bot) $ and 
\begin{align*}
& | {\mathtt \Phi}_2^{\pm 1} - {\mathbb I}_2 |_{s, \s'}\,,\,  | ({\mathtt \Phi}_2^{\pm 1} - {\mathbb I}_2) {\frak D} |_{s, \s'}  \leq_s \e (1 + \| \io \|_{s + s_0}) \\ 
& 
 | {\mathtt \Phi}_2^{\pm 1} - {\mathbb I}_2 |_{s, \s'}^\Lipg\,,\,\,
  | ({\mathtt \Phi}_2^{\pm 1} - {\mathbb I}_2) {\frak D} |_{s, \s'}^\Lipg  
\leq_s \e \, (1 + \| \io \|_{s + s_0}^\Lipg)\,.
\end{align*} 

\noindent
$(iii)$
${\frak R}_2$ is a linear Hamiltonian operator with 
$ {\frak R}_2 (\vphi ) \in {\cal L} ( h^{\s-2}_\bot \times h^{\s-2}_\bot , h^{\s-1}_\bot \times h^{\s-1}_\bot) $ 
for any $\vphi \in \T^S$ and 
\be\label{lem:esR2f}
| {\frak R}_2 \frak D |_{s, \s-1} \leq_s \e + \e \g^{-2} \| \io \|_{s+ 2 s_0} \, , \quad
 |{\frak R}_2 \frak D |_{s, \s-1}^\Lipg \leq_s \e +  \e \g^{-2}\| \io \|_{s+ 2 s_0}^\Lipg \,. 
\ee
\end{lemma}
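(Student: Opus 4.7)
The plan is to follow the strategy of Lemma \ref{lem:44 Lip}, adapting the estimates to the new generating operator $A_2$ defined in \eqref{def: A2}. The structural difference from the first transformation is that now $A_2$ depends on $a_2 = \frac14 \partial_x^{-1}(q_1 - {\rm av}(q_1))$, which gains one derivative in $x$ compared to $q_1$, and whose building block $D\lla D\rra^{-2}$ also provides one smoothing. As in Section \ref{sec1}, the Hamiltonian nature of $JF_{nls}^\bot A_2 F_{nls}^{-1}$ follows from Lemma \ref{lemma:HamiltonianVF} (using that $a_2$ is real valued since $q_1$ is), so $\mathtt \Phi_2$ is symplectic by the arguments preceding Definition \ref{linear symplectic transformations} and $\frak R_2$ is Hamiltonian by Lemma \ref{transformation of Hamiltonian operators}.

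For item $(i)$ I would write $J F_{nls}^\bot A_2 F_{nls}^{-1}\,\mathfrak D$ as a composition of a multiplication operator by $a_2$ with operators built from Fourier multipliers of symbol $D \lla D \rra^{-2} \lla D\rra$ and similar, each of which is bounded on $h^{\s'}$ uniformly (they correspond to operators of order $\le 0$). By Lemma \ref{lemma:mult}$(i)$ the $|\cdot|_{s,\s'}$-norm of multiplication by $a_2$ is bounded by $\|a_2\|_{s,\s'+1}\lessdot \|q_1\|_{s,\s'}$, and the estimate \eqref{estimates q1 q2} of Lemma \ref{stima lip q1 q2 RP} then yields \eqref{lem:A2-s1}. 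The Lipschitz version \eqref{lem:A2-lip1} is identical, invoking the Lipschitz bound of Lemma \ref{stima lip q1 q2 RP}. Note that the range of $\s'$ extends up to $\s+1$, which is precisely allowed because $a_2$ gains one derivative relative to $q_1$.

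For item $(ii)$, the smallness condition \eqref{ansatz 1} combined with the bounds of item $(i)$ gives
$$
2 C_{op}(s_0)\, |\e J F_{nls}^\bot A_2 F_{nls}^{-1}\,\frak D|_{s_0}^\Lipg \leq 1\, ,
$$
so Lemma \ref{lem:inverti}$(i)$-$(ii)$ applies and yields the desired estimates for $\mathtt \Phi_2^{\pm 1} - \mathbb I_2$ and for $(\mathtt \Phi_2^{\pm 1} - \mathbb I_2)\frak D$, again with the Lipschitz version obtained in the same way.

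For item $(iii)$, which is the main point of the lemma, I would insert into the formula \eqref{frak R2} the expression for ${\frak R}^I$ given after \eqref{Second composition} and for ${\frak R}^{II}$ given by \eqref{frak R II nuovo}, and estimate each summand individually in the $|\cdot\,\frak D|_{s,\s-1}$-norm using Lemma \ref{prodest} (composition of operator valued maps). Each term either
\begin{itemize}
\item already contains the one-smoothing factor $\frak R_1\frak D$ (estimated by \eqref{lem:esR1f}) composed with operators bounded uniformly on the relevant scale (by items $(i)$, $(ii)$ of this lemma and Lemma \ref{lem:44 Lip}), or
\item is a commutator $[JD^2\mathbb I_2, \mathtt\Phi_2 -\mathbb I_2 + \e J F_{nls}^\bot A_2 F_{nls}^{-1}]$ whose one-smoothing character follows by expanding the exponential and applying Lemma \ref{lem:inverti}$(iii)$-$(iv)$ with the operator $\e J F_{nls}^\bot A_2 F_{nls}^{-1}$, or
\item is $\mathtt\Phi_2^{-1}\e{\frak R}^{II}$, whose kernel involves at worst $\partial_x a_2 = \frac14(q_1 - {\rm av}(q_1))$ and $\partial_x^2 a_2 = \frac14 \partial_x q_1$, both bounded in $H^s(\T^S,H^{\s-1})$-norm by $\|q_1\|_{s,\s}\lessdot 1 + \|\io\|_{s+s_0}$ via Lemma \ref{stima lip q1 q2 RP}, composed with bounded operators from Lemma \ref{lemma-reg-D}.
\end{itemize}
Adding the contributions and using the smallness condition \eqref{ansatz 1} to absorb quadratic terms of the form $\e^2(1+\|\io\|_{s+s_0})$ into $\e + \e\gamma^{-2}\|\io\|_{s+2s_0}$, one obtains \eqref{lem:esR2f}. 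The Lipschitz version follows by the same estimates, replacing each norm by its $\Lipg$-counterpart.

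The main bookkeeping obstacle, as in Lemma \ref{lem:44 Lip}$(iii)$, is the commutator $[JD^2\mathbb I_2, \mathtt\Phi_2 - \mathbb I_2 + \e J F_{nls}^\bot A_2 F_{nls}^{-1}]$: the linear term in the exponential expansion of $\mathtt\Phi_2$ cancels with $\e J F_{nls}^\bot A_2 F_{nls}^{-1}$ by design of $A_2$, so only quadratic and higher terms remain, and these produce the required one smoothing gain via Lemma \ref{lem:inverti}$(iii)$-$(iv)$ upon flanking with factors of $\frak D^{\pm 1}$. All other terms are elementary.
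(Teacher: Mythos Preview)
Your overall approach and the bookkeeping for parts $(i)$ and $(ii)$ match the paper's. The gap is in part $(iii)$, in the treatment of the commutator
\[
[JD^2\,\mathbb I_2,\,\mathtt\Phi_2 - \mathbb I_2 + \e JF_{nls}^\bot A_2 F_{nls}^{-1}]\,\frak D\,.
\]
You invoke Lemma~\ref{lem:inverti}$(iii)$--$(iv)$ for the entire tail $\sum_{n\ge 2}$, but item~$(iii)$ of that lemma is stated for sums of the form $\sum_{n\ge 2}(\frak D^{-1}A\frak D^{-1})^n$, i.e., it requires a \emph{two}-smoothing building block. That was the situation in Lemma~\ref{lem:44 Lip}, where $A_1=\lla D\rra^{-1}a_1\lla D\rra^{-1}$. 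Here $A_2$ is only one-smoothing, so writing $\e JF_{nls}^\bot A_2 F_{nls}^{-1}=\frak D^{-1}A\frak D^{-1}$ would force $A=\frak D(\e JF_{nls}^\bot A_2 F_{nls}^{-1})\frak D$ to be of order $+1$, hence unbounded, and item~$(iii)$ does not apply. Only item~$(iv)$ is available, and it starts at $n\ge 3$. The quadratic term
\[
-\tfrac{1}{2}\,\e^2\,[JD^2\,\mathbb I_2,\,(F_{nls}^\bot A_2 F_{nls}^{-1})^2]\,\frak D
\]
is therefore not covered by Lemma~\ref{lem:inverti} and must be handled separately.

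The paper does this by a direct computation: with $\Lambda:=D\lla D\rra^{-2}$ it expands the diagonal block $B=(\Lambda a_2+a_2\Lambda)\pi_\bot(\Lambda a_2+a_2\Lambda)$, writes $[B,D^2]=T_1+T_2+T_3+T_4$, and checks that each $T_i$ involves at worst $\partial_x^2 a_2=\tfrac14\partial_x q_1\in H^{\sigma-1}$ composed with Fourier multipliers of order $\le -1$, so that $|T_i\lla D\rra|_{s,\sigma-1}\lessdot\|q_1\|_s\|q_1\|_{s_0}$. Without this computation (or an equivalent argument exploiting the commutator gain for $[JD^2,A_2]$ already recorded in \eqref{Comm2}), the quadratic contribution is formally only of order $0$ and your argument does not close.
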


\begin{proof}
$(i)$ 
We begin proving \eqref{lem:A2-s1}. 
We consider the case $\sigma' = \sigma + 1$ only, since the other cases can be treated in a similar way. According to 
\eqref{def: A2} we can write 
$$
J F_{nls}^\bot A_2 F_{nls}^{- 1} = 
J  {\frak D}^{- 2} F_{nls}^\bot
\begin{pmatrix}
 D a_2 & 0 \\
0 & - D a_2
\end{pmatrix} F_{nls}^{- 1} + 
 J F_{nls}^\bot \begin{pmatrix}
a_2 D & 0 \\
0 & - a_2 D
\end{pmatrix} F_{nls}^{- 1}  {\frak D}^{- 2}
$$
Since $|D  \lla D \rra^{- 2}|_{s, \sigma + 1} \lessdot  
\|\lla D \rra^{- 1} \|_{{\cal L}(h^{\sigma + 1})}  \lessdot 1$ one has 
$|J F_{nls}^\bot A_2F_{nls}^{- 1} |_{s, \sigma + 1} 
\leq |J F_{nls}^\bot A_2 F_{nls}^{- 1}   {\frak D}|_{s, \sigma+ 1} $ and 
$$
|J F_{nls}^\bot A_2 F_{nls}^{- 1}   {\frak D}|_{s, \sigma + 1} 
\stackrel{Lemma \,\ref{lemma:mult}}{\leq_s} \| a_2\|_{s, \sigma + 1} 
\stackrel{\eqref{def p}}{\leq_s} \| q_1 \|_{s, \sigma } 
\stackrel{\eqref{estimates q1 q2}}{\leq_s} 1 + \| \io \|_{s + s_0}\,.
$$
The estimates \eqref{lem:A2-lip1} are proved in a similar way. 

\noindent
$(ii)$ is proved in a similar way as item $(ii)$ of Lemma \ref{lem:44 Lip}. 

\noindent
$(iii)$ We begin by proving  the first estimate in \eqref{lem:esR2f}. 
Note that the remainder $ {\frak R}_2 $ introduced in \eqref{frak R2}, 
$$
{\frak R}_2 = \e ({\mathtt \Phi}_2^{-1} - {\mathbb I}_2 ) 
 {\rm av}(q_1) J + {\mathtt \Phi}_2^{-1} \big( {\frak R}^I + \e {\frak R}^{II} \big) \, , 
$$
 is of the same form as the remainder ${\frak R}_1$ in Lemma \ref{lem:44 Lip}. 
Due to the definition % identity 
 \eqref{remainder Rfr-1} - \eqref{remainder R II} 
 %where $ a_2 $ given in \eqref{def p} % \eqref{frak R II nuovo} 
of  ${\frak R}^{II}$, the term
$\e |{\frak R}^{II}{\frak D}|_{s, \sigma - 1}$ can be estimated in the same way as the corresponding term of ${\frak R}_1$. Since, in contrast to $A_1$, the operator $A_2$ is only one
smoothing, the main difference for estimating $|{\frak R}^{I} {\frak D}|_{s, \sigma - 1}$ concerns the term
 $$
 [ J D^2\, {\mathbb I}_2 , \, {\mathtt \Phi}_2 - {\mathbb I}_2 + 
\e J \, F_{nls}^\bot A_2 F_{nls}^{- 1}]\,.
 $$
 Using that $J$ and $ F_{nls}^\bot A_2 F_{nls}^{- 1}$ commute one has
 $$
 {\mathtt \Phi}_2 - {\mathbb I}_2 + \e J \,  F_{nls}^\bot A_2 F_{nls}^{- 1} = 
- \frac12 \e^2 (  F_{nls}^\bot A_2 F_{nls}^{- 1} )^2 + 
\sum_{n \geq 3} \frac{ (- \e J  F_{nls}^\bot A_2 F_{nls}^{- 1} )^n}{n !}\,.
 $$
Using item ($i$) together with Lemma \ref{lem:inverti} $(iv)$ we get 
 \begin{align*}
 \Big| J D^2 {\mathbb I}_2\, 
\sum_{n \geq 3} \frac{ (- \e J  F_{nls}^\bot A_2 F_{nls}^{- 1} )^n}{n !}  \, 
{\frak D} \Big|_{s, \sigma - 1}\,, \,\,\,
\Big| \sum_{n \geq 3} \frac{ (- \e J  F_{nls}^\bot A_2 F_{nls}^{- 1} )^n}{n !} 
J D^2 {\mathbb I}_2 \, {\frak D} \Big|_{s, \sigma - 1}\, {\leq_s} \,\,
\e^3(1 + \| \io \|_{s + s_0})\,.
 \end{align*}
 The estimate of the norm of the commutator 
$[J D^2{\mathbb I}_2 , \, ( \,  F_{nls}^\bot A_2 F_{nls}^{- 1})^2 ]{\frak D}$ 
requires more attention. 
%We recall that, for any  $\s' \in \Z $, the subspace %
%$H_\bot^{\s'} $ is defined in  \eqref{H-sigma-bot}. 
%Introduce 
%for any $\s' \in \Z$ the subspace 
%$H_\bot^{\s'} (\T_1) \equiv H_\bot^{\s'} (\T_1, \C)$ of $H^{\s'}(\T_1) \equiv H^{\s'}(\T_1, \C)$ defined by 
%$$
%H_\bot^{\s'} (\T_1) := 
%\big\{ u(x) = \sum_{n \in \Z} u_n e^{\ii 2\pi n x} \in H^{\s'}(\T_1) : \,\,u_n= 0 \,, \,\,\, \forall n \in S \big\}.
%$$
%Keeping with our convention of identifying the Hilbert spaces $H^\s  $ and $h^\s$
%by the Fourier transform, we also
%We denote by 
%$\pi_\bot$ the standard $L^2$-orthogonal projection of $H^{\s'} $
%onto $H_\bot^{\s'}  $ and by 
%$\mathbb I_\bot$ the operator $ \mathbb I_\bot = {\rm diag}(\pi_\bot, \pi_\bot) $. %  where $ \pi_\bot $.  
%Note that by \eqref{Fourier transform F nf}, \eqref{proiettori trasformata di fourier}, 
%the operator $F_{nls}^{- 1} F_{nls}^\bot : H^{\s'} \to H^{\s'} $ satisfies
Recalling  \eqref{proiettori trasformata di fourier} % \eqref{property-FF-1} % $  F_{nls}^{- 1} F_{nls}^\bot = \mathbb I_\bot $
one has  
$$
 [J D^2{\mathbb I}_2 , \, (  F_{nls}^\bot A_2 F_{nls}^{- 1})^2 ]  = 
J [D^2{\mathbb I}_2 , \, (  F_{nls}^\bot A_2 F_{nls}^{- 1})^2 ]
 = J F_{nls}^\bot \big( D^2  A_2 \mathbb I_\bot  A_2 - 
A_2 \mathbb I_\bot A_2 D^2  \big) F_{nls}^{- 1}\,.
%= J F_{nls}^\bot [A_2 F_{nls}^\bot A_2, D^2{\mathbb I}_2 ]  F_{nls}^{- 1} \label{alvaro}\,.
$$
The operator
$A_2 \mathbb I_\bot A_2 $ is of the form ${\rm diag}( B , \overline B )$ where,
with the short hand notation $\Lambda := D \lla D \rra^{- 2}$, 
 \begin{equation}\label{alvaro 1}
 B :=(\Lambda a_2 + a_2 \Lambda) \pi_\bot (\Lambda a_2 + a_2 \Lambda)  = 
\Lambda a_2 \pi_\bot \Lambda a_2 + \Lambda a_2 \pi_\bot a_2  \Lambda + 
a_2 \Lambda^2 \pi_\bot  a_2 + a_2 \Lambda \pi_\bot a_2 \Lambda\, .
 \end{equation}
Hence 
 \begin{equation}\label{alvaro la tartaruga}
 - [J D^2{\mathbb I}_2 , \, (  F_{nls}^\bot A_2 F_{nls}^{- 1})^2 ]
= J \, [(  F_{nls}^\bot A_2 F_{nls}^{- 1})^2 ,\,  D^2{\mathbb I}_2 ]
=  J F_{nls}^\bot {\rm diag}( [ B, D^2] , [ \overline B, D^2] ) F_{nls}^{- 1}
 \end{equation}
 and the commutator $[B, D^2]$ is given by the sum $T_1 + T_2 + T_3 + T_4$ with
 \begin{align}
T_1 := [ \Lambda a_2 \pi_\bot \Lambda a_2, D^2]\,, \quad 
T_2 := [ \Lambda a_2 \pi_\bot a_2  \Lambda, D^2]\,, \quad 
T_3 := [ a_2 \Lambda^2 \pi_\bot a_2, D^2]\,, \quad 
T_4 := [ a_2 \Lambda \pi_\bot a_2 \Lambda, D^2]\,. \label{alvaro 2}
 \end{align}
 The four operators are treated in the same way, so we consider $T_1$ only. 
Since $D^2 = - \partial_x^2$ one has 
$$
 T_1 =  \Lambda (\partial_x^2 a_2) \pi_\bot \Lambda a_2 + \Lambda a_2 \pi_\bot \Lambda (\partial_x^2 a_2) + 2  \Lambda (\partial_x a_2)\pi_\bot \Lambda (\partial_x a_2) + 2 \ii \Lambda(\partial_x a_2) \pi_\bot \Lambda a_2 D + 2 \ii \Lambda a_2 \pi_\bot \Lambda (\partial_x a_2) D\,. 
%\label{T1 alvaro}
$$
 Since by \eqref{def p}
 $$
 \| a_2\|_{s, \sigma - 1}\,, \,\, \| \partial_x a_2\|_{s, \sigma - 1}\,, \,\, \| \partial_x^2 a_2\|_{s, \sigma - 1} \leq_s \| q_1 \|_{s}
 $$
it  follows from Lemma \ref{lemma:mult} and the estimate 
$\| \Lambda\|_{{\cal L}(h^{\sigma' - 1}, h^{\sigma'})} \lessdot 1$, valid for arbitrary $\sigma'$, that
 $$
 |T_1 \lla D \rra |_{s, \sigma - 1} \leq_s 
 \| q_1\|_s \|q_1 \|_{s_0} \stackrel{\eqref{estimates q1 q2}}{\leq_s} 1 + \| \io \|_{s + s_0}\,.
$$
Since the operators $T_2,$ $T_3$, and $T_4$  can be estimated in the same way, one concludes that
$$
| [ J D^2 \mathbb I_2 , \,\, \e^2 (  F_{nls}^\bot A_2 F_{nls}^{- 1} )^2] \frak D |_{s, \s-1}
\le \e^2 (1 + \| \io \|_{s + s_0})\,.
$$ 
Altogether, this proves the first estimate in \eqref{lem:esR2f}. 
The second estimate in \eqref{lem:esR2f} follows in a similar way. 
%The estimates corresponding to the ones of Lemma \ref{stima R2 lip} for the $\| \cdot \|^\Lipg$ norm can be proved similarly. 
 \end{proof}

 \begin{lemma}\label{stima R2 Delta 12}
 For any $s_0 \leq s \leq s_* - 2 s_0$ and any torus embeddings $\breve \io^{(a)}(\vphi) = (\vphi, 0, 0) + \io^{(a)}(\vphi)$, $a = 1, 2$, satisfying \eqref{ansatz 1}, the following estimates hold:  
 
 \noindent
 $(i)$ For any $  \s' \in \{  \sigma + 1, \s, \s - 1, \sigma - 2, \sigma - 3\} $, the operator 
$\Delta_{12} A_2 := A_2(\breve \io^{(1)}) - A_2(\breve \io^{(2)})$ satisfies the estimates 
 $$
 |J F_{nls}^\bot \Delta_{12} A_2 F_{nls}^{- 1} |_{s, \sigma'}\,,\, \,
|J F_{nls}^\bot \Delta_{12} A_2 F_{nls}^{- 1} {\frak D}|_{s, \sigma'} \leq_s 
\| \Delta_{12} \io\|_{s + s_0} + {\rm max}_{s + s_0}(\io) \| \Delta_{12} \io\|_{s_0}\,,
 $$
 
 \noindent
 $(ii)$ For any $  \s' \in \{ \sigma, \sigma - 1, \s- 2 \} $,  the operators
$\Delta_{12} \mathtt \Phi_2 := 
\mathtt \Phi_2(\breve \io^{(1)}) - \mathtt \Phi_2 (\breve \io^{(2)})$
and $\Delta_{12} \mathtt \Phi_2^{-1} := 
\mathtt \Phi_2^{-1}(\breve \io^{(1)}) - \mathtt \Phi_2^{-1} (\breve \io^{(2)})$
 satisfy the etimate
 $$
 | \Delta_{12} \mathtt \Phi_2^{\pm 1} |_{s, \sigma'}\,, |(\Delta_{12} \mathtt \Phi_2^{\pm 1} ) {\frak D}|_{s, \sigma'} \leq_s \e 
 \big( \| \Delta_{12} \io\|_{s + s_0} +
 {\rm max}_{s + s_0}(\io) \, \| \Delta_{12} \io \|_{2s_0} \big)\,,
 $$
 
 \noindent
 $(iii)$ The operator $\Delta_{12} {\frak R}_2 := 
{\frak R}_2(\breve \io^{(1)}) - {\frak R}_2 (\breve \io^{(2)})$
 satisfies the estimate 
 $$
 |\Delta_{12} {\frak R}_2 {\frak D}|_{s, \sigma - 1} \leq_s \e \gamma^{- 2}\| \Delta_{12} \io\|_{s + 2 s_0 } + 
{\rm max}_{s + 2 s_0 }(\io) \, \| \Delta_{12} \io\|_{3 s_0}\,.
 $$
 \end{lemma}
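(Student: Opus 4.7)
The plan is to mirror the proof of Lemma~\ref{lem:44 Delta 12} step by step, adapting it to the specific structure of the second transformation. Since the estimates are tame in the Sobolev index, the main effort is to keep track of how each building block depends on the torus embedding.

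For item $(i)$, I would start from the definition \eqref{def: A2} of $A_2$ and write
\[
J F_{nls}^\bot \Delta_{12} A_2 F_{nls}^{- 1}
= J  {\frak D}^{- 2} F_{nls}^\bot
\begin{pmatrix}
 D \Delta_{12} a_2 & 0 \\
0 & - D \Delta_{12} a_2
\end{pmatrix} F_{nls}^{- 1} +
 J F_{nls}^\bot \begin{pmatrix}
\Delta_{12} a_2 \, D & 0 \\
0 & - \Delta_{12} a_2 \, D
\end{pmatrix} F_{nls}^{- 1}  {\frak D}^{- 2},
\]
so that, after applying Lemma~\ref{lemma:mult}, everything reduces to controlling $\|\Delta_{12} a_2\|_{s,\sigma'}$. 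By the definition \eqref{def p} of $a_2$ and the continuity of $\pa_x^{-1}: H^{\sigma'-1}\to H^{\sigma'}$, this is bounded by $\|\Delta_{12} q_1\|_{s,\sigma'-1}$, which is handled by the estimate \eqref{estimates Delta 12 q1 q2} of Lemma~\ref{estimates from the perturbation lip}$(i)$. The argument is carried out separately for each $\sigma'\in\{\sigma+1,\sigma,\sigma-1,\sigma-2,\sigma-3\}$, exactly as in Lemma~\ref{stima R2 lip}$(i)$.

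For item $(ii)$, I would apply the variational estimate \eqref{derivata-inversa-Phi}--\eqref{derivata-inversa-Phi frak D} in Lemma~\ref{lem:inverti}$(v)$ to the pair $A_i := -\varepsilon J F_{nls}^\bot A_2(\breve \io^{(i)}) F_{nls}^{-1}$, $i=1,2$. The smallness assumption in \eqref{smallness A1 A2} is met thanks to \eqref{lem:A2-s1} and \eqref{ansatz 1}. Combining with item $(i)$ and the bounds of Lemma~\ref{stima R2 lip}$(i)$--$(ii)$ gives the required factor~$\varepsilon$.

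For item $(iii)$, which is the main technical obstacle, I would expand $\Delta_{12}{\frak R}_2$ from the explicit formula \eqref{frak R2}, estimating each piece separately. The truly routine pieces--the terms $\varepsilon(\mathtt \Phi_2^{-1}-{\mathbb I}_2)J\,\text{av}(q_1){\mathbb I}_2$ and $\mathtt\Phi_2^{-1}{\frak R}^I$--follow from items $(i)$--$(ii)$, from Lemma~\ref{estimates from the perturbation lip}$(i)$, from the variation estimates \eqref{small-remainder Delta 12} for ${\frak R}_0$, and from Lemma~\ref{lem:44 Delta 12}$(iii)$ applied to ${\frak R}_1\mathtt\Phi_2$ inside ${\frak R}^I$, all combined with the tame composition inequality \eqref{interpm}, exactly as at the end of the proof of Lemma~\ref{lem:44 Delta 12}. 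The delicate part is the piece $\varepsilon\mathtt\Phi_2^{-1}{\frak R}^{II}$, where ${\frak R}^{II}$ is given by \eqref{remainder Rfr-1}--\eqref{frak R II nuovo}. Here I would estimate $|\Delta_{12} {\frak R}^{II}{\frak D}|_{s,\sigma-1}$ by expanding each summand (e.g.\ $\Delta_{12}\bigl(D\lla D\rra^{-2}(\pa_x q_1)\bigr)=D\lla D\rra^{-2}(\pa_x\Delta_{12} q_1)$) and invoking Lemma~\ref{lemma:mult} together with \eqref{estimates Delta 12 q1 q2}.

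The main obstacle I anticipate is the analogue of the telescoping bound \eqref{scocciatura Delta 12 A} for the commutator $[JD^2{\mathbb I}_2,(F_{nls}^\bot A_2 F_{nls}^{-1})^2]{\frak D}$ hidden inside ${\frak R}^I$. As in the proof of Lemma~\ref{stima R2 lip}$(iii)$, I would use the decomposition \eqref{alvaro 1}--\eqref{alvaro 2} of $A_2\mathbb I_\bot A_2$ into four terms $T_1,\dots,T_4$, take the difference term by term, and write each $\Delta_{12} T_j$ as a telescoping sum that distributes $\Delta_{12}$ across the factors $\Lambda,a_2,\pa_x a_2,\pa_x^2 a_2$. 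The smoothing operator $\Lambda=D\lla D\rra^{-2}$ gains one derivative in each factor, exactly compensating the loss from $D^2$ on the right, so Lemma~\ref{lemma:mult} combined with $\|\Delta_{12}\pa_x^k a_2\|_{s,\sigma-1}\lessdot\|\Delta_{12} q_1\|_s$ for $k=0,1,2$ yields the target bound. The remaining tail $\sum_{n\ge3}$ of the exponential is controlled via Lemma~\ref{lem:inverti}$(iv)$ and the telescoping identity \eqref{Delta 12 B1 n}. Assembling all pieces with \eqref{interpm} and the smallness assumption \eqref{ansatz 1} gives the estimate claimed in $(iii)$.
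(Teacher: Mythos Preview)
Your proposal is correct and follows essentially the same route as the paper: item $(i)$ via the decomposition of $A_2$ and Lemma~\ref{lemma:mult} combined with \eqref{estimates Delta 12 q1 q2}; item $(ii)$ via Lemma~\ref{lem:inverti}$(v)$; and item $(iii)$ by expanding $\Delta_{12}{\frak R}_2$ from \eqref{frak R2}, with the delicate commutator $[JD^2{\mathbb I}_2,(F_{nls}^\bot A_2 F_{nls}^{-1})^2]{\frak D}$ handled through the decomposition \eqref{alvaro 1}--\eqref{alvaro 2} and a telescoping of each $\Delta_{12}T_j$, while the tail $\sum_{n\ge 3}$ is controlled by the telescoping argument of \eqref{Delta 12 B1 n}. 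The only minor slip is the citation of Lemma~\ref{lem:inverti}$(iv)$ for the tail---that item bounds a single exponential, not a difference---so the correct reference is the argument leading to \eqref{scocciatura Delta 12 A} (adapted from ${\frak D}^{-1}A{\frak D}^{-1}$ to $A{\frak D}^{-1}$), which you in fact also invoke via \eqref{Delta 12 B1 n}.
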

\begin{proof}
$(i)$ We consider the case $\sigma' = \sigma + 1$ only, since the other cases can be treated in a similar way. According to the definition \eqref{def: A2} we can write 
$$
J F_{nls}^\bot \Delta_{12} A_2 F_{nls}^{- 1} =
 J  {\frak D}^{- 2} F_{nls}^\bot \begin{pmatrix}
D \Delta_{12} a_2 & 0 \\
0 & - D \Delta_{12} a_2
\end{pmatrix} F_{nls}^{- 1} +  
J F_{nls}^\bot \begin{pmatrix}
\Delta_{12} a_2 D & 0 \\
0 & - \Delta_{12} a_2 D
\end{pmatrix} F_{nls}^{- 1}  {\frak D}^{- 2}
$$
Since $|D  \lla D \rra^{- 2}|_{s, \sigma + 1} \lessdot  \|\lla D \rra^{- 1} \|_{{\cal L}(h^{\sigma + 1})}  \lessdot 1$ one has
$$
|J F_{nls}^\bot \Delta_{12} A_2 F_{nls}^{- 1}   {\frak D}|_{s, \sigma + 1}
 \stackrel{Lemma \,\ref{lemma:mult}}{\leq_s} \| \Delta_{12} a_2\|_{s, \sigma + 1} 
\stackrel{\eqref{def p}}{\leq_s} \| \Delta_{12} q_1 \|_{s, \sigma } 
\stackrel{\eqref{estimates Delta 12 q1 q2}}{\leq_s} 
\|\Delta_{12} \io \|_{s + s_0} + {\rm max}_{s + s_0}(\io) \, \| \Delta_{12} \io\|_{s_0}\,.
$$

\noindent
$(ii)$ Follows by Lemma  \ref{lem:inverti} $(v)$ and item $(i)$.

\noindent
$(iii)$ Note that the remainder $ {\frak R}_2 $ introduced in \eqref{frak R2}, 
$$
{\frak R}_2 = \e ({\mathtt \Phi}_2^{-1} - {\mathbb I}_2 ) 
  {\rm av}(q_1) \, J + {\mathtt \Phi}_2^{-1} \big( {\frak R}^I + \e {\frak R}^{II} \big) \, , 
$$
 is of the same form as the remainder ${\frak R}_1$ in Lemma \ref{lem:44 Lip}. 
Due to the definition % identity 
 \eqref{remainder Rfr-1} - \eqref{remainder R II} 
 %where $ a_2 $ given in \eqref{def p} % \eqref{frak R II nuovo} 
of  ${\frak R}^{II}$,
% the identity \eqref{frak R II nuovo} for ${\frak R}^{II}$, 
the term $\e |\Delta_{12} {\frak R}^{II}{\frak D}|_{s, \sigma - 1}$ can be estimated in the same way as the corresponding term of $\Delta_{12} {\frak R}_1$. Since, in contrast to $A_1$, 
the operator $A_2$ is only one smoothing, the main difference for estimating $|\Delta_{12} {\frak R}^{I} {\frak D}|_{s, \sigma - 1}$ concerns the operator
 $$
 \Delta_{12}[ J D^2\, {\mathbb I}_2 , \, {\mathtt \Phi}_2 - {\mathbb I}_2 + 
\e J \, F_{nls}^\bot A_2 F_{nls}^{- 1}] {\frak D}\,.
 $$
 Using that $J$ and $A_2$ commute one has
 $$
 \Delta_{12} \Big({\mathtt \Phi}_2 - {\mathbb I}_2 + 
\e J \, F_{nls}^\bot A_2 F_{nls}^{- 1} \Big) = 
- \frac12 \e^2 \Delta_{12} \big( F_{nls}^\bot A_2 F_{nls}^{- 1} \big)^2 + 
\sum_{n \geq 3} \frac{ \Delta_{12} (- \e J F_{nls}^\bot A_2 F_{nls}^{- 1} )^n}{n !}\,.
 $$
By the same arguments used for obtaining the estimate \eqref{scocciatura Delta 12 A} in the proof of Lemma \ref{lem:44 Delta 12}, one concludes from item $(i)$ and  Lemma \ref{stima R2 lip}$(i)$,  
 $$
  S_1, \,  S_2 \,\,  {\leq_s} \, \, \e^3( \| \Delta_{12} \io\|_{s + s_0} + {\rm max}_{s + s_0}(\io) \| \Delta_{12} \io\|_{s_0})
 $$
where
$$
S_1 := \Big| J D^2 {\mathbb I}_2\, 
\sum_{n \geq 3} \frac{ \Delta_{12} (- \e J F_{nls}^\bot A_2 F_{nls}^{- 1} )^n}{n !} 
{\frak D}\Big|_{s, \sigma - 1}\, ,
\qquad 
S_2: =\Big| 
\sum_{n \geq 3} \frac{ \Delta_{12}  (- \e J F_{nls}^\bot A_2 F_{nls}^{- 1} )^n}{n !}
 J D^2 {\mathbb I}_2 {\frak D} \Big|_{s, \sigma - 1}\,.
$$
 The estimate of the norm of 
$- \frac{1}{2} \e^2 [ J D^2{\mathbb I}_2 , \, \Delta_{12} (  \, F_{nls}^\bot A_2 F_{nls}^{- 1})^2 ] {\frak D}$ 
requires more attention. By \eqref{alvaro la tartaruga}
$$
 - [J D^2{\mathbb I}_2 , \, \Delta_{12}(  F_{nls}^\bot A_2 F_{nls}^{- 1})^2 ]
= J F_{nls}^\bot {\rm diag}( \Delta_{12} [ B, D^2] , \Delta_{12} [ \overline B, D^2] ) F_{nls}^{- 1}
$$
%\eqref{alvaro}, \eqref{alvaro la tartaruga}
 where $B$ is defined in \eqref{alvaro 1} and
 $[B, \, D^2] = T_1 + T_2 + T_3 + T_4$ with $T_1, T_2, T_3, T_4$ defined in \eqref{alvaro 2}. Hence 
 $$
 \Delta_{12}[B, D^2]  =  \Delta_{12} T_1 + \Delta_{12} T_2 + \Delta_{12} T_3 + \Delta_{12} T_4 \,.
 $$
 The four terms are treated in the same way, so we consider $\Delta_{12} T_1$ only. Recall that
 $$
 T_1 =  \Lambda (\partial_x^2 a_2) \pi_\bot \Lambda a_2 + \Lambda a_2 \pi_\bot \Lambda (\partial_x^2 a_2) + 2  \Lambda (\partial_x a_2)\pi_\bot \Lambda (\partial_x a_2) + 2 \ii \Lambda(\partial_x a_2) \pi_\bot \Lambda a_2 D + 2 \ii \Lambda a_2 \pi_\bot \Lambda (\partial_x a_2) D\,. 
%\label{T1 alvaro}
 $$
  By \eqref{def p} one has 
 $
 \| a_2\|_{s, \sigma - 1}\,, \,\, \| \partial_x a_2\|_{s, \sigma - 1}\,, \,\,
 \| \partial_x^2 a_2\|_{s, \sigma - 1} \leq_s \| q_1 \|_{s}$, and
$$
 \| \Delta_{12} a_2\|_{s, \sigma - 1}\,, \, \, \| \partial_x \Delta_{12} a_2\|_{s, \sigma - 1}\,, 
\,\,\| \partial_x^2 \Delta_{12} a_2\|_{s, \sigma - 1} \,  \leq_s  \, \| \Delta_{12} q_1 \|_{s}  \,. 
 $$
It then follows from Lemma \ref{lemma:mult} and the estimate 
$\| \Lambda\|_{{\cal L}(h^{\sigma' - 1}, h^{\sigma'})} \lessdot 1$ for $\sigma'$ arbitrary, that
 \begin{align*}
 |\Delta_{12} T_1 \lla D \rra |_{s, \sigma - 1}  & \leq_s  
\| \Delta_{12} q_1\|_s \big( \| q_1(\breve \io^{(1)})\|_{s_0} + \| q_1(\breve \io^{(2)} ) \|_{s_0} \big) +  
\|\Delta_{12} q_1 \|_{s_0} \big( \| q_1(\breve \io^{(1)})\|_{s} + \| q_1(\breve \io^{(2)} )\|_{s} \big) \nonumber\\
 & \stackrel{\eqref{estimates q1 q2}, \eqref{estimates Delta 12 q1 q2}}{\leq_s} 
\| \Delta_{12} \io\|_{s + s_0} + {\rm max}_{s + s_0}(\io) \, \| \Delta_{12} \io\|_{s_0}\,.
 \end{align*}
Since the operators $\Delta_{12} T_2,$ $\Delta_{12} T_3$, and $\Delta_{12} T_4$  
can be estimated in the same way, one concludes that
$$
| [ J D^2 \mathbb I_2 , \,\, \e^2 \Delta_{12}  (  F_{nls}^\bot A_2 F_{nls}^{- 1} )^2] \frak D |_{s, \s-1}
\le \e^2 \big( \| \Delta_{12} \io\|_{s + s_0} + {\rm max}_{s + s_0}(\io) \, \| \Delta_{12} \io\|_{s_0} \big) \,.
$$ 
One then  concludes the proof of item ($iii$) by arguing in the same way as at the end 
of the proof of item ($iii$) of Lemma \ref{lem:44 Delta 12}.
\end{proof}

\subsection{Gauge transformation}\label{sec-gauge}

Finally we eliminate the $ \vphi $-dependence from 
$J \big( \Omega^{nls}   + \e \,{\rm av}(q_1) \, \big) {\mathbb I}_2$ 
in \eqref{op L2} by a gauge transformation. 
More precisely, we conjugate  $ {\frak L}_2 $ with the symplectic map, given by the time 1-flow map  
$$
{\mathtt \Phi}_3  := \exp \Big(- {\rm diag}(\b_k )_{k \in S^\bot} J \Big) = 
{\rm diag}  \Big(\big( e^{  - \ii \beta_k  } \big)_{k \in S^\bot}, \big( e^{ \ii \beta_k  } \big)_{k \in S^\bot} \Big)\,,
$$
corresponding to the Hamiltonian $ \sum_{k \in S^\bot} \b_k (\vphi) z_k \bar z_k  $ with 
$ \beta_k = \beta_k (\vphi) \in \R $. 
The conjugated operator ${\frak L}_3 := {\mathtt \Phi}_3^{-1} {\frak L}_2 {\mathtt \Phi}_3 $
is then given by
\be\label{def:OpL3}
{\frak L}_3 = 
\om \cdot \partial_\vphi {\mathbb I}_2  - 
J {\rm diag}_{k \in S^\bot} (\omega \cdot \partial_\vphi \beta_k) {\mathbb I}_2 + J \big( D^2  + 
 \Omega^{nls}  + \e \, {\rm av}(q_1) \, \big) {\mathbb I}_2 + {\frak R}_3  
\ee
where $ {\frak R}_3 :=  {\mathtt \Phi}_3^{-1}  {\frak R}_2 {\mathtt \Phi}_3 $. We choose the functions 
$ \b_k (\vphi)  $, $ k \in S^\bot $,  so  that 
\be\label{eqbetak}
 \om \cdot \pa_\vphi \b_k (\vphi)  =  \om_k^{nls} ( I( \vphi)) + \e\, {\rm av}(q_1) (\vphi)  - 
[[  \om_k^{nls}  \circ I + \e q_1 ]]  \, , \quad {\hat \b}_k (0) = 0 \, , 
\ee
where $[[ g]]$ denotes the average in space and time of a function $g : \T^S \times \T_1 \to \C$,   
$$ 
[[ g ]] := \frac{1}{(2 \pi)^{|S|}}
\int_{\T^S \times \T_1}  g (\vphi, x)\, d \vphi \, dx \, . 
$$
Since $ \om$ is assumed to be in $\Omega_0(\io) \subset \Omega_{\g , \t}$ 
it satisfies the  diophantine condition \eqref{Omega o Omega gamma tau} and by  
Lemma \ref{om vphi - 1 lip gamma},  the equations \eqref{eqbetak} have unique solutions. 
As a consequence  by \eqref{def:OpL3} and \eqref{definition Omega nls} we have 
\be\label{L4}
{\frak L}_3  = \om \cdot \partial_\vphi {\mathbb I}_2 + J   \big( D^2   +
[[ \Omega^{nls}  ]] + \e  [[ q_1 ]] \big) {\mathbb I}_2 + {\frak R}_3 \,, \qquad
 {\frak R}_3 =  {\mathtt \Phi}_3^{-1}  {\frak R}_2 {\mathtt \Phi}_3
\ee
where  ${\frak R}_2$ is defined in \eqref{frak R2}.
By \eqref{definition Omega nls} one has  $ D^2   + [[ \Omega^{nls}  ]] =
{\rm diag}_k (  [[ \omega^{nls}_k  ]] )_{k \in S^\bot} $.  

\begin{lemma} {\bf (Normal form of $ {\frak L}_3 $)} \label{NFL3}
The diagonal elements of 
$ D^2   + [[ \Omega^{nls}  ]] + \e  [[ q_1 ]] $  satisfy 
\begin{equation}\label{espansione asintotica autovalori blocco iniziale}
[[ \om_k^{nls}  ]] + \e [[ q_1  ]]  = \omega_k^{nls}(\xi, 0) + c_\e + 
\frac{1}{k}  r_{k,\xi} \, , \quad k \in S^\bot \, , 
 \end{equation}
 where 
\begin{equation}\label{stime asintotica autovalori iniziali}
 |c_\e|^\Lipg\,,\,|r_{k,\xi}|^\Lipg \lessdot \e \gamma^{- 2}\,.
 \end{equation}
 Furthermore
\begin{equation}\label{lip-order-1}
| [[ \om_k^{nls}  ]] + \e [[ q_1  ]]  |^{\rm lip} \lessdot 1 \, . 
 \end{equation}

\end{lemma}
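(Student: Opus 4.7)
\noindent
\textbf{Proof proposal for Lemma \ref{NFL3}.}
The plan is to insert the asymptotic expansion of the dNLS frequencies from Theorem~\ref{Corollary 2.2}$(ii)$ into the averages and separate the $k$-independent part from the decaying tail. By that theorem, for any $\vphi\in\T^S$
\[
  \omega_k^{nls}(I(\vphi)) \;=\; 4\pi^2 k^2 \;+\; 4\sum_{j\in\Z} I_j(\vphi) \;+\; \frac{r_k(I(\vphi))}{k},
  \qquad I(\vphi) = (\xi + y(\vphi),\, z(\vphi)\bar z(\vphi)),
\]
and analogously $\omega_k^{nls}(\xi,0) = 4\pi^2 k^2 + 4\sum_{j\in S}\xi_j + r_k(\xi,0)/k$. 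Averaging the former over $\vphi\in\T^S$ and subtracting the latter, the $4\pi^2 k^2$ terms cancel and the sum $4\sum_{j\in S}\xi_j$ cancels, leaving a $k$-independent contribution
\[
  c_\e \;:=\; 4\Big[\!\Big[\sum_{j\in S} y_j + \sum_{j\in S^\bot} z_j\bar z_j\Big]\!\Big] \;+\; \e\,[[q_1]]
\]
together with the $k$-dependent remainder $r_{k,\xi} := [[r_k(I)]] - r_k(\xi,0)$, which gives exactly the decomposition \eqref{espansione asintotica autovalori blocco iniziale}.

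\medskip

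Next I would establish the bounds \eqref{stime asintotica autovalori iniziali}. For $c_\e$, the first summand is controlled by the smallness assumption \eqref{size of torus}: Cauchy--Schwartz plus $\|z\|_{s_0}^\Lipg \lessdot \|\io\|_{s_0}^\Lipg \lessdot \e\gamma^{-2}$ (using $\s\geq 4$ to sum $\langle j\rangle^{-2\s}$) yields $|[[\sum y_j + \sum z_j\bar z_j]]|^\Lipg \lessdot \e\gamma^{-2}$, whereas $\e\,|[[q_1]]|^\Lipg \lessdot \e(1+\|\io\|_{s_0}^\Lipg) \lessdot \e$ by Lemma~\ref{stima lip q1 q2 RP}$(i)$, so $|c_\e|^\Lipg \lessdot \e\gamma^{-2}$. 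For $r_{k,\xi}$, I would invoke the estimate \eqref{Taylor estimate for rk}, which states $\|r_k(\xi+y, z\bar z) - r_k(\xi,0)\|_s^\Lipg \leq_s \|\io\|_{s+2s_0}^\Lipg$, applied at $s = s_0$ and combined with the trivial bound $|[[g]]|^\Lipg \lessdot \|g\|_{s_0}^\Lipg$; together with \eqref{size of torus}, this gives $|r_{k,\xi}|^\Lipg \lessdot \e\gamma^{-2}$ uniformly in $k$.

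\medskip

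Finally, for \eqref{lip-order-1}, I would write
\[
  [[\omega_k^{nls}]] + \e\,[[q_1]] \;=\; \omega_k^{nls}(\xi,0) \;+\; c_\e \;+\; \tfrac{1}{k}\,r_{k,\xi},
\]
and bound each term's Lipschitz seminorm in $\omega$. The term $c_\e + r_{k,\xi}/k$ is already $O(\e\gamma^{-2})$ uniformly in $k$ by the previous step. For $\omega_k^{nls}(\xi,0)$, note that $\xi = (\omega^{nls})^{-1}(\omega)$ is bi-Lipschitz by the assumption of Theorem~\ref{Theorem 1.1}, so it suffices to bound the Lipschitz constant of $\xi\mapsto\omega_k^{nls}(\xi,0)$ uniformly in $k$; this follows directly from the local boundedness estimate \eqref{local-boundeness} of Theorem~\ref{Corollary 2.2}, since $\partial_{I_j}\omega_k^{nls}(\xi,0)$ is bounded uniformly in $k\in\Z$ for each of the finitely many $j\in S$. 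Hence $|\omega_k^{nls}(\xi,0)|^{\rm lip} \lessdot 1$ uniformly in $k$, proving \eqref{lip-order-1}.

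\medskip

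\noindent\emph{Expected main obstacle.} None of the steps is individually hard, but the subtle point is the uniformity in $k$: one must be careful that the Lipschitz bound on $\omega_k^{nls}(\xi,0)$ does not degrade as $|k|\to\infty$. This is the whole point of invoking the refined asymptotics \eqref{local-boundeness} from Theorem~\ref{Corollary 2.2} rather than the bare estimate \eqref{asintotics frequencies A}; without it the Lipschitz constant a priori might grow polynomially in $k$ and destroy \eqref{lip-order-1}.
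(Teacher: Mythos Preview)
Your proposal is correct and follows essentially the same approach as the paper: both insert the asymptotic expansion of Theorem~\ref{Corollary 2.2}$(ii)$, identify the same $c_\e$ and $r_{k,\xi}$, and bound them via \eqref{size of torus}, Lemma~\ref{stima lip q1 q2 RP}$(i)$, and \eqref{Taylor estimate for rk}. One small imprecision: for \eqref{lip-order-1} you write that $c_\e + r_{k,\xi}/k$ is ``already $O(\e\gamma^{-2})$'', but the $\Lipg$ bound $\e\gamma^{-2}$ only gives a $|\cdot|^{\rm lip}$ bound of $\e\gamma^{-3}$, so you must invoke the smallness $\e\gamma^{-3}\lessdot 1$ (the paper does this explicitly); also, your ``expected obstacle'' is slightly off, since \eqref{asintotics frequencies A} is $\ell^\infty$-valued analyticity and already yields the uniform-in-$k$ Lipschitz bound via Cauchy --- \eqref{local-boundeness} is not strictly needed, though it works just as well.
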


\begin{proof}
Since by Theorem \ref{Corollary 2.2},
$$ 
\omega^{nls}_k = 4 \pi ^2 k^2 +4 \sum _{j \in {\mathbb Z}}  I_j + \frac{r_k}{k} \, ,  \quad (r_k)_{k \in \Z} \in \ell^\infty \, , 
$$ 
we get \eqref{espansione asintotica autovalori blocco iniziale}
with
$$
 c_\e :=  \big[ \big[4 \sum_{j \in S} y_j + 4 \sum_{j \in S^\bot} z_j \bar z_j + \e q_1 \big] \big]
\quad \mbox{and} \quad
r_{k,\xi} := \big[ \big[ r_k(\xi + y, z \bar z) - r_k(\xi, 0)  \big] \big]\,.
$$
Since
$| [[ q_1  ]] \,|^\Lipg \le \| q_1\|_{s_0}^\Lipg$ and 
$\| q_1\|_{s_0}^\Lipg \stackrel{\eqref{estimates q1 q2}}{\lessdot } 1 + \|  \io \|_{2 s_0}$
it follows that  
$|\,[[ q_1  ]]\, |^\Lipg \stackrel{\eqref{ansatz 1}}{\lessdot } 1$.
Furthermore, by \eqref{f (vphi) y z bar z} and  Lemma \,\ref{lemma variazione i Omega nls} (i),
$|\, [ [4 \sum_{j \in S} y_j + 4 \sum_{j \in S^\bot} z_j \bar z_j ] ]\, |^{\Lipg }
\stackrel{\eqref{ansatz 1}}{\lessdot } \e \gamma^{- 2}$. Similarly, 
$|r_{k,\xi}|^\Lipg \le \| r_k(\xi + y, z \bar z) - r_k(\xi, 0)\|_{s_0}^\Lipg$ and hence by
\eqref{Taylor estimate for rk}, $|r_{k,\xi}|^\Lipg \lessdot \| \io \|_{3 s_0}^\Lipg$.
Altogether we thus have proved % that for any $k \in S^\bot $ 
\eqref{stime asintotica autovalori iniziali}.
The estimate \eqref{lip-order-1} follows from \eqref{espansione asintotica autovalori blocco iniziale},
\eqref{stime asintotica autovalori iniziali} since $ \e \g^{-3} \leq 1 $  and
$ \omega_k^{nls}(\xi (\om) , 0) $ is analytic and hence Lipschitz in $ \om $. 
\end{proof}

Using the smallness assumption \eqref{ansatz 1}, we prove the following

\begin{lemma} % \label{stima R3} 
\label{stima R3 lip}
{\bf (Estimates of  $ {\mathtt \Phi}_3 $ and ${\frak R}_3$)}
For any $ s_0 \leq s \leq s_* - 4 s_0 - \tau $, % and any $ \om \in \Omega_o $,
 the following holds:

\noindent
$(i)$ For any $ \vphi \in \T^S $ and $  \s' \in \{ \sigma, \sigma - 1, \s- 2 \} $, 
 $ {\mathtt \Phi}_3 (\vphi ) \in {\cal L} ( h^{\s'}_\bot) $ and 
\begin{align}
& | {\mathtt \Phi}_3  - {\mathbb I}_2 |_{s, \s'} \,,\,   | {\mathtt \Phi}_3^{-1} - {\mathbb I}_2 |_{s, \s'}  \leq_s \g^{-1}(\e +   \| \io \|_{s+ 4 s_0 + \tau}) \label{lem:phi3} \\
& | {\mathtt \Phi}_3^{\pm 1}  - {\mathbb I}_2 |_{s, \s'}^\Lipg  \leq_s \g^{-1}(\e +   \| \io \|_{s+ 4 s_0 + 2 \tau + 1}^\Lipg)\,.
\label{lem:phi3-lip} 
\end{align}
\noindent
$(ii)$ ${\frak R}_3$ is a linear Hamiltonian operator with $ {\frak R}_3 (\vphi ) \in {\cal L} ( h^{\s-2}_\bot \times h^{\s-2}_\bot  , h^{\s-1}_\bot \times h^{\s-1}_\bot  ) $ for any $\vphi \in \T^S$ and
\begin{equation}  \label{estimate R3 lip} % \label{estimate R3}
 | {\frak R}_3 \frak D |_{s, \s-1} \leq_s \e + \e \gamma^{- 2}  \| \io \|_{s+4 s_0 + \tau} \,, \quad
 | {\frak R}_3 \frak D |_{s, \s-1}^\Lipg \leq_s \e + 
\e\gamma^{- 2}  \| \io \|_{s+4 s_0 + 2 \tau + 1}^\Lipg \,. 
\end{equation}
\end{lemma}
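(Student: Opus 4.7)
The starting point is the explicit solution of the homological equation \eqref{eqbetak}. Writing
$$
g_k(\vphi) := \om_k^{nls}(I(\vphi)) + \e\,{\rm av}(q_1)(\vphi) - [[\om_k^{nls}\circ I + \e q_1]],
$$
the asymptotics of Theorem~\ref{Corollary 2.2} give
$g_k = 4\bigl(\sum_{j\in\Z} I_j - [[\sum_j I_j]]\bigr) + \e(\mathrm{av}(q_1) - [[q_1]]) + k^{-1}(r_k(I) - [[r_k(I)]])$,
so the $k$-dependence lives in a $1/k$-factor. Using Lemma~\ref{I bot Lip gamma}, the bound \eqref{Taylor estimate for rk}, Lemma~\ref{stima lip q1 q2 RP} and the smallness \eqref{ansatz 1}, this gives uniformly in $k \in S^\bot$
$$
\sup_{k\in S^\bot}\| g_k\|_s \leq_s \e + \|\io\|_{s+2s_0},
\qquad
\sup_{k\in S^\bot}\| g_k\|_s^\Lipg \leq_s \e + \|\io\|_{s+2s_0}^\Lipg \, .
$$
Since $\omega \in \Omega_o(\io)\subset\Omega_{\gamma,\tau}$, Lemma~\ref{om vphi - 1 lip gamma} applied componentwise produces $\beta_k$ with $\hat\beta_k(0)=0$ satisfying, uniformly in $k$,
\begin{equation}\label{plan:betak}
\sup_{k\in S^\bot}\|\beta_k\|_s \leq_s \gamma^{-1}(\e + \|\io\|_{s+2s_0+\tau}),\qquad
\sup_{k\in S^\bot}\|\beta_k\|_s^\Lipg \leq_s \gamma^{-1}(\e + \|\io\|_{s+2s_0+2\tau+1}^\Lipg).
\end{equation}

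For $(i)$, note that ${\mathtt\Phi}_3^{\pm1}-{\mathbb I}_2$ is diagonal with entries $f^{\pm}_k(\vphi):=e^{\mp\ii\beta_k(\vphi)}-1$, so $\hat{A}(\ell)$ acts on $h^{\sigma'}_\bot$ with operator norm $\sup_k|\hat f^\pm_k(\ell)|$, whence
$$
|{\mathtt\Phi}_3^{\pm 1}-{\mathbb I}_2|_{s,\sigma'} \,=\, \bigl\|(f^\pm_k)_{k\in S^\bot}\bigr\|_{H^s(\T^S,\ell^\infty)}.
$$
Given the smallness $\sup_k\|\beta_k\|_{s_0}\lessdot \gamma^{-1}(\e+\|\io\|_{2s_0+\tau})\ll 1$ from \eqref{plan:betak} and \eqref{ansatz 1}, we apply the tame composition estimate for maps with values in the Banach space $\ell^\infty$ (Lemma~\ref{Lemma 2.4bis} with $Y=\ell^\infty$) to the scalar map $x\mapsto e^{\mp\ii x}-1$ acting on the $\ell^\infty$-valued map $(\beta_k)_{k\in S^\bot}$, obtaining
$$
\bigl\|(f^\pm_k)_{k\in S^\bot}\bigr\|_{H^s(\T^S,\ell^\infty)} \leq_s \sup_k\|\beta_k\|_s + \text{lower order},
$$
which, combined with \eqref{plan:betak}, yields both \eqref{lem:phi3} and \eqref{lem:phi3-lip}; the loss of $2s_0$ in the argument of $\|\io\|$ comes from the embedding constants in Lemma~\ref{Lemma 2.4bis}.

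For $(ii)$, I would write $\frak R_3\frak D = {\mathtt\Phi}_3^{-1}(\frak R_2 \frak D){\mathtt\Phi}_3$, using that $\mathtt\Phi_3$ is diagonal and hence commutes with $\frak D$. By Lemma~\ref{prodest} (tame estimates for the composition of operator valued maps) applied twice,
$$
|\frak R_3 \frak D|_{s,\sigma-1}\leq_s |\frak R_2\frak D|_{s,\sigma-1}|{\mathtt\Phi}_3^{\pm 1}|^2_{s_0,\sigma-1} + |\frak R_2\frak D|_{s_0,\sigma-1}\bigl(|{\mathtt\Phi}_3^{\pm 1}|_{s,\sigma-1}|{\mathtt\Phi}_3^{\pm 1}|_{s_0,\sigma-1}\bigr).
$$
Inserting the estimates of item $(i)$ and \eqref{lem:esR2f} from Lemma~\ref{stima R2 lip} gives \eqref{estimate R3 lip}; the same argument with $\Lipg$-norms together with the Lipschitz parts of $(i)$ and of Lemma~\ref{stima R2 lip} yields the Lipschitz estimate, with the total Sobolev loss $4s_0+\tau$ (resp.\ $4s_0+2\tau+1$) accounting for both the $2s_0+\tau$ loss in $(i)$ and the $2s_0$ loss already present in $\frak R_2$. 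The Hamiltonian character of $\frak R_3$ follows from Lemma~\ref{transformation of Hamiltonian operators} since $\mathtt\Phi_3$ is the time-1 flow of a (real-valued) Hamiltonian.

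The only genuine difficulty is the diagonal estimate in $(i)$: one must exploit the \emph{uniform}-in-$k$ structure of $g_k$, coming from the one-smoothing asymptotics of the dNLS frequencies (Theorem~\ref{Corollary 2.2}) and the space-averaging in the definition of $g_k$, in order to pass from componentwise bounds on $\beta_k$ to a bound on the operator-valued map ${\mathtt\Phi}_3-{\mathbb I}_2$ via its natural $\ell^\infty$-valued lift. Everything else is a straightforward application of the tools already developed.
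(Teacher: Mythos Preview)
Your approach is correct and essentially the same as the paper's. For part $(ii)$ the arguments are identical: commute $\mathtt\Phi_3$ with $\frak D$ and apply Lemma~\ref{prodest} together with Lemma~\ref{stima R2 lip}$(iii)$. For part $(i)$, both you and the paper bound $\sup_k\|\beta_k\|_s$ uniformly via \eqref{tame estimates for omega} (rather than decomposing $g_k$ as you do, the paper just uses $\sup_k\|\omega_k^{nls}(I)-\omega_k^{nls}(\xi,0)\|_s\leq_s\|\io\|_{s+2s_0}$ directly) and Lemma~\ref{om vphi - 1 lip gamma}, then exploit the diagonal structure.

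One technical point worth tightening: your displayed inequality $\|(f^\pm_k)\|_{H^s(\T^S,\ell^\infty)}\leq_s\sup_k\|\beta_k\|_s$ does not follow from Lemma~\ref{Lemma 2.4bis} as stated, since that lemma gives $\mathcal C^s$ bounds and its domain is $\T^S\times V$ with $V\subset\R^S\times h^\sigma_\bot$, not $\ell^\infty$. The paper handles this cleanly by the explicit $\mathcal C^{s+s_0}$ detour via \eqref{upper-bound-norm-op}: it bounds $|\mathtt\Phi_3-\mathbb I_2|_{s,\sigma'}\leq_s\|\mathtt\Phi_3-\mathbb I_2\|_{\mathcal C^{s+s_0}(\T^S,\mathcal L(h^{\sigma'}_\bot))}\leq_s\sup_k\|e^{\ii\beta_k}-1\|_{\mathcal C^{s+s_0}}\leq_s\sup_k\|\beta_k\|_{\mathcal C^{s+s_0}}\leq_s\sup_k\|\beta_k\|_{s+2s_0}$, where the middle step is the scalar composition estimate applied componentwise. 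This is exactly the $2s_0$ loss you allude to, but it lands on the Sobolev index of $\beta_k$ (hence $s+4s_0+\tau$ on $\|\io\|$), not merely in the constants.
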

\begin{proof}

$(i)$
We begin by proving  the estimate  \eqref{lem:phi3}. 
We first estimate the right hand side of \eqref{eqbetak} which we rewrite as
$$
 \om_k^{nls} ( I( \vphi) )  -  \om_k^{nls} ( \xi, 0 ) - 
[[  \om_k^{nls}  \circ I   -  \om_k^{nls} ( \xi, 0 ) ]] 
+ \e \big( {\rm av}( q_1)  (\vphi)  - 
[[  q_1 ]] \big) \, ,
$$ 
where $ I ( \vphi) =  ( \xi+y (\vphi) , z \bar z (\vphi) ) $. By \eqref{tame estimates for omega} 
$$
\sup_{k \in S^\bot } \|  \om_k^{nls} ( I )  -  \om_k^{nls} ( \xi, 0 )\|_s \leq_s \| \io \|_{s + 2 s_0} \, . 
$$ 
By Lemma \ref{om vphi - 1 lip gamma}, the solutions $ \b_k $ of \eqref{eqbetak} satisfy 
$$
\sup_{k \in S^\bot} \| \b_k \|_s \leq_s \g^{-1} \big( \| \io \|_{s + 2 s_0 + \tau} + 
\e \|{\rm av}( q_1)  - [[  q_1 ]] \|_{s + \tau} \big)
$$
and since $\|{\rm av}( q_1)  - [[  q_1 ]]\|_{s+\tau} \le \|q_1\|_{s+\tau}$ and 
by \eqref{estimates q1 q2}, $\|q_1\|_{s+\t} \le_s 1 + \| \io \|_{s + \t + s_0}$
it then follows that
$$
\sup_{k \in S^\bot} \| \beta_k\|_s  {\leq_s} 
\g^{-1} \big( \e + \| \io \|_{s + 2 s_0 +  \tau}   \big)\,.
$$
Due to the fact that $ \mathtt \Phi_3 $ is diagonal we have, for 
$ \s' \in \{ \sigma,  \s -1, \sigma - 2 \} $,  
$$
\| {\mathtt \Phi}_3 - {\mathbb I}_2 \|_{{\cal C}^{s + s_0} (\T^S, {\cal L}(h^{\s'}_\bot)) } = 
\sup_{k \in S^\bot} \| e^{\ii \b_k } - 1 \|_{{\cal C}^{s + s_0}(\T^S, \C)} 
 \leq_s  \sup_{k \in S^\bot}  \| \b_k \|_{{\cal C}^{s + s_0}(\T^S, \C)} 
$$
and since, by \eqref{upper-bound-norm-op},  $|\mathtt \Phi_3 - {\mathbb I}_2|_{s, \sigma'} \leq_s
\| {\mathtt \Phi}_3 - {\mathbb I}_2 \|_{{\cal C}^{s + s_0} (\T^S, {\cal L}(h^{\s'}_\bot)) }$
it then follows that
$$
|\mathtt \Phi_3 - {\mathbb I}_2|_{s, \sigma'} \leq_s
\sup_{k \in S^\bot}  \| \b_k \|_{{\cal C}^{s + s_0}(\T^S, \C)} 
\leq_s \sup_{k \in S^\bot}  \| \b_k \|_{s + 2 s_0}  \leq_s \g^{-1} \big(\e +  \| \io \|_{s + \t + 4s_0}   \big) \,.
$$
In the same way, one derives the claimed estimate for ${\mathtt \Phi}_3^{-1}$. 
The estimate \eqref{lem:phi3-lip} is proved in a similar way. % This proves item $(i)$.

\noindent
$(ii)$ Since $ {\mathtt \Phi}_3 $ is diagonal it commutes with  $ \frak D $ and hence
$  {\frak R}_3 \frak D  =  {\mathtt \Phi}_3^{-1}  ({\frak R}_2 \frak D) {\mathtt \Phi}_3 $. The 
first estimate in 
\eqref{estimate R3 lip} then follows from $(i)$, Lemma \ref{stima R2 lip} $(iii)$, and the tame estimate of Lemma \ref{prodest} 
for operator valued maps. 
The second estimate in \eqref{estimate R3 lip} is proved in a similar way. 
\end{proof}

\begin{lemma}\label{stimaR3 Delta 12} 
For any torus embeddings 
$\breve \io^{(a)}(\vphi) = (\vphi, 0, 0) + \io^{(a)}(\vphi)$, $a = 1, 2$, satisfying \eqref{ansatz 1}
and any $s_0 \leq s \leq s_* - 4 s_0 - \tau$, the following estimates hold:  

\noindent
$(i)$ For any $  \s' \in \{ \sigma, \sigma - 1, \s- 2 \} $, the operators
$\Delta_{12} {\mathtt \Phi}_3 := 
\mathtt \Phi_3(\breve \io^{(1)}) - \mathtt \Phi_3(\breve \io^{(2)})$
and  $\Delta_{12} {\mathtt \Phi}_3^{-1} := 
\mathtt \Phi_3^{-1}(\breve \io^{(1)}) - \mathtt \Phi_3^{-1}(\breve \io^{(2)})$ satisfy  
$$
| \Delta_{12}{\mathtt \Phi}_3^{\pm 1}|_{s, \s'}   \leq_s \g^{-1} \big( \| \Delta_{12} \io \|_{s + 4 s_0 + \tau} + {\rm max}_{s + 4  s_0 + \tau}(\io) \| \Delta_{12} \io\|_{s_0}  \big)\,.
$$
$(ii)$ The operator $\Delta_{12}{\frak R}_3 :=
 {\frak R}_3(\breve \io^{(1)}) - {\frak R}_3(\breve \io^{(2)})$ 
satisfies the estimate
\begin{equation}\label{estimate Delta 12 R3}  
| \Delta_{12} {\frak R}_3 \frak D |_{s, \s-1} \leq_s
\e \gamma^{- 2}\| \Delta_{12} \io\|_{s + 4  s_0 + \tau}  +   
{\rm max}_{s + 4 s_0 + \tau }(\io) \| \Delta_{12} \io \|_{5 s_0 + \tau} \,. 
\end{equation}
\end{lemma}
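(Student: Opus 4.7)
The plan is to follow the same pattern as in the proofs of Lemma \ref{lem:44 Delta 12} and Lemma \ref{stima R2 Delta 12}: first estimate the variation of the phases $\beta_k$ defining $\mathtt\Phi_3$, then lift this to an estimate on $\Delta_{12}\mathtt\Phi_3^{\pm 1}$, and finally telescope the conjugation formula ${\frak R}_3=\mathtt\Phi_3^{-1}{\frak R}_2\mathtt\Phi_3$ to obtain \eqref{estimate Delta 12 R3}.

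\textbf{Step 1 (variation of $\beta_k$).} For each $k\in S^\bot$, set $\Delta_{12}\beta_k:=\beta_k(\breve\io^{(1)})-\beta_k(\breve\io^{(2)})$. By the linearity of $\omega\cdot\partial_\vphi$ in \eqref{eqbetak}, $\Delta_{12}\beta_k$ has zero average in $\vphi$ and solves
\[
\omega\cdot\partial_\vphi(\Delta_{12}\beta_k)=\Delta_{12}\omega_k^{nls}\circ I+\varepsilon\bigl(\mathrm{av}(\Delta_{12}q_1)-[[\Delta_{12}q_1]]\bigr)-[[\Delta_{12}\omega_k^{nls}\circ I]].
\]
Since $\omega\in\Omega_o(\io)\subset\Omega_{\gamma,\tau}$, Lemma \ref{om vphi - 1 lip gamma} gives
\[
\sup_{k\in S^\bot}\|\Delta_{12}\beta_k\|_s\leq_s\gamma^{-1}\Bigl(\sup_{k\in S^\bot}\|\Delta_{12}\omega_k^{nls}\circ I\|_{s+\tau}+\varepsilon\|\Delta_{12}q_1\|_{s+\tau}\Bigr).
\]
Lemma \ref{lemma variazione i Omega nls}(i) (applied to the diagonal entries of $\Omega^{nls}$, which just encode $\omega_k^{nls}\circ I-4\pi^2k^2$) and \eqref{estimates Delta 12 q1 q2} of Lemma \ref{estimates from the perturbation lip}(i) then yield
\[
\sup_{k\in S^\bot}\|\Delta_{12}\beta_k\|_s\leq_s\gamma^{-1}\bigl(\|\Delta_{12}\io\|_{s+2s_0+\tau}+{\rm max}_{s+2s_0+\tau}(\io)\|\Delta_{12}\io\|_{s_0}\bigr).
\]

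\textbf{Step 2 (from $\Delta_{12}\beta_k$ to $\Delta_{12}\mathtt\Phi_3^{\pm 1}$).} Because $\mathtt\Phi_3$ is diagonal, we have the pointwise identity
\[
\Delta_{12}(e^{\pm\ii\beta_k})=\pm\ii\int_0^1 e^{\pm\ii(t\beta_k^{(1)}+(1-t)\beta_k^{(2)})}\,dt\cdot\Delta_{12}\beta_k,
\]
and therefore, using \eqref{upper-bound-norm-op}, the tame composition estimate of Lemma \ref{Lemma 2.4bis}(i) applied to $t\mapsto e^{\pm\ii t}$, together with the Step 1 bound and the smallness assumption \eqref{ansatz 1}, we obtain
\[
|\Delta_{12}\mathtt\Phi_3^{\pm 1}|_{s,\sigma'}\leq_s\sup_{k\in S^\bot}\|\Delta_{12}\beta_k\|_{s+s_0}\cdot\bigl(1+\sup_{k,a}\|\beta_k(\breve\io^{(a)})\|_{s+s_0}\bigr)\leq_s\gamma^{-1}\bigl(\|\Delta_{12}\io\|_{s+4s_0+\tau}+{\rm max}_{s+4s_0+\tau}(\io)\|\Delta_{12}\io\|_{s_0}\bigr),
\]
using Lemma \ref{stima R3 lip}(i) to bound $\|\beta_k(\breve\io^{(a)})\|_{s+s_0}$ and absorbing the $\varepsilon$-factor against $\gamma^{-1}$ via $\varepsilon\gamma^{-4}\ll 1$. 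This proves (i).

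\textbf{Step 3 (conjugation identity for $\frak R_3$).} Since $\mathtt\Phi_3$ is diagonal it commutes with $\frak D$, hence
\[
\Delta_{12}({\frak R}_3\frak D)=\Delta_{12}\mathtt\Phi_3^{-1}\cdot({\frak R}_2^{(1)}\frak D)\,\mathtt\Phi_3^{(1)}+\mathtt\Phi_3^{-1,(2)}\,\Delta_{12}({\frak R}_2\frak D)\,\mathtt\Phi_3^{(1)}+\mathtt\Phi_3^{-1,(2)}({\frak R}_2^{(2)}\frak D)\,\Delta_{12}\mathtt\Phi_3,
\]
where ${\frak R}_2^{(a)}:={\frak R}_2(\breve\io^{(a)})$ and similarly for $\mathtt\Phi_3^{(a)}$. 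Applying the tame product estimate \eqref{interpm} of Lemma \ref{prodest} to each of the three terms, and feeding in
(a) the bounds on $|\mathtt\Phi_3^{\pm 1}|_{s,\sigma-1}\leq_s 1$ from Lemma \ref{stima R3 lip}(i) (using \eqref{ansatz 1}),
(b) the estimate $|{\frak R}_2\frak D|_{s,\sigma-1}\leq_s\varepsilon+\varepsilon\gamma^{-2}\|\io\|_{s+2s_0}$ from Lemma \ref{stima R2 lip}(iii) for the $\breve\io^{(a)}$ factors,
(c) the estimate of Lemma \ref{stima R2 Delta 12}(iii) on $|\Delta_{12}{\frak R}_2\frak D|_{s,\sigma-1}$, and
(d) the bound from Step 2 for $|\Delta_{12}\mathtt\Phi_3^{\pm 1}|_{s,\sigma-1}$,
one obtains \eqref{estimate Delta 12 R3} after using \eqref{ansatz 1} to absorb the mixed $\|\io^{(a)}\|_{s+\cdots}$ factors and matching the loss of derivatives $s+4s_0+\tau$ in $\io$ against the loss $5s_0+\tau$ in $\Delta_{12}\io$ at the low--regularity end.

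\textbf{Main obstacle.} None of the steps above is deep: the proof is essentially bookkeeping of the tame estimates already established, with the small-divisor estimate of Lemma \ref{om vphi - 1 lip gamma} doing the work in Step 1. The only technical point requiring some care is to verify that the loss of derivatives in Step 2 (i.e.\ the passage from $\|\Delta_{12}\beta_k\|_{s+s_0}$ to $|\Delta_{12}\mathtt\Phi_3^{\pm 1}|_{s,\sigma'}$ through \eqref{upper-bound-norm-op}) is compatible with the $+4s_0+\tau$ loss claimed in (i), and similarly that the three-term telescoping in Step 3 does not produce a loss larger than $4s_0+\tau$ (respectively $5s_0+\tau$) on the high (respectively low) norm of $\io$ and of $\Delta_{12}\io$; this determines the choice of $s_*-4s_0-\tau$ as the upper bound on $s$, consistent with Lemma \ref{stima R3 lip}.
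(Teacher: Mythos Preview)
Your proposal is correct and follows essentially the same route as the paper: estimate $\Delta_{12}\beta_k$ via Lemma \ref{om vphi - 1 lip gamma} combined with the variation estimates for $\omega_k^{nls}\circ I$ and $q_1$, lift to $|\Delta_{12}\mathtt\Phi_3^{\pm 1}|_{s,\sigma'}$ through the diagonal structure and \eqref{upper-bound-norm-op}, and then telescope ${\frak R}_3=\mathtt\Phi_3^{-1}{\frak R}_2\mathtt\Phi_3$ using Lemmas \ref{stima R2 lip}(iii), \ref{stima R2 Delta 12}(iii), \ref{stima R3 lip}(i) and the tame product estimate of Lemma \ref{prodest}. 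The paper's Step 2 is slightly terser (it does not spell out your integral representation for $\Delta_{12}e^{\pm\ii\beta_k}$), but the content is the same.
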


\begin{proof}
$(i)$
Note that $\Delta_{12} \beta_k := \beta_k^{(1)} - \beta_k^{(2)}$ 
with $\beta_k^{(a)}\equiv \beta_k (\io^{(a)})$, $a = 1,2$, satisfies the equation
\be\label{equation for delta12beta}
\omega \cdot \partial_\varphi \Delta_{12} \beta_k = 
\Delta_{12} \big(\om_k^{nls} ( I( \vphi) )   - 
[[  \om_k^{nls}  \circ I   ]] + \e \big( {\rm av}(q_1) (\varphi)  - [[  q_1 ]] \big)  \big)\,.
\ee
Using the same strategy developed  in the proof of  Lemma \ref{lemma variazione i Omega nls}
to obtain the estimate \eqref{stima Delta 12 rk I pippa}, we get 
with $ I^{(a)} ( \vphi) :=  ( \xi+y^{(a)} (\vphi) , z^{(a)} \bar z^{(a)} (\vphi) ) $, $a = 1, 2$, 
$$
\|  \Delta_{12} (\om_k^{nls} \circ  I ) \|_s = 
\| \omega_k^{nls}  \circ I^{(1)} - \omega_k^{nls}  \circ I^{(2)} \|_s \stackrel{\eqref{tame composizione derivate ennesime frequenza}}{\leq_s}  \| \Delta_{12} \io\|_{s } + {\rm max}_{s + 2 s_0}(\io) \| \Delta_{12} \io \|_{s_0} \, . 
$$
Since $\|\Delta_{12} \big( {\rm av}(q_1)  - 
[[  q_1 ]] \big) \|_s \le \| \Delta_{12} q_1 \|_s$, it then follows from
  \eqref{estimates Delta 12 q1 q2} that it can be bounded in the same way as
$\| \Delta_{12} (\om_k^{nls} \circ  I ) \|_s$.
Hence by \eqref{equation for delta12beta} and Lemma \ref{om vphi - 1 lip gamma}, $ \Delta_{12} \b_k $ satisfies 
\begin{equation}\label{stima Delta 12 beta k}
\| \Delta_{12} \b_k \|_s \leq_s \g^{-1} 
\big( \| \Delta_{12} \io \|_{s + \tau} + {\rm max}_{s + 2 s_0 + \tau}(\io) \| \Delta_{12} \io\|_{s_0} \big)\,.
\end{equation}
Since $\mathtt \Phi_3$ is diagonal, so is $ \Delta_{12} \mathtt \Phi_3 $ and we have 
for any $ \s' \in \{ \sigma,  \s -1, \sigma - 2 \} $,  
$$
\| \Delta_{12} {\mathtt \Phi}_3 \|_{{\cal C}^{s + s_0} (\T^S, {\cal L}(h^{\s'}_\bot)) } 
= \sup_{k} \| \Delta_{12} e^{\ii \b_k }  \|_{{\cal C}^{s + s_0}(\T^S, \C)} \,.
$$
Using that, by \eqref{upper-bound-norm-op}  $| \Delta_{12} \mathtt \Phi_3 |_{s, \sigma'} \leq_s 
\| \Delta_{12} {\mathtt \Phi}_3 \|_{{\cal C}^{s + s_0} (\T^S, {\cal L}(h^{\s'}_\bot)) }$
it then follows from % \eqref{trivialembe} and 
\eqref{stima Delta 12 beta k} that
$$
%| \Delta_{12} \mathtt \Phi_3 |_{s, \sigma'}  \lessdot  
| \Delta_{12} \mathtt \Phi_3 |_{s, \sigma'} \leq_s
\g^{-1} \big( \| \Delta_{12} \io \|_{s + 4 s_0 + \tau} + 
{\rm max}_{s + 4 s_0 + \tau}(\io) \| \Delta_{12} \io\|_{s_0}  
\big)\,.
$$
In the same way one derives the claimed estimate for $\Delta_{12}{\mathtt \Phi}_3^{-1}$. 
This proves item $(i)$.
Concerning item $(ii)$, the claimed estimate follows from Lemma \ref{stima R2 lip}$(iii)$, 
Lemma \ref{stima R2 Delta 12}$(iii)$, Lemma \ref{stima R3 lip}$(i)$, and item $(i)$ by using 
the tame estimate of Lemma \ref{prodest} and the smallness assumption $\e \gamma^{- 4} \ll 1$. 
\end{proof}

\begin{remark}
Taking into account the asymptotics of the dNLS frequencies \eqref{asymptotic expansion NLS}, 
as an alternative, one can 
choose a simpler gauge transformation by defining $ \b_k(\vphi) : = \b (\vphi)$, $ \forall k $, with $\beta(\vphi)$ the solution of 
$$
 \om \cdot \pa_\vphi \b (\vphi)  =  c_0 (\vphi)  - [[ c_0]]  \, , \quad 
c_0 (\vphi) := 4 \sum_{j \in S} y_j  (\vphi) + 4 \sum _{j \in S^\bot} z_j (\vphi) \bar z_j (\vphi) + \e {\rm av}(q_1)(\breve \io (\vphi) )\, . 
$$
In this case, there are additional $ \vphi$-dependent diagonal terms
of size $ O(\e \g^{-2} / k ) $.  
\end{remark}

\noindent
The operator $ {\frak L}_3 $ in \eqref{L4} is now in diagonal form up to a one smoothing 
remainder of small norm. More precisely, the $k$-th diagonal component of
$ {\frak L}_3 (\widehat z, \widehat w)$ is of the form 
$$
\om \cdot \partial_\vphi \widehat z_k  + \ii  \big( [[ \om_k^{nls}  ]] + 
\e  [[ q_1 ]] \big) \widehat z_k + \ldots 
$$
 In the subsequent section we will block diagonalize the remainder in ${\frak L}_3$
by a KAM-reduction scheme.

\section{Reduction of $ {\frak L}_\om $. Part 2}\label{sec:redu}

In this section we reduce the linear Hamiltonian operator ${\frak L}_3$, defined in  \eqref{L4}, by means of a KAM iteration scheme. Recall that ${\frak L}_3$ is an operator from $H^s(\T^S, h^\sigma_\bot \times h^\sigma_\bot)$ into 
$H^{s - 1}(\T^S, h_\bot^{\sigma - 2} \times  h_\bot^{\sigma - 2})$ for any $s_0 \leq s \leq s_* - \bar \mu$, where  
\begin{equation}\label{perdita mu dopo prime trasformazioni}
 \bar \mu:= 4 s_0 + 2 \tau + 1\,.
\end{equation} 
To describe the reduction scheme, it is convenient to denote ${\frak L}_3$ by ${\bf L}_0$ and write

\be\label{L0}
{\bf L}_{0} =  \om \cdot \partial_\vphi {\mathbb I}_2 +  {\bf N}_0 + {\bf R}_{0}
\ee
where
\begin{equation}\label{first diagonal normal form}
{\bf N}_0 := J \begin{pmatrix}
{\bf N}_0^{(1)} & 0 \\
0 & {\bf N}_0^{(1)}
\end{pmatrix}  \, , \quad  {\bf N}_0^{(1)} :=   
{\rm diag}_{k \in S^\bot} \big( [[ \om_k^{nls}  ]] + \e  [[ q_1 ]] \big) \,, \quad {\bf R}_0 := {\frak R}_3\,,  
\end{equation}
with the normal form 
${\bf N}_0 $  described in Lemma \ref{NFL3} and  ${\frak R}_3$ given by \eqref{L4}. 
We recall that $ {\bf R}_0 $ is one smoothing \big(meaning that
$ {\bf R}_0 {\mathfrak D} \in H^s (\T^S, {\cal L}(h_\bot^{\s-1} \times h_\bot^{\s-1}))$\big)
and satisfies the estimate (cf \eqref{estimate R3 lip})
\begin{equation}\label{stima R0 riducibilita}
|{\bf R}_0 {\frak D}|_{s, \sigma - 1}^{\gamma {\rm lip}} \leq_s \e + \e \gamma^{- 2}\| \io \|_{s+ \bar \mu}^{\Lipg}\,, \quad \forall s_0 \leq s \leq s_* - \bar \mu\,.
 \end{equation}
 The linear Hamiltonian operators ${\bf L}_0$, ${\bf N}_0$, ${\bf R}_0$  depend on
 the  torus embedding $\breve \io\equiv \breve \io_\omega : \T^S \to M^\sigma$,
satisfying the smallness assumption \eqref{ansatz 1},  with
$\omega \in \Omega_o(\io)$. Here
\be \label{definition Omega io} 
\Omega_o(\io) \subset  \Omega_{\gamma, \tau} \subset \Omega\,, \qquad 0 < \g < 1\,, 
\ee
and $\Omega_{\gamma, \tau}$ denotes the set of diophantine frequencies
\eqref{Omega o Omega gamma tau}.
%Finally we record that by \eqref{espansione asintotica autovalori blocco iniziale}, 
%\eqref{stime asintotica autovalori iniziali}, and Theorem \ref{Corollary 2.2}
%% the regularity of the dNLS frequencies
%\begin{equation}\label{estimates of the initial normal form}
%\| ({\bf N}^{(1)}_0)_{k,k}  - 4 \pi^2 k^2 \|^{\rm sup} \, , \quad
%\| ({\bf N}^{(1)}_0)_{k,k}  - 4 \pi^2 k^2 \|^{\rm lip} 
%%|\,[[ \om_k^{nls}  ]]  + \e [[ q_1 ]]  - 4 \pi^2 k^2 \, |^{\Lipg} 
%\lessdot 1\,, \quad \forall k \in S^\bot
%\end{equation}
%and that 
%$  k \big( \omega_k^{nls}(\xi, 0) - \omega_{- k}^{nls}(\xi, 0) \big) = r_k(\xi, 0) + r_{- k}(\xi, 0)  $
% need not to be small.

%%%%%%%%%%%%%%%%%%%%%%%%%%%%%%%%%%%%%%%%%%%%%%%%%%%%%%%%%%%%%%%%%%%%%
%%%%%%%%%%%%%%%%%%%%%%%%%%%%%%%%%%%%%%%%%%%%%%%%%%%%%%%%%%%%%%%%%%%%%

\subsection{KAM reduction scheme for ${\bf L}_0$}

In view of the near resonances of the dNLS frequencies $ \om^{nls}_k $, $ \om^{nls}_{-k} $, we group the
coordinates $ z_{-k}$ and $ z_k $ together. Our aim is to reduce $ {\bf L}_{0} $ to a $ 2 \times 2 $ block diagonal operator
with $\vphi$-independent coefficients, referred to as its {\it normal form}.  
Accordingly, a complex linear operator $ A $ in ${\cal L}(h^{\sigma'}_\bot)$ with matrix representation 
$(A_j^k )_{j, k \in S^\bot}$, $ A_j^k \in \C$ for all $j, k \in S^\bot $, (cf \eqref{matrix:coefficients})
is written as a matrix of $ 2 \times 2 $ matrices 
$ ([A]_j^k)_{j, k \in S^+_\bot}   $ where
$$
[A]_j^k :=
\left(
\begin{array}{cc}
A_{-j}^{-k}  & A_{-j}^{k}    \\
A_{j}^{ -k}    &  A_{j}^{ k}    \\
\end{array}
\right) \, , \quad  j, k \in  S_+^\bot := S^\bot \cap \N \, .
$$
We denote by $ \| \ \| $ the operator norm of these $ 2 \times 2 $ matrices. Actually any other norm could be used as well.
We say that $A$ is a $2 \times 2$ block diagonal operator if $[A]_j^k = 0$ for any $j, k \in S_+^\bot$ 
with $j \neq k$. 
Let $N_0 > 0$ be given and define
\be\label{defN}
N_{-1} := 1 \, ,   \quad
N_{\nu} := N_{0}^{\chi^{\nu}} \quad \forall \, \nu \geq 1 \, ,  \quad
\chi := 3 /2\,.
\ee
Note that $ N_{\nu+1} = N_{\nu}^{\frac32} $ for any $  \nu \geq 0 $. 
Along the iteration scheme, 
we shall consider 
the following decreasing sequence  $\big(\Om_{\nu}^{\g}( \io  ) \big)_{\nu \ge 0}$
of subsets of frequencies %
\be\label{Omega-second-meln}
 \Om_{0}^{\g}( \io ) := \Om_o(\io) \subset \Omega_{\gamma, \tau} 
 \,, \quad \Om_{\nu}^{\g}( \io  ) := \big\{ \om \in \Om_{\nu-1}^{\g}( \io  ) \,  :  
\eqref{Hyp1}-\eqref{Hyp2} \,  \text{hold} \, \big\} \, , \quad \nu \geq 1\, .  
\ee
We point out that the conditions  \eqref{Hyp1}-\eqref{Hyp2} also involve
an  exponent  $ \t >  |S| $ 
and that set $\Omega_{\gamma, \tau}$ 
is defined in \eqref{Omega o Omega gamma tau}.
We introduce the following constants  $ \a, \b $,  which appear in the exponents of the Sobolev spaces
in the iterative scheme, 
\begin{equation}\label{alpha beta}
 \a  \equiv  \a (\t) := 6 \tau + 4 \, , \quad  \quad \b \equiv \b(\t)  :=  \a + 1 \, .
\end{equation}
 In addition we require that 
\begin{equation}\label{regularity s-1}
s_0 + \beta + \bar \mu \le  s_* 
\end{equation}
where $\bar \mu$ is given by \eqref{perdita mu dopo prime trasformazioni}.

\begin{theorem}{{\bf (Reduction scheme for ${\bf L}_0$)}} \label{thm:abstract linear reducibility}
There exists $ N_{0} = N_0( \tau, |S|, s_*)  \in \N $   such that, if
\begin{equation}\label{piccolezza1}
\g^{-1} N_{0}^{C_0} \,  |{\bf R}_{0} \frak D |_{ s_0 + \beta, \s-1}^{\Lipg}  \leq 1, \quad C_0 := 2 \tau + 2 + \alpha
\end{equation}
then for any 
$ \nu \geq 1 $, the following statements hold:
\begin{itemize}
\item[${\bf(S1)_{\nu}}$]
For any $\omega \in \Omega^\gamma_\nu(\io)$
there exists a symplectic transformation $ \Phi_{\nu-1} := \exp( -\Psi_{\nu-1}) $
such that for any $\vphi \in \T^S $, 
$ \Phi_{\nu-1}(\vphi)  \in {\cal L}( h_\bot^{\s'} \times h_\bot^{\s'}) $, $ \s' \in \{ \s - 2,\s -1, \s \} $,  
$ \Psi_{\nu-1} $ is a linear Hamiltonian vector field satisfying for any $s \in [ s_0, s_*  - \bar \mu - \b ] $ the estimates
\be\label{Psinus}
\left|\Psi_{\nu-1}  \right|_{s, \s}^{\Lipg} \, , \ 
\left|\Psi_{\nu-1} \frak D  \right|_{s, \s-1}^{\Lipg} \lessdot \gamma^{-1}
|{\bf R}_{0} \frak D  |_{s+\beta, \s-1}^{\Lipg}  N_{\nu-1}^{2 \t+1} N_{\nu - 2}^{- \a}  \, , 
\ee
and
\be	\label{Lnu+1}
{\bf L}_{\nu} := \Phi_{\nu-1}^{-1} {\bf L}_{\nu-1} \Phi_{\nu-1} = 
\om \cdot \partial_\vphi {\mathbb I}_2  +   {\bf N}_\nu + {\bf R}_\nu
\ee
where ${\bf N}_\nu$ and ${\bf R}_\nu$ have the following properties:
${\bf N}_\nu$ is in normal form, i.e., ${\bf N}_\nu$ is a $\vphi$-independent $2 \times 2$ block diagonal operator, 
\be\label{normal-form}
{\bf N}_\nu = J 
\left(
\begin{array}{cc}
{\bf N}_\nu^{(1)} &   0   \\
0  & \overline{\bf N}_\nu^{(1)}   \\
\end{array}
\right) \, ,
\quad  {\bf N}_\nu^{(1)} =   {\rm diag}_{k \in S^\bot_+} \big[ {\bf N}_\nu^{(1)} \big]^k_k \, ,
\ee
where for any $k \in S_+^\bot$, $ \big[ {\bf N}_\nu^{(1)} \big]^k_k \in \C^{2 \times 2}$  is self-adjoint
\be\label{normal-form-specific}
({\bf N}_\nu^{(1)} \big)_{-k}^{-k} \, ,  \ ({\bf N}_\nu^{(1)} )_k^k \in \R \, , \quad
( {\bf N}_\nu^{(1)} )_{-k}^{k} =  ( \overline{\bf N}_\nu^{(1)} )_{k}^{ -k} \in \C
\ee
and satisfies 
\begin{equation}\label{N nu - N 0 lip gamma}
\big\| [{\bf N}_\nu^{(1)} - {\bf N}_{\nu - 1}^{(1)}]_k^k 
 \big\|^{\gamma {\rm lip}} \lessdot | {\bf R}_{\nu - 1} {\frak D}|_{s_0,\s-1}^{\gamma {\rm lip}} k^{-1} 
 \, , \qquad  \| [{\bf N}^{(1)}_\nu ]_k^k  \|^{ {\rm lip}} \lessdot 1  \, .
\end{equation}
The remainder ${\bf R}_\nu$ in \eqref{Lnu+1} is a linear Hamiltonian operator
\be\label{Remainder nu}
{\bf R}_\nu = J 
\left(
\begin{array}{cc}
  {\bf R}_\nu^{(1)}  &   {\bf R}_\nu^{(2)}     \\
  \overline{\bf R}_\nu^{(2)}      &     \overline{\bf R}_\nu^{(1)}   \\
\end{array}
\right),  \quad {\bf R}_\nu^{(1)} =   
\big({\bf R}_\nu^{(1)}\big)^*, \quad {\bf R}_\nu^{(2)} = ({\bf R}_\nu^{(2)} )^t
\ee
satisfying for any $s \in [ s_0, s_* - \bar \mu - \b ] $ the following estimates
\be\label{Rsb}
\left|{\bf R}_{\nu} {\mathfrak D}
\right|_{s, \s-1}^{\Lipg} \leq  \left|{\bf R}_{0} {\mathfrak D}  \right|_{s+\beta, \s-1}^{\Lipg}N_{\nu - 1}^{-\alpha} \, , \quad
\left|{\bf R}_{\nu}{\mathfrak D}  \right|_{s + \beta, \s-1}^{\Lipg} \leq \left|{\bf R}_{0} 
{\mathfrak D}  \right|_{s+\beta, \s-1}^{\Lipg}\,N_{\nu - 1}\,.
\ee
In ${\bf(S1)}_\nu$, all the Lipschitz norms are computed on the set $\Omega_\nu^\g (\io)$.

\item[${\bf(S2)_{\nu}}$] For any $k \in S_+^\bot$, there exists a Lipschitz extension $[ \widetilde{\bf N}^{(1)}_\nu]_k^k$ of $[{\bf N}^{(1)}_\nu]_k^k$ to the set $\Omega_o(\io)$, which is self-adjoint and satisfies the estimate 
\begin{equation}\label{closeness extended blocks}
\|[\widetilde{\bf N}^{(1)}_\nu]_k^k - [\widetilde{\bf N}^{(1)}_{\nu - 1}]_k^k \|^{\gamma {\rm lip}} \lessdot |{\bf R}_{\nu - 1} {\frak D}|_{s_0, \sigma - 1}^{\gamma {\rm lip}} k^{- 1}\,,
\end{equation}
where we set $[\widetilde{\bf N}_0^{(1)}]_k^k = [{\bf N}_0^{(1)}]_k^k $. % for all $k \in S_+^\bot$.
\end{itemize}
\end{theorem}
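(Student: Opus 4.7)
The plan is to prove the theorem by induction on $\nu$, with a standard KAM super-exponential scheme adapted to the $2\times 2$ block-diagonal normal form dictated by the near resonance $\omega_k^{nls}\approx\omega_{-k}^{nls}$. The base step $\nu=0$ is the content of \eqref{L0}--\eqref{first diagonal normal form}, with $\Omega_0^\gamma(\io)=\Omega_o(\io)$, $\Phi_{-1}=\mathbb{I}_2$, the estimate \eqref{stima R0 riducibilita} yielding \eqref{Rsb} for $\nu=0$, and the self-adjointness structure \eqref{normal-form-specific} following from Lemma \ref{NFL3}. For the extension in $(S2)_0$ there is nothing to prove since $\Omega_0^\gamma(\io)=\Omega_o(\io)$.

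At the inductive step, assume $(S1)_\nu, (S2)_\nu$ hold and write ${\bf L}_\nu=\omega\cdot\partial_\varphi\mathbb{I}_2+{\bf N}_\nu+{\bf R}_\nu$. I would split ${\bf R}_\nu=\Pi_{N_\nu}{\bf R}_\nu+\Pi_{N_\nu}^\bot{\bf R}_\nu$ using the Fourier truncation \eqref{SM} and look for $\Psi_\nu$ as the solution of the homological equation
\begin{equation}\label{homeq-plan}
\omega\cdot\partial_\varphi \Psi_\nu+[{\bf N}_\nu,\Psi_\nu]=-\Pi_{N_\nu}{\bf R}_\nu+[{\bf R}_\nu]
\end{equation}
where $[{\bf R}_\nu]$ denotes the $2\times 2$ block-diagonal, $\varphi$-average of ${\bf R}_\nu$, which will be absorbed into the next normal form ${\bf N}_{\nu+1}:={\bf N}_\nu+[{\bf R}_\nu]$. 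Writing $\Psi_\nu$ in the block form analogous to \eqref{Remainder nu}, equation \eqref{homeq-plan} decomposes into scalar equations on each Fourier--block coefficient: for every $\ell\in\Z^S$ and every $j,k\in S_+^\bot$ with $(\ell,j,k)\ne(0,k,k)$, one has to invert
$\ii\,\omega\cdot\ell\,\mathrm{Id}_{2\times 2}+\bigl[{\bf N}_\nu^{(1)}\bigr]_j^j\,\cdot-\,\cdot\bigl[{\bf N}_\nu^{(1)}\bigr]_k^k$ (and the analogous expression mixing ${\bf N}_\nu^{(1)}$ with $\overline{\bf N}_\nu^{(1)}$ for the off-diagonal block ${\bf R}_\nu^{(2)}$). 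The second Melnikov conditions \eqref{Hyp1}--\eqref{Hyp2} defining $\Omega_{\nu+1}^\gamma(\io)$ are precisely those guaranteeing invertibility of these $2\times 2$ (resp.\ $4\times 4$) matrices with a loss of $\langle\ell\rangle^\tau/\gamma$; combined with Lemma \ref{om vphi - 1 lip gamma} and the Fourier truncation at $N_\nu$, this yields the estimate
\[
|\Psi_\nu|_{s,\sigma}^{\Lipg}\, ,\ |\Psi_\nu\frak D|_{s,\sigma-1}^{\Lipg}\lessdot \gamma^{-1}N_\nu^{2\tau+1}|{\bf R}_\nu\frak D|_{s,\sigma-1}^{\Lipg},
\]
which, inserted into the inductive bounds $(S1)_\nu$ on ${\bf R}_\nu$ via \eqref{Rsb}, produces \eqref{Psinus} at level $\nu+1$.

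Having constructed $\Psi_\nu$, the transformation $\Phi_\nu=\exp(-\Psi_\nu)$ is well-defined and symplectic by Definition \ref{def:Hamiltonian op}, and by Lemma \ref{lem:inverti} the smallness condition $2C_{op}(s_0)|\Psi_\nu|_{s_0}^{\Lipg}\le 1$ follows from \eqref{piccolezza1} and the smallness of $N_0$. Conjugation yields
\[
{\bf L}_{\nu+1}=\omega\cdot\partial_\varphi\mathbb{I}_2+{\bf N}_{\nu+1}+{\bf R}_{\nu+1},
\]
with ${\bf R}_{\nu+1}=\Pi_{N_\nu}^\bot{\bf R}_\nu+\bigl(e^{-\mathrm{ad}\,\Psi_\nu}-\mathrm{Id}-(-\mathrm{ad}\,\Psi_\nu)\bigr)({\bf N}_\nu+{\bf R}_\nu)+(e^{-\mathrm{ad}\,\Psi_\nu}-\mathrm{Id})\Pi_{N_\nu}^\bot{\bf R}_\nu$. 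The smoothing estimate \eqref{smoothingN} with exponent $\beta$ gives the linear-in-$\Psi_\nu$ term, and the tame product estimates of Lemma \ref{prodest} combined with the super-exponential scaling \eqref{defN} yield the quadratic bound. Tracking the smoothing by $\frak D$ carefully (using Lemma \ref{lemma-reg-D} and Lemma \ref{lem:inverti} items $(iii)$--$(iv)$ to preserve the one-smoothing character along the commutators $[\Psi_\nu,{\bf N}_\nu],[\Psi_\nu,{\bf R}_\nu]$) leads to \eqref{Rsb} at step $\nu+1$, provided one chooses $N_0$ large enough so that the gains $N_\nu^{-\beta}$ and $N_\nu^{4\tau+2}(|{\bf R}_\nu\frak D|_{s_0+\beta}^{\Lipg})^2$ both dominate $N_{\nu+1}^{-\alpha}|{\bf R}_0\frak D|_{s_0+\beta,\sigma-1}^{\Lipg}$, i.e.\ $\chi=3/2$, $\alpha=6\tau+4$, $\beta=\alpha+1$ are exactly the right exponents. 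The self-adjointness structure \eqref{normal-form-specific} and the Hamiltonian structure \eqref{Remainder nu} are preserved by Lemma \ref{transformation of Hamiltonian operators}. The key bound \eqref{N nu - N 0 lip gamma} with the $k^{-1}$ decay comes from the one-smoothing character of ${\bf R}_\nu$: the block $\bigl[{\bf R}_\nu\bigr]_k^k$ acts from $h^{\sigma-2}_\bot$ to $h^{\sigma-1}_\bot$, so its operator norm picks up a factor $\langle k\rangle^{-1}$ relative to $|{\bf R}_\nu\frak D|_{s_0,\sigma-1}$.

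Finally, $(S2)_{\nu+1}$ is obtained by applying the Kirszbraun extension theorem to $[{\bf N}_{\nu+1}^{(1)}]_k^k-[\widetilde{\bf N}_\nu^{(1)}]_k^k$ on $\Omega_{\nu+1}^\gamma(\io)$, with the Lipschitz constant controlled by $|{\bf R}_\nu\frak D|_{s_0,\sigma-1}^{\Lipg}k^{-1}$ from \eqref{N nu - N 0 lip gamma}, preserving self-adjointness by extending real-diagonal and complex-off-diagonal entries separately. The main obstacle I anticipate is the bookkeeping of the $\frak D$-factors when estimating the quadratic remainder $e^{-\mathrm{ad}\,\Psi_\nu}-\mathrm{Id}-\mathrm{ad}\,\Psi_\nu$ acting on ${\bf N}_\nu+{\bf R}_\nu$: one has to show that the one-smoothing property is preserved by iterated commutators, which forces the use of Lemma \ref{lem:inverti} $(iii)$--$(iv)$ applied both to $\Psi_\nu$ and $\Psi_\nu\frak D$, and a delicate splitting of the composition so that at least one factor of $\frak D$ can always be placed to the right of a smoothing piece. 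All other estimates are routine consequences of the tame estimates of Section \ref{sec:2}.
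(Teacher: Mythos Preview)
Your overall plan matches the paper's proof almost exactly: induction on $\nu$, solve the homological equation \eqref{eq:homo} on $\Omega_{\nu+1}^\gamma(\io)$ using the $2\times 2$ block second Melnikov conditions \eqref{Hyp1}--\eqref{Hyp2}, obtain $|\Psi_\nu\frak D|_{s,\sigma-1}^{\Lipg}\lessdot\gamma^{-1}N_\nu^{2\tau+1}|{\bf R}_\nu\frak D|_{s,\sigma-1}^{\Lipg}$ (this is Lemma \ref{lemma:redu}), update ${\bf N}_{\nu+1}={\bf N}_\nu+{\bf R}_\nu^{nf}$, and close \eqref{Rsb} with the super-exponential choice \eqref{alpha beta}. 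The $(S2)$ part via Lipschitz (Kirszbraun) extension is exactly what the paper does, citing Lemma~M.5 of \cite{KP}.

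The one real gap is in your treatment of the quadratic remainder. You write it as $(e^{-\mathrm{ad}\,\Psi_\nu}-\mathrm{Id}+\mathrm{ad}\,\Psi_\nu)({\bf N}_\nu+{\bf R}_\nu)$ and propose to control the iterated commutators with ${\bf N}_\nu$ by Lemma~\ref{lem:inverti}\,(iii)--(iv). That lemma, however, only handles weighted compositions of the form $\frak D^2(\frak D^{-1}A\frak D^{-1})^n\frak D$; it says nothing about commutators with the order-two operator ${\bf N}_\nu\sim D^2\mathbb{I}_2$. A priori $[\Psi_\nu,{\bf N}_\nu]$ is of order $1$, not one-smoothing, so your ``delicate splitting so that at least one factor of $\frak D$ can always be placed to the right'' will not work on its own.

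The paper resolves this not by $\frak D$-bookkeeping but by an algebraic identity: it does \emph{not} use the Lie series formalism at all, but instead expands $\Phi_\nu=\sum_{n\ge 0}(-\Psi_\nu)^n/n!$ directly and groups the dangerous terms as in \eqref{1stepNM}--\eqref{bf R tilde nu}. The key point is that the homological equation itself gives $[{\bf N}_\nu,\Psi_\nu]=-(\omega\cdot\partial_\varphi)\Psi_\nu+\Pi_{N_\nu}{\bf R}_\nu-{\bf R}_\nu^{nf}$, and iterating this via the Leibniz-type identity
\[
(\omega\cdot\partial_\varphi)(\Psi_\nu^n)+[{\bf N}_\nu,\Psi_\nu^n]=\sum_{n_1+n_2+1=n}\Psi_\nu^{n_1}\bigl(\Pi_{N_\nu}{\bf R}_\nu-{\bf R}_\nu^{nf}\bigr)\Psi_\nu^{n_2}
\]
(formula \eqref{pizza margherita}) eliminates ${\bf N}_\nu$ from the remainder entirely, replacing it by the bounded one-smoothing operator $\Pi_{N_\nu}{\bf R}_\nu-{\bf R}_\nu^{nf}$. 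After this substitution the estimate \eqref{Rsgen} follows from the tame product bounds of Lemma~\ref{prodest} alone, with no need for Lemma~\ref{lem:inverti}\,(iii)--(iv). You correctly flagged this as ``the main obstacle'', but the resolution is substitution via the homological equation, not commutator estimates with $\frak D$.
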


Theorem \ref{thm:abstract linear reducibility} is proved in Section~\ref{proof of reduction theorem}. In the subsequent two sections we establish some auxiliary results.

\subsection{$ 2 \times 2 $ block representation of operators }
Let us write an element  $ z  = (z_k)_{k \in S^\bot } $ in $ h^{\s'}_\bot $ as a sequence of vectors
$$
 z  = (  \vec z_k )_{k \in S^\bot_+} \,, \quad \vec z_k := (z_{-k}, z_k)\,, \quad S_+^\bot = S^\bot \cap \N \, .
 $$ Its Sobolev norm is thus
$$
\| z \|_{\sigma'}^2 = \sum_{k \in S^\bot} |z_k|^2 \langle k \rangle^{2\s'} 
= \sum_{k \in S_+^\bot} | \vec z_k|^2 \langle k \rangle^{2\s'} \,.
$$
For each complex linear operator $A \in {\cal L}(h^{\sigma'}_\bot)$ and $z  = (\vec z_k)_{k \in S_+^\bot} \in h_\bot^{\sigma'}$, $A z = (\vec{A z})_{j \in S^\bot_+}$ with
$$
(\vec{A z})_j = \sum_{m \in S_+^\bot} [A]_j^m \vec z_m\,. 
$$

\noindent
 Furthermore, we denote by $ A^{\rm diag}$ the linear operator obtained from 
$A$ by setting for any $j, k \in S_+^\bot$ 
\be\label{def:diag-NF}
[ A^{\rm diag}]^k_j = [A]^k_k \, \quad \text{if} \ j = k \, , 
\quad [A^{\rm diag} ]^k_j = 0  \, \quad \text{if} \ j \neq k \, .
\ee
\begin{lemma}\label{bound op2}
Let  $ A \in {\cal L}( h^{\s'}_\bot) $ with $ \s' \leq \s $. Then the following holds:

\noindent
$(i)$ $ A^{\rm diag} \in  {\cal L}( h^{\s}_\bot)  $ and  
$  \| A^{\rm diag} \|_{{\cal L}( h^{\s}_\bot)} \lessdot  \| A \|_{ {\cal L}( h^{\s'}_\bot) }$;

\noindent
$(ii)$ $ \sum_{j \in S_+^\bot} \| [A]^k_j \|^2 \langle j \rangle^{2\s'}  \lessdot 
\| A \|_{{\cal L}(h^{\s'}_\bot)}^2 \langle k \rangle^{2\s'} $, $ \forall k \in S_+^\bot $;

\noindent
$(iii)$ for any $(\vec h_k)_{k \in S_+^\bot} \in h^{\sigma'}_\bot$, 
$$
\sum_{j \in S_+^\bot} \Big( \sum_{k \neq j} \frac{\| [A]^k_j \| \| \vec h_k \| }{| j - k |} \Big)^2 \langle j \rangle^{2 \s'} \lessdot 
\| A \|_{{\cal L}(h^{\s'}_\bot)}^2 \| h \|_{\s'}^2 \,.
$$
\end{lemma}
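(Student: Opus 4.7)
The plan is to reduce all three estimates to a single observation: probing $A$ with vectors supported on a single index block $k\in S_+^\bot$ yields uniform control on the block matrices $[A]_j^k$, and the three conclusions are then standard consequences of this control.

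More precisely, for each $k\in S_+^\bot$ and each standard basis vector $e_a\in\C^2$ ($a=1,2$), let $z^{(a,k)}\in h^{\sigma'}_\bot$ be the element whose only nonzero block component is $\vec z^{(a,k)}_k = e_a$. Then $\|z^{(a,k)}\|_{\sigma'}=\langle k\rangle^{\sigma'}$, and $(\vec{A z^{(a,k)}})_j=[A]_j^k e_a$, so
\begin{equation*}
\sum_{j\in S_+^\bot}\|[A]_j^k e_a\|^2\langle j\rangle^{2\sigma'} \;=\;\|A z^{(a,k)}\|_{\sigma'}^2\;\le\;\|A\|_{\mathcal L(h^{\sigma'}_\bot)}^2\langle k\rangle^{2\sigma'}.
\end{equation*}
Summing over $a=1,2$ and using $\|M\|^2\le\|M\|_{\rm HS}^2=|Me_1|^2+|Me_2|^2$ for any $M\in\C^{2\times2}$ yields item $(ii)$ directly. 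In particular, specializing to $j=k$ gives the uniform bound $\|[A]_k^k\|\lessdot\|A\|_{\mathcal L(h^{\sigma'}_\bot)}$ (independent of $k$, since $\langle k\rangle^{\sigma'}$ cancels). Item $(i)$ then follows: for any $z=(\vec z_k)_{k\in S_+^\bot}\in h^{\sigma}_\bot$,
\begin{equation*}
\|A^{\rm diag}z\|_\sigma^2=\sum_{k\in S_+^\bot}\|[A]_k^k\vec z_k\|^2\langle k\rangle^{2\sigma}\;\lessdot\;\|A\|_{\mathcal L(h^{\sigma'}_\bot)}^2\sum_{k\in S_+^\bot}|\vec z_k|^2\langle k\rangle^{2\sigma}=\|A\|_{\mathcal L(h^{\sigma'}_\bot)}^2\|z\|_\sigma^2,
\end{equation*}
which works for any $\sigma\ge 0$ precisely because $A^{\rm diag}$ does not couple different indices.

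For item $(iii)$, I would apply the Cauchy--Schwarz inequality to the inner sum: for each fixed $j$,
\begin{equation*}
\Big(\sum_{k\neq j}\frac{\|[A]_j^k\|\,\|\vec h_k\|}{|j-k|}\Big)^2\;\le\;\Big(\sum_{k\neq j}\frac{1}{|j-k|^2}\Big)\Big(\sum_{k\neq j}\|[A]_j^k\|^2\|\vec h_k\|^2\Big)\;\lessdot\;\sum_{k\neq j}\|[A]_j^k\|^2\|\vec h_k\|^2,
\end{equation*}
since $\sum_{k\neq j}|j-k|^{-2}\lessdot 1$. Multiplying by $\langle j\rangle^{2\sigma'}$, summing over $j$, and exchanging the order of summation gives
\begin{equation*}
\sum_{j\in S_+^\bot}\Big(\sum_{k\neq j}\frac{\|[A]_j^k\|\|\vec h_k\|}{|j-k|}\Big)^2\langle j\rangle^{2\sigma'}\;\lessdot\;\sum_{k\in S_+^\bot}\|\vec h_k\|^2\sum_{j\neq k}\|[A]_j^k\|^2\langle j\rangle^{2\sigma'},
\end{equation*}
and the inner sum is $\lessdot\|A\|_{\mathcal L(h^{\sigma'}_\bot)}^2\langle k\rangle^{2\sigma'}$ by item $(ii)$, so the total is $\lessdot\|A\|_{\mathcal L(h^{\sigma'}_\bot)}^2\|h\|_{\sigma'}^2$.

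There is no genuine obstacle: each part is a one-line consequence of the single observation that $A$ tested on block-supported elements controls column sums of the block matrix. The mild subtlety is the passage from operator norm to Hilbert--Schmidt norm on the $2\times2$ blocks in $(ii)$, which costs only a harmless constant and is what allows $(iii)$ to close cleanly by Cauchy--Schwarz.
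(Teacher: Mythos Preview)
Your proof is correct and follows essentially the same approach as the paper: probing $A$ with block-supported vectors $\vec h_k=e_a$ to control column sums (item $(ii)$), reading off the diagonal bound (item $(i)$), and then Cauchy--Schwarz plus exchanging sums for item $(iii)$. The only cosmetic difference is that the paper states $(i)$ directly from the boundedness of individual matrix entries rather than as a specialization of $(ii)$.
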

\begin{proof}

\noindent
$(i)$ The estimate holds, since each  matrix element of $ [A]^j_j \in \C^{2 \times 2} $, $ j \in S^\bot_+ $, is bounded by 
$  \| A \|_{ { {\cal L}( h^{\s'}_\bot) }} $. 

\noindent
$(ii)$ By the definition of the operator norm, for any $h \in h^{\sigma'}_\bot $ one has
$$
\| A h \|_{\sigma'}^2 = \sum_{j \in S_+^\bot} \Big\|  \sum_{m\in S_+^\bot} [A]^m_j \vec h_m  \Big\|^2 \langle j \rangle^{2 \s'} \leq \| A \|_{{\cal L}(h^{\s'}_\bot)}^2 \| h \|_{\s'}^2 \,.
$$
For the sequence $ h = (\vec h_k \d_{k, m})_{m \in  S_+^\bot}$
(with  $\delta_{k, m } = 0$ for $m \neq k$ and $\delta_{k, k} = 1$),  we find
$$
\sum_{j \in S_+^\bot} \big\|  [A]^k_j \vec h_k  \big\|^2 \langle j \rangle^{2 \s'} 
\lessdot \| A \|_{{\cal L}(h^{\s'}_\bot)}^2 | \vec h_k |^2 \langle k \rangle^{2\s'} \, .
$$
By choosing $ \vec h_k = (1,0) $ and $ \vec h_k = (0, 1) $, respectively, one gets
$$
\sum_{j \in S_+^\bot} \Big\|
\begin{pmatrix}
A_{-j}^{-k}    \\
A_{j}^{-k} 
\end{pmatrix}
\Big\|^2 \langle j \rangle^{2 \s'}, \quad
\sum_{j \in S_+^\bot} \Big\| 
\begin{pmatrix}
A_{-j}^{ k}    \\
A_j^k  
\end{pmatrix}
\Big\|^2 \langle j \rangle^{2 \s'} 
\lessdot \| A \|_{{\cal L}(h^{\s'}_\bot)}^2  \langle k \rangle^{2\s'} \,. 
$$
Since $ \| [A]^k_j \| $ is bounded by
$ |  A_{-j}^{-k}|^2 +  |  A_{j}^{ -k}|^2 +  |  A_{-j}^{ k}|^2 +  | A_{j}^{ k}|^2  $, item $(ii)$ follows.

\noindent
$(iii)$ Using the Cauchy Schwartz inequality one has  
\begin{align}
\sum_{j \in S_+^\bot} \Big( \sum_{k \neq j} \frac{\| [A]^k_j \| \| \vec h_k \| }{| j - k |} \Big)^2 \langle j \rangle^{2 \s'}  & 
 \leq 
\sum_{j \in S_+^\bot}  \Big(\sum_{k \in S_+^\bot} \| [A]^k_j \|^2 \| \vec h_k \|^2 \langle j \rangle^{ 2\s'} \Big)\Big( \sum_{k \neq j} \frac{1}{|j-k|^2} \Big) 
\nonumber	\\
& \lessdot \sum_{j \in S_+^\bot}  \sum_{k \in S_+^\bot} \| [A]^k_j \|^2 \| \vec h_k \|^2 \langle j \rangle^{ 2\s'} 
\lessdot  \sum_{k \in S_+^\bot} \| \vec h_k \|^2 \sum_{j \in S_+^\bot} \| [A]^k_j \|^2 \langle j \rangle^{ 2\s'}  \nonumber \\
& \stackrel{(ii)}{\lessdot} \sum_{k \in S_+^\bot} \| \vec h_k \|^2\| A \|_{{\cal L}(h^{\s'}_\bot)}^2 \langle k \rangle^{2\s'} 
= \| A \|_{{\cal L}(h^{\s'}_\bot)}^2 \| h \|_{\s'}^2\,, \nonumber 
\end{align}
establishing the claimed estimate.
\end{proof}
Let us denote by $\C^{2 \times 2}$ the 4-dimensional Hilbert space of the complex $2 \times 2$ matrices equipped with the inner product given for any $X, Y \in \C^{2 \times 2}$ by
\begin{equation}\label{prodotto scalare traccia matrici}
\langle X, Y \rangle := {\rm Tr}(X Y^*)\,, \qquad Y^* = \overline Y^t \,.
\end{equation}
For any $A \in \C^{2 \times 2}$, denote by $M_L(A)$, $M_R(A)$ the linear operators on $\C^{2 \times 2}$, defined for any $X \in \C^{2 \times 2}$ as 
left respectively right multiplication by $A$,
$$
M_L(A) X := A X\,, \qquad M_R(A) X := X A\,.
$$ 
For what follows it is convenient to associate to arbitrary vectors $v, w \in \C^2$ the $2 \times 2$ matrix $(v \,\, w)$ defined as
$$
(v \,\, w ) := \begin{pmatrix}
v_1 & w_1 \\
v_2 & w_2
\end{pmatrix}\,, \quad \text{where} \quad 
v := \begin{pmatrix}
v_1 \\
v_2
\end{pmatrix}\,, \quad w := \begin{pmatrix}
w_1 \\
w_2
\end{pmatrix}\,. 
$$
Furthermore, for any $A \in \C^{2 \times 2}$ denote by ${\rm spec}(A)$ the spectrum of $A$ and recall that ${\rm spec}(A) = {\rm spec}(A^t)$. 
\begin{lemma}\label{properties operators matrices}
$(i)$ Let $A \in \C^{2 \times 2}$. Then any $\lambda \in {\rm spec}(A)$ is an eigenvalue of the operators $M_L(A)$ and $M_R(A)$. More precisely for any $v, w \in \C^2$, with $A v = \lambda v$ and  $A^t w = \lambda w$, one has for any $\alpha, \beta \in \C $, 
$$
M_L(A)(\alpha v\,\, \beta v) = \lambda (\alpha v \,\, \beta v)\,,  \quad  M_R(A)(\alpha w \,\, \beta w)^t = \lambda (\alpha w \,\, \beta w)^t\,.
$$

\noindent
$(ii)$ For any $A, B \in \C^{2 \times 2}$, $\lambda \in {\rm spec}(A)$, $\mu \in {\rm spec}(B)$ and for any $v = \begin{pmatrix}
v_1 \\
v_2
\end{pmatrix}$, $w = \begin{pmatrix}
w_1 \\
w_2
\end{pmatrix}$ in $\C^2$ with $A v = \lambda v$, $B^t w = \mu w$, $\lambda \pm \mu$ is an eigenvalue of $M_L(A) \pm M_R(B)$, namely 
$$
\big(M_L(A) \pm M_R(B)  \big)(w_1v \,\, w_2 v) = (\lambda \pm \mu)(w_1v \,\, w_2 v)\,.
$$

\noindent 
$(iii)$ Let $A\in \C^{2 \times 2}$ be self-adjoint. Then  $M_L(A)$ and $M_R(A)$ are self-adjoint operators on $\C^{2 \times 2}$ with respect to the scalar product defined in \eqref{prodotto scalare traccia matrici}.
\end{lemma}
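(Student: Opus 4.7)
The proof is a direct computation resting on two structural observations: first, a $2 \times 2$ matrix whose columns are all scalar multiples of a single vector $v$ is invariant (up to scalar) under left multiplication by any operator having $v$ as an eigenvector; dually, a matrix whose rows are all scalar multiples of a single $w^t$ is invariant (up to scalar) under right multiplication by any operator having $w$ as an eigenvector of its transpose. The matrices $(\alpha v \,\, \beta v)$, $(\alpha w \,\, \beta w)^t$ and $(w_1 v \,\, w_2 v) = v w^t$ are precisely engineered to exploit these observations. For item $(iii)$, everything reduces to the cyclic property of the trace.

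To prove $(i)$, I would simply compute: since the columns of $(\alpha v \,\, \beta v)$ are $\alpha v$ and $\beta v$, one has
\[
M_L(A)(\alpha v \,\, \beta v) = (A(\alpha v) \,\, A(\beta v)) = (\alpha \lambda v \,\, \beta \lambda v) = \lambda \,(\alpha v \,\, \beta v).
\]
Dually, $(\alpha w \,\, \beta w)^t$ is the $2 \times 2$ matrix with rows $\alpha w^t$ and $\beta w^t$, so
\[
M_R(A)(\alpha w \,\, \beta w)^t = \begin{pmatrix} \alpha w^t A \\ \beta w^t A \end{pmatrix} = \begin{pmatrix} \alpha (A^t w)^t \\ \beta (A^t w)^t \end{pmatrix} = \lambda (\alpha w \,\, \beta w)^t,
\]
where the second equality uses $(w^t A) = (A^t w)^t$ and the hypothesis $A^t w = \lambda w$.

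For $(ii)$, the key is to recognize that $(w_1 v \,\, w_2 v) = v\, w^t$, a rank-one outer product. Left multiplication by $A$ acts only on $v$ and right multiplication by $B$ acts only on $w^t$, so
\[
M_L(A)(v w^t) = (Av) w^t = \lambda \, v w^t, \qquad
M_R(B)(v w^t) = v (w^t B) = v (B^t w)^t = \mu \, v w^t,
\]
and combining yields $(M_L(A) \pm M_R(B))(v w^t) = (\lambda \pm \mu) v w^t$, as claimed. Finally, for $(iii)$, I would use the cyclicity of the trace and the identity $(AY)^* = Y^* A^*$: if $A^* = A$ then
\[
\langle M_L(A) X, Y \rangle = \mathrm{Tr}(AXY^*) = \mathrm{Tr}(X Y^* A) = \mathrm{Tr}(X(AY)^*) = \langle X, M_L(A) Y \rangle,
\]
and symmetrically $\langle M_R(A) X, Y \rangle = \mathrm{Tr}(XAY^*) = \mathrm{Tr}(X(YA)^*) = \langle X, M_R(A) Y \rangle$.

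Honestly, there is no real obstacle here: the lemma is essentially a bookkeeping exercise about how the left and right regular representations of $\mathbb{C}^{2\times 2}$ act on rank-one matrices, and the only thing to watch is consistently distinguishing $A$ from $A^t$ in the right-multiplication computations. The content of the lemma — that eigenvalues of $A$ and $B$ lift to eigenvalues of $M_L(A) \pm M_R(B)$ via explicit rank-one eigenmatrices — will be used later in Section~\ref{sec:redu} to diagonalize the $2 \times 2$ blocks produced by the KAM scheme and to verify the self-adjointness required for the measure estimates.
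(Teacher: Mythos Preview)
Your proof is correct and follows essentially the same approach as the paper: direct computation for $(i)$, reduction to item $(i)$ for the two halves of $(ii)$, and trace cyclicity for $(iii)$. Your recognition of $(w_1 v\,\,w_2 v) = v\,w^t$ as a rank-one outer product is a slightly cleaner way to phrase what the paper does via the transpose identity $(w_1 v\,\,w_2 v)^t = (v_1 w\,\,v_2 w)$, but the underlying argument is identical.
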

\begin{proof}
$(i)$ 
%The two claimed identities can be proved in the same way, so we just verify  
%$$
%M_R(A)(\alpha w\,\, \beta w)^t = \lambda (\alpha w\,\, \beta w)^t\,.
%$$
One has 
$$
M_R(A)(\alpha w\,\, \beta w)^t =(\alpha w \,\, \beta w)^t A  = \big( A^t (\alpha w\,\,\beta w)\big)^t = \lambda (\alpha w\,\, \beta w)^t\,.
$$
Similarly one proves $ M_L(A)(\alpha v\,\, \beta v) = \lambda (\alpha v \,\, \beta v) $. 

\noindent
$(ii)$ By item $(i)$ one has  
$$
M_L(A)(w_1 v \,\, w_2 v)  = \lambda (w_1v \,\, w_2 v)\,
$$
and using that $(w_1 v\,\, w_2 v)^t = (v_1 w\,\, v_2 w)  $ 
$$
M_R(B)(w_1v\,\,w_2 v) = (w_1v\,\,w_2 v) B = \big(B^t (w_1 v\,\, w_2 v)^t \big)^t =  
\big(B^t (v_1 w \,\, v_2 w) \big)^t = \mu (v_1 w\,\, v_2 w)^t = \mu (w_1 v\,\, w_2 v)\,.
$$
Altogether this proves item $(ii)$. 

\noindent
$(iii)$ For any $X, Y \in \C^{2 \times 2}$ 
$$
\langle M_L(A) X , Y \rangle \stackrel{\eqref{prodotto scalare traccia matrici}}{=} {\rm Tr}(A X Y^*) = {\rm Tr}(X Y^* A) \stackrel{A = A^*}{=} {\rm Tr}(X (A Y)^*) = \langle X, M_L(A)Y \rangle\,. 
$$
The self-adjointness of $M_R(A)$ is verified similarly. 
\end{proof}

%%%%%%%%%%%%%%%%%%%%%%%%%%%%%%%%%%%%%%%%%%%%%%%%%%%%%%%%%%%%%%%%%%%%%%%%%%%%%%%
%%%%%%%%%%%%%%%%%%%%%%%%%%%%%%%%%%%%%%%%%%%%%%%%%%%%%%%%%%%%%%%%%%%%%%%%%%%%%%

\subsection{Homological equation} \label{the-reducibility-step}

We now show how, at the $\nu$th step of the KAM iteration scheme, described in Theorem \ref{thm:abstract linear reducibility}, one constructs a symplectic transformation  
$$
\Phi_\nu := \exp( - \Psi_\nu )  = {\mathbb I}_2 - \Psi_\nu + \ldots 
$$
so that  $ {\bf L}_{\nu+1 } = \Phi_\nu^{- 1} {\bf L}_\nu \Phi_\nu $ has the desired properties.
Recall that for any $\nu \ge 0$, ${\bf L}_\nu$ is of the form \eqref{Lnu+1}, ${\bf L}_\nu = \omega \cdot \partial_\vphi {\mathbb I}_2 + {\bf N}_\nu + {\bf R}_\nu$, and $ \Psi_\nu $ is required to be a linear Hamiltonian vector field acting on  $ h_\bot^\s \times h_\bot^\s, $ 
\begin{equation}\label{structure Psi nu}
\Psi_\nu =
 J
\left(
\begin{array}{cc}
  \Psi_\nu^{(1)}  &   \Psi_\nu^{(2)}     \\
 \overline\Psi_\nu^{(2)}       &    \overline\Psi_\nu^{(1)} \\
\end{array}
\right),  \quad   \Psi_\nu^{(1)}  =   \big(  \Psi_\nu^{(1)} \big)^*, \
 \Psi_\nu^{(2)}   =  \big( \Psi_\nu^{(2)}  \big)^t \, .
\end{equation}
The map $ \Psi_\nu $ will be chosen to be a trigonometric polynomial in $ \vphi $,
\be\label{def-psi-step}
 \Psi_\nu (\vphi) = \sum_{\ell \in \Z^S, |\ell| \leq N_\nu}  \hat\Psi_\nu (\ell) e^{ \ii \ell\cdot \vphi } \, ,
 \quad   \hat \Psi_\nu (\ell) \in {\cal L}( h^{\s'}_\bot \times h^{\s'}_\bot ) \,  , \ \s' \in \{ \s-2, \sigma - 1, \s \} \, .  
\ee
 With $\Pi_{N_\nu}$ denoting the projector introduced in \eqref{SM}, 
and $\Pi_{N_\nu}^\bot = {\rm Id} - \Pi_{N_\nu}$ we write
\begin{align}
{\bf L}_\nu \Phi_\nu
& =  \Phi_\nu 
 \big( \om \cdot \partial_\vphi {\mathbb I}_2    +  {\bf N}_\nu  \big) + \big( - (\om \cdot \partial_\vphi )   \Psi_\nu 
- \left[ {\bf N}_\nu, \Psi_\nu \right] + \Pi_{N_\nu} {\bf R}_\nu \big) + \widetilde{\bf R}_\nu\,, \label{1stepNM}  
\end{align}
where
\begin{equation}\label{bf R tilde nu}
 \widetilde{\bf R}_\nu := ( \om \cdot \partial_\vphi ) ( \Phi_\nu -{\mathbb I}_2 + \Psi_\nu ) + 
 \left[ {\bf N}_\nu, \Phi_\nu - {\mathbb I}_2 + \Psi_\nu \right] + \Pi_{N_\nu}^\bot {\bf R}_\nu +  {\bf R}_\nu ( \Phi_\nu -{\mathbb I}_2 ) \,.
\end{equation}
We remark that in a non-analytic setup such as ours, it is necessary for the convergence of the KAM scheme, to consider in \eqref{1stepNM}, the truncation $\Pi_{N_\nu} {\bf R}_\nu$ of the Fourier expansion of $ {\bf R}_\nu  $.

We look for a solution of the {\it homological} equation
\be\label{eq:homo}
- ( \om \cdot \partial_\vphi )    \Psi_\nu  - \left[{\bf N}_\nu, \Psi_\nu \right] + \Pi_{N_\nu} {\bf R}_\nu = {\bf R}^{nf}_\nu 
\ee
where $  {\bf R}^{nf}_\nu  $ is given by %the contribution to the normal form operator, namely
\be\label{bf R nf A 1 nu}
{\bf R}^{nf}_\nu  :=  
J \left(
\begin{array}{cc}
{\bf A}_\nu^{(1)}  &   0   \\
0     &     \overline{\bf A}_\nu^{(1)}   \\
\end{array}
\right),  \quad {\bf A}_\nu^{(1)} := \hat{\bf R}^{(1)}_\nu(0)^{\rm diag}
  \, .
\end{equation}
We recall that 
$ \hat{\bf R}^{(1)}_\nu(0)^{\rm diag} $ is defined in  \eqref{def:diag-NF}
and $ \hat{\bf R}_\nu^{(1)}(0)$ denotes the $0$th Fourier coefficient of ${\bf R}_\nu$,
$$ 
\hat{\bf R}_\nu^{(1)}(0) = 
\frac{1}{(2 \pi)^{|S|}} \int_{\T^{S}} {\bf R}_\nu^{(1)}(\vphi)\, d \vphi \, . % = [[ {\bf R}_\nu^{(1)}]]  \, . 
$$ 
%Thus 
% Note that  
%$ \hat{\bf R}_\nu^{(1)}(0) $ is $\vphi$-independent, and 
By \eqref{Remainder nu},  $ {\bf A}^{(1)}_\nu=   \big({\bf A}^{(1)}_{\nu}\big)^* $. 
For any $\ell \in \Z^S$  and $j, k \in S_+^\bot$, let us introduce the following linear operators on the vector space $\C^{2 \times 2}$ 
of $2 \times 2$ matrices with complex coefficients,
\begin{align}\label{L + nu}
& L^+_\nu(\ell, j, k) \equiv L^+_\nu(\ell, j, k; \omega) := \omega \cdot \ell \,\, {\rm Id}_{\C^{2 \times 2}} + M_L([{\bf N}^{(1)}_\nu]_j^j) + M_R([\overline{\bf N}^{(1)}_\nu]_k^k) \\
& \label{L - nu}
%\label{L - nu}
L^-_\nu(\ell, j, k) \equiv L^-_\nu(\ell, j, k; \omega) := \omega \cdot \ell \,\, {\rm Id}_{\C^{2 \times 2}} + M_L([{\bf N}^{(1)}_\nu]_j^j) - M_R([{\bf N}^{(1)}_\nu]_k^k)\,,
\end{align}
where $ {\rm Id}_{\C^{2 \times 2}}$ denotes the identity operator on $\C^{2 \times 2}$. Note that apart from the sign, $L^-_\nu(\ell, j, k)$ differs from $L_\nu^+(\ell, j, k)$ since $L^-_\nu(\ell, j, k)$ involves
the operator  $M_R([{\bf N}_\nu^{(1)}]_k^k)$  rather than  $M_R([\overline{\bf N}_\nu^{(1)}]_k^k)$.
%We remark that in the sequel, we often will indicate the dependence of a given function $g$ on the parameter
% $\omega$ by writing it as an argument rather than a subscript, i.e., $g(\cdot \, ; \, \omega)$. 

Furthermore, let $\Omega_{0}^\gamma(\io) :=  \Omega_{o}(\io)$ (cf \eqref{definition Omega-o}),  
and for any $\nu \ge 0,$ let
$\Omega_{\nu + 1}^\gamma(\io)$ be the subset of $\Omega_{\nu}^\gamma(\io),$ consisting of all $\omega \in \Omega_\nu^\gamma(\io)$ satisfying the so-called second order Melnikov conditions: 

\medskip

\noindent
${ ({\bf M}^{II}_{+, \gamma})}_{\nu + 1}$ $\forall \ell \in \Z^S$, $|\ell| \leq N_\nu$, $\forall j, k \in S_+^\bot$, the operator $L^+_\nu(\ell, j, k; \omega)$ is invertible and 
\be\label{Hyp1}
\Big\| L^{+}_\nu(\ell, j, k; \omega)^{- 1}  \Big\| \leq \frac{ \langle \ell \rangle^\tau }{\gamma \langle j^2 + k^2 \rangle} 
\ee

\noindent
${ ({\bf M}^{II}_{-, \gamma})}_{\nu + 1}$ $\forall \ell \in \Z^S$, $|\ell| \leq N_\nu$, $\forall j, k \in S_+^\bot$ with $(\ell, j, k) \neq (0, j, j)$, the operator $L^-_\nu(\ell, j, k; \omega)$ is invertible and  
\be\label{Hyp2}
\Big\| L^-_\nu(\ell, j, k; \omega)^{- 1} \Big\| \leq  \frac{   \langle \ell \rangle^\tau }{  \gamma\langle j^2 - k^2 \rangle}\,.
\ee
%Note that the subsets $\Omega_{\nu + 1}^\gamma(\io)$ also depend on $\tau$, appearing in  \eqref{definition Omega-o}. 
% But since $\t$ is kept fixed throughout this section, it is suppressed in the notation.
Since $[{\bf N}^{(1)}_\nu]_j^j$ is self-adjoint 
% for any $j \in S_+^\bot$ , 
it follows from Lemma \ref{properties operators matrices} $(iii)$ that $L_\nu^{\pm}(\ell, j, k)$ are self-adjoint operators 
on $\C^{2 \times 2}$ for any $\ell \in \Z^S$ and $j, k \in S_+^\bot$. 
Therefore conditions \eqref{Hyp1}, \eqref{Hyp2} are lower bounds for the modulus 
of the eigenvalues of $L_\nu^{\pm}(\ell , j, k)$. 
Note that by Lemma \ref{properties operators matrices} $(ii)$, the operator $L_\nu^{-}(0, j, j)$ has a zero eigenvalue, hence condition \eqref{Hyp2} is violated for $(\ell, j, k) = (0, j, j)$. 

In the next lemma Condition \eqref{Hyp1} will be used to reduce ${\bf R}^{(2)}_\nu$, 
whereas \eqref{Hyp2} will be used for ${\bf R}^{(1)}_\nu$. 

\begin{lemma} \label{lemma:redu} {\bf (Homological equation)}
For any $ \om \in {\Om}^{\gamma}_{\nu + 1}(\io)  $ there exists a unique solution
$ \Psi_\nu $ of the form \eqref{structure Psi nu}
of the homological equation \eqref{eq:homo} with the normalization 
$ [\hat\Psi^{(1)}_\nu (0)]_j^j = 0 $, $ j \in S_+^\bot $. For any $s_0 \leq s \leq s_* - \bar \mu$,
the map $ \Psi_\nu $ satisfies the following estimates
\begin{align}\label{PsiR sup}
& \left|\Psi_\nu \right|_{s, \s} \,,\,\,\,
\left|\Psi_\nu {\mathfrak D} \right|_{s, \s-1}\ 
 \lessdot    
\gamma^{-1} \left|{\bf R}_\nu{\mathfrak D}\right|_{s, \s-1} N_\nu^{\tau} \\
& \label{PsiR}
\left|\Psi_\nu \right|_{s, \s}^{\Lipg} \,,\,\,\,
\left|\Psi_\nu {\mathfrak D} \right|_{s, \s-1}^{\Lipg}\ 
 \lessdot    
\gamma^{-1} \left|{\bf R}_\nu{\mathfrak D}\right|_{s, \s-1}^{\Lipg} N_\nu^{2\tau + 1} \, .
\end{align}
%\begin{equation}\label{PsiR perdita 1}
%\left|\Psi {\mathfrak D} \right|_{s + 1, \s-1}^{\Lipg} \, , \ 
%\left|\Psi \right|_{s + 1, \s}^{\Lipg}  \leq C  N^{2\tau + 2}  
%\gamma^{-1} \left|{\bf R}{\mathfrak D}\right|_{s, \s-1}^{\Lipg} \, .
%\end{equation}
As a consequence $\Psi_\nu \in H^s(\T^S, {\cal L}( h_\bot^{\sigma - 2}))$ and
\begin{equation}\label{estimate Psi nu sigma - 2}
|\Psi_\nu |_{s, \sigma - 2}^{\Lipg} \lessdot  \gamma^{-1} \left|{\bf R}_\nu{\mathfrak D}\right|_{s, \s-1}^{\Lipg} N_\nu^{2\tau + 1}
\end{equation}
\end{lemma}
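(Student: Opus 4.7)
\medskip

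\textbf{Proof proposal.} The plan is to Fourier-expand the homological equation \eqref{eq:homo} in $\vphi$ and then to pass to the $2\times 2$ block representation of Section 6.2. Recalling that $\mathbf{N}_\nu = J\,\mathrm{diag}(\mathbf{N}_\nu^{(1)}, \overline{\mathbf N}_\nu^{(1)})$ is $\vphi$-independent and $2\times 2$ block diagonal, a direct computation of the commutator $[\mathbf{N}_\nu, \Psi_\nu]$ with $\Psi_\nu$ of the form \eqref{structure Psi nu} shows that, after stripping off the outer factor of $J$, equation \eqref{eq:homo} decouples into two uncoupled families of matrix equations: for every $|\ell|\le N_\nu$ and $j,k\in S^\bot_+$,
\begin{align*}
L^-_\nu(\ell,j,k)\,\bigl[\hat\Psi_\nu^{(1)}(\ell)\bigr]_j^k &= \bigl[\widehat{\mathbf R_\nu^{(1)}}(\ell)\bigr]_j^k - \bigl[\widehat{\mathbf R_\nu^{nf,(1)}}(\ell)\bigr]_j^k,\\
L^+_\nu(\ell,j,k)\,\bigl[\hat\Psi_\nu^{(2)}(\ell)\bigr]_j^k &= \bigl[\widehat{\mathbf R_\nu^{(2)}}(\ell)\bigr]_j^k,
\end{align*}
where $L^\pm_\nu(\ell,j,k)$ are the operators on $\mathbb C^{2\times 2}$ from \eqref{L + nu}--\eqref{L - nu}. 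The normal-form correction $\mathbf R_\nu^{nf}$ defined in \eqref{bf R nf A 1 nu} exactly kills the terms $(0,j,j)$ for which $L^-_\nu$ has the zero eigenvalue identified in Lemma \ref{properties operators matrices}(ii); for all other $(\ell,j,k)$ with $|\ell|\le N_\nu$ the right hand side gives a unique solution with the required normalization.

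On the set $\Omega^\gamma_{\nu+1}(\io)$ I would then invert these matrix equations using the second order Melnikov conditions ${(\mathbf M^{II}_{\pm,\gamma})}_{\nu+1}$, which give the quantitative bounds \eqref{Hyp1}--\eqref{Hyp2}. Applied to $\bigl[\hat\Psi_\nu^{(1)}(\ell)\bigr]_j^k$ this yields
\[
\bigl\|\bigl[\hat\Psi_\nu^{(1)}(\ell)\bigr]_j^k\bigr\| \lessdot \frac{\langle\ell\rangle^\tau}{\gamma\,\langle j^2-k^2\rangle}\,\bigl\|\bigl[\widehat{\mathbf R_\nu^{(1)}}(\ell)\bigr]_j^k\bigr\|,
\]
and similarly for $\Psi_\nu^{(2)}$ using \eqref{Hyp1}. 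To convert this into the operator norm bound \eqref{PsiR sup}, I would use the definition \eqref{def norm1} of $|\cdot|_{s,\sigma}$ and Lemma \ref{bound op2}(iii): the weight $\langle j^2-k^2\rangle^{-1}\le \langle j-k\rangle^{-1}$ together with the control of $\sum_j \|[A]_j^k\|^2\langle j\rangle^{2\sigma'}$ produces a Schur-type estimate that reduces the sum over $(j,k)$ to the operator norm of $\hat{\mathbf R}_\nu^{(1)}(\ell)$ on $h^{\sigma-1}_\bot$; the Fourier truncation $|\ell|\le N_\nu$ replaces $\langle\ell\rangle^\tau$ by $N_\nu^\tau$, giving the first claim of \eqref{PsiR sup}. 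The estimate for $|\Psi_\nu\mathfrak D|_{s,\sigma-1}$ is obtained in the same way after noting that multiplication by $\mathfrak D$ only changes the weights $\langle k\rangle^\sigma$ to $\langle k\rangle^{\sigma-1}\langle k\rangle$, which is absorbed into the Schur estimate. The symmetries $\Psi_\nu^{(1)*}=\Psi_\nu^{(1)}$, $(\Psi_\nu^{(2)})^t=\Psi_\nu^{(2)}$ in \eqref{structure Psi nu} are preserved by the homological equation since, by Lemma \ref{properties operators matrices}(iii) and the self-adjointness \eqref{normal-form-specific} of $[\mathbf N_\nu^{(1)}]_j^j$, the operators $L^\pm_\nu(\ell,j,k)$ intertwine appropriately with complex conjugation, transposition and adjunction, while $\mathbf R_\nu$ has the structure \eqref{Remainder nu}.

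For the Lipschitz estimate \eqref{PsiR}, the key tool is the resolvent identity
\[
L^\pm_\nu(\ell,j,k;\omega_1)^{-1}-L^\pm_\nu(\ell,j,k;\omega_2)^{-1}=-L^\pm_\nu(\ell,j,k;\omega_1)^{-1}\bigl(L^\pm_\nu(\ell,j,k;\omega_1)-L^\pm_\nu(\ell,j,k;\omega_2)\bigr)L^\pm_\nu(\ell,j,k;\omega_2)^{-1}.
\]
The difference inside is $(\omega_1-\omega_2)\cdot\ell\,\mathrm{Id}_{\mathbb C^{2\times 2}}+M_L(\Delta[\mathbf N_\nu^{(1)}]_j^j)\pm M_R(\cdots)$, whose norm is bounded by $|\omega_1-\omega_2|(|\ell|+|[\mathbf N_\nu^{(1)}]_j^j|^{\mathrm{lip}}+|[\mathbf N_\nu^{(1)}]_k^k|^{\mathrm{lip}})\lessdot |\omega_1-\omega_2|\,\langle\ell\rangle$ thanks to the $O(1)$ Lipschitz bound \eqref{N nu - N 0 lip gamma} for the diagonal blocks. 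Each of the two resolvents contributes a factor $\gamma^{-1}\langle\ell\rangle^\tau$, so together with $\langle\ell\rangle$ from the difference and the convention $|\cdot|^{\gamma\mathrm{lip}}=|\cdot|^{\sup}+\gamma|\cdot|^{\mathrm{lip}}$ one loses a factor $\gamma^{-1}N_\nu^{\tau+1}$ relative to the sup estimate, which combined with the sup-norm factor $\gamma^{-1}N_\nu^\tau$ gives the overall $\gamma^{-1}N_\nu^{2\tau+1}$ in \eqref{PsiR}. The same Schur argument as before then yields the desired $|\cdot|^{\Lipg}_{s,\sigma}$ and $|\cdot \mathfrak D|^{\Lipg}_{s,\sigma-1}$ bounds, and \eqref{estimate Psi nu sigma - 2} follows immediately by combining \eqref{PsiR} with the trivial $|\Psi_\nu|_{s,\sigma-2}\le |\Psi_\nu|_{s,\sigma-1}\lessdot |\Psi_\nu\mathfrak D|_{s,\sigma-1}$ from Lemma \ref{lemma-reg-D} in the doubled-space form.

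The main obstacle is bookkeeping: one must verify carefully that the decoupling into the $L^\pm_\nu$ families matches precisely the Hamiltonian structure \eqref{structure Psi nu} (so that the symmetries are consistently preserved under the inversion), that the single resonant family $(\ell,j,k)=(0,j,j)$ is exactly what $\mathbf R_\nu^{nf}$ removes, and that the seemingly weaker weights $\langle j^2\pm k^2\rangle^{-1}$ really do produce, via Lemma \ref{bound op2}, a Schur-type estimate on $2\times 2$ block matrices without losing any derivatives in $k$. The Lipschitz part is more delicate than the sup-norm part because the resolvent identity must be controlled uniformly in the block indices, which is where the Lipschitz bound \eqref{N nu - N 0 lip gamma} with its uniform $k$-independent Lipschitz constant for the normal form blocks becomes crucial.
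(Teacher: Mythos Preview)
Your overall strategy---Fourier expansion, block decoupling into the $L^\pm_\nu$ families, inversion via the Melnikov bounds \eqref{Hyp1}--\eqref{Hyp2}, and the resolvent identity for the Lipschitz part---is exactly the paper's approach, and your identification of the resonant triple $(0,j,j)$ with what $\mathbf R_\nu^{nf}$ removes is correct.

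There is, however, one point where your sketch is too crude to close. You propose the bound $\langle j^2-k^2\rangle^{-1}\le\langle j-k\rangle^{-1}$ and then invoke Lemma~\ref{bound op2}(iii). For the off-diagonal part of $|\Psi_\nu\mathfrak D|_{s,\sigma-1}$ this is fine (write $\|[\hat\Psi]_j^k\|\langle k\rangle\lessdot N_\nu^\tau\gamma^{-1}|j-k|^{-1}\|[\hat{\mathbf R}\langle D\rangle]_j^k\|$ and apply the Schur bound with $A=\hat{\mathbf R}(\ell)\langle D\rangle\in\mathcal L(h^{\sigma-1}_\bot)$). But for $|\Psi_\nu|_{s,\sigma}$ it fails: the Schur estimate only controls $\|\hat\Psi(\ell)\|_{\mathcal L(h^{\sigma'}_\bot)}$ in terms of $\|\hat{\mathbf R}(\ell)\|_{\mathcal L(h^{\sigma'}_\bot)}$ at the \emph{same} $\sigma'$, and the hypothesis provides $\hat{\mathbf R}(\ell)\langle D\rangle$ only in $\mathcal L(h^{\sigma-1}_\bot)$, not $\hat{\mathbf R}(\ell)$ in $\mathcal L(h^\sigma_\bot)$. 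Thus with $|j-k|^{-1}$ alone you can reach $|\Psi_\nu|_{s,\sigma-1}$ but not $|\Psi_\nu|_{s,\sigma}$.

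The paper's remedy is to keep the full factorization $|j^2-k^2|=|j-k|\,(j+k)$ for $j\ne k$ in $S_+^\bot$. Writing $\langle j\rangle^{2\sigma}\sim j^2\langle j\rangle^{2(\sigma-1)}$, the extra $(j+k)^{-1}$ absorbs the peeled-off factor $j$ via $j/(j+k)\le 1$, which shifts the estimate down to $h^{\sigma-1}_\bot$ where $\hat{\mathbf R}(\ell)\langle D\rangle$ lives; the diagonal blocks $j=k$ are treated separately by Lemma~\ref{bound op2}(i), since $\|[\hat\Psi]_j^j\|\,j\lessdot N_\nu^\tau\gamma^{-1}\|[\hat{\mathbf R}\langle D\rangle]_j^j\|$ directly. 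The same mechanism (now $k/(j+k)\le 1$) is how the paper handles $|\Psi_\nu\mathfrak D|_{s,\sigma-1}$, although your alternative for that norm also works. Once you make this one correction, the rest of your outline (including the Lipschitz argument and the deduction of \eqref{estimate Psi nu sigma - 2}) goes through as written.
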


\begin{proof} % {\it(Lemma \ref{lemma:redu})}
To simplify notations in this proof, we frequently drop the index $ \nu $ in 
$N_\nu$, $\Psi_\nu$, ${\bf R}_\nu$ and simply write  $N$, $\Psi$, ${\bf R}$ instead.
For any $\omega$ in ${\Om}^{\gamma}_{\nu}(\io) $,
%and ${\Om}^{\gamma}_+ $ for ${\Om}^{\gamma}_{\nu +1}(\io) $.
the homological equation \eqref{eq:homo}, when expressed in Fourier coefficients, reads 
$$
\ii  \om \cdot \ell \, \hat\Psi (\ell) +
 \big[{\bf N} , \hat\Psi (\ell) \big]    =  \hat{\bf R} (\ell) - \hat{\bf R}^{nf}(\ell) \, , \quad \forall \ell \in \Z^S \, , \ |\ell| \leq N \, .
$$
 In view of \eqref{def-psi-step} it suffices to consider the equations for the components 
$ \hat\Psi^{(1)}(\ell) $ and $ \hat\Psi^{(2)}(\ell)  $ with $|\ell| \leq N$,
\begin{align*}
& \om \cdot \ell \, \hat\Psi^{(2)} (\ell) +
{\bf N}^{(1)} \hat\Psi^{(2)} (\ell) +
\hat\Psi^{(2)} (\ell) \, \overline{\bf N}^{(1) \,  }  =    - \ii \hat{\bf R}^{(2)} (\ell)  \,, \\
&
\om \cdot \ell \, \hat\Psi^{(1)} (\ell) +
{\bf N}^{(1)} \hat\Psi^{(1)} (\ell)  -
\hat\Psi^{(1)} (\ell) \, {\bf N}^{(1)}  =   - \ii \hat{ {\bf R}}^{(1)} (\ell) + \ii \hat{\bf R}^{(1)}(0)^{\rm diag} \,\,\delta_{0, \ell}
\end{align*}
where  $\delta_{0, \ell } = 0$ for $\ell \neq 0$ and $\delta_{0, 0} = 1$. 
Taking into account that $[\hat\Psi^{(1)} (0)]_k^k = 0$ by the chosen normalization, the following
 equations then need to be solved ($|\ell| \leq N$, \, $j, k \in S_+^\bot$)
\begin{align*}
&  \om \cdot \ell \, \ [\hat\Psi^{(2)} (\ell)]_j^k +
\big[{\bf N}^{(1)}\big]^j_j [\hat\Psi^{(2)} (\ell)]_j^k +
[\widehat\Psi^{(2)} (\ell)]_j^k  \big[\overline{\bf N}^{(1) } \big]^k_k =    - \ii [\hat{\bf R}^{(2)} (\ell)]^k_j  \, , \quad \forall (\ell, j, k)\,, \\
& 
 \om \cdot \ell \, \ [\hat\Psi^{(1)} (\ell)]_j^k +
\big[{\bf N}^{(1)}\big]^j_j [\hat\Psi^{(1)} (\ell)]_j^k -
[\hat\Psi^{(1)} (\ell)]_j^k  \big[{\bf N}^{(1)}\big]^k_k  =   - \ii[\hat{ {\bf R}}^{(1)} (\ell)]^k_j\,, \quad \forall (\ell, j, k) \neq (0, j, j)\,.
\end{align*}
For any $ \om \in {\Om}^{\gamma}_{\nu +1}(\io) $, these equations admit unique solutions. 
%Let us show the bound \eqref{PsiR}. We start with the component $ \Psi^{(1) }$.
We have
\begin{align}\label{Psi 2 j k ell}
& [\hat\Psi^{(2)} (\ell)]_j^k =
- \ii L^{+}(\ell, j, k)^{-1}  [{\hat{\bf R}}^{(2)} (\ell)]^k_j\,, 
\quad \forall \ell \in \Z^S, \quad |\ell| \leq N, \quad j, k \in S_+^\bot\,, \\
& \label{Psi 1 j k ell}
[\hat\Psi^{(1)} (\ell)]_j^k =
- \ii L^{-}(\ell, j, k)^{-1}  [{\hat{\bf R}}^{(1)} (\ell)]^k_j\,,\quad 
\forall \ell \in \Z^S, \quad |\ell| \leq N, \quad (\ell, j, k) \neq (0, j, j) \,.
\end{align}
The remaining Fourier coefficients of $\Psi^{(1)}$ and $\Psi^{(2)}$ are set equal to $0$.
By \eqref{Hyp1}, \eqref{Hyp2} we deduce
$$
 \| [\hat\Psi^{(2)} (\ell)]_j^k \| \lessdot \frac{ N^\tau}{\g \langle j^2 + k^2 \rangle} \|  [{\hat{\bf R}}^{(2)} (\ell)]^k_j \|\, , \quad 
\| [\hat\Psi^{(1)} (\ell)]_j^k \| \lessdot \frac{ N^\tau}{\g \langle j^2 - k^2 \rangle} \|  [{\hat{\bf R}}^{(1)} (\ell)]^k_j \| \,.
$$
\noindent
{\it Estimate for $|\Psi {\frak D}|_{s, \sigma - 1}$:} In view of the definition operator norm \eqref{def norm1}, 
we need to estimate $\|\hat{\Psi}^{(1)}(\ell) \lla D \rra \|_{\sigma - 1}$.
For any $ h \in h^\s_\bot$ we have % We claim that the operatorial norm satisfies
\begin{align}
\| (\hat\Psi^{(1)}(\ell)  \lla D \rra ) h \|_{\s-1}^2   \lessdot
\sum_{j \in S_+^\bot} \Big(  \sum_{k \in S_+^\bot} & \| [\hat\Psi^{(1)} (\ell)]_j^k \| \,  \lla k \rra \, | (h_{-k}, h_k)  |
 \Big)^2 \langle j \rangle^{2 (\s-1)} \nonumber  \\
 \lessdot  N^{2 \t} \g^{-2} \sum_{j \in S_+^\bot} \Big(
  \| [ \hat {\bf R}^{(1)} (\ell)]_j^j \|  \,  j \, | (h_{-j}, h_j)  | & +
 \sum_{k \in S_+^\bot, k \neq j}
 \frac{\| [ \hat {\bf R}^{(1)} (\ell)]_j^k \| }{| j - k| } \,  \frac{k}{ j + k } \, | (h_{-k}, h_k)  |
  \Big)^2 \langle j \rangle^{2 (\s-1)}\,. \nonumber
  \end{align} 
 Since 
 $$
 \sum_{j \in S_+^\bot}
  \|  [ \hat{\bf R}^{(1)} (\ell) \lla D \rra ]_j^j \|^2  \, | (h_{-j}, h_j)  |^2 \langle j \rangle^{2 (\s-1)} \stackrel{Lemma\, \ref{bound op2}\,(i)}{\lessdot} \| \hat{\bf R}^{(1)} (\ell) \lla D \rra \|^2_{{\cal L}(h^{\s-1}_\bot)}  \, \| h \|_{\s-1}^2 
 $$
 and 
 $$
\sum_{j \in S_+^\bot} \Big(\sum_{k \in S_+^\bot, k \neq j}
\frac{\| [ \hat{\bf R}^{(1)} (\ell)]_j^k \| }{| j - k| }  \, | (h_{-k}, h_k)|  \Big)^2 \langle j \rangle^{2 (\sigma - 1)} \stackrel{Lemma\, \ref{bound op2}\,(iii)}{\lessdot} \| \hat{\bf R}^{(1)} (\ell) \|^2_{{\cal L}(h^{\s-1}_\bot)} \| h \|_{\sigma - 1}^2\,, 
 $$
 one sees that
$$ 
\| \hat\Psi^{(1)}(\ell ) \lla D \rra  \|_{{\cal L}(h^{\s-1}_\bot)}    \lessdot N^\t \g^{-1} \|  \hat{\bf R}^{(1)}(\ell) 
\lla D \rra  \|_{{\cal L}(h^{\s-1}_\bot )} \,.
$$
A similar bound holds for $ \hat\Psi^{(2)}(\ell) $, hence in view of the definition of the operator norm \eqref{def norm1} 
$$
|\Psi {\frak D}|_{s, \sigma - 1} \lessdot  N^{\tau }  
\gamma^{-1} \left|{\bf R}{\mathfrak D}\right|_{s, \s-1}\,.
$$

\noindent
{\it Estimate for $|\Psi|_{s, \sigma}$:} Since
\begin{align}
\sum_{j \in S_+^\bot} \Big(  \sum_{k \in S_+^\bot} & \| [\hat\Psi^{(1)} (\ell)]_j^k \| \,  \, | (h_{-k}, h_k)  |
 \Big)^2 \langle j \rangle^{2 \sigma}  \nonumber \\
 \lessdot  N^{2 \t} \g^{-2} \sum_{j \in S_+^\bot} \Big(
  \| [ \hat {\bf R}^{(1)} (\ell)]_j^j \|  \,  j \, | (h_{-j}, h_j)  | & +
 \sum_{k \in S_+^\bot, k \neq j}
 \frac{\| [ \hat{\bf R}^{(1)} (\ell)]_j^k \| }{| j - k| } \,  \frac{j}{ j + k } \, | (h_{-k}, h_k)  |
  \Big)^2 \langle j \rangle^{2 (\s-1)}\,, \nonumber
  \end{align} 
  the previous arguments yield 
  $$ 
\| \hat \Psi^{(1)} (\ell )  \|_{{\cal L}(h^{\s}_\bot)}   \lessdot N^\t \g^{-1} \| \hat{\bf R}^{(1)} (\ell) \lla D \rra
\|_{{\cal L}(h^{\s-1}_\bot )} \, .
$$
Similar estimates also hold for $  \hat\Psi^{(2)}(\ell)  $  and hence $|\Psi|_{s, \sigma} \lessdot N^\tau \gamma^{- 1} |{\bf R} {\frak D}|_{s, \sigma - 1}$. 

\noindent
{\it Estimate for $|\Psi {\frak D}|_{s, \sigma - 1}^{{\rm lip}}$:} Let us first estimate
$|\Psi^{(1)} \lla D \rra |_{s, \sigma - 1}^{{\rm lip}}$. 
For any $\omega_1, \omega_2 \in \Omega_{\nu + 1}^\gamma(\io)$ one has 
\begin{align*}
L^{-}(\ell, j, k;\omega_1)^{- 1} - L^{-}(\ell, j, k;\omega_2)^{- 1} =  L^{-}(\ell, j, k ; \omega_2)^{- 1} \big( L^{-}(\ell, j, k ; \omega_2) - L^{-}(\ell, j, k ; \omega_1) \big) L^{-}(\ell, j, k ; \omega_1)^{- 1}
\end{align*}
with $L^{-}(\ell, j, k ; \omega_2) -L^{-}(\ell, j, k ; \omega_1)$ given by
$$
(\omega_2 - \omega_1) \cdot \ell +
M_L\big([{\bf N}^{(1)}(\omega_1) - {\bf N}^{(1)}(\omega_2) ]_j^j  \big)  - 
M_R\big([{\bf N}^{(1)}(\omega_1) - {\bf N}^{(1)}(\omega_2) ]_k^k \big)\,.  \nonumber
$$
Since by  
\eqref{N nu - N 0 lip gamma},  $\| [{\bf N}^{(1)} ]_j^j  \|^{ {\rm lip}} \lessdot 1$ for any $j \in S_+^\bot$, 
%To see it,  write $ [{\bf N}_\nu^{(1)}]_j^j $ as a telescoping sum, 
%$
%[{\bf N}_\nu^{(1)}]_j^j  = [{\bf N}^{(1)}_0]_j^j + 
%\sum_{n = 1}^\nu [{\bf N}_n^{(1)} - {\bf N}_{n - 1}^{(1)}]_j^j
%$,
%and use that  $ |[{\bf N}^{(1)}_0]_j^j  - 4 \pi^2 j^2 {\rm Id_{2\times 2}} |^{\Lipg} \lessdot 1$ 
%(by \eqref{estimates of the initial normal form}),
%$\big\| [{\bf N}_n^{(1)} - {\bf N}_{n - 1}^{(1)}]_j^j
% \big\|^{\gamma {\rm lip}} \lessdot | {\bf R}_{n- 1} {\frak D}|_{s_0,\s-1}^{\gamma {\rm lip}} | j |^{-1}
%$ (by \eqref{N nu - N 0 lip gamma}), and
%$\left|{\bf R}_{n-1} {\mathfrak D}
%\right|_{s_0, \s-1}^{\Lipg} \leq  
%\left|{\bf R}_{0} {\mathfrak D}  \right|_{s_0+\beta, \s-1}^{\Lipg}N_{n - 2}^{-\alpha} 
%$ (by \eqref{Rsb})
%to conclude that
%$
%\g  \| [{\bf N}^{(1)}]_j^j  \|^{ {\rm lip}} \lessdot 
%\left|{\bf R}_{0} {\mathfrak D}  \right|_{s_0+\beta, \s-1}^{\Lipg}
%$.
%then follows from 
%$\g^{-1} N_{0}^{C_0} \,  |{\bf R}_{0} \frak D |_{ s_0 + \beta, \s-1}^{\Lipg}  \leq 1
%$ (cf \eqref{piccolezza1}) that
%$ \| [{\bf N}^{(1)}]_j^j  \|^{ {\rm lip}} \lessdot 1$ for any $j \in S_+^\bot$. 
we get 
\begin{align*}
 \| L^-(\ell, j, k ; \omega_2) - L^-(\ell, j, k ;\omega_1)  \| & \lessdot  \langle \ell \rangle  |\omega_1 - \omega_2| \lessdot N |\omega_1 - \omega_2|, \quad \forall \ell \in \Z^S\,\, \mbox{with} \,\, |\ell| \leq N\,.
\end{align*}
This together with \eqref{Hyp2} yields
$$
 \| L^-(\ell, j, k ; \omega_1)^{- 1} - L^-(\ell, j, k ; \omega_2)^{- 1} \| \lessdot  \frac{N^{2 \tau + 1}}{\gamma^2 \langle j^2 - k^2 \rangle^2} |\omega_1 - \omega_2| \, .
$$
Arguing as in the proof of the estimate for $ \| \hat{\Psi}^{(1)}(\ell) \lla D \rra \|_{{\cal L}(h^{\sigma - 1}_\bot)}$, we get that for any $\ell \in \Z^S$, $|\ell| \leq N$,
\begin{align}
 \| \big(\hat\Psi^{(1)}(\ell ; \omega_1) - 
\hat\Psi^{(1)}(\ell ; \omega_2) \big) \lla D \rra   \|_{{\cal L}(h^{\sigma - 1}_\bot)} & \lessdot 	
{N^\tau}\gamma^{- 1} \| \big(\hat{\bf R}^{(1)}(\ell ; \omega_1) - 
\hat{\bf R}^{(1)}(\ell ; \omega_2) \big) \lla D \rra \|_{{\cal L}(h^{\sigma - 1}_\bot)} \nonumber\\
& \quad + N^{2 \tau + 1} \gamma^{- 2} |\omega_1 - \omega_2|  \| \hat{\bf R}^{(1)}(\ell ; \omega_2  )\|_{{\cal L}(h^{\sigma - 1}_\bot)} 
\nonumber
\end{align}
which in view of the definition of the norm 
$| \cdot |_{s, \sigma'}^{\gamma {\rm lip}} =
 | \cdot |_{s, \sigma'}^{\sup} + \gamma | \cdot |_{s, \sigma'}^{ {\rm lip}}$ implies that 
$$
 |\hat \Psi^{(1)}(\ell) \lla D \rra |_{s, \sigma - 1}^{ {\rm lip}} 
\lessdot {N^\tau}\gamma^{- 2} \cdot
\g \|\hat{\bf R}^{(1)}(\ell) \lla D \rra \|_{{\cal L}(h^{\sigma - 1}_\bot)}^{ {\rm lip}} +
N^{2 \tau + 1} \gamma^{- 2} \| \hat{\bf R}^{(1)}(\ell)\|_{{\cal L}(h^{\sigma - 1}_\bot)}^{ {\rm sup}}
\lessdot N^{2 \tau + 1} \gamma^{- 2} |\hat {\bf R}^{(1)}(\ell) \lla D \rra |_{s, \sigma - 1}^{\Lipg}\,.
$$
In the same way one proves the corresponding estimate for 
$|\hat \Psi^{(2)} (\ell) \lla D \rra |_{s, \sigma - 1}^{ {\rm lip}}$,
yielding altogether 
$$
\g |\Psi {\frak D}|_{s, \sigma - 1}^{ { \rm lip}} \lessdot 
N^{2 \tau + 1} \gamma^{- 1} |{\bf R} {\frak D}|_{s, \sigma - 1}^{\Lipg}\,.
$$
\noindent
{\it Estimate for $ |\Psi |_{s, \sigma }^{{\rm lip}}$ :}
In the same way one shows that
$\g  |\Psi|_{s, \sigma }^{ {\rm lip}} \lessdot 
N^{2 \tau + 1} \gamma^{- 1} |{\bf R} {\frak D}|_{s, \sigma - 1}^{\Lipg}\,.$  

\smallskip

\noindent
Combining the four estimates above then proves \eqref{PsiR}. 

\noindent
{\it Estimate of $|\Psi |_{s, \sigma - 2}^{\Lipg}$}:  Since 
${\frak D} : h^{\sigma - 1}_\bot \times  h^{\sigma - 1}_\bot  
\to h^{\sigma - 2}_\bot \times  h^{\sigma - 2}_\bot$ is a linear isomorphism, it follows from \eqref{PsiR} that for any $\ell \in \Z^S$, $\hat \Psi(\ell) \in {\cal L}(h^{\sigma - 2}_\bot \times h^{\sigma - 2}_\bot)$ and that the claimed estimate \eqref{estimate Psi nu sigma - 2} holds.
\end{proof}

%%%%%%%%%%%%%%%%%%%%%%%%%%%%%%%%%%%%%%%%%%%%%%%%%%%%%%%%%%%%%%%%%%%%%%%%%%%%
%%%%%%%%%%%%%%%%%%%%%%%%%%%%%%%%%%%%%%%%%%%%%%%%%%%%%%%%%%%%%%%%%%%%%%%%%%%%

\subsection{Proof of Theorem  \ref{thm:abstract linear reducibility}}\label{proof of reduction theorem}

{\it Proof of $({\bf S 1})_\nu$:} We prove $({\bf S1})_\nu$ by induction  with respect to $\nu \geq 1$. In view of the smallness assumption \eqref{piccolezza1}, the proof of $({\bf S1})_1$ and the one of the inductive step are similar, hence we only consider the latter one:
Assuming that ${\bf({S}1)_{\nu}}$ is true for a given $\nu \geq 1$,
it is to prove that  $({\bf S1})_{\nu+1}$ holds.
To simplify notations we write $|\cdot|_{s, \s-1}$ instead of $|\cdot|_{s, \s-1}^{\Lipg}$.
By Lemma \ref{lemma:redu}, for any $ \omega \in \Omega_{\nu+1}^{\g}(\io) $, there exists a solution
$ \Psi_{\nu} $ of the homological equation \eqref{eq:homo} of the form \eqref{structure Psi nu}, which by  \eqref{PsiR} satisfies for any $s_0 \leq s \leq  s_* - \bar \mu $
\be\label{Psinu}
\left|\Psi_\nu \right|_{s, \s} \, , \ 
\left|\Psi_{\nu} {\frak D} \right|_{s, \s-1} \stackrel{\eqref{PsiR}}{\lessdot}
N_{\nu}^{2\tau + 1}\gamma^{-1} \left|{\bf R}_{\nu}  {\frak D} \right|_{s, \s-1} \,.
\ee
By the induction hyphothesis, \eqref{Rsb} holds for any $s_0 \leq s \leq s_* - \bar \mu - \beta$  and hence
\begin{equation}\label{Psinu2}
\left|\Psi_\nu \right|_{s, \s} \, , \ 
\left|\Psi_{\nu} {\frak D} \right|_{s, \s-1}
{\lessdot} N_{\nu}^{2\tau + 1}\ N_{\nu-1}^{- \a}  \gamma^{-1} \left|{\bf R}_{0}  {\frak D} \right|_{s  + \beta, \s-1} 
\end{equation}
which is the estimate \eqref{Psinus} at the inductive step $ \nu +1 $. 
It follows that for any $ \vphi \in \T^S $, $ \Phi_\nu (\vphi ) = \exp(-\Psi_\nu(\vphi) )$
is bounded and invertible when viewed as an operator on $h^{\s-2}_\bot \times h^{\s-2}_\bot$. 
Furthermore, in view of the definition \eqref{defN} of $N_\nu$ and \eqref{alpha beta} of 
$\alpha \equiv \alpha(\tau)$ and by the assumption $\tau \ge |S|+1,$ 
it also follows that for any $s_0 \leq s \leq s_* - \beta$,
$\Phi_\nu^{\pm 1} = {\rm exp}(\mp \Psi_\nu)$ are maps in $H^s(\T^S, {\cal L}(h^{\sigma - 2}_\bot \times h^{\sigma - 2}_\bot))$ and $H^s(\T^S, {\cal L}(h^{\sigma }_\bot \times h^{\sigma }_\bot ))$. 
By \eqref{1stepNM} and \eqref{eq:homo} one has
$$
{\bf L}_{\nu + 1} = \Phi^{-1}_\nu {\bf L}_\nu \Phi_\nu = \om \cdot \partial_\vphi {\mathbb I}_2   +
{\bf N}_{\nu + 1}  + {\bf R}_{\nu + 1} 
$$
where
\begin{equation}\label{D+}
{\bf N}_{\nu + 1} := {\bf N}_\nu + {\bf R}^{nf}_{\nu } \, , \quad
{\bf R}_{\nu + 1} := \Phi^{-1}_\nu \tilde {\bf R}_\nu +  (\Phi^{-1}_\nu - {\mathbb I}_2 )  {\bf R}^{nf}_\nu
%\Phi^{-1} \Big(\Pi_N^{\bot} {\mathfrak R} + {\mathfrak R} (\Phi - I) - (\Phi - I) {\mathfrak R}_{nf} \Big).
\end{equation}
and $\widetilde{\bf R}_\nu$ is defined in \eqref{bf R tilde nu}. By construction,  ${\bf N}_{\nu + 1}$  is of the form \eqref{normal-form}-\eqref{normal-form-specific}. In particular by \eqref{bf R nf A 1 nu},
$[{\bf N}_{\nu + 1}^{(1)} - {\bf N}_\nu^{(1)}]_k^k =  [ \hat{\bf R}_\nu^{(1)}(0)]_k^k$ 
 for any $k \in S_+^\bot$ and hence 
\begin{equation}\label{nuovadiagonale}
\| [{\bf N}_{\nu + 1}^{(1)} - {\bf N}_\nu^{(1)}]_k^k  \|^{\gamma {\rm lip}} \lessdot |{\bf R}_\nu {\frak D}|_{s_0, \sigma - 1}^{\gamma {\rm lip}} k^{- 1}\,,
\end{equation}
establishing the first estimate of \eqref{N nu - N 0 lip gamma} at the inductive step $\nu + 1$.
To prove the second estimate 
write 
% $ [{\bf N}_{\nu+1}^{(1)}]_j^j $ as a telescoping sum, 
$
[{\bf N}_{\nu+1}^{(1)}]_j^j  = [{\bf N}^{(1)}_0]_j^j + 
\sum_{n = 1}^{\nu+1} [{\bf N}_n^{(1)} - {\bf N}_{n - 1}^{(1)}]_j^j $ as a telescoping sum, 
and use the estimates 
\begin{equation}\label{estimates of the initial normal form}
\| [{\bf N}^{(1)}_0]_j^j  
\|^{\rm lip} 
\stackrel{\eqref{first diagonal normal form}, \eqref{lip-order-1}}\lessdot 1\,, \quad \forall j \in S^\bot \, , 
\end{equation}
$\big\| [{\bf N}_n^{(1)} - {\bf N}_{n - 1}^{(1)}]_j^j
 \big\|^{\gamma {\rm lip}} \lessdot | {\bf R}_{n- 1} {\frak D}|_{s_0,\s-1}^{\gamma {\rm lip}} j ^{-1}
$ (by  \eqref{N nu - N 0 lip gamma}), and
$\left|{\bf R}_{n-1} {\mathfrak D}
\right|_{s_0, \s-1}^{\Lipg} \leq  
\left|{\bf R}_{0} {\mathfrak D}  \right|_{s_0+\beta, \s-1}^{\Lipg}N_{n - 2}^{-\alpha} 
$ (by \eqref{Rsb})
to conclude that
$
  \| [{\bf N}^{(1)}_{\nu+1}]_j^j  \|^{ {\rm lip}} \lessdot 1 + \g^{-1}
\left|{\bf R}_{0} {\mathfrak D}  \right|_{s_0+\beta, \s-1}^{\Lipg} \stackrel{\eqref{piccolezza1}} \lessdot 1 $.

Since by Lemma \ref{transformation of Hamiltonian operators}, ${\bf L}_{\nu + 1}$ is 
a linear Hamiltonian operator, so is $  {\bf R}_{\nu + 1} $ and hence has the form \eqref{Remainder nu}. 
It remains to verify the claimed estimate \eqref{Rsb} for ${\bf R}_{\nu + 1}$.
To this end, we first need to establish estimates for $\Phi_\nu^{\pm 1}$
which we derive from Lemma \ref{lem:inverti}. 
Indeed, one has
 \begin{align}\label{Phis0}
 & \left|(\Phi_{\nu}^{\pm 1} - {\mathbb I}_2 ) {\frak D}  \right|_{s, \s-1} \stackrel{Lemma\,\, \ref{lem:inverti}\,\,(ii)}{\leq_s}  | \Psi_\nu  {\frak D} |_{s, \s-1} \stackrel{\eqref{Psinu}}{\leq_s} N_{\nu}^{2\tau + 1} \gamma^{- 1}\left|{\bf R}_{\nu}  {\frak D} \right|_{s, \s-1}\, , \\
& 
  \left| \Phi_{\nu}^{\pm 1} - {\mathbb I}_2  \right|_{s, \s } \stackrel{Lemma\,\, \ref{lem:inverti}\,\,(i)}{\leq_s}  | \Psi_\nu |_{s, \s} \stackrel{\eqref{Psinu}}{\leq_s} N_{\nu}^{2\tau + 1} \gamma^{- 1}\left|{\bf R}_{\nu}  {\frak D} \right|_{s, \s-1}  \, . \nonumber
\end{align}
We now estimate ${\bf R}_{\nu+1} = \Phi_\nu^{-1} \widetilde{\bf R}_{\nu} + (\Phi_\nu^{- 1} - {\mathbb I}_2) {\bf R}_\nu^{nf}$ where we recall that
$$
 \widetilde{\bf R}_{\nu} := (\om \cdot \partial_\vphi) ( \Phi_\nu -{\mathbb I}_2 - \Psi_\nu ) + 
 \left[ {\bf N}_\nu, \Phi_\nu - {\mathbb I}_2 - \Psi_\nu \right] +  (\Pi_{N_\nu}{\bf R}_\nu) ( \Phi_\nu -{\mathbb I}_2 ) +  
(\Pi_{N_\nu}^{\bot}  {\bf R}_\nu) \Phi_\nu\,.
$$
The terms in ${\bf R}_{\nu + 1}$ are estimated individually. 
One has 
$$
(\omega \cdot \partial_\vphi)(\Phi_\nu - {\mathbb I}_2 - \Psi_\nu) = 
\sum_{n \geq 2} (- 1)^n \frac{(\omega \cdot \partial_\vphi)(\Psi_\nu^n)}{n !}\,, \quad 
(\omega \cdot \partial_\vphi)(\Psi_\nu^n) = 
\sum_{n_1 + n_2 +1= n} \Psi_\nu^{n_1} (\omega \cdot \partial_\vphi \Psi_\nu) \Psi_\nu^{n_2}\,, \  \forall n \geq 2\,.
$$
Furthermore writing
$$
[{\bf N}_\nu, \Phi_\nu - {\mathbb I}_2 - \Psi_\nu] = \sum_{n \geq 2} (- 1)^n \frac{[{\bf N}_\nu, \Psi_\nu^n]}{n !}\,,
$$
and using that by the homological equation \eqref{eq:homo}, \,
$[{\bf N}_\nu, \Psi_\nu^n] = \sum_{n_1 + n_2 +1= n} \Psi_\nu^{n_1} [{\bf N}_\nu, \Psi_\nu] \Psi_\nu^{n_2}$ 
equals 
%for any $n \geq 2$
$$
% \sum_{n_1 + n_2 +1= n} 
%\Psi_\nu^{n_1} [{\bf N}_\nu, \Psi_\nu] \Psi_\nu^{n_2} \stackrel{\eqref{eq:homo}}{=}\, 
- \sum_{n_1 + n_2 +1 = n}\Psi_\nu^{n_1} (\omega \cdot \partial_\vphi \Psi_\nu) \Psi_\nu^{n_2} \, + \sum_{n_1 + n_2+1 = n} \Psi_\nu^{n_1} (\Pi_{N_\nu} {\bf R}_\nu - {\bf R}_\nu^{n f}) \Psi_\nu^{n_2}\,,
$$
one obtains altogether
\begin{equation}\label{pizza margherita}
(\omega \cdot \partial_\vphi) (\Psi^n_\nu) + [{\bf N}_\nu, \Psi_\nu^n] =  \sum_{n_1 + n_2 +1= n} \Psi_\nu^{n_1} (\Pi_{N_\nu} {\bf R}_\nu - {\bf R}_\nu^{n f}) \Psi_\nu^{n_2}\,.
\end{equation}
Choosing $C(s) > 2 C_{op}(s)$ large enough with $C_{op}(s)$ as
in Lemma \ref{lem:inverti} we get for any $n \geq 2$, 
\begin{align*}
\big| \big( (\omega \cdot \partial_\vphi) & (\Psi_\nu^n) + 
[{\bf N}_\nu, \Psi_\nu^n] \big) {\frak D}\big|_{s, \sigma - 1} 
  \stackrel{\eqref{Mnab}}{\leq} 
n \big( C( s) \left|\Psi_{\nu}  {\frak D} \right|_{ s_0, \s-1} \big)^{n-1}
|{\bf R}_\nu {\frak D}|_{s, \sigma - 1} \\
& + n(n-1) \big( C( s) \left|\Psi_{\nu}  {\frak D} \right|_{ s_0, \s-1} \big)^{n-2}
 C( s) \left|\Psi_{\nu}  {\frak D} \right|_{ s, \s-1}
|{\bf R}_\nu {\frak D}|_{s_0, \sigma - 1} \\
& \stackrel{\eqref{Psinu}}{\leq} 
n^2 C(s)^{n-1} 
(\left|\Psi_{\nu}  {\frak D} \right|_{ s_0, \s-1} )^{n - 2}
N_\nu^{2 \tau + 1} \gamma^{- 1}  |{\bf R}_\nu {\frak D}|_{s_0, \sigma - 1}  
|{\bf R}_\nu {\frak D}|_{s, \sigma - 1} \,.
\end{align*}
Choosing $ N_0 = N_0(s_*, \tau, |S|) > 0 $ in \eqref{defN} large enough so that 
\be\label{Psinu0}
 \left|\Psi_{\nu}  {\frak D} \right|_{ s_0, \s-1} \stackrel{\eqref{Psinu2}} {\lessdot}
 N_{\nu}^{2\tau + 1}\ N_{\nu-1}^{- \a}  \gamma^{-1} 
\left|{\bf R}_{0}  {\frak D} \right|_{ s_0  + \beta, \s-1} 
\stackrel{\eqref{alpha beta}, \, \eqref{piccolezza1}} \leq 1
\ee
one then obtains
$$
\big| \big( (\omega \cdot \partial_\vphi)  (\Psi_\nu^n) + 
[{\bf N}_\nu, \Psi_\nu^n] \big) {\frak D}\big|_{s, \sigma - 1} 
 \stackrel{\eqref{Psinu0}}{\leq}
 n^2 C(s)^{n-1} N_\nu^{2 \tau + 1} \gamma^{- 1} |{\bf R}_\nu {\frak D}|_{s_0, \sigma - 1}  |{\bf R}_\nu {\frak D}|_{s, \sigma - 1}
$$
which implies 
$$
\big| \big( (\omega \cdot \partial_\vphi)(\Phi_\nu - {\mathbb I}_2 - \Psi_\nu) +  [{\bf N}_\nu, \Phi_\nu - {\mathbb I}_2 - \Psi_\nu] \big) {\frak D}\big|_{s, \sigma - 1} \leq_s N_\nu^{2 \tau + 1} \gamma^{- 1}
|{\bf R}_\nu {\frak D}|_{s_0, \sigma - 1} |{\bf R}_\nu {\frak D}|_{s, \sigma - 1} \,.
$$
Furthermore, by \eqref{interpm} and\eqref{Phis0} one has 
$$
|(\Pi_{N_\nu}{\bf R}_\nu) (\Phi_\nu - {\mathbb I}_2) {\frak D}|_{s, \sigma - 1}\,, \quad |(\Phi_\nu^{- 1} - {\mathbb I}_2) {\bf R}_\nu^{nf}{\frak D}|_{s, \sigma - 1} \leq_s N_\nu^{2 \tau + 1} \gamma^{- 1} |{\bf R}_\nu {\frak D}|_{s, \sigma - 1}  |{\bf R}_\nu {\frak D}|_{s_0, \sigma - 1}\,,
$$
yielding, with $\Phi_\nu = {\mathbb I}_2 + (\Phi_\nu - {\mathbb I}_2)$,
$$
|(\Pi_{N_\nu}^{\bot}  {\bf R}_\nu) \Phi_\nu {\frak D}|_{s, \sigma - 1} \leq_s 
|(\Pi_{N_\nu}^\bot {\bf R}_\nu) {\frak D}|_{s, \sigma - 1} + 
N_\nu^{2 \tau + 1} \gamma^{- 1} |{\bf R}_\nu {\frak D}|_{s, \sigma - 1}  
|{\bf R}_\nu {\frak D}|_{s_0, \sigma - 1}\,.
$$
Combining the estimates above with the estimate
$\left|\Psi_{\nu} \right|_{s, \s-1} \stackrel{\eqref{Psinu}}{\lessdot}
N_{\nu}^{2\tau + 1}\gamma^{-1} \left|{\bf R}_{\nu}  {\frak D} \right|_{s, \s-1} $
and using again \eqref{interpm} and the smallness assumption \eqref{piccolezza1} one then gets
\begin{equation}\label{Rsgen}
|{\bf R}_{\nu + 1}{\frak D} |_{s, \sigma - 1} \leq_s |(\Pi_{N_\nu}^\bot  {\bf R}_\nu)  {\frak D} |_{s, \s-1} +
  N_\nu^{2\t+1} \g^{-1} |{\bf R}_\nu  {\frak D} |_{s, \s-1} | {\bf R}_\nu  {\frak D} |_{ s_{0}, \s-1} \, ,
\end{equation} 
which by the induction hyphothesis leads to
\begin{align}
|{\bf R}_{\nu + 1}{\frak D} |_{s, \sigma - 1} &\stackrel{\eqref{smoothingN} }{\leq_s} N_\nu^{- \beta} |{\bf R}_\nu {\frak D}|_{s + \beta, \sigma - 1} + N_\nu^{2 \tau + 1} \gamma^{- 1} |{\bf R}_\nu {\frak D}|_{s, \sigma - 1} |{\bf R}_\nu {\frak D}|_{s_0, \sigma - 1} \nonumber\\
& \stackrel{\eqref{Rsb}}{\leq} C(s) \big( N_\nu^{- \beta}  N_{\nu - 1} |{\bf R}_0 {\frak D}|_{s + \beta, \sigma - 1} + N_\nu^{2 \tau + 1} \gamma^{- 1} N_{\nu - 1}^{- 2 \alpha} |{\bf R}_0 {\frak D}|_{s + \beta, \sigma - 1}  
|{\bf R}_0 {\frak D}|_{s_0 + \beta , \sigma - 1} \big)\,. \label{rtyyrrer}
\end{align}
In order to insure that $|{\bf R}_{\nu + 1}{\frak D} |_{s, \sigma - 1}$ can be bounded by $|{\bf R}_{0}{\frak D} |_{s, \sigma - 1} N_\nu^{- \alpha}$  we 
need that for any $\nu \geq 0$ 
$$
C(s) N_\nu^{- \beta} N_{\nu - 1} N_\nu^{\alpha} \leq 1/2\, \quad \text{and} \quad  C(s )N_{\nu}^{2 \tau + 1} N_{\nu - 1}^{- 2 \alpha} N_\nu^\alpha |{\bf R}_0 {\frak D}|_{s_0 + \beta , \sigma - 1} \gamma^{- 1} \leq 1/ 2\,.
$$
The latter conditions are fullfilled since by \eqref{alpha beta} $\b = \a +  1$, $\a = 6 \tau + 4$
and by \eqref{piccolezza1}, $N_0^{C_0}|{\bf R}_0 {\frak D}|_{s_0 + \beta , \sigma - 1} \g^{-1}  \le 1$, with $C_0 = 2 \tau + 2 + \alpha$, taking $N_0$ large enough. 
Thus the first inequality of \eqref{Rsb} at the inductive step $ \nu +1 $ is verified.
By \eqref{Rsgen}, applied for $ s + \b $ with $s_0 \leq s \leq s_* - \bar \mu - \beta$, 
 we get
\be \label{sch2}
|{\bf R}_{\nu + 1} {\frak D} |_{s+\b, \s-1} \,
{\leq_{s+\b}} \,  | {\bf R}_\nu {\frak D} |_{s+ \b, \s-1} +  N_\nu^{2\t+1} \g^{-1} 
|{\bf R}_\nu {\frak D} |_{s+\b, \s-1}| {\bf R}_\nu {\frak D} |_{ s_{0}, \s-1}\,. 
\ee
Then \eqref{sch2}, \eqref{Rsb}, \eqref{piccolezza1}, \eqref{alpha beta} imply
the
inequality
$$
|{\bf R}_{\nu + 1} {\frak D} |_{s+\b, \s-1} \leq_{s + \beta}   | {\bf R}_\nu {\frak D} |_{s+ \b, \s-1},
$$
whence by the induction hyphothesis \eqref{Rsb} we get
$$
|{\bf R}_{\nu + 1} {\frak D} |_{s+\b, \sigma - 1}  \leq N_{\nu} |{\bf R}_0 {\frak D} |_{s+ \b, \s-1}
$$
for $ N_0 = N_0 (s_* ,\tau, S) >0 $ in \eqref{piccolezza1} large enough, which is the second inequality of \eqref{Rsb} at the step $ \nu +1 $.

\medskip

\noindent
{\it Proof of $({\bf S2})_{\nu + 1}$:}  For any $k \in S_+^\bot$
\begin{equation}\label{paolino}
\| [{\bf N}^{(1)}_{\nu + 1}]_k^k - [{\bf N}^{(1)}_\nu]_k^k \|^{\gamma {\rm lip}} \stackrel{\eqref{nuovadiagonale}}{\lessdot} |{\bf R}_\nu {\frak D}|_{s_0, \sigma - 1} k^{- 1} \stackrel{\eqref{Rsb}}{\lessdot} N_{\nu - 1}^{- \alpha} |{\bf R}_0 {\frak D}|_{s_0 + \beta, \sigma - 1} k^{- 1}
\end{equation}
where the Lipschitz seminorm is computed on $\Omega^\gamma_{\nu + 1}(\io)$.
By Lemma M.5 in \cite{KP} and its proof, the matrix elements of $[{\bf N}_{\nu}^{\Delta}]_k^k := [{\bf N}^{(1)}_{\nu + 1}]_k^k - [{\bf N}^{(1)}_\nu]_k^k$ can be extended to all of $\Omega_o(\io)$ so that the extension $[\widetilde{\bf N}_{\nu}^{\Delta}]_k^k$ of $[{\bf N}_{\nu}^{\Delta}]_k^k$ is Lipschitz, self-adjoint  and satisfies the estimate \eqref{paolino}. $({\bf S2})_{\nu + 1}$ then follows by setting
$$
 [\widetilde{\bf N}^{(1)}_{\nu + 1}]_k^k := [\widetilde{\bf N}^{(1)}_{\nu }]_k^k +[\widetilde{\bf N}_{\nu}^{\Delta}]_k^k\,. 
$$
This concludes the proof of Theorem \ref{thm:abstract linear reducibility}.

%%%%%%%%%%%%%%%%%%%%%%%%%%%%%%%%%%%%%%%%%%%%%%%%%%%%%%%%%%%%%%%%%%%%%%%%%%%%%
%%%%%%%%%%%%%%%%%%%%%%%%%%%%%%%%%%%%%%%%%%%%%%%%%%%%%%%%%%%%%%%%%%%%%%%%%%%%%

\subsection{$2 \times 2$ block diagonalization of ${\bf L}_0$}
\label{2 times 2 block diagonalization}

In this subsection we study the limit of the sequence of operators ${\bf L}_\nu$,
introduced in Theorem \ref{thm:abstract linear reducibility}, and show that it 
is the $2 \times 2$ block diagonalization of ${\bf L}_0$.
Recall that, for any $k \in S_+^\bot$,  the $2\times 2$ matrices $[\widetilde{\bf N}_\nu^{(1)}]_k^k$, $\nu \geq 1,$  were introduced 
in $({\bf S 2})_\nu$ of Theorem \ref{thm:abstract linear reducibility} 
and that $[\widetilde{\bf N}_0^{(1)}]_k^k$ is given by $[{\bf N}_0^{(1)}]_k^k$.

\begin{lemma}\label{final blocks normal form}
% {\bf (Normal form $ {\bf N}_\infty $)}
Assume that \eqref{piccolezza1} holds. Then for any $k \in S_+^\bot$, the sequence 
$([\widetilde{\bf N}_\nu^{(1)}]_k^k)_{\nu \geq 0}$ converges in the norm $\| \cdot \|^\Lipg$ to a 
$\vphi$-independent $2 \times 2$ matrix $[{\bf N}_\infty^{(1)}]_k^k$. 
The limit $[{\bf N}_\infty^{(1)}]_k^k$ is self-adjoint and satisfies the estimate
\begin{equation}\label{stime blocchi 2 per 2 finali}
\|[{\bf N}_\infty^{(1)}]_k^k - [\widetilde{\bf N}_\nu^{(1)}]_k^k \|^\Lipg \lessdot  
N_{\nu - 1}^{- \alpha}|{\bf R}_0 {\frak D}|_{s_0 + \beta, \sigma - 1} k^{- 1}\,, \quad \forall \nu \geq 0\, . 
\end{equation}
\end{lemma}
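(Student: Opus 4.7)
The plan is to prove convergence of the sequence $([\widetilde{\bf N}_\nu^{(1)}]_k^k)_{\nu \geq 0}$ by showing it is Cauchy in the Banach space $(\C^{2\times 2}, \|\cdot\|^{\Lipg})$ over $\Omega_o(\io)$, then identify the limit and estimate the tail of the telescoping series. First I would combine the one-step estimate \eqref{closeness extended blocks} from $({\bf S2})_\nu$ with the smallness bound \eqref{Rsb} on ${\bf R}_{\nu-1}$: for every $\nu \geq 1$ and every $k \in S_+^\bot$,
\begin{equation*}
\|[\widetilde{\bf N}^{(1)}_\nu]_k^k - [\widetilde{\bf N}^{(1)}_{\nu - 1}]_k^k \|^{\Lipg} \lessdot |{\bf R}_{\nu - 1} {\frak D}|_{s_0, \sigma - 1}^{\Lipg}\, k^{-1} \lessdot |{\bf R}_0 {\frak D}|_{s_0 + \beta, \sigma - 1}^{\Lipg}\, N_{\nu - 2}^{-\alpha}\, k^{-1}.
\end{equation*}
Since $N_\nu = N_0^{\chi^\nu}$ with $\chi = 3/2$ and $\alpha > 0$ (cf. \eqref{defN}, \eqref{alpha beta}), the super-exponential growth of $(N_\nu)$ implies that $\sum_{\nu \geq 0} N_\nu^{-\alpha}$ converges; in particular the series of one-step differences above is absolutely convergent in the Banach space $(\C^{2\times 2}, \|\cdot\|^{\Lipg})$.

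Next I would define
\begin{equation*}
[{\bf N}_\infty^{(1)}]_k^k := [\widetilde{\bf N}_0^{(1)}]_k^k + \sum_{\nu \geq 1} \bigl( [\widetilde{\bf N}^{(1)}_\nu]_k^k - [\widetilde{\bf N}^{(1)}_{\nu - 1}]_k^k \bigr),
\end{equation*}
which exists in $\C^{2\times 2}$ and is the $\Lipg$-limit of $[\widetilde{\bf N}_\nu^{(1)}]_k^k$ as $\nu \to \infty$. Self-adjointness of $[{\bf N}_\infty^{(1)}]_k^k$ is inherited from each $[\widetilde{\bf N}_\nu^{(1)}]_k^k$ by $({\bf S2})_\nu$, since self-adjointness is preserved under $\Lipg$-limits. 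The $\vphi$-independence is automatic: each $[\widetilde{\bf N}_\nu^{(1)}]_k^k$ is already $\vphi$-independent by the normal form structure \eqref{normal-form}, and the extension in $({\bf S2})_\nu$ is a Lipschitz extension in the variable $\omega$ only.

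Finally, to establish the quantitative tail estimate \eqref{stime blocchi 2 per 2 finali}, I would sum the telescoping series from $\nu+1$ to infinity:
\begin{equation*}
\|[{\bf N}_\infty^{(1)}]_k^k - [\widetilde{\bf N}_\nu^{(1)}]_k^k \|^{\Lipg} \leq \sum_{m > \nu} \|[\widetilde{\bf N}^{(1)}_m]_k^k - [\widetilde{\bf N}^{(1)}_{m - 1}]_k^k \|^{\Lipg} \lessdot |{\bf R}_0 {\frak D}|_{s_0 + \beta, \sigma - 1}^{\Lipg}\, k^{-1} \sum_{m > \nu} N_{m - 2}^{-\alpha}.
\end{equation*}
The remaining and only non-routine point is to bound the tail $\sum_{m > \nu} N_{m-2}^{-\alpha}$ by a constant multiple of $N_{\nu - 1}^{-\alpha}$; this follows easily from $N_{m} = N_{m-1}^{3/2}$, which gives $N_{m-2}^{-\alpha} \leq N_{\nu - 1}^{-\alpha} \cdot 2^{-(m - \nu)}$ for $m > \nu$ once $N_0$ is chosen sufficiently large (as already required in the smallness condition \eqref{piccolezza1}). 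I do not expect any serious obstacle: the main point is simply the geometric decay provided by the $N_{\nu - 2}^{-\alpha}$ factors coming from \eqref{Rsb}, which is what makes the KAM scheme converge.
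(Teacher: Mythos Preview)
Your proposal is correct and follows essentially the same approach as the paper: telescope the differences $[\widetilde{\bf N}^{(1)}_m]_k^k - [\widetilde{\bf N}^{(1)}_{m-1}]_k^k$, apply \eqref{closeness extended blocks} and \eqref{Rsb} to bound each term by $|{\bf R}_0 {\frak D}|_{s_0+\beta,\sigma-1}^{\Lipg}\, N_{m-2}^{-\alpha}\, k^{-1}$, and then sum the geometrically decaying tail to get \eqref{stime blocchi 2 per 2 finali}; self-adjointness of the limit is inherited from each $[\widetilde{\bf N}_\nu^{(1)}]_k^k$. The paper's proof is slightly more terse but otherwise identical.
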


\begin{proof}
Note that for any $k \in S_+^\bot$ and any $ \nu \geq 0 $
\begin{align}
\sum_{n \geq \nu + 1}\| [\widetilde{\bf N}_n^{(1)}]_k^k - [\widetilde{\bf N}_{n - 1}^{(1)}]_k^k \|^{\Lipg} 
& \stackrel{\eqref{closeness extended blocks}}{\lessdot}
\sum_{n \geq \nu +1} |{\bf R}_{n - 1} {\frak D}|_{s_0, \sigma - 1}^{\gamma {\rm lip}} k^{- 1} \nonumber\\ 
& \stackrel{\eqref{Rsb}}{\lessdot} 
|{\bf R}_0 {\frak D}|_{s_0 + \beta, \sigma - 1}^\Lipg k^{- 1}
\sum_{n \geq \nu+1} N_{n - 2}^{- \alpha} \stackrel{\eqref{defN}, \eqref{alpha beta}}{\lessdot} 
N_{\nu - 1}^{- \alpha} |{\bf R}_0 {\frak D}|_{s_0 + \beta, \sigma - 1}^\Lipg k^{- 1} \,\nonumber\,.
\end{align}
Hence the sequence  $[\widetilde{\bf N}_\nu^{(1)}]_k^k $ has a limit, 
denoted by $[{\bf N}_\infty^{(1)}]_k^k $, and  \eqref{stime blocchi 2 per 2 finali} holds. 
Since $ [ {\bf N}_0^{(1)}]_k^k$ (by  \eqref{first diagonal normal form}) 
and $[ \widetilde{\bf N}_\nu^{(1)}]_k^k$  (by $({\bf S 2})_\nu $) are self-adjoint 
so is $[{\bf N}_\infty^{(1)}]_k^k$. 
\end{proof}

In Theorem \ref{teoremadiriducibilita} below we 
 prove that $ {\bf L}_0 $ is conjugated to the normal form Hamiltonian operator 
\begin{equation}\label{bf L infinito esplicito}
{\bf L}_\infty(\omega) := \omega \cdot \partial_\vphi {\mathbb I}_2 + {\bf N}_\infty(\omega) 
\end{equation}
where
\begin{equation}\label{cal D infinito}
{\bf N}_\infty := J \begin{pmatrix}
 {\bf N}_\infty^{(1)} & 0 \\
0 &   \overline{\bf N}_\infty^{(1)}
\end{pmatrix}\,,\quad {\bf N}_\infty^{(1)} := {\rm diag}_{k \in S_+^\bot} [{\bf N}_\infty^{(1)}]_k^k \, .
\end{equation}
To this end we study the compositions of the symplectic transformations $\Phi_\nu$, $\nu \geq 0,$ 
introduced in ${\bf (S1)_\nu}$ of Theorem \ref{thm:abstract linear reducibility}. 
 For any $\nu \geq 0$, we define 
$$
\widetilde \Phi_\nu := \Phi_0 \circ \Phi_1 \circ \ldots \circ \Phi_\nu\,.
$$
\begin{lemma}  {\bf (Composition of $\Phi_\nu$)} \label{composizione trasformazioni KAM}
Assume that \eqref{piccolezza1} holds with $N_0 = N_0(s_*, \tau, |S|) > 0$ sufficiently large. 
Then on the set $ \cap_{\nu \geq 0} \Omega_\nu^\gamma(\io)$, the sequence of symplectic transformations $\widetilde \Phi_\nu$ converges to an invertible 
map $\Phi_\infty$ in the norm $|\cdot|^\Lipg_{s, \sigma'}$ for $\sigma' = \sigma, \sigma - 2$ and $s \in [s_0, s_* - \bar \mu - \beta]$.   
Moreover % $\Phi_\infty$ is invertible and 
$\Phi_\infty$, $\Phi_\infty^{- 1}$ are symplectic and satisfy the estimates
$$
|\Phi_\infty^{\pm 1} - {\mathbb I}_2 |_{s, \sigma - 2 }^\Lipg\,, \quad |\Phi_\infty^{\pm 1} - {\mathbb I}_2 |_{s, \sigma }^\Lipg \quad   \leq_s \gamma^{- 1} |{\bf R}_0 {\frak D}|_{s + \beta, \sigma - 1}^\Lipg\,.
$$
\end{lemma}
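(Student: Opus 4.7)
The plan is to establish convergence of $\widetilde\Phi_\nu$ in the telescoping form $\widetilde\Phi_{\nu} - \widetilde\Phi_{\nu-1} = \widetilde\Phi_{\nu-1}(\Phi_{\nu} - \mathbb I_2)$, using the tame product estimates of Lemma~\ref{prodest} in each of the two norms $|\cdot|^{\Lipg}_{s,\sigma'}$, $\sigma' \in \{\sigma,\sigma-2\}$. Since each $\Phi_\nu = \exp(-\Psi_\nu)$ is the time-one flow of the Hamiltonian operator $\Psi_\nu$ of the form \eqref{structure Psi nu}, symplecticity is built in at every step, and the only genuine issue is to control the infinite product quantitatively.

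The core quantitative input is the estimate \eqref{Psinus} from Theorem~\ref{thm:abstract linear reducibility}, namely
\[
|\Psi_\nu|_{s,\sigma}^{\Lipg},\ |\Psi_\nu \mathfrak D|_{s,\sigma-1}^{\Lipg} \lessdot \gamma^{-1}|\mathbf R_0\mathfrak D|_{s+\beta,\sigma-1}^{\Lipg}\, N_\nu^{2\tau+1}N_{\nu-1}^{-\alpha}.
\]
By Lemma~\ref{lem:inverti}(i) and Lemma~\ref{lemma-reg-D}, the first bound controls $|\Phi_\nu^{\pm1}-\mathbb I_2|_{s,\sigma}^{\Lipg}$ while the second controls $|\Phi_\nu^{\pm1}-\mathbb I_2|_{s,\sigma-2}^{\Lipg}$ (passing through $|\Psi_\nu|_{s,\sigma-2}^{\Lipg}\lessdot |\Psi_\nu\mathfrak D|_{s,\sigma-1}^{\Lipg}$). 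The choices $\alpha = 6\tau+4$, $\beta = \alpha+1$ in \eqref{alpha beta} and the super-geometric growth $N_\nu = N_0^{\chi^\nu}$, $\chi=3/2$, make the series $\sum_\nu N_\nu^{2\tau+1}N_{\nu-1}^{-\alpha}$ rapidly convergent; combined with the smallness hypothesis \eqref{piccolezza1} (taking $N_0$ large), this yields $\sum_\nu |\Phi_\nu - \mathbb I_2|_{s_0,\sigma'}^{\Lipg} \ll 1$ for both values of $\sigma'$.

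Next I would show by induction on $\nu$ that $|\widetilde\Phi_\nu|_{s_0,\sigma'}^{\Lipg}\leq 2$ and $|\widetilde\Phi_\nu - \mathbb I_2|_{s,\sigma'}^{\Lipg}\leq_s \gamma^{-1}|\mathbf R_0\mathfrak D|_{s+\beta,\sigma-1}^{\Lipg}$, using \eqref{interpm Lip} to propagate each factor in the telescoping. Applied once more, the product estimate gives for any $\nu\leq M$
\[
|\widetilde\Phi_M - \widetilde\Phi_\nu|_{s,\sigma'}^{\Lipg} \leq_s \sum_{k=\nu}^{M-1}\bigl(|\widetilde\Phi_k|_{s,\sigma'}^{\Lipg}|\Phi_{k+1}-\mathbb I_2|_{s_0,\sigma'}^{\Lipg} + |\widetilde\Phi_k|_{s_0,\sigma'}^{\Lipg}|\Phi_{k+1}-\mathbb I_2|_{s,\sigma'}^{\Lipg}\bigr),
\]
which is Cauchy by the summability above. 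Sending $M\to\infty$ delivers the limit $\Phi_\infty$; setting $\nu=0$ (with $\widetilde\Phi_{-1}:=\mathbb I_2$) produces the claimed bound on $\Phi_\infty - \mathbb I_2$. Symplecticity is closed under composition and passes to the limit because the defining relation \eqref{condizione simpletticita matrice} involves only bilinear forms continuous in the $|\cdot|_{s_0,\sigma}$ topology. For $\Phi_\infty^{-1}$ I would either repeat the argument on the sequence $\Phi_0^{-1}\circ\cdots\circ\Phi_\nu^{-1}$ (composed in the reverse order), or observe that the smallness of $|\Phi_\infty - \mathbb I_2|_{s_0,\sigma'}^{\Lipg}$ allows inversion by Neumann series and then deduce the tame estimate from the identity $\Phi_\infty^{-1} - \mathbb I_2 = -\Phi_\infty^{-1}(\Phi_\infty - \mathbb I_2)$ via \eqref{interpm Lip}.

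The main obstacle is to run the telescoping argument simultaneously in the two norms $|\cdot|_{s,\sigma}^{\Lipg}$ and $|\cdot|_{s,\sigma-2}^{\Lipg}$ with tame $s$-dependence, without losing the super-geometric gain coming from the $N_{\nu-1}^{-\alpha}$ factor. The delicate point is that the product estimate \eqref{interpm Lip} would, if applied naively, multiply $|\widetilde\Phi_k|_{s,\sigma'}^{\Lipg}$ by $|\Phi_{k+1} - \mathbb I_2|_{s_0,\sigma'}^{\Lipg}$ at each step and thereby risk a geometric blow-up in $s$. Maintaining the uniform bound $|\widetilde\Phi_\nu|_{s_0,\sigma'}^{\Lipg}\lessdot 1$ as an inductive hypothesis is precisely what prevents this, which is why the Nash--Moser smallness condition \eqref{piccolezza1} is stated in the $s_0+\beta$ norm.
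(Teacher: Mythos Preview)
Your proposal is correct and follows essentially the same route as the paper: telescoping $\widetilde\Phi_{k+1}-\widetilde\Phi_k = \widetilde\Phi_k(\Phi_{k+1}-\mathbb I_2)$, the tame product estimate \eqref{interpm Lip}, an inductive uniform bound on $\widetilde\Phi_\nu$, and the summability of $N_\nu^{2\tau+1}N_{\nu-1}^{-\alpha}$ coming from \eqref{Psinus} and \eqref{alpha beta}. One point worth making explicit when you write it up: the paper closes the high-norm induction using the algebraic identity $\varepsilon_0(s)\varepsilon_{k+1}(s_0)=\varepsilon_0(s_0)\varepsilon_{k+1}(s)$ (which holds because $\varepsilon_\nu(s)=K(s)\gamma^{-1}|\mathbf R_0\mathfrak D|_{s+\beta,\sigma-1}N_\nu^{2\tau+1}N_{\nu-1}^{-\alpha}$ factors as a function of $s$ times a function of $\nu$), so that the dangerous cross term $|\widetilde\Phi_k|_s\,|\Phi_{k+1}-\mathbb I_2|_{s_0}$ can be traded for $\varepsilon_0(s_0)\varepsilon_{k+1}(s)$, which is summable in $k$; this is precisely what prevents the high-norm blow-up you allude to in your last paragraph.
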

\begin{proof}
To simplify notations we write $|\cdot |_{s, \sigma - 1}$ instead of $| \cdot |_{s, \sigma - 1}^\Lipg$. For any $\nu \geq 0$, write 
$$
 \Phi_\nu = {\mathbb I}_2 + \Psi_\nu^{\Sigma}\,, \quad  \Psi_\nu^{\Sigma} := \sum_{n \geq 1} \frac{\Psi_\nu^n}{n !}\,.
$$
By \eqref{Psinus} and the smallness condition \eqref{piccolezza1}, as specified in \eqref{Psinu0}, we get $C(s_*)|\Psi_\nu {\frak D}|_{s_0, \sigma - 1} \leq 1$, where $C(s)$ denotes the same constant  as
in \eqref{Psinu0}. Hence, for any $s \in [s_0, s_* - \beta]$, we obtain
\begin{equation}\label{stima widetilde Psi n}
| \Psi_\nu^\Sigma {\frak D}|_{s, \sigma - 1} \stackrel{Lemma \,\,\ref{lem:inverti}}{\leq_s}  |\Psi_\nu {\frak D}|_{s, \sigma - 1} \stackrel{\eqref{Psinus}}{\leq} \e_\nu(s)\,, \quad \e_\nu(s) := K(s) \gamma^{- 1} |{\bf R}_0 {\frak D}|_{s + \beta, \sigma - 1} N_{\nu}^{2 \tau + 1} N_{\nu - 1}^{- \alpha}
\end{equation}
for some constant $K(s) \geq C(s)$, chosen to be increasing in $s$. In particular one has 
\begin{equation}\label{estimate Phi nu epsilon nu}
|\Phi_\nu - {\mathbb I}_2 |_{s, \sigma - 1} \leq  \e_\nu(s)\,.
\end{equation}
We claim that for any $\nu \geq 0$ and $s \in [s_0, s_* - \beta]$, 
\begin{equation}\label{locarno 0}
|\widetilde \Phi_\nu - \mathbb I_2|_{s, \sigma - 1} \leq 2 \e_0(s)\,.
\end{equation}
To prove it we argue by induction. For $\nu = 0$, inequality \eqref{locarno 0} follows from \eqref{estimate Phi nu epsilon nu} since $\widetilde \Phi_0 = \Phi_0$. To prove the inductive step from $\nu$ to $\nu + 1$, we write $\widetilde \Phi_{\nu + 1} - {\mathbb I}_2$ as a telescoping sum 
\begin{equation}\label{locarno 1}
\widetilde \Phi_{\nu + 1} - {\mathbb I}_2 = \sum_{k = 0}^\nu (\widetilde \Phi_{k + 1} - \widetilde \Phi_k) + \widetilde \Phi_0 - {\mathbb I}_2\,.
\end{equation}
Using that 
$$
\widetilde \Phi_{k + 1} - \widetilde \Phi_k = (\widetilde \Phi_k - {\mathbb I}_2)(\Phi_{k + 1} - \mathbb I_2) + \Phi_{k + 1} - {\mathbb I}_2 \,,
$$
one has by Lemma \ref{prodest} and by \eqref{estimate Phi nu epsilon nu}
$$
|\widetilde \Phi_{k + 1} - \widetilde \Phi_k|_{s, \sigma- 1} \leq C_{op}(s) |\widetilde \Phi_k - {\mathbb I}_2|_{s_0, \sigma - 1} \e_{k + 1}(s) + C_{op}(s) |\widetilde \Phi_k - {\mathbb I}_2|_{s, \sigma - 1} \e_{k + 1}(s_0) + \e_{k + 1}(s)\,.
$$
By the induction hyphothesis,
$|\widetilde \Phi_k - \mathbb I_2|_{s, \sigma - 1} \leq 2 \e_0(s)$.
Since by \eqref{stima widetilde Psi n} $2 \e_0(s) \e_{k + 1}(s_0) = 2 \e_0(s_0) \e_{k + 1}(s)$
one sees that
$
|\widetilde \Phi_k - \mathbb I_2|_{s, \sigma - 1} \e_{k + 1}(s_0) \leq 2 \e_0(s_0) \e_{k + 1}(s)\,,
$
yielding with $C(s) = 2 C_{op}(s)$ altogether
$$
|\widetilde \Phi_{k + 1} - \widetilde \Phi_k |_{s, \sigma - 1} \leq (2 C(s) \e_0(s_0) + 1) \e_{k + 1}(s)\,.
$$
Substituting this estimate into \eqref{locarno 1} leads to 
$$
|\widetilde \Phi_{\nu + 1} - {\mathbb I}_2|_{s, \sigma - 1} \leq 
(2 C(s) \e_0(s_0) + 1) \sum_{k = 0}^\nu \e_{k + 1}(s) + \e_0(s)\,.
$$
With $N_0$ in \eqref{Psinus} chosen large enough, it follows that $|\widetilde \Phi_{\nu + 1} - {\mathbb I}_2|_{s, \sigma - 1} \leq 2 \e_0(s)$ and hence \eqref{locarno 0} is established. 
Finally for all $\nu_2 > \nu_1 > 0$
\begin{align}
|(\widetilde \Phi_{\nu_2} - \widetilde \Phi_{\nu_1}) {\frak D}|_{s, \sigma - 1} & \leq \sum_{\nu = \nu_1}^{\nu_2 - 1}|(\widetilde \Phi_{\nu + 1} - \widetilde \Phi_\nu) {\frak D}|_{s, \sigma - 1} \nonumber\\
& = \sum_{\nu = \nu_1}^{\nu_2 - 1} |\widetilde \Phi_\nu  \Psi_{\nu + 1}^{\Sigma} {\frak D}|_{s, \sigma - 1} \stackrel{ \eqref{interpm Lip}}{\leq_s} \sum_{\nu = \nu_1}^{\nu_2 - 1} \Big(|\widetilde \Phi_\nu|_{s, \sigma - 1} | \Psi_{\nu + 1}^\Sigma {\frak D}|_{s_0, \sigma - 1} + |\widetilde \Phi_\nu|_{s_0, \sigma - 1} |\Psi_{\nu + 1}^\Sigma {\frak D}|_{s, \sigma - 1}  \Big) \nonumber\\
& \stackrel{\eqref{stima widetilde Psi n}, \eqref{estimate Phi nu epsilon nu} }{\leq_s} \sum_{\nu = \nu_1}^{\nu_2 - 1} \Big( (1 + 2 \e_0(s)) \e_{\nu + 1}(s_0) + (1 + 2 \e_0 (s_0)) \e_{\nu + 1}(s) \Big)\,. \nonumber
\end{align}
Using again $\e_0(s) \e_{\nu + 1}(s_0) = \e_0(s_0) \e_{\nu + 1}(s)$, it then follows from the smallness assumption \eqref{piccolezza1} that
$$
|(\widetilde \Phi_{\nu_2} - \widetilde \Phi_{\nu_1}) {\frak D}|_{s, \sigma - 1} \leq_s \e_{\nu_1}(s ) \leq_s \gamma^{- 1} |{\bf R}_0 {\frak D}|_{s + \beta, \sigma - 1} N_{\nu_1 }^{2 \tau + 1} N_{\nu_1 - 1}^{- \alpha}
$$
Therefore the sequence $((\widetilde \Phi_\nu -{\mathbb I}_2  ){\frak D})_{\nu \geq 0}$ 
is a Cauchy sequence with respect to the norm $|\cdot |_{s, \sigma - 1}$ and hence converges in 
$H^s(\T^S, {\cal L}(h^{\sigma - 1}_\bot \times h^{\sigma - 1}_\bot ))$. It then follows that  
$(\widetilde \Phi_\nu)_{\nu \geq 0}$ is a Cauchy sequence in the space 
$H^s(\T^S, {\cal L}(h^{\sigma - 2}_\bot \times h^{\sigma - 2}_\bot))$ and hence has a limit $\Phi_\infty $ in $ {H^s}(\T^S,{\cal L} ( h^{\sigma - 2}_\bot \times h^{\sigma - 2}_\bot))$. Since $\Phi_\nu^{- 1} = {\rm exp}(\Psi_\nu)$, one can show by the same arguments that the sequence $(\widetilde \Phi_\nu^{- 1})_{\nu \geq 0}$ satisfies the same bounds. Since $\widetilde \Phi_\nu  \widetilde \Phi_\nu^{- 1} = {\mathbb I}_2$ for all $\nu \geq 0$, the limit of $(\widetilde\Phi_\nu^{- 1})_{\nu \geq 0}$ is equal to $\Phi_\infty^{- 1}$.  
By the same arguments one shows that $(\widetilde \Phi_\nu^{\pm 1})_{\nu \geq 0}$ is a Cauchy sequence in $H^s(\T^S, {\cal L}(h^{\sigma }_\bot \times h^{\sigma }_\bot ))$ and hence it also converges in this space to (the restriction of) $\Phi_\infty^{\pm 1}$.
By Theorem \ref{thm:abstract linear reducibility}, the maps $\Phi_\nu$ are symplectic for any $\nu \geq 0$
and hence by the characterization \eqref{condizione simpletticita matrice} of sympletic maps, 
so are  $\widetilde \Phi_\nu$ and in turn $\Phi_\infty^{\pm 1}$. 
\end{proof}

\smallskip

\noindent
For any $\ell \in \Z^S$, $j, k \in S_+^\bot$ and $\omega \in \Omega_o(\io)$, we define
\begin{align}\label{definizione L infinito - seconde Melnikov}
&  L_\infty^{+}(\ell, j, k) \equiv L_\infty^{+}(\ell, j, k; \omega) :=  
\omega \cdot \ell \,\,{\rm Id}_{\C^{2 \times 2}} + M_L([{\bf N}_\infty^{(1)}]_j^j) + 
M_R([\overline{\bf N}_\infty^{(1)}]_k^k)  \\
& \label{definizione L infinito + seconde Melnikov}
L_\infty^{-}(\ell, j, k) \equiv L_\infty^{-}(\ell, j, k; \omega) :=  
\omega \cdot \ell \,\, {\rm Id}_{\C^{2 \times 2}} + M_L([{\bf N}_\infty^{(1)}]_j^j) - M_R([{\bf N}_\infty^{(1)}]_k^k) 
\end{align}
and the set 
\begin{align} 
\Omega_\infty^{2\g}(\io)  & := \big\{ \omega \in \Omega_o(\io) \, :\,\, {({\bf M}_{+, 2 \gamma}^{II})}_\infty,\,\, {({\bf M}_{-, 2 \gamma}^{II})}_\infty \,\,\text{hold} \big\} \label{Omegainfty}
\end{align}
where ${ ({\bf M}^{II}_{+, 2 \gamma})}_\infty$, ${ ({\bf M}^{II}_{-, 2 \gamma})}_\infty$ are the following second order Melnikov conditions:

\smallskip

\noindent
{${({\bf M}_{+, 2 \gamma}^{II})}_\infty$} For any $\ell \in \Z^S$, $j, k \in S_+^\bot$, the operator 
$L_\infty^+(\ell, j, k; \omega)$ is invertible and
\begin{equation}\label{seconde melnikov off diagonali finali matrici}
\|L_\infty^+(\ell, j, k; \omega)^{- 1} \| \leq \frac{\langle \ell\rangle^\tau}{2 \gamma \langle j^2 + k^2 \rangle}\,.
\end{equation}

\noindent
{ ${ ({\bf M}^{II}_{-, 2 \gamma})}_\infty$} For any $\ell \in \Z^S$, $j, k \in S_+^\bot$ with $(\ell,j,k) \neq (0, j, j)$, the operator $L^-_\infty(\ell, j, k; \omega)$ is invertible and
\begin{equation}\label{seconde melnikov diagonali finali matrici}
\| L_\infty^-(\ell, j, k; \omega)^{- 1} \| \leq \frac{\langle \ell \rangle^\tau}{2 \gamma \langle j^2 - k^2 \rangle}\,.
\end{equation}
We remark that the superindex $2\g$ in
$\Omega_\infty^{2\g}(\io)$ stands for the factor $2\g$ in the denominator of the bounds in 
\eqref{seconde melnikov off diagonali finali matrici} and \eqref{seconde melnikov diagonali finali matrici}.
The set can be localized as follows: 
\begin{lemma}\label{inclusion of cantor sets}
If \eqref{piccolezza1} holds, with $N_0 = N_0(s_*, \tau, |S|) > 0$ sufficiently large, then $\Omega_\infty^{2 \gamma}(\io) \subseteq \cap_{\nu \geq 0} \Omega_\nu^\gamma(\io)$. 
\end{lemma}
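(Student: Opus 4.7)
The plan is to prove the inclusion by induction on $\nu \geq 0$. The base case $\nu = 0$ is trivial since $\Omega_0^\gamma(\io) = \Omega_o(\io)$ by definition. For the inductive step, suppose $\omega \in \Omega_\infty^{2\gamma}(\io) \cap \Omega_\nu^\gamma(\io)$; we must verify the second order Melnikov conditions $({\bf M}_{\pm,\gamma}^{II})_{\nu+1}$ defining $\Omega_{\nu+1}^\gamma(\io)$, i.e., that for all $|\ell| \leq N_\nu$ and $j, k \in S_+^\bot$ (with $(\ell, j, k) \neq (0, j, j)$ in the $-$ case) the operators $L_\nu^{\pm}(\ell, j, k; \omega)$ are invertible with
$\|L_\nu^{\pm}(\ell, j, k;\omega)^{-1}\| \leq \langle \ell \rangle^\tau / (\gamma \langle j^2 \pm k^2\rangle)$.

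The natural route is a Neumann series argument: write $L_\nu^{\pm} = L_\infty^{\pm} + (L_\nu^{\pm} - L_\infty^{\pm})$ and perturb off the bound $\|L_\infty^{\pm}(\ell, j, k; \omega)^{-1}\| \leq \langle \ell \rangle^\tau / (2\gamma \langle j^2 \pm k^2\rangle)$ supplied by $\omega \in \Omega_\infty^{2\gamma}(\io)$. Comparing definitions \eqref{L + nu}--\eqref{L - nu} with \eqref{definizione L infinito - seconde Melnikov}--\eqref{definizione L infinito + seconde Melnikov} and using that, for $\omega \in \Omega_\nu^\gamma(\io)$, the extension $[\widetilde{\bf N}_\nu^{(1)}]_m^m$ coincides with $[{\bf N}_\nu^{(1)}]_m^m$, one gets
\begin{equation*}
\big\| L_\nu^{\pm}(\ell, j, k;\omega) - L_\infty^{\pm}(\ell, j, k;\omega) \big\| \lessdot \big\| [{\bf N}_\infty^{(1)} - {\bf N}_\nu^{(1)}]_j^j \big\| + \big\| [{\bf N}_\infty^{(1)} - {\bf N}_\nu^{(1)}]_k^k \big\| \lessdot N_{\nu - 1}^{-\alpha} \, |{\bf R}_0 {\frak D}|_{s_0 + \beta, \sigma - 1}^{\Lipg} \, (j^{-1} + k^{-1}),
\end{equation*}
where the last inequality is \eqref{stime blocchi 2 per 2 finali} of Lemma \ref{final blocks normal form}.

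The main task then reduces to absorbing the $\langle \ell \rangle^\tau$ factor and arranging that the product
$\| L_\infty^{\pm}(\ell, j, k;\omega)^{-1} \| \cdot \| L_\nu^{\pm} - L_\infty^{\pm} \|$
is bounded by $1/2$ uniformly in $|\ell| \leq N_\nu$, $j,k \in S_+^\bot$. Since $(j^{-1}+k^{-1})/\langle j^2 \pm k^2\rangle \lessdot 1$ when $j, k \geq 1$ and $\langle j^2\pm k^2 \rangle \geq 1$, this amounts to verifying
\begin{equation*}
\frac{N_\nu^\tau}{\gamma} \, N_{\nu - 1}^{-\alpha} \, |{\bf R}_0 {\frak D}|_{s_0 + \beta, \sigma - 1}^{\Lipg} \leq \frac{1}{2C},
\end{equation*}
for a universal constant $C$. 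Using \eqref{piccolezza1} the left-hand side is $\lessdot N_\nu^\tau N_{\nu-1}^{-\alpha} N_0^{-C_0}$ with $C_0 = 2\tau + 2 + \alpha$. For $\nu = 0$ this is $N_0^{\tau - C_0}$, which is small for $N_0$ large. For $\nu \geq 1$, the relation $N_\nu = N_{\nu-1}^{3/2}$ together with $\alpha = 6\tau + 4$ gives $N_\nu^\tau N_{\nu-1}^{-\alpha} = N_{\nu-1}^{3\tau/2 - \alpha} = N_{\nu-1}^{-9\tau/2 - 4}$, hence the bound becomes $N_{\nu-1}^{-9\tau/2 -4} N_0^{-C_0} \leq N_0^{-9\tau/2 - 4 - C_0}$, again small for $N_0$ large enough (depending only on $s_*, \tau, |S|$).

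Once smallness is in place, the Neumann series gives $\|L_\nu^{\pm}(\ell, j, k;\omega)^{-1}\| \leq 2 \|L_\infty^{\pm}(\ell, j, k;\omega)^{-1}\| \leq \langle \ell \rangle^\tau/(\gamma \langle j^2 \pm k^2\rangle)$, completing the inductive step. The only subtle point, which deserves some care, is the bookkeeping to pass from the Lipschitz estimate \eqref{stime blocchi 2 per 2 finali} (stated for the $\Lipg$-norm of the extension $[\widetilde{\bf N}_\nu^{(1)}]_m^m$) to a pointwise estimate at the fixed $\omega \in \Omega_\nu^\gamma(\io)$; this is immediate since $[\widetilde{\bf N}_\nu^{(1)}]_m^m$ extends $[{\bf N}_\nu^{(1)}]_m^m$ and the sup-part of the $\Lipg$-norm dominates the pointwise norm on $\Omega_o(\io) \supseteq \Omega_\nu^\gamma(\io)$.
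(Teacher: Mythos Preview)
Your proof is correct and follows essentially the same approach as the paper: induction on $\nu$, writing $L_\nu^{\pm} = L_\infty^{\pm} + (L_\nu^{\pm} - L_\infty^{\pm})$, bounding the difference via \eqref{stime blocchi 2 per 2 finali}, and closing with a Neumann series using \eqref{piccolezza1} together with the relation $N_\nu = N_{\nu-1}^{3/2}$ and $\alpha = 6\tau+4$. Your bookkeeping on the $(j^{-1}+k^{-1})$ factor and the passage from the $\Lipg$-norm of the extension to the pointwise estimate at $\omega \in \Omega_\nu^\gamma(\io)$ is in fact slightly more explicit than the paper's presentation.
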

\begin{proof}
Note that by the definition \eqref{Omega-second-meln}, $(\Omega_\nu^\gamma(\io))_{\nu \geq 0}$ is a decreasing sequence. Hence it suffices to show that for any $\nu \geq 0$, $\Omega_\infty^{2 \gamma}(\io) \subseteq \Omega_\nu^\gamma(\io)$. We argue by induction. Since $\Omega_0^\gamma(\io) = \Omega_o(\io)$ by \eqref{Omega-second-meln}, it follows from the definition \eqref{Omegainfty} that $\Omega_\infty^{2 \gamma}(\io) \subseteq \Omega_0^\gamma(\io)$.
To prove the inductive step from $\nu$ to $\nu+1$ we have to verify that
$\Omega_\infty^{2 \gamma}(\io) \subseteq \Omega_{\nu + 1}^\gamma(\io)$.
Let $\omega \in \Omega_\infty^{2 \gamma}(\io)$. By the induction hyphothesis we know that $\omega \in \Omega_\nu^\gamma(\io)$. Theorem \ref{thm:abstract linear reducibility} then implies that 
the $2 \times 2$ matrices $[{\bf N}^{(1)}_\nu(\omega)]_k^k$, $k \in S_+^\bot$, are well defined and that
$[{\bf N}_\nu^{(1)}(\omega)]_k^k = [\widetilde{\bf N}_\nu^{(1)}(\omega)]_k^k $. 
By the definitions \eqref{L + nu} and \eqref{L - nu}, also the matrices $L_\nu^{\pm}(\ell, j, k; \omega)$ 
are well defined.
Since $\omega \in \Omega_\infty^{2 \gamma}(\io)$, $L^-_\infty(\ell, j, k; \omega)$ is invertible 
and we may write 
$$
L_\nu^-(\ell, j, k; \omega) = L_\infty^- (\ell, j, k; \omega) + L_{\Delta}^-(\ell, j, k; \omega) =  L_\infty^- (\ell, j, k; \omega) 
\big( {\rm Id}_{\C^{2 \times 2}} + 
L_\infty^- (\ell, j, k; \omega)^{- 1}L_{\Delta}^-( j, k; \omega)  \big)  
$$
where
$$
L_{\Delta}^-( j, k; \omega) := M_L\big( [{\bf N}_\nu^{(1)}(\omega) - {\bf N}_\infty^{(1)}(\omega)]_j^j \big) - M_R \big( [{\bf N}_\nu^{(1)}(\omega) - {\bf N}_\infty^{(1)}(\omega)]_k^k \big)\,.
$$
By the estimate \eqref{stime blocchi 2 per 2 finali}
$$
\| L_{\Delta}^-(j, k; \omega) \| \lessdot 
N_{\nu - 1}^{- \alpha}|{\bf R}_0 {\frak D}|_{s_0 + \beta, \sigma - 1} k^{- 1}\,.
$$
By \eqref{seconde melnikov diagonali finali matrici} it then follows that
for any $|\ell| \leq N_\nu$ and $j, k \in S_+^\bot$, with $(\ell, j, k) \neq (0, j, j)$
\begin{equation}\label{copenaghen 1}
\| L_\infty^-(\ell, j, k; \omega)^{- 1} L_{\Delta}^-(\ell, j, k; \omega)  \| \leq C \frac{N_\nu^\tau N_{\nu - 1}^{-\alpha}}{2 \gamma \langle j^2 - k^2 \rangle } |{\bf R}_0 {\frak D}|_{s_0 + \beta, \sigma - 1}  \stackrel{\eqref{alpha beta}, \eqref{piccolezza1}}{\leq} \frac12\,,
\end{equation}
with $N_0 > 0$ in \eqref{piccolezza1} large enough. Hence the $2 \times 2$ matrix $L_\nu^-(\ell, j, k; \omega)$ is invertible, with inverse given by a Neumann series. For all $|\ell| \leq N_\nu$, $j, k \in S_+^\bot$ with $(\ell, j, k) \neq (0, j, j)$
\begin{align*}
\|L_\nu^-(\ell, j, k; \omega)^{- 1} \| & \leq \frac{\| L_\infty^-(\ell, j, k; \omega)^{- 1}\|}{1 - \| L_\infty^-(\ell, j, k; \omega)^{- 1} L_{\Delta}^-( j, k; \omega) \|}  \stackrel{\eqref{copenaghen 1}}{\leq 2} \| L_\infty^-(\ell, j, k; \omega)^{- 1}\| \stackrel{\eqref{seconde melnikov diagonali finali matrici}}{\leq} \frac{\langle \ell \rangle^\tau}{\gamma \langle j^2 - k^2 \rangle}\,.
\end{align*}
By similar arguments, one can prove that, for any $|\ell| \leq N_\nu$ and $j, k \in S_+^\bot$ 
$$
 \|L_\nu^+(\ell, j, k; \omega)^{- 1} \|  \leq 
\frac{\langle \ell \rangle^\tau}{\gamma \langle j^2 + k^2 \rangle}\,.
$$
Hence, by the definition \eqref{Omega-second-meln}, $\omega \in \Omega_{\nu + 1}^\gamma(\io)$ and the inductitive step is proved. 
\end{proof}

As advertised we now prove that  ${\bf L}_0  $ is conjugated to the normal form Hamiltonian operator ${\bf L}_\infty $:

\begin{theorem}\label{teoremadiriducibilita}
{ \bf ($ 2 \times 2 $ diagonalization of  ${\bf L}_0 $)}
There exists $0 < \delta \equiv \delta (|S|, \tau, s_*) < 1$ 
such that for any $ \io : \T^S \times \Omega_o(\io) \to M^\sigma$ with  
\begin{equation}\label{final KAM smallness condition}
 \| \io\|_{s_0 + \bar \mu + \beta}^\Lipg \leq C \e \gamma^{- 2}\,, \qquad  \e \gamma^{- 4} \leq \delta\,,
\end{equation}
where $\bar \mu$ is given as in \eqref{perdita mu dopo prime trasformazioni}, and $\beta$ as in \eqref{alpha beta}, the following holds:

\noindent
$(i)$ For any $\omega \in \Omega_\infty^{2\g}(\io)$ and $s \in [s_0, s_* - \bar \mu - \beta]$, the transformations $\Phi_\infty, \Phi_\infty\inv$ satisfy the estimates
\begin{equation} \label{stima Phi infty}
 \quad |\Phi_\infty^{\pm 1} - {\mathbb I}_2 |_{s, \sigma }^\Lipg\,, \quad |\Phi_\infty^{\pm 1} - {\mathbb I}_2 |_{s, \sigma - 2 }^\Lipg
\leq_s \gamma^{- 1}\big( \e  + \e \gamma^{- 2}\| \io \|_{s + \bar \mu + \b }^{\gamma {\rm lip}} \big)\,.
\end{equation}

\noindent
$(ii)$
For any $\omega \in \Omega_\infty^{2 \gamma}(\io)$ and any $s \in [s_0 +1, s_* - \bar \mu - \beta]$ , the Hamiltonian operator 
$$
{\bf L}_0(\omega) : H^s(\T^S, h^{\sigma}_\bot \times h^{\sigma}_\bot) \to H^{s - 1}(\T^S, h^{\sigma - 2}_\bot \times h^{\sigma - 2}_\bot)
$$ 
in \eqref{L0} is conjugated to the  normal form Hamiltonian  operator ${\bf L}_\infty(\omega)$ in \eqref{bf L infinito esplicito} 
by $\Phi_\infty(\omega)$, 
\be\label{Lfinale}
{\bf L}_{\infty}(\omega)
= \Phi_{\infty}\inv(\omega) {\bf L}_0(\omega)   \Phi_{\infty}(\omega)\,.
\ee
($iii$) For any $ k \in S_+^\bot $, the two eigenvalues of  $ [{\bf N}_\infty^{(1)}]_k^k $ are 
real and of the form 
\begin{align}\label{prima asintotica autovalori}
\omega_{-k}^{nls}(\xi, 0) + c_\e + \frac{r^{(-)}_{\xi, \e}(k)}{k} =  
4 \pi^2 k^2 + c_{\xi, \e} + \frac{\rho^{(-)}_{\xi, \e}(k)}{k}    \,, \\
\quad \omega_{ k}^{nls}(\xi, 0) + c_\e + \frac{r^{(+)}_{\xi, \e}(k)}{k} =
4 \pi^2 k^2 + c_{\xi, \e} + \frac{\rho^{(+)}_{\xi, \e}(k)}{k}   \label{prima asintotica autovalori1}
\end{align}
where 
\be\label{estimates-errors} 
|c_\e|^{\sup} = O(\e \g^{-2})  \, , \
|r^{(\pm)}_{\xi, \e}(k)|^{\sup}  = O(\e \gamma^{- 2}) \, ,  \  | c_{\xi, \e}|^{\rm sup} = O(1) \, ,  \
 \sup_{k \in S^\bot_+}|\rho_{\xi, \e}^{(\pm)}(k)|^{\rm sup}  = O(1) \, .
\ee
When listed
according to size, they are denoted by $\lambda_k^{(\pm)}$, i.e. $\lambda_k^{(-)} \leq \lambda_k^{(+)}$. 
Then $\lambda_k^{(\pm)}$ are Lipschitz continuous and satisfy % admit the following asymptotic expansion 
\begin{align}\label{asintotica autovalori finali misura0}
 \sup_{k \in S^\bot_+} | \lambda_k^{(\pm)} |^{\rm lip} =  O(1) \, .
% 4 \pi^2 k^2 + c_{\xi, \e} + \frac{\rho^{(\pm)}_{\xi, \e}(k)}{k}    
%\lambda_j^{(1)} = 4 \pi^2 j^2 + c_\e + \frac{r_\infty^{(1)}(j)}{j}\,, \quad \text{with} \quad r_\infty^{(1)}(j) = O(1)\,,\nonumber\\
%\lambda_j^{(2)} = 4 \pi^2 j^2 + c_\e + \frac{r_\infty^{(2)}(j)}{j}\,, \quad \text{with} \quad r_\infty^{(2)}(j) = O(1)\,.
\end{align}
% with 
%$$
%  | c_{ \xi, \e}|^{\rm sup}\,,
%  | c_{\xi, \e}|^{\rm lip} = O(1)\,, \qquad  \sup_{k \in S^\bot_+}|\rho_{\xi, \e}^{(\pm)}(k)|^{\rm sup}\,,\, \sup_{k \in S^\bot_+} |\rho_{\xi, \e}^{(\pm)}(k)|^{\rm lip} = O(1)\,.
%$$
\end{theorem}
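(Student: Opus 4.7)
Part $(i)$ is essentially a bookkeeping exercise: Lemma~\ref{composizione trasformazioni KAM} already yields $|\Phi_\infty^{\pm 1} - \mathbb{I}_2|_{s,\sigma'}^\Lipg \leq_s \gamma^{-1}|{\bf R}_0 \mathfrak{D}|_{s+\beta,\sigma-1}^\Lipg$ for $\sigma' \in \{\sigma, \sigma-2\}$, so I only need to substitute the bound \eqref{stima R0 riducibilita} for $|{\bf R}_0\mathfrak{D}|_{s+\beta,\sigma-1}^\Lipg$ to obtain \eqref{stima Phi infty}. The smallness hypothesis \eqref{piccolezza1} of Theorem~\ref{thm:abstract linear reducibility}, required to invoke Lemma~\ref{composizione trasformazioni KAM}, is verified by combining \eqref{stima R0 riducibilita} with \eqref{final KAM smallness condition} and choosing $\delta$ small enough (together with $N_0$ large).

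For part $(ii)$, the inclusion $\Omega_\infty^{2\gamma}(\io) \subseteq \bigcap_{\nu \geq 0}\Omega_\nu^\gamma(\io)$ provided by Lemma~\ref{inclusion of cantor sets} ensures that, on $\Omega_\infty^{2\gamma}(\io)$, the conclusion of Theorem~\ref{thm:abstract linear reducibility} applies at every step. Hence $\widetilde{\Phi}_\nu^{-1}{\bf L}_0 \widetilde{\Phi}_\nu = \omega \cdot \partial_\vphi \mathbb{I}_2 + {\bf N}_{\nu+1} + {\bf R}_{\nu+1}$ where $\widetilde{\Phi}_\nu = \Phi_0 \cdots \Phi_\nu$. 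I would then pass to the limit $\nu \to \infty$: the term ${\bf R}_{\nu+1}$ tends to $0$ in the operator norm on $H^{s-1}(\T^S, h_\bot^{\sigma-2}\times h_\bot^{\sigma-2})$ by the first estimate in \eqref{Rsb}, ${\bf N}_{\nu+1} \to {\bf N}_\infty$ by Lemma~\ref{final blocks normal form}, and $\widetilde{\Phi}_\nu^{\pm 1}\to \Phi_\infty^{\pm 1}$ in the relevant norms by Lemma~\ref{composizione trasformazioni KAM}. This yields \eqref{Lfinale}.

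Part $(iii)$ is the key part. By Lemma~\ref{final blocks normal form}, each $[{\bf N}_\infty^{(1)}]_k^k \in \C^{2\times 2}$ is self-adjoint, so its two eigenvalues are real. The starting matrix $[{\bf N}_0^{(1)}]_k^k$ is \emph{diagonal}, with diagonal entries $[[\omega_{-k}^{nls}]] + \e[[q_1]]$ and $[[\omega_{k}^{nls}]] + \e[[q_1]]$; by Lemma~\ref{NFL3} (cf.~\eqref{espansione asintotica autovalori blocco iniziale}--\eqref{stime asintotica autovalori iniziali}), these are exactly $\omega_{\pm k}^{nls}(\xi,0) + c_\e + r_{\pm k,\xi}/(\pm k)$ with $|c_\e|^\Lipg, |r_{\pm k,\xi}|^\Lipg \lessdot \e\gamma^{-2}$. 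Applying \eqref{stime blocchi 2 per 2 finali} with $\nu=0$ (where $N_{-1}=1$) together with \eqref{stima R0 riducibilita} and \eqref{final KAM smallness condition} gives
\[
\|[{\bf N}_\infty^{(1)}]_k^k - [{\bf N}_0^{(1)}]_k^k\|^\Lipg \lessdot \e\gamma^{-2}\, k^{-1}.
\]
Since for self-adjoint $2\times 2$ matrices the sorted eigenvalues are $1$-Lipschitz in the operator norm, this perturbation estimate transfers directly to the eigenvalues of $[{\bf N}_\infty^{(1)}]_k^k$, producing the two real numbers of the form $\omega_{\pm k}^{nls}(\xi,0) + c_\e + r^{(\pm)}_{\xi,\e}(k)/k$ with $|r^{(\pm)}_{\xi,\e}(k)|^{\sup} = O(\e\gamma^{-2})$, i.e.~\eqref{prima asintotica autovalori}--\eqref{prima asintotica autovalori1}. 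The second form with $c_{\xi,\e}$ and $\rho^{(\pm)}_{\xi,\e}(k)$ is obtained by expanding $\omega_{\pm k}^{nls}(\xi,0) = 4\pi^2 k^2 + 4\sum_{j\in S}\xi_j + r_{\pm k}(\xi,0)/(\pm k)$ via Theorem~\ref{Corollary 2.2}$(ii)$; absorbing the constant part into $c_{\xi,\e}$ yields the required $O(1)$ bounds on $c_{\xi,\e}$ and $\rho^{(\pm)}_{\xi,\e}(k)$. Finally, \eqref{asintotica autovalori finali misura0} is obtained by noting that the Lipschitz seminorm of $[{\bf N}_\infty^{(1)}]_k^k$ is controlled by that of $[{\bf N}_0^{(1)}]_k^k$ (Lipschitz constant $O(1)$ by \eqref{lip-order-1}) plus the $O(\e\gamma^{-2}/k)$ Lipschitz correction from \eqref{stime blocchi 2 per 2 finali}, and again applying the $1$-Lipschitz dependence of sorted eigenvalues on the matrix.

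The main technical obstacle is less in executing the eigenvalue perturbation (which is elementary for $2\times 2$ self-adjoint matrices) than in threading the chain of dependencies cleanly: one must verify that the smallness hypothesis \eqref{piccolezza1} of the reduction scheme is implied by \eqref{final KAM smallness condition} with the correct loss of derivatives $\bar\mu + \beta$, then ensure that the limits in part $(ii)$ actually take place in Sobolev spaces of the correct regularity (this is where the operator norm $|\cdot|_{s,\sigma-1}$ with the smoothing factor $\mathfrak{D}$ is essential, since ${\bf R}_\nu$ is only one-smoothing), and finally keep careful track of signs of $k$ versus $|k|$ in the asymptotic expansions when separating the dNLS frequencies $\omega_{\pm k}^{nls}$.
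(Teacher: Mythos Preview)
Your proposal is correct and follows essentially the same approach as the paper's proof: verify \eqref{piccolezza1} from \eqref{final KAM smallness condition} via \eqref{stima R0 riducibilita}, then invoke Lemma~\ref{inclusion of cantor sets} and Lemma~\ref{composizione trasformazioni KAM} for part $(i)$; pass to the limit in the identity ${\bf L}_\nu = \widetilde{\Phi}_{\nu-1}^{-1}{\bf L}_0\widetilde{\Phi}_{\nu-1}$ for part $(ii)$; and for part $(iii)$ combine the diagonal form of $[{\bf N}_0^{(1)}]_k^k$ from Lemma~\ref{NFL3}, the perturbation bound \eqref{stime blocchi 2 per 2 finali}, and Weyl-type eigenvalue perturbation for self-adjoint $2\times 2$ matrices, followed by the asymptotics of Theorem~\ref{Corollary 2.2}$(ii)$. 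One very minor point: the combination of \eqref{stima R0 riducibilita} with \eqref{final KAM smallness condition} actually yields $|{\bf R}_0\mathfrak{D}|_{s_0+\beta,\sigma-1}^\Lipg \lessdot \e$ (not $\e\gamma^{-2}$), since $\e\gamma^{-2}\|\io\|_{s_0+\bar\mu+\beta}^\Lipg \lessdot \e^2\gamma^{-4} \leq \e\delta$; this sharper bound is what the paper uses, but your weaker $\e\gamma^{-2}k^{-1}$ bound is still sufficient for the stated conclusion \eqref{estimates-errors}.
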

\begin{proof}

\noindent
 By the estimate  \eqref{stima R0 riducibilita}, we get 
\begin{equation}\label{asdf}
|{\bf R}_0 {\frak D}|_{s_0 + \beta}^\Lipg \leq_{s_0 + \beta} \e  + \e \gamma^{- 2}  \| \io \|^\Lipg_{s_0 + \bar \mu + \beta} \stackrel{\eqref{final KAM smallness condition}}{\leq_{s_0 + \beta}} \e\,.
\end{equation}
 This together with the smallness condition \eqref{final KAM smallness condition} implies 
that the smallness condition \eqref{piccolezza1} of Theorem \ref{thm:abstract linear reducibility}
holds once $\delta_0$ is chosen so that $\delta_0 \leq_{s_*}  N_0^{- C_0}$ (recall \eqref{regularity s-1}). 
We now prove items $(i)$ and $(ii)$. 

\noindent
$(i)$ Since $\Omega_\infty^{2 \gamma}(\io) \stackrel{Lemma\,\, \ref{inclusion of cantor sets}}{\subseteq} \cap_{\nu \geq 0} \Omega_\nu^\gamma(\io)$, Lemma \,\ref{composizione trasformazioni KAM} implies that
$$
 |\Phi_\infty^{\pm 1} - {\mathbb I}_2 |_{s, \sigma }^\Lipg\,, \quad 
|\Phi_\infty^{\pm 1} - {\mathbb I}_2 |_{s, \sigma - 2 }^\Lipg \quad
\leq_s  \gamma^{- 1} |{\bf R}_0 {\frak D}|_{s + \beta, \sigma - 1}^\Lipg\,.
$$
Furthermore by \eqref{stima R0 riducibilita},  the operator ${\bf R}_0$ in \eqref{L0} satisfies
\begin{equation}\label{dreq}
|{\bf R}_0 {\frak D}|_{s + \beta, \sigma - 1}^{\Lipg} \leq_{s + \beta} \e   + \e \gamma^{- 2}\| \io \|_{s+ \bar \mu + \beta}^{\Lipg}\,,
\end{equation}
yielding the claimed estimates \eqref{stima Phi infty}. 

\noindent
$(ii)$ By \eqref{Lnu+1}, we get 
\begin{equation}\label{spanish}
{\bf L}_\nu = \widetilde{\Phi}_{\nu - 1}^{- 1} {\bf L}_0 \widetilde \Phi_{\nu - 1} = \omega \cdot \partial_\vphi {\mathbb I}_2 + {\bf N}_\nu + {\bf R}_\nu\,, \qquad 
\widetilde  \Phi_\nu = \Phi_0 \circ \cdots \circ \Phi_\nu\,.
\end{equation}
Since 
$
|{\bf N}^{(1)}_\infty - {\bf N}^{(1)}_\nu |_{\sigma - 2}^\Lipg  \leq |({\bf N}^{(1)}_\infty - {\bf N}^{(1)}_\nu) {\frak D} |_{\sigma - 1}^\Lipg \lessdot
\sup_{k \in S_+^\bot} \|[{\bf N}^{(1)}_\infty - {\bf N}^{(1)}_\nu ]_k^k k\|^\Lipg $
one has
$$
|{\bf N}^{(1)}_\infty - {\bf N}^{(1)}_\nu |_{\sigma - 2}^\Lipg \quad 
\stackrel{\eqref{stime blocchi 2 per 2 finali}, \eqref{asdf}}{\leq_{s_0 + \beta}} 
\quad N_{\nu - 1}^{- \alpha} \e  \quad  \stackrel{\nu \to + \infty}{\to} \quad 0
$$
and for any $s \in [s_0, s_* - \bar \mu - \beta]$ 
$$
|{\bf R}_\nu|_{s, \sigma - 2}^\Lipg \lessdot |{\bf R}_\nu {\frak D}|_{s, \sigma - 1}^\Lipg \stackrel{\eqref{Rsb}, \eqref{dreq}}{\lessdot} 
N_{\nu - 1}^{- \alpha} \big(\e   +  \e \gamma^{- 2}\| \io \|_{s+ \bar \mu + \beta}^{\Lipg} \big) 
\quad \stackrel{\nu \to + \infty}{\to} \quad 0\,.
$$
Hence ${\bf L}_\nu - {\bf L}_\infty \stackrel{\nu \to + \infty}{\to}0$ 
with respect to the norm $| \cdot |_{s, \sigma - 2}^\Lipg$ and  
${\bf L}_\nu \,\, \stackrel{\nu \to + \infty}{\to} \,\, {\bf L}_\infty$ in the space of linear, bounded operators from $H^s(\T^S, h^{\sigma}_\bot \times h^{\sigma}_\bot) \to H^{s - 1}(\T^S, h^{\sigma - 2}_\bot \times h^{\sigma - 2}_\bot) $. 
Since by Lemma \ref{composizione trasformazioni KAM}, 
$\widetilde\Phi_\nu \,\, \stackrel{\nu \to + \infty}{\to} \,\, \Phi_\infty$ in the norm $|\cdot |_{s, \sigma}^\Lipg$ and similarly,
$\widetilde \Phi_\nu^{- 1} \,\, \stackrel{\nu \to + \infty}{\to} \,\, \Phi_\infty^{- 1}$ 
in the norm $| \cdot |_{s - 1, \sigma - 2}^\Lipg$  for any $s_0 + 1 \leq s \leq s_* - \bar \mu - \beta$,
formula \eqref{Lfinale} follows by passing to the limit in \eqref{spanish}. 

\noindent
($iii$)
{\it Proof of formula \eqref{prima asintotica autovalori}-\eqref{estimates-errors}:} 
% For any $k \in S_+^\bot$, 
% $[{\bf N}_\infty^{(1)}]_k^k$ is self-adjoint and hence its eigenvalues are real.
We write
$[{\bf N}_\infty^{(1)}]_k^k = [{\bf N}_0^{(1)}]_k^k + [{\bf N}_\infty^{(1)} - {\bf N}_0^{(1)}]_k^k $ and note that 
\be\label{pert-matr1}
\|[{\bf N}_\infty^{(1)}]_k^k - [{\bf N}_0^{(1)}]_k^k \|^\Lipg \stackrel{\eqref{stime blocchi 2 per 2 finali}}{\lessdot}  
|{\bf R}_0 {\frak D}|_{s_0 + \beta, \sigma - 1} k^{- 1} \stackrel{\eqref{asdf}}{\lessdot } \e k^{- 1}\,.
\ee
% and consider $[{\bf N}_\infty^{(1)}]_k^k$ as a perturbation of $[{\bf N}_0^{(1)}]_k^k$. 
By \eqref{first diagonal normal form}, \eqref{espansione asintotica autovalori blocco iniziale}, 
the matrix $[{\bf N}^{(1)}_0]_k^k$ is diagonal and its entries are given by 
\be\label{spectrum-diagonal}
\omega_{-k}^{nls}(\xi, 0) + c_\e + 
\frac{1}{-k}  r_{-k,\xi}\,, \quad \omega_{ k}^{nls}(\xi, 0) + c_\e + 
\frac{1}{k}  r_{ k,\xi}\,, \qquad |c_\e|^{\Lipg}\,,
\, \sup_{k \in S^\bot_+} | r_{\pm k,\xi}|^{\Lipg} \stackrel{\eqref{stime asintotica autovalori iniziali}}{=} O(\e \gamma^{- 2}) \, . 
\ee
By standard perturbation theory for the eigenvalues of self-adjoint $2 \times 2$ matrices,
the estimates  \eqref{pert-matr1} and \eqref{spectrum-diagonal} imply that 
the eigenvalues of $ [{\bf N}_\infty^{(1)}]_k^k $ are 
given by the left hand side of the identities  % close $ O(\e \gamma^{- 2}) $ 
\eqref{prima asintotica autovalori}-\eqref{prima asintotica autovalori1} with estimates 
$ |c_\e|^{\sup} = O(\e \g^{-2}) $,  $ |r^{(\pm)}_{\xi, \e}(k)|^{\sup}  = O(\e \gamma^{- 2}) $, cf \eqref{estimates-errors}. 
The right hand side of the identities 
% expression of the eigenvalues of $ [{\bf N}_\infty^{(1)}]_k^k $  in % close $ O(\e \gamma^{- 2}) $ 
\eqref{prima asintotica autovalori}-\eqref{prima asintotica autovalori1} are obtained by expanding
$ \om_{\pm  k}^{nls}(\xi, 0) $ by Theorem \ref{Corollary 2.2} item $(ii)$.
%In addition, by Theorem \ref{Corollary 2.2} item $(ii)$ it follows that for any $k \in S_+^\bot$
%$$
%\omega_{\pm k}^{nls}(\xi, 0) = 4 \pi^2 k^2 + 4 \sum_{j \in S} \xi_j + \frac{r_{\pm k}}{\pm k}\,, \qquad r_{\pm k} := \pm k %\big( \omega_{\pm k}^{nls}(\xi, 0) - 4 \pi^2 k^2 - 4 \sum_{j \in S} \xi_j  \big)\,,
%$$
%with $|r_{\pm k}|^{\rm sup}, |r_{\pm k}|^{\rm lip} \lessdot 1$. 

\noindent
{\it Proof of formula \eqref{asintotica autovalori finali misura0}:} 
The eigenvalues $ \lambda_k^{(\pm)} (\om) $ of the matrix $ [{\bf N}_\infty^{(1)}]_k^k (\om) $ 
are Lipschitz continuous functions of the matrices 
$$
|\lambda_k^{\pm} (\om_2) - \lambda_k^{\pm} (\om_1) | \lessdot 
\|  [{\bf N}_\infty^{(1)}]_k^k (\om_2)  -   [{\bf N}_\infty^{(1)}]_k^k (\om_1) \| \lessdot |\om_2 - \om_1 |  
$$
by \eqref{pert-matr1}, \eqref{spectrum-diagonal} and  Theorem \ref{Corollary 2.2}  item $(ii)$.
\end{proof}

\subsection{Proof of Theorem \ref{invertibility of frak L omega}}\label{proof of invertibility of frak L omega}

By Theorem \ref{teoremadiriducibilita}, the normal form Hamiltonian 
operator ${\bf L}_\infty(\omega) = \omega \cdot \partial_\vphi {\mathbb I}_2 + {\bf N}_\infty(\omega)$ is a $\vphi$-independent $2 \times 2 $ block diagonal operator
 for any $\omega$  in $\Omega_\infty^{2 \gamma}(\io)$, which is defined in \eqref{Omegainfty}. Furthermore, the operator ${\bf L}_\infty$ is conjugated to ${\frak L}_\omega$ introduced in \eqref{definition frak L omega sec 4} 
by the composition of the symplectic transformations 
$\mathtt \Phi_1$, $\mathtt \Phi_2$, $\mathtt \Phi_3$ 
(Section \ref{sec:5}), and $\Phi_\infty$ (Section \ref{2 times 2 block diagonalization}), 
\be\label{coniugazione-frak-Lomega}
{\frak L}_\omega = \mathtt \Phi_1 \mathtt \Phi_2 \mathtt \Phi_3 \Phi_\infty {\bf L}_\infty \Phi_\infty^{- 1} \mathtt \Phi_3^{- 1} \mathtt \Phi_2^{- 1} \mathtt \Phi_1^{- 1}\,.
\ee
This representation allows to prove Theorem  \ref{invertibility of frak L omega}. To this end, introduce
\begin{equation}\label{cantor for invertibility}
\Omega^{2 \gamma}_{\rm Mel}(\io) := \big\{ \omega \in \Omega_\infty^{2 \gamma}(\io) : \omega \,\, {\rm satisfies}\,\, ({\bf M}^{ I}_{2 \gamma})_\infty \big\}\,,
\end{equation}
where {${({\bf M}^{I}_{2 \gamma}})_\infty$} is the following first order Melnikov condition:

\medskip

\noindent
{${({\bf M}^{I}_{2 \gamma}})_\infty$} {\it For any $\ell \in \Z^S$, $ j \in S_+^\bot$, the operator 
$ \omega \cdot \ell \, {\rm Id}_{2} + [{\bf N}_\infty^{(1)}]_j^j$ is invertible and}
\begin{equation}\label{prime melnikov off diagonali finali matrici}
\big\|\big( \omega \cdot \ell \, {\rm Id}_{2} + [{\bf N}_\infty^{(1)}]_j^j \big)^{- 1} \big\| \leq \frac{\langle \ell\rangle^\tau}{ 2 \gamma   j^2 }\,.
\end{equation}

\noindent
Before proving Theorem \ref{invertibility of frak L omega}, we need to establish the following 

\begin{lemma} {\bf (Estimate of $ {\bf L}_\infty^{-1} $)}
For any $\omega \in \Omega^{2 \gamma}_{\rm Mel}(\io)$ and  $g \in H^{s + \tau}(\T^S, h_\bot^{\sigma - 2} \times h_\bot^{\sigma - 2})$ the linear equation ${\bf L}_\infty(\omega) h = g$ has a unique solution $h$ 
in $ H^s(\T^S, h_\bot^{\sigma} \times h_\bot^{\sigma} )$, denoted by ${\bf L}_\infty^{- 1} g$.
Moreover, if $g$ is a Lipschitz family in $H^{s + 2 \tau + 1}(\T^S, h^{\sigma - 2}_\bot \times h^{\sigma - 2}_\bot)$, 
\begin{equation}\label{stima inverso L infinito}
\| {\bf L}_\infty^{- 1} g \|_{s, \sigma}^\Lipg \lessdot \gamma^{- 1} \| g \|_{s + 2 \tau + 1, \sigma - 2}^\Lipg\,.
\end{equation}
\end{lemma}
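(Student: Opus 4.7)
The plan is to diagonalise the equation ${\bf L}_\infty h = g$ by Fourier expansion in $\vphi$. By \eqref{bf L infinito esplicito}--\eqref{cal D infinito} the operator ${\bf L}_\infty$ has constant coefficients in $\vphi$ and, once the normal coordinates are grouped as $\vec z_k := (z_{-k}, z_k)$, $k \in S_+^\bot$, it becomes $2 \times 2$ block diagonal with self-adjoint blocks $[{\bf N}^{(1)}_\infty]_j^j$. Writing $h = (h_1, h_2)$, $g = (g_1, g_2)$, the two components of ${\bf L}_\infty h = g$ decouple and expansion in Fourier series gives, for every $\ell \in \Z^S$ and $j \in S_+^\bot$, the $2 \times 2$ system
\begin{equation*}
\bigl(\omega \cdot \ell + [{\bf N}^{(1)}_\infty]_j^j\bigr)\,\vec{\hat h}_{1,j}(\ell) = -\ii\, \vec{\hat g}_{1,j}(\ell),
\end{equation*}
together with the analogous equation for the second component, involving $-[\overline{\bf N}^{(1)}_\infty]_j^j$ in place of $[{\bf N}^{(1)}_\infty]_j^j$.

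The second block equation is controlled by the same first order Melnikov condition \eqref{prime melnikov off diagonali finali matrici} as the first: since $[{\bf N}^{(1)}_\infty]_j^j$ is self-adjoint its spectrum is real and invariant under complex conjugation, so, after the harmless substitution $\ell \mapsto -\ell$, both block matrices have their inverses bounded by ${({\bf M}^I_{2\gamma})}_\infty$. Consequently each system is uniquely solvable with
\begin{equation*}
\|\vec{\hat h}_{a,j}(\ell)\| \lessdot \frac{\langle \ell \rangle^\tau}{\gamma\, j^2}\,\|\vec{\hat g}_{a,j}(\ell)\|, \qquad a = 1, 2\,.
\end{equation*}
Squaring, multiplying by $\langle j \rangle^{2\sigma} \langle \ell \rangle^{2s}$, summing over $j \in S_+^\bot$ and $\ell \in \Z^S$, and using that $\langle j \rangle^{2\sigma} j^{-4} \lessdot \langle j \rangle^{2(\sigma - 2)}$ (valid since $\sigma \geq 4$), yields the sup estimate $\|h\|_{s,\sigma} \lessdot \gamma^{-1}\|g\|_{s + \tau, \sigma - 2}$.

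For the Lipschitz estimate in $\omega$ I would apply the resolvent identity to the $2\times 2$ matrix $L_j(\ell; \omega) := \omega \cdot \ell + [{\bf N}^{(1)}_\infty]_j^j(\omega)$. Combining \eqref{estimates of the initial normal form} with \eqref{stime blocchi 2 per 2 finali} (consistently with \eqref{asintotica autovalori finali misura0}) gives the uniform Lipschitz bound $\|[{\bf N}^{(1)}_\infty]_j^j\|^{\rm lip} \lessdot 1$, hence $\|L_j(\ell; \omega_1) - L_j(\ell; \omega_2)\| \lessdot \langle \ell \rangle\, |\omega_1 - \omega_2|$, and in turn
\begin{equation*}
\|L_j(\ell; \omega_1)^{-1} - L_j(\ell; \omega_2)^{-1}\| \lessdot \frac{\langle \ell \rangle^{2\tau + 1}}{\gamma^2\, j^4}\, |\omega_1 - \omega_2|\,.
\end{equation*}
Inserting this bound into the difference of the formulae for $\vec{\hat h}_{a,j}(\ell; \omega_i)$ and summing as in the sup case produces
\begin{equation*}
\|h\|^{\rm lip}_{s,\sigma} \lessdot \gamma^{-1}\,\|g\|^{\rm lip}_{s + \tau, \sigma - 2} + \gamma^{-2}\,\|g\|^{\sup}_{s + 2\tau + 1, \sigma - 2}\,,
\end{equation*}
which, recombined according to the definition $\|\cdot\|^\Lipg = \|\cdot\|^{\sup} + \gamma\,\|\cdot\|^{\rm lip}$, yields \eqref{stima inverso L infinito}. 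The computation is a routine diagonal calculation and no serious obstacle is anticipated; the only point requiring attention is the reduction of the complex-conjugate block equation to the Melnikov condition stated for the original block, which is handled by the self-adjointness remark above.
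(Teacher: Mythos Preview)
Your proof is correct and follows essentially the same approach as the paper: Fourier diagonalisation of the $\vphi$-independent $2\times 2$ block operator, the first Melnikov bound \eqref{prime melnikov off diagonali finali matrici} for the sup estimate, and the resolvent identity together with the uniform Lipschitz bound $\|[{\bf N}_\infty^{(1)}]_j^j\|^{\rm lip} \lessdot 1$ for the Lipschitz estimate. The only cosmetic difference is that the paper dismisses the second component with ``it suffices to study ${\bf L}_\infty^{(1)}$'' while you spell out the self-adjointness and $\ell \mapsto -\ell$ reduction explicitly; your treatment of this point is correct.
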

\begin{proof}
By \eqref{bf L infinito esplicito}, the normal form Hamiltonian operator ${\bf L}_\infty$ can be written as 
$$
{\bf L}_\infty = \begin{pmatrix}
{\bf L}_\infty^{(1)} & 0 \\
0 & \overline{\bf L}_\infty^{(1)}
\end{pmatrix}\,, \qquad {\bf L}_\infty^{(1)} := \omega \cdot \partial_\vphi {\rm I}_2
+ \ii {\bf N}_\infty^{(1)}\,, \quad {\bf N}_\infty^{(1)} := {\rm diag}_{j \in S_+^\bot} [ {\bf N}_\infty^{(1)} ]_j^j\,.
$$
It thus suffices to study the operator ${\bf L}_\infty^{(1)}$.
For any $\omega \in \Omega_{\rm Mel}^{2 \gamma}(\io)$ and 
$g \in H^{s + \tau}(\T^S, h_\bot^{\sigma - 2})$, one has by
\eqref{prime melnikov off diagonali finali matrici} 
$$
\big( {\bf L}_\infty^{(1)}\big)^{- 1} g =  
\sum_{\ell \in \Z^S} \Big( {\bf A}_\infty(\ell, j)^{- 1} \begin{pmatrix}
\hat g_{- j}(\ell) \\
\hat g_{ j}(\ell)
\end{pmatrix} \Big)_{j \in S_+^\bot} e^{\ii \ell \cdot \vphi}\,, \qquad 
A_\infty(\ell, j) \equiv [{\bf A}_\infty(\ell) ]_j^j  := 
\ii \Big( \omega \cdot \ell \, {\rm Id}_2 + [{\bf N}_\infty^{(1)}]_j^j \Big) \,.
$$
In view of Lemma \ref{bound op2} $(i)$ and \eqref{prime melnikov off diagonali finali matrici} one then obtains
\begin{equation}\label{stima sup L infty (1)}
\| \big( {\bf L}_\infty^{(1)}\big)^{- 1} g \|_{s, \sigma} \lessdot \gamma^{- 1} \| g \|_{s + \tau, \sigma - 2}\,.
\end{equation}
\noindent
Concerning the Lipschitz seminorm, 
%of $\big( {\bf L}_\infty^{(1)}\big)^{- 1} g$,
given any $\omega_1, \omega_2 \in \Omega_{\rm Mel}^{2 \gamma}(\io)$, 
write $(  {\bf L}_\infty^{(1)}(\omega_1) )^{- 1} g_{\omega_1} - 
(  {\bf L}_\infty^{(1)}(\omega_2) )^{- 1} g_{\omega_2 } $ as
\begin{align}
& (  {\bf L}_\infty^{(1)}(\omega_1) )^{- 1} \big( g_{\omega_1} - g_{\omega_2} \big)
+  \big( ({\bf L}_\infty^{(1)}(\omega_1) )^{- 1} - ({\bf L}_\infty^{(1)}(\omega_2) )^{- 1} \big) g_{\omega_2}\,. \label{L infty stima lip 1}
\end{align}
The latter two terms are estimated individually: by \eqref{stima sup L infty (1)}, 
the first term satisfies the estimate 
\begin{equation}\label{L infty stima lip 2}
\| \big(  {\bf L}_\infty^{(1)}(\omega_1) \big)^{- 1} \big( g_{\omega_1} - g_{\omega_2} \big)  \|_{s, \sigma } 
{\lessdot} \gamma^{- 1} \| g \|_{s + \tau, \sigma - 2}^{{\rm lip}} |\omega_1 - \omega_2 |
\end{equation}
whereas the term  
$\big( ({\bf L}_\infty^{(1)}(\omega_1) )^{- 1} - ( {\bf L}_\infty^{(1)}(\omega_2))^{- 1} \big) g_{[\omega_2} $
equals
\begin{equation}\label{mariolino}
 \sum_{\ell \in \Z^S} \Big( 
\big( {A}_\infty(\ell, j ; \omega_1)^{- 1} - {A}_\infty(\ell, j ; \omega_2)^{- 1} \big) \begin{pmatrix} \hat g_{- j}(\ell; \omega_2) \\
\hat g_j(\ell ; \omega_2)
\end{pmatrix} \Big)_{j \in S_+^\bot} e^{\ii \ell \cdot \vphi} \,.
\end{equation}
Since 
\begin{align}
{ A}_\infty(\ell, j ; \omega_1)^{- 1} - {A}_\infty(\ell, j ; \omega_2)^{- 1} & 
=  {A}_\infty(\ell, j ; \omega_2)^{- 1} \big( { A}_\infty(\ell, j ; \omega_2) - { A}_\infty(\ell, j ; \omega_1) \big)  {A}_\infty(\ell, j ; \omega_1)^{- 1}\,, \nonumber
\end{align}
we have  
\begin{align}
\| {A}_\infty(\ell, j ; \omega_1)^{- 1} - { A}_\infty(\ell, j ; \omega_2)^{- 1} \| 
\stackrel{\eqref{prime melnikov off diagonali finali matrici}}{\lessdot} 
\frac{\langle \ell\rangle^{2 \tau}}{ \gamma^2 j^4} 
\| {A}_\infty(\ell, j ; \omega_2) - {A}_\infty(\ell, j ; \omega_1)  \| 
\label{L infty stima lip 3}
\end{align}
with
$\| { A}_\infty(\ell, j ; \omega_2) - { A}_\infty(\ell, j ; \omega_1) \|  
\lessdot |\omega_2 - \omega_1| |\ell| + 
\|[{\bf N}_\infty^{(1)}(\omega_2) - {\bf N}_\infty^{(1)}(\omega_1)]_j^j \| $.
Since $\|[{\bf N}_\infty^{(1)}(\omega_2) - {\bf N}_\infty^{(1)}(\omega_1)]_j^j \|$
is bounded by
$$
\|[{\bf N}_\infty^{(1)}(\omega_2) - {\bf N}_0^{(1)}(\omega_2)]_j^j \|
+ \|[{\bf N}_0^{(1)}(\omega_2) - {\bf N}_0^{(1)}(\omega_1)]_j^j \|
+\|[{\bf N}_0^{(1)}(\omega_1) - {\bf N}_\infty^{(1)}(\omega_1)]_j^j \|
$$
and
$$
\| [{\bf N}_\infty^{(1)} - {\bf N}_0^{(1)}]_j^j \|^{\rm lip} \leq
\gamma^{- 1} \| [{\bf N}_\infty^{(1)} - {\bf N}_0^{(1)}]_j^j \|^{\Lipg} 
\stackrel{\eqref{stime blocchi 2 per 2 finali}}{\lessdot} 
\gamma^{- 1} |{\bf R}_0 {\frak D}|_{s_0 + \beta, \sigma - 1} j^{- 1} 
\stackrel{\eqref{asdf},\,\, \e \gamma^{- 1} \leq 1 \,  }{\lessdot} \,\, 1
$$
one concludes that
\begin{align*}
\|[{\bf N}_\infty^{(1)}(\omega_2) - {\bf N}_\infty^{(1)}(\omega_1)]_j^j \|
\lessdot  |\omega_1 - \omega_2| +
 \| [{\bf N}_0^{(1)}(\omega_2) - {\bf N}_0^{(1)}(\omega_1)]_j^j  \| 
+ |\omega_1 - \omega_2| 
&  \stackrel{ \eqref{first diagonal normal form}, \eqref{estimates of the initial normal form}}{\lessdot} |\omega_1 - \omega_2| \,.
\end{align*} 
We thus have proved that
$$
\| {A}_\infty(\ell, j ; \omega_2) - { A}_\infty(\ell, j ; \omega_1)  \| \lessdot \,
 |\omega_2 - \omega_1| \,\langle \ell \rangle \label{L infty stima lip 4}
$$
and hence \eqref{L infty stima lip 3}, \eqref{L infty stima lip 4} imply that
$$
\| {A}_\infty(\ell, j ; \omega_1)^{- 1} - {A}_\infty(\ell, j ; \omega_2)^{- 1} \| \lessdot 
\frac{\langle \ell\rangle^{2 \tau + 1}}{ \gamma^2 j^4} |\omega_1 - \omega_2|\,.
$$
Applying this estimate to \eqref{mariolino}, one sees that
\begin{align}\label{L infty stima lip 6}
\big\| \big( {\bf L}_\infty^{(1)}(\omega_1) \big)^{- 1} - {\bf L}_\infty^{(1)}(\omega_2) \big)^{- 1} \big) g_{\omega_2} \big\|_{s, \sigma} \lessdot  \gamma^{- 2} \| g \|_{s + 2 \tau + 1, \sigma - 2}\,.
\end{align}
Combining \eqref{L infty stima lip 1}, \eqref{L infty stima lip 2}, and \eqref{L infty stima lip 6} leads to
$$
\| \big( {\bf L}_\infty^{(1)} \big)^{- 1} g \|_{s, \sigma}^{\rm lip} \lessdot 
 \gamma^{- 1} \| g \|_{s + \tau, \sigma - 2}^{\rm lip} +
\gamma^{- 2} \| g \|_{s + 2 \tau + 1, \sigma - 2} 
$$
which, together with \eqref{stima sup L infty (1)}, proves
%$$
%\| \big( {\bf L}_\infty^{(1)} \big)^{- 1} g \|_{s, \sigma}^\Lipg \lessdot \gamma^{- 1} \| g \|_{s + 2 \tau + 1, \sigma - 2}^{\Lipg}\,.
%$$
% This establishes the estimate 
 \eqref{stima inverso L infinito}. 
\end{proof}

\noindent
{\it Proof of Theorem \ref{invertibility of frak L omega}. } By Lemmata 
\ref{lem:44 Lip}, \ref{stima R2 lip}, \ref{stima R3 lip},
Theorem \ref{teoremadiriducibilita}, and the smallness condition $\e \g^{-4} \le 1$ one gets
\begin{equation}\label{stima tutte le trasformazioni}
 |\mathtt \Phi_j |_{s, \sigma}^\Lipg\,,\, |\Phi_\infty|_{s, \sigma}^\Lipg \leq_s 1 + \e \gamma^{- 3} \| \io \|_{s + \bar \mu + \beta}^\Lipg \leq_s 1 + \| \io \|_{s + \bar \mu + \beta}^\Lipg\,, 
\qquad \forall j \in \{1,2,3 \}\,,
\end{equation}
implying together with \eqref{size of torus} that
$$
|\mathtt \Phi_j |_{s_0, \sigma}^\Lipg\,,\, |\Phi_\infty|_{s_0, \sigma}^\Lipg \lessdot 1\,,
\qquad \forall j \in \{1,2,3 \}\,.
$$

\noindent
It then follows by Lemma \ref{lemma:action-Sobolev} that
\begin{align}
\| \mathtt \Phi_1 \mathtt \Phi_2 \mathtt \Phi_3 \Phi_\infty {\bf L}_\infty^{- 1} g \|_{s, \sigma}^\Lipg & \stackrel{\eqref{stima tutte le trasformazioni}}{\leq_s} 
\| {\bf L}_\infty^{- 1} g \|_s^\Lipg + \|\io \|_{s + \bar \mu + \beta}^\Lipg 
\| {\bf L}_\infty^{- 1} g \|_{s_0}^\Lipg  \nonumber\\
& \stackrel{\eqref{stima inverso L infinito}}{\leq_s} \gamma^{- 1} \big( \| g \|_{s + 2 \tau + 1, \sigma - 2}^\Lipg  +  \| \io \|_{s + \bar \mu + \beta}^\Lipg \| g \|_{s_0 + 2 \tau + 1, \sigma - 2}^\Lipg\big)\,. \nonumber
\end{align}
Similarly one has
%Furthermore, again by Lemmata \ref{lem:44 Lip}, \ref{stima R2 lip}, \ref{stima R3 lip}
%Theorem \ref{teoremadiriducibilita} and the smallness condition \eqref{size of torus}, Lemma \ref{lemma:action-Sobolev} 
%\begin{align*}
%& 
%|\mathtt \Phi_j^{- 1} |_{s + 2 \tau + 1, \sigma - 2}^\Lipg\,,\, 
%|\Phi_\infty^{- 1}|_{s + 2 \tau + 1, \sigma - 2}^\Lipg \leq_s 
%1 +  \| \io \|_{s + 2 \tau + 1 + \bar \mu + \beta}^\Lipg\,, \quad \forall j \in \{1,2,3 \}\,,
%\end{align*}
%implying together with \eqref{size of torus} that
%$$
% |\mathtt \Phi_j^{- 1} |_{s_0 + 2 \tau + 1, \sigma - 2}^\Lipg\,,\, |\Phi_\infty^{- 1}|_{s_0 + 2 \tau + 1 , \sigma - 2}^\Lipg \lessdot 1\,, \quad \forall j \in \{1,2,3 \}\,. \\
%$$
%It then again follows by Lemma \ref{lemma:action-Sobolev} that
$$
\|\Phi_\infty^{- 1} \mathtt \Phi_3^{- 1} \mathtt \Phi_2^{- 1} \mathtt \Phi_1^{- 1} g \|_{s + 2 \tau + 1, \sigma - 2}^\Lipg \leq_s \| g \|_{s + 2 \tau + 1, \sigma - 2}^\Lipg + \| \io\|_{s +\bar \mu + \beta + 2 \tau + 1}^\Lipg \| g \|_{s_0 + 2\tau + 1, \sigma - 2}^\Lipg\,.
$$
Combining the above estimates yield 
$$
\| \mathtt \Phi_1 \mathtt \Phi_2 \mathtt \Phi_3 \Phi_\infty {\bf L}_\infty^{- 1} \Phi_\infty^{- 1} \mathtt \Phi_3^{- 1} \mathtt \Phi_2^{- 1} \mathtt \Phi_1^{- 1} g  \|_{s, \sigma}^\Lipg \leq_s \gamma^{- 1} 
\big( \| g \|_{s + 2 \tau + 1, \sigma - 2}^\Lipg +  
\| \io\|_{s +\bar \mu + \beta + 2 \tau + 1}^\Lipg \| g \|_{s_0 + 2\tau + 1, \sigma - 2}^\Lipg \big) \,,
$$
which, recalling \eqref{coniugazione-frak-Lomega},  is the estimate \eqref{tame inverse} of Theorem \ref{invertibility of frak L omega}, 
with 
\be\label{value mu0}
\mu_0 := \bar \mu + \beta + 2 \tau + 1 
\stackrel{\eqref{perdita mu dopo prime trasformazioni}, \eqref{alpha beta}} {=} 
 4s_0 +10 \tau + 7.
\ee

\subsection{Variation with respect to $\io$}

In this section we provide estimates for the variation of the $2 \times 2$ matrices $[{\bf N}_\nu^{(1)}]_k^k$,
introduced in Theorem \ref{thm:abstract linear reducibility},
with respect to $\io$. They are required in  Section~\ref{sec:measure} for obtaining the measure estimate of Theorem \ref{main theorem}. 
To prove them, we also need such estimates for the remainder terms ${\bf R}_\nu$, $\nu \ge 0$,
of Theorem \ref{thm:abstract linear reducibility}.
%where $\om \in \Omega_{\nu}^{\g_1}(\io) $

\begin{theorem}\label{teorema variazione autovalori}
Let $ \breve \io^{(a)}(\vphi) = (\vphi, 0, 0) + \io^{(a)}(\vphi)$, $a = 1, 2$, be
two Lipschitz families of torus embeddings with $\breve \io^{(a)} \equiv \breve \io_\om^{(a)}$  
defined on $\Omega_o(\io^{(a)})$ where
$\Omega_o(\io^{(2)}) \subseteq \Omega_o(\io^{(1)})$ with  
$\Omega_o(\io^{(1)}) \subseteq \Omega_{2\g, \tau}$ for some given $0 < \g < 1/2$.
Furthermore we assume that $\io^{(1)}$ and $\io^{(2)}$ satisfy  the smallness condition \eqref{final KAM smallness condition} (with $2\g$). Then the following statements hold: 

\begin{itemize}
\item[ ${\bf (S1)_{\nu}}$]
  There exists a constant $C_{\rm var} = C_{\rm var} (\tau, |S|) > 0$ so that for any
$\nu \geq 0$ and any $\g /2 \le \g_1, \g_2  \le 2\g$, the operator 
$ \Delta_{12} {\bf R}_\nu
 := {\bf R}_\nu(\breve \io^{(1)}) - {\bf R}_\nu(\breve \io^{(2)})$, defined for
$\omega \in \Omega_{\nu}^{\g_1}(\io^{(1)}) \cap  \Omega_{\nu}^{\g_2}(\io^{(2)})$
(with $\Omega_{\nu}^{\g_a}(\io^{(a)})$ as in \eqref{Omega-second-meln})
satisfies 
\begin{equation}\label{derivate-R-nu}
|\Delta_{12} {\bf R}_{\nu} {\frak D}|_{ s_{0}, \sigma - 1}\leq 
C_{\rm var}N_{\nu - 1}^{-\alpha}  \Vert \Delta_{12} \io \Vert_{ s_0 + \bar \mu + \beta },\quad
|\Delta_{12} {\bf R}_{\nu} {\frak D}|_{ s_{0} + \beta , \sigma - 1}\leq 
C_{\rm var} N_{\nu - 1}  \Vert \Delta_{12} \io \Vert_{ s_0 + \bar \mu + \beta } \, 
\end{equation}
where $\bar \mu$, $N_\nu$, and $\a$, $\b$ are given in 
\eqref{perdita mu dopo prime trasformazioni}, \eqref{defN}, and \eqref{alpha beta}, respectively. 
Moreover, for any $k \in S_+^\bot$ one has 
\be\label{Delta12 rj}
\| \Delta_{12}  [{\bf N}_\nu^{(1)}]_k^k \| \lessdot   
\| \Delta_{12} \io \|_{s_0 + \bar \mu + \beta} 
\ee 
and, in case $\nu \ge 1$,
\be\label{deltarj12}
\big\|\Delta_{12}\big([{\bf N}_\nu^{(1)} - {\bf N}_{\nu - 1}^{(1)}]_k^k\big) \big\|\lessdot  \vert \Delta_{12} {\bf R}_{\nu - 1} {\frak D} \vert_{s_0, \sigma - 1} k^{- 1}\,. 
\ee

\noindent
\item[ ${\bf (S2)_{\nu}}$]
There exists a constant $C_{\rm var}'= C_{\rm var} '(\tau, |S|) > 0$ so that 
for any given $ 0 < \rho \le \g / 2 $, 
\begin{equation}\label{legno}
 C_{\rm var}'  N_{\nu - 1}^{\tau} \Vert \Delta_{12} \io \Vert_{ s_0 + \bar \mu + \beta }^{\rm sup} \leq \rho 
\quad \Longrightarrow \quad
 \Omega_{\nu }^{\gamma}(\io^{(1)}) \cap \Omega_o(\io^{(2)}) \subseteq 
 \Omega_{\nu}^{\gamma - \rho}(\io^{(2)}) \, .
\end{equation}
\end{itemize}
\end{theorem}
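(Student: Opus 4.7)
The plan is to prove $(S1)_\nu$ and $(S2)_\nu$ simultaneously by induction on $\nu \ge 0$, mirroring the structure of the proofs of Theorem \ref{thm:abstract linear reducibility} and Lemma \ref{inclusion of cantor sets} but now tracking the variation with respect to the torus embedding $\io$. For the base case $\nu=0$, the statement $(S2)_0$ is trivial because $\Omega_0^\gamma(\io^{(a)})=\Omega_o(\io^{(a)})$ by definition \eqref{Omega-second-meln}. For $(S1)_0$ one has $N_{-1}=1$, so the two bounds in \eqref{derivate-R-nu} are the same and follow directly from Lemma \ref{stimaR3 Delta 12}$(ii)$ (recalling ${\bf R}_0={\frak R}_3$), combined with the smallness assumption on $\io^{(a)}$ in \eqref{final KAM smallness condition}. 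The bound \eqref{Delta12 rj} at $\nu=0$ is a consequence of formula \eqref{espansione asintotica autovalori blocco iniziale}, Lemma \ref{NFL3}, Theorem \ref{Corollary 2.2}$(ii)$ and the usual estimates for $\Delta_{12}q_1$ via Lemma \ref{estimates from the perturbation lip}, while \eqref{deltarj12} is vacuous.

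For the inductive step I would first prove $(S2)_{\nu+1}$ from $(S1)_\nu$. Given $\omega\in\Omega_{\nu+1}^\gamma(\io^{(1)})\cap\Omega_o(\io^{(2)})$, the inductive hypothesis $(S2)_\nu$ (applied with the monotonicity $N_{\nu-2}\le N_{\nu-1}$) places $\omega$ in $\Omega_\nu^{\gamma-\rho}(\io^{(2)})$, so $[{\bf N}_\nu^{(1)}(\io^{(2)})]_k^k$ is well defined and $(S1)_\nu$ yields the key bound $\|\Delta_{12}[{\bf N}_\nu^{(1)}]_k^k\|\lessdot\|\Delta_{12}\io\|_{s_0+\bar\mu+\beta}$. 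Since $L_\nu^\pm(\ell,j,k;\omega)$ is self-adjoint on $\C^{2\times2}$ by Lemma \ref{properties operators matrices}$(iii)$, the inverse-norm equals the reciprocal of the smallest modulus eigenvalue; standard Lipschitz eigenvalue perturbation yields
\[
\bigl\|L_\nu^\pm(\ell,j,k;\omega;\io^{(2)})^{-1}\bigr\|\le \frac{\langle\ell\rangle^\tau}{\gamma\langle j^2\pm k^2\rangle}\Bigl(1-\tfrac{C\langle\ell\rangle^\tau}{\gamma\langle j^2\pm k^2\rangle}\|\Delta_{12}[{\bf N}_\nu^{(1)}]\|\Bigr)^{-1},
\]
and choosing $C_{\rm var}'$ large enough the hypothesis $C_{\rm var}'N_{\nu-1}^\tau\|\Delta_{12}\io\|_{s_0+\bar\mu+\beta}^{\sup}\le\rho$ makes the right-hand side bounded by $\langle\ell\rangle^\tau/((\gamma-\rho)\langle j^2\pm k^2\rangle)$, which is exactly the Melnikov condition for $\io^{(2)}$ with constant $\gamma-\rho$.

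To pass from $(S1)_\nu$ (plus the just established $(S2)_{\nu+1}$) to $(S1)_{\nu+1}$, I would work on the intersection $\Omega_{\nu+1}^{\gamma_1}(\io^{(1)})\cap\Omega_{\nu+1}^{\gamma_2}(\io^{(2)})$, where both KAM steps are defined. Applying $\Delta_{12}$ to the explicit formulas \eqref{Psi 2 j k ell}--\eqref{Psi 1 j k ell} for $\hat\Psi_\nu$, together with the resolvent identity
\[
\Delta_{12}(L_\nu^\pm)^{-1}=-L_\nu^\pm(\io^{(2)})^{-1}\bigl(M_L(\Delta_{12}[{\bf N}_\nu^{(1)}]_j^j)\pm M_R(\Delta_{12}[{\bf N}_\nu^{(1)}]_k^k)\bigr)L_\nu^\pm(\io^{(1)})^{-1},
\]
and estimates from Lemma \ref{bound op2}, gives the analogue of \eqref{PsiR} for $\Delta_{12}\Psi_\nu$: a bound of the form $N_\nu^{2\tau+1}\gamma^{-1}(|\Delta_{12}{\bf R}_\nu{\frak D}|_{s,\sigma-1}+\|\Delta_{12}\io\|_{s_0+\bar\mu+\beta}|{\bf R}_\nu{\frak D}|_{s,\sigma-1})$. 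Then Lemma \ref{lem:inverti}$(v)$ transfers this to bounds on $\Delta_{12}\Phi_\nu^{\pm1}$. Inserting into the $\Delta_{12}$-differentiated version of \eqref{D+} and using the telescoping identity \eqref{pizza margherita}, together with \eqref{smoothingN} to absorb a factor $N_\nu^{-\beta}$ from $\Pi_{N_\nu}^\bot$ and the tame composition estimates of Lemma \ref{prodest}, yields the analogue of \eqref{Rsgen}--\eqref{sch2} for $\Delta_{12}{\bf R}_{\nu+1}$. Finally, \eqref{deltarj12} at level $\nu+1$ is immediate from $[{\bf N}_{\nu+1}^{(1)}-{\bf N}_\nu^{(1)}]_k^k=[\hat{\bf R}_\nu^{(1)}(0)]_k^k$, and \eqref{Delta12 rj} at level $\nu+1$ follows by telescoping using \eqref{deltarj12} and the convergent sum $\sum N_{n-2}^{-\alpha}$, exactly as in the proof of Lemma \ref{final blocks normal form}.

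The main obstacle is the delicate bookkeeping of $N$-powers required to close the induction: the $s_0$-bound in \eqref{derivate-R-nu} must acquire the geometric factor $N_{\nu-1}^{-\alpha}$ while the $s_0+\beta$-bound is allowed only $N_{\nu-1}$. As in the proof of $(S1)_{\nu+1}$ in Theorem \ref{thm:abstract linear reducibility}, one needs $\alpha=6\tau+4$ and $\beta=\alpha+1$, together with the smallness hypothesis \eqref{final KAM smallness condition} and $N_0$ large depending on $(|S|,\tau,s_*)$, to absorb the extra factors $N_\nu^{2\tau+1}\gamma^{-1}$ coming from the $\Delta_{12}\Psi_\nu$ estimates; the intertwining with $(S2)_{\nu+1}$ (needed to even define the differences on a non-empty parameter set) is what makes the double induction unavoidable.
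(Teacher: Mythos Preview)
Your proposal is correct and follows essentially the same inductive scheme as the paper. Two minor differences worth noting: the paper proves $(S1)_{\nu+1}$ before $(S2)_{\nu+1}$ (the reverse of your order), and neither actually depends on the other within the same step---both follow from $(S1)_\nu$ and $(S2)_\nu$ alone---so the ``intertwining'' you describe in your last paragraph is a slight overstatement (the set $\Omega_{\nu+1}^{\gamma_1}(\io^{(1)})\cap\Omega_{\nu+1}^{\gamma_2}(\io^{(2)})$ may be empty, but then $(S1)_{\nu+1}$ is vacuous). For the inverse bound on $L_\nu^\pm(\io^{(2)})$ the paper uses a Neumann series rather than eigenvalue perturbation, but since these operators are self-adjoint the two routes are equivalent. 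Your sketched bound for $\Delta_{12}\Psi_\nu$ has the right structure, though the paper's Lemma \ref{Psi(u1) - Psi(u2)} gets $N_\nu^{2\tau}\gamma^{-2}$ on the cross term $|{\bf R}_\nu{\frak D}|_{s,\sigma-1}\|\Delta_{12}\io\|$ (two resolvent factors) rather than your $N_\nu^{2\tau+1}\gamma^{-1}$; either closes the induction with the given $\alpha,\beta$. Finally, in your $(S2)_{\nu+1}$ paragraph the relevant hypothesis involves $N_\nu^\tau$ (i.e.\ $N_{(\nu+1)-1}^\tau$), not $N_{\nu-1}^\tau$, since the Melnikov conditions at step $\nu+1$ range over $|\ell|\le N_\nu$.
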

\begin{proof} We argue by induction. First let us prove ${\bf (S1)}_{0} $ and ${\bf (S2)}_{0} $.
Concerning ${\bf (S1)}_{0} $, note that 
by \eqref{estimate Delta 12 R3}, the operator 
${\bf R}_0 = {\frak R}_3$ satisfies for any  
$ \o \in \Omega_o(\io^{(2)})\, (= \Omega_o(\io^{(1)}) )$
$$
|\Delta_{12} {\bf R}_0 {\frak D}|_{s_0 + \beta, \sigma - 1}  \lessdot 
\e \gamma^{- 2} \| \Delta_{12} \io\|_{ s_0 + \beta + 4 s_0 + \tau } + 
{\rm max}_{ s_0 + \beta + 4 s_0 + \tau}(\io) \| \Delta_{12} \io\|_{5 s_0 + \tau}\,,
$$
implying that
$$|\Delta_{12} {\bf R}_0 {\frak D}|_{s_0 + \beta, \sigma - 1} 
 \stackrel{\eqref{perdita mu dopo prime trasformazioni}}{\lessdot}  
(\e \gamma^{- 2} + {\rm max}_{s_0 + \bar \mu + \beta}(\io)) 
\| \Delta_{12} \io\|_{s_0 + \bar \mu + \beta} 
\stackrel{\eqref{final KAM smallness condition}}{\lessdot} 
\| \Delta_{12} \io\|_{s_0 + \bar \mu + \beta}\,.
$$
Since $N_{-1} = 1,$
the estimates \eqref{derivate-R-nu} for $\nu = 0$ then follow 
by choosing  $C_{\rm var} (\tau, |S|) > 0$ large enough. 
Concerning the estimate \eqref{Delta12 rj} for $ \nu = 0 $ 
%for $\Delta_{12}( [{\bf N}_0^{(1)}]_k^k)$ 
recall that 
by \eqref{first diagonal normal form}, the matrix element
$({\bf N}_0^{(1)})_k^k $, $k \in S^\bot$, is given by
$[[ \om_k^{nls}  ]]+ \e  [[ q_1 ]]
=4 \pi^2 k^2 +  [[ \Omega_k^{nls} ]] + \e  [[ q_1 ]]$.
%with $\Omega^{nls}$ and $q_1$ defined  in 
%\eqref{definition Omega nls} respectively \eqref{definition q1 q2 R}.
By the estimates of $\Delta_{12}\Omega^{nls}$ and $\Delta_{12} q_1$ in
Lemma \ref{lemma variazione i Omega nls} $(i)$ and, respectively, 
Lemma \ref{estimates from the perturbation lip} $(i)$ (valid uniformly on
$\Omega_o(\io^{(2)})$) and using
 the smallness condition \eqref{final KAM smallness condition}, one concludes that for any $k \in S_+^\bot$
$$
\| \Delta_{12} [{\bf N}_0^{(1)}]_k^k\| {\lessdot}\| \Delta_{12} \io\|_{s_0 + \bar \mu + \beta}\,,
$$
which is the estimate \eqref{Delta12 rj} for $\nu = 0$. 
Clearly, ${\bf (S2)}_0 $ holds for any choice of $C_{\rm var}'$ since by assumption, 
$\Omega_o (\io^{(2)}) \subseteq \Omega_o (\io^{(1)})$ and
by \eqref{Omega-second-meln},
 $\Omega_0^{\gamma}( \io^{(a)}) = \Omega_o ( \io^{(a)})$, $a= 1, 2$, implying that $\Omega_0^\gamma(\io^{(1)}) \cap \Omega_o(\io^{(2)}) = \Omega_o(\io^{(2)})$.

Let us now prove the inductive step from $\nu$ to $\nu+1$.
We assume that ${\bf (S1)_{\nu}}$, ${\bf (S2)_{\nu}}$ hold and begin
by showing ${\bf (S1)}_{\nu + 1} $. 
Since the torus embeddings $\breve \io^{(1)}$, $\breve \io^{(2)}$ 
satisfy \eqref{final KAM smallness condition},
it follows from \eqref{stima R0 riducibilita} that the operators ${\bf R}_0(\breve \io^{(a)})$, $a = 1, 2$, 
satisfy 
\be\label{estimate for R0 frakd}
|{\bf R}_0(\breve \io^{(a)}) {\frak D}|_{s_0 + \beta, \sigma - 1} \lessdot \e \g^{-2}\,.
\ee
In particular, the condition \eqref{piccolezza1} of Theorem \ref{thm:abstract linear reducibility} holds
and hence \eqref{Rsb}, combined with \eqref{estimate for R0 frakd}, yields

\begin{equation}\label{Rsb io 1 io 2}
|{\bf R}_\nu(\breve \io^{(a)}) {\frak D}|_{s_0, \sigma - 1} \lessdot \e \g^{-2} N_{\nu - 1}^{- \alpha}\,, \quad 
|{\bf R}_\nu(\breve \io^{(a)}) {\frak D}|_{s_0 + \beta, \sigma - 1} \lessdot \e \g^{-2}  N_{\nu - 1}\,, \quad
 a = 1,2\,.
\end{equation}
We have to estimate $\Delta_{12} {\bf R}_{\nu + 1}$, which according to \eqref{D+} is given by
\begin{equation}\label{recall bf R nu + 1}
\Delta_{12} {\bf R}_{\nu + 1} = 
\Delta_{12} ( \Phi^{-1}_\nu \tilde {\bf R}_\nu) + 
\Delta_{12} ( (\Phi^{-1}_\nu - {\mathbb I}_2 )  {\bf R}^{nf}_\nu)
\end{equation}
where by \eqref{bf R tilde nu}
\begin{equation}\label{recall bf R nu}
\widetilde{\bf R}_\nu = \Pi_{N_\nu}^\bot {\bf R}_\nu + ( \om \cdot \partial_\vphi ) ( \Phi_\nu -{\mathbb I}_2 + \Psi_\nu ) + 
 \left[ {\bf N}_\nu, \Phi_\nu - {\mathbb I}_2 + \Psi_\nu \right] +  {\bf R}_\nu ( \Phi_\nu -{\mathbb I}_2 )\,.
\end{equation}
 We first need to estimate 
$\Delta_{12} { \Psi}_\nu = {\Psi}_\nu(\breve \io^{(1)}) - {\Psi}_\nu(\breve \io^{(2)})$
where ${\Psi}_\nu(\breve \io^{(a)})$, $a = 1, 2$, are the solutions of the homological equation \eqref{eq:homo} with ${\bf R}_\nu = {\bf R}_\nu(\breve \io^{(a)})$:

\begin{lemma}\label{Psi(u1) - Psi(u2)}
 For $s = s_0$ and  $s = s_0 + \beta$, the norms
$|\Delta_{12}\Psi_\nu {\frak D}|_{s, \sigma - 1}$,
$|\Delta_{12}\Psi_\nu |_{s, \sigma}$, and $|\Delta_{12}\Psi_\nu |_{s, \sigma - 2}$
are $\lessdot$ bounded for any $\nu \ge 0$ by
$$
 N_\nu^{2 \tau}\Big(
\gamma^{- 2}|{\bf R}_\nu(\breve \io^{(1)}) {\frak D}|_{s, \sigma - 1} 
\|\Delta_{12} \io \|_{s_0 + \bar \mu + \beta} 
+ \gamma^{- 2}|{\bf R}_\nu(\breve \io^{(2)}) {\frak D}|_{s, \sigma - 1} 
\|\Delta_{12} \io \|_{s_0 + \bar \mu + \beta} 
 +\gamma^{- 1} |\Delta_{12} {\bf R}_\nu {\frak D} |_{s, \sigma - 1} \Big) \,.
$$
\end{lemma}
\begin{proof}
To simplify notations, we drop the index $\nu$ in this proof. 
Since $\Psi_\nu$ is of the form  \eqref{structure Psi nu}, it suffices to prove the estimates corresponding
to  the claimed ones
 for the operators $\Delta_{12} \Psi^{(1)} \lla D \rra $ and $\Delta_{12} \Psi^{(2)} \lla D \rra $. The estimates for
these two operators can be shown in the same way and hence we consider $\Delta_{12} \Psi^{(1)} \lla D \rra $ only. 
Evaluating \eqref{Psi 1 j k ell} at $\io^{(a)}$, one has for any $j, k \in S^\bot_+$ 
and any $\omega$  in  $\Omega_{\nu + 1}^{\gamma_a}(\breve \io^{(a)})$,
$$
[\hat\Psi^{(1)} (\ell)]_j^k =
- \ii L^{-}(\ell, j, k)^{-1}  [{\hat{\bf R}}^{(1)} (\ell)]^k_j\,,\quad \forall \ell \in \Z^S\,, \quad |\ell| \leq N\,, \quad (\ell, j, k) \neq (0, j, j)
$$
and hence for any $\omega \in  
\Omega_{\nu + 1}^{\gamma_1}(\breve \io^{(1)}) \cap \Omega_{\nu + 1}^{\gamma_2} (\breve \io^{(2)})$,
\begin{equation}\label{australia - 1}
\Delta_{12} [\hat\Psi^{(1)} (\ell)]_j^k = - \ii \big( \Delta_{12}L^{-}(\ell, j, k)^{-1} \big)[{\hat{\bf R}}^{(1)} (\ell ; \breve \io^{(1)})]^k_j -  \ii L^{-}(\ell, j, k ; \breve \io^{(2)})^{-1} \big(\Delta_{1 2} [{\hat{\bf R}}^{(1)} (\ell)]^k_j \big)\,.
\end{equation}
Together with
\begin{align*}
\Delta_{12} L^{-}(\ell, j, k)^{- 1} =  - L^{-}(\ell, j, k ; \breve \io^{(2)})^{- 1}  \Delta_{12} L^{-}(\ell, j, k )  L^{-}(\ell, j, k ; \breve \io^{(1)})^{- 1}\,,
\end{align*}
the definition \eqref{L - nu} of $L^{-}(\ell, j, k )$ implies that
$$
\Delta_{12} L^{-}(\ell, j, k )   =  M_L\big(\Delta_{12} [{\bf N}^{(1)} ]_j^j  \big)  - M_R\big(\Delta_{12} [{\bf N}^{(1)}]_k^k \big) \,.
$$
By the induction hypothesis, estimate \eqref{Delta12 rj} holds and hence
$ \| \Delta_{12} L^-(\ell, j, k) \|  \lessdot \| \Delta_{12} \io\|_{s_0 + \bar \mu + \beta} $. 
This together with \eqref{Hyp2} then yields
$$
\|\Delta_{12}  L^-(\ell, j, k)^{- 1}  \| \lessdot  \frac{N^{2 \tau}}{\g_1\g_2 \langle j^2 - k^2 \rangle^2} \| \Delta_{12} \io\|_{s_0 + \bar \mu + \beta}\,.
$$
Hence \eqref{australia - 1} implies that for any $\ell \in \Z^S$, $|\ell| \leq N$, and $j, k \in S_+^\bot$,
$$
\|  \Delta_{12} [\hat\Psi^{(1)} (\ell)]_j^k\| \lessdot  
\frac{N^{2 \tau}}{\g_1 \g_2 \langle j^2 - k^2 \rangle^2} \| \Delta_{12} \io\|_{s_0 + \bar \mu + \beta} \|  [{\hat{\bf R}}^{(1)} (\ell ; \breve \io^{(1)})]^k_j  \| + 
\frac{N^\tau}{\g_2 \langle j^2 - k^2 \rangle}\| \Delta_{1 2} [{\hat{\bf R}}^{(1)} (\ell)]^k_j  \|\,.
$$
Arguing as in the proof of Lemma \ref{lemma:redu} for deriving the estimate of 
$ \| \hat{\Psi}^{(1)}(\ell) \lla D \rra \|_{{\cal L}(h^{\sigma - 1}_\bot)}$ and using the assumption 
$\g_1, \g_2 \ge \g/2$,
one sees that for any $\ell \in \Z^S$, $|\ell| \leq N$,
$$
\| \Delta_{12} \hat\Psi^{(1)}(\ell )  \lla D \rra   \|_{{\cal L}(h^{\sigma - 1}_\bot)}  \lessdot 	
N^{2 \tau } \gamma^{- 2} \| \Delta_{12} \io \|_{s_0 + \bar \mu + \beta}   
\| \hat{\bf R}^{(1)}(\ell ; \breve \io^{(2)}  )\|_{{\cal L}(h^{\sigma - 1}_\bot)} +
{N^\tau}\gamma^{- 1} \| \Delta_{12} \hat{\bf R}^{(1)}(\ell )  \lla D \rra \|_{{\cal L}(h^{\sigma - 1}_\bot)} 
$$
which implies that $|\Delta_{12} \Psi^{(1)} \lla D \rra |_{s, \sigma - 1}$ satisfies the claimed estimate. The one for 
$|\Delta_{12} \Psi^{(1)}|_{s, \sigma}$ follows by similar arguments. Finally,
the  estimate for $|\Delta_{12} \Psi^{(1)} \lla D \rra |_{s, \sigma - 1}$  implies the claimed one
 for $|\Delta_{12} \Psi^{(1)}|_{s, \sigma - 2}$ since 
$|\Delta_{12} \Psi^{(1)}|_{s, \sigma - 2} \leq |\Delta_{12} \Psi^{(1)} \lla D \rra |_{s, \sigma - 1}$. 
\end{proof}

We estimate each term in the expression \eqref{recall bf R nu + 1}
for $\Delta_{12} {\bf R}_{\nu + 1}$ individually. For convenience,  introduce
$$
R_\nu(s) := {\rm max}\{ |{\bf R}_\nu(\breve \io^{(1)}) {\frak D}|_{s, \sigma - 1}, |{\bf R}_\nu(\breve \io^{(2)}) {\frak D}|_{s, \sigma - 1} \}\,, \quad s = s_0, \,  s_0 + \beta \, .
$$
By Lemma \ref{Psi(u1) - Psi(u2)} and then using the induction hypothesis, one sees that
\begin{align}
|\Delta_{12} \Psi_\nu & {\frak D}|_{s_0, \sigma - 1} \lessdot  N_\nu^{2 \tau }\big( \gamma^{- 2}  R_\nu(s_0) \|\Delta_{12} \io \|_{s_0 + \bar \mu + \beta} + \gamma^{- 1}|\Delta_{12}{\bf R}_\nu {\frak D}|_{s_0, \sigma - 1} \big) \nonumber\\
& \stackrel{\eqref{Rsb io 1 io 2}, \, \eqref{derivate-R-nu},\, \,\e \gamma^{- 1} \leq 1}{\lessdot}  \,\,\,
N_\nu^{2 \tau } N_{\nu - 1}^{- \alpha} \gamma^{- 1} \| \Delta_{12} \io \|_{s_0 + \bar \mu + \beta}\, \label{delta 12 Psi nu s0}
\end{align}
and
\begin{align}
|\Delta_{12} \Psi_\nu  {\frak D} &|_{s_0 + \beta, \sigma - 1} \lessdot N_\nu^{2 \tau } 
 \big( \gamma^{- 2} R_\nu(s_0 + \beta) \|\Delta_{12} \io \|_{s_0 + \bar \mu + \beta} + 
\gamma^{- 1}|\Delta_{12}{\bf R}_\nu {\frak D}|_{s_0 + \beta, \sigma - 1} \big) \nonumber\\
& \stackrel{\eqref{Rsb io 1 io 2}, \, \eqref{derivate-R-nu},\,\,  \e \gamma^{- 1} \leq 1}{\lessdot} \,
 N_\nu^{2 \tau } N_{\nu - 1}  \gamma^{- 1} \| \Delta_{12} \io \|_{s_0 + \bar \mu + \beta}\,.
\label{delta 12 Psi nu s0 + beta}
\end{align}
By Lemma \ref{lemma:redu}, 
the operators $\Psi_\nu(\breve \io^{(a)})$, $a = 1, 2$, satisfy the estimates 
\begin{equation}\label{stime Psi nu (i a)}
|\Psi_\nu(\breve \io^{(a)}) {\frak D}|_{s, \sigma - 1},  \,\, 
|\Psi_\nu(\breve \io^{(a)})|_{s, \sigma}, \,\,
|\Psi_\nu(\breve \io^{(a)}) |_{s, \sigma - 2} \, \lessdot \,
N_\nu^{\tau} \gamma^{- 1} R_\nu(s)\,, \quad s= s_0, \, s_0 + \beta\,.
\end{equation}
Taking into account that 
\begin{align}\label{condizione u1 - u2 stima delta 12 Phi}
N_\nu^{\tau} \gamma^{-1} R_\nu(s_0) & \stackrel{\eqref{Rsb io 1 io 2}}{\lessdot} 
N_\nu^{\tau} N_{\nu - 1}^{- \alpha} \e \gamma^{- 3} 
\stackrel{\eqref{alpha beta}, \eqref{final KAM smallness condition}}{\leq}1\, , 
\end{align}
one then concludes from  
\eqref{derivata-inversa-Phi} and \eqref{stime Psi nu (i a)}
  that
\begin{align}
|\Delta_{12} \Phi_\nu^{\pm 1}{\frak D} |_{s_0, \sigma - 1} & \lessdot 
|\Delta_{12} \Psi_\nu {\frak D} |_{s_0, \sigma - 1} \stackrel{\eqref{delta 12 Psi nu s0}}{\lessdot} 
 N_\nu^{2 \tau } N_{\nu - 1}^{- \alpha}  \gamma^{- 1} 
\| \Delta_{12} \io \|_{s_0 + \bar \mu + \beta}   \label{Delta 12 Phi nu norma bassa}
\end{align}
and
\begin{align}
|\Delta_{12} & \Phi_\nu^{\pm 1} {\frak D} |_{s_0 + \beta, \sigma - 1}  \lessdot 
|\Delta_{12} \Psi_\nu {\frak D}|_{s_0 + \beta, \sigma - 1} + 
(|\Psi_\nu(\breve \io^{(1)}) {\frak D} |_{s_0 + \beta, \sigma - 1} + 
|\Psi_\nu(\breve \io^{(2)}) {\frak D}|_{s_0 + \beta, \sigma - 1}) 
|\Delta_{12} \Psi_\nu {\frak D}|_{s_0, \sigma - 1}  \nonumber\\
& \stackrel{\eqref{delta 12 Psi nu s0}, \,\eqref{delta 12 Psi nu s0 + beta}, \, 
\eqref{stime Psi nu (i a)}}{\lessdot} \,\,\,
N_\nu^{2 \tau} N_{\nu - 1}  \gamma^{- 1} \| \Delta_{12} \io \|_{s_0 + \bar \mu + \beta}  
   + N_\nu^{\tau} \gamma^{- 1} R_\nu(s_0 + \beta) N_\nu^{2 \tau} N_{\nu - 1}^{- \alpha}  \gamma^{- 1} \| \Delta_{12} \io \|_{s_0 + \bar \mu + \beta} \nonumber\\
& \stackrel{\eqref{Rsb io 1 io 2}, \, \eqref{alpha beta}, \,\,\e \gamma^{- 3} \leq 1}{\lessdot} \,
N_\nu^{2 \tau} N_{\nu - 1}  \gamma^{- 1}  \| \Delta_{12} \io \|_{s_0 + \bar \mu + \beta}\,.
\label{Delta 12 Phi nu norma alta}
\end{align}

\medskip
\noindent
{\it Estimate of $\Delta_{12} \widetilde{\bf R}_\nu$:}
We begin by estimating the term 
$\Delta_{12} \big( {\bf R}_\nu  (\Phi_\nu -  {\mathbb I}_2) \big)$
in $\Delta_{12}\widetilde{\bf R}_\nu$ (cf \eqref{recall bf R nu}):
\begin{align}
|\Delta_{12}\big( {\bf R}_\nu & (\Phi_\nu -{\mathbb I}_2) \big){\frak D} \big\} |_{s_0, \sigma -1}  \lessdot |\Delta_{12 } {\bf R}_\nu {\frak D}|_{s_0, \sigma - 1} |(\Phi_\nu(\breve \io^{(1)}) - {\mathbb I}_2){\frak D} |_{s_0, \sigma - 1} + |{\bf R}_\nu(\breve \io^{(2)}) {\frak D}|_{s_0, \sigma - 1} |\Delta_{12} \Phi_\nu {\frak D}|_{s_0, \sigma - 1} \nonumber\\
& \stackrel{\eqref{PhINV with D} }{\lessdot} \, 
|\Delta_{12 } {\bf R}_\nu {\frak D}|_{s_0, \sigma - 1} |\Psi_\nu(\breve \io^{(1)}) {\frak D}  |_{s_0, \sigma - 1} + |{\bf R}_\nu(\breve \io^{(2)}) {\frak D} |_{s_0, \sigma - 1} |\Delta_{12} \Phi_\nu {\frak D}|_{s_0, \sigma - 1} \,. \nonumber
\end{align}
Using the induction hypothesis one sees that
\be\label{stima primo pezzo delta 12 R nu + 1 s0}
|\Delta_{12}\big( {\bf R}_\nu  (\Phi_\nu -{\mathbb I}_2) \big){\frak D} \big\} |_{s_0, \sigma - 1}
 \stackrel{\eqref{Delta 12 Phi nu norma bassa}, \eqref{stime Psi nu (i a)}, \eqref{Rsb io 1 io 2}, \eqref{derivate-R-nu}}{\lessdot} N_\nu^{2 \tau} N_{\nu - 1}^{- 2 \alpha} \e \gamma^{- 3} \|\Delta_{12} \io \|_{s_0 + \bar \mu + \beta}\,. 
\ee
Similarly,  $|\Delta_{12} \big( {\bf R}_\nu  (\Phi_\nu - {\mathbb I}_2) \big) 
{\frak D}|_{s_0 + \beta, \sigma -1}$ is $\lessdot$ bounded by
\begin{align}
& |\Delta_{12 } {\bf R}_\nu {\frak D}|_{s_0  + \beta, \sigma - 1} 
|(\Phi_\nu(\breve \io^{(1)}) - {\mathbb I}_2){\frak D} |_{s_0, \sigma - 1} 
 + |\Delta_{12 } {\bf R}_\nu {\frak D}|_{s_0, \sigma - 1 } 
|(\Phi_\nu(\breve \io^{(1)}) - {\mathbb I}_2){\frak D} |_{s_0 + \beta, \sigma - 1} \nonumber\\
& + |{\bf R}_\nu(\breve \io^{(2)}) {\frak D}|_{s_0 + \beta, \sigma - 1} 
|\Delta_{12} \Phi_\nu {\frak D}|_{s_0, \sigma - 1} +  
|{\bf R}_\nu(\breve \io^{(2)}) {\frak D}|_{s_0, \sigma - 1} 
|\Delta_{12} \Phi_\nu {\frak D}|_{s_0 + \beta, \sigma - 1} \nonumber\\
& \stackrel{\eqref{PhINV with D}}{\lessdot} \,
|\Delta_{12 } {\bf R}_\nu {\frak D}|_{s_0  + \beta, \sigma - 1} |\Psi_\nu(\breve \io^{(1)}) {\frak D}|_{s_0, \sigma - 1} + |\Delta_{12 } {\bf R}_\nu {\frak D}|_{s_0, \sigma - 1 } |\Psi_\nu(\breve \io^{(1)}) |_{s_0 + \beta, \sigma - 1} \nonumber\\
& \quad + |{\bf R}_\nu(\breve \io^{(2)}) {\frak D}|_{s_0 + \beta, \sigma - 1} |\Delta_{12} \Phi_\nu {\frak D}|_{s_0, \sigma - 1} +  |{\bf R}_\nu(\breve \io^{(2)}) {\frak D}|_{s_0, \sigma - 1} |\Delta_{12} \Phi_\nu {\frak D}|_{s_0 + \beta, \sigma - 1} \nonumber
\end{align}
which by \eqref{stime Psi nu (i a)}  is $\lessdot$ bounded by
\begin{align}
&|\Delta_{12 } {\bf R}_\nu {\frak D}|_{s_0  + \beta, \sigma - 1} 
N_\nu^{\tau} \gamma^{- 1} R_\nu(s_0) \,
 +  \, |\Delta_{12 } {\bf R}_\nu {\frak D}|_{s_0, \sigma - 1 } 
N_\nu^{\tau} \gamma^{- 1}R_\nu(s_0 + \beta)  \nonumber\\
& + R_\nu(s_0 + \beta)  |\Delta_{12} \Phi_\nu {\frak D}|_{s_0, \sigma - 1} + 
R_\nu(s_0) |\Delta_{12} \Phi_\nu {\frak D}|_{s_0 + \beta, \sigma - 1}\,.\nonumber
\end{align}
Again using the induction hypothesis, one then obtains
by \eqref{Delta 12 Phi nu norma bassa}, \eqref{Delta 12 Phi nu norma alta}, \eqref{condizione u1 - u2 stima delta 12 Phi}, \eqref{Rsb io 1 io 2}, \eqref{derivate-R-nu}
\be\label{stima primo pezzo delta 12 R nu + 1 s0 + beta}
|\Delta_{12} \big( {\bf R}_\nu  (\Phi_\nu - {\mathbb I}_2) \big) 
{\frak D}|_{s_0 + \beta, \sigma - 1} 
\lessdot  \,\,
 N_{\nu - 1}   \| \Delta_{12} \io \|_{s_0 + \bar \mu + \beta}\,. 
\ee
Next we estimate the term 
$ \Delta_{12} \big( \om \cdot \partial_\vphi ) ( \Phi_\nu -{\mathbb I}_2 + \Psi_\nu ) + 
 \left[ {\bf N}_\nu, \Phi_\nu - {\mathbb I}_2 + \Psi_\nu \right]\big)$
in $\Delta_{12}\widetilde{\bf R}_\nu$. 
Since $\Phi_\nu = {\rm exp}(- \Psi_\nu)$, one has
 \begin{equation}\label{varsavia}
 ( \om \cdot \partial_\vphi ) ( \Phi_\nu -{\mathbb I}_2 + \Psi_\nu ) + 
 \left[ {\bf N}_\nu, \Phi_\nu - {\mathbb I}_2 + \Psi_\nu \right] = \sum_{n \geq 2} (- 1)^n \frac{(\omega \cdot \partial_\vphi)(\Psi_\nu^n) + [{\bf N}_\nu, \Psi_\nu^n]}{n !}
 \end{equation}
where by \eqref{pizza margherita}
\begin{equation}\label{alfredo}
(\omega \cdot \partial_\vphi)(\Psi_\nu^n) + [{\bf N}_\nu, \Psi_\nu^n] = 
\sum_{n_1 + n_2 + 1= n} \Psi_\nu^{n_1} (\Pi_{N_\nu} {\bf R}_\nu - {\bf R}_\nu^{n f}) \Psi_\nu^{n_2}\,.
 \end{equation}
Iterating the tame estimates \eqref{interpm} for the composition of operator valued maps 
one sees  that for any $i, k$ with $i + k +1 = n$ ($ \ge 2$), \,
$|\Delta_{12}\big(  \Psi_\nu^i (\Pi_{N_\nu} {\bf R}_\nu - {\bf R}_\nu^{n f}) \, \Psi_\nu^k \big) 
{\frak D} |_{s_0, \sigma - 1} $ is bounded by
$$
\big( C' |\Psi_\nu  {\frak D}|_{s_0, \sigma - 1}\big)^{n-1}
|\Delta_{12} {\bf R}_\nu {\frak D}|_{s_0, \sigma - 1} + \,
(n - 1) C' \big( C' |\Psi_\nu  {\frak D}|_{s_0, \sigma - 1}\big)^{n-2}
|{\bf R}_\nu {\frak D}|_{s_0, \sigma - 1} |\Delta_{12} \Psi_\nu  {\frak D}|_{s_0, \sigma - 1}
$$
where $C' \equiv C'(s_0) := 2 C_{op}(s_0)$ with $C_{op}(s)$ as in \eqref{interpm}.
Using \eqref{stime Psi nu (i a)},  
\eqref{delta 12 Psi nu s0} and increasing $C'$ if necessary, one sees that 
the latter expression is bounded by 
\begin{align}
&\big( C' N_\nu^{\tau} \gamma^{- 1}  R_\nu(s_0) \big)^{n - 1} 
|\Delta_{12} {\bf R}_\nu {\frak D}|_{s_0, \sigma - 1} 
+  (n-1) \, C' \big( C' N_\nu^{\tau} \gamma^{- 1} R_\nu(s_0) \big)^{n - 2 } R_\nu(s_0) \, \gamma^{- 1}  N_\nu^{2\tau} N_{\nu - 1}^{- \alpha} \| \Delta_{12} \io \|_{s_0 + \bar \mu + \beta} \nonumber\\
& \stackrel{\eqref{derivate-R-nu}, \eqref{Rsb io 1 io 2} }{\lessdot}  
n\,  C^{n-1} \big( N_\nu^{\tau} N_{\nu - 1}^{- \alpha} \e \gamma^{- 3}\big)^{n - 2} 
N_\nu^{2\tau} N_{\nu - 1}^{- 2\alpha} \e \gamma^{- 3} 
\| \Delta_{12} \io \|_{s_0 + \bar \mu + \beta} \nonumber
\end{align}
with $C \equiv C(s_0) > C'$ chosen sufficiently large.  
Together with \eqref{condizione u1 - u2 stima delta 12 Phi} this then implies that
\be\label{alfredo n s0}
|\Delta_{12}\big(  \Psi_\nu^i (\Pi_{N_\nu} {\bf R}_\nu - {\bf R}_\nu^{n f}) \,\Psi_\nu^k \big) {\frak D} |_{s_0, \sigma - 1}  \lessdot
n \, C(s_0)^{n-1} N_\nu^{2\tau}  N_{\nu - 1}^{- 2 \alpha} \e \gamma^{- 3} 
\| \Delta_{12} \io \|_{s_0 + \bar \mu + \beta} \,.
\ee
Similarly, using 
\eqref{Rsb io 1 io 2}, the induction hypothesis \eqref{derivate-R-nu}, and 
\eqref{delta 12 Psi nu s0}, \eqref{delta 12 Psi nu s0 + beta},
\eqref{stime Psi nu (i a)},  one sees that for 
$C(s_0 +\b) > 2 C_{op}(s_0 + \b)$ sufficiently large and 
any $i, k$ with $i + k +1 = n$ ($ \ge 2$),
$|\Delta_{12}\big(  \Psi_\nu^i ({\bf R}_\nu - {\bf R}_\nu^{nf}) \, \Psi_\nu^k \big) 
{\frak D}|_{s_0 + \beta, \sigma - 1} $
is bounded by
$$
n^2 C(s_0 + \beta)^{n-1} 
\big(N_\nu^{\tau} N_{\nu - 1}^{- \alpha} \e \gamma^{- 3}\big)^{n - 2}
N_\nu^{2\tau} N_{\nu - 1}^{- \alpha} \e \gamma^{- 3}
N_{\nu - 1} \| \Delta_{12} \io \|_{s_0 + \bar \mu + \beta}
$$
yielding
\be\label{alfredo n s0 + beta}
|\Delta_{12}\big(  \Psi_\nu^i ({\bf R}_\nu - {\bf R}_\nu^{nf}) \, \Psi_\nu^k \big) 
{\frak D}|_{s_0 + \beta, \sigma - 1} 
 \stackrel{\eqref{alpha beta}, \eqref{final KAM smallness condition}}{\lessdot} \,
n^2 \, C(s_0 + \beta)^{n - 1} N_{\nu - 1}  \,
\| \Delta_{12} \io \|_{s_0 + \bar \mu + \beta}\,. 
\ee
Hence by \eqref{varsavia}
\begin{align}
\big| \big(( \om \cdot \partial_\vphi ) & ( \Phi_\nu -{\mathbb I}_2 + \Psi_\nu ) + 
 \left[ {\bf N}_\nu, \Phi_\nu - {\mathbb I}_2 + \Psi_\nu \right] \big) 
{\frak D} \big|_{s_0, \sigma - 1}
\stackrel{\eqref{alfredo}}{\leq} 
\sum_{n \geq 2} \frac{1}{n!} \sum_{i + k +1 = n } 
\big|\Delta_{12}\big(  \Psi_\nu^i ({\bf R}_\nu - {\bf R}_\nu^{nf}) \Psi_\nu^k \big) 
{\frak D}\big|_{s_0, \sigma - 1} \nonumber\\
& \stackrel{\eqref{alfredo n s0}}{\lessdot} N_\nu^{2\tau} N_{\nu - 1}^{- 2 \alpha} 
\e \gamma^{- 3} \| \Delta_{12} \io \|_{s_0 + \bar \mu + \beta} 
\sum_{n \geq 2} \frac{C(s_0)^{n - 1} }{(n - 2) !} \,
 \lessdot \, N_\nu^{2\tau } N_{\nu - 1}^{- 2 \alpha} \e \gamma^{- 3} 
\| \Delta_{12} \io \|_{s_0 + \bar \mu + \beta} \label{stima quarto pezzo delta 12 R nu + 1 s0}\,.
\end{align}
Similarly, 
$\big| \big(( \om \cdot \partial_\vphi ) ( \Phi_\nu -{\mathbb I}_2 + \Psi_\nu ) + 
 \left[ {\bf N}_\nu, \Phi_\nu - {\mathbb I}_2 + \Psi_\nu \right] \big) 
{\frak D} \big|_{s_0 + \beta , \sigma - 1} $
is bounded by 
$$
\sum_{n \geq 2} \frac{1}{n!} \sum_{i + k +1 = n } 
\big|\Delta_{12}\big(  \Psi_\nu^i ({\bf R}_\nu - {\bf R}_\nu^{nf}) \Psi_\nu^k \big) 
{\frak D} \big|_{s_0 + \beta, \sigma - 1} \nonumber\\
 \stackrel{\eqref{alfredo n s0 + beta}}{\lessdot} 
N_{\nu - 1}  \| \Delta_{12} \io \|_{s_0 + \bar \mu + \beta} 
\sum_{n \geq 2} n \frac{C(s_0 + \beta)^{n - 1} }{(n - 2) !} 
$$
leading to the estimate
\be \label{stima quarto pezzo delta 12 R nu + 1 s0 + beta}
\big| \big(( \om \cdot \partial_\vphi ) ( \Phi_\nu -{\mathbb I}_2 + \Psi_\nu ) + 
 \left[ {\bf N}_\nu, \Phi_\nu - {\mathbb I}_2 + \Psi_\nu \right] \big) 
{\frak D} \big|_{s_0 + \beta , \sigma - 1} 
 \lessdot N_{\nu - 1}  \| \Delta_{12} \io \|_{s_0 + \bar \mu + \beta}\,. 
\ee
Finally,  the term 
$\Delta_{12} \Pi_{N_\nu}^\bot {\bf R}_\nu = \Pi_{N_\nu}^\bot \Delta_{12} {\bf R}_\nu $
 in $\Delta_{12}\widetilde{\bf R}_\nu$ (cf \eqref{recall bf R nu})
can be estimated as 
\be\label{estimate first piece delta 12 R nu + 1 s0}
|\Pi_{N_\nu}^\bot \Delta_{12}{\bf R}_\nu {\frak D}|_{s_0, \sigma - 1} \stackrel{\eqref{smoothingN}}{\leq} N_\nu^{- \beta} 
|\Delta_{12} {\bf R}_\nu {\frak D}|_{s_0 + \beta, \sigma - 1}
\stackrel{\eqref{derivate-R-nu}}{\lessdot}  N_\nu^{- \beta}
N_{\nu - 1}  \Vert \Delta_{12} \io \Vert_{ s_0 + \bar \mu + \beta } 
\ee
and
\be\label{estimate first piece delta 12 R nu + 1 s0 + beta}
 |\Pi_{N_\nu}^\bot \Delta_{12}{\bf R}_\nu {\frak D}|_{s_0 + \beta, \sigma - 1} \leq |\Delta_{12} {\bf R}_\nu {\frak D}|_{s_0 + \beta, \sigma - 1}
\stackrel{\eqref{derivate-R-nu}}{\lessdot}  N_{\nu -1}
\Vert \Delta_{12} \io \Vert_{ s_0 + \bar \mu + \beta } \,.
\ee
Combining the estimates \eqref{stima primo pezzo delta 12 R nu + 1 s0}, 
\eqref{stima quarto pezzo delta 12 R nu + 1 s0}, and 
\eqref{estimate first piece delta 12 R nu + 1 s0}
we get 
\begin{align}
|\Delta_{12} \widetilde{\bf R}_{\nu }{\frak D}|_{s_0, \sigma - 1} & \lessdot 
\big(N_{\nu - 1} N_{\nu}^{- \beta}  + 
N_\nu^{2 \tau} N_{\nu - 1}^{- 2 \alpha} \e \gamma^{- 3} \big) 
\| \Delta_{12} \io \|_{s_0 + \bar \mu + \beta}\,,  \label{stima Delta 12 tilde R nu s0}
\end{align}
whereas   \eqref{stima primo pezzo delta 12 R nu + 1 s0 + beta}, 
\eqref{stima quarto pezzo delta 12 R nu + 1 s0 + beta}, and
\eqref{estimate first piece delta 12 R nu + 1 s0 + beta} lead to
\begin{equation}\label{stima Delta 12 tilde R nu s0 + beta}
|\Delta_{12} \widetilde{\bf R}_\nu {\frak D}|_{s_0 + \beta, \sigma - 1} \lessdot N_{\nu - 1} \| \Delta_{12} \io \|_{s_0 + \bar \mu + \beta}\,.
\end{equation}

\noindent
{\it Estimate of $\Delta_{12} {\bf R}_{\nu + 1}$:} 
Arguing as in \eqref{stima primo pezzo delta 12 R nu + 1 s0}, \eqref{stima primo pezzo delta 12 R nu + 1 s0 + beta}, we get 
\begin{align}\label{stima secondo pezzo delta 12 R nu + 1 s0}
& |\Delta_{12} \big( (\Phi_\nu^{- 1} - {\mathbb I}_2) {\bf R}_\nu^{nf}  \big) 
{\frak D}|_{s_0, \sigma - 1} \lessdot 
N_\nu^{2 \tau } N_{\nu - 1}^{- 2 \alpha} \e \gamma^{- 3} 
\|\Delta_{12} \io \|_{s_0 + \bar \mu + \beta}\,,  \\
& \label{stima secondo pezzo delta 12 R nu + 1 s0 + beta}
 |\Delta_{12} \big( (\Phi_\nu^{- 1} - {\mathbb I}_2) {\bf R}_\nu^{nf}  \big) 
{\frak D} |_{s_0 + \beta, \sigma - 1}\lessdot N_{\nu - 1}  
\| \Delta_{12} \io \|_{s_0 + \bar \mu + \beta}\,.
\end{align}
Moreover, by the arguments in the proof of $({\bf S 1})_\nu$ in 
Section \ref{proof of reduction theorem}, 
the operators $\widetilde{\bf R}_\nu(\breve \io^{(a)})$, $a = 1, 2$, satisfy 
$$
|\widetilde {\bf R}_\nu {\frak D}|_{s, \sigma - 1} \leq_s
|\Pi_{N_\nu}^\bot {\bf R}_\nu {\frak D}|_{s, \sigma - 1} + 
N_\nu^{2 \tau + 1} \gamma^{- 1} |{\bf R}_\nu {\frak D}|_{s, \sigma - 1}  
|{\bf R}_\nu {\frak D}|_{s_0, \sigma - 1}\,.
$$
Since $|\Pi_{N_\nu}^\bot {\bf R}_\nu {\frak D}|_{s_0, \sigma - 1} \lessdot
N_\nu^{- \beta} |\Pi_{N_\nu}^\bot {\bf R}_\nu {\frak D}|_{s_0 + \b, \sigma - 1} $
one concludes from \eqref{Rsb io 1 io 2} together with
\eqref{alpha beta}, \eqref{final KAM smallness condition} that
\begin{align}
|\widetilde{\bf R}_\nu(\breve \io^{(a)}) {\frak D}|_{s_0, \sigma - 1} \leq_s
N_{\nu - 1} N_\nu^{- \beta} \e \g^{-2}  + 
N_\nu^{2 \tau + 1} N_{\nu - 1}^{- 2 \alpha} \e \g^{-1}\,, \quad 
|\widetilde{\bf R}_\nu(\breve \io^{(a)}) {\frak D} |_{s_0 + \beta, \sigma - 1} \lessdot N_{\nu - 1} \e \g^{-2} \,. \label{varsavia 1}
\end{align}
Recalling that for $a = 1, 2$, 
\begin{align*}
& |(\Phi_\nu^{- 1}(\breve \io^{(a)}) - {\mathbb I}_2){\frak D}|_{s_0, \sigma - 1} \stackrel{\eqref{PhINV with D}}{\lessdot} |\Psi_\nu(\breve \io^{(a)}) {\frak D}|_{s_0, \sigma - 1} \stackrel{\eqref{stime Psi nu (i a)}, \eqref{Rsb io 1 io 2}}{\lessdot} N_\nu^{\tau} N_{\nu - 1}^{- \alpha} \e \gamma^{- 3}\,, \\
&  |(\Phi_\nu^{- 1}(\breve \io^{(a)}) - {\mathbb I}_2){\frak D}|_{s_0 + \beta, \sigma - 1} \stackrel{\eqref{PhINV with D}}{\lessdot} |\Psi_\nu(\breve \io^{(a)}) {\frak D}|_{s_0 + \beta, \sigma - 1} \stackrel{\eqref{stime Psi nu (i a)}, \eqref{Rsb io 1 io 2}}{\lessdot} N_\nu^{\tau} N_{\nu - 1} \e \gamma^{- 3}\,,
\end{align*}
and using \eqref{Delta 12 Phi nu norma bassa}, \eqref{Delta 12 Phi nu norma alta}, \eqref{stima Delta 12 tilde R nu s0}, \eqref{stima Delta 12 tilde R nu s0 + beta}, \eqref{varsavia 1}, $\e \gamma^{- 3} \leq 1$ (cf \eqref{final KAM smallness condition}) one sees that
\begin{align}\label{varsavia 2}
& \big|\Delta_{12} \big( \Phi_\nu^{- 1} \widetilde{\bf R}_\nu \big) 
{\frak D}\big|_{s_0, \sigma - 1} \lessdot \,
\big(N_{\nu - 1} N_{\nu}^{- \beta}  + N_\nu^{2 \tau + 1} N_{\nu - 1}^{- 2 \alpha} 
\e \gamma^{- 3} \big) \| \Delta_{12} \io \|_{s_0 + \bar \mu + \beta}\,, \\
& \label{varsavia 3}
\big|\Delta_{12} \big( \Phi_\nu^{- 1}  \widetilde{\bf R}_\nu\big) {\frak D}\big|_{s_0 + \beta, \sigma - 1} \lessdot \, N_{\nu - 1} \| \Delta_{12} \io\|_{s_0 + \bar \mu + \beta}\,.
\end{align}
By \eqref{recall bf R nu + 1},
$$
|\Delta_{12} {\bf R}_{\nu + 1} {\frak D}|_{s_0, \sigma - 1}  
\stackrel{\eqref{stima secondo pezzo delta 12 R nu + 1 s0}, \, \eqref{varsavia 2}}{\leq} C(\tau, |S|) \big(N_{\nu - 1} N_{\nu}^{- \beta}  + N_\nu^{2 \tau + 1} N_{\nu - 1}^{- 2 \alpha} 
\e \gamma^{- 3} \big) \| \Delta_{12} \io \|_{s_0 + \bar \mu + \beta} 
$$
for some constant $C(\tau, |S|) > 0$. Hence one has 
$$
|\Delta_{12} {\bf R}_{\nu + 1} {\frak D}|_{s_0, \sigma - 1} 
\leq C_{\rm var} N_\nu^{- \alpha} \| \Delta_{12} \io\|_{s_0 + \bar \mu + \beta}
$$
provided that $C_{\rm var}$ can be chosen such that for any $\nu \ge 0$,
$$
C(\tau, |S|) N_{\nu - 1} N_\nu^{- \beta} N_\nu^{\alpha} \leq C_{\rm var} / 2 
\quad \mbox{and} \quad
 \tilde C(\tau, |S|) N_\nu^{2 \tau + 1} N_\nu^{\alpha} N_{\nu - 1}^{- 2 \alpha} \e \gamma^{- 3} \leq C_{\rm var} / 2\,.
$$
In view of \eqref{alpha beta}, \eqref{final KAM smallness condition} 
this is possible by choosing $N_0$ large enough. Furthermore,
$$
|\Delta_{12} {\bf R}_{\nu + 1} {\frak D} |_{s_0 + \beta, \sigma - 1} \stackrel{\eqref{stima secondo pezzo delta 12 R nu + 1 s0 + beta}, \eqref{varsavia 3}}{\leq}
\widetilde C(\tau, |S|) N_{\nu - 1} \| \Delta_{12} \io\|_{s_0 + \bar \mu + \beta} \,,
$$
for some constant $\widetilde C(\tau, |S|) > 0$, 
implying that by increasing $N_0$, if necessary,  
$$
|\Delta_{12} {\bf R}_{\nu + 1} {\frak D} |_{s_0 + \beta, \sigma - 1}
\leq 
C_{\rm var} N_\nu \| \Delta_{12} \io\|_{s_0 + \overline \mu + \beta}\,.
$$ 
This establishes \eqref{derivate-R-nu} at the inductive step $\nu + 1$. 
Since for any $k \in S_+^\bot$, 
$[{\bf N}_{\nu + 1}^{(1)} - {\bf N}_\nu]_k^k = [\hat{\bf R}^{(1)}_\nu(0)]_k^k$ 
(see \eqref{bf R nf A 1 nu})
the estimate \eqref{deltarj12} follows directly from \eqref{derivate-R-nu} and
implies \eqref{Delta12 rj} by a telescopic argument, using the estimate
\eqref{Delta12 rj} in the case $\nu = 0,$ established at the beginning
of the proof.

\medskip

\noindent
Finally let us turn towards ${\bf (S2)}_{\nu + 1} $. 
Since by the definiton \eqref{Omega-second-meln},  
$\Omega_{\nu + 1}^\gamma(\io^{(1)}) \subseteq \Omega_\nu^\gamma(\io^{(1)})$, by the induction hyphothesis,
$\Omega_\nu^\gamma(\io^{(1)}) \cap \Omega_o(\io^{(2)})\subseteq \Omega_\nu^{\gamma - \rho}(\io^{(2)})$, and $\Omega_\nu^{\gamma - \rho}(\io^{(2)}) \subseteq \Omega_o(\io^{(2)})$,
one has
$$
\Omega_{\nu + 1}^\gamma(\io^{(1)}) \cap \Omega_o(\io^{(2)}) \subseteq \Omega_\nu^{\gamma - \rho}(\io^{(2)}) \,
\stackrel{ 0 < \rho < \g / 2 }{\subseteq} \, \Omega_\nu^{\gamma/2}(\io^{(2)}) \,.
$$
By construction, for any $k \in S_+^\bot$, the $2 \times 2$ matrices 
$[{\bf N}_\nu^{(1)}(\io^{(2)})]_k^k \equiv [{\bf N}_\nu^{(1)}(\omega, \io^{(2)}(\omega))]_k^k$ 
are then defined for $\omega \in \Omega_{\nu + 1}^\gamma(\io^{(1)}) \cap \Omega_o(\io^{(2)})$ and hence 
by the definition \eqref{L - nu}, so are
the operators $L_\nu^-(\ell, j, k; \io^{(a)})$, $a = 1, 2$, for any $\ell \in \Z^S$.
Furthermore, if in addition, 
$|\ell| \leq N_\nu$ and $(\ell, j, k) \neq (0, j, j)$, then 
$L_\nu^-(\ell, j, k; \io^{(1)})$ and $L_\nu^-(\ell, j, k; \io^{(2)})$ are invertible for
any  $\omega \in \Omega_{\nu + 1}^\gamma(\io^{(1)}) \cap \Omega_o(\io^{(2)}) $.
% and satisfy the estimate \eqref{Hyp2}. 
Clearly, it follows from the definition \eqref{L - nu} that
\begin{align}
\| \Delta_{12} L_\nu^-(\ell, j, k) \| & \leq  \|M_L\big( \Delta_{12} [{\bf N}_\nu^{(1)}]_k^k \big) \|  +  \| M_R\big( \Delta_{12} [{\bf N}_\nu^{(1)}]_j^j \big) \| \nonumber\\
& \leq C_{\rm mult} \sup_{\kappa \in S_+^\bot} 
\| \Delta_{12} [{\bf N}_\nu^{(1)}]_\kappa^\kappa \| \stackrel{\eqref{Delta12 rj}}{\leq}  
C_{\rm mult} C_{{\rm lip}}
\| \Delta_{12} \io\|_{s_0 + \bar \mu + \beta} \label{stima Delta 12 L -}
\end{align}
where $C_{\rm mult} > 0$ is an absolute constant related to the multiplication of $2 \times 2$ matrices
and  $C_{{\rm lip}}$ denotes the constant in \eqref{Delta12 rj}, 
implying that for any $\kappa \in S^\bot,$
$\| \Delta_{12}  [{\bf N}_\nu^{(1)}]_\kappa^\kappa \| \leq  C_{\rm lip}  
\| \Delta_{12} \io \|_{s_0 + \bar \mu + \beta}$.
We then define $C_{\rm var}' := C_{\rm mult} C_{{\rm lip}}$ and note 
that by assumption, 
\begin{equation}\label{caltanisetta}
 C_{\rm var}'   N_\nu ^\tau  \| \Delta_{12} \io \|_{s_0 + \bar \mu + \beta} 
\leq \rho \,.
 \end{equation}
It is to show that for any 
$\omega \in \Omega_{\nu + 1}^\gamma(\io^{(1)}) \cap \Omega_o(\io^{(2)})$, 
$L_\nu^-(\ell, j, k ; \io^{(2)}(\omega))$ is invertible and its inverse is bounded by 
$ \frac{ \langle \ell \rangle^\tau }{ ( \gamma - \rho) \langle j^2 - k^2 \rangle}$ 
(cf \eqref{Hyp2}). To this end we write $L_\nu^-(\ell, j, k ; \io^{(2)})$ in the form
\begin{equation}\label{varsavia 10}
L_\nu^-(\ell, j, k ; \io^{(2)}) = L_\nu^-(\ell, j, k ; \io^{(1)}) 
\big( {\rm Id}_2 - L_\nu^-(\ell, j, k ; \io^{(1)})^{- 1} \Delta_{12} L_\nu^-(\ell, j, k)  \big)
\end{equation}
where ${\rm Id}_2$ denotes the $2 \times 2$ identity matrix.
Since for any $\omega \in \Omega_{\nu + 1}^\gamma(\io^{(1)}) \cap \Omega_o(\io^{(2)})$  
\begin{align}
&\| L_\nu^-(\ell, j, k ; \io^{(1)})^{- 1} \Delta_{12} L_\nu^-(\ell, j, k)  \| 
 \leq \| L_\nu^-(\ell, j, k ; \io^{(1)})^{- 1}  \|  \| \Delta_{12} L_\nu^-(\ell, j, k) \| \nonumber\\
& \stackrel{\eqref{stima Delta 12 L -}}{\leq}  C_{\rm var}'
\frac{\langle \ell \rangle^\tau}{\gamma \langle j^2 - k^2 \rangle}  
\| \Delta_{12} \io \|_{s_0 + \bar \mu + \beta} 
\stackrel{|\ell| \leq N_\nu}{\leq} C_{\rm var}'  N_\nu^\tau \gamma^{- 1}   
\| \Delta_{12} \io \|_{s_0 + \bar \mu + \beta} 
\stackrel{\eqref{caltanisetta}}{\leq} \rho \gamma^{- 1} \nonumber
\end{align}
and $\rho \gamma^{- 1} \leq 1/2$ it follows from \eqref{varsavia 10} that  
$L_\nu^-(\ell, j, k; \io^{(2)})$ is invertible by Neumann series and 
\begin{align*}
\| L_\nu^-(\ell, j, k; \io^{(2)})^{- 1}\| & \leq 
\frac{1}{ 1 - \rho \g^{-1}} \| L_\nu^-(\ell, j, k ; \io^{(1)})^{- 1}\| \leq 
\frac{\gamma}{\gamma - \rho} 
\frac{\langle \ell \rangle^\tau}{\gamma \langle j^2 - k^2 \rangle} = 
\frac{\langle \ell \rangle^\tau}{(\gamma -\rho) \langle j^2 - k^2 \rangle}\,.
\end{align*}
Using the same strategy, one can prove that for any 
$\omega \in \Omega_{\nu + 1}^\gamma(\io^{(1)}) \cap \Omega_o(\io^{(2)})$, 
any $\ell \in \Z^S$ with $|\ell| \leq N_\nu$, and any $j, k \in S_+^\bot$, the operator $L_\nu^+(\ell, j, k; \io^{(2)})$ is invertible and satisfies 
$$
\| L_\nu^+(\ell, j, k; \io^{(2)})^{- 1} \| \leq \frac{\langle \ell \rangle^\tau}{(\gamma - \rho) \langle j^2 - k^2 \rangle}\,.
$$
Altogether, we thus have verified ${\bf (S2)}_{\nu + 1}$. 
\end{proof}

%%%%%%%%%%%%%%%%%%%%%%%%%%%%%%%%%%%%%%%%%%%%%%%%%%%%%%%%%%%%%%%%%%%%%%%%%%%%%%
%%%%%%%%%%%%%%%%%%%%%%%%%%%%%%%%%%%%%%%%%%%%%%%%%%%%%%%%%%%%%%%%%%%%%%%%%%%%%%
%%%%%%%%%%%%%%%%%%%%%%%%%%%%%%%%%%%%%%%%%%%%%%%%%%%%%%%%%%%%%%%%%%%%%%%%%%%%%

\section{Nash-Moser iteration}\label{sec:NM}

In this section we prove Theorem~\ref{main theorem} except for the 
measure estimate \eqref{measure estimate Omega in Theorem 4.1} which is proved in Section~\ref{sec:measure}. 
Recall that in  \eqref{smoothing Sobolev} we introduced the family of smoothing operators $(\Pi_t)_{t \geq 0}$ for the Sobolev spaces $ H^s(\T^S, X)$. 
By a slight abuse of notation, we define, for $ n \geq 0 $,  
$$
\Pi_n \equiv \Pi_{N_n}\,, \quad \Pi_n^\bot = {\rm Id} - \Pi_n\,,  \quad N_n = N_0^{\chi^n}\,, \quad \chi = 3/2  \, , 
$$ 
% where $N_{- 1} = 1$ 
with  $N_0= N_0(|S|, \tau) > 0$ as is Theorem \ref{iterazione-non-lineare}.
By Lemma \ref{smoothing sobolev lemma}, 
the classical smoothing properties hold: for any $s \geq 0 $, $k \geq 0 $, 
and  any Lipschitz family $\io \equiv \io_\omega \in H^s(\T^S,\, \T^S \times \R^S \times h^{\s'}_\bot)$
with  $\s' \le \s $, we have 
\begin{equation}\label{smoothing-u1}
\|\Pi_{n} \io \|_{s + k, \s'}^\Lipg 
\leq N_{n}^{k} \| \io \|_{s, \s'}^\Lipg \, ,   
\end{equation}
and for any Lipschitz family 
$\io \equiv \io_\omega \in H^{s + k}(\T^S, \, \T^S \times \R^S \times h^{\s'}_\bot)$
\begin{equation}\label{smoothing-u2}
\|\Pi_{n}^\bot \io \|_{s, \s'}^\Lipg 
\leq N_{n}^{- k} \| \io \|_{s + k , \s'}^\Lipg \, .
\end{equation}
Furthermore, introduce for any $n \geq 0$ 
\[
E_n := \big\{ \vphi \mapsto \io (\vphi) = ( \Theta(\vphi), y(\vphi), z(\vphi) )  : \, 
\Theta = \Pi_n \Theta, \ y = \Pi_n y \in U_0, \ z = \Pi_n z \big\} 
\subseteq {\cal C}^\infty(\T^S, M^\sigma)\,, \quad E_{- 1} := \{ 0 \}
\]
with $M^\sigma = \, \T^S \times U_0 \times h^{\s}_\bot$ introduced in  
\eqref{def:M-sigma}. Recall that in Subsection \ref{Hamiltonian setup}, the differential of a possibly $\vphi$-dependent vector field on $M^\sigma$ has been extended to a linear operator on $\R^S \times \R^S \times h^\sigma_\bot \times h^\sigma_\bot$ -- see formula \eqref{differential XF (2)}. This extension turned out to be useful in 
Sections \ref{3. Set up} - \ref{sec:redu} for the construction of an approximate right inverse of $d_{\io, \zeta} F_\omega$. In the sequel, by a slight abuse of notation, we will identify a possibly $\vphi$-dependent vector $(\widehat \theta, \widehat y, \widehat z) \in \R^S \times \R^S \times h^\sigma_\bot$ with the vector $(\widehat \theta, \widehat y, \widehat z, \overline{\widehat z}) \in \R^S \times \R^S \times h^\sigma_\bot \times h^\sigma_\bot$. % whenever this is needed in the given context. 

\noindent
 Define the constants
\begin{alignat}{3} \label{costanti nash moser}
 \eta_1 := 6 \mu_1 + 1\,, \qquad & \alpha_1 :=  2 \mu_1 + \frac23\,, \qquad  & 
\kappa_1 := 6 \mu_1 + 1\,,\qquad &
& \beta_1 := 12 \mu_1 + 2   \qquad &
\end{alignat}
where $\mu_1 = \mu_1(|S|, \tau) > 0$ is the integer of Theorem \ref{thm:stima inverso approssimato}. 
Finally, for any $0 < \g < 1/2$, introduce 
\begin{equation}\label{gamma n nash moser}
\gamma_n := \gamma (1 + 2^{-n})\,, \qquad n \geq 0\,,
\end{equation}
let $0 < \delta_1 < 1$ be as in Theorem \ref{thm:stima inverso approssimato}, and recall that  $ \Omega_{ \gamma, \tau} $ denotes the set of
diophantine frequencies, introduced in \eqref{Omega o Omega gamma tau}. Let  $ N_{- 1} := 1 $.

\begin{theorem}\label{iterazione-non-lineare} 
{\bf (Nash-Moser)} Assume that the perturbation $f $ in \eqref{nonlin:f} 
 is ${\cal C}^{\sigma, s_*}$-smooth  with $ s_* \geq s_0 + \b_1 + \mu_1  $ and 
let $\tau \geq 2 |S| + 1 $. Then there exist 
$0 < \delta_2 = \delta_2(|S|, \tau) \le \delta_1 (<1 )$, 
$N_0 = N_0(|S|, \tau) > 0$, and $C_* \ge 1$ so that 
if $\e>0 $, $0 < \gamma < 1/4$ satisfy
\begin{equation}\label{nash moser smallness condition}  
  \e \gamma^{-4} < \d_2\,, 
\end{equation}
then the following holds: for any $ n \geq 0$, there exists a Lipschitz family 
$(\io_{n+1}, \zeta_{n+1}) :\O^{\rm Mel}_{n+1} \to E_{n} \times \R^S$ where 
\begin{equation}\label{def:Gn+1}
\O^{\rm Mel}_{n+1} :=   \Omega_{\rm Mel}^{2 \gamma_n}(\io_n) \quad \quad
\end{equation}
with $\Omega_{\rm Mel}^{2 \gamma_n}(\io_n)$ defined as in \eqref{cantor for invertibility}, \eqref{Omegainfty} by choosing $\Omega_o(\io_n)$ to be $\O^{\rm Mel}_n$  in the case $n \ge 1$ whereas for $n = 0$
\be\label{def:G-0}
\Omega_o(\io_0) \equiv \O^{\rm Mel}_{0} := \Omega_{4 \gamma, \tau} \quad \text{with} 
 \quad (\io_0, \zeta_0):= (0, 0)
\ee
so that the following estimates are valid for any $ n \geq 0 $: 

\noindent
$({NM}1)_{n}$  \emph{(middle norms)}
\begin{equation}\label{ansatz induttivi nell'iterazione}
\| \io_n \|_{s_0 + \mu_1 }^{\Lipg} 
\lessdot \e \gamma^{-2}\,, \quad 
\| F_\omega(\io_n, \zeta_n)\|_{s_0 + \mu_1 , \s-2}^{\Lipg} \lessdot \e  \,. \quad
\end{equation}
 
The difference 
$ \widehat {\io}_n :=  {\io}_n - {\io}_{n - 1} $ (with $ \widehat \io_0 := 0 $) is defined on 
$\O^{\rm Mel}_n$ and one has, in case $n \ge 1$,
\begin{equation}  \label{Hn}
\| \widehat {\io}_n \|_{ s_0 + \mu_1}^{\Lipg} \lessdot \e \gamma^{-2} N_{n - 1}^{-\alpha_1} \,.
%\| \widehat {\io}_1 \|_{ s_0 + \mu_1}^{\Lipg} \lessdot \e \gamma^{-2} \, .
\end{equation}

\noindent
$(NM2)_{n}$  \emph{(low norms)}   $ \| F_\omega(\io_n, \zeta_n) \|_{ s_0, \s-2}^{\Lipg} \leq 
C_* \e N_{n - 1}^{- \eta_1}\,,\quad  | \zeta_n |^\Lipg \leq 
C_* \|F_\omega(\io_n, \zeta_n ) \|_{s_0, \s-2}^\Lipg$\,.

\noindent
$({NM}3)_{n}$ \emph{(high norms)} 
\ $  \| \io_n \|_{ s_{0}+ \beta_1}^{\Lipg} \leq C_* \e \gamma^{-2}  N_{n - 1}^{\kappa_1} $\,, \quad  
$ \|F_\omega(\io_n, \zeta_n ) \|_{ s_{0}+\beta_1, \s-2}^{\Lipg} \leq 
C_* \e   N_{n - 1}^{\kappa_1} $.   

\noindent
In  $({NM}1)_{n} - ({NM}3)_{n}$, the $\Lipg$ norms are defined on $ \O^{\rm Mel}_{n} $, namely  
$\| \,\cdot \, \|_s^{\Lipg} = \| \, \cdot \, \|_{s, \O^{\rm Mel}_n}^{\Lipg} $\,.
\end{theorem}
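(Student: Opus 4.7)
The proof is by induction on $n$. The base case $n = 0$ reduces to estimating $F_\omega(0, 0) = \e(-\nabla_y P, \nabla_\theta P, \ii\nabla_{\bar z} P)|_{(\vphi, 0, 0)}$ (using $\xi = (\omega^{nls})^{-1}(\omega)$, so the unperturbed first component cancels), which by Proposition \ref{teorema stime perturbazione} satisfies $\|F_\omega(0, 0)\|_{s, \s-2}^\Lipg \lessdot \e$ in all relevant norms; this gives $(NM1)_0$--$(NM3)_0$ after choosing $C_*$ sufficiently large (note $N_{-1} = 1$).

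For the inductive step, I assume $(NM1)_n$--$(NM3)_n$ hold on $\O^{\rm Mel}_n$. By $(NM1)_n$ and the global smallness $\e\gamma^{-4} < \delta_2 \leq \delta_1$, the smallness hypothesis \eqref{final smallness condition for approximate inverse} of Theorem \ref{thm:stima inverso approssimato} is satisfied on $\Omega_o(\io_n) = \O^{\rm Mel}_n$ with $2\gamma_n$ in place of $\gamma$ (since $\gamma_n \geq \gamma$). This supplies on $\O^{\rm Mel}_{n+1} = \Omega^{2\gamma_n}_{\rm Mel}(\io_n)$ an approximate right inverse ${\bf T}_n$ of $d_{\io, \zeta}F_\omega(\io_n)$ satisfying \eqref{stima inverso approssimato 1}--\eqref{stima inverso approssimato 2}. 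I then set
\begin{equation*}
(\widehat\io_{n+1}, \widehat\zeta_{n+1}) := -\Pi_n {\bf T}_n \Pi_n F_\omega(\io_n, \zeta_n), \qquad (\io_{n+1}, \zeta_{n+1}) := (\io_n, \zeta_n) + (\widehat\io_{n+1}, \widehat\zeta_{n+1}),
\end{equation*}
so that $\io_{n+1} \in E_n$ by construction. Combining \eqref{stima inverso approssimato 1} with the smoothing estimate \eqref{smoothing-u1} and $(NM2)_n$ yields $\|\widehat\io_{n+1}\|_{s_0 + \mu_1}^\Lipg \lessdot \e\gamma^{-2} N_n^{2\mu_1} N_{n-1}^{-\eta_1}$; the arithmetic identity $\eta_1 = 6\mu_1 + 1 = 3\mu_1 + \tfrac{3}{2}\alpha_1$ (recall $\chi = 3/2$) then gives \eqref{Hn} at step $n+1$, and the middle-norm part of $(NM1)_{n+1}$ follows by summing a geometric series.

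The heart of the argument is to bound $F_\omega(\io_{n+1}, \zeta_{n+1})$. Using Taylor's formula and substituting the definition of the correction, I write
\begin{equation*}
F_\omega(\io_{n+1}, \zeta_{n+1}) = \Pi_n^\perp F_\omega(\io_n, \zeta_n) + R_n^{(1)} + R_n^{(2)} + Q_n,
\end{equation*}
where $R_n^{(1)} := -[d_{\io, \zeta}F_\omega(\io_n), \Pi_n]\, {\bf T}_n \Pi_n F_\omega(\io_n, \zeta_n)$ is the commutator of the differential with the smoothing projector, $R_n^{(2)} := -\Pi_n(d_{\io, \zeta}F_\omega(\io_n)\, {\bf T}_n - {\rm Id})\Pi_n F_\omega(\io_n, \zeta_n)$ is the approximate-inverse error controlled by \eqref{stima inverso approssimato 2}, and $Q_n$ is the quadratic Taylor remainder estimated by Lemma \ref{quadraticPart X H nls} and Lemma \ref{quadraticPart X P}. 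In the low norm $\|\cdot\|_{s_0, \s-2}^\Lipg$: the truncation term is handled by \eqref{smoothing-u2} and $(NM3)_n$, giving a gain $N_n^{-\beta_1} N_{n-1}^{\kappa_1}$; $R_n^{(1)}$ is handled by a low--high frequency splitting producing a gain of type $N_n^{-\beta_1 + \mu_1}$; $R_n^{(2)}$ carries a factor $\gamma^{-3}\|F_\omega(\io_n, \zeta_n)\|^2$, hence a quadratic gain $N_{n-1}^{-2\eta_1}$ via $(NM2)_n$; and $Q_n$ carries a factor quadratic in $\widehat\io_{n+1}$, hence a gain $N_n^{-2\alpha_1}$ via \eqref{Hn}. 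The values in \eqref{costanti nash moser} are chosen precisely to ensure $\beta_1 \geq \tfrac{2}{3}(\eta_1 + \kappa_1)$ and $\eta_1 \geq 3\mu_1 + \tfrac{3}{2}\alpha_1$, so each piece is bounded by $\tfrac14 C_* \e N_n^{-\eta_1}$, closing $(NM2)_{n+1}$ after taking $\delta_2$ small. The high norm $(NM3)_{n+1}$ is verified by the same decomposition but with only the trivial bound on $\Pi_n^\perp$, which is enough since $\beta_1$ appears only once on the right. Lipschitz-in-$\omega$ bounds are propagated throughout by applying the same arguments to incremental quotients, using $\gamma_n \geq \gamma$ uniformly; the bound on $|\zeta_n|^\Lipg$ in $(NM2)_n$ is the direct consequence of Lemma \ref{zeta = 0}.

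The main technical obstacle is the commutator $R_n^{(1)}$, whose low-norm estimate requires decomposing $d_{\io, \zeta}F_\omega(\io_n) = d_{\io, \zeta}F_\omega(0) + [d_{\io, \zeta}F_\omega(\io_n) - d_{\io, \zeta}F_\omega(0)]$, controlling the first piece directly and the second via Corollary \ref{differenziale X H nls imperturbato}. This forces $\beta_1$ to absorb simultaneously the divergence $N_{n-1}^{\kappa_1}$ coming from $(NM3)_n$ and the further loss $\mu_1$ inherent to $d_{\io, \zeta}F_\omega$; the value $\beta_1 = 12\mu_1 + 2$ is the minimal choice compatible with these constraints and with the regularity requirement $s_0 + \beta_1 + \mu_1 \leq s_*$ built into the hypothesis $s_* \geq s_0 + \beta_1 + \mu_1$ on the perturbation.
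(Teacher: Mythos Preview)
Your approach is essentially the paper's: the same Newton step, the same four--term decomposition of $F_\omega({\cal S}_{n+1})$ (your $R_n^{(1)}, R_n^{(2)}, Q_n$ are exactly the paper's $R_n, Q_n', Q_n$ in \eqref{Rn Q tilde n}--\eqref{def:Qn}), and the commutator $R_n^{(1)}$ is handled just as the paper does, by splitting off the constant--coefficient part $d_{\io,\zeta}F_\omega(0)$, which commutes with $\Pi_n$.

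There is, however, one quantitative gap. For the quadratic remainder in the low norm you invoke \eqref{Hn}, i.e.\ $\|Q_n\|_{s_0,\sigma-2} \lessdot \|\widehat\io_{n+1}\|_{s_0+\mu_1}^2 \lessdot \e^2\gamma^{-4}N_n^{-2\alpha_1}$. With the constants \eqref{costanti nash moser} one has $\eta_1 - 2\alpha_1 = 2\mu_1 - \tfrac13 > 0$, so the target inequality $\e^2\gamma^{-4}N_n^{-2\alpha_1} \le \tfrac14 C_*\e N_n^{-\eta_1}$ becomes $\e\gamma^{-4}N_n^{2\mu_1-1/3} \le \tfrac14 C_*$, which fails for large $n$ regardless of the smallness of $\e\gamma^{-4}$. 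The remedy is what the paper does in \eqref{H n+1 bassa} and \eqref{Qn norma bassa}: bound $\widehat\io_{n+1}$ directly in the $s_0$ norm,
\[
\|\widehat\io_{n+1}\|_{s_0} \lessdot \gamma^{-2}\|\Pi_n F_\omega({\cal S}_n)\|_{s_0+\mu_1,\sigma-2} \lessdot \gamma^{-2}N_n^{\mu_1}\|F_\omega({\cal S}_n)\|_{s_0,\sigma-2},
\]
losing only one factor $N_n^{\mu_1}$ instead of two. Then $\|Q_n\|_{s_0,\sigma-2} \lessdot \gamma^{-4}N_n^{2\mu_1}\|F_\omega({\cal S}_n)\|_{s_0,\sigma-2}^2$, and the exponent on $N_n$ in the induction condition is $2\mu_1 + \eta_1 - \tfrac43\eta_1 = -\tfrac13$, which closes. (Your stated constraint $\beta_1 \ge \tfrac23(\eta_1+\kappa_1)$ for the truncation/commutator piece is also slightly off: the actual requirement from \eqref{F(U n+1) norma bassa} is $\beta_1 \ge 2\mu_1 + \eta_1 + \tfrac23\kappa_1 = 12\mu_1 + \tfrac53$, still met by $\beta_1 = 12\mu_1+2$.)
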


\begin{proof}
The proof of Theorem \ref{iterazione-non-lineare} 
follows the scheme in \cite{BBM3}. Note however that in contrast to the setup in
\cite{BBM3}, the regularity in the space variable is fixed, meaning that $\s$ in $h^\s_\bot$ is kept unchanged along the iteration. The main ingredient for proving the claimed estimates are the tame estimates of the approximate right inverse ${\bf T}$ of Theorem  \ref{thm:stima inverso approssimato}.  
To shorten notation, we write $\| \, \|$ for  $\| \cdot \|^{\Lipg}$ in this proof. 

\smallskip

\noindent
 \emph{Proof of $({NM}1)_0- (NM3)_0$:} Since $\omega^{nls}(\xi, 0) = \omega$ (by the definition of $\xi = \xi(\omega)$)
and $(\io_0 , \zeta_0) = (0, 0)$ (by definition) one has
$X_{H^{nls}} \circ \breve \io_0 = (\omega^{nls}(\xi, 0), 0, 0)$ (cf \eqref{HS-action-angle}),
and hence by the definition \eqref{operatorF} of $F_\omega$,
 $$
 F_\omega(\io_0 , \zeta_0) = - \e X_P \circ \breve \io_0
 $$
where $X_P$ is the Hamiltonian vector field of the Hamiltonian $P$, expressed in the coordinates
$(\theta, y, z) \in M^\s$. 
By \eqref{signXP} we have 
 $$
 \widetilde X_P = (d \Phi   \widetilde X_{\cal P} )_{|\Phi^{-1}} \, , \quad P = {\cal P} \circ \Phi^{- 1} 
$$
where $\Phi = \Phi^{nls}$ is the Birkhoff map of Theorem \ref{Theorem Birkhoff coordinates} and 
$ \widetilde X_P $ is obtained from $X_{\cal P}$ by expressing it in the Birkhoff coordinates $(w_n)_{n \in \Z}$ 
and then adding the complex conjugate as a second component. 
 In this way one sees that for any $s_0 \le s \le s_* -1$
 $$
 \| X_P \circ \breve \io_0\|_{s, \sigma - 2} \leq_s 1\,.
 $$
 Altogether  we proved that 
 \be \label{estimate F 0}
 \|  F_\omega(\io_0 , \zeta_0) \|_{s, \sigma - 2} \leq_s \e \,.
 \ee
Since $N_{- 1} = 1$ (by definition), one sees that the claimed estimates of $(NM1)_0-(NM3)_0$ hold, once $C_* \equiv C_* (s_0 + \b_1)$ is chosen large enough. 

\smallskip

\noindent
 \emph{Proof of inductive step:} Assume that $(NM1)_n - (NM3)_n$ hold for a given $n \geq 0$. 
 Our task is to prove that $(NM1)_{n + 1}-(NM3)_{n + 1}$ hold as well. First we have to make sure that the smallness assumption \eqref{final smallness condition for approximate inverse} of Theorem \ref{thm:stima inverso approssimato} for $(\io_n, \zeta_n)$ is valid with 
$\Omega_o(\io_n)$ given by $\O^{\rm Mel}_n$. Indeed, since \eqref{ansatz induttivi nell'iterazione}  
is satisfied by the induction hypothesis, \eqref{final smallness condition for approximate inverse} holds by
choosing $\delta_2$ in the statement of the theorem sufficiently small.
% the smallness condition \eqref{final smallness condition for approximate inverse} 
% of Theorem \ref{thm:stima inverso approssimato} is satisfied.
Hence Theorem \ref{thm:stima inverso approssimato} applies to $(\io_n, \zeta_n)$: 
 by the definition of $\O^{\rm Mel}_{n + 1}$ in \eqref{def:Gn+1} 
 there exists a family of operators $({\bf T}_n(\omega))_{\omega \in \O^{\rm Mel}_{n + 1}}$ so that the estimates \eqref{stima inverso approssimato 1} hold, 
 \begin{align}\label{stima Tn}
\| {\bf T}_n g  \|_{s,\s} 
& \leq_s \gamma^{-2} \big( \| g \|_{s + \mu_1, \s-2} 
+  \| \io_n \|_{s + \mu_1}  \| g \|_{s_0+ \mu_1, \s-2} \big)  \,, \quad \forall s \in [s_0, s_0 + \beta_1]\,,
\end{align}
implying together with \eqref{ansatz induttivi nell'iterazione} and 
\eqref{nash moser smallness condition}   that
\begin{align}
\| {\bf T}_n  g \|_{s_0, \s} 
& \leq_{s_0} \gamma^{-2} \| g \|_{s_0 + \mu_1, \s-2}\,.   \label{stima Tn norma bassa}
\end{align}
Furthermore, denoting by $L_n$ the differential $d_{\io, \zeta} F_\omega(\io_n, \zeta_n)$,
one has  by \eqref{stima inverso approssimato 2}
for any $s$ in $[s_0, s_0 + \beta_1]$,
\begin{align}
\| \big(L_n & \circ  {\bf T}_n -  {\rm Id} \big)  g \|_{s, \s-2} \leq_s 
\gamma^{-3}   \| F_\omega(\io_n, \zeta_n) \|_{s_0 + \mu_1, \s-2} 
\| g\|_{s + \mu_1, \s-2} \, \,+ \nonumber \\
&  \gamma^{- 3}\| F_\omega(\io_n, \zeta_n) \|_{s + \mu_1, \s-2} 
\|  g \|_{s_0 + \mu_1, \s-2} \, + \,
\gamma^{-3} \| \io_n \|_{s + \mu_1} \| F_\omega(\io_n, \zeta_n) \|_{s_0 + \mu_1, \s-2} 
\| g \|_{s_0 + \mu_1, \s-2 }  \,.
 \label{stima Tn inverso approssimato} 
\end{align}
For $s=s_0$, this yields
$\| \big(L_n \circ  {\bf T}_n -{\rm Id} \big) g  \|_{s_0, \s-2} 
 \leq_{s_0} \gamma^{-3} \| F_\omega(\io_n, \zeta_n)\|_{s_0 + \mu_1, \s-2} \| g \|_{s_0 + \mu_1, \s-2} $. Using that
\begin{align}
\|F_\omega(\io_n, \zeta_n) \|_{s_0 + \mu_1, \sigma - 2} & \leq_s \|\Pi_n  F_\omega(\io_n, \zeta_n)\|_{s_0 + \mu_1, \s-2}+  
\|\Pi_n^\bot  F_\omega(\io_n, \zeta_n)\|_{s_0 + \mu_1,\s-2} \nonumber\\
& \stackrel{\eqref{smoothing-u1}, \eqref{smoothing-u2}} \leq N_n^{\mu_1} \| F_\omega(\io_n, \zeta_n) \|_{s_0, \sigma - 2} + N_n^{\mu_1 - \beta_1} \| F_\omega(\io_n, \zeta_n)\|_{s_0 + \beta_1, \sigma - 2} \label{sostitutivo ansatz nash moser}
\end{align}
the above estimate then leads to
\begin{align}
\label{stima Tn inverso approssimato norma bassa}
\| \big(L_n \circ  {\bf T}_n -{\rm Id} \big) g  \|_{s_0, \s-2} 
 \leq_{s_0} & N_{n }^{\mu_1} \gamma^{-3} \| F_\omega(\io_n, \zeta_n)\|_{s_0, \s-2} 
\|  g  \|_{s_0 + \mu_1, \s-2} \nonumber\\
& + N_{n}^{\mu_1 -\beta_1} \gamma^{- 3} \| F_\omega(\io_n, \zeta_n)\|_{s_0 + \beta_1, \s-2} \|  g  \|_{s_0 + \mu_1, \s-2}\, .
\end{align}

 \medskip

\noindent
For convenience we define ${\cal S}_n := (\io_n, \zeta_n)$. As advertised at the beginning of this section, we identify the vectors $(\widehat \theta, \widehat y, \widehat z) \in \R^S \times \R^S \times h^\sigma_\bot$ and $(\widehat \theta, \widehat y, \widehat z, \overline{\widehat z}) \in \R^S \times \R^S \times h^\sigma_\bot \times h^\sigma_\bot$. With this convention the Taylor expansion up to order $1$ of $F_\omega$ at ${\cal S}_n$, reads 
$$
F_\omega({\cal S}_n + \widehat {\cal S}) =  F_\omega({\cal S}_n) + 
L_n \widehat {\cal S} + Q({\cal S}_n, \widehat {\cal S}) \,,
$$
where $\widehat {\cal S} = ( \widehat \io, \widehat \zeta)$ is assumed to be
a sufficiently small element in  $E_n \times \R^S$ and $Q({\cal S}_n, \widehat {\cal S})$
denotes the Taylor remainder term. 
By the Newton-Nash-Moser iteration scheme, we define
${\cal S}_{n + 1}$ as ${\cal S}_n + \widehat {\cal S}_{n + 1}$ with
$\widehat {\cal S}_{n + 1} :=( \widehat \io_{n+1}, \widehat \zeta_{n+1})$ 
chosen to be an approximate solution of the equation 
$ F_\omega({\cal S}_n) + L_n \widehat {\cal S} = 0$.
More precisely, we define ${\cal S}_{n + 1}$ on $\O^{\rm Mel}_{n + 1}$ by
\begin{equation}\label{soluzioni approssimate}
{\cal S}_{n + 1} := {\cal S}_n + \widehat {\cal S}_{n + 1}\,, \quad 
\widehat {\cal S}_{n + 1} :=
 - {\widetilde \Pi}_{n } {\bf T}_n \Pi_{n } F_\omega({\cal S}_n)  
\end{equation}
where  $ {\wtilde \Pi}_n ( \io , \zeta ) :=   ( \Pi_n \io , \zeta ) $.
Arguing as above and using the induction hypothesis, one verifies  that 
${\cal S}_{n + 1}$ and $\widehat {\cal S}_{n + 1}$ are in  $E_n \times \R^S$.
(We choose $C_1$, $N_0$ sufficiently large and $\delta_2$ sufficiently small.)
Then   
\begin{equation}\label{def:Qn}
F_\omega({\cal S}_{n + 1}) =  F_\omega({\cal S}_n) + L_n \widehat {\cal S}_{n + 1} + 
Q_n\,,\quad Q_n := Q({\cal S}_n, \widehat {\cal S}_{n + 1})\,. 
\end{equation} 
Upon substituting the expression for  $\widehat {\cal S}_{n + 1}$ in  \eqref{soluzioni approssimate} 
and writing  $\widetilde \Pi_n$ as ${\rm Id} - \widetilde \Pi_n^\bot$ with
$ {\wtilde \Pi}_n^\bot (\io, \zeta ) := (\Pi_n^\bot \io, 0) $, the identity \eqref{def:Qn} reads  
\begin{align}
F_\omega({\cal S}_{n + 1}) & 
% = F_\omega({\cal S}_n) - L_n {\wtilde \Pi}_{n } {\bf T}_n \Pi_{n } F_\omega({\cal S}_n) + Q_n 
= F_\omega({\cal S}_n) - L_n  {\bf T}_n \Pi_{n } F_\omega({\cal S}_n) +
 L_n  {\wtilde \Pi}_n^\bot  {\bf T}_n \Pi_{n } F_\omega({\cal S}_n) + Q_n\,. \nonumber
%& =  F_\omega(S_n)  - \Pi_{n } L_n {\bf T}_n \Pi_{n }F_\omega(S_n) 
%+ ( L_n  {\wtilde \Pi}_n^\bot -  \Pi_n^\bot L_n ) {\bf T}_n \Pi_{n }F_\omega(U_n) + Q_n \nonumber\\
% & = \Pi_{n }^\bot F_\omega(U_n) + R_n + Q_n + Q_n'  
\end{align}
The first two terms in the latter expression are split up by applying ${\rm Id} = \Pi_n + \Pi_n^\bot$, yielding
\begin{equation}\label{relazione algebrica induttiva}
F_\omega({\cal S}_{n + 1}) = \Pi_{n }^\bot F_\omega({\cal S}_n) + R_n + Q_n' + Q_n 
\end{equation}
where
\begin{equation}\label{Rn Q tilde n}
R_n := (L_n  {\wtilde \Pi}_n^\bot -  \Pi_n^\bot L_n) {\bf T}_n \Pi_{n }F_\omega({\cal S}_n) \,,
\qquad 
Q_n' := - \Pi_{n } ( L_n {\bf T}_n - {\rm Id} ) \Pi_{n } F_\omega({\cal S}_n)\,.
\end{equation}
%\begin{lemma}\label{lemma convergence}{\cal S}
%Define (check the scaling!!)
%\begin{equation}\label{riscalamenti nash moser}
%w_n :=  \gamma^{-1}  \|F_\omega({\cal S}_n) \|_{s_0, \s-2}\,,
%\quad B_n := \| \io_n \|_{s_0 + \beta_1} + \gamma^{-1}  \|F_\omega({\cal S}_n) \|_{s_0 + \beta_1, \s-2} \,.
%\end{equation}
%Then there exists $ K :=  K( s_0, \b_1 ) > 0 $ such that,  for all $n \geq 0$, 
%setting $ \mu_1 := 3 \mu + 9$ (see \eqref{costanti nash moser}), 
%\begin{equation}\label{relazioni induttive}
%w_{n + 1} \leq K N_{n }^{\mu_1 + \frac{1}{\rho} - \beta_1} B_n +  K  N_n^{\mu_1} w_n^2\,,\qquad 
%B_{n + 1} \leq K N_{n }^{\mu_1 + \frac{1}{\rho}}  B_n\, . 
%\end{equation}
%\end{lemma}
We  estimate the terms $ Q_n,$ $Q_n'$, and $R_n$ separately.
% in \eqref{Rn Q tilde n}. 
\\[1mm]
{\it Estimate of $ Q_n $:}
By \eqref{operatorF}, $\zeta_n$ appears linearly in $F_\omega({\cal S}_n)$, hence for any 
$\widehat {\cal S} = (\widehat \io, \widehat \zeta) \in E_n \times \R^S$, 
$Q({\cal S}_n, \widehat {\cal S})$ is independent of $\zeta_n$ and $\widehat \zeta$. 
By Lemmata \ref{quadraticPart X H nls}, \ref{quadraticPart X P} and
using   \eqref{smoothing-u1}, \eqref{ansatz induttivi nell'iterazione}
we conclude that
\begin{align}
\label{stima parte quadratica norma alta}
&\| Q({\cal S}_n, \widehat {\cal S})  \|_{s,\s-2}  \leq_s 
\| \widehat \io \|_{s } \|  \widehat \io \|_{s_0 } + \| \io_n \|_{s + 2s_0} 
\|  \widehat \io \|_{s_0}^2\,, \quad \forall s \in [s_0, s_0 + \beta_1]\,,  \\
\label{stima parte quadratica norma bassa}
&\| Q({\cal S}_n, \widehat {\cal S}) \|_{s_0, \s-2}  \leq_{s_0} \| \widehat \io \|_{s_0}^2   \, . 
\end{align}
%(Recall $\mu_1 > 2s_0$ (cf Theorem \ref{thm:stima inverso approssimato}), \eqref{value mu0}.)
By the definition of $\widehat {\cal S}_{n + 1}$ in \eqref{soluzioni approssimate},
one gets by using first \eqref{smoothing-u1} and then 
\eqref{stima Tn} together with \eqref{ansatz induttivi nell'iterazione}, 
\ref{nash moser smallness condition},
\begin{align}
\|  \widehat \io_{n + 1} \|_{s_0 + \beta_1} 
& \leq N_n^{\mu_1} \|  \widehat \io_{n + 1} \|_{s_0 + \beta_1 - \mu_1} 
\leq_{s_0 + \beta_1} N_n^{\mu_1} 
\big( \gamma^{-2}\| F_\omega({\cal S}_n) \|_{s_0 + \beta_1, \s-2}  
+ \| \io_n \|_{s_0 + \beta_1}\big)\, ,  \label{H n+1 alta} 
\end{align}
and similarly,     
\begin{align} \label{H n+1 bassa}
\|  \widehat \io_{n + 1}\|_{s_0} 
& \stackrel{\eqref{stima Tn norma bassa}}\lessdot 
\gamma^{-2} \|  \Pi_n F_\omega({\cal S}_n)\|_{s_0 + \mu_1, \s-2} 
\stackrel{\eqref{smoothing-u1}} \lessdot
\gamma^{-2}N_{n}^{ \mu_1} \| F_\omega({\cal S}_n)\|_{s_0, \s-2} 
\quad \text{and} \quad 
\|  \widehat \io_{n + 1}\|_{s_0} 
\stackrel{\eqref{ansatz induttivi nell'iterazione} } \lessdot \e \g^{-2} \, .
\end{align}
Hence the term $ Q_n $, defined in \eqref{def:Qn}, satisfies
 by \eqref{stima parte quadratica norma bassa} and \eqref{H n+1 bassa}
\begin{align}
\| Q_n \|_{s_0, \s-2} 
&  \leq_{s_0}  \gamma^{-4}  N_n^{2 \mu_1  }  
\| F_\omega({\cal S}_n) \|_{s_0, \s-2}^2 \label{Qn norma bassa}
\end{align}
and by \eqref{stima parte quadratica norma alta},
\eqref{H n+1 alta}, \eqref{H n+1 bassa} together with \eqref{ansatz induttivi nell'iterazione} 
\begin{align} 
\| Q_n \|_{s_0 + \beta_1, \s-2} 
& \leq_{s_0 + \beta_1}
 N_n^{ \mu_1 }  \e \gamma^{- 2} \big( \g^{-2} \| F_\omega({\cal S}_n)\|_{s_0 + \beta_1, \s-2}  + 
\| \io_n \|_{s_0 + \beta_1} \big) \, .  \label{Qn norma alta} 
\end{align}
{\it Estimate of $ Q_n' $:}
Using  \eqref{stima Tn inverso approssimato norma bassa} and, respectively,
\eqref{smoothing-u1}, \eqref{stima Tn inverso approssimato}, 
 together with
 \eqref{costanti nash moser}, \eqref{ansatz induttivi nell'iterazione} one verifies that
\begin{align}
\label{Qn' norma bassa}
\| Q_n'\|_{s_0, \s-2} & \leq_{s_0}  N_{n }^{2\mu_1 } \gamma^{- 3}\big(\| F_\omega({\cal S}_n) \|_{s_0, \s-2} + 
N_{n}^{- \beta_1} \|F_\omega({\cal S}_n)\|_{s_0 + \beta_1, \s-2} \big) \| F_\omega({\cal S}_n) \|_{s_0, \s-2}\,, \\
\label{Qn' norma alta}
\| Q_n' \|_{s_0 + \beta_1, \s-2} &  \leq 
N_n^{ \mu_1} \| Q_n' \|_{s_0 + \beta_1 - \mu_1, \s-2}
\leq_{s_0 + \beta_1}
N_n^{ \mu_1} \e \gamma^{- 3} \big(
 \| F_\omega({\cal S}_n) \|_{s_0 + \beta_1, \s-2} + \e  \| \io_n \|_{s_0 + \beta_1}\big) \, .
\end{align}

\noindent
{\it Estimate of $ R_n $:} In a first step we estimate the operator 
$L_n  {\wtilde \Pi}_n^\bot -  \Pi_n^\bot L_n$.
For  $ \widehat {\cal S} := (\widehat \io, \widehat \zeta ) $ we have 
\begin{align}
L_n \widehat {\cal S}  & = \omega \cdot \partial_\vphi \widehat \imath - 
d_\io X_{H_\e}(\io_n)[\widehat \imath] + (0, \widehat \zeta, 0, 0) \nonumber\\
& =\omega \cdot \partial_\vphi \widehat \imath - d_\io X_{H^{nls}}(\io_n)[\widehat \io] - 
\e d_\io X_P(\io_n)[\widehat \io] + (0, \widehat \zeta, 0, 0)\,.
\end{align}
Writing  % $d_\io X_{H^{nls}}(\io_n)$,
$ d_\io X_{H^{nls}}(\io_n) =  d_\io X_{H^{nls}}(\io_0) + \big(d_\io X_{H^{nls}}(\io_n) - d_\io X_{H^{nls}}(\io_0)  \big) $
we get
$$
L_n \widehat {\cal S} = L_n^I \widehat {\cal S} + L_n^{II} \widehat {\cal S} + (0, \widehat \zeta, 0, 0)
$$
where 
$$
\qquad L_n^I \widehat {\cal S} := 
\omega \cdot \partial_\vphi \widehat \io - d_\io X_{H^{nls}}(\io_0) [\widehat \imath]\,, \quad
 L_n^{II} \widehat {\cal S} := \big(d_\io X_{H^{nls}}(\io_n) - d_\io X_{H^{nls}}(\io_0)  \big)
[\widehat \io] + \e d_\io X_P(\io_n)[\widehat \io]\,.
$$
Since
$$
d_\io X_{H^{nls}}(\io_0)[\widehat \io] = \Big(
\big( \sum_{k \in S} \partial_{I_k} \omega_n(\xi, 0) \widehat y_k \big)_{n \in S}\,, \,\, 0\, , \, \,
- \ii \big(\omega_n(\xi, 0)\widehat z_n \big)_{n \in S^\bot}, \,\,
\ii \big(\omega_n(\xi, 0)\widehat{\bar z}_n \big)_{n \in S^\bot} \,\Big)\,,
$$
the 'commutator'  $L_n^I \widetilde \Pi_n^\bot - \Pi_n^\bot L_n^I$ 
% (and $L_n^I \widetilde \Pi_n  - \Pi_n L_n^I$) 
vanishes, implying that 
$$
L_n  {\wtilde \Pi}_n^\bot -  \Pi_n^\bot L_n = L_n^{II}  {\wtilde \Pi}_n^\bot -  \Pi_n^\bot L_n^{II}\,.
$$
Using Proposition \ref{teorema stime perturbazione}, Corollary \ref{differenziale X H nls imperturbato}, the smallness condition \eqref{ansatz induttivi nell'iterazione}, and the smoothing properties \eqref{smoothing-u1}, \eqref{smoothing-u2},
it follows that for any $\widehat {\cal S}$ in  $E_n \times \R^S$
\begin{align}\label{stima commutatore modi alti norma bassa}
&\| (L_n  {\wtilde \Pi}_n^\bot -  \Pi_n^\bot L_n) \widehat {\cal S} \|_{s_0, \s-2}  \leq_{s_0+ \b_1} 
 N_{n }^{- \beta_1 + \mu_1} \big(\e \gamma^{- 2} \|  \widehat \io \|_{s_0 + \beta_1} + 
\| \io_n \|_{s_0 + \beta_1} \|  \widehat \io \|_{s_0}\big)\,, \\
\label{stima commutatore modi alti norma alta}
 &\| (L_n  {\wtilde \Pi}_n^\bot -  \Pi_n^\bot L_n) \widehat {\cal S} \|_{s_0 + \beta_1, \s-2} 
%\|[\Pi_n  , {\bar D}_n] \widehat \io \|_{s_0 + \beta_1}
\leq_{s_0 + \beta_1} 
 N_n^{\mu_1} \big(\e \gamma^{- 2}\|  \widehat \io \|_{s_0 + \beta_1  } + \| \io_n \|_{s_0 + \beta_1  } 
\|  \widehat \io \|_{s_0 } \big)\,.
\end{align}
Hence, applying 
\eqref{stima Tn}, 
\eqref{stima commutatore modi alti norma bassa}, \eqref{stima commutatore modi alti norma alta},  
\eqref{nash moser smallness condition}, 
\eqref{ansatz induttivi nell'iterazione}, 
\eqref{smoothing-u1}, 
the term $R_n$ defined in \eqref{Rn Q tilde n} satisfies
\begin{align} \label{stima Rn norma bassa}
\| R_n\|_{s_0, \s-2} 
& \leq_{s_0 + \beta_1}  N_n^{ 2 \mu_1  - \beta_1} ( \e \gamma^{-4} \| F_\omega({\cal S}_n)\|_{s_0 + \beta_1, \s-2} +
 \e \gamma^{- 2} \| \io_n  \|_{s_0 + \beta_1} )\,, 
\\
\label{stima Rn norma alta}
\| R_n \|_{s_0 + \beta_1, \s-2} 
& \leq_{s_0 + \beta_1} N_n^{ 2 \mu_1 } ( \e \gamma^{-4} \| F_\omega({\cal S}_n)\|_{s_0 + \beta_1, \s-2} 
+ \e\gamma^{- 2} \| \io_n  \|_{s_0 + \beta_1} )\,.
\end{align}
{\it Estimate of $ F_\omega({\cal S}_{n + 1}) $:} 
By the identity \eqref{relazione algebrica induttiva} and 
the estimates \eqref{Qn norma alta}, 
 \eqref{Qn norma bassa}, \eqref{Qn' norma alta}, \eqref{Qn' norma bassa}, 
\eqref{stima Rn norma bassa}, \eqref{stima Rn norma alta},  \eqref{nash moser smallness condition},  
\eqref{ansatz induttivi nell'iterazione}, 
we get
\begin{align}\label{F(U n+1) norma bassa}
& \| F_\omega({\cal S}_{n + 1})\|_{s_0, \s-2} 
\leq_{s_0 + \beta_1}   N_{n }^{2 \mu_1 - \beta_1} ( \| F_\omega({\cal S}_n)\|_{s_0 + \beta_1, \s-2} 
+  \e \gamma^{- 2} \| \io_n \|_{s_0 + \beta_1} ) 
+ N_n^{2 \mu_1}  \gamma^{-4} \| F_\omega({\cal S}_n)\|_{s_0, \s-2}^2\,, 
\\
\label{F(U n+1) norma alta}
& \| F_\omega({\cal S}_{n + 1}) \|_{s_0 + \beta_1, \s-2} 
\leq_{s_0 + \beta_1}  N_n^{2 \mu_1} ( \| F_\omega({\cal S}_n) \|_{s_0 + \beta_1, \s-2} 
+ \e \gamma^{- 2} \| \io_n \|_{s_0 + \beta_1} ) \,.
\end{align}

\noindent 
{\it Estimate of $ \io_{n+1} $:} 
Using  \eqref{H n+1 alta}  the term $ \io_{n+1} = \io_n + \widehat {\io}_{n+1} $ 
can be estimated as follows:
\begin{equation}\label{U n+1 alta}
\| \io_{n + 1}\|_{s_0 + \beta_1} \leq_{s_0 + \beta_1} 
\| \io_{n }\|_{s_0 + \beta_1}  + \| \widehat \io_{n + 1}\|_{s_0 + \beta_1} \leq_{s_0 + \beta_1} 
N_n^{\mu_1} ( \| \io_n\|_{s_0 + \beta_1} +\gamma^{-2} \| F_\omega({\cal S}_n)\|_{s_0 + \beta_1} )\, . 
\end{equation}

\noindent
\emph{Proof of $(NM3)_{n + 1}$:} 
By \eqref{F(U n+1) norma alta}, $(NM3)_n$ we have 
\begin{align}
\| F_\omega({\cal S}_{n + 1}) &\|_{s_0 + \beta_1}   \leq_{s_0 + \beta_1} N_n^{2 \mu_1} 
\| F_\omega({\cal S}_n) \|_{s_0 + \beta_1, \s-2} 
+ N_n^{2 \mu_1} \e \gamma^{- 2} \| \io_n \|_{s_0 + \beta_1}  \nonumber\\
& \leq_{s_0 + \beta_1} N_n^{2 \mu_1} C_* \e N_{n - 1}^{\kappa_1}  + 
\e \gamma^{- 2}  N_n^{2 \mu_1}  C_* \e \gamma^{- 2}  N_{n - 1}^{\kappa_1} 
%\nonumber\\  & 
\stackrel{\e \gamma^{- 4} \leq 1}{\leq} C(s_0 + \beta_1) C_* \e N_n^{2 \mu_1} N_{n - 1}^{\kappa_1}  \,.
\end{align}
Hence 
$\| F_\omega({\cal S}_{n + 1}) \|_{s_0 + \beta_1} \leq C_* \e N_n^{\kappa_1}$
provided that
$$
N_j^{\kappa_1 - 2 \mu_1} N_{j - 1}^{-\kappa_1} \geq C(s_0 + \beta_1)\,, 
\quad \forall j \geq 0\,,
$$
which is satisfied by choosing $\kappa_1$ as in \eqref{costanti nash moser} and 
$N_0$ sufficiently large. The bound for $\| \io_{n + 1}\|_{s_0 + \beta_1}$ is proved similarly, hence $(NM3)_{n + 1}$ is established.

\noindent
\emph{Proof of $(NM2)_{n + 1}$:} 
By \eqref{F(U n+1) norma bassa}, $(NM2)_n$, $(NM3)_n$, 
and $\e \gamma^{- 4} \leq 1$ 
(cf \eqref{nash moser smallness condition}), one has 
$$
\| F_\omega({\cal S}_{n + 1})\|_{s_0, \sigma - 2}  \leq C(s_0 + \beta_1) 
\big( N_n^{2 \mu_1 - \beta_1} N_{n - 1}^{\kappa_1} C_* \e +
 N_n^{2 \mu_1} N_{n - 1}^{- 2 \eta_1} C_*^2 \e^2 \gamma^{- 4} \big) \,.
$$
Hence $\| F_\omega({\cal S}_{n + 1})\|_{s_0, \sigma - 2} \leq C_* \e N_n^{- \eta_1}$
provided that
$$
C(s_0 + \beta_1) N_j^{2 \mu_1 + \eta_1 - \beta_1} N_{j - 1}^{\kappa_1} \leq \frac12\,, \quad C(s_0 + \beta_1) C_* N_j^{2 \mu_1 + \eta_1} N_{j - 1}^{- 2 \eta_1} \e \gamma^{- 4}  \leq \frac12\,, \quad \forall j \geq 0\,.
$$
The latter conditions are fulfilled by  choosing $\eta_1$, $\beta_1$ 
as in \eqref{costanti nash moser}, $N_0$ sufficiently large and $\delta_2$ in \eqref{nash moser smallness condition} sufficiently small.  
Moreover, the claimed estimate for $\zeta_n$ follows from Lemma \ref{zeta = 0}
(no induction needed).
Altogether, this establishes $(NM2)_{n + 1}$. 

\smallskip

\noindent
{\em Proof of estimate \eqref{Hn}:}
The bound \eqref{Hn} for $  \widehat \io_1 $ 
follows by \eqref{soluzioni approssimate} and \eqref{stima Tn} (for $ s = s_0 + \mu_1 $) 
together with the estimate 
$  \| F_\omega ( {\cal S}_0 ) \|_{s_0 + 2 \mu_1, \sigma - 2}$ 
%$ = \| F_\omega(\io_0, \zeta_0) \|_{s_0 + 2 \mu_1, \sigma - 2} $
$\leq_{s_0+2\mu_1 } \e $ of \eqref{estimate F 0}.
Similarly, the bound \eqref{Hn} for $ \widehat \io_{n + 1} $ 
is obtained from \eqref{soluzioni approssimate} and \eqref{stima Tn}
(cf \eqref{H n+1 alta}), 
using \eqref{smoothing-u1} and \eqref{costanti nash moser}. 

\smallskip

\noindent
{\em Proof of estimate \eqref{ansatz induttivi nell'iterazione}: }
It remains to prove the inductive step from $n$ to $n + 1$ of 
\eqref{ansatz induttivi nell'iterazione}. We have
$$
\| \io_{n + 1} \|_{s_0 + \mu_1} 
\leq {\mathop \sum}_{k = 1}^{n + 1} \| \widehat {\io}_k \|_{s_0 + \mu_1} 
% \leq  {\mathop \sum}_{k = 1}^{n + 1} \| H_k \|_{s_0 + \mu}
\lessdot \e\gamma^{-2} {\mathop \sum}_{k \geq 1} N_{k - 1}^{- \alpha_1} 
\lessdot \e \gamma^{-2}\,.
$$
Finally, to prove the claimed estimate for
$\| F_\omega({\cal S}_{n + 1}) \|_{s_0 + \mu_1, \sigma - 2}$
we write $F_\omega({\cal S}_{n + 1})$ as a sum,
$\Pi_n F_\omega({\cal S}_{n + 1}) + \Pi_n^\bot F_\omega({\cal S}_{n + 1})$,
and then use \eqref{smoothing-u1} to get 
$$
\| F_\omega({\cal S}_{n + 1}) \|_{s_0 + \mu_1, \sigma - 2} 
 \leq  N_n^{\mu_1} \|F_\omega({\cal S}_{n + 1}) \|_{s_0, \sigma - 2} + 
N_{n}^{\mu_1 - \beta_1} \| F_\omega({\cal S}_{n + 1}) \|_{s_0 + \beta_1, \sigma - 2} \,.
$$
By $(NM2)_{n +1}$, $(NM3)_{n +1}$,  and \eqref{costanti nash moser} it then follows that
$$
\| F_\omega({\cal S}_{n + 1}) \|_{s_0 + \mu_1, \sigma - 2} 
 \leq C_* \e N_n^{\mu_1 - \eta_1} + C_* \e N_n^{\mu_1 - \beta_1 + \kappa_1}
\lessdot \e\,,
$$
which is the second inequality in \eqref{ansatz induttivi nell'iterazione} at the step $n + 1$.
This finishes the proof ot the inductive step.
%The bound $ | \zeta_{n+1} |^\Lipg \leq C \|F_\omega({\cal S}_{n+1}) \|_{s_0, \sigma - 2}^\Lipg %$ is a consequence of Lemma 
%\ref{zeta = 0} (it is not inductive).
\end{proof}

 Theorem \ref{iterazione-non-lineare} leads in a straightforward way to a
proof of Theorem~\ref{main theorem},
except for the measure estimate \eqref{measure estimate Omega in Theorem 4.1}
which is proved in Section~\ref{sec:measure}.
By 
$ (NM1)_n $ the sequence 
$ (\io_n(\, \cdot \, ; \omega))_{n \geq 0}$ 
converges to $ \io_\omega $ in the norm $\| \ \|_{s_0 + \mu_1}^\Lipg   $, while $ (NM2)_n $ implies that  
$ F_\omega(\io_n, \zeta_n) \to 0 $ and $ \zeta_n  \to 0 $. 
Altogether it then follows that $F_\omega(\io_\omega, 0) = 0$. 
The following corollary implies Theorem~\ref{main theorem}
with $s_*$ chosen as in Theorem \ref{iterazione-non-lineare}, $\mu_2$ given by $\mu_1(|S|, \tau)$ with $\tau = 2 |S| + 1$ (cf Section \ref{sec:measure} for this choice of $\tau$) and $ 0 < \e_0 < 1 $
so that for some $0 < a < 1/4$, $\e_0^{1-4a} < \delta_2$ with $\delta_2$ as
in  Theorem \ref{iterazione-non-lineare} (cf Theorem \ref{measure estimate}).

\begin{corollary}\label{corollario finale nash moser}
{\bf (Invariant torus and linear stability)}
Under the same assumptions as in Theorem \ref{iterazione-non-lineare}, the sequence $(\io_n, \zeta_n)$ converges in the norm $\| \cdot \|_{s_0 + \mu_1}^\Lipg$ on the set 
\begin{equation}\label{definizione cal G infty}
\O^{\rm Mel}_\infty := \bigcap_{n \geq 0} \O^{\rm Mel}_n
\end{equation}
to $(\io, 0)$ with $\io \equiv \io_\omega$, $\omega \in \O^{\rm Mel}_\infty$,  satisfying 
$F_\omega(\io_\omega, 0) = 0$ and $ \| \io \|_{s_0 + \mu_1}^\Lipg \lessdot \e \gamma^{- 2}$. The sets $ \O^{\rm Mel}_n $ are defined in \eqref{def:Gn+1}. 
Furthermore, for any $\omega \in \O^{\rm Mel}_\infty$, the torus 
$\breve \io_\omega(\T^S)$ is linearly stable in the sense of Lyapunov:
linearizing the equation 
$
\partial_t \breve \io - X_{H_\e}(\breve \io) = 0
$ 
at the quasi-periodic solution $t \mapsto \io_\omega(\omega t)$ in the coordinates 
provided in Section~\ref{3. Set up}, one obtains 
\begin{equation}\label{lineare stabilita esplicita}
\begin{cases}
\dot{\widehat \psi} = K_{2, 0}(\omega t ) [\widehat \upsilon] + K_{1,1}(\omega t) [\widehat W] \\
\dot{\widehat \upsilon} = 0 \\
\dot{\widehat W } =  - {\mathbb  J_2} K_{0 ,2}(\omega t) [\widehat W] -
 {\mathbb  J_2} (K_{1, 1}(\omega t ))^t [ \widehat \upsilon]
\end{cases}\qquad {\mathbb  J_2} := \ii \begin{pmatrix}
 0 &  {\rm Id}_\bot \\
-  {\rm Id}_\bot & 0
\end{pmatrix}\,.
\end{equation}
For any initial datum $ (\widehat \upsilon_0 , \widehat W_0 ) $ % $\widehat \upsilon_0 \in  U_0$ ($ \subset \R^S$)
the solution of \eqref{lineare stabilita esplicita} satisfies
%In particular, $\widehat \upsilon (t)$ is constant and hence equals its initial value 
%$\widehat \upsilon_0 \in  U_0$ ($ \subset \R^S$) and  
%By definition, 
%The torus $\breve \io_\omega(\T^S)$ is linearly stable:  
%then means  that 
%for any initial datum  $ \widehat W_0 \in H^\s $, $\widehat \upsilon_0 $, %  \in U_0$, 
% the solution of \eqref{lineare stabilita esplicita} satisfies 
% the inhomogeneous linear differential
%equation 
%$\dot{\widehat W} =  
%- {\mathbb  J_2} K_{0 ,2}(\omega t) [\widehat W] - 
%{\mathbb  J_2} (K_{1, 1}(\omega t ))^t [ \widehat \upsilon_0]$ satisfies
\be\label{Lyapunov-stability}
\widehat \upsilon (t) = \widehat \upsilon (0) \, , \forall t \in \R \, , \quad 
\sup_{t \in \R} \| \widehat W(t, \cdot)\|_{h^\sigma_\bot \times h^\sigma_\bot} 
\lessdot \| \widehat W (0) \|_{h^\sigma_\bot \times h^\sigma_\bot} + |\widehat \upsilon_0|\,.
\ee

\end{corollary}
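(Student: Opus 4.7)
\smallskip

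\noindent \textbf{Plan of proof of Corollary \ref{corollario finale nash moser}.}

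The first assertion follows from the Nash--Moser estimates of Theorem \ref{iterazione-non-lineare}. The telescoping bound $(NM1)_n$ gives $\|\widehat \io_n\|_{s_0+\mu_1}^{\Lipg} \lessdot \e\gamma^{-2} N_{n-1}^{-\alpha_1}$ on $\Omega^{\rm Mel}_n$, hence in particular on the intersection $\Omega^{\rm Mel}_\infty$, where all $\io_n$ are defined. Since $\alpha_1 > 0$ the series $\sum_n N_{n-1}^{-\alpha_1}$ converges, so $(\io_n)_{n\ge 0}$ is Cauchy in $\|\cdot\|_{s_0+\mu_1}^{\Lipg}$ on $\Omega^{\rm Mel}_\infty$, with limit $\io_\omega$ satisfying $\|\io_\omega\|_{s_0+\mu_1}^{\Lipg} \lessdot \e\gamma^{-2}$. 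The bound $(NM2)_n$ forces $\zeta_n\to 0$ and $\|F_\omega(\io_n,\zeta_n)\|_{s_0,\sigma-2}^{\Lipg}\to 0$. Passing to the limit in $F_\omega$ via the tame estimates of Proposition~\ref{stime derivate H nls} and Proposition~\ref{teorema stime perturbazione} (continuity in the low norm suffices to identify the limiting equation) yields $F_\omega(\io_\omega,0)=0$.

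For the linear stability, since $F_\omega(\io_\omega,0)=0$, the embedding $\breve \io_\omega$ is invariant for the Hamiltonian flow of $H_\e$, and thus in the canonical coordinates $(\psi,\upsilon,W)$ introduced in Subsection~\ref{Hamiltonian setup} the quantities $\nabla_\psi K_\e\circ\breve \io_0$, $\nabla_W K_\e\circ\breve \io_0$ vanish and $\nabla_\upsilon K_\e\circ\breve\io_0 = \omega$ (by Lemma \ref{coefficienti nuovi}). Consequently the linearized system around the solution $t\mapsto\io_\omega(\omega t)$ reduces exactly to the triangular system \eqref{lineare stabilita esplicita}: the middle equation integrates to $\widehat\upsilon(t)=\widehat\upsilon(0)$, and the third equation becomes
\begin{equation*}
\dot{\widehat W} + \mathbb J_2 K_{0,2}(\omega t)\widehat W \;=\; -\mathbb J_2\,K_{1,1}(\omega t)^t[\widehat\upsilon_0],
\end{equation*}
which is driven by the Hamiltonian operator ${\frak L}_\omega$ (evaluated at $\io_\omega$) acting on $\widehat W$. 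The plan is to apply the whole reduction scheme of Sections~\ref{sec:5}--\ref{sec:redu} at $\io=\io_\omega$: the symplectic transformations $\mathtt\Phi_1,\mathtt\Phi_2,\mathtt\Phi_3,\Phi_\infty$ conjugate ${\frak L}_\omega$ to the constant-coefficient, $2\times2$ block-diagonal normal form ${\bf L}_\infty = \omega\cdot\partial_\vphi{\mathbb I}_2 + {\bf N}_\infty$ of Theorem~\ref{teoremadiriducibilita}.

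In the reduced variables $\widehat W_\infty := \Phi_\infty^{-1}\mathtt\Phi_3^{-1}\mathtt\Phi_2^{-1}\mathtt\Phi_1^{-1}\widehat W$ the homogeneous flow decouples into the $2\times 2$ blocks
\begin{equation*}
\dot{\vec{u}}_k + \ii [{\bf N}_\infty^{(1)}]_k^k\,\vec u_k = 0,\qquad k\in S_+^\bot,
\end{equation*}
each of which has a unitary propagator because $[{\bf N}_\infty^{(1)}]_k^k$ is self-adjoint. Unitarity per block preserves $\sum_k\langle k\rangle^{2\sigma}\|\vec u_k\|^2$, hence the $h^\sigma_\bot\times h^\sigma_\bot$ norm of $\widehat W_\infty$. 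Duhamel's formula applied to the constant forcing of size $\lessdot|\widehat\upsilon_0|$ then yields the uniform bound $\sup_t\|\widehat W_\infty(t)\|\lessdot\|\widehat W_\infty(0)\|+|\widehat\upsilon_0|$. Since all four transformations are bounded and invertible on $h^\sigma_\bot\times h^\sigma_\bot$ with norms of order $1$ (by \eqref{stima Phi infty} and Lemmata \ref{lem:44 Lip}, \ref{stima R2 lip}, \ref{stima R3 lip}), transforming back gives \eqref{Lyapunov-stability}.

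The main technical point to verify is that Theorem~\ref{teoremadiriducibilita} is applicable at the \emph{limit} torus $\io_\omega$ for every $\omega\in\Omega^{\rm Mel}_\infty$. By construction $\Omega^{\rm Mel}_{n+1}=\Omega_{\rm Mel}^{2\gamma_n}(\io_n)$, so the Melnikov conditions \eqref{prime melnikov off diagonali finali matrici}--\eqref{seconde melnikov diagonali finali matrici} and \eqref{Hyp1}--\eqref{Hyp2} are known to hold for the approximating embeddings $\io_n$ with the \emph{strictly larger} constant $2\gamma_n\ge 2\gamma$, not directly for $\io_\omega$ with constant $\gamma$. The Lipschitz closeness of the normal-form blocks $[{\bf N}_\infty^{(1)}]_k^k(\io_n)$ and $[{\bf N}_\infty^{(1)}]_k^k(\io_\omega)$ provided by Theorem~\ref{teorema variazione autovalori} (with $\Delta_{12}\io = \io_n-\io_\omega\to 0$ in the norm $\|\cdot\|_{s_0+\bar\mu+\beta}$, which holds because in the inductive proof one also controls the high-norm growth of $\io_n$ through $(NM3)_n$ and interpolation) lets me pass to the limit in the Melnikov denominators, showing that on $\Omega^{\rm Mel}_\infty$ all the second-order conditions hold at $\io_\omega$ with a factor $\gamma$ (say). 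This is the step that closes the argument and upgrades the reducibility from the iterative sets to the limit Cantor set.
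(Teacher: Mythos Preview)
Your overall approach coincides with the paper's: use the Nash--Moser estimates $(NM1)_n$--$(NM2)_n$ for convergence, then conjugate the linearized normal equation by $\mathtt\Phi_1,\mathtt\Phi_2,\mathtt\Phi_3,\Phi_\infty$ to the constant-coefficient block-diagonal form ${\bf L}_\infty$, and exploit the self-adjointness of the blocks $[{\bf N}_\infty^{(1)}]_k^k$ to get a unitary homogeneous propagator.

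There is, however, a genuine gap in your treatment of the inhomogeneous equation in the reduced coordinates. You write that ``Duhamel's formula applied to the constant forcing of size $\lessdot|\widehat\upsilon_0|$ then yields the uniform bound.'' But the forcing $g_\infty(\omega t)$ is not constant---it is quasi-periodic in $t$---and a unitary propagator together with a merely bounded forcing gives, via Duhamel, only
\[
\Big\|\int_0^t e^{-(t-s){\bf N}_\infty}\,g_\infty(\omega s)\,ds\Big\|
\;\le\; |t|\,\sup_s\|g_\infty(\omega s)\|,
\]
i.e.\ linear growth, not a uniform bound. What is actually needed is the first-order Melnikov condition $({\bf M}^I_{2\gamma})_\infty$, which holds on $\O^{\rm Mel}_\infty$ by construction: it guarantees that the eigenvalues $\lambda_k^{(\pm)}$ of $[{\bf N}_\infty^{(1)}]_k^k$ are nonresonant with all $\omega\cdot\ell$, so that one can construct a \emph{bounded quasi-periodic particular solution} $\widehat V_p = -{\bf L}_\infty^{-1} g_\infty$ via the estimate \eqref{stima inverso L infinito}. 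Then $\widehat V(t)-\widehat V_p(\omega t)$ solves the homogeneous equation and is bounded by unitarity, giving \eqref{Lyapunov-stability}. The paper's phrase ``by the method of the variation of constants'' together with its reference to the eigenvalue expansion \eqref{prima asintotica autovalori}--\eqref{estimates-errors} is admittedly terse on this point, but the nonresonance needed to make the integral converge is precisely what the definition of $\O^{\rm Mel}_\infty$ provides.

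Your final paragraph, by contrast, correctly flags a technical point that the paper handles only implicitly: the reduction of Sections~\ref{sec:5}--\ref{sec:redu} must be applied at the \emph{limit} embedding $\io_\omega$, while the sets $\O^{\rm Mel}_{n+1}=\Omega_{\rm Mel}^{2\gamma_n}(\io_n)$ are defined through the approximants. Passing to the limit in the normal-form blocks via Theorem~\ref{teorema variazione autovalori} and the convergence $\io_n\to\io_\omega$ (with $2\gamma_n\downarrow 2\gamma$) is the right way to close this.
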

\begin{proof}
It remains to prove that $\breve \io_\omega(\T^S)$ is linearly stable for any 
$\omega \in \O^{\rm Mel}_\infty$. By \eqref{splitting d F d X K} and, since $F_\omega(\io_\omega, 0 ) = 0 $ implies that  
$ G_2 = 0 $ by Lemma \ref{stima G2}, we have 
$$
d_{\io, \zeta} F_\omega ( \io_{\rm iso})[\widehat \imath, \widehat \zeta]  = 
d \Gamma(\breve \io_0) \big( \omega \cdot \partial_\vphi  - 
d_{\io, \zeta} X_{K_{\e, \zeta}}(\breve \io_0)\big) 
[d \Gamma(\breve \io_0)^{- 1}[\widehat \imath], \widehat \zeta]\,. 
$$
Since $\breve \io_\o $ is an isotropic 
torus embedding it coincides with $\breve \io_{\rm iso}$, constructed
in Subsection~\ref{Isotropic torus embeddings} 
(cf \eqref{toro isotropico modificato A}, \eqref{formula divergence free part}).
% Lemma \ref{tame estiamte Akj} ).
Furthermore recall that by \eqref{error term approximate inverse 3}, and since $ G_3 = 0 $ by Lemma \ref{stima G3},
we have 
$$
\omega \cdot \partial_\vphi - d_{\io, \zeta} X_{K_{\e, \zeta}}(\breve \io_0) = {\frak T}_\omega
$$
where $\frak T_\om$, when expressed in the coordinates $\psi,$ $\upsilon,$ $W$, 
is given by 
$$
\frak T_\om[\widehat \io, 0] = \big(
\omega \cdot \partial_\vphi \widehat \psi  - K_{2, 0}(\vphi) [\widehat \upsilon] - K_{1,1}(\vphi) [\widehat W] \,, \,\, \,\omega \cdot \partial_\vphi \widehat \upsilon\,, \,\,\,
\omega \cdot \partial_\vphi \widehat W +  {\mathbb J}_2  K_{1, 1}(\vphi)^t [ \widehat \upsilon] + 
{\mathbb J}_2K_{0 ,2}(\vphi) [\widehat W] 
\big).
$$
% Since $F_\omega(\io_\omega, 0 ) = 0$, 
Then \eqref{lineare stabilita esplicita}  follows.
To prove \eqref{Lyapunov-stability}
% that $\dot{\widehat W} =  
%- {\mathbb  J_2} K_{0 ,2}(\omega t) [\widehat W] - 
%{\mathbb  J_2} (K_{1, 1}(\omega t ))^t [ \widehat \upsilon_0]$
% is stable in the sense of Lyapunov, 
recall that the operator
$\frak L_\o = \omega \cdot \partial_\vphi  + {\mathbb  J_2} K_{0,2}(\varphi)$,
introduced in \eqref{definition frak L omega sec 4},
is conjugated to the $\vphi$-independent $2 \times 2 $ block diagonal operator
${\bf L}_\infty(\omega) = \omega \cdot \partial_\vphi {\mathbb I}_2 + {\bf N}_\infty(\omega)$, 
defined in \eqref{bf L infinito esplicito}, \eqref{cal D infinito}, 
$${\frak L}_\omega = \mathtt \Phi_1 \mathtt \Phi_2 \mathtt \Phi_3 \Phi_\infty {\bf L}_\infty \Phi_\infty^{- 1} \mathtt \Phi_3^{- 1} \mathtt \Phi_2^{- 1} \mathtt \Phi_1^{- 1}\,,$$
by the composition of the symplectic transformations 
$\mathtt \Phi_1$, $\mathtt \Phi_2$, $\mathtt \Phi_3$ 
(Section \ref{sec:5}) and $\Phi_\infty$ (Subsection \ref{2 times 2 block diagonalization}).
The equation 
$\dot{\widehat W} =  
- {\mathbb  J_2} K_{0 ,2}(\omega t) [\widehat W] - 
{\mathbb  J_2} (K_{1, 1}(\omega t ))^t [ \widehat \upsilon_0]$
then transforms into 
$$
\dot{\widehat V} = - {\bf N}_\infty(\o) \widehat{V} - g_\infty(\omega t) \,, \qquad  
g_\infty(\omega t) :=
 \big( \Phi_\infty(\omega t)^{- 1} \circ \mathtt \Phi_3(\omega t)^{- 1} \circ 
\mathtt \Phi_2(\omega t)^{- 1} \circ \mathtt \Phi_1(\omega t)^{- 1} \big) 
{\mathbb  J_2} (K_{1, 1}(\omega t ))^t [ \widehat \upsilon_0]
$$
where $\widehat V(t)$ is given by
$ \big( \Phi_\infty(\omega t)^{- 1} \circ \mathtt \Phi_3(\omega t)^{- 1} \circ 
\mathtt \Phi_2(\omega t)^{- 1} \circ \mathtt \Phi_1(\omega t)^{- 1} \big) \widehat W(t)$.
Since the coordinate transformations 
$\mathtt \Phi_1(\omega t)^{-1}, \mathtt \Phi_2(\omega t)^{-1}, \mathtt \Phi_3(\omega t)^{-1}$, 
$\Phi_\infty(\omega t)^{-1}: h^\s_\bot \times h^\s_\bot \to h^\s_\bot \times h^\s_\bot$
(see Sections \ref{sec:5}, \ref{sec:redu}) and the operator
$(K_{1, 1}(\omega t ))^t : \R^S \to h^\s_\bot \times h^\s_\bot$
(see Lemma \ref{lemma:Kapponi vari})  are bounded,  uniformly in $ t $, one has
$$
\sup_{t \in \R} \| g_\infty(\omega t) \|_{h^\sigma_\bot \times h^\sigma_\bot} 
\lessdot  |\widehat \upsilon_0|\, .
$$
By the definition of ${\bf N}_\infty$ in  \eqref{cal D infinito} and 
the estimates provided by \eqref{prima asintotica autovalori} - \eqref{estimates-errors}
 in Theorem  \ref{teoremadiriducibilita}  it then follows by
the method of the variation of constants that the solution of
$\dot{\widehat V} = - {\bf N}_\infty \widehat{V} - g_\infty(\omega t) $
with initial datum ${\widehat V}_0$ satisfies 
$$
\sup_{t \in \R} \| \widehat V(t, \cdot)\|_{h^\sigma_\bot \times h^\sigma_\bot} 
\lessdot \| \widehat V_0 \|_{h^\sigma_\bot \times h^\sigma_\bot} + |\widehat \upsilon_0|\, .
$$
Finally, using that the coordinate transformations $\mathtt \Phi_1(\omega t), \mathtt \Phi_2(\omega t), \mathtt \Phi_3(\omega t)$, 
$\Phi_\infty(\omega t)$ are  bounded operators on $ h^\s_\bot \times h^\s_\bot $,  uniformly in $ t $, 
(see Sections \ref{sec:5}, \ref{sec:redu}), one concludes that the corresponding solution $\widehat W(t)$ of 
$\dot{\widehat W} =  
- {\mathbb  J_2} K_{0 ,2}(\omega t) [\widehat W] - 
{\mathbb  J_2} (K_{1, 1}(\omega t ))^t [ \widehat \upsilon_0]$ satisfies \eqref{Lyapunov-stability}.
\end{proof}
Finally we prove the statement of Remark~\ref{remark size of torus}
saying that for most of the $\omega \in \Omega_{\infty}^{\rm Mel}$, the distance of the embedded torus $\breve \io_\o (\T^S)$ to the
standard torus $\breve \io_0 (\T^S)$ is of the order of $\e \g^{-1}$. 
To state our result more precisely, we introduce the  first order 
Melnikov non resonance conditions for the unperturbed equation
\begin{equation}\label{first mel integrable}
\Omega_{\gamma, \tau}^{nls} := \big\{ \omega \in \Omega : \ 
|\omega \cdot \ell + \omega_k^{nls}(\xi(\omega), 0) | \geq 
\frac{\gamma k^2}{\langle \ell \rangle^\tau} \quad \forall (\ell, k) \in \Z^S \times S^\bot \big\}\,.
\end{equation}
Arguing as in Section \ref{sec:measure} (cf Lemmas \ref{limitazioni indici risonanti}, \ref{stima in misura insiemi risonanti}) one shows that
$ {\rm meas}(\Omega \setminus \Omega_{\gamma, \tau}^{nls}) = O(\gamma)$. 
Then the following holds:
\begin{corollary}\label{stima optimal size torus} {\bf (Size of perturbed torus)}
For any $\omega \in \O^{\rm Mel}_\infty \cap \Omega_{\gamma, \tau}^{nls}$, the torus embedding $\breve \io_\omega(\vphi) = (\theta(\vphi), y(\vphi), z(\vphi))$
of Corollary~\ref{corollario finale nash moser} satisfies 
$$
\| y \|_{s_0}\,,\,\,\, \| z \|_{s_0, \sigma} \, \lessdot \, \e \gamma^{- 1}\,.
$$
\end{corollary}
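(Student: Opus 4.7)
\medskip

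\noindent
\emph{Proof proposal.} The plan is to extract directly from the identity $F_\omega(\breve \io_\omega, 0) = 0$ (available for $\omega \in \Omega_\infty^{\rm Mel}$ by Corollary~\ref{corollario finale nash moser}) two scalar equations for $y$ and $z$, and then invert suitable constant-coefficient operators on each using the diophantine condition and the unperturbed first Melnikov condition. Since $H^{nls}$ depends only on the actions, the $\theta$-component of $\nabla H^{nls}$ vanishes, and $\nabla_{\bar z} H^{nls}$ is diagonal with entries $\omega_n^{nls}(\xi+y, z\bar z)\, z_n$. Writing out the second and third components of \eqref{operatorF} at $(\io_\omega, 0)$ thus yields
\begin{align*}
\omega \cdot \partial_\varphi y &= -\varepsilon \nabla_\theta P \circ \breve\io_\omega\,, \\
\omega \cdot \partial_\varphi z_n + \ii\,\omega_n^{nls}(\xi + y, z\bar z)\, z_n &= -\ii\,\varepsilon\, \partial_{\bar z_n} P \circ \breve\io_\omega\,, \quad n \in S^\bot\,.
\end{align*}

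For $y$, I would take the $\varphi$-average of the first equation, deduce $[[\nabla_\theta P \circ \breve\io_\omega]] = 0$, and invert $\omega \cdot \partial_\varphi$ via Lemma~\ref{om vphi - 1 lip gamma} (which applies since $\Omega_\infty^{\rm Mel} \subseteq \Omega_{4\gamma, \tau}$). This gives $\|y\|_{s_0} \lesssim \gamma^{-1} \varepsilon \|\nabla_\theta P \circ \breve \io_\omega\|_{s_0 + \tau}$, and Proposition~\ref{teorema stime perturbazione} combined with the Nash-Moser bound $\|\io_\omega\|_{s_0 + \mu_1} \lesssim \varepsilon \gamma^{-2}$ (choosing $\mu_1 \geq \tau + 2s_0$) bounds $\|\nabla_\theta P \circ \breve \io_\omega\|_{s_0 + \tau}$ by $O(1)$. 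Hence $\|y\|_{s_0} \lesssim \varepsilon \gamma^{-1}$, as claimed.

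For $z$, I would decouple the leading linear part by rewriting
\[
L_0 z_n := (\omega \cdot \partial_\varphi + \ii\, \omega_n^{nls}(\xi, 0))\, z_n = -\ii \Delta\omega_n \, z_n - \ii\, \varepsilon\, \partial_{\bar z_n} P \circ \breve \io_\omega\,, \qquad \Delta\omega_n := \omega_n^{nls}(\xi + y, z\bar z) - \omega_n^{nls}(\xi, 0)\,.
\]
The point is that for $\omega \in \Omega_{\gamma,\tau}^{nls}$, one has $|\omega \cdot \ell + \omega_n^{nls}(\xi,0)| \geq \gamma n^2 / \langle \ell \rangle^\tau$, so on the Fourier side
\[
\|L_0^{-1} g\|_{s_0, \sigma}^2 \;=\; \sum_{\ell, n} \frac{\langle \ell \rangle^{2 s_0} \langle n \rangle^{2\sigma}}{|\omega\cdot\ell + \omega_n^{nls}(\xi,0)|^2}\, |\hat g_n(\ell)|^2 \;\leq\; \gamma^{-2}\, \|g\|_{s_0 + \tau, \sigma - 2}^2\,,
\]
which gives a gain of two derivatives in $x$ at the cost of a loss of $\tau$ derivatives in $\varphi$ and one factor of $\gamma^{-1}$. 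Applied to the right-hand side above, this yields
\[
\|z\|_{s_0, \sigma} \;\lesssim\; \gamma^{-1}\, \|\Delta \omega \cdot z\|_{s_0 + \tau, \sigma - 2} + \varepsilon \gamma^{-1} \|\nabla_{\bar z} P \circ \breve \io_\omega\|_{s_0 + \tau, \sigma - 2}\,,
\]
and the perturbation term is $\lesssim \varepsilon \gamma^{-1}$ by Proposition~\ref{teorema stime perturbazione}.

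The main obstacle is closing the bootstrap on the quadratic-type term $\|\Delta \omega \cdot z\|_{s_0 + \tau, \sigma - 2}$, which couples the unknown $z$ to itself through the dNLS frequency variation. The strategy is: combine Lemma~\ref{interpolation product} (tame product estimates, applied in $\varphi$ for each fixed index $n$) with the uniform-in-$n$ bound $\|\Delta \omega_n\|_s \lesssim_s \|\io_\omega\|_{s + 2 s_0}$ from \eqref{tame estimates for omega}, then sum over $n$ with the weight $\langle n \rangle^{2(\sigma - 2)}$. Using $(NM1)_\infty$ to control $\|\io_\omega\|_{s_0 + \mu_1}$ by $\varepsilon \gamma^{-2}$, I expect to get
\[
\|\Delta \omega \cdot z\|_{s_0 + \tau, \sigma - 2} \;\lesssim\; \varepsilon \gamma^{-2} \|z\|_{s_0, \sigma} + \varepsilon \gamma^{-2} \|z\|_{s_0 + \mu_1, \sigma}\,,
\]
and the last term is itself $\lesssim \varepsilon \gamma^{-2}$ by Nash-Moser. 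Inserting this into the previous inequality yields $\|z\|_{s_0, \sigma} \lesssim \varepsilon \gamma^{-3} \|z\|_{s_0, \sigma} + \varepsilon \gamma^{-1}$, and the smallness of $\varepsilon \gamma^{-3}$ (which follows from the standing assumption $\varepsilon \gamma^{-4} \leq \delta_2$, since $\varepsilon \gamma^{-3} \leq \gamma\, \delta_2 \leq \delta_2$) lets us absorb the first term on the left, giving $\|z\|_{s_0, \sigma} \lesssim \varepsilon \gamma^{-1}$. The delicate bookkeeping lies in ensuring that every Sobolev exponent appearing on the right stays below $s_0 + \mu_1$, i.e.\ within the regime controlled by $(NM1)_\infty$; this drives the choice of $\mu_2$ in Theorem~\ref{main theorem}.
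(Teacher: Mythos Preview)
Your proposal is correct and follows essentially the same route as the paper: extract the $y$- and $z$-equations from $F_\omega(\io_\omega,0)=0$, invert $\omega\cdot\partial_\vphi$ for $y$, and invert the diagonal operator $\omega\cdot\partial_\vphi + \ii\,\omega_n^{nls}(\xi,0)$ for $z$ using the unperturbed first Melnikov condition \eqref{first mel integrable}. The only difference is that you organize the quadratic term $\Delta\omega\cdot z$ as a bootstrap (absorbing $\e\gamma^{-3}\|z\|_{s_0,\sigma}$ on the left), whereas the paper simply inserts the a priori Nash--Moser bound $\|z\|_{s_0+\tau,\sigma}\lessdot\e\gamma^{-2}$ directly on the right, obtaining $\gamma^{-1}\cdot(\e\gamma^{-2})^2=\e^2\gamma^{-5}=(\e\gamma^{-4})(\e\gamma^{-1})\lessdot\e\gamma^{-1}$ without any absorption step; both arguments close under the same smallness hypothesis $\e\gamma^{-4}\ll 1$.
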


\begin{proof}
The torus embedding $\breve \io(\vphi) = (\theta(\vphi), y(\vphi), z(\vphi))$ 
of Corollary \ref{corollario finale nash moser} satisfies 
the equation $F_\omega (\io, 0) = 0$.  When written componentwise,
the latter equation reads
\begin{equation}\label{pappa pappa}
\begin{cases}
\omega \cdot \partial_\vphi \theta = 
\omega^{nls}(\xi + y, z \bar z) + \e \nabla_y P(\theta, y, z) \\
\omega \cdot \partial_\vphi y = - \e \nabla_\theta P(\theta, y, z) \\
\ii \omega \cdot \partial_\vphi z_k = \omega_k^{nls}(\xi + y, z \bar z) z_k  + 
\e \partial_{\bar z_k} P(\theta, y, z)\,, \qquad k \in S^\bot\,.
\end{cases}
\end{equation}
Furthermore,  $\io(\vphi) = (\Theta(\vphi), y(\vphi), z(\vphi) )$ with 
$\Theta(\vphi) = \theta(\vphi) - \vphi$ can be estimated as follows
$$
\| \io \|_{s_0 + \mu_1}  = \| \Theta \|_{s_0 + \mu_1} \, + \,
\| y \|_{s_0 + \mu_1}\, + \, \| z \|_{s_0 + \mu_1, \sigma} \lessdot \e \gamma^{- 2}
$$
where $\mu_1$ is the integer given in Theorem \ref{thm:stima inverso approssimato}.
Since $\mu_1$ is larger than the integer $\mu_0$ of Theorem \ref{invertibility of frak L omega} and $\mu_0 =  4s_0 +10 \tau + 7$ one has $\mu_1 \geq 2s_0 + \tau$, implying that
\begin{equation}\label{piccolo ansatz}
\| \io \|_{ s_0 + 2s_0 + \tau} \lessdot \e \gamma^{- 2}\,.
\end{equation}  
{\em Estimate of $\| y \|_{s_0}$:}
Since $\omega \in \O^{\rm Mel}_\infty \subset  \Omega_{\gamma, \tau}$,
the solution $y$ of the equation
$\omega \cdot \partial_\vphi y = - \e \nabla_\theta P(\theta, y, z)$, 
$$
y = - \e (\omega \cdot \partial_\vphi)^{- 1} \nabla_\theta P(\theta, y, z)\,,
$$
can be estimated  as follows
$$
\| y \|_{s_0} \stackrel{Lemma\,\ref{om vphi - 1 lip gamma}}{\leq} 
\, \e \gamma^{- 1} \| \nabla_\theta P(\theta, y, z)\|_{s_0 + \tau} 
\stackrel{Prop.\,\ref{teorema stime perturbazione}\,(i)}{\lessdot}  \,
\e \gamma^{- 1}(1 + \| \io\|_{3 s_0+ \tau}) 
\stackrel{\eqref{piccolo ansatz}, \, \eqref{nash moser smallness condition}}{\lessdot}  
\,\e \gamma^{- 1}\,.
$$

\noindent
{\em Estimate of $\| z \|_{s_0, \sigma}$:}
For any $k \in S^\bot$ write $\omega_k^{nls}(\xi  + y, z \overline z) = a_k^{I} + a_k^{II}$ where
\begin{equation}\label{pappa pappa - 1}
a_k^{I} := \omega_k^{nls}(\xi, 0) \qquad 
a_k^{II} := \omega_k^{nls}(\xi  + y, z \overline z) - \omega_k^{nls}(\xi, 0)
\end{equation}
and define the diagonal operators 
\begin{equation}\label{pappa pappa 0}
A^{I} := {\rm diag}_{k \in S^\bot} \, a_k^{I} \,, \qquad 
A^{II} := {\rm diag}_{k \in S^\bot} \, a_k^{II}\,.
\end{equation}
The third equation in \eqref{pappa pappa} can then be rewritten as 
\begin{equation}\label{pappa pappa 1}
B z = A^{II}z + \e \nabla_{\bar z} P(\theta, y, z)\,, \qquad 
B := \ii \omega \cdot \partial_\vphi {\rm Id}_\bot  - A^I\,.
\end{equation}
Since by assumption $\omega \in \Omega_{\gamma, \tau}^{nls}$, the diagonal operator $B$ is invertible and for any 
$g \in H^{s + \tau}(\T^S, \, h^{\sigma -2 }_\bot)$ one has 
$\| B^{- 1} g \|_{s, \sigma} \leq \gamma^{- 1}  \| g \|_{s + \tau, \sigma -2}$. 
Furthermore, the identity \eqref{pappa pappa 1} leads to 
\begin{equation}\label{pappa pappa 8}
z = B^{- 1}A^{II} z + \e B^{- 1} \nabla_{\bar z} P(\theta, y, z)\,.
\end{equation}
The latter two terms are estimated individually:  
\begin{align}
\|  B^{- 1} A^{II} z \|_{s_0, \sigma } & \, \lessdot \,
\gamma^{- 1} \| A^{II} z\|_{s_0 + \tau, \sigma } \,
\stackrel{\eqref{pappa pappa - 1}, \eqref{pappa pappa 0}, \eqref{tame estimates for omega}}{\lessdot} \, \gamma^{- 1} \| \io \|_{3 s_0 + \tau, \sigma } \| z \|_{s_0 + \tau, \sigma} \nonumber\\
&  \stackrel{\eqref{piccolo ansatz}}{\lessdot}\e^2 \gamma^{- 5} \, \lessdot \,
(\e \gamma^{- 1})(\e \gamma^{- 4}) \,
\stackrel{\eqref{nash moser smallness condition}}{\lessdot} 
\, \e \gamma^{- 1} \,.\label{pappa pappa 10}  
\end{align}
The second term on the right hand side of \eqref{pappa pappa 8} can be estimated as 
\begin{align}
\e \| B^{- 1}  \nabla_{\bar z}P(\theta, y, z) \|_{s_0, \sigma } \,  & \lessdot \, \e \gamma^{- 1}  \| \nabla_{\bar z}P(\theta, y, z) \|_{s_0 + \tau, \sigma} \, \stackrel{Prop. \,\ref{teorema stime perturbazione}\,(i)}{\lessdot} \, \e \gamma^{- 1} (1 + \| \io\|_{3 s_0 + \tau }) 
\nonumber\\
&  \stackrel{\eqref{piccolo ansatz}, \eqref{nash moser smallness condition}}{\lessdot} 
\, \e \gamma^{- 1}\,. \label{pappa pappa 11}
\end{align}
The identity \eqref{pappa pappa 8} 
and the estimates \eqref{pappa pappa 10}, \eqref{pappa pappa 11} then yield 
$\| z \|_{s_0, \sigma} \lessdot \e \gamma^{- 1}$. 
\end{proof}

%%%%%%%%%%%%%%%%%%%%%%%%%%%%%%%%%%%%%%%%%%%%%%%%%%%%%%%%%%%%%%%%%%%
%%%%%%%%%%%%%%%%%%%%%%%%%%%%%%%%%%%%%%%%%%%%%%%%%%%%%%%%%%%%%%%%%%%
%%%%%%%%%%%%%%%%%%%%%%%%%%%%%%%%%%%%%%%%%%%%%%%%%%%%%%%%%%%%%%%%%%%

\section{Measure estimate}\label{sec:measure}

The goal of this section is to prove the measure estimate of Theorem \ref{main theorem}.

\begin{theorem}{\bf (Measure estimate)} \label{measure estimate}
Let $ \tau := 2 |S| + 1 $. 
Assume the smallness condition \eqref{nash moser smallness condition} hold with $ \e  $, $ \gamma $ satisfying 
\be\label{prop-def}
0 < \e^{\frak a} <  \frac{1}{64} \, , \quad 
0 < {\frak a} < 1/4 \, , \quad  \gamma = \e^{\frak a}  \, .
\ee
Then there exists $ 0 < \frak b  \le 1/2 $ so that 
the set $\Omega_\e := \O^{\rm Mel}_\infty$ (cf \eqref{definizione cal G infty}), satisfies 
\be\label{estimate-meas-final}
{\rm meas}\big( \Omega \setminus \Omega_\e \big)  = 
O(\e^{\frak a \frak b})\,, \quad \text{as} \quad \e \to 0\,.
\ee
\end{theorem}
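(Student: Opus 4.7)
The plan is to reduce the measure estimate to a countable sum of measures of elementary ``resonant sublevel sets'' indexed by $(\ell, j, k, \pm)$ and use the asymptotic expansion of the perturbed normal frequencies to control the sum. First I would unfold the definition of $\Omega_\e = \bigcap_{n\ge 0}\O^{\rm Mel}_n$. By Lemma \ref{inclusion of cantor sets} and the construction \eqref{def:Gn+1}, $\Omega \setminus \Omega_\e$ is contained in the union of the sublevel sets where \eqref{prime melnikov off diagonali finali matrici}, \eqref{seconde melnikov off diagonali finali matrici}, or \eqref{seconde melnikov diagonali finali matrici} fails at some step $n$, expressed through the $2\times 2$ blocks $[{\bf N}_\infty^{(1)}(\io_n)]_k^k$. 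Using the Lipschitz extensions from $({\bf S2})_\nu$ of Theorem \ref{thm:abstract linear reducibility}, Lemma \ref{final blocks normal form}, and Theorem \ref{teorema variazione autovalori}, I would replace these $\io_n$--dependent blocks by the blocks $[{\bf N}_\infty^{(1)}(\io)]_k^k$ evaluated at the limit embedding $\io$ of Corollary \ref{corollario finale nash moser}, at the price of shrinking $\gamma$ by a fixed factor; this is harmless because the torus dependence $\Vert \Delta_{12}\io \Vert_{s_0 + \bar\mu + \beta}^{\rm sup}$ is of size $\e\gamma^{-2}$ which is negligible in the Melnikov denominators thanks to \eqref{prop-def}. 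After this reduction it suffices to bound the measure of the set where any of the Melnikov conditions fails for the self-adjoint $2\times 2$ matrices with eigenvalues $\lambda_j^{(\pm)}(\omega)$ given by \eqref{prima asintotica autovalori}--\eqref{prima asintotica autovalori1}.

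Next I would exploit the asymptotic expansion \eqref{prima asintotica autovalori}--\eqref{estimates-errors} to determine for which $(\ell, j, k)$ a resonance can actually occur. For the $L^-_\infty$ Melnikov condition, the eigenvalue difference has the form
\begin{equation*}
\lambda_j^{(\pm)}(\omega) - \lambda_k^{(\pm)}(\omega)
= 4\pi^2(j^2 - k^2) + \frac{\rho^{(\pm)}_{\xi,\e}(j)}{j} - \frac{\rho^{(\pm)}_{\xi,\e}(k)}{k}
\end{equation*}
where the constant $c_{\xi,\e}$ cancels. Since $|\omega|$ is bounded on $\Omega$, a resonance $|\omega\cdot\ell + \lambda_j - \lambda_k| \leq 2\gamma\langle j^2-k^2\rangle\langle\ell\rangle^{-\tau}$ can hold only when $4\pi^2|j^2-k^2|\le C\langle\ell\rangle$, i.e.\ when $|j-k|\le C\langle\ell\rangle/(j+k)$; this is the content of the ``resonant indices lemma'' referred to in the text as Lemma \ref{limitazioni indici risonanti}. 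For $L^+_\infty$ the sum $4\pi^2(j^2+k^2)+2c_{\xi,\e}$ is of order $j^2+k^2$, so $j^2+k^2\le C\langle\ell\rangle$, producing only finitely many tuples for each $\ell$. The first-order Melnikov condition \eqref{prime melnikov off diagonali finali matrici} similarly restricts $k^2\le C\langle\ell\rangle$.

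For each admissible $(\ell,j,k)$ I would then apply the quantitative transversality argument: the hypothesis of Theorem \ref{Theorem 1.1} guarantees that $\omega\mapsto\xi(\omega)$ is bi-Lipschitz, and by Theorem \ref{Corollary 2.2} the unperturbed part $\omega^{nls}_k(\xi,0) = 4\pi^2 k^2 + 4\sum_{j\in S}\xi_j + O(1/k)$ has Lipschitz constant in $\omega$ of order $1$, while $c_{\xi,\e}$ and $\rho_{\xi,\e}^{(\pm)}(k)$ have Lipschitz seminorm $O(\e\gamma^{-2})$ by \eqref{estimates-errors} and \eqref{asintotica autovalori finali misura0}. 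Hence for $\ell\neq 0$ the Lipschitz constant of $\omega\mapsto\omega\cdot\ell+\lambda_j^{(\pm)}\pm\lambda_k^{(\pm)}$ is at least $|\ell| - C - O(\e\gamma^{-2})\ge |\ell|/2$ whenever $|\ell|\ge C_0$, and a standard Fubini-like argument (Lemma \ref{stima in misura insiemi risonanti} of the paper) gives
\begin{equation*}
{\rm meas}\bigl\{\omega : |\omega\cdot\ell+\lambda_j^{(\pm)}\pm\lambda_k^{(\pm)}|\le 2\gamma\langle j^2\pm k^2\rangle\langle\ell\rangle^{-\tau}\bigr\} \lessdot \frac{\gamma}{\langle\ell\rangle^{\tau+1}}.
\end{equation*}
For small $|\ell|$ the non-degeneracy \eqref{Melnikov-1-2} of Proposition \ref{Proposition 2.3} together with the smallness \eqref{nash moser smallness condition} yields the analogous bound $O(\gamma^{\frak b})$ for some $0<\frak b\le 1/2$. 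Summing over admissible $(\ell,j,k)$, the $L^-_\infty$ contribution gives (using $|j-k|\le C\langle\ell\rangle/(j+k)$ and $j+k \ge 2$)
\begin{equation*}
\sum_{\ell\in\Z^S}\ \sum_{(j,k)\text{ admissible}} \frac{\gamma}{\langle\ell\rangle^{\tau+1}} \lessdot \gamma \sum_{\ell\in\Z^S}\frac{\log\langle\ell\rangle}{\langle\ell\rangle^{\tau+1-|S|}}\cdot\langle\ell\rangle^{|S|-1} \lessdot \gamma,
\end{equation*}
which converges because $\tau = 2|S|+1$. The $L^+_\infty$ and first-Melnikov contributions are controlled analogously with even better summability. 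This yields ${\rm meas}(\Omega\setminus\Omega_\e)\lessdot \gamma^{\frak b} = \e^{\frak a\frak b}$, as required.

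The main obstacle, and the reason this works at all, is the $L^-_\infty$ family with $j+k$ large but $|j-k|$ small---precisely the near-resonance between $\om_k^{nls}$ and $\om_{-k}^{nls}$ for which the KAM scheme was forced to produce $2\times 2$ blocks rather than a full diagonal. The cancellation of the $\omega$-dependent constant $c_{\xi,\e}$ in the difference $\lambda_j-\lambda_k$, together with the $1/k$ decay of the remainder $\rho_{\xi,\e}^{(\pm)}(k)/k$, is the decisive input; this decay is exactly the one-smoothing property of the nonlinear part of the Birkhoff map from Theorem \ref{Theorem Birkhoff coordinates}(ii), propagated through the reduction of Sections \ref{sec:5}--\ref{sec:redu} into the expansion \eqref{prima asintotica autovalori}. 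Without it, the resonant strips would have width independent of $k$ and the sum in $(j,k)$ would diverge, and the whole scheme would fail for $x$-dependent non-symmetric perturbations.
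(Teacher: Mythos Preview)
Your proposal has a genuine gap precisely at the point you yourself flag as ``the main obstacle'': the $L^-_\infty$ family with $j = k$ and $\ell \neq 0$. Your constraint $|j-k|\le C\langle\ell\rangle/(j+k)$ gives no restriction whatsoever when $j=k$, so for each fixed $\ell$ there are infinitely many admissible $j$, and your sum diverges. The $1/k$ decay you invoke is necessary but not sufficient: for $j=k$ and $a\neq b$ the eigenvalue of $L^-_\infty(\ell,j,j)$ is $\omega\cdot\ell + (\lambda_j^{(+)}-\lambda_j^{(-)})$ with $|\lambda_j^{(+)}-\lambda_j^{(-)}|=O(1/j)$, so the resonance condition reads $|\omega\cdot\ell|\le 2\gamma\langle\ell\rangle^{-\tau}+C/j$. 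Using only the diophantine bound $|\omega\cdot\ell|\ge 4\gamma\langle\ell\rangle^{-\tau}$ coming from $\O^{\rm Mel}_0=\Omega_{4\gamma,\tau}$ gives $j\lessdot\gamma^{-1}\langle\ell\rangle^\tau$, and summing $\gamma\langle\ell\rangle^{-\tau-1}$ over $j\le C\gamma^{-1}\langle\ell\rangle^\tau$ yields $\sum_\ell\langle\ell\rangle^{-1}$, which diverges in $\Z^S$.

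The paper circumvents this by intersecting with an \emph{auxiliary} diophantine set $\Omega_{\gamma_*,\tau_*}$ with $\gamma_*:=\gamma^{1/2}$ and $\tau_*:=|S|+1<\tau$ (see \eqref{def:gamma}--\eqref{diofanteo ausiliare}). For $\omega\in\Omega_{\gamma_*,\tau_*}$ one has the stronger lower bound $|\omega\cdot\ell|\ge\gamma_*\langle\ell\rangle^{-\tau_*}$, and since $\gamma_*\gg\gamma$ and $\tau_*<\tau$ this forces $j\lessdot\gamma_*^{-1}\langle\ell\rangle^{\tau_*}$ (Lemma~\ref{limitazioni indici risonanti}\,$(iv)$). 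The resulting sum $\gamma\gamma_*^{-1}\sum_\ell\langle\ell\rangle^{-(\tau+1-\tau_*)}$ converges because $\tau+1-\tau_*=|S|+1$, at the price of the extra factor $\gamma_*^{-1}$; this is why the final bound is $O(\gamma^{1/2})$ rather than $O(\gamma)$, and why $\frak b\le 1/2$ in \eqref{estimate-meas-final}. The complement $\Omega\setminus\Omega_{\gamma_*,\tau_*}$ has measure $O(\gamma_*)=O(\gamma^{1/2})$ by the standard diophantine estimate, so the two pieces balance. Your proposal contains no analogue of this mechanism.

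A secondary issue: your reduction ``replace the $\io_n$--dependent blocks by those at the limit embedding $\io$'' is problematic as stated, since $\io$ is only defined on $\O^{\rm Mel}_\infty$ and you are trying to estimate the measure of its complement. The paper instead telescopes $\Omega\setminus\O^{\rm Mel}_\infty=\bigcup_n(\O^{\rm Mel}_n\setminus\O^{\rm Mel}_{n+1})$ and uses Theorem~\ref{teorema variazione autovalori} (applied to $\io_{n-1},\io_n$, both of which are defined on $\O^{\rm Mel}_n$) to show that the resonant sets with $|\ell|\le N_{n-1}$ are already excluded at step $n$ (Lemma~\ref{risonanti-1}, Corollary~\ref{excisione parametri io}). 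This yields the summable decay $N_{n-1}^{-1}$ over $n$. Finally, note that your stated measure bound for a single resonant strip is missing the factor $\langle j^2\pm k^2\rangle$ (cf.\ Lemma~\ref{stima in misura insiemi risonanti}); this does not affect convergence but the bookkeeping in your displayed sum is inconsistent with it.
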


The remaining part of this section is devoted to the proof of Theorem \ref{measure estimate}. 
We first choose 
\be\label{def:gamma}
\gamma_* := \gamma^{1/2} = \e^{{\frak a}/2}, \quad 
\tau_* := |S| + 1 \, . 
\ee
Note that, by \eqref{prop-def}, we have $ 8 \gamma  < \gamma_* < 1 $.
Then we consider the set of diophantine frequencies (cf \eqref{Omega o Omega gamma tau})
\begin{equation}\label{diofanteo ausiliare}
\Omega_{\gamma_*, \tau_*} = \big\{ \omega \in \Omega : 
|\omega \cdot \ell| \geq \frac{\gamma_*}{|\ell|^{\tau_*}}\,, 
\quad \forall \ell \in \Z^S \setminus \{ 0 \} \big\}\, . 
\end{equation}
To estimate the Lebesgue measure of the set $\Omega \setminus \O^{\rm Mel}_\infty$, note that
\begin{equation}\label{zero decomposizione risonante}
\Omega \setminus \O^{\rm Mel}_\infty \subseteq (\Omega \setminus \Omega_{\gamma_*, \tau_*}) \cup (\Omega_{\gamma_*, \tau_*} \cap \Omega \setminus \O^{\rm Mel}_\infty)\,.
\end{equation}
Since $\Omega$ is compact and $\tau_* = |S| + 1$, one verifies by a standard  estimate that
\begin{equation}\label{primo pezzo misura}
{\rm meas} (\Omega \setminus \Omega_{\gamma_*, \tau_*} ) = O(\gamma_*)
\stackrel{\eqref{def:gamma}}  = O(\e^{{\frak a}/2}) \, . 
\end{equation}
To deduce  Theorem \ref{measure estimate} 
it thus remains to prove that the measure of 
$ (\Omega \setminus \O^{\rm Mel}_\infty) \cap \Omega_{\gamma_*, \tau_*}$ satisfies 
the estimate  \eqref{estimate-meas-final}. 
Recall that by \eqref{definizione cal G infty},  % by Corollary \ref{corollario finale nash moser},
$\O^{\rm Mel}_\infty = \cap_{n \geq 0} \O^{\rm Mel}_n$ where,  
according to \eqref{def:Gn+1}-\eqref{def:G-0},  % Theorem \ref{iterazione-non-lineare}, 
the sequence of subsets
$(\O^{\rm Mel}_n)_{n \ge 0}$ is defined inductively by
\be\label{Melnikov condition in NM A}
\O^{\rm Mel}_0 = \Omega_{2 \gamma_0, \tau}\,, \quad \text{and} \quad 
\O^{\rm Mel}_{n + 1} = \Omega^{2 \gamma_n}_{\rm Mel}(\io_n)\,, \,\,\, n \ge 0.
\ee 
Here $\gamma_n = \gamma (1 + 2^{- n})$ (hence $\gamma_0 = 2\g$) and 
$\Omega_{\rm Mel}^{2 \gamma_n}(\io_n)$ is defined by \eqref{cantor for invertibility},
\eqref{Omegainfty},
\be\label{Melnikov condition in NM B}
\Omega_{\rm Mel}^{2 \gamma_n}(\io_n) = \big\{ \omega \in \O^{\rm Mel}_n :
 ({{\bf M}^{I}_{2 \gamma_n}})_\infty\,,\,({{\bf M}_{+, 2 \gamma_n}^{II}})_\infty\,,\,({{\bf M}_{-, 2 \gamma_n}^{II}})_\infty\,\, \text{hold} \big\}\,.
\ee
According to \eqref{prime melnikov off diagonali finali matrici},
\eqref{seconde melnikov off diagonali finali matrici}, and
\eqref{seconde melnikov diagonali finali matrici}
the Melnikov conditions 
$({{\bf M}^{I}_{2 \gamma_n}})_\infty$, $({{\bf M}_{+, 2 \gamma_n}^{II}})_\infty$, and $({{\bf M}_{-, 2 \gamma_n}^{II}})_\infty$ 
 for the Lipschitz family $\io_n \equiv \io_n(\, \cdot \,; \omega)$, \, 
$\omega \in \O^{\rm Mel}_n$, are defined as follows:

\noindent
{$ ({{\bf M}^{I}_{2 \gamma_n}})_\infty$} For any $\ell \in \Z^S$, $j \in S_+^\bot$, the linear operator
\begin{equation}\label{matrice prime Melnikov stime in misura}
{ A}_\infty(\ell, j ; \, \omega , \io_n(\omega)) := 
\omega \cdot \ell \, {\rm Id}_2 + [{\bf N}_\infty^{(1)}(\omega, \io_n(\omega))]_j^j\,,
\end{equation}
acting on the vector space $\C^2$ (cf Lemma \ref{final blocks normal form}), is invertible and 
\begin{equation}\label{prime di Melnikov stime in misura}
\| {A}_\infty(\ell, j ; \, \omega , \io_n(\omega))^{- 1}\| \leq 
\frac{\langle \ell \rangle^\tau}{2 \gamma_n  \langle j \rangle^2}\,.
\end{equation}

\noindent
{$({{\bf M}_{+, 2 \gamma_n}^{II}})_\infty$} For any $\ell \in \Z^S$, $j, k \in S_+^\bot$,
the linear operator 
\begin{equation}\label{definizione L infinito + seconde Melnikov stime misura}
L_\infty^{+}(\ell, j, k; \, \omega, \io_n(\omega)) :=  \omega \cdot \ell \,\, {\rm Id}_{\C^{2 \times 2}} + M_L([{\bf N}_\infty^{(1)}(\omega, \io_n(\omega))]_j^j) + M_R([\overline{\bf N}_\infty^{(1)}(\omega, \io_n(\omega))]_k^k)\,,
\end{equation}
acting on the vector space $\C^{2 \times 2}$ of $2 \times 2$ matrices
(cf \eqref{definizione L infinito + seconde Melnikov}), is invertible and 
\begin{equation}\label{seconde melnikov off diagonali finali matrici misura}
\|L_\infty^+(\ell, j, k; \, \omega , \io_n(\omega))^{- 1} \| \leq \frac{\langle \ell\rangle^\tau}{2 \gamma_n \langle j^2 + k^2 \rangle}\,.
\end{equation}

\noindent
{ $({{\bf M}_{-, 2 \gamma_n}^{II}})_\infty$} For any $\ell \in \Z^S$, $j, k \in S_+^\bot$ with 
$(\ell,j,k) \neq (0, j, j)$, the linear operator
\begin{equation}\label{definizione L infinito - seconde Melnikov stime misura}
L_\infty^{-}(\ell, j, k; \, \omega, \io_n(\omega)) :=  \omega \cdot \ell \,\, {\rm Id}_{\C^{2 \times 2}} + M_L([{\bf N}_\infty^{(1)}(\omega, \io_n(\omega))]_j^j) - M_R([{\bf N}_\infty^{(1)}(\omega, \io_n(\omega))]_k^k)\,,
\end{equation}
acting on the vector space $\C^{2 \times 2}$ of $2 \times 2$ matrices
(cf \eqref{definizione L infinito - seconde Melnikov}), is invertible and 
\begin{equation}\label{seconde melnikov diagonali finali matrici misura}
\| L_\infty^-(\ell, j, k; \, \omega, \io_n(\omega))^{- 1} \| \leq \frac{\langle \ell \rangle^\tau}{2 \gamma_n \langle j^2 - k^2 \rangle}\,.
\end{equation}
Since the sequence $\O^{\rm Mel}_n$, $n \ge 0$, is decreasing,
$(\Omega \setminus \O^{\rm Mel}_\infty )\cap \Omega_{\gamma_*, \tau_*}$
can be written as a disjoint union,
\begin{equation}\label{secondo pezzo misura}
(\Omega \setminus \O^{\rm Mel}_\infty )\cap \Omega_{\gamma_*, \tau_*} =  
\Big( \big( \Omega \setminus \O^{\rm Mel}_0 \big) \cap 
\Omega_{\gamma_*, \tau_*} \Big) \cap 
\Big(  \bigcup_{n \geq 0} \big(\O^{\rm Mel}_{n} \setminus \O^{\rm Mel}_{n + 1}  \big) 
\cap \Omega_{\gamma_*, \tau_*} \Big)\,.
\end{equation}
%We estimate ${\rm meas}\big(\Omega \setminus \O^{\rm Mel}_0 \big)$, 
%${\rm meas}\big( (\O^{\rm Mel}_n \setminus \O^{\rm Mel}_{n + 1})
%\cap \Omega_{\gamma_*, \tau_*} \big)$, $n \geq 0$, separately.
Since $\O^{\rm Mel}_0 = \Omega_{4 \gamma, \tau}$, we have, by a standard  estimate, 
\begin{equation}\label{terzo pezzo misura}
{\rm meas} \big( \Omega \setminus \O^{\rm Mel}_0 \big) 
%\cap \Omega_{\gamma_*, \tau_*} \big)  
= O(\gamma)\, . 
\end{equation} 
To estimate the measure of 
$ (\O^{\rm Mel}_n \setminus \O^{\rm Mel}_{n + 1}) \cap \Omega_{\gamma_*, \tau_*}$, write
\begin{equation}\label{espansione risonanti}
\big(\O^{\rm Mel}_{n} \setminus \O^{\rm Mel}_{n + 1}  \big) 
\cap \Omega_{\gamma_*, \tau_*} = 
\Big( \bigcup_{\begin{subarray}{c} \ell \in \Z^S \\ j \in S^\bot_+\end{subarray}} 
Q_{\ell j}(\io_n) \Big) 
\cup 
\Big( \bigcup_{\begin{subarray}{c} \ell \in \Z^S \\ j, k \in S^\bot_+\end{subarray}}
R_{\ell j k}^{+}(\io_n) \Big)
\cup 
\Big( \bigcup_{\begin{subarray}{c} \ell \in \Z^S, \,  j, k \in S^\bot_+ \\  (\ell, j, k) \neq (0, j, j) 
\end{subarray}} R_{\ell j k}^{-}(\io_n) \Big)
\end{equation}
where, by \eqref{prime di Melnikov stime in misura}, 
\eqref{seconde melnikov off diagonali finali matrici misura}, \eqref{seconde melnikov diagonali finali matrici misura}, 
for any $\ell \in \Z^S$, $j, k$ in $S^\bot_+$, and $n \ge 0$,
\begin{align}
\label{risonanti prime Melnikov} Q_{\ell j}(\io_n) & := \Big\{ \omega \in \O^{\rm Mel}_n 
\cap \Omega_{\gamma_*, \tau_*} : \,\, \text{ either \,\,}
\, {A}_\infty(\ell, j ; \omega , \io_n(\omega))\,\, \text{ not\,\,invertible\,\,\,\, or} \\
& \qquad {A}_\infty(\ell, j ; \omega , \io_n(\omega))
\text{\,invertible\,\,and}\, \| { A}_\infty(\ell, j ; \omega , \io_n(\omega))^{- 1}\| > 
\frac{\langle \ell \rangle^\tau}{2 \gamma_n \langle j \rangle^2}  \Big\}\,, 
\nonumber \\
 \label{risonanti seconde di melnikov somma}
R_{\ell j k}^+(\io_n) & := \Big\{ \omega \in \O^{\rm Mel}_n \cap \Omega_{\gamma_*, \tau_*} : 
\,\, \text{ either \,\,}
 \, {L}_\infty^+(\ell, j, k ; \omega , \io_n(\omega))\,\, \text{ not\,\,invertible\,\,\,\,or} \\
& \qquad  {L}_\infty^+(\ell, j, k ; \omega , \io_n(\omega)) \text{\,invertible\,\,and}\, 
\| { L}_\infty^+(\ell, j, k ; \omega , \io_n(\omega))^{- 1}\| > 
\frac{\langle \ell \rangle^\tau}{2\gamma_n \langle j^2 + k^2 \rangle}  \Big\}\,, 
\nonumber \\
 \label{risonanti seconde di melnikov differenza}
R_{\ell j k}^-(\io_n) & := \Big\{ \omega \in \O^{\rm Mel}_n \cap \Omega_{\gamma_*, \tau_*} : 
\,\, \text{ either \,\,}
 \, {L}_\infty^-(\ell, j, k ; \omega , \io_n(\omega))\,\, \text{\, not\,\,invertible\,\,\,\, or} \\
& \qquad 
{L}_\infty^-(\ell, j, k ; \omega , \io_n(\omega))\text{\,\,invertible\,\,and}\, \| { L}_\infty^-(\ell, j, k ; \omega , \io_n(\omega))^{- 1}\| > \frac{\langle \ell \rangle^\tau}{2\gamma_n \langle j^2 - k^2 \rangle}  \Big\}\,.
\nonumber
\end{align}
Actually many of the subsets in \eqref{espansione risonanti} turn out to be empty due to the overlapping 
of $ \O^{\rm Mel}_n $ and $ \O^{\rm Mel}_{n+1} $. 
%First we need to establish some auxiliary results.
In order to show this we first prove that the eigenvalues of the normal form $ {\bf N}_\infty^{(1)} $
(cf Lemma \ref{final blocks normal form})
evaluated at two consecutive approximate solutions $ \breve \io_{n}, \breve \io_{n - 1} $
are very close to each other. 

\begin{lemma}
For any $n \geq 1$, 
\begin{equation}\label{marco}
\sup_{j \in S_+^\bot} \big\|   [{\bf N}_\infty^{(1)}( \io_n) - 
{\bf N}_\infty^{(1)}(\io_{n - 1})]_j^j  \big\|
\lessdot \e \gamma^{- 2} N_{n - 1}^{-\a} \, , \quad \forall \omega \in \O^{\rm Mel}_n \, , 
\end{equation}
where  $ \a = 6 \t + 4$ (cf \eqref{alpha beta})
and $[{\bf N}_\infty^{(1)} (\io_n) ]_j^j$ is a short for $ [{\bf N}_\infty^{(1)}(\omega, \io_n(\omega)) ]_j^j$.
\end{lemma}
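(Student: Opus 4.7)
\emph{Plan.} The natural setup is to apply Theorem~\ref{teorema variazione autovalori} to the pair $\io^{(1)} := \io_{n-1}$, $\io^{(2)} := \io_n$. The hypotheses of that theorem are directly verifiable: by construction $\Omega_o(\io_n) = \mathcal{O}^{\rm Mel}_n \subseteq \mathcal{O}^{\rm Mel}_{n-1} = \Omega_o(\io_{n-1})$ and $\Omega_o(\io_{n-1}) \subseteq \mathcal{O}^{\rm Mel}_0 = \Omega_{4\gamma, \tau} \subseteq \Omega_{2\gamma, \tau}$; the smallness condition \eqref{final KAM smallness condition} for $\io_n$ and $\io_{n-1}$ is $(NM1)_n$ and $(NM1)_{n-1}$. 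The difference $\Delta_{12}\io = \io_n - \io_{n-1} = \widehat \io_n$ then satisfies, by \eqref{Hn} and the choice of $\mu_1$ so that $\bar\mu + \beta \le \mu_1$,
\[
\|\widehat \io_n\|_{s_0 + \bar\mu + \beta}^{\sup} \;\lessdot\; \varepsilon \gamma^{-2} N_{n-1}^{-\alpha_1}.
\]

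The proof itself will proceed by triangle inequality at the finite truncation level $\nu = n$,
\[
[{\bf N}_\infty^{(1)}(\io_n) - {\bf N}_\infty^{(1)}(\io_{n-1})]_j^j = A_1 + A_2 + A_3,
\]
with $A_1 := [{\bf N}_\infty^{(1)}(\io_n) - \widetilde{\bf N}_n^{(1)}(\io_n)]_j^j$, $A_3 := [\widetilde{\bf N}_n^{(1)}(\io_{n-1}) - {\bf N}_\infty^{(1)}(\io_{n-1})]_j^j$, and $A_2 := [\widetilde{\bf N}_n^{(1)}(\io_n) - \widetilde{\bf N}_n^{(1)}(\io_{n-1})]_j^j$, where $\widetilde{\bf N}_n^{(1)}$ denotes the Lipschitz extension provided by Theorem~\ref{thm:abstract linear reducibility}~$({\bf S2})_n$. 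The two tail terms will be bounded by \eqref{stime blocchi 2 per 2 finali} of Lemma~\ref{final blocks normal form} applied to $\io_n$ and $\io_{n-1}$ separately, after inserting the uniform bound $|{\bf R}_0(\io){\frak D}|_{s_0+\beta,\sigma-1}^{\Lipg} \lessdot \varepsilon$ — itself a consequence of \eqref{stima R0 riducibilita} combined with $(NM1)_n$, $(NM1)_{n-1}$. This yields $\|A_1\|,\|A_3\| \lessdot \varepsilon N_{n-1}^{-\alpha} j^{-1}$.

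For the middle term $A_2$, the key point will be to identify $\widetilde{\bf N}_n^{(1)}$ with the non-extended matrix $[{\bf N}_n^{(1)}]_j^j$ on $\mathcal{O}^{\rm Mel}_n$. For $\io_{n-1}$ this is immediate from Lemma~\ref{inclusion of cantor sets}, since $\mathcal{O}^{\rm Mel}_n = \Omega_{\rm Mel}^{2\gamma_{n-1}}(\io_{n-1}) \subseteq \bigcap_\nu \Omega_\nu^{\gamma_{n-1}}(\io_{n-1})$. For $\io_n$ I will invoke $({\bf S2})_n$ of Theorem~\ref{teorema variazione autovalori} with $\rho = \gamma/2 < \gamma_{n-1}/2$: the smallness hypothesis $C_{\rm var}' N_{n-1}^\tau \|\widehat \io_n\|_{s_0+\bar\mu+\beta}^{\sup} \le \gamma/2$ reduces, via the displayed bound above, to $N_{n-1}^{\tau - \alpha_1} \lessdot \gamma^3/\varepsilon$, which holds because $\alpha_1 = 2\mu_1 + 2/3 \gg \tau$ (using $\mu_1 \ge \mu_0 = 4s_0 + 10\tau + 7$) and because $\varepsilon \gamma^{-3} = \varepsilon^{1-3\mathfrak{a}} \ll 1$ under \eqref{prop-def}. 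Once the inclusion $\mathcal{O}^{\rm Mel}_n \subseteq \Omega_n^{\gamma_{n-1}-\gamma/2}(\io_n)$ is established, estimate \eqref{Delta12 rj} of $({\bf S1})_n$ applies and gives $\|A_2\| \lessdot \|\widehat \io_n\|_{s_0+\bar\mu+\beta}^{\sup} \lessdot \varepsilon \gamma^{-2} N_{n-1}^{-\alpha_1}$.

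Summing the three contributions and using $\alpha_1 \ge \alpha$ (again from $\mu_1 \ge \mu_0$ versus $\alpha = 6\tau + 4$), the right-hand side is dominated by $\varepsilon \gamma^{-2} N_{n-1}^{-\alpha}$, which is \eqref{marco}. The \emph{main obstacle} is the verification step for $A_2$: one must ensure that the second-order Melnikov conditions which define $\mathcal{O}^{\rm Mel}_n$ along the previous torus $\io_{n-1}$ are still strong enough, after a controlled loss of Melnikov constant, to control the divisors for $\io_n$ at the finite truncation level $\nu = n$. This is precisely what the Nash--Moser decay rate $\alpha_1$, which is much larger than $\tau$ by the choice of $\mu_1$, is designed to guarantee.
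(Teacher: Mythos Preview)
Your proof is correct and follows essentially the same route as the paper: apply Theorem~\ref{teorema variazione autovalori} to the pair $(\io_{n-1},\io_n)$, use $({\bf S2})_n$ together with Lemma~\ref{inclusion of cantor sets} to secure $\mathcal{O}^{\rm Mel}_n \subseteq \Omega_n^{\gamma_{n-1}}(\io_{n-1}) \cap \Omega_n^{\gamma'}(\io_n)$ for a suitable $\gamma' \in [\gamma/2,2\gamma]$, then bound $A_2$ via \eqref{Delta12 rj} and the tails $A_1,A_3$ via \eqref{stime blocchi 2 per 2 finali} with $|{\bf R}_0{\frak D}|_{s_0+\beta,\sigma-1}^{\Lipg}\lessdot \e$. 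The only cosmetic differences are that the paper takes $\rho=\gamma 2^{-n}$ (so that $\gamma_{n-1}-\rho=\gamma_n$ exactly) while you take $\rho=\gamma/2$, and the paper works directly with $[{\bf N}_n^{(1)}]_j^j$ rather than passing through the extension $[\widetilde{\bf N}_n^{(1)}]_j^j$; neither affects the argument.
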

\begin{proof}
We first task is to 
% begin with some preliminary considerations.
 show that ${\bf (S2)_{\nu}}$ of Theorem \ref{teorema variazione autovalori}  
with ($\nu,$ $\g$, $\rho$,  $\io^{(1)}$, $\io^{(2)}$) given by 
($n$,  $\g_{n-1} $, $\g 2^{-n} $, $\io_{n-1}$, $\io_n $), applies. 
Since  $ \rho =  \g2^{-n} < \g_{n-1} /2$ and $\g_{n-1} - \rho = \g_n$
it means that
 \be \label{Theorem 7.3 applied}
\Omega_{\nu}^{\g_{n-1}} ( \io_{n-1}) \cap \O^{\rm Mel}_{n} \subseteq 
\Omega_{\nu}^{\g_n} ( \io_n )\,, \quad \forall \, \nu \ge 0\,.
\ee
Since $n \ge 1$ one has  by \eqref{Melnikov condition in NM A}
$\O^{\rm Mel}_{n} = \Omega^{2 \gamma_{n-1}}_{\rm Mel}(\io_{n-1})$ and 
from \eqref{Melnikov condition in NM B} and 
 Lemma \ref{inclusion of cantor sets} one concludes that
$$
\Omega^{2 \gamma_{n-1}}_{\rm Mel}(\io_{n-1}) \subseteq
\Omega_\infty^{2 \gamma_{n-1}}(\io_{n-1}) \subseteq 
\cap_{\nu \geq 0} \Omega_\nu^{\gamma_{n-1}}(\io_{n-1})\,.
$$
In particular, one has $\O^{\rm Mel}_{n} \subseteq \Omega_{n}^{\g_{n-1}} ( \io_{n-1})$
and hence for $\nu = n$, the inclusion  \eqref{Theorem 7.3 applied} becomes
\be\label{primoste}
\O^{\rm Mel}_{n} \subseteq 
\Omega_{n}^{\g_{n-1}} ( \io_{n-1}) \cap \Omega_{n}^{\g_n} ( \io_n ) \, . 
\ee
To justify that ${\bf (S2)_{\nu}}$ of Theorem \ref{teorema variazione autovalori}  
in the situation above applies it remains to verify 
the smallness condition in \eqref{legno} of 
Theorem \ref{teorema variazione autovalori}: To see it, recall that
$\bar \mu = 4s_0 + 2\tau +1$ 
(cf \eqref{perdita mu dopo prime trasformazioni}),
$\b = 6\tau + 5$ (cf \eqref{alpha beta}),
$\mu_0 = 4s_0 + 10 \tau + 7$
(cf remark after Theorem \ref{invertibility of frak L omega}),
and $\mu_0 < \mu_1$ (cf Theorem \ref{thm:stima inverso approssimato}).
Therefore $s_0 + \bar \mu + \beta < s_0 + \mu_0 < s_0 + \mu_1$
 and in turn
$\Vert \io_n - \io_{n - 1} \Vert_{s_0 + \bar \mu + \beta} 
\leq \Vert \io_n - \io_{n - 1} \Vert_{ s_0 + \mu_1} $.
Furthermore, by \eqref{Hn} 
$$
\Vert \io_n - \io_{n - 1}\Vert_{ s_0 + \mu_1} \lessdot  N_{n - 1}^{- \alpha_1}  \e \g^{-2} \,.
$$
Since $\alpha_1 = 2\mu_1 + 2/3 > \tau $ (cf \eqref{costanti nash moser}) one has
$N_{n - 1}^{\tau} N_{n - 1}^{- \alpha_1} \le 1$.
Altogether we proved that for some $C' > 0$,
$
 C_{\rm var}' N_{n - 1}^{\tau} \Vert \io_n - \io_{n - 1} \Vert_{s_0 + \bar \mu + \beta} 
\leq C'   \e \g^{-2} 
$
implying that
$$
 C_{\rm var}' N_{n - 1}^{\tau} \Vert \io_n - \io_{n - 1} \Vert_{s_0 + \bar \mu + \beta} 
 \leq  \g 2^{-n } = \rho
$$
for $\e \gamma^{-3}$ small enough. Hence the smallness condition in \eqref{legno} 
is satisfied and therefore \eqref{primoste} holds.
 %(see \eqref{hn}, \eqref{perdite stima tame linearizzato}). 
%By the definitions \eqref{def:Gn+1} and \eqref{cantor for invertibility}, \eqref{Omegainfty}, 
%one  has $\O^{\rm Mel}_{n} \, \subseteq \, \O^{\rm Mel}_{n-1} \cap 
%\Omega_{\infty}^{2 \g_{n-1}} (\io_{n-1})$. Furthermore, 
%$$ \O^{\rm Mel}_{n-1} \cap \Omega_{\infty}^{2 \g_{n-1}} (\io_{n-1}) \,
%\stackrel{Lemma\,\ref{inclusion of cantor sets}} \subseteq \,
%\bigcap_{\nu \geq 0} \Omega_{\nu}^{\gamma_{n - 1}}(\io_{n - 1}) \,
%\subset \, \Omega_{n}^{\g_{n-1}}(\io_{n-1}) \,
%\stackrel{\eqref{primoste}} \subseteq \, \Omega_{n}^{\g_n}(\io_n).$$

Since by \eqref{primoste}
$\O^{\rm Mel}_n  \subset \Omega_{n}^{\g_{n-1}}(\io_{n-1}) \cap \Omega_{n}^{\g_n}(\io_n)$
the $2 \times 2$ matrices $ [{\bf N}_{n}^{(1)} (\io_{n-1})]_j^j $ and $ [{\bf N}_{n}^{(1)} (\io_{n})]_j^j $ are defined for any $ \omega \in \O^{\rm Mel}_n$, and
by the estimate \eqref{Delta12 rj} of Theorem \ref{teorema variazione autovalori}
 with $ \nu = n $  one has
\be\label{vicin+1}
\sup_{j \in S_+^\bot} \big\| 
\big[{\bf N}_{n}^{(1)}(\io_n) - {\bf N}_{n }^{(1)}(\io_{n - 1}) \big]_j^j \big\| \stackrel{\eqref{Delta12 rj}} 
\lessdot   \| \io_{n} - \io_{n - 1} \|_{s_0 + \bar \mu + \beta} \lessdot \| \io_{n} - \io_{n - 1} \|_{s_0 + \mu_1} \, .
\ee 
Moreover \eqref{stime blocchi 2 per 2 finali}  (with $ \nu = n $) and \eqref{asdf} imply that for any $j \in S_+^\bot$ 
\begin{eqnarray}\label{diffrkn}
 \big\| \big[{\bf N}_\infty^{(1)}(\io_{n - 1}) - {\bf N}_{n}^{(1)}(\io_{n - 1}) \big]_j^j \big\| \,,  \, \,
\,\, \big\| \big[{\bf N}_\infty^{(1)}(\io_{n}) - {\bf N}_{n}^{(1)}(\io_{n}) \big]_j^j \big\| & \lessdot &
\e  N_{n - 1}^{- \alpha} \,.
\end{eqnarray}
Since  
$\big\| \big[{\bf N}_\infty^{(1)}(\io_n) - {\bf N}_\infty^{(1)}(\io_{n - 1}) \big]_j^j \big\|$
is bounded by
$$
\big\| \big[{\bf N}_{n }^{(1)}(\io_n) - {\bf N}_{n }^{(1)}(\io_{n - 1}) \big]_j^j \big\| 
+ \big\| \big[{\bf N}_\infty^{(1)}(\io_{n - 1}) - {\bf N}_{n }^{(1)}(\io_{n - 1}) \big]_j^j \big\| 
+ \big\| \big[{\bf N}_\infty^{(1)}(\io_{n}) - {\bf N}_{n }^{(1)}(\io_{n}) \big]_j^j \big\| 
$$
one then concludes that  for any $\omega \in \O^{\rm Mel}_{n}$ and any  $ j \in S_+^\bot$,
$$
\big\| \big[{\bf N}_\infty^{(1)}(\io_n) - {\bf N}_\infty^{(1)}(\io_{n - 1}) \big]_j^j \big\|
 \stackrel{\eqref{vicin+1}, \eqref{diffrkn}} \lessdot
 \| \io_n - \io_{n - 1} \|_{ s_0+ \mu_1}  + \e  N_{n - 1}^{-\a}
\stackrel{\eqref{Hn}} \lessdot  \e \g^{-2}  N_{n - 1}^{-\a}  
$$
where for the latter inequality we used that $ \alpha_1 > \a $ since
$\alpha_1 = 2\mu_1 + 2/3$ and $\mu_1 > \bar \mu + \a$
 (cf \eqref{costanti nash moser},  \eqref{alpha beta}). 
The claimed estimate \eqref{marco} is thus established. 
\end{proof}

\begin{lemma}\label{risonanti-1}
For  $ \e  \g^{-4}$ small enough one has for any $n \geq 1$, 
$\ell \in \Z^S$ with $|\ell| \leq N_{n - 1}$, and  $j, k \in S_+^\bot$,
\begin{equation}\label{inclusione-1}
Q_{\ell j}(\io_n) = \emptyset\,, \qquad 
R_{\ell jk}^{+}(\io_{n}) = \emptyset \,,
\end{equation}
and, if in addition $(\ell,j,k) \neq (0, j, j)$, 
\be\label{inclusione-1 for R-}
R_{\ell jk}^{-}(\io_{n}) = \emptyset  \, . 
%R_{\ell jk}^{-}(\io_{n}) \subseteq R_{\ell jk}^{-}(\io_{n - 1})\,.
\ee
\end{lemma}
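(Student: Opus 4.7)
The plan is to transfer the second-order Melnikov conditions from the previous torus embedding $\io_{n-1}$, where they are guaranteed by membership in $\O^{\rm Mel}_n$, to the current embedding $\io_n$, at the cost of only the tiny gap $\gamma_{n-1} - \gamma_n = \gamma\, 2^{-n}$ between the two parameters. The key observation is that any $\omega \in \O^{\rm Mel}_n$ lies in $\Omega^{2\gamma_{n-1}}_{\rm Mel}(\io_{n-1})$ by \eqref{Melnikov condition in NM A}, and hence in $\Omega^{2\gamma_{n-1}}_\infty(\io_{n-1})$ by Lemma~\ref{inclusion of cantor sets}. Consequently, the bounds \eqref{prime di Melnikov stime in misura}, \eqref{seconde melnikov off diagonali finali matrici misura} and \eqref{seconde melnikov diagonali finali matrici misura} hold for $A_\infty$, $L^+_\infty$, $L^-_\infty$ evaluated at $\io_{n-1}$ with $\gamma_n$ replaced by $\gamma_{n-1}$. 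The second ingredient is the closeness estimate \eqref{marco}, which gives $\|[{\bf N}^{(1)}_\infty(\io_n) - {\bf N}^{(1)}_\infty(\io_{n-1})]_j^j \| \lessdot \e \gamma^{-2} N_{n-1}^{-\alpha}$ uniformly in $j \in S_+^\bot$.

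For the set $Q_{\ell j}(\io_n)$, I would decompose $A_\infty(\ell,j;\omega,\io_n) = A_\infty(\ell,j;\omega,\io_{n-1}) + \Delta$, where $\Delta := [{\bf N}^{(1)}_\infty(\io_n) - {\bf N}^{(1)}_\infty(\io_{n-1})]_j^j$, and factor it as $A_\infty(\io_{n-1}) \bigl( {\rm Id}_2 + A_\infty(\io_{n-1})^{-1} \Delta \bigr)$. The bound on $\|A_\infty(\io_{n-1})^{-1}\|$, the hypothesis $|\ell| \le N_{n-1}$, and \eqref{marco} give $\|A_\infty(\io_{n-1})^{-1} \Delta\| \lessdot \e \gamma^{-3} N_{n-1}^{\tau - \alpha}$. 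Since $\alpha = 6\tau + 4$ (cf \eqref{alpha beta}) and $\e \gamma^{-3}$ is small, this quantity is smaller than the gap ratio $\gamma \cdot 2^{-n}/\gamma_{n-1}$ for every $n \ge 1$, an inequality that follows from \eqref{nash moser smallness condition} together with the size of $N_0$ fixed in Theorem~\ref{iterazione-non-lineare}, using that $N_{n-1}^{-(5\tau+4)}$ beats $2^n$. A Neumann series then converts the bound for $A_\infty(\io_{n-1})^{-1}$ with denominator $2\gamma_{n-1}$ into a bound for $A_\infty(\io_n)^{-1}$ with denominator $2\gamma_n$, contradicting the definition \eqref{risonanti prime Melnikov} of $Q_{\ell j}(\io_n)$.

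The arguments for $R^+_{\ell jk}(\io_n)$ and $R^-_{\ell jk}(\io_n)$ are structurally identical. The difference $L^\pm_\infty(\ell,j,k;\omega,\io_n) - L^\pm_\infty(\ell,j,k;\omega,\io_{n-1})$ is a sum of an $M_L$-term built from $[{\bf N}^{(1)}_\infty(\io_n)-{\bf N}^{(1)}_\infty(\io_{n-1})]_j^j$ and an $M_R$-term built from the analogous block in $k$ (complex conjugated in the $+$ case). Since $M_L$ and $M_R$ on $\C^{2\times 2}$ have operator norm $\lessdot$ the matrix norm of their argument, the whole perturbation has norm $\lessdot \e\gamma^{-2} N_{n-1}^{-\alpha}$ uniformly in $j,k$. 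The denominator $\langle j^2 \pm k^2 \rangle$ plays exactly the role of $\langle j \rangle^2$ above and is $\ge 1$ in all relevant cases, the only forbidden triple $(0,j,j)$ for $L^-$ being excluded in \eqref{inclusione-1 for R-}. The same Neumann series argument then gives $R^\pm_{\ell jk}(\io_n) = \emptyset$. The one technical point that requires attention, and which I expect to be the main obstacle in a careful write-up, is verifying that the gain $N_{n-1}^{-(5\tau+4)}$ from the smoothing beats the loss $2^n$ from the gap ratio uniformly in $n \ge 1$; this is precisely where the super-exponential choice $N_{n-1} = N_0^{(3/2)^{n-1}}$ pays off, and no new smallness condition on $\e$ or $\gamma$ is required beyond \eqref{nash moser smallness condition}.
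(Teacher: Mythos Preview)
Your proposal is correct and follows essentially the same approach as the paper: factor $A_\infty(\io_n)$ (respectively $L^\pm_\infty(\io_n)$) through its value at $\io_{n-1}$, control the perturbation via \eqref{marco} to get a bound $\lessdot \e\gamma^{-3}N_{n-1}^{\tau-\alpha}$, and run a Neumann series to pass from the $2\gamma_{n-1}$-bound to the $2\gamma_n$-bound. The paper phrases the final inequality as $C\e\gamma^{-3}N_{n-1}^{\tau-\alpha}\le \tfrac{1}{1+2^n}=(\gamma_{n-1}-\gamma_n)/\gamma_n$ rather than your $(\gamma_{n-1}-\gamma_n)/\gamma_{n-1}$, but the two are equivalent up to a harmless factor and your observation that the super-exponential growth of $N_{n-1}$ absorbs the $2^n$ is exactly the point.
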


\begin{proof} Since the proofs of the three stated inclusions are similar
we only prove \eqref{inclusione-1 for R-}. 
For any $n \ge 1$,
$\ell \in \Z^S$ with $|\ell| \leq N_{n - 1}$,
$ j, k \in S_+^\bot$ with $(\ell,j,k) \neq (0, j, j)$, and $\omega \in \O^{\rm Mel}_n$,  
the operator $L_\infty^-(\ell, j, k; \io_{n - 1})$ is invertible and hence we can write
\begin{align*}
L_\infty^-(\ell, j, k; \io_n) & = L_\infty^-(\ell, j, k; \io_{n - 1}) \,
\big( {\rm Id}_{\C^{2 \times 2}} + L_\infty^-(\ell, j, k; \io_{n - 1})^{- 1} \Delta_\infty(j, k, n) \big)
\end{align*}
where 
$$
\Delta_\infty(j, k, n) := 
M_L \big( [{\bf N}_\infty^{(1)}( \io_n) - {\bf N}_\infty^{(1)}(\io_{n - 1})]_j^j \big) - 
M_R \big(  [{\bf N}_\infty^{(1)}(\io_n) - {\bf N}_\infty^{(1)}(\io_{n - 1})]_k^k \big)\,.
$$
Since
\begin{align*}
\big\| L_\infty^-(\ell, j, k; \io_{n - 1})^{- 1} \Delta_\infty(j, k, n) \big\| & \, 
\stackrel{\eqref{seconde melnikov diagonali finali matrici misura}}\leq \,
\frac{\langle \ell \rangle^\tau}{2 \gamma_{n - 1} \langle j^2 - k^2 \rangle} 
\| \Delta_\infty(j, k, n) \| 
\, \stackrel{\eqref{marco}}{\leq} \, C  \e \gamma^{- 3} 
\langle \ell \rangle^\tau N_{n - 1}^{ - \alpha} 
\end{align*}
and $|\ell| \leq N_{n - 1}$ (by assumption), $\alpha > \tau$ (cf \eqref{alpha beta}) 
 it follows that for $\e \gamma^{- 3}$ small enough,
$$
\big\| L_\infty^-(\ell, j, k; \io_{n - 1})^{- 1} \Delta_\infty(j, k, n) \big\|\leq  1/2 \,.
$$
Therefore $L_\infty^-(\ell, j, k; \io_n)$ is invertible by a Neumann series and 
\begin{align*}
\| L_\infty^-(\ell, j, k; \io_n)^{- 1} \| & \leq  
\| L_\infty^-(\ell, j, k; \io_{n - 1})^{- 1} \| 
\big(1 + C \e \gamma^{- 3} N_{n - 1}^{\tau - \alpha} \big) \,
\stackrel{\eqref{seconde melnikov diagonali finali matrici misura}}
\leq \, \frac{\langle \ell \rangle^\tau}{2 \gamma_{n - 1}\langle j^2 - k^2 \rangle} 
\big( 1 + C \e \gamma^{- 3} N_{n - 1}^{\tau - \alpha} \big)\,.
\end{align*}
Choosing $\e \gamma^{- 3}$ sufficiently small one achieves that
$C \e \gamma^{- 3} N_{n - 1}^{\tau - \alpha} \leq \frac{1}{1 + 2^n}$
for any $n\ge 1$.
Since by the definition of $\gamma_n$, 
$\frac{\gamma_{n - 1} - \gamma_n}{\gamma_n} = \frac{1}{1 + 2^n}$
it then follows that
$$
\| L_\infty^-(\ell, j, k; \io_n)^{- 1} \| \leq \frac{\langle \ell \rangle^\tau}{2 \gamma_{n}\langle j^2 - k^2 \rangle}\,.
$$
Hence, recalling \eqref{risonanti seconde di melnikov differenza}, we have proved that $R_{\ell jk}^{-}(\io_{n}) = \emptyset$.  
\end{proof}

As an immediate consequence of Lemma \ref{risonanti-1}, one gets the following
\begin{corollary}\label{excisione parametri io}
For any $n \geq 1$,
\be\label{parametri cattivi}
\big( \O^{\rm Mel}_{n} \setminus\O^{\rm Mel}_{n+1} \big) \cap \Omega_{\gamma_*, \tau_*} \, \stackrel{\eqref{espansione risonanti}}  = \,  
\Big( \bigcup_{\begin{subarray}{c}
|\ell| > N_{n - 1} \\
 j \in S^\bot_+
\end{subarray}} Q_{\ell j}(\io_n) \Big) \cup 
\Big( \bigcup_{\begin{subarray}{c}
|\ell| > N_{n - 1}  \\
j, k \in S^\bot_+
\end{subarray}}R_{\ell j k}^{+}(\io_n) \Big) \cup
\Big( \bigcup_{\begin{subarray}{c}
|\ell| > N_{n - 1} \,  j, k \in S^\bot_+ \\
 (\ell, j, k) \neq (0, j, j)
\end{subarray}} R_{\ell j k}^{-}(\io_n) \Big)  \,.   
\ee
\end{corollary}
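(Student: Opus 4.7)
The corollary follows essentially by inspection from the decomposition \eqref{espansione risonanti} together with the emptiness result of Lemma \ref{risonanti-1}, so my plan is simply to record this carefully. Starting point: for each $n \geq 1$, formula \eqref{espansione risonanti} already expresses $(\Omega^{\rm Mel}_n \setminus \Omega^{\rm Mel}_{n+1}) \cap \Omega_{\gamma_*, \tau_*}$ as the union, over all $\ell \in \Z^S$ and all relevant $j, k \in S_+^\bot$, of the three families of resonant sets $Q_{\ell j}(\io_n)$, $R^+_{\ell j k}(\io_n)$, $R^-_{\ell j k}(\io_n)$ defined in \eqref{risonanti prime Melnikov}--\eqref{risonanti seconde di melnikov differenza}, with the sole constraint $(\ell,j,k) \neq (0,j,j)$ in the last family.

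The next step is to split each of the three unions according to whether $|\ell| \leq N_{n-1}$ or $|\ell| > N_{n-1}$. For the low frequency part $|\ell| \leq N_{n-1}$, Lemma \ref{risonanti-1} applies directly: its three conclusions \eqref{inclusione-1} and \eqref{inclusione-1 for R-} tell us that $Q_{\ell j}(\io_n) = \emptyset$, $R^+_{\ell j k}(\io_n) = \emptyset$, and (when $(\ell,j,k) \neq (0,j,j)$) $R^-_{\ell j k}(\io_n) = \emptyset$. Hence all low frequency contributions drop out, and only the high frequency part $|\ell| > N_{n-1}$ survives in each of the three unions. This yields precisely the identity \eqref{parametri cattivi}, finishing the proof.

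There is essentially no obstacle here; the entire content of the corollary has been absorbed into Lemma \ref{risonanti-1}, whose proof relied on the key quantitative closeness estimate \eqref{marco} between the $2\times 2$ normal form blocks at consecutive approximate solutions $\io_{n-1}$ and $\io_n$, combined with a Neumann series argument using the gap $\gamma_{n-1} - \gamma_n = \gamma/(1+2^n)$ in the Melnikov constants and the budget $\alpha > \tau$ in \eqref{alpha beta}. Once Lemma \ref{risonanti-1} is granted, the corollary is a purely set-theoretic rewriting.
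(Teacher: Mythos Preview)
Your proof is correct and follows essentially the same approach as the paper: both start from the decomposition \eqref{espansione risonanti} and use Lemma \ref{risonanti-1} to eliminate all contributions with $|\ell| \leq N_{n-1}$, leaving only the high-frequency terms. If anything, your write-up is slightly more direct, since you invoke the emptiness conclusions of Lemma \ref{risonanti-1} immediately, whereas the paper phrases the same step via inclusions before arriving at the identical conclusion.
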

\begin{proof}
By definition, $ R_{\ell jk}^{\pm}(\io_n)$, $Q_{\ell j}(\io_n) \subset \O^{\rm Mel}_n $ and, 
by \eqref{inclusione-1}, for any $\ell \in \Z^S$ with $ |\ell| \leq N_{n - 1} $, one has 
$ R_{\ell jk}^{\pm} (\io_n) \subseteq R_{ljk}^{\pm} (\io_{n-1}) $ and $Q_{\ell j}(\io_n) \subseteq Q_{\ell j}(\io_{n - 1})$.
By definition, one also has $ R_{\ell jk}^\pm(\io_{n-1}) \cap \O^{\rm Mel}_n$ and $Q_{\ell j}(\io_{n - 1}) \cap \O^{\rm Mel}_n$ are  empty sets. 
As a consequence, for any $\ell$ with $ |\ell| \leq N_{n - 1} $, 
$ R_{\ell jk}^{\pm} (\io_n)\,,\, Q_{\ell j}(\io_n) = \emptyset $. 
\end{proof}

The next lemma 
is the core of the measure estimates. To prove ($iv$) the key ingredients are the
asymptotic expansion of the dNLS frequencies of Theorem \ref{Corollary 2.2} ($ii$)  and the
one of the eigenvalues of  the normal form $ {\bf N}_\infty^{(1)} $
up to order $- 1$,  obtained in \eqref{prima asintotica autovalori}-\eqref{estimates-errors}.  

\begin{lemma}\label{limitazioni indici risonanti}
For any $n \geq 0$, $\ell \in \Z^S$, and $j, k \in S^\bot_+$, the following statements hold: 

\noindent
$(i)$ If $Q_{\ell j }(\io_n) \neq \emptyset$, then $j^2 \lessdot  \langle\ell \rangle$\,.
$(ii)$ If $R_{\ell j k}^{+}(\io_n) \neq \emptyset$, then $|j^2 + k^2| \lessdot  \langle\ell \rangle$.

\noindent
$(iii)$ If $R_{\ell j k}^{-}(\io_n) \neq \emptyset$ and $j \neq k$ then $|j^2 - k^2| \lessdot \langle\ell \rangle$. 
$ (iv)$ If $R_{\ell j j}^-(\io_n) \neq \emptyset$ and $\ell \ne 0$
 then $|j| \lessdot  \gamma_*^{- 1} \langle\ell \rangle^{\tau_*}$. 

As a consequence, for any $C> 0$ there are finitely many triples $(\ell, j, k) \ne (0, j, j)$
with $|\ell | \le C$ and $j, k \in S^\bot_+$ so that at least one of the sets
$Q_{\ell j }(\io_n)$, $R_{\ell j k}^{+}(\io_n)$, or $R_{\ell j k}^{-}(\io_n)$  is nonempty.
\end{lemma}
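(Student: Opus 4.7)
\medskip

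\noindent
\emph{Proof proposal.} The unifying approach is to analyze the spectra of the self-adjoint operators ${A}_\infty(\ell,j)$, $L_\infty^{\pm}(\ell,j,k)$ by means of Lemma~\ref{properties operators matrices}, using as input the sharp asymptotic expansion \eqref{prima asintotica autovalori}--\eqref{estimates-errors} of the eigenvalues $\lambda_k^{(\pm)}$ of $[{\bf N}_\infty^{(1)}]_k^k$. Recall that
\[
\lambda_k^{(\pm)} \ = \ 4\pi^2 k^2 + c_{\xi,\e} + \frac{\rho_{\xi,\e}^{(\pm)}(k)}{k}\,, \qquad |c_{\xi,\e}|^{\sup}=O(1)\,,\ \sup_{k}|\rho_{\xi,\e}^{(\pm)}(k)|^{\sup}=O(1)\,.
\]
By Lemma~\ref{properties operators matrices}$(ii)$ the eigenvalues of ${A}_\infty(\ell,j)$ are $\omega\cdot\ell+\lambda_j^{(\pm)}$ and those of $L_\infty^{\pm}(\ell,j,k)$ are $\omega\cdot\ell+\lambda_j^{(\alpha)}\pm\lambda_k^{(\beta)}$ with $\alpha,\beta\in\{+,-\}$. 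Since these operators are self-adjoint, nonemptiness of $Q_{\ell j}$, $R^{\pm}_{\ell jk}$ is equivalent to the existence of some eigenvalue of modulus below the corresponding Melnikov threshold.

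\medskip

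\noindent
For items $(i)$--$(iii)$ the plan is as follows. In case $(i)$, nonemptiness of $Q_{\ell j}$ yields $\omega$ and a sign so that $|\omega\cdot\ell+4\pi^2 j^2+c_{\xi,\e}+\rho^{(\pm)}(j)/j|\le 2\gamma_n j^2/\langle\ell\rangle^\tau$. Since $\gamma_n\le 2\gamma\ll 1$ and $|\omega\cdot\ell|\lessdot\langle\ell\rangle$ uniformly on the compact set $\Omega$, the reverse triangle inequality gives $(4\pi^2-2\gamma_n)j^2\lessdot\langle\ell\rangle + O(1)$, hence $j^2\lessdot\langle\ell\rangle$. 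Item $(ii)$ is identical after replacing $j^2$ by $j^2+k^2$: all four combinations of eigenvalues $\omega\cdot\ell+\lambda_j^{(\alpha)}+\lambda_k^{(\beta)}$ have leading term $4\pi^2(j^2+k^2)+2c_{\xi,\e}+O(1)$, and the smallness of $\gamma_n$ absorbs the $2\gamma_n\langle j^2+k^2\rangle/\langle\ell\rangle^\tau$ factor. Item $(iii)$ is the analog with the combination $\omega\cdot\ell+\lambda_j^{(\alpha)}-\lambda_k^{(\beta)}$; here the $c_{\xi,\e}$ terms cancel and the leading term is $4\pi^2(j^2-k^2)+O(1/j)+O(1/k)$. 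For $j\ne k$ one has $|j^2-k^2|\ge|j+k|\ge 1$, so the same argument yields $|j^2-k^2|\lessdot\langle\ell\rangle$.

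\medskip

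\noindent
The main obstacle is item $(iv)$, where $j=k$ and the leading cancellation in $\lambda_j^{(\alpha)}-\lambda_j^{(\beta)}$ removes \emph{all} terms of size $O(1)$, leaving just $O(1/j)$. The four eigenvalues of $L_\infty^-(\ell,j,j)$ are $\omega\cdot\ell$ (with multiplicity two, corresponding to $\alpha=\beta$) and $\omega\cdot\ell\pm(\rho^{(+)}(j)-\rho^{(-)}(j))/j$. The key point is that the diophantine bound $\omega\in\Omega_{\gamma_*,\tau_*}$ forces, for $\ell\ne 0$,
\[
|\omega\cdot\ell|\ \ge\ \frac{\gamma_*}{|\ell|^{\tau_*}}\,.
\]
The Melnikov threshold on the right-hand side of the definition of $R^-_{\ell jj}$ is $\langle\ell\rangle^\tau/(2\gamma_n)$ for the norm of the inverse, i.e.\ a threshold $2\gamma_n/\langle\ell\rangle^\tau$ on the smallest eigenvalue. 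Since $\gamma_n<2\gamma=2\gamma_*^2$ and $\tau>\tau_*$, the diophantine bound on $|\omega\cdot\ell|$ is strictly larger than this threshold for $\gamma$ small, ruling out $\alpha=\beta$. For $\alpha\ne\beta$ one must have $|\omega\cdot\ell\pm O(1/j)|\le 2\gamma_n/\langle\ell\rangle^\tau$, and since the diophantine bound gives $|\omega\cdot\ell|\ge\gamma_*/|\ell|^{\tau_*}$, this is possible only if the $O(1/j)$ correction is at least of comparable size, i.e.\ $1/j\gtrsim\gamma_*/|\ell|^{\tau_*}$, yielding $|j|\lessdot\gamma_*^{-1}\langle\ell\rangle^{\tau_*}$.

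\medskip

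\noindent
Finally, the concluding statement is an easy deduction. If $|\ell|\le C$, then $(i)$--$(ii)$ bound $j$ (resp.\ $j,k$) by a constant depending on $C$; in $(iii)$, for $j\ne k$, the factorization $|j-k|\,|j+k|\lessdot\langle\ell\rangle\lessdot 1$ together with $|j-k|\ge 1$, $|j+k|\ge 1$ bounds both $j$ and $k$; in $(iv)$, the bound $|j|\lessdot\gamma_*^{-1}\langle\ell\rangle^{\tau_*}\lessdot\gamma_*^{-1}$ is again finite. Hence at most finitely many triples survive, proving the final assertion.
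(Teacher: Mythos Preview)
Your proposal is correct and follows essentially the same approach as the paper's proof: both translate the nonemptiness conditions into smallness of some eigenvalue via self-adjointness and Lemma~\ref{properties operators matrices}, then invoke the asymptotic expansion \eqref{prima asintotica autovalori}--\eqref{estimates-errors}; in item $(iv)$ both use the auxiliary diophantine condition $\omega\in\Omega_{\gamma_*,\tau_*}$ to rule out the diagonal case $\alpha=\beta$ (since $\gamma_*>2\gamma_n$ and $\tau>\tau_*$) and then bound $|j|$ via the residual $O(1/j)$ term. One small correction: $\gamma_n=\gamma(1+2^{-n})\le 2\gamma$ with equality at $n=0$, so write $\gamma_n\le 2\gamma$ rather than strict inequality; the conclusion is unaffected.
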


\begin{proof} 
We prove item $(iii)$ and $(iv)$ in detail.
%using the asymptotic expansion   \eqref{asintotica autovalori finali misura}. 
Items ($i$) and ($ii$) follow by similar, but simpler arguments as a  less 
precise  asymptotic expansion suffices. 
Since  the operator  $L_\infty^-(\ell, j, k) \in \mathcal L(\C^{2\times 2})$, defined in 
\eqref{definizione L infinito - seconde Melnikov stime misura}, is self-adjoint, 
the norm of  $ L_\infty^-(\ell, j, k)^{-1} $ (when it exists) is given 
by the inverse of the minimum modulus of the four eigenvalues of $L_\infty^-(\ell, j, k) $. 
By Lemma \ref{properties operators matrices}, these eigenvalues  are given by 
$$
\omega \cdot \ell + \lambda_j^{(a)}(\omega) - \lambda_k^{(b)}(\omega)\,, \quad  a, b \in \{+, -\}\,,
$$
where for any $\kappa \in S_+^\bot$, $\lambda_\kappa^{(+)}(\omega)$, $\lambda_\kappa^{(-)}(\omega)$ 
denote the two eigenvalues of the matrix 
$[{\bf N}_\infty^{(1)}(\omega, \io_n(\omega))]_\kappa^\kappa \in \C^{2 \times 2}$. 
By the definition \eqref{risonanti seconde di melnikov differenza}, $ R_{\ell j k}^-(\io_n) $ thus reads
\begin{align}
R_{\ell j k}^-(\io_n) & = \Big\{ \omega \in \O^{\rm Mel}_n \cap \Omega_{\gamma_*, \tau_*} : 
\exists \, a, b \in \{+, -\}  \,\, \text{with} \,\,  |\omega \cdot \ell + \lambda_j^{(a)}(\omega) - \lambda_k^{(b)}(\omega)| 
< \frac{2 \gamma_n \langle j^2 - k^2 \rangle}{\langle \ell \rangle^\tau} \Big\}  \,. \label{risonanti con autovalori}
\end{align}
% By \eqref{stime blocchi 2 per 2 finali} (for $\nu = 0$), \eqref{first diagonal normal form},
% \eqref{espansione asintotica autovalori blocco iniziale},  
% \eqref{stime asintotica autovalori iniziali} 
% and \eqref{asdf} one has for any $\kappa \in S_+^\bot$ and any $a \in \{+, -\}$,
By  item $(iii)$ of Theorem \ref{teoremadiriducibilita}, we have for $a \in \{ +, -\}$
\begin{align}\label{asintotica autovalori finali misura}
 \lambda_\kappa^{(a)} = 4 \pi^2 \kappa^2 + c_{\e, \xi} + \frac{\rho_{\xi, \e}^{(a)}(\kappa)}{\kappa}\,, \qquad 
|c_{ \xi, \e}| = O(1)\,, \quad  \sup_{\kappa \in S^\bot_+} |\rho_{\xi, \e}^{(a)}(\kappa)| = O(1)\,.
%\lambda_j^{(1)} = 4 \pi^2 j^2 + c_\e + \frac{r_\infty^{(1)}(j)}{j}\,, \quad \text{with} \quad r_\infty^{(1)}(j) = O(1)\,,\nonumber\\
%\lambda_j^{(2)} = 4 \pi^2 j^2 + c_\e + \frac{r_\infty^{(2)}(j)}{j}\,, \quad \text{with} \quad r_\infty^{(2)}(j) = O(1)\,.
\end{align}

\smallskip

\noindent
{\it Case $j \neq k$:} 
Assume that  $R^-_{\ell j k}(\io_n) \neq \emptyset$. By \eqref{risonanti con autovalori}, given
$\o \in R^-_{\ell j k}(\io_n) $ % \O^{\rm Mel}_n \cap \Omega_{\gamma_*, \tau_*}$ 
there exist $a, b \in \{ +, - \}$ so that
\begin{equation}\label{pietro il grande}
  | \lambda_j^{(a)}(\omega) - \lambda_k^{(b)}(\omega)| <
 \frac{2 \gamma_n |j^2 - k^2|}{\langle \ell \rangle^\tau} + |\omega| |\ell| \,.
\end{equation}
On the other hand, by \eqref{asintotica autovalori finali misura},
one sees that 
\be\label{primo-caso-lower}
|\lambda_j^{(a)}(\o) - \lambda_k^{(b)}(\o)|  
\geq |j^2 - k^2| - C'
\ee
for some constant $C' > 0$.
Hence \eqref{pietro il grande} and \eqref{primo-caso-lower} imply that 
$$ 
 |\omega| |\ell|  + C' \geq 
\Big(1-   \frac{2 \gamma_n }{\langle \ell \rangle^\tau} \Big)  |j^2 - k^2| 
\geq (1-   2 \gamma_n )  |j^2 - k^2| \geq \frac12  |j^2 - k^2| 
$$
 taking $\gamma$ in $\g_n = \g (1 + 2^{-n})$ so small that  $  \gamma_n \leq 1/ 4 $.
One concludes that $ |j^2 - k^2| \lessdot \langle \ell \rangle$ and 
item $(iii)$ is proved.

 \medskip
 
 \noindent
 {\it Case $j = k$, $\ell \ne 0$:} Assume that $R_{\ell j j}^-(\io_n) \neq \emptyset$. By \eqref{risonanti con autovalori},
given $ \o \in R_{\ell j j}^-(\io_n) $,
 %  \O^{\rm Mel}_n \cap \Omega_{\gamma_*, \tau_*}$,
there exist $a, b \in \{ +, - \}$ so that
 \be\label{estimateRljj -}
 |\omega \cdot \ell + \lambda_j^{(a)} (\om)  - \lambda_j^{(b)} (\om)| < \frac{2 \gamma_n}{\langle \ell \rangle^\tau}\,.
 \ee
 Assume that $a = b$.  By \eqref{estimateRljj -}
 and since $\omega \in \Omega_{\gamma_*, \tau_*}$ (see \eqref{diofanteo ausiliare}) one has
 $$
 \frac{2 \gamma_n}{\langle \ell \rangle^\tau} >  |\omega \cdot \ell| \geq \frac{\gamma_*}{\langle \ell \rangle^{\tau_*}} 
 >  \frac{2 \gamma_n}{\langle \ell \rangle^{\tau}}   
 $$
 since $\gamma_* > 8\gamma \geq 2 \gamma_n$ and  $\tau > \tau_*$. 
 The assumption $a=b$ thus yields a contradiction.  
 Hence $a \neq b $. 
%Using again the assumption $\omega \in \Omega_{\gamma_*, \tau_*} $, 
Using the asymptotics  
\eqref{asintotica autovalori finali misura}, we  get  that,  
for some constant $C' > 0 $,  
\be\label{crucial-point}
|\omega \cdot \ell + \lambda_j^{(a)}(\o) - \lambda_j^{(b)}(\o) |  
\, \geq \,  |\omega \cdot \ell| - \frac{C'}{ |j|} 
\, \stackrel{\eqref{diofanteo ausiliare}}\geq \,
\frac{ \gamma_*}{\langle \ell \rangle^{\tau_*}} -\frac{C'}{ |j|}\,,
\ee
which,  together with \eqref{estimateRljj -} and $\tau > \tau_*$,  implies that 
$$
 \frac{C'}{ |j|} \geq   \frac{\gamma_* - 2 \gamma_n}{\langle \ell \rangle^{\tau_*}}
 \geq   \frac{\gamma_* }{2 \langle \ell \rangle^{\tau_*}} 
% \frac{\gamma_* - 2 \gamma_n}{\langle \ell \rangle^{\tau_*}}\lessdot \frac{1}{ |j|}\,.
$$
because % Since by definition, 
$\gamma_n \leq 2 \gamma$ and $8 \gamma < \gamma_* $.
% one has  $\gamma_* - 2 \gamma_n > \gamma_* - 4 \gamma > \frac{\gamma_*}{2}$,
The claimed inequality
$|j| \lessdot \gamma_*^{- 1} \langle \ell \rangle^{\tau_*}$
of item $(iv)$ is proved. 
 \end{proof}

\medskip

Combining Corollary \ref{excisione parametri io} and Lemma \ref{limitazioni indici risonanti}, 
one sees that there exists a constant $C_* > 0$ so that
the identity \eqref{parametri cattivi} for 
$\big(\O^{\rm Mel}_n \setminus \O^{\rm Mel}_{n + 1} \big) \cap 
\Omega_{\gamma_*, \tau_*}$ with $n \ge 1$ becomes
 \be\label{parametri cattivi forma finale}
\Big( \bigcup_{\begin{subarray}{c}
|\ell| > N_{n - 1} \\
 j \in S^\bot_+ \\
 |j| \leq C_* |\ell |^{1/2}
\end{subarray}} Q_{\ell j}(\io_n) \Big) \cup 
\Big( \bigcup_{\begin{subarray}{c}
|\ell| > N_{n - 1}  \\
j, k \in S^\bot_+\\
j^2 + k^2 \leq C_* | \ell |
\end{subarray}}R_{\ell j k}^{+}(\io_n) \Big) \cup
\Big( \bigcup_{\begin{subarray}{c}
|\ell| > N_{n - 1}  \\
 j, k \in S^\bot_+\,,\,j \neq k \\
 |j^2 - k^2| \leq C_* | \ell |
\end{subarray}} R_{\ell j k}^{-}(\io_n) \Big)  \cup 
\Big( \bigcup_{\begin{subarray}{c}
|\ell| > N_{n - 1}\\
 j\in S^\bot_+ \\
  |j| \leq C_*\gamma_*^{- 1} | \ell |^{\tau_*} 
\end{subarray}} R_{\ell j j}^{-}(\io_n) \Big)\,. 
\ee
The measures of these resonant sets are now estimated individually:  
\begin{lemma}\label{stima in misura insiemi risonanti}
There exists a constant $\widetilde C > 0$ so that 
for any $n \geq 0$,
$j, k \in S_+^\bot$, and $\ell \in \Z^S$ with $|\ell| \geq \widetilde C$  
the following holds: \quad
$(i)$ ${\rm meas}\big( Q_{\ell j}(\io_n) \big)
\lessdot \gamma \langle j \rangle^2 \langle \ell \rangle^{- \tau - 1}$;
\quad
$(ii)$ 
${\rm meas}\big( R_{\ell j k}^+(\io_n) \big)
\lessdot \gamma \langle j^2 + k^2 \rangle \langle \ell \rangle^{- \tau - 1}$;

\noindent
$(iii)$ 
${\rm meas}\big( R_{\ell j k}^-(\io_n) \big) \lessdot 
\gamma \langle j^2 - k^2 \rangle \langle \ell \rangle^{- \tau - 1}$.

\end{lemma}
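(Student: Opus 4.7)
The plan is to reduce all three estimates to a single elementary one-dimensional measure bound via Fubini and a directional-derivative argument. Since the matrices $[{\bf N}_\infty^{(1)}]_j^j$ are self-adjoint (Lemma \ref{final blocks normal form}), the operators $A_\infty(\ell,j)$, $L_\infty^+(\ell,j,k)$, and $L_\infty^-(\ell,j,k)$ are all self-adjoint on $\C^2$ or $\C^{2\times 2}$, so their inverse norms are controlled by the reciprocal of the smallest-modulus eigenvalue. By Lemma \ref{properties operators matrices}, these eigenvalues are, for a choice of signs $a,b\in\{+,-\}$,
$$
\omega\cdot\ell+\lambda_j^{(a)}(\omega), \quad \omega\cdot\ell+\lambda_j^{(a)}(\omega)+\lambda_k^{(b)}(\omega), \quad \omega\cdot\ell+\lambda_j^{(a)}(\omega)-\lambda_k^{(b)}(\omega),
$$
respectively, where $\lambda^{(\pm)}_\kappa$ are the two real eigenvalues of $[{\bf N}_\infty^{(1)}]_\kappa^\kappa$. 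Thus each resonant set is a union of at most four sublevel sets $\{|\varphi_{abc}(\omega)|<\rho\}$, with $\rho$ given respectively by $2\gamma_n\langle j\rangle^2/\langle\ell\rangle^\tau$, $2\gamma_n\langle j^2+k^2\rangle/\langle\ell\rangle^\tau$, or $2\gamma_n\langle j^2-k^2\rangle/\langle\ell\rangle^\tau$.

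The key ingredient is the directional derivative of $\varphi(\omega)$ along $\hat\ell:=\ell/|\ell|$. The $\omega\cdot\ell$ term contributes exactly $|\ell|$, while each $\lambda^{(\pm)}_\kappa$ is Lipschitz in $\omega$ with constant $O(1)$ uniformly in $\kappa$, as recorded in \eqref{asintotica autovalori finali misura0}. Consequently there is an absolute constant $C>0$ so that for any two $\omega,\omega'\in\Omega$ with $\omega'-\omega=t\hat\ell$,
$$
|\varphi(\omega')-\varphi(\omega)| \geq (|\ell|-2C)|t|.
$$
Choosing $\widetilde C:=4C$ ensures that for $|\ell|\geq\widetilde C$ the effective Lipschitz constant along $\hat\ell$ is at least $|\ell|/2$, and in particular $\varphi$ is strictly monotone along lines parallel to $\hat\ell$.

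Now I would write $\omega=s\hat\ell+\omega^\perp$ with $\omega^\perp\in\hat\ell^{\perp}$, so that for each fixed $\omega^\perp$ the set of admissible $s$ with $|\varphi(s\hat\ell+\omega^\perp)|<\rho$ has one-dimensional Lebesgue measure at most $4\rho/|\ell|$, by the monotonicity and two-sided Lipschitz lower bound above. Fubini, together with the boundedness of $\Omega$ which controls the $(|S|-1)$-dimensional measure of the admissible $\omega^\perp$, then yields
$$
\mathrm{meas}\{|\varphi|<\rho\} \,\lessdot\, \rho/|\ell|.
$$
Summing over the four choices $(a,b)$ and substituting the three values of $\rho$ (with $\gamma_n\leq 2\gamma$) produces the three bounds of the lemma, since $|\ell|\asymp\langle\ell\rangle$.

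The main (mild) obstacle is verifying that the Lipschitz constant of $\lambda^{(\pm)}_\kappa$ is genuinely bounded independently of $\kappa$, $j$, $k$: for this I rely on the estimate $\sup_{k\in S_+^\bot}|\lambda_k^{(\pm)}|^{\mathrm{lip}}=O(1)$ from Theorem \ref{teoremadiriducibilita}(iii), which in turn rests on the one-smoothing character of the remainder in the asymptotic expansion \eqref{asintotica autovalori finali misura}. No other subtleties arise; the case $\ell=0$ is irrelevant because the hypothesis $|\ell|\geq\widetilde C\geq 1$ excludes it, and the subset where one of the self-adjoint operators fails to be invertible is contained in the zero-level set of the same $\varphi$, hence of measure zero and absorbed in the estimate above.
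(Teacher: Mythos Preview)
Your proposal is correct and follows essentially the same route as the paper: reduce to sublevel sets of the real eigenvalue functions via the self-adjointness of the operators and Lemma~\ref{properties operators matrices}, use the uniform Lipschitz bound \eqref{asintotica autovalori finali misura0} on $\lambda_\kappa^{(\pm)}$ to get a lower Lipschitz bound $\gtrsim |\ell|$ along the direction $\ell/|\ell|$ once $|\ell|\geq \widetilde C$, and conclude by the one-dimensional measure estimate and Fubini. The constants and the organization differ only cosmetically from the paper's argument.
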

 \begin{proof}
Since  the proofs of the three items are similar, we only prove  item $(iii)$. 
Assume that $j, k \in S^\bot$ and 
$\ell \in \Z^S$ with  $\ell \neq 0$. Consider the straight line in
$\Omega$ of the form 
 $$
 \omega (s) = s \frac{\ell}{|\ell|}  + v \,, \quad v \cdot \ell = 0 
 $$
where $s$ is a real parameter of appropriate range. The four eigenvalues of the operator
$ L_\infty^-\big(\ell, j, k ; \, s\frac{\ell}{|\ell|} + v \big)$ in 
${\cal L}(\C^{2 \times 2})$ are given by
$ \phi_{a, b} (s) := |\ell| s + \widetilde \lambda_j^{(a)}(s) - \widetilde \lambda_k^{(b)}(s)$
where $a, b \in \{+, - \}$ and
% by a slight abuse of notation,
$$ \widetilde \lambda_\kappa^{(a)}(s) := \lambda_\kappa^{(a)} 
\big( s \frac{\ell}{|\ell|}  + v \big)\,, 
\quad a \in \{+, - \}, \quad \kappa \in \{ j, k\}.
 $$
Recall that $\lambda_\kappa^{(-)}(\o)$, $\lambda_\kappa^{(+)}(\o)$ 
denote the two eigenvalues of 
$[{\bf N}_\infty^{(1)}(\omega, \io_n(\omega))]_\kappa^\kappa$
(cf \eqref{asintotica autovalori finali misura}), listed according to their size, $\lambda_\kappa^{(-)}(\o) \leq\lambda_\kappa^{(+)}(\o)$.
%Furthermore, recall that $[{\bf N}_\infty^{(1)}(\omega, \io_n(\omega))]_\kappa^\kappa$
% is Lipschitz continuous in $\o$ and hence for any $ a, b \in \{+, - \}$,
% $\phi_{a, b} (s)$
 % s Lipschitz continuous with respect to the (one dimensional) parameter $s$. 
By % Theorem \ref{teoremadiriducibilita}-$(iii)$ (cf 
\eqref{asintotica autovalori finali misura0}, they are Lipschitz continuous and, for any $\kappa \in S^\bot$, $a \in \{+, -\}$,
 $$
| \widetilde \lambda_\kappa^{(a)}(s) |^{\rm lip} \lessdot 1 \, . 
%= 4 \pi^2 \kappa^2 + \widetilde c^{(a)}_\kappa(s) \,, \qquad |\widetilde c^{(a)}_\kappa|^{\rm lip} \lessdot 1\,.
 $$
 Hence for any $a, b \in \{+, -\}$, $\phi_{a, b} (s)$ satisfies the estimate
 $
|\phi_{a, b} (s_1) - \phi_{a, b} (s_2)| \geq \big( |\ell| - C' \big) |s_1 - s_2| 
$
for some constant $C' >0$. Setting $\widetilde C := 2 C'$ it then follows that for any
 $\ell \in \Z^S$ with $|\ell | \geq \widetilde C$,
$$
|\phi_{a, b} (s_1) - \phi_{a, b} (s_2)| \geq \frac{|\ell|}{2} |s_1 - s_2|\,.
$$
Since $\O$ is compact and by  \eqref{risonanti con autovalori}
$$
\{ s \in \R:  \, s \frac{\ell}{|\ell|}  + v \in R_{\ell j k}^-(\io_n) \big\} =
 \Big\{ s \in \R: \,  \exists \,\, a, b \in \{ +, -\}  \,\, \text{with} \,\, |\phi_{a, b} (s) | < 
\frac{2\gamma_n \langle j^2 - k^2 \rangle}{\langle \ell \rangle^{\tau}}
\Big\}
$$
one  sees by a standard  argument that
 $$
 {\rm meas}\big( \big\{ s \in \R:  \, s \frac{\ell}{|\ell|}  + v \in R_{\ell j k}^-(\io_n) \big\} \big) \lessdot \frac{\gamma \langle j^2 - k^2 \rangle}{\langle \ell \rangle^{\tau + 1}}
 $$
which then yields item $(iii)$ using Fubini's theorem. 
 \end{proof}

By choosing $N_0 \geq \widetilde C$, where $\widetilde C$ is the constant given in Lemma \ref{stima in misura insiemi risonanti}, we have estimated in the latter lemma
 the measures of all the
resonant sets appearing in \eqref{parametri cattivi forma finale},
which will allow us to derive measure estimates of 
$\O^{\rm Mel}_{n} \setminus \O^{\rm Mel}_{n + 1}$ for any $n \geq 1$. 
In view of \eqref{secondo pezzo misura},
it then remains to estimate the measure of $ \O^{\rm Mel}_0 \setminus \O^{\rm Mel}_1$.
 Hence taking into account \eqref{espansione risonanti} and 
 Lemma \ref{stima in misura insiemi risonanti}
 we need to estimate the measures of 
 $Q_{\ell j}(\io_0)$, $R_{\ell j k}^+(\io_0)$, $R_{\ell j k}^-(\io_0)$ 
for any  $\ell \in \Z^S$ with $|\ell| \leq \widetilde C$. 
We use the analyticity of the dNLS frequencies to obtain the following: 
 
 \begin{lemma}\label{stima risonanti modi bassi}
 There exists $ {\frak b}' \in (0, 1]$ so that for any $ j, k \in S^\bot_+$ and  
$\ell \in \Z^S$ with $|\ell| \leq \widetilde C$ (with $\widetilde C$ as in Lemma \ref{stima in misura insiemi risonanti} ) 
the following statements hold: \quad 
 $(i)$ 
${\rm meas}\big( Q_{\ell j}(\io_0) \big)  = O(\gamma^{{\frak b}'})$; \quad
 $(ii)$
${\rm meas}\big( R_{\ell j k}^+(\io_0) \big) = O(\gamma^{{\frak b}'})$; \quad

\noindent
 $(iii)$ if in addition $(\ell, j , k) \ne (0, j, j)$ then
${\rm meas}\big( R_{\ell j k}^-(\io_0) \big) = O(\gamma^{{\frak b}'})$.

 \end{lemma}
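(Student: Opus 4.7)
\medskip

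\noindent
\textit{Proof plan.} The key observation is that $\io_0 = 0$ is the trivial torus embedding, and the constraint $|\ell| \leq \widetilde C$ restricts us to finitely many triples $(\ell, j, k)$ once $j, k$ are also bounded. Combined with items $(i)$--$(iv)$ of Lemma \ref{limitazioni indici risonanti}, each of the three subsets under consideration is a finite union of sub-level sets of scalar functions of $\omega$. The strategy is to identify the leading (analytic) part of each such function via the non-degeneracy conditions of Proposition \ref{Proposition 2.3} and then apply a quantitative Lojasiewicz-type estimate for analytic functions on the compact set $\Omega$.

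First I would invoke Theorem \ref{teoremadiriducibilita}$(iii)$ with $\io = \io_0 = 0$: the two eigenvalues of $[{\bf N}_\infty^{(1)}(\omega, 0)]_j^j$ are real, depend Lipschitz-continuously on $\omega$, and (after listing by size) equal
$\lambda_j^{(\pm)}(\omega) = \omega_{\pm j}^{nls}(\xi(\omega), 0) + c_\e + O(\e \gamma^{-2} j^{-1})$,
with $|c_\e|^{\sup} = O(\e\gamma^{-2})$. Using \eqref{risonanti con autovalori} and the analogous expressions derived from Lemma \ref{properties operators matrices} applied to \eqref{definizione L infinito + seconde Melnikov stime misura}, the resonant sets $Q_{\ell j}(\io_0)$, $R^+_{\ell jk}(\io_0)$, $R^-_{\ell jk}(\io_0)$ are contained in unions (over finitely many choices of signs $a, b \in \{+,-\}$) of sets of the form $\{ \omega \in \Omega : |\phi_{\ell j k}^{a b}(\omega)| < C \gamma \}$, where
\[
\phi_{\ell j k}^{a b}(\omega) \;=\; \omega \cdot \ell + \eta_a \omega_j^{nls}(\xi(\omega), 0) + \eta_b \omega_k^{nls}(\xi(\omega),0) + O(\e\gamma^{-2}),
\]
with $\eta_a, \eta_b \in \{+1, -1, 0\}$ chosen appropriately (the $0$ coefficient covering the case $Q_{\ell j}$).

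Second, since by definition $\omega = \omega^{nls}(\xi(\omega), 0) = (\omega_n^{nls}(\xi(\omega),0))_{n \in S}$, the leading part of $\phi_{\ell jk}^{ab}$ is, via the substitution $I \mapsto \xi(\omega)$, an analytic function on $\Pi$ of the form displayed in \eqref{Melnikov-1-2} of Proposition \ref{Proposition 2.3} (after including also the trivial case $\ell = 0$, $j = 0$, $k = 0$ corresponding to $Q_{\ell j}$). Crucially, because we work away from the trivially degenerate triples $(\ell, j, k) = (0, j, j)$ with $a \neq b$ (excluded by hypothesis in item $(iii)$) and because $|\ell|$ together with the indices of $\omega_j^{nls}, \omega_k^{nls}$ are bounded by $\widetilde C$ (and by $C_\ast|\ell|^{1/2}$, $C_\ast|\ell|$, etc.\ via Lemma \ref{limitazioni indici risonanti} once $|\ell|$ is capped), only finitely many such analytic functions occur, and \emph{each one is not identically zero} on $\Pi$ by Proposition \ref{Proposition 2.3}. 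Via the bi-Lipschitz action-to-frequency map $\omega^{nls} : \Pi \to \Omega$ of Theorem \ref{Theorem 1.1}, the leading part of $\phi_{\ell jk}^{ab}$ pulled back to $\Omega$ is therefore a (locally) analytic, not identically vanishing, Lipschitz function on $\Omega$.

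Third, I would apply a standard quantitative sub-level estimate for analytic functions (e.g.\ R\"ussmann's lemma or a Lojasiewicz inequality on the compact set $\Omega$): there exist constants $c, {\frak b}' > 0$, depending only on the finitely many functions involved, such that
\[
 \mbox{meas}\bigl( \{ \omega \in \Omega : |\phi_{\ell jk}^{ab}(\omega)| < \rho \}\bigr) \leq c\, \rho^{{\frak b}'}, \qquad \forall \, 0 < \rho \leq 1 \, .
\]
The perturbation term of size $O(\e\gamma^{-2})$ in $\phi_{\ell jk}^{ab}$ is absorbed: since $\e\gamma^{-2} \leq \gamma^{1 - 2{\frak a}}\cdot \gamma/\gamma = O(\gamma)$ under the smallness \eqref{prop-def} (for ${\frak a} < 1/4$ one has $\e\gamma^{-2} = \e^{1-2{\frak a}} \ll \gamma$), applying the estimate with $\rho \sim \gamma$ yields ${\rm meas}(Q_{\ell j}(\io_0)), {\rm meas}(R^\pm_{\ell jk}(\io_0)) = O(\gamma^{{\frak b}'})$. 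The main (though routine) obstacle is verifying that the perturbation $O(\e\gamma^{-2})$ can indeed be absorbed at the level of sub-level sets: this follows by the mean value / inclusion argument $\{|\phi_{\ell jk}^{ab}| < C\gamma\} \subseteq \{|\phi_{\text{leading}}| < C'\gamma\}$, valid once $\e\gamma^{-3}$ is sufficiently small. Taking ${\frak b}'$ to be the minimum of the exponents obtained for the finitely many functions involved concludes the proof.
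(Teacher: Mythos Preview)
Your proposal is correct and follows essentially the same approach as the paper: reduce to finitely many triples via Lemma \ref{limitazioni indici risonanti}, use the eigenvalue asymptotics of Theorem \ref{teoremadiriducibilita}$(iii)$ to absorb the $O(\e\gamma^{-2})$ perturbation into the threshold (valid since $\e\gamma^{-3}$ is small), identify the leading part with the analytic, non-identically-vanishing frequency combinations of Proposition \ref{Proposition 2.3}, and conclude with a quantitative sub-level estimate for analytic functions. The only cosmetic difference is that the paper cites the Weierstrass preparation theorem (via \cite[Proposition 3.1]{Biasco-Coglitore}) for the last step, whereas you invoke a R\"ussmann/Lojasiewicz-type lemma; these are interchangeable tools for the same purpose here.
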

 \begin{proof}
Since the proofs of the three items are similar, we only consider item $(iii)$.
By  Lemma \ref{limitazioni indici risonanti}  there are finitely many triples
 $(\ell, j, k) \ne (0, j, j)$ in $\Z^S \times S^\bot_+ \times S^\bot_+ $ 
with $|\ell | \leq \widetilde C$ so that $R_{\ell j k}^-(\io_0) \neq \emptyset$.
For these finitely many triples it follows from
 the definition \eqref{risonanti con autovalori} and  
 \eqref{prima asintotica autovalori}-\eqref{estimates-errors}
 that there exists $C' > 0$ so that when choosing $ \e \gamma^{- 3}$ small enough 
 $$
 R_{\ell j k}^-(\io_0) \, \subseteq \, 
\bigcup_{\begin{subarray}{c}
a, b \in \{+, -\}\\
\end{subarray}}
\big\{ \omega \in \O^{\rm Mel}_0 \, \cap \, \Omega_{\gamma_*, \tau_*} : \,
 |\omega \cdot \ell + \omega_{a j}^{nls}(\xi, 0) - \omega_{b k}^{nls}(\xi, 0)| < 
C' \gamma \big\}\,.
 $$
By Theorem \ref{Corollary 2.2},  $\omega \mapsto \xi(\omega)$, being the inverse map of 
$\xi \mapsto (\omega_\kappa^{nls}(\xi, 0))_{\kappa \in S}$, is analytic as are the maps
 $$
 \omega \mapsto \omega \cdot \ell + \omega_{a j}^{nls}(\xi(\omega), 0) - 
\omega_{b k}^{nls}(\xi(\omega), 0)
 $$
are analytic. By Proposition \ref{Proposition 2.3}, none of these maps vanishes identically. The claimed estimate of item $(iii)$ 
then follows by the Weierstrass preparation theorem as used for instance in \cite[Proposition 3.1]{Biasco-Coglitore}. 
\end{proof}
 
 \medskip

Lemma \ref{stima in misura insiemi risonanti} and Lemma \ref{stima risonanti modi bassi}  are now used to prove measure estimates
of $\big(\O^{\rm Mel}_{n} \setminus \O^{\rm Mel}_{n + 1}  \big) 
\cap \Omega_{\gamma_*, \tau_*}$  for any $n \ge 0$.

 \noindent
 \begin{lemma}\label{cal G n - cal G n + 1}
The following estimates hold: 
$$
{\rm meas}\Big( \big(\O^{\rm Mel}_{0} \setminus \O^{\rm Mel}_{1}  \big) \cap \Omega_{\gamma_*, \tau_*} \Big) = 
O(\gamma^{{\frak b}'})\,,
\qquad
{\rm meas}\Big( \big(\O^{\rm Mel}_{n} \setminus \O^{\rm Mel}_{n + 1}  \big) \cap \Omega_{\gamma_*, \tau_*} \Big) =
 O(\gamma \gamma_*^{- 1} N_{n - 1}^{- 1})\,, \ \  \forall n \geq 1\,.
$$
\end{lemma}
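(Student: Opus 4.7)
\smallskip

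The plan is to estimate both claims by applying the per-resonant-set measure bounds of Lemma \ref{stima in misura insiemi risonanti} to the decomposition \eqref{parametri cattivi forma finale}, with the index restrictions coming from Lemma \ref{limitazioni indici risonanti}. For the case $n \ge 1$, I would split $(\O^{\rm Mel}_{n} \setminus \O^{\rm Mel}_{n + 1}) \cap \Omega_{\gamma_*, \tau_*}$ according to \eqref{parametri cattivi forma finale} and treat each of the four families $Q_{\ell j}$, $R^{+}_{\ell j k}$, $R^{-}_{\ell j k}$ (with $j\neq k$), and $R^{-}_{\ell j j}$ separately.

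For $R^{+}_{\ell j k}$, using $\langle j^2+k^2\rangle \leq C_*\langle\ell\rangle$ on the admissible indices and the bound $\mathrm{meas}(R^{+}_{\ell j k}(\io_n)) \lessdot \gamma\langle j^2+k^2\rangle\langle\ell\rangle^{-\tau-1}$ from Lemma \ref{stima in misura insiemi risonanti}, together with the fact that the number of pairs $(j,k)$ with $j^2+k^2 \leq C_*\langle\ell\rangle$ is $O(\langle\ell\rangle)$, the total contribution is $\lessdot \gamma\sum_{|\ell|>N_{n-1}} \langle\ell\rangle^{1-\tau} = O(\gamma N_{n-1}^{|S|-\tau+1}) = O(\gamma N_{n-1}^{-|S|})$ for $\tau = 2|S|+1$, which is absorbed in $\gamma\gamma_*^{-1}N_{n-1}^{-1}$. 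The same argument gives identical bounds for $Q_{\ell j}$ and for $R^{-}_{\ell j k}$ with $j\neq k$. The crucial contribution comes from $R^{-}_{\ell j j}$: here $|j| \lessdot \gamma_*^{-1}\langle\ell\rangle^{\tau_*}$ by Lemma \ref{limitazioni indici risonanti}$(iv)$ and the individual measure is $\lessdot \gamma\langle\ell\rangle^{-\tau-1}$ (since $\langle j^2-k^2\rangle = 1$). Summing over $j$ yields $\lessdot \gamma\gamma_*^{-1}\langle\ell\rangle^{\tau_*-\tau-1}$ per $\ell$, and then summing over $|\ell|>N_{n-1}$ with $\ell\in\Z^S$ gives $O(\gamma\gamma_*^{-1}N_{n-1}^{|S|+\tau_*-\tau-1}) = O(\gamma\gamma_*^{-1}N_{n-1}^{-1})$ precisely when $\tau = 2|S|+1$, $\tau_* = |S|+1$ as fixed. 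This yields the claimed estimate for $n \ge 1$.

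For the case $n = 0$, I cannot use the excision \eqref{parametri cattivi forma finale} (which relied on Lemma \ref{risonanti-1}, valid only for $n\ge 1$), so I would start from the full decomposition \eqref{espansione risonanti} for $n=0$ and split each union according to whether $|\ell|\le\widetilde C$ or $|\ell|>\widetilde C$, with $\widetilde C$ as in Lemma \ref{stima in misura insiemi risonanti}. The ``large $|\ell|$'' part is handled exactly as in the previous paragraph and contributes $O(\gamma\gamma_*^{-1}) = O(\gamma^{1/2})$. The ``small $|\ell|$'' part involves only finitely many triples $(\ell,j,k)$ by Lemma \ref{limitazioni indici risonanti} (whose cardinality is independent of $\gamma$), each contributing $O(\gamma^{{\frak b}'})$ by Lemma \ref{stima risonanti modi bassi}. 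Taking ${\frak b}' \leq 1/2$ if necessary, the total is $O(\gamma^{{\frak b}'})$.

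The main obstacle is bookkeeping the exponents so that everything matches with $\tau=2|S|+1$, $\tau_*=|S|+1$; in particular, verifying that the $R^{-}_{\ell j j}$ family, where one loses the factor $\langle j^2-k^2\rangle$ in the denominator of \eqref{seconde melnikov diagonali finali matrici misura}, is precisely compensated by the stronger index restriction $|j|\lessdot\gamma_*^{-1}\langle\ell\rangle^{\tau_*}$ coming from the diophantine bound \eqref{diofanteo ausiliare} used in the proof of Lemma \ref{limitazioni indici risonanti}$(iv)$. This is where the trade-off between $\gamma_*$, $\tau_*$ and $\gamma$, $\tau$ is decisive, and where the asymptotic expansion \eqref{asintotica autovalori finali misura} of the perturbed normal eigenvalues up to order $|k|^{-1}$ (rather than $O(1)$) is indispensable, since the $\frac{C'}{|j|}$ term in \eqref{crucial-point} is what allows the excision of a set of $\omega$'s of the desired small measure rather than a set which would completely destroy $\Omega$.
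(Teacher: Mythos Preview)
Your proposal is correct and follows essentially the same route as the paper: decompose via \eqref{parametri cattivi forma finale} for $n\ge 1$ (and \eqref{espansione risonanti} for $n=0$, split at $|\ell|\le\widetilde C$), apply the per-set bounds of Lemma~\ref{stima in misura insiemi risonanti} and Lemma~\ref{stima risonanti modi bassi}, and use the index restrictions of Lemma~\ref{limitazioni indici risonanti} to sum. One small imprecision: saying ``the same argument'' for $R^{-}_{\ell j k}$ with $j\neq k$ is not quite right, since the number of pairs $(j,k)\in (S^\bot_+)^2$ with $|j^2-k^2|\le C_*\langle\ell\rangle$ is $O(\langle\ell\rangle\log\langle\ell\rangle)$ rather than $O(\langle\ell\rangle)$ (parametrize by $d=|j-k|$, $s=j+k$); this extra logarithm is harmless since that sum is anyway dominated by the $R^{-}_{\ell j j}$ contribution, and the paper's own proof is equally casual on this point.
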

\begin{proof}
 To estimate $ {\rm meas}\big( \big(\O^{\rm Mel}_{n} \setminus\O^{\rm Mel}_{n+1} \big) 
\cap \Omega_{\gamma_*, \tau_*} \big)$ for $n \geq 1$, note that by
\eqref{parametri cattivi forma finale} and Lemma \ref{stima in misura insiemi risonanti}, 
it  is $\lessdot $ bounded by
 \begin{align}
&  \sum_{\begin{subarray}{c}
|\ell| > N_{n - 1} \\
 j \in S^\bot_+ \\
 |j| \leq C_* \langle \ell \rangle^{\frac12}
\end{subarray}} \frac{\gamma \langle j \rangle^2}{\langle \ell \rangle^{\tau + 1}}
\,\,+ \,
\sum_{\begin{subarray}{c}
|\ell| > N_{n - 1}  \\
j, k \in S^\bot_+\\
j^2 + k^2 \leq C_* \langle \ell \rangle
\end{subarray}}\frac{\gamma \langle j^2 + k^2 \rangle}{\langle \ell \rangle^{\tau + 1}}
 \,\, + \, \sum_{\begin{subarray}{c}
|\ell| > N_{n - 1}  \\
 j, k \in S^\bot_+\,,\,j \neq k \\
 |j^2 - k^2| \leq C_* \langle \ell \rangle
\end{subarray}} \frac{\gamma \langle j^2 - k^2 \rangle}{\langle \ell \rangle^{\tau + 1}} \,\, +\, \sum_{\begin{subarray}{c}
|\ell| > N_{n - 1} \\
 j\in S^\bot_+ \\
  |j| \leq C_* \gamma_*^{- 1} \langle \ell \rangle^{\tau_*} 
\end{subarray}} \frac{\gamma}{\langle \ell \rangle^{\tau + 1}}  
 \nonumber\\
& \lessdot \,  \gamma \sum_{|\ell| > N_{n - 1}} \frac{1}{\langle \ell \rangle^{\tau - \frac12}} \,\,+ \, \gamma \sum_{|\ell| > N_{n - 1}} \frac{1}{\langle \ell \rangle^{\tau - 1}} 
\, + \,\gamma \sum_{|\ell| > N_{n - 1}} \frac{1}{\langle \ell \rangle^{\tau - 1}}
 \, + \, \gamma \gamma_*^{- 1} 
\sum_{|\ell| > N_{n - 1}} \frac{1}{\langle \ell \rangle^{\tau + 1 - \tau_*}} \,.  \nonumber
\end{align} 
Since by definition, $\tau = 2|S| + 1$ and $\tau_* = |S| + 1$
(cf \eqref{def:gamma}), one has $\t + 1 - \t_*  = |S| + 1$,
yielding the estimate
$$
{\rm meas}\Big( \big(\O^{\rm Mel}_{n} \setminus\O^{\rm Mel}_{n+1} \big) \cap 
\Omega_{\gamma_*, \tau_*} \Big) \,
 \lessdot \, \gamma \gamma_*^{- 1} \sum_{|\ell| > N_{n - 1}} 
\frac{1}{\langle \ell \rangle^{\tau + 1 - \tau_*}} \, \lessdot \,
 \gamma \gamma_*^{- 1} \frac{1}{N_{n - 1}}\,.
$$
The estimate of ${\rm meas} \big(\big( \O^{\rm Mel}_0 \setminus \O^{\rm Mel}_1 \big) 
\cap \Omega_{\gamma_*, \tau_*} \big)$ 
follows by similar arguments, using in addition Lemma \ref{stima risonanti modi bassi}.
 \end{proof}

 \medskip
 
 \noindent
 {\it Proof of Theorem \ref{measure estimate}:} By \eqref{zero decomposizione risonante},
 \eqref{primo pezzo misura},  
 \eqref{terzo pezzo misura} and Lemma \ref{cal G n - cal G n + 1} one has that 
 $$
 {\rm meas}\big( \Omega \setminus \O^{\rm Mel}_\infty \big) \, \leq   \,
O( \gamma_*) +  O(\gamma )  + O(\gamma^{{\frak b}'} ) + 
O(\gamma \gamma_*^{- 1}) \sum_{n \geq 1} \frac{1}{N_{n - 1}} \,
\leq \, O(\gamma^{{\frak b}'}) + O(\gamma_*) + O(\gamma_*^{- 1} \gamma)\,.
  $$
Thanks to our choice of $ \gamma_* $ in \eqref{def:gamma} and $ \g = \e^{\frak a} $, 
% By choosing $ \gamma_* = \gamma^{\frac12}$, 
we have $ \gamma_* = \gamma_*^{- 1} \gamma = \e^{{\frak a}/2} $
and \eqref{estimate-meas-final}
 % are of the same size.
%Furthermore,  $0 < \gamma_* < 1$ and by  choosing $\gamma$ sufficiently small
% one can achieve that $\gamma_* > 8 \gamma$. 
% Theorem \ref{measure estimate} 
then follows with % by setting
 $ {\frak b} := \min \{ {\frak b}' , 1/2 \}$.

\vspace{1.0cm}

\noindent
M. Berti
SISSA, Via Bonomea 265, 34136 Trieste, Italy; \\
${}\qquad$ email: berti@sissa.it\\

\noindent
T. Kappeler, 
Institut f\"ur Mathematik, 
Universit\"at Z\"urich, Winterthurerstr 190, CH-8057 Z\"urich;\\
${}\qquad$  email: thomas.kappeler@math.uzh.ch \\

\noindent
R. Montalto, 
Institut f\"ur Mathematik, 
Universit\"at Z\"urich, Winterthurerstr 190, CH-8057 Z\"urich;\\
${}\qquad$ email: riccardo.montalto@math.uzh.ch

\end{document}